\let\old@tocline\@tocline
\let\section@tocline\@tocline
\newcommand{\subsection@dotsep}{4.5}
\newcommand{\subsubsection@dotsep}{4.5}
     \leaders\hbox{$\m@th
        \mkern \subsection@dotsep mu\hbox{.}\mkern \subsection@dotsep mu$}\hfill
\let\subsection@tocline\@tocline
\let\@tocline\old@tocline
     \leaders\hbox{$\m@th
        \mkern \subsubsection@dotsep mu\hbox{.}\mkern \subsubsection@dotsep mu$}\hfill
\let\subsubsection@tocline\@tocline
\let\@tocline\old@tocline
\let\old@l@subsection\l@subsection
\let\old@l@subsubsection\l@subsubsection
\def\@tocwriteb#1#2#3{%
  \begingroup
    \@xp\def\csname #2@tocline\endcsname##1##2##3##4##5##6{%
      \ifnum##1>\c@tocdepth
      \else \sbox\z@{##5\let\indentlabel\@tochangmeasure##6}\fi}%
    \csname l@#2\endcsname{#1{\csname#2name\endcsname}{\@secnumber}{}}%
  \endgroup
  \addcontentsline{toc}{#2}%
    {\protect#1{\csname#2name\endcsname}{\@secnumber}{#3}}}%
\newlength{\@tocsectionindent}
\newlength{\@tocsubsectionindent}
\newlength{\@tocsubsubsectionindent}
\newlength{\@tocsectionnumwidth}
\newlength{\@tocsubsectionnumwidth}
\newlength{\@tocsubsubsectionnumwidth}
\newcommand{\settocsectionnumwidth}[1]{\setlength{\@tocsectionnumwidth}{#1}}
\newcommand{\settocsubsectionnumwidth}[1]{\setlength{\@tocsubsectionnumwidth}{#1}}
\newcommand{\settocsubsubsectionnumwidth}[1]{\setlength{\@tocsubsubsectionnumwidth}{#1}}
\newcommand{\settocsectionindent}[1]{\setlength{\@tocsectionindent}{#1}}
\newcommand{\settocsubsectionindent}[1]{\setlength{\@tocsubsectionindent}{#1}}
\newcommand{\settocsubsubsectionindent}[1]{\setlength{\@tocsubsubsectionindent}{#1}}
\renewcommand{\l@section}{\section@tocline{1}{\@tocsectionvskip}{\@tocsectionindent}{}{\@tocsectionformat}}%
\renewcommand{\l@subsection}{\subsection@tocline{1}{\@tocsubsectionvskip}{\@tocsubsectionindent}{}{\@tocsubsectionformat}}%
\renewcommand{\l@subsubsection}{\subsubsection@tocline{1}{\@tocsubsubsectionvskip}{\@tocsubsubsectionindent}{}{\@tocsubsubsectionformat}}%
\newcommand{\@tocsectionformat}{}
\newcommand{\@tocsubsectionformat}{}
\newcommand{\@tocsubsubsectionformat}{}
\def\csname toc@1format\endcsname{\@tocsectionformat}
\def\csname toc@2format\endcsname{\@tocsubsectionformat}
\def\csname toc@3format\endcsname{\@tocsubsubsectionformat}
\newcommand{\settocsectionformat}[1]{\renewcommand{\@tocsectionformat}{#1}}
\newcommand{\settocsubsectionformat}[1]{\renewcommand{\@tocsubsectionformat}{#1}}
\newcommand{\settocsubsubsectionformat}[1]{\renewcommand{\@tocsubsubsectionformat}{#1}}
\newlength{\@tocsectionvskip}
\newcommand{\settocsectionvskip}[1]{\setlength{\@tocsectionvskip}{#1}}
\newlength{\@tocsubsectionvskip}
\newcommand{\settocsubsectionvskip}[1]{\setlength{\@tocsubsectionvskip}{#1}}
\newlength{\@tocsubsubsectionvskip}
\newcommand{\settocsubsubsectionvskip}[1]{\setlength{\@tocsubsubsectionvskip}{#1}}
\patchcmd{\tocsection}{\indentlabel}{\makebox[\@tocsectionnumwidth][l]}{}{}
\patchcmd{\tocsubsection}{\indentlabel}{\makebox[\@tocsubsectionnumwidth][l]}{}{}
\patchcmd{\tocsubsubsection}{\indentlabel}{\makebox[\@tocsubsubsectionnumwidth][l]}{}{}
\newcommand{\@sectypepnumformat}{}
\renewcommand{\contentsline}[1]{%
  \expandafter\let\expandafter\@sectypepnumformat\csname @toc#1pnumformat\endcsname%
  \csname l@#1\endcsname}
\newcommand{\@tocsectionpnumformat}{}
\newcommand{\@tocsubsectionpnumformat}{}
\newcommand{\@tocsubsubsectionpnumformat}{}
\newcommand{\setsectionpnumformat}[1]{\renewcommand{\@tocsectionpnumformat}{#1}}
\newcommand{\setsubsectionpnumformat}[1]{\renewcommand{\@tocsubsectionpnumformat}{#1}}
\newcommand{\setsubsubsectionpnumformat}[1]{\renewcommand{\@tocsubsubsectionpnumformat}{#1}}
\renewcommand{\@tocpagenum}[1]{%
  \hfill {\mdseries\@sectypepnumformat #1}}
\let\oldappendix\appendix
\renewcommand{\appendix}{%
  \leavevmode\oldappendix%
  \addtocontents{toc}{%
    \protect\settowidth{\protect\@tocsectionnumwidth}{\protect\@tocsectionformat\sectionname\space}%
    \protect\addtolength{\protect\@tocsectionnumwidth}{2em}}%
}
\let\oldtableofcontents\tableofcontents
\renewcommand{\tableofcontents}{%
  \vspace*{-\linespacing}% Default gap to top of CONTENTS is \linespacing.
  \oldtableofcontents}
\tikzset{anchorbase/.style={baseline={([yshift=-0.5ex]current bounding box.center)}}}
\tikzstyle directed=[postaction={decorate,decoration={markings,
    mark=at position #1 with {\arrow{>}}}}]
\tikzset{cross/.style={cross out, draw=black, minimum size=2*(#1-\pgflinewidth), inner sep=0pt, outer sep=0pt},
%default radius will be 1pt. 
cross/.default={1pt}}
\tikzset{
    partial ellipse/.style args={#1:#2:#3}{
        insert path={+ (#1:#3) arc (#1:#2:#3)}
    }
}
\newtheorem{thm}{Theorem}[section]
\newtheorem{cor}[thm]{Corollary}
\newtheorem{theorem}[thm]{Theorem}
\newtheorem{proposition}[thm]{Proposition}
\newtheorem{corollary}[thm]{Corollary}
\newtheorem{lemma}[thm]{Lemma}
\theoremstyle{definition}
\newtheorem{definition}[thm]{Definition}
\newtheorem{defn}[thm]{Definition}
\theoremstyle{remark}
\newtheorem{remark}[thm]{Remark}
\newtheorem{rmk}[thm]{Remark}
\newtheorem{example}[thm]{Example}
\newtheorem{eg}[thm]{Example}
\newtheorem{question}[thm]{Question}
\numberwithin{equation}{section}
\newcommand{\R}{{\mathbb{R}}}
\newcommand{\C}{{\mathbb{C}}}
\newcommand{\Q}{{\mathbb{Q}}}
\newcommand{\Z}{{\mathbb{Z}}}
\newcommand{\area}{\operatorname{area}}
\newcommand{\Sk}{\operatorname{Sk}}
\newcommand{\SkAlg}{\operatorname{Sk}}
\newcommand{\Loc}{\operatorname{Loc}}
\begin{document}

\title{Skein traces from curve counting}

\author{Tobias Ekholm}
\address{Department of mathematics, Uppsala University, Box 480, 751 06 Uppsala, Sweden \and
Institut Mittag-Leffler, Aurav 17, 182 60 Djursholm, Sweden}
\email{tobias.ekholm@math.uu.se}

\author{Pietro Longhi}
\address{Department of Physics and Astronomy \and Department of Mathematics, Uppsala University, Box 516, 751 20 Uppsala, Sweden}
\email{pietro.longhi@physics.uu.se}

\author{Sunghyuk Park}
\address{Department of Mathematics \and Center of Mathematical Sciences and Applications, Harvard University, Cambridge, MA 02138, USA}
\email{sunghyukpark@math.harvard.edu}

\author{Vivek Shende}
\address{Center for Quantum Mathematics, Syddansk Univ., Campusvej 55
5230 Odense Denmark \and 
Department of mathematics, UC Berkeley, 970 Evans Hall,
Berkeley CA 94720 USA}
\email{vivek.vijay.shende@gmail.com}

\thanks{TE is supported by the Knut and Alice Wallenberg Foundation, KAW2020.0307 Wallenberg Scholar and by the Swedish Research Council, VR 2022-06593, Centre of Excellence in Geometry and Physics at Uppsala University and VR 2024-04417, project grant. \\ \indent
The work of P.L. is supported by the Knut and Alice Wallenberg Foundation, KAW2020.0307 Wallenberg Scholar and by the Swedish Research Council, VR 2022-06593, Centre of Excellence in Geometry and Physics at Uppsala University, and he records the preprint number UUITP-30/25. \\ \indent
S.P. gratefully acknowledges support from Simons Foundation through Simons Collaboration on Global Categorical Symmetries. \\ \indent
V.S. is supported by  Villum Fonden Villum Investigator grant 37814, Novo Nordisk Foundation grant NNF20OC0066298, and Danish National Research Foundation grant DNRF157. 
}

\begin{abstract}
Given a 3-manifold $M$, and a branched cover arising from the projection of a Lagrangian 3-manifold $L$ in the cotangent bundle of $M$ to the zero-section,  
we define a map from the skein of $M$ to the skein of $L$, via the skein-valued counting of holomorphic curves. 
When $M$ and $L$ are products of surfaces and 
intervals, we show that wall crossings in the space of the branched covers obey a skein-valued lift of the Kontsevich-Soibelman wall-crossing formula.   

Holomorphic curves in cotangent bundles correspond to Morse flow graphs; in the case of branched double covers, this allows us to give an explicit formula for the the skein trace.  
After specializing to the case where $M$ is a surface times an interval, and additionally specializing the HOMFLYPT skein to the
$\mathfrak{gl}(2)$ skein on $M$ and the $\mathfrak{gl}(1)$ skein on $L$, we recover an existing prescription of Neitzke and Yan. 
\end{abstract}

\maketitle
\thispagestyle{empty}

\vspace{3mm}
\renewcommand{\contentsname}{}
\tableofcontents
% \newpage

\section{Introduction}\label{Intro}
The use of what would later be called `cluster coordinates' on the moduli space of $\mathrm{SL}(2, \C)$ local systems on a Riemann surface $S$ dates back at least to the work of Penner \cite{Penner}; 
it is by now a part of the `higher Teichm\"uller theory' of Fock and Goncharov \cite{Fock-Goncharov-moduli}, which treats local systems of arbitrary rank $n$. 
It was subsequently discovered by Gaiotto, Moore, and Neitzke \cite{Gaiotto-Moore-Neitzke-WKB, Gaiotto-Moore-Neitzke-spectral} that the resulting coordinate charts can be identified with the moduli of abelian local systems on a spectral curve $\Sigma \subset T^* S$, whose projection to the zero section is a degree $n$ branched cover $\pi\colon \Sigma\to S$. 
The coordinate systems have at least two interpretations, indicated already in \cite{Gaiotto-Moore-Neitzke-WKB}, either as governing Stokes phenomena in the exact WKB analysis of the Schr\"odinger equation on a Riemann surface \cite{Iwaki-Nakanishi, Allegretti-voros, Kuwagaki-WKB}, or in terms of certain moduli of objects in the Fukaya category \cite{Shende-Treumann-Williams-Zaslow}, especially as calculated in the flow-tree limit \cite{Ekholm-morse} by \cite{Nho-spectral, Casals-Nho-spectral}.

Fock and Goncharov have shown that cluster varieties in general can be $q$-deformed by $q$-deforming their charts \cite{Fock-Goncharov-quantization-dilogarithm}.  On the other hand,  Turaev had previously established that moduli of rank $n$ local systems on $S$ admit $q$-deformations to the $\mathfrak{gl}(n)$ skein modules \cite{Turaev-quantization}.
Here, we recall that given a 3-manifold $M$, its HOMFLYPT skein $\Sk_{a, z}(M)$ is the $\Z[a^\pm, z^\pm]$-module generated by framed links in $M$, modulo the following relations: 
\begin{align}
\vcenter{\hbox{
\begin{tikzpicture}[scale=0.7]
\draw[dotted] (0,0) circle (1);
\draw[ultra thick, ->] ({sqrt(2)/2},{-sqrt(2)/2}) -- ({-sqrt(2)/2},{sqrt(2)/2});
\draw[white, line width=2.5mm] ({-sqrt(2)/2},{-sqrt(2)/2}) -- ({sqrt(2)/2},{sqrt(2)/2});
\draw[ultra thick, ->] ({-sqrt(2)/2},{-sqrt(2)/2}) -- ({sqrt(2)/2},{sqrt(2)/2});
\end{tikzpicture}
}}
\;\;-\;\;
\vcenter{\hbox{
\begin{tikzpicture}[scale=0.7]
\draw[dotted] (0,0) circle (1);
\draw[ultra thick, ->] ({-sqrt(2)/2},{-sqrt(2)/2}) -- ({sqrt(2)/2},{sqrt(2)/2});
\draw[white, line width=2.5mm] ({sqrt(2)/2},{-sqrt(2)/2}) -- ({-sqrt(2)/2},{sqrt(2)/2});
\draw[ultra thick, ->] ({sqrt(2)/2},{-sqrt(2)/2}) -- ({-sqrt(2)/2},{sqrt(2)/2});
\end{tikzpicture}
}}
\;\;&=\;\;
z\;
\vcenter{\hbox{
\begin{tikzpicture}[scale=0.7]
\draw[dotted] (0,0) circle (1);
\draw[ultra thick, <-] ({sqrt(2)/2},{sqrt(2)/2}) arc (135:225:1);
\draw[ultra thick, ->] ({-sqrt(2)/2},{-sqrt(2)/2}) arc (-45:45:1);
\end{tikzpicture}
}}
\;, \label{eq:skeinrel1}
\\
a
\vcenter{\hbox{
\begin{tikzpicture}[scale=0.7]
\draw[dotted] (0,0) circle (1);
\end{tikzpicture}
}}
\;-\;
a^{-1}
\vcenter{\hbox{
\begin{tikzpicture}[scale=0.7]
\draw[dotted] (0,0) circle (1);
\end{tikzpicture}
}}
\;\;&=\;\;
z\;\vcenter{\hbox{
\begin{tikzpicture}[scale=0.7]
\draw[dotted] (0,0) circle (1);
\draw[ultra thick, ->] (0.5,0) arc (0:370:0.5);
\end{tikzpicture}
}}
\;, \label{eq:skeinrel2}
\\
\vcenter{\hbox{
\begin{tikzpicture}[scale=0.25]
\draw[dotted] (0, 0) circle (3);
\draw [ultra thick] (1,-1) to [out=180,in=-90] (0,0);
\draw [ultra thick, ->] (0,0) -- (0,3);
\draw [ultra thick] (1,1) to [out=0,in=90] (2,0) to [out=-90,in=0] (1,-1);
\draw [white, line width=2.5mm] (0,-3) to [out=90,in=-90] (0,0) to [out=90,in=180] (1,1);
\draw [ultra thick] (0,-3) to [out=90,in=-90] (0,0) to [out=90,in=180] (1,1);
\end{tikzpicture}
}}
\;\;&=\;\;
a\;
\vcenter{\hbox{
\begin{tikzpicture}[scale=0.25]
\draw[dotted] (0, 0) circle (3);
\draw[ultra thick, <-] (0, 3) -- (0, -3);
\end{tikzpicture}
}}
\;. \label{eq:skeinrel3}
\end{align}
We will always set $z = q^{1/2} - q^{-1/2}$ and extend scalars to $\Z[q^{\pm 1/2}, (1-q)^{-1}, (1-q^2)^{-1}, \cdots]$. 
The $\mathfrak{gl}(n)$ skein modules are the corresponding $\Z[q^{\pm 1/2}]$-modules arising from the specialization of the above relations at $a = q^{n/2}$.\footnote{To be precise, the $\mathfrak{gl}(n)$ skein modules are quotients of such specializations that kill, e.g., all the $\lambda$-colored strands, for any Young diagram $\lambda$ with more than $n$ rows. 
}

For the Fock-Goncharov deformation to agree with that of Turaev, there must be corresponding maps from the $\mathfrak{gl}(n)$ skein of $S$ to the $\mathfrak{gl}(1)$ skein of $\Sigma$, as indeed Fock conjectured explicitly \cite{Fock-dual}.  
Such a map was constructed by Bonahon and Wong \cite{Bonahon-Wong-trace} for $n=2$ and termed a {\em quantum trace}; their methods have been pursued further by various authors \cite{TTQLe-trace, Panitch-Park, Garoufalidis-Yu-trace}.   Neitzke and Yan \cite{Neitzke-Yan-nonabelianization, Neitzke-Yan-gl3}, building on earlier work of Gabella \cite{Gabella}, showed that a $q$-deformation of the Gaiotto-Moore-Neitzke formulas would also provide such maps; this construction was  later shown to recover the Bonahon-Wong map  \cite{Panitch-Park-compatibility}. 
We will give in Theorem \ref{thm:UV-IRmap smooth} a generalization of the Neitzke-Yan formulas, both to more general 3-manifolds (rather than surface times interval) and to the HOMFLYPT skein on both base and cover (rather than $\mathfrak{gl}(2)$ and $\mathfrak{gl}(1)$ skeins, respectively).  

\vspace{2mm}
Our main purpose  in the present article is to give a geometric construction of these and more general skein traces, by counting holomorphic curves.  

Let us recall from \cite{SOB, ghost, bare} that counting holomorphic curves with (Maslov zero) Lagrangian boundary conditions in Calabi-Yau 3-folds by the class of their boundary in the HOMFLYPT skein modules of the Lagrangians lead to invariant quantities. 
This is because it is possible to match the wall-crossings in parameter space corresponding to boundaries of moduli space with the HOMFLYPT skein relations.

Consider a 3-manifold $M$, possibly noncompact with cylindrical ends $\partial_\infty M \times \R_{>0}$.  
We fix $L \subset T^*M$; if $M$ is noncompact, we ask that $L$ is cylindrical in the cylindrical end, i.e., there is a Lagrangian $\partial_\infty L \subset T^* \partial_\infty M$ and, in the cylindrical end, $L$ takes the form $\partial_\infty L \times \R_{> 0} \subset T^*{\partial_\infty M} \times T^*\R_{> 0}$.

Consider a knot or link $K \subset M$ and its conormal $N^*K\subset T^\ast M$. We identify $M$ with the $0$-section in $T^\ast M$, $M\subset T^\ast$; then $N^*K \cap M = K$. 
We may shift $N^*K$ along the $1$-form dual to the tangent of $K$. This is a (non-Hamiltonian) symplectic isotopy which takes $N^\ast K$ to a non-exact Lagrangian $N^\circ K$ off the $0$-section, $N^\circ K\cap M=\emptyset$. 
In the geometry $(T^*M; M \sqcup  N^\circ K)$, there is a single bounded holomorphic curve, namely the trace of $K$ along the isotopy. 
(When $M=S^3$, this geometry motivated the Ooguri-Vafa conjecture expressing the HOMFLYPT invariant of a knot in as a count of curves in the holomorphic curves in the resolved conifold \cite{OV}, and its proof \cite{SOB, ekholm-shende-colored}.) 
We may shift further so that $N^\circ K$ sits {\em outside} the unit cotangent bundle; in particular, so that it is disjoint from $L$.  

After such further shift, we count curves in the geometry $(T^*M; L \sqcup N^\circ K)$. 
In particular, if we restrict attention to those which go once positively 
around the longitude $K\subset N^\circ K$ (or count all curves and take the appropriate coefficient with respect to a basis of $\Sk(N^\circ K)$, see Section \ref{basic classes}) we obtain an element of $\Sk(L)$. 

\begin{definition} \label{main definition}
We write $[K]_L \in \Sk(L)$ for the  count of curves in $(T^*M; L \sqcup N^\circ K)$
which have boundary going once positively around the longitude $k$ of the solid torus $N^\circ K$. 
\end{definition} 

\begin{remark}
This formulation is similar to the proposal of \cite[Sec. 1.6.2-3]{Neitzke-Yan-nonabelianization}. 
One difference is that we push the conormal off the zero section. 
Another is that we work in the HOMFLYPT skein on both $L$ and $M$. 
Yet another is that the skein-valued curve counting is most immediately expressed in the variable $z = e^{g_s/2} - e^{- g_s/2}$, see \cite[Theorem 1.4]{bare}, 
rather than $g_s$, and in the variable $a$ rather than $e^{N g_s/2}$, see \cite{ekholm-shende-colored}. 
All of these may be related to the assertion in loc.\ cit.\ that those authors work ``\thinspace`before' the conifold transition''.
\end{remark}

We  understand the Lagrangians appearing  in Definition \ref{main definition} as coming equipped with certain additional ``brane data''.  This data includes -- as always for studying holomorphic curves with boundary -- a choice of spin structure.  Related issues lead us to introduce certain {\em sign lines} into our skeins; these being some fixed 1-cycles in the 3-manifold which our links should avoid, but may cross at the cost of changing sign.  
\begin{gather}
\vcenter{\hbox{
\begin{tikzpicture}[scale=0.7]
\draw[dotted] (0,0) circle (1);
\draw[ultra thick, red] (-1, 0) -- (1, 0);
\draw[white, line width=2.5mm] (0, -1) -- (0, 1);
\draw[ultra thick, ->] (0, -1) -- (0, 1);
\end{tikzpicture}
}}
\;\;=\;\;
(-1)\cdot
\vcenter{\hbox{
\begin{tikzpicture}[scale=0.7]
\draw[dotted] (0,0) circle (1);
\draw[ultra thick, ->] (0, -1) -- (0, 1);
\draw[white, line width=2.5mm] (-1, 0) -- (1, 0);
\draw[ultra thick, red] (-1, 0) -- (1, 0);
\end{tikzpicture}
}}
\;, \label{eq:skeinrel4}
\end{gather}
The effect on the isomorphism class of the skein module only depends on the class of the sign lines in $H^1(M, \Z/2\Z)$.  
These lines correspond to the appearance of twisted local systems in \cite{Gaiotto-Moore-Neitzke-WKB}, and appeared previously in a skein-valued curve-counting setting in \cite{Scharitzer-Shende-mirror}. 
Brane data also includes, uniquely to the skein-valued curve counting, a choice of a 4-chain $W \subset T^*M$ where $\partial W$ is twice the Lagrangian of interest \cite{SOB}.  We discuss appropriate choices for these data in Section \ref{sec: brane data} below. 

We show that $[ \, \cdot \,]_L$ defines a skein trace:  

\begin{theorem} \label{skein trace}
The map $K \mapsto [K]_L$ factors through $\Sk(M)|_{a_M = a_L^n}$, hence defining a map 
\begin{eqnarray*}
\Sk(M)|_{a_M = a_L^n} & \to & \Sk(L) \\
K & \mapsto & [K]_L
\end{eqnarray*} 
\end{theorem}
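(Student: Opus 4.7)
The plan is to verify that the assignment $K \mapsto [K]_L$ respects each of the defining relations of the HOMFLYPT skein of $M$, after the specialization $a_M = a_L^n$: invariance under framed isotopy of $K$, the crossing relation \eqref{eq:skeinrel1} with parameter $z$, the framing relation \eqref{eq:skeinrel3} with parameter $a_M$, and the unknot value relation \eqref{eq:skeinrel2}. Each of these corresponds to a controlled deformation of $K$ in $M$, which in turn induces either a Lagrangian isotopy or a well-understood codimension-one wall-crossing of $N^\circ K \subset T^*M$, with $L$ held fixed throughout.

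The foundational input is the invariance theorem for skein-valued curve counts proved in \cite{SOB, ghost, bare}: the count is unchanged under Lagrangian isotopy of the boundary Lagrangian, and at codimension-one walls it transforms by a local formula which is exactly the HOMFLYPT skein relation on the boundary Lagrangian. Any isotopy of $K$ inside $M$ lifts to a Lagrangian isotopy of $N^\circ K$ in $T^*M$; for a generic path $[K]_L$ is constant, giving isotopy invariance. When the isotopy passes through a transverse self-crossing of $K$, two strands of $N^\circ K$ collide along a short rigid holomorphic strip and the wall-crossing formula of \cite{bare} inserts on $N^\circ K$ the $z$-term of \eqref{eq:skeinrel1}; since the longitudinal boundary condition is preserved by the recoupling, the resulting identity in $\Sk(L)$ is precisely the image of the crossing relation on $K$.

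The framing relation is the step that forces the specialization $a_M = a_L^n$. A framing shift of $K$ by $+1$ corresponds to a meridional full twist of $N^\circ K$; realized as a one-parameter family it crosses a wall where new rigid holomorphic disks are born. In the single-Lagrangian setting of \cite{bare} such a wall disk contributes a factor of $a$ via \eqref{eq:skeinrel3}. In our setting the wall disks have one boundary on $N^\circ K$ and the other on the branched cover $L$, and the $n$ sheets of $\pi \colon L \to M$ over a generic point of $K$ yield $n$ independent disk contributions. Each contributes a factor $a_L$ in $\Sk(L)$, for a total of $a_L^n$, matching the specialization of the $\Sk(M)$ framing coefficient $a_M$. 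The main obstacle in the whole proof lies here: a careful accounting of the brane data of Section \ref{sec: brane data} --- the spin structure, the 4-chain $W$ with $\partial W = 2L$, and the sign lines of \eqref{eq:skeinrel4} --- is required so that the $n$ sheet-wise disk contributions combine with consistent signs, framings, and $a$-powers. This is the central technical step; once it is in place, all wall-crossings admit a clean sheet-wise interpretation.

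Finally, once isotopy invariance, the crossing relation, and the framing relation are established, the unknot value relation \eqref{eq:skeinrel2} is verified by an explicit local computation for a small unknot $K$ bounding a disk disjoint from the branch locus of $\pi$. There the cover is trivial and the relevant holomorphic curves form a local Ooguri--Vafa system lifted to the $n$ sheets; combined with the skein relations already proved in $\Sk(L)$, their count reproduces the factor $(a_L^n - a_L^{-n})/z$ on the empty class --- precisely the specialization at $a_M = a_L^n$ of the unknot value $(a_M - a_M^{-1})/z$ dictated by \eqref{eq:skeinrel2}. These four verifications together show that $[\,\cdot\,]_L$ descends to a well-defined map $\Sk(M)|_{a_M = a_L^n} \to \Sk(L)$.
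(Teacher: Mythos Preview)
Your overall plan---verify isotopy invariance and the three skein relations---matches the paper's, and your treatment of the unknot relation is essentially what the paper does in Corollary \ref{second skein relation}. But your mechanism for the crossing relation has a genuine gap, and the paper explicitly flags it: immediately after stating Theorem \ref{skein trace} it warns that the result ``does not follow immediately from the well-definedness of the skein-valued curve counting.''

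The problem is your appeal to the wall-crossing formula of \cite{bare} when two strands of $N^\circ K$ collide. The invariance theorems of \cite{SOB,bare} concern boundaries of holomorphic curves crossing inside a \emph{fixed embedded} Lagrangian; the skein relation there is imposed in $\Sk(N^\circ K)$. What happens at a crossing change of $K$ is different: the conormal family $\mathsf{U}_{K(\sigma)}$ passes through an \emph{immersed} Lagrangian with a clean self-intersection line (Lemma \ref{l: intersection L_1 and L_2}), and on the two sides one must compare curve counts for genuinely different Lagrangians---indeed $\mathsf{U}_{K_\pm}$ and $\mathsf{U}_{K_\asymp}$ may have different numbers of components. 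There is no black-box wall-crossing in \cite{bare} that produces $[K_\asymp]_L$ from this; one must actually identify the curves that are born or die at $\sigma=0$ and match them with those for $\mathsf{U}_{K_\asymp}$. This is the content of Section \ref{sec : Lag conormals and flow graphs}.

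The paper's route is a two-step reduction. First, pass to the flow-graph limit $\lambda L \to M$ (Lemma \ref{limit configurations}): curves for $(T^*M; L \sqcup \mathsf{U}_K)$ become curves for $(T^*M; M \sqcup \mathsf{U}_K)$ with $L$-flow-graphs attached along their boundaries in $M$. Second, arrange (Lemma \ref{l : no graph at the intersection}) that no $L$-flow-graph enters a neighborhood of the crossing; then the $L$-dressing is literally the same for all three terms $K_+, K_-, K_\asymp$, and the identity reduces to the explicit analysis of $(T^*M; M \sqcup \mathsf{U}_{K(\sigma)})$ carried out in Section \ref{sec : Lag conormals and flow graphs} (see Section \ref{ssec: description} and Corollary \ref{c: flow graphs U(sigma,rho)}). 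Your proposal lacks both the reduction step and the explicit identification of the wall term.

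Your framing argument has a similar issue: a framing change is not a wall of the type in \cite{bare}, and there are no ``wall disks'' born from a meridional twist of $N^\circ K$. The paper instead realizes the framing change as an isotopy $\beta \to \beta^+$ introducing a kink in a standard region (Section \ref{subsubsec:third-skein-rel}), and computes directly---again in the flow-graph limit, in the model geometry of Definition \ref{model geometry}---the extra flow-line contribution from each of the $n$ sheets together with the $4$-chain intersections. Your intuition that the factor $a_L^n$ arises sheet-wise is correct, but the mechanism is this explicit model calculation, not a wall-crossing.
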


This result does not follow immediately from the well-definedness of the skein-valued curve counting, and 
requires us to compare the counts of curves ending on conormals to knots which are related by a skein
relation.  We do so by studying holomorphic curves in the Morse flow graph limit of \cite{Ekholm-morse}; the proof of Theorem \ref{skein trace} is given in Section \ref{existence of skein trace}.  

In general, finding higher genus Morse flow graphs is a
difficult problem, but when the cover has degree two, it simplifies enough that we can solve it explicitly, showing: 
\begin{theorem}[Theorem \ref{explicit description}] \label{degree two}
When $L \to M$ has degree two, the skein trace is computed by an explicit finite sum of graphs, recovering the prescription of Neitzke and Yan \cite{Neitzke-Yan-nonabelianization}, and, more generally, our Theorem \ref{thm:UV-IRmap smooth}.
\end{theorem}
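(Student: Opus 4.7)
The plan is to reduce the holomorphic curve count defining $[K]_L$ to an explicit, finite combinatorial sum over Morse flow graphs on $M$ by invoking the flow-tree degeneration of \cite{Ekholm-morse}. Under a scaling of Lagrangians in the cotangent direction, bounded pseudoholomorphic curves in $T^*M$ with boundary on $L\sqcup N^\circ K$ concentrate onto graphs in $M$ whose edges are gradient flow lines of differences of local primitives for the sheets of $L\to M$, and whose vertices are either punctures on $K$ or interior junctions that are forced to sit at branch points of $\pi\colon L\to M$. For a degree two cover, the local classification of vertex types is especially simple: away from the ramification locus the two sheets are distinguished by a single difference function $f_1-f_2$, and the only trivalent interior vertex is the $Y$-vertex at a branch point where two sheets are exchanged. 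This is precisely the combinatorial datum underlying the nonabelianization webs of \cite{Gaiotto-Moore-Neitzke-WKB, Neitzke-Yan-nonabelianization} and of Theorem \ref{thm:UV-IRmap smooth}.

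First, I would set up the geometry with the brane data fixed as in Section \ref{sec: brane data}, and consider the family $L_t := t\cdot L$ obtained by scaling in the cotangent direction. As $t\to 0$, SFT-type compactness of \cite{Ekholm-morse} produces flow graphs as limits of the curves counted in Definition \ref{main definition}. The longitude-one boundary condition near $K$ imposes that the graph carries exactly one positively oriented edge along each preimage of $K$ under $\pi$, so its lift to $L$ is a web whose boundary realizes the Neitzke--Yan lift of $K$.

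Next, I would prove that the count is given by a finite sum. In the degree two case, gradient flow lines of $f_1-f_2$ exit any neighborhood of $K$ with uniformly bounded length, because the only places where they can terminate interiorly are the isolated branch points of $\pi$ and the strands of $K$ sitting above them. Together with the constraints from the Maslov-zero normalization, this bounds the topological type of contributing flow graphs, so the enumeration reduces to the finitely many ways of resolving the transverse intersections of $K$ with the branch locus into $Y$-vertices. Writing this enumeration down produces exactly the lift prescribed by Theorem \ref{thm:UV-IRmap smooth}.

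Finally, I would match coefficients between the curve-count and the Neitzke--Yan formula. This is the step I expect to be the main obstacle. One must check that the framing and 4-chain contributions of \cite{SOB}, together with the sign-line contributions from the relation \eqref{eq:skeinrel4}, reproduce the $q$- and $a$-weights of each $Y$-vertex under the specialization $a_M = a_L^2$. A tractable approach is to verify the matching in two local models, namely a small neighborhood of a branch point and a small neighborhood of a transverse crossing of $K$ with the branch locus, and then to extend the identification globally by appealing to Theorem \ref{skein trace}: both sides are skein traces in the sense of that theorem, so they are determined by their values on generators whose conormals are pushed into transverse position with respect to the branch locus. In particular, once the local weights are matched, agreement on arbitrary links follows from the well-definedness of $[\,\cdot\,]_L$ on $\Sk(M)|_{a_M = a_L^2}$.
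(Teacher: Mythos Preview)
Your overall strategy---pass to the flow-tree limit and identify the contributing graphs with the Neitzke--Yan lifts---is the right one, and matches the paper's approach. But several of your key steps are either incorrect or miss essential structure that the paper relies on.

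First, your classification of vertices is off. You write that ``the only trivalent interior vertex is the $Y$-vertex at a branch point.'' In fact the central structural lemma (Lemma~\ref{l:flowgraphs on Lcirc}) is that for a degree-two cover there are \emph{no} trivalent vertices whatsoever: away from the branch locus there is only one difference function, so a $Y_0$-splitting is impossible. The flow graphs are therefore simply unions of flow \emph{lines}, which come in exactly two flavors: segments connecting two points of $K$ (the ``exchanges'') and segments running from $K$ to the branch locus, where they terminate via a switch and an end as in Figure~\ref{fig:SwitchEnd} (the ``detours''). This is what makes the sum finite and gives it exactly the combinatorial shape of the Neitzke--Yan prescription; your bounded-length argument is not needed and would not by itself rule out graphs with internal vertices.

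Second, you are missing the SFT decomposition. The paper does not work directly with the smooth $L$; it first passes to the exact cobordism $L^\circ_\Delta$ with Harvey--Lawson conical ends at the barycenters, classifies flow graphs there (where exactness immediately kills flow loops), and then glues in the smoothing caps by SFT stretching. Because the Legendrian $\Lambda_{\mathbb{T}}$ has no Reeb chords of index $\le 0$, the curve count factorizes, and the caps contribute precisely the skein dilogarithms of \cite{Ekholm-Shende-unknot}. This is how the map of Theorem~\ref{smoothing skein module} (inserting $\Psi^{\pm 1}$ at each tetrahedron) arises geometrically; your proposal has no mechanism to produce these infinite-series factors.

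Third, your plan for matching coefficients---check two local models, then invoke Theorem~\ref{skein trace} to propagate---does not work. Theorem~\ref{skein trace} only says the curve-counting map descends to the skein; it does not say two skein-trace maps agreeing on some local models must agree globally. The paper instead matches coefficients \emph{directly} for every flow-graph contribution: the $z$-power is the Euler-characteristic drop from attaching flow lines; the sign comes from a gluing-sequence computation reducing to crossing signs at exchanges and a deformation-invariance argument at detours (using the chosen spin structure with sign line along $\widetilde\tau$); and the $a$-power is computed by constructing a specific $4$-chain from the vector field $\xi_{\mathrm{Mor}}+\epsilon\,\xi_{\mathrm{pert}}$ and showing (Lemmas~\ref{l: a power easy lift}--\ref{l: junction4chain}) that its intersections with the lifted curve reproduce exactly the turning number. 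That $4$-chain construction, and the resulting identification of $a$-powers with turning numbers, is the substantive content you are treating as a black box.
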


For a Riemann surface $S$, we write $\Sk(S) := \Sk(S \times \R)$.
Recall that concatenation of the $\R$ factor makes $\Sk(S)$ an algebra, and, similarly, if $M$ is a 3-manifold and $S$ is a component of its boundary, then $\Sk(S)$ acts on $\Sk(M)$. 

Fix $\Sigma \subset T^*S$ a Lagrangian which bounds no (non-constant) holomorphic curves,
which can be arranged by choosing $\Sigma$
either to be an exact Lagrangian with Legendrian ends as in \cite{Shende-Treumann-Williams-Zaslow}\footnote{In \cite{Shende-Treumann-Williams-Zaslow}, the Legendrian ends were drawn in the cosphere bundle of a surface $S$. However, those ends projected to a neighborhood of certain marked points on $S$; if we puncture $S$ at these points to get $S^\circ$ and view $T^* S^\circ$ as having convex contact boundary, then we may isotope the ends to lie in the `vertical' part of the boundary, compatibly with our current conventions.}, or, as in the original \cite{Gaiotto-Moore-Neitzke-WKB}, 
considering holomorphic $S$ and $\Sigma$, and, if $\Omega$ is the holomorphic symplectic form on $T^*S$, taking the real symplectic form to be  $\mathrm{Re}(e^{i \theta} \Omega)$ where $\theta$ is generic. We show the following result in Section \ref{wall crossing lifts}.
\begin{theorem}\label{algebra structure}
    Suppose $\Sigma \subset T^*S$ bounds no nonconstant holomorphic curves and $\Sigma \to S$ is a degree $n$ cover.  Then 
    $$[\;\cdot\;]_{\Sigma}: \Sk_{a^n}(S) \to \Sk_{a}(\Sigma)$$ is an algebra homomorphism.  

    More generally, if $M$ is a 3-manifold, $S$ is a component of $\partial M$, and $L \subset T^*M$ is a Lagrangian eventually cylindrical on $\Sigma \subset T^* S$ then the map $\Sk_{a^n}(M) \to \Sk_a(L)$ is a map of $\Sk_{a^n}(S)$ modules,  where $\Sk_{a^n}(S)$ acts on $\Sk_a(L)$ through the algebra map $[\,\cdot\,]_{\Sigma}$. 
\end{theorem}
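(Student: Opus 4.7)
The plan is to prove the algebra statement by SFT-style neck stretching; the module case is then analogous. Given links $K_1, K_2 \subset S \times \R$ representing classes in $\Sk(S)$, I place them in disjoint slabs $S \times [1,2]$ and $S \times [-2,-1]$, so that the product $K_1 \cdot K_2 \in \Sk(S)$ is represented by the disjoint union. The conormals $N^\circ K_i$ then sit in the corresponding slabs of $T^*(S \times \R)$, and the Lagrangian $L = \Sigma \times \R$ is translation-invariant. By Definition \ref{main definition}, $[K_1 \cdot K_2]_\Sigma \in \Sk(L)$ is the coefficient of "once positively around the longitude" at each $N^\circ K_i$ in the skein-valued count of holomorphic curves in $(T^*(S\times\R);\, L \sqcup N^\circ K_1 \sqcup N^\circ K_2)$.

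I would then deform the almost complex structure so as to stretch the neck along the contact hypersurface coming from the unit cotangent bundle of the middle slice $S \times \{0\}$, converting the target into a broken geometry consisting of the two halves $T^*(S \times \R_{\geq 0})$ and $T^*(S \times \R_{\leq 0})$ joined by the symplectization of that hypersurface. Under SFT compactness, holomorphic curves limit to broken configurations: upper pieces with boundary on the upper half of $L$ together with $N^\circ K_1$; lower pieces with boundary on the lower half of $L$ together with $N^\circ K_2$; and intermediate pieces in the symplectization with boundary on the cylinder over $\Sigma$. The hypothesis that $\Sigma$ bounds no non-constant holomorphic curves, combined with the usual $\R$-rescaling argument for curves in a symplectization with translation-invariant Lagrangian boundary, forces all intermediate pieces to be trivial cylinders. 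Hence every contributing broken configuration is a pair of an upper and a lower piece matched along strands in $\Sigma$, and extracting the two longitude coefficients would give
\[
[K_1 \cdot K_2]_\Sigma \;=\; [K_1]_\Sigma \cdot [K_2]_\Sigma,
\]
where the right-hand product is concatenation in $L = \Sigma \times \R$.

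The main obstacle is to verify that these broken configurations truly assemble multiplicatively in the skein-valued setting. First, the brane data (spin structure, sign lines, and $4$-chains from Section \ref{sec: brane data}) must be chosen translation-invariant in the cylindrical region so as to restrict coherently to both halves. Second, one must check that the matching of boundary strands at $\Sigma$ in the gluing formula corresponds to ordinary concatenation in $\Sk(L)$: before the stretching limit, the matching strands become short vertical segments of the boundary link in $\Sigma \times \R$, placing the upper piece's boundary above the lower piece's, which is precisely the concatenation product. Third, one needs absence of further symplectization floors, which is again the hypothesis. For the relative statement, I would repeat the stretching along a slice $S \times \{t_0\}$ inside the cylindrical end $S \times \R_{>0}$ of $M$, with $t_0$ chosen above $K \subset M$ but below $K' \subset S \times \R_{>0}$; the upper piece produces $[K']_\Sigma$ and the lower piece produces $[K]_L$, and the two combine through the action of $\Sk(\Sigma)$ on $\Sk(L)$ by stacking in the cylindrical end, yielding the desired $\Sk_{a^n}(S)$-linearity.
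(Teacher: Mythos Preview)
Your SFT approach has a concrete setup problem: there is no separating contact-type hypersurface in $T^*(S \times \R)$ where you want one. The locus $\{r = 0\}$ is coisotropic but not of contact type (the radial Liouville field $p\cdot\partial_p$ is tangent to it), so standard neck-stretching with symplectization levels and Reeb asymptotics does not apply there; and ``the unit cotangent bundle of the middle slice $S\times\{0\}$'' is not a hypersurface in $T^*(S\times\R)$ under any natural reading. One can salvage the idea by simply elongating the $\R$-factor, but then the limit analysis reduces to the elementary projection argument the paper actually gives, without any of the gluing machinery you worry about.

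The paper's proof is two sentences. Take a split almost complex structure on $T^*(S\times\R) = T^*S \times T^*\R$; then the projection to $T^*\R$ is holomorphic. The Lagrangian $\Sigma \times \R$ projects to the zero section $0_\R$, while the conormals $N^\circ K_1$ and $N^\circ K_2$ project into disjoint strips of $T^*\R$ lying over the disjoint $\R$-intervals containing $K_1$ and $K_2$. Hence the holomorphic curves contributing to $[K_1 K_2]_\Sigma$ have correspondingly disjoint projections: each connected component meets at most one conormal, and a component with boundary only on $\Sigma\times\R$ projects to a constant in $T^*\R$, lies in a single slice $T^*S\times\{(t,0)\}$, and is then constant by the hypothesis on $\Sigma$. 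The curve count therefore splits as a disjoint union over the two $\R$-levels, which is exactly multiplication in $\Sk(\Sigma)$. The module statement is the same projection argument applied in the cylindrical end of $M$. No stretching, no broken-curve gluing, and no brane-data matching across a neck is required.
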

\begin{proof}
    The algebra structure concerns knots in $S \times \R$ whose projections to the $\R$ direction are disjoint.  The holomorphic curves involved in determining $[\, \cdot \,]_\Sigma$ then necessarily have correspondingly disjoint projections to $T^*\R$. 
\end{proof}

In fact, letting (totally real) traces of sufficiently slow Lagrangian isotopies play the role of the Lagrangian $L$ in Theorem \ref{algebra structure}, we also establish the following `wall crossing' formula in Section \ref{wall crossing lifts}: 

\begin{theorem} \label{general wall crossing}
Let $\phi_t\colon \Sigma \to T^*S$ be a path of Lagrangians interpolating between some $\Sigma_-$ and $\Sigma_+$.  
Then there is an invertible element $\Omega(\phi) \in \Sk(\Sigma)$ such that, for every $[K] \in \Sk(S)$, 
\begin{equation} \label{general skein mutation} 
[K]_{\Sigma_-}  \cdot \Omega(\phi) = \Omega(\phi) \cdot [K]_{\Sigma_+}
\end{equation} 
Moreover, $\Omega(\phi)$ is invariant under homotopy of the path $\phi$ with fixed endpoints. 
\end{theorem}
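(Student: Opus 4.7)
The plan is to realize $\Omega(\phi)$ as the skein-valued count of closed holomorphic curves with boundary on the (totally real) trace Lagrangian of $\phi$, and to derive \eqref{general skein mutation} from Theorem \ref{algebra structure} applied to $M = S \times \R$.  After replacing $\phi$ by a sufficiently slow reparametrization, the trace
\[
L_\phi \;\subset\; T^*(S \times \R) \;=\; T^*S \times T^*\R,
\]
equipped with brane data inherited from $\Sigma_\pm$, is a totally real submanifold cylindrical on $\Sigma_-$ as $t \to -\infty$ and on $\Sigma_+$ as $t \to +\infty$.  Via retraction to either cylindrical end I identify $\Sk(L_\phi) \cong \Sk(\Sigma)$, and define $\Omega(\phi) \in \Sk(L_\phi) \cong \Sk(\Sigma)$ to be the skein-valued sum of closed holomorphic curves in $(T^*(S\times\R);\, L_\phi)$, i.e.\ those with no added conormal component.

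For the exchange relation, I compute $[K]_{L_\phi}$ in two ways, exploiting that a knot $K$ may be slid freely along the $\R$-direction in $S\times\R$ without changing its class in $\Sk(S\times\R)$.  Placing $K$ in the cylindrical end at $t \to -\infty$ and applying the two-ended refinement of Theorem \ref{algebra structure} (using disjoint $\R$-projections exactly as in its proof), the curve contributions factor into those concentrated near $-\infty$, which assemble into $[K]_{\Sigma_-}$, and the remaining closed curves on $L_\phi$, which assemble into $\Omega(\phi)$; stacking in $\R$ gives $[K]_{L_\phi} = [K]_{\Sigma_-} \cdot \Omega(\phi)$.  The symmetric computation with $K$ pushed to $t \to +\infty$ yields $[K]_{L_\phi} = \Omega(\phi) \cdot [K]_{\Sigma_+}$, and equating the two gives \eqref{general skein mutation}.

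For homotopy invariance, a homotopy $\{\phi_s\}$ with fixed endpoints produces a smooth family $\{L_{\phi_s}\}$ of cobordisms, all cylindrical on $\Sigma_\pm$; deformation invariance of the skein-valued curve count \cite{bare} then gives $\Omega(\phi_0) = \Omega(\phi_1)$.  Invertibility follows by concatenating $\phi$ with its reverse $\bar\phi$ along the $\R$-direction: the resulting trace Lagrangian computes $\Omega(\phi) \cdot \Omega(\bar\phi)$ on the one hand, and, being a cobordism for a null-homotopic loop at $\Sigma_-$, equals $\Omega(\mathrm{const}) = 1$ on the other; the opposite product is handled identically.  The main technical obstacle will be ensuring that the slow trace Lagrangian falls within the curve-counting framework of \cite{bare} --- in particular that its moduli spaces remain compact and exhibit only the wall crossings already absorbed by the skein relations --- together with the two-ended refinement of Theorem \ref{algebra structure}, which requires controlling any holomorphic curves whose boundary components simultaneously reach both cylindrical ends and verifying that they fit into the bimodule picture underlying the stacking computation.
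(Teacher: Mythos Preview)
Your proposal is correct and follows essentially the same approach as the paper: define $\Omega(\phi)$ as the skein-valued curve count on the totally real trace of a sufficiently slow isotopy, and obtain \eqref{general skein mutation} by sliding the conormal of $K$ from one end to the other and invoking deformation invariance. The paper's substantive work is precisely the technical machinery you flag as the main obstacle: it slows the isotopy by inserting alternating \emph{constant} intervals, builds a split almost complex structure adapted to those intervals, and proves (via the maximum principle on the $T^*\R$ factor) that any connected $J$-holomorphic curve meeting an end region in fact lies in a single slice --- hence, since $\Sigma_\pm$ bound no curves, no bare curve reaches the ends and the factorization goes through cleanly. Your invertibility argument via concatenation with the reverse path is not made explicit in the paper's proof but is the natural one.
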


Furthermore, for the simplest such paths, we determine $\Omega(\phi)$ explicitly in Section \ref{(-1)disks}.  The result is formulated in terms of the exponentiated skein dilogarithm, which 
is a certain lift of the $q$-dilogarithm of Fock and Goncharov from the $\mathfrak{gl}(1)$ skein to the HOMFLYPT skein,  see Definition \ref{def : skein dilog}. 
The skein dilogarithm first appeared as the skein-valued count of curves ending on an Aganagic-Vafa brane in $\C^3$ \cite{Ekholm-Shende-unknot}.

\begin{theorem} \label{skein valued cluster}
    If $\phi_t(\Sigma)$ bounds no holomorphic disks away from $t=0$, and $\phi_0(\Sigma)$ bounds a single embedded index $-1$ holomorphic disk $D$, which is a transversely cut out holomorphic disk instance in the $1$-parameter family, see Section \ref{(-1)disks} for details, then $\Omega(\phi)$ is the exponentiated skein dilogarithm on $\partial D$. 
\end{theorem}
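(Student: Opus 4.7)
The plan is to compute $\Omega(\phi)$ directly from its geometric construction as a skein-valued curve count on the trace Lagrangian of $\phi_t$, and then reduce the computation to the Aganagic--Vafa local model in $\C^3$, where the corresponding count has already been identified with the exponentiated skein dilogarithm in \cite{Ekholm-Shende-unknot}. More precisely, following the construction that underlies Theorem \ref{general wall crossing}, I would work in $T^*(S \times \R_t)$ with $L$ the totally real trace of the sufficiently slow Lagrangian isotopy $\phi_t$, so that $L$ is asymptotic to $\Sigma_\pm \times \R$ at the $t \to \pm \infty$ ends, and $\Omega(\phi) \in \Sk(\Sigma)$ is extracted from the skein-valued count of holomorphic curves with boundary on $L$.

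Second, I would show by an SFT-type compactness argument that, under the hypotheses, every holomorphic curve with boundary on $L$ contributing nontrivially to $\Omega(\phi)$ is, after neck-stretching along $\partial D$, obtained by attaching a (possibly multiple) cover of the disk $D$ at time $t=0$ to components living in a cylindrical neighborhood of $\partial D$. The key inputs are: the hypothesis that $\phi_t(\Sigma)$ bounds no nonconstant disks for $t \neq 0$ (which prevents any SFT breaking at other times), transversality of $D$ in the one-parameter family (which controls its moduli), and standard energy/action estimates forcing curves to concentrate near $t=0$.

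Third, a Weinstein neighborhood argument should identify an open neighborhood of $D \cup L$ inside $T^*(S \times \R)$, together with all relevant brane data (spin structure, $4$-chain, sign lines, framing), with the standard neighborhood of an Aganagic--Vafa brane in $\C^3$ equipped with its canonical brane data. Applying the skein-valued curve count of \cite{Ekholm-Shende-unknot} then identifies the resulting contribution with the exponentiated skein dilogarithm on $\partial D$, and yields the stated formula for $\Omega(\phi)$.

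The hardest step will be the compactness and localization in step two: one must carefully exclude broken configurations with boundary components drifting onto $\phi_t(\Sigma)$ for $t$ away from $0$, and rule out higher-genus or multi-level SFT buildings that do not factor through multiple covers of $D$. A secondary technical point is the matching of brane data between the trace geometry and the Aganagic--Vafa local model, since the choices of $4$-chain and sign lines enter the skein-valued count directly; this, however, should be purely topological and follow from the conventions set up in Section \ref{sec: brane data}.
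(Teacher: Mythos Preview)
Your overall strategy---localize near $D$ and reduce to the Aganagic--Vafa/toric brane count of \cite{Ekholm-Shende-unknot}---matches the paper's, but the mechanism of reduction is genuinely different. The paper does \emph{not} use SFT neck-stretching or a Weinstein neighborhood identification. Instead it works entirely at the linearized level: it observes that $D$ sits as an embedded disk in the trace $\Phi(\Sigma\times\R)$, splits $T(T^*S)|_D = TD \oplus ND$, and computes the Maslov indices of the Lagrangian boundary condition along $\partial D$ (tangent index $2$, normal index $-2$, plus the trivial $T^*\R$ direction). The transversality hypothesis enters precisely here: the deformation $\partial_t\tilde\phi_t|_{t=0}$ spans the $1$-dimensional cokernel, and this deforms the trivial product boundary condition in the normal direction into one which, after Fourier expansion, is conjugate up to $\mathcal{O}(r^2)$ to the explicit $2\times 2$ rotating boundary condition of the basic disk on the toric brane. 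Since the multiple cover formula of \cite{Ekholm-Shende-unknot} is determined by this linearized normal-bundle data, the skein dilogarithm follows directly.

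Your route could likely be made rigorous, but there is a genuine technical point you have not addressed: the trace $L$ is only \emph{totally real}, not Lagrangian (the isotopy $\phi_t$ is non-exact, so the trace cannot be Lagrangian; this is exactly why the paper's Section on wall crossing builds the trace as a totally real submanifold with a carefully adapted almost complex structure). A standard Weinstein neighborhood theorem therefore does not apply, and identifying a neighborhood of $D\cup L$ symplectically with the AV brane in $\C^3$ requires additional argument. Also, ``neck-stretching along $\partial D$'' is not a well-defined SFT operation as written---one stretches along contact-type hypersurfaces, not along a curve in a boundary condition; you would need to stretch around a Darboux neighborhood of $D$ and control what breaks off. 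The paper's linearized approach sidesteps both issues entirely.
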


Theorem \ref{skein valued cluster} is the skein-quantized version of the fact that the simplest wall crossings of the charts $\Loc_1(\Sigma) \to \Loc_n(S)$ in \cite{Gaiotto-Moore-Neitzke-WKB} or \cite{Shende-Treumann-Williams-Zaslow} are cluster transformations.  A related result, showing that a certain algebra of boundary operators $A(\Sigma)$ transforms similarly,  has previously appeared in \cite{Scharitzer-Shende-cluster, Hu-Schrader-Zaslow}.

\section{The skein dilogarithm}

\subsection{Skein of the solid torus}
Consider a solid torus, $D^2 \times S^1$.  
Choose an orientation of the longitude, hence distinguishing an oriented meridian $P_{1,0}$ of the boundary which links the longitude positively.  
According to \cite{Hadji-Morton}, $P_{1,0}$ acts diagonally with distinct eigenvalues, one for each pair of partitions $(\lambda,\mu)$ equal to 
$$
\frac{a-a^{-1}}{q^{1/2}-q^{-1/2}}+
(q^{1/2}-q^{-1/2})\left(a C_\lambda(q)-a^{-1}C_\mu(q^{-1})\right),
$$ 
where $C_\lambda$ denotes the content polynomial of $\lambda$, on the $\Sk(D^2\times S^1)$ and correspondingly determines, up to scalar multiples, a basis $W_{\lambda, \mu}$ indexed by pairs of partitions $\lambda,  \mu$. 
A choice of framing of the longitude determines the scalars by requiring that, when completing the torus to the three-sphere, $W_{\lambda, \mu}$ goes to the appropriate colored HOMFLYPT invariant of the longitude-turned-unknot with the given framing.  Note that the framing of the longitude also determines a choice of longitude in the boundary of the solid torus.
Thus for $\Psi \in \Sk(D^2 \times S^1)$, we may expand:
\begin{equation} \label{torus skein expansion}
    \Psi = \sum \psi_{\lambda, \mu} W_{\lambda, \mu}
\end{equation}
The $W_\lambda:=W_{\lambda,\emptyset}$ are known to span the part of the skein generated by links whose tangent in the $S^1$ direction is everywhere positive; we refer to this as the \emph{positive skein} and denote it $\Sk_+(D^2\times S^1)$.  We write $\widehat{\Sk}(D^2 \times S^1)$ for the completion in which we allow infinite sums of links, so long as for any $N$, there are only finitely many whose winding around $S^1$ is bounded above by $N$. 

We will later want to consider the $a=1$ specialization of $\Sk_+(D^2\times S^1)$. 
As the eigenvalues of the meridian operator among the $W_{\lambda}$ remain distinct, these remain linearly independent.

\subsection{Skein dilogarithm}\label{ssec: skein dilog} 
Let $P_{1,0}$ and $P_{0,1}$ be the meridian and longitude of the boundary of the solid torus.   

\begin{defn}\label{def : skein dilog}
The \emph{exponentiated skein dilogarithm} $\Psi[\xi] \in \Q[\xi]\otimes \widehat{\Sk}_{+}(D^2 \times S^1)$ is the unique solution to the 3-term recurrence relation 
\begin{equation}\label{eq:dilog-recurrence}
(\bigcirc - P_{1,0} - a\xi P_{0,1}) \Psi[\xi] = 0
\end{equation}
of the form $\Psi[\xi] = 1 + \cdots$, i.e.\ the coefficient of $W_\emptyset$ is $1$. 
\end{defn}
Explicitly, 
\begin{equation}\label{eq:explicit-skein-dilog}
\Psi[\xi]:= \sum_{\lambda} \prod_{\square \in \lambda}\frac{-q^{-c(\square)/2}\xi}{q^{h(\square)/2} - q^{-h(\square)/2}} W_{\lambda} =  \exp\left( -\sum_d \frac{1}{d} \frac{\xi^d\, P_d}{q^{d/2}-q^{-d/2}} \right).
\end{equation}
For the existence and uniqueness of the solution, and the first expression for it, see \cite{Ekholm-Shende-unknot}.  
In the second expression, the $P_d$ are the images of the power sum symmetric functions under the identification of symmetric functions and $\Sk_+(D^2 \times S^1)$ determined by identifying $W_\lambda$ with the corresponding Schur function \cite{Aiston-powersum, Morton_powersum}. 
The equality of the two formulas can be derived by manipulation of symmetric functions, see e.g. \cite[Theorem 1.1 (d)]{Nakamura}. 
As is customary in the literature, we will often omit the word `exponentiated'; we always mean $\Psi[\xi]$, rather than its logarithm. 
Also, we will simply write $\Psi$ for $\Psi[1]$. 

The skein dilogarithm satisfies a relative version of the 3-term recurrence relation \eqref{eq:dilog-recurrence} in a thickened annulus with 2 marked points on the boundary, see \cite{Ekholm-Shende-unknot} or \cite[Equation (104)]{Nakamura}: 
\begin{equation}\label{eq:dilog-relative-recurrence}
\vcenter{\hbox{
\begin{tikzpicture}
\draw[very thick, blue, <-] (0, 0) [partial ellipse = 0 : 360 : 0.6];
\draw[white, line width=5] (0, 0.3) -- (0, 1);
\draw[very thick, ->] (0, 0.3) -- (0, 1);
\draw[very thick] (0, 0) circle (1);
\draw[very thick] (0, 0) circle (0.3);
\node[blue, right] at (1.0, 0){$\Psi$};
\end{tikzpicture}
}}
\;\;=\;\;
\vcenter{\hbox{
\begin{tikzpicture}
\draw[very thick, ->] (0, 0.3) -- (0, 1);
\draw[white, line width=5] (0, 0) [partial ellipse = 0 : 360 : 0.6];
\draw[very thick, blue, <-] (0, 0) [partial ellipse = 0 : 360 : 0.6];
\draw[very thick] (0, 0) circle (1);
\draw[very thick] (0, 0) circle (0.3);
\node[blue, right] at (1.0, 0){$\Psi$};
\end{tikzpicture}
}}
\;\;+\;\;
\vcenter{\hbox{
\begin{tikzpicture}
\draw[very thick, blue, <-] (0, 0) [partial ellipse = 0 : 360 : 0.5];
\draw[white, line width=5] (0, 0.3) to[out=90, in=150] ({0.7*1/2}, {0.7*sqrt(3)/2});
\draw[very thick] (0, 0.3) to[out=90, in=150] ({0.7*1/2}, {0.7*sqrt(3)/2});
\draw[very thick] (0, 0) [partial ellipse = 60 : -240 : 0.7];
\draw[very thick, ->] ({-0.7*1/2}, {0.7*sqrt(3)/2}) to[out=30, in=-90] (0, 1); 
\draw[very thick] (0, 0) circle (1);
\draw[very thick] (0, 0) circle (0.3);
\node[blue, right] at (1.0, 0){$\Psi$};
\end{tikzpicture}
}}
\;.
\end{equation}

The skein dilogarithm has an inverse:
\begin{equation}\label{eq:skein-dilog-inverse}
\Psi^{-1} = \sum_{\lambda} \prod_{\square \in \lambda}\frac{q^{c(\square)/2}}{q^{h(\square)/2} - q^{-h(\square)/2}} W_{\lambda}
\end{equation}
which satisfies an analogous 3-term recurrence relation
\begin{equation}
(\bigcirc - P_{-1,0} - a^{-1} P_{0,1}) \Psi^{-1} = 0 
\end{equation}
where $P_{-1,0}$ is the meridian with opposite orientation, and the corresponding relative version.

\section{Branched double covers from triangulations}

In this section we recall how to associate smooth double covers to certain decorated ideal triangulations of 3-manifolds, and how these can be understood as smoothings of certain singular double covers depending on the triangulation alone.  Then we define a skein module associated to the singular double cover, and identify conditions under which it maps to the skein module of a smoothing.

\subsection{Smooth covers from marked triangulations}\label{ssec:smoothcoversfromtriang}

Given any manifold $M$ containing a cooriented codimension two submanifold $\sigma \subset M$, there is a canonical double cover $\widetilde{M} \to M$ branched over $\sigma$.  In this context we will write $\widetilde{\sigma}$ for the preimage of $\sigma$.  

For triangulated $3$-manifolds $M$, we will define branch loci $\sigma\subset M$ by a patching local pieces in each tetrahedron of the triangulation. Recall that an ideal tetrahedron is a tetrahedron minus its vertices, and an ideal triangulation $\Delta$ of a noncompact 3-manifold $M$ is a stratification of $M$ by ideal tetrahedra. 
We will use the notation $\Delta^k$ to denote the set of $k$-cells in the ideal triangulation. 

\begin{definition}
    A {\em marking} of an ideal triangulation $\Delta$ of a 3-manifold $M$ is a choice of a pair of opposed edges on each tetrahedron in $\Delta$.
\end{definition}

A marking of a tetrahedron determines a tangle by the local rules in Figure \ref{fig:marked_tetrahedron_resolutions}, 
and hence an associated smooth branched double cover with topology $S^1\times D^2\stackrel{2 : 1}{\longrightarrow} D^3$.
We may alternatively specify a marking by choosing quadrilaterals which slice the tetrahedron in a way separating the tangle associated to the marking, as in Figure \ref{fig:tetrahedron-boundary-cycles}.

\begin{figure}
\centering
$\vcenter{\hbox{
\tdplotsetmaincoords{60}{80}
\begin{tikzpicture}[tdplot_main_coords]
\begin{scope}[scale = 0.8, tdplot_main_coords]
    \coordinate (o) at (0, 0, 0);
    \coordinate (a) at (0, 0, 3);
    \coordinate (a1) at (0, 0, -1);
    
    \coordinate (b) at ({2*sqrt(2)}, 0, -1);
    \coordinate (b1) at ({-2*sqrt(2)/3}, 0, 1/3);
    
    \coordinate (c) at ({-sqrt(2)}, {sqrt(6)}, -1);
    \coordinate (c1) at ({sqrt(2)/3}, {-sqrt(6)/3}, 1/3);
    
    \coordinate (d) at ({-sqrt(2)}, {-sqrt(6)}, -1);
    \coordinate (d1) at ({sqrt(2)/3}, {sqrt(6)/3}, 1/3);
    
    \coordinate (ab) at ({sqrt(2)}, 0, 1);
    \coordinate (ac) at ({-sqrt(2)/2}, {sqrt(6)/2}, 1);
    \coordinate (ad) at ({-sqrt(2)/2}, {-sqrt(6)/2}, 1);
    \coordinate (bc) at ({sqrt(2)/2}, {sqrt(6)/2}, -1);
    \coordinate (bd) at ({sqrt(2)/2}, {-sqrt(6)/2}, -1);
    \coordinate (cd) at ({-sqrt(2)}, 0, -1);

    \draw[very thick, dashed] (c) -- (d);
    
    \draw[red, very thick] (a1) .. controls (o) .. (c1);
    \draw[red, very thick] (b1) .. controls (o) .. (d1);

    \filldraw[red] (a1) circle (0.15em);
    \filldraw[red] (b1) circle (0.15em);
    \filldraw[red] (c1) circle (0.2em);
    \filldraw[red] (d1) circle (0.2em);

    \draw[very thick] (a) -- (b);
    \draw[very thick, blue] (a) -- (c);
    \draw[very thick] (a) -- (d);
    \draw[very thick] (b) -- (c);
    \draw[very thick, blue] (b) -- (d);

    \filldraw (a) circle (0.05em);
    \filldraw (b) circle (0.05em);
    \filldraw (c) circle (0.05em);
    \filldraw (d) circle (0.05em);

\end{scope}
\end{tikzpicture}
}}
\quad\quad
\vcenter{\hbox{
\tdplotsetmaincoords{60}{80}
\begin{tikzpicture}[tdplot_main_coords]
\begin{scope}[scale = 0.8, tdplot_main_coords]
    \coordinate (o) at (0, 0, 0);
    \coordinate (a) at (0, 0, 3);
    \coordinate (a1) at (0, 0, -1);
    
    \coordinate (b) at ({2*sqrt(2)}, 0, -1);
    \coordinate (b1) at ({-2*sqrt(2)/3}, 0, 1/3);
    
    \coordinate (c) at ({-sqrt(2)}, {sqrt(6)}, -1);
    \coordinate (c1) at ({sqrt(2)/3}, {-sqrt(6)/3}, 1/3);
    
    \coordinate (d) at ({-sqrt(2)}, {-sqrt(6)}, -1);
    \coordinate (d1) at ({sqrt(2)/3}, {sqrt(6)/3}, 1/3);
    
    \coordinate (ab) at ({sqrt(2)}, 0, 1);
    \coordinate (ac) at ({-sqrt(2)/2}, {sqrt(6)/2}, 1);
    \coordinate (ad) at ({-sqrt(2)/2}, {-sqrt(6)/2}, 1);
    \coordinate (bc) at ({sqrt(2)/2}, {sqrt(6)/2}, -1);
    \coordinate (bd) at ({sqrt(2)/2}, {-sqrt(6)/2}, -1);
    \coordinate (cd) at ({-sqrt(2)}, 0, -1);

    \draw[very thick, dashed] (c) -- (d);
    
    \draw[red, very thick] (a1) .. controls (o) .. (d1);
    \draw[red, very thick] (b1) .. controls (o) .. (c1);

    \filldraw[red] (a1) circle (0.15em);
    \filldraw[red] (b1) circle (0.15em);
    \filldraw[red] (c1) circle (0.2em);
    \filldraw[red] (d1) circle (0.2em);

    \draw[very thick] (a) -- (b);
    \draw[very thick] (a) -- (c);
    \draw[very thick, blue] (a) -- (d);
    \draw[very thick, blue] (b) -- (c);
    \draw[very thick] (b) -- (d);

    \filldraw (a) circle (0.05em);
    \filldraw (b) circle (0.05em);
    \filldraw (c) circle (0.05em);
    \filldraw (d) circle (0.05em);

\end{scope}
\end{tikzpicture}
}}
\quad\quad
\vcenter{\hbox{
\tdplotsetmaincoords{60}{80}
\begin{tikzpicture}[tdplot_main_coords]
\begin{scope}[scale = 0.8, tdplot_main_coords]
    \coordinate (o) at (0, 0, 0);
    \coordinate (a) at (0, 0, 3);
    \coordinate (a1) at (0, 0, -1);
    
    \coordinate (b) at ({2*sqrt(2)}, 0, -1);
    \coordinate (b1) at ({-2*sqrt(2)/3}, 0, 1/3);
    
    \coordinate (c) at ({-sqrt(2)}, {sqrt(6)}, -1);
    \coordinate (c1) at ({sqrt(2)/3}, {-sqrt(6)/3}, 1/3);
    
    \coordinate (d) at ({-sqrt(2)}, {-sqrt(6)}, -1);
    \coordinate (d1) at ({sqrt(2)/3}, {sqrt(6)/3}, 1/3);
    
    \coordinate (ab) at ({sqrt(2)}, 0, 1);
    \coordinate (ac) at ({-sqrt(2)/2}, {sqrt(6)/2}, 1);
    \coordinate (ad) at ({-sqrt(2)/2}, {-sqrt(6)/2}, 1);
    \coordinate (bc) at ({sqrt(2)/2}, {sqrt(6)/2}, -1);
    \coordinate (bd) at ({sqrt(2)/2}, {-sqrt(6)/2}, -1);
    \coordinate (cd) at ({-sqrt(2)}, 0, -1);

    \draw[very thick, dashed, blue] (c) -- (d);
    
    \draw[red, very thick] (a1) .. controls (o) .. (b1);
    \draw[white, line width=5] (c1) .. controls (o) .. (d1);
    \draw[red, very thick] (c1) .. controls (o) .. (d1);

    \filldraw[red] (a1) circle (0.15em);
    \filldraw[red] (b1) circle (0.15em);
    \filldraw[red] (c1) circle (0.2em);
    \filldraw[red] (d1) circle (0.2em);

    \draw[very thick, blue] (a) -- (b);
    \draw[very thick] (a) -- (c);
    \draw[very thick] (a) -- (d);
    \draw[very thick] (b) -- (c);
    \draw[very thick] (b) -- (d);

    \filldraw (a) circle (0.05em);
    \filldraw (b) circle (0.05em);
    \filldraw (c) circle (0.05em);
    \filldraw (d) circle (0.05em);

\end{scope}
\end{tikzpicture}
}}
$
\caption{Red tangle associated to the marked blue edges.}
\label{fig:marked_tetrahedron_resolutions}
\end{figure}
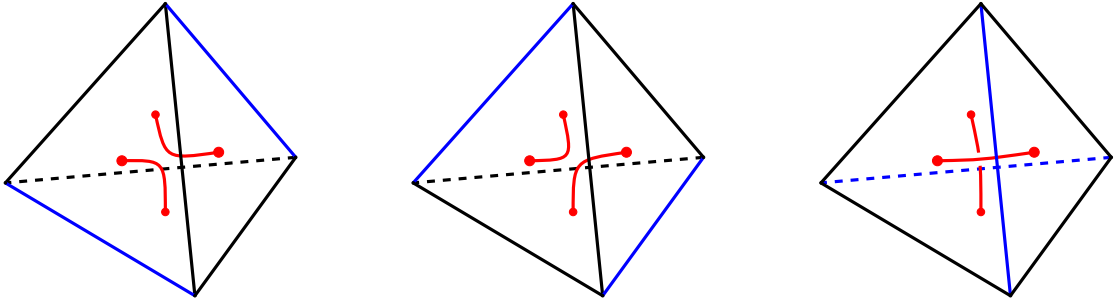

\begin{figure}
\centering
$
\vcenter{\hbox{
\tdplotsetmaincoords{60}{80}
\begin{tikzpicture}[scale=0.7, tdplot_main_coords]
\begin{scope}[scale = 0.8, tdplot_main_coords]
    \coordinate (o) at (0, 0, 0);
    \coordinate (a) at (0, 0, 3);
    \coordinate (a1) at (0, 0, -1);
    
    \coordinate (b) at ({2*sqrt(2)}, 0, -1);
    \coordinate (b1) at ({-2*sqrt(2)/3}, 0, 1/3);
    
    \coordinate (c) at ({-sqrt(2)}, {sqrt(6)}, -1);
    \coordinate (c1) at ({sqrt(2)/3}, {-sqrt(6)/3}, 1/3);
    
    \coordinate (d) at ({-sqrt(2)}, {-sqrt(6)}, -1);
    \coordinate (d1) at ({sqrt(2)/3}, {sqrt(6)/3}, 1/3);
    
    \coordinate (ab) at ({sqrt(2)}, 0, 1);
    \coordinate (ac) at ({-sqrt(2)/2}, {sqrt(6)/2}, 1);
    \coordinate (ad) at ({-sqrt(2)/2}, {-sqrt(6)/2}, 1);
    \coordinate (bc) at ({sqrt(2)/2}, {sqrt(6)/2}, -1);
    \coordinate (bd) at ({sqrt(2)/2}, {-sqrt(6)/2}, -1);
    \coordinate (cd) at ({-sqrt(2)}, 0, -1);
    
    \draw[very thick] (c) -- (d);

    \draw[very thick, blue] (ab) -- (bc) -- (cd) -- (ad) -- cycle;

    \draw[white, ultra thick] (a) -- (b);
    \draw[very thick] (a) -- (b);
    \draw[very thick] (a) -- (c);
    \draw[very thick] (a) -- (d);
    \draw[very thick] (b) -- (c);
    \draw[very thick] (b) -- (d);
    
    \filldraw (a) circle (0.05em);
    \filldraw (b) circle (0.05em);
    \filldraw (c) circle (0.05em);
    \filldraw (d) circle (0.05em);
\end{scope}
\end{tikzpicture}
}}
\quad
\quad
\vcenter{\hbox{
\tdplotsetmaincoords{60}{80}
\begin{tikzpicture}[scale=0.7, tdplot_main_coords]
\begin{scope}[scale = 0.8, tdplot_main_coords]
    \coordinate (o) at (0, 0, 0);
    \coordinate (a) at (0, 0, 3);
    \coordinate (a1) at (0, 0, -1);
    
    \coordinate (b) at ({2*sqrt(2)}, 0, -1);
    \coordinate (b1) at ({-2*sqrt(2)/3}, 0, 1/3);
    
    \coordinate (c) at ({-sqrt(2)}, {sqrt(6)}, -1);
    \coordinate (c1) at ({sqrt(2)/3}, {-sqrt(6)/3}, 1/3);
    
    \coordinate (d) at ({-sqrt(2)}, {-sqrt(6)}, -1);
    \coordinate (d1) at ({sqrt(2)/3}, {sqrt(6)/3}, 1/3);
    
    \coordinate (ab) at ({sqrt(2)}, 0, 1);
    \coordinate (ac) at ({-sqrt(2)/2}, {sqrt(6)/2}, 1);
    \coordinate (ad) at ({-sqrt(2)/2}, {-sqrt(6)/2}, 1);
    \coordinate (bc) at ({sqrt(2)/2}, {sqrt(6)/2}, -1);
    \coordinate (bd) at ({sqrt(2)/2}, {-sqrt(6)/2}, -1);
    \coordinate (cd) at ({-sqrt(2)}, 0, -1);
    
    \draw[very thick] (c) -- (d);

    \draw[very thick, blue] (ab) -- (ac) -- (cd) -- (bd) -- cycle;

    \draw[white, ultra thick] (a) -- (b);
    \draw[very thick] (a) -- (b);
    \draw[very thick] (a) -- (c);
    \draw[very thick] (a) -- (d);
    \draw[very thick] (b) -- (c);
    \draw[very thick] (b) -- (d);
    
    \filldraw (a) circle (0.05em);
    \filldraw (b) circle (0.05em);
    \filldraw (c) circle (0.05em);
    \filldraw (d) circle (0.05em);
\end{scope}
\end{tikzpicture}
}}
\quad
\vcenter{\hbox{
\tdplotsetmaincoords{60}{80}
\begin{tikzpicture}[scale=0.7, tdplot_main_coords]
\begin{scope}[scale = 0.8, tdplot_main_coords]
    \coordinate (o) at (0, 0, 0);
    \coordinate (a) at (0, 0, 3);
    \coordinate (a1) at (0, 0, -1);
    
    \coordinate (b) at ({2*sqrt(2)}, 0, -1);
    \coordinate (b1) at ({-2*sqrt(2)/3}, 0, 1/3);
    
    \coordinate (c) at ({-sqrt(2)}, {sqrt(6)}, -1);
    \coordinate (c1) at ({sqrt(2)/3}, {-sqrt(6)/3}, 1/3);
    
    \coordinate (d) at ({-sqrt(2)}, {-sqrt(6)}, -1);
    \coordinate (d1) at ({sqrt(2)/3}, {sqrt(6)/3}, 1/3);
    
    \coordinate (ab) at ({sqrt(2)}, 0, 1);
    \coordinate (ac) at ({-sqrt(2)/2}, {sqrt(6)/2}, 1);
    \coordinate (ad) at ({-sqrt(2)/2}, {-sqrt(6)/2}, 1);
    \coordinate (bc) at ({sqrt(2)/2}, {sqrt(6)/2}, -1);
    \coordinate (bd) at ({sqrt(2)/2}, {-sqrt(6)/2}, -1);
    \coordinate (cd) at ({-sqrt(2)}, 0, -1);
    
    \draw[very thick] (c) -- (d);

    \draw[very thick, blue] (ac) -- (ad) -- (bd) -- (bc) -- cycle;

    \draw[white, ultra thick] (a) -- (b);
    \draw[very thick] (a) -- (b);
    \draw[very thick] (a) -- (c);
    \draw[very thick] (a) -- (d);
    \draw[very thick] (b) -- (c);
    \draw[very thick] (b) -- (d);
    
    \filldraw (a) circle (0.05em);
    \filldraw (b) circle (0.05em);
    \filldraw (c) circle (0.05em);
    \filldraw (d) circle (0.05em);
\end{scope}
\end{tikzpicture}
}}
$
\caption{Boundaries of slicing quadrilaterals.}
\label{fig:tetrahedron-boundary-cycles}
\end{figure}
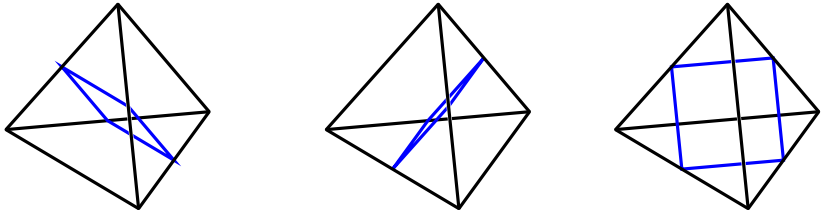

\subsection{Singular branched double covers}\label{ssec:brancheddoublecoversfoliations}
In fact, given an ideal triangulation of a 3-manifold $M$ without any additional structure,  there is a singular branched double cover $\widetilde{M}^{\mathrm{sing}}$ of $M$ canonically associated to it; see e.g.\ \cite{Cecotti-Cordova-Vafa, Freed-Neitzke}. 
In this section, we briefly review this construction. 

Let $M$ be a 3-manifold with an ideal triangulation $\Delta$.  We write $M^\circ\subset M$ for the complement of the barycenters of the tetrahedra in $\Delta$. 
We write $\tau \subset M$ for the singular tangle that consists of the edges ($1$-cells) of the polygonal decomposition dual to the triangulation, i.e.\ the edges which connect barycenters in adjacent tetrahedra of $\Delta$, passing through the barycenters of faces, see Figure \ref{fig:tetrahedron_branch_locus}. 

We write $\widetilde{M}^\circ$ for branched double cover of $M^\circ$ corresponding to the smooth tangle $\tau \cap M^\circ$. 
The topology of $\widetilde{M}^\circ$ near the barycenters is $T^2\times \R_{\ge 0}\subset \widetilde{M}^\circ$, where the torus $T^2$ double covers the sphere $S^2$ in the punctured neighborhood $\R_{\ge 0}\times S^2\subset M^\circ$ of the barycenter. 
We compactify $\widetilde{M}^\circ$ by gluing in a cone on the torus $T^2$ near each barycenter. We call the resulting singular space $\widetilde{M}^{\mathrm{sing}}$. Then $\widetilde{M}^{\mathrm{sing}}$ is smooth save for the isolated points (corresponding to the barycenters) where it is the cone on a torus and it comes with a branched double cover $\widetilde{M}^{\mathrm{sing}} \to M$. 
We write $\widetilde{\tau}\subset \widetilde{M}^{\mathrm{sing}}$ for the preimage of $\tau$ under this branched double cover. 

Consider a tetrahedron in $\Delta$ and consider inside this tetrahedron the triangles with the following
vertices: one vertex of the tetrahedron, the barycenter of a face adjacent to this tetrahedron vertex, and the barycenter of the tetrahedron. For each vertex of the tetrahedron there are three such triangles and therefore there are $4 \cdot 3 = 12$ such triangles in each tetrahedron, see Figure \ref{fig:tetrahedron_spectralnetwork}. In \cite{Freed-Neitzke}, the complex formed by all such triangles in $M$ is called a 3d spectral network in $M$, as an analogue in the 3-dimensional context of the spectral networks for surfaces \cite{Gaiotto-Moore-Neitzke-spectral}.

Consider next the complement of the 3d spectral network triangles in a tetrahedron of $\Delta$. It has six connected components, each one contains exactly one edge of the tetrahedron and exactly two of its ideal vertices in its boundary. We next consider the faces ($2$-cells) of the polygonal decomposition $\Delta^\vee$ dual to $\Delta$. In each tetrahedron these are the quadrilaterals depicted in Figure \ref{fig:tetrahedron_branchcut}. (We will use the $2$-skeleton $\Delta^\vee_{(2)}$ as a branch cut for the double cover $\widetilde{M}^{\mathrm{sing}}\to M$, see Definition \ref{defn: sheet labels}.) 
Each component of the complement of the 3d spectral network triangles retracts to one of the quadrilaterals in $\Delta^\vee_{(2)}$. In fact, it retracts along the lines of a foliation in which each line connects the two ideal vertices in the boundary of the complement component and intersects the quadrilateral exactly once. We extend the foliation across the 3d spectral network by allowing critical leaves: the critical leaves connect points on the branch locus to ideal vertices as follows: each edge in the branch locus is a boundary component of three of the spectral network triangles and the critical leaves through a point in the branch locus connects this point inside the network triangles to the ideal vertex which is also a vertex of the triangle, see Figure \ref{fig:tetrahedron_foliation} for an illustration of this 1-dimensional foliation.

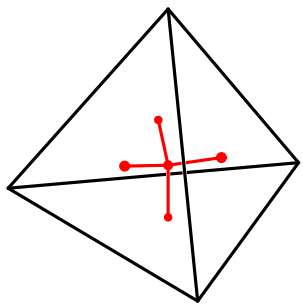
\begin{figure}
\centering
$\vcenter{\hbox{
\tdplotsetmaincoords{60}{80}
\begin{tikzpicture}[tdplot_main_coords]
\begin{scope}[scale = 0.8, tdplot_main_coords]
    \coordinate (o) at (0, 0, 0);
    \coordinate (a) at (0, 0, 3);
    \coordinate (a1) at (0, 0, -1);
    
    \coordinate (b) at ({2*sqrt(2)}, 0, -1);
    \coordinate (b1) at ({-2*sqrt(2)/3}, 0, 1/3);
    
    \coordinate (c) at ({-sqrt(2)}, {sqrt(6)}, -1);
    \coordinate (c1) at ({sqrt(2)/3}, {-sqrt(6)/3}, 1/3);
    
    \coordinate (d) at ({-sqrt(2)}, {-sqrt(6)}, -1);
    \coordinate (d1) at ({sqrt(2)/3}, {sqrt(6)/3}, 1/3);
    
    \coordinate (ab) at ({sqrt(2)}, 0, 1);
    \coordinate (ac) at ({-sqrt(2)/2}, {sqrt(6)/2}, 1);
    \coordinate (ad) at ({-sqrt(2)/2}, {-sqrt(6)/2}, 1);
    \coordinate (bc) at ({sqrt(2)/2}, {sqrt(6)/2}, -1);
    \coordinate (bd) at ({sqrt(2)/2}, {-sqrt(6)/2}, -1);
    \coordinate (cd) at ({-sqrt(2)}, 0, -1);
    
    \draw[very thick] (c) -- (d);

    \draw[white, ultra thick] (o) -- (a1);
    
    \draw[red, very thick] (o) -- (a1);
    \draw[red, very thick] (o) -- (b1);
    \draw[red, very thick] (o) -- (c1);
    \draw[red, very thick] (o) -- (d1);
    % \node at (a) {$a$};
    % \node at (b) {$b$};
    % \node at (c) {$c$};
    % \node at (d) {$d$};

    \filldraw[red] (o) circle (0.18em);
    \filldraw[red] (a1) circle (0.15em);
    \filldraw[red] (b1) circle (0.15em);
    \filldraw[red] (c1) circle (0.2em);
    \filldraw[red] (d1) circle (0.2em);

    \draw[white, ultra thick] (a) -- (b);
    \draw[very thick] (a) -- (b);
    \draw[very thick] (a) -- (c);
    \draw[very thick] (a) -- (d);
    \draw[very thick] (b) -- (c);
    \draw[very thick] (b) -- (d);
    
    \filldraw (a) circle (0.05em);
    \filldraw (b) circle (0.05em);
    \filldraw (c) circle (0.05em);
    \filldraw (d) circle (0.05em);
\end{scope}
\end{tikzpicture}
}}$
\caption{The branch locus for the singular branched double cover of a tetrahedron}
\label{fig:tetrahedron_branch_locus}
\end{figure}

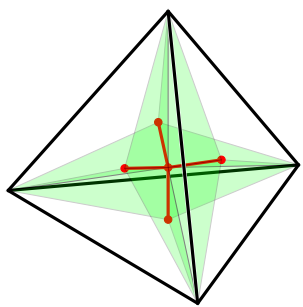
\begin{figure}
\centering
$\vcenter{\hbox{
\tdplotsetmaincoords{60}{80}
\begin{tikzpicture}[tdplot_main_coords]
\begin{scope}[scale = 0.8, tdplot_main_coords]
    \coordinate (o) at (0, 0, 0);
    \coordinate (a) at (0, 0, 3);
    \coordinate (a1) at (0, 0, -1);
    
    \coordinate (b) at ({2*sqrt(2)}, 0, -1);
    \coordinate (b1) at ({-2*sqrt(2)/3}, 0, 1/3);
    
    \coordinate (c) at ({-sqrt(2)}, {sqrt(6)}, -1);
    \coordinate (c1) at ({sqrt(2)/3}, {-sqrt(6)/3}, 1/3);
    
    \coordinate (d) at ({-sqrt(2)}, {-sqrt(6)}, -1);
    \coordinate (d1) at ({sqrt(2)/3}, {sqrt(6)/3}, 1/3);
    
    \coordinate (ab) at ({sqrt(2)}, 0, 1);
    \coordinate (ac) at ({-sqrt(2)/2}, {sqrt(6)/2}, 1);
    \coordinate (ad) at ({-sqrt(2)/2}, {-sqrt(6)/2}, 1);
    \coordinate (bc) at ({sqrt(2)/2}, {sqrt(6)/2}, -1);
    \coordinate (bd) at ({sqrt(2)/2}, {-sqrt(6)/2}, -1);
    \coordinate (cd) at ({-sqrt(2)}, 0, -1);
    
    \draw[very thick] (c) -- (d);

    \draw[white, ultra thick] (o) -- (a1);
    
    \filldraw [very thin, fill= green, opacity = 0.2] (o) -- (b1) -- (d) -- (o);
    \filldraw [very thin, fill= green, opacity = 0.2] (o) -- (b1) -- (c) -- (o);
    \filldraw [very thin, fill= green, opacity = 0.2] (o) -- (a1) -- (d) -- (o);
    \filldraw [very thin, fill= green, opacity = 0.2] (o) -- (a1) -- (c) -- (o);
    
    \filldraw [very thin, fill= green, opacity = 0.2] (o) -- (a1) -- (b) -- (o);
    \filldraw [very thin, fill= green, opacity = 0.2] (o) -- (b1) -- (a) -- (o);

    \filldraw [very thin, fill= green, opacity = 0.2] (o) -- (c1) -- (d) -- (o);
    \filldraw [very thin, fill= green, opacity = 0.2] (o) -- (d1) -- (c) -- (o);
    
    \draw[red, very thick] (o) -- (a1);
    \draw[red, very thick] (o) -- (b1);
    \draw[red, very thick] (o) -- (c1);
    \draw[red, very thick] (o) -- (d1);
    % \node at (a) {$a$};
    % \node at (b) {$b$};
    % \node at (c) {$c$};
    % \node at (d) {$d$};

    \filldraw[red] (o) circle (0.15em);
    \filldraw[red] (a1) circle (0.15em);
    \filldraw[red] (b1) circle (0.15em);
    \filldraw[red] (c1) circle (0.15em);
    \filldraw[red] (d1) circle (0.15em);

    \filldraw [very thin, fill= green, opacity = 0.2] (o) -- (c1) -- (b) -- (o);
    \filldraw [very thin, fill= green, opacity = 0.2] (o) -- (c1) -- (a) -- (o);
    \filldraw [very thin, fill= green, opacity = 0.2] (o) -- (d1) -- (b) -- (o);
    \filldraw [very thin, fill= green, opacity = 0.2] (o) -- (d1) -- (a) -- (o);

    \draw[white, ultra thick] (a) -- (b);
    \draw[very thick] (a) -- (b);
    \draw[very thick] (a) -- (c);
    \draw[very thick] (a) -- (d);
    \draw[very thick] (b) -- (c);
    \draw[very thick] (b) -- (d);
    
    \filldraw (a) circle (0.05em);
    \filldraw (b) circle (0.05em);
    \filldraw (c) circle (0.05em);
    \filldraw (d) circle (0.05em);
\end{scope}
\end{tikzpicture}
}}$
\caption{3d spectral network on a tetrahedron (twelve green triangles).}
\label{fig:tetrahedron_spectralnetwork}
\end{figure}

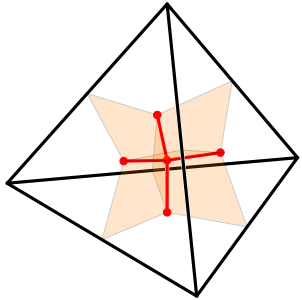
\begin{figure}
\centering
$\vcenter{\hbox{
\tdplotsetmaincoords{60}{80}
\begin{tikzpicture}[tdplot_main_coords]
\begin{scope}[scale = 0.8, tdplot_main_coords]
    \coordinate (o) at (0, 0, 0);
    \coordinate (a) at (0, 0, 3);
    \coordinate (a1) at (0, 0, -1);
    
    \coordinate (b) at ({2*sqrt(2)}, 0, -1);
    \coordinate (b1) at ({-2*sqrt(2)/3}, 0, 1/3);
    
    \coordinate (c) at ({-sqrt(2)}, {sqrt(6)}, -1);
    \coordinate (c1) at ({sqrt(2)/3}, {-sqrt(6)/3}, 1/3);
    
    \coordinate (d) at ({-sqrt(2)}, {-sqrt(6)}, -1);
    \coordinate (d1) at ({sqrt(2)/3}, {sqrt(6)/3}, 1/3);
    
    \coordinate (ab) at ({sqrt(2)}, 0, 1);
    \coordinate (ac) at ({-sqrt(2)/2}, {sqrt(6)/2}, 1);
    \coordinate (ad) at ({-sqrt(2)/2}, {-sqrt(6)/2}, 1);
    \coordinate (bc) at ({sqrt(2)/2}, {sqrt(6)/2}, -1);
    \coordinate (bd) at ({sqrt(2)/2}, {-sqrt(6)/2}, -1);
    \coordinate (cd) at ({-sqrt(2)}, 0, -1);
    
    \draw[very thick] (c) -- (d);

    \draw[white, ultra thick] (o) -- (a1);
    
    \filldraw [very thin, fill= orange, opacity = 0.2] (o) -- (a1) -- (cd) -- (b1) -- (o);
    \filldraw [very thin, fill= orange, opacity = 0.2] (o) -- (b1) -- (ac) -- (d1) -- (o);
    \filldraw [very thin, fill= orange, opacity = 0.2] (o) -- (b1) -- (ad) -- (c1) -- (o);
    \filldraw [very thin, fill= orange, opacity = 0.2] (o) -- (a1) -- (bc) -- (d1) -- (o);
    \filldraw [very thin, fill= orange, opacity = 0.2] (o) -- (a1) -- (bd) -- (c1) -- (o);
    
    \draw[red, very thick] (o) -- (a1);
    \draw[red, very thick] (o) -- (b1);
    \draw[red, very thick] (o) -- (c1);
    \draw[red, very thick] (o) -- (d1);

    \filldraw[red] (o) circle (0.15em);
    \filldraw[red] (a1) circle (0.15em);
    \filldraw[red] (b1) circle (0.15em);
    \filldraw[red] (c1) circle (0.15em);
    \filldraw[red] (d1) circle (0.15em);

    \filldraw [very thin, fill= orange, opacity = 0.2] (o) -- (c1) -- (ab) -- (d1) -- (o);

    \draw[white, ultra thick] (a) -- (b);
    \draw[very thick] (a) -- (b);
    \draw[very thick] (a) -- (c);
    \draw[very thick] (a) -- (d);
    \draw[very thick] (b) -- (c);
    \draw[very thick] (b) -- (d);
    
    \filldraw (a) circle (0.05em);
    \filldraw (b) circle (0.05em);
    \filldraw (c) circle (0.05em);
    \filldraw (d) circle (0.05em);
\end{scope}
\end{tikzpicture}
}}$
\caption{Leaf space / branch cuts for the branched cover (six orange quadrilaterals).}
\label{fig:tetrahedron_branchcut}
\end{figure}

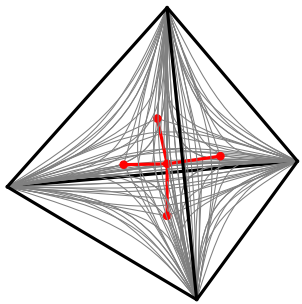
\begin{figure}
\centering
$\vcenter{\hbox{
\tdplotsetmaincoords{60}{80}
\begin{tikzpicture}[tdplot_main_coords]
\begin{scope}[scale = 0.8, tdplot_main_coords]
    \coordinate (o) at (0, 0, 0);
    
    \coordinate (a) at (0, 0, 3);
    \coordinate (a1) at (0, 0, -1);
    
    \coordinate (b) at ({2*sqrt(2)}, 0, -1);
    \coordinate (b1) at ({-2*sqrt(2)/3}, 0, 1/3);
    
    \coordinate (c) at ({-sqrt(2)}, {sqrt(6)}, -1);
    \coordinate (c1) at ({sqrt(2)/3}, {-sqrt(6)/3}, 1/3);
    
    \coordinate (d) at ({-sqrt(2)}, {-sqrt(6)}, -1);
    \coordinate (d1) at ({sqrt(2)/3}, {sqrt(6)/3}, 1/3);
    
    \coordinate (ab) at ({sqrt(2)}, 0, 1);
    \coordinate (ac) at ({-sqrt(2)/2}, {sqrt(6)/2}, 1);
    \coordinate (ad) at ({-sqrt(2)/2}, {-sqrt(6)/2}, 1);
    \coordinate (bc) at ({sqrt(2)/2}, {sqrt(6)/2}, -1);
    \coordinate (bd) at ({sqrt(2)/2}, {-sqrt(6)/2}, -1);
    \coordinate (cd) at ({-sqrt(2)}, 0, -1);
    
    \draw[very thick] (c) -- (d);

    \foreach \t in {0, 1/2, 2/2} { % c, d, b1
        \foreach \n in {1/2, 2/2} {
            \draw[gray] (c) .. controls ({\n*\t*(-2*sqrt(2)/3) + (1-\n)*(-sqrt(2))}, {\n*\t*(0) + (1-\n)*(sqrt(6)/3)}, {\n*\t*(1/3) + (1-\n)*(-1)}) and ({\n*\t*(-2*sqrt(2)/3) + (1-\n)*(-sqrt(2))}, {\n*\t*(0) + (1-\n)*(-sqrt(6)/3)}, {\n*\t*(1/3) + (1-\n)*(-1)}) .. (d);
        }
    }
    \foreach \t in {0, 1/2, 2/2} { % c, d, a1
        \foreach \n in {1/2, 2/2} {
            \draw[gray] (c) .. controls ({\n*\t*(0) + (1-\n)*(-sqrt(2))}, {\n*\t*(0) + (1-\n)*(sqrt(6)/3)}, {\n*\t*(-1) + (1-\n)*(-1)}) and ({\n*\t*(0) + (1-\n)*(-sqrt(2))}, {\n*\t*(0) + (1-\n)*(-sqrt(6)/3)}, {\n*\t*(-1) + (1-\n)*(-1)}) .. (d);
        }
    }

    \foreach \t in {0, 1/3, 2/3, 3/3} { % d, a1 and c, a1
        \draw[gray] (d) -- ({\t*(0)}, {\t*(0)}, {\t*(-1)});
        \draw[gray] (c) -- ({\t*(0)}, {\t*(0)}, {\t*(-1)});
    }
    \foreach \t in {0, 1/3, 2/3, 3/3} { % d, b1 and c, b1
        \draw[gray] (d) -- ({\t*(-2*sqrt(2)/3)}, {\t*(0)}, {\t*(1/3)});
        \draw[gray] (c) -- ({\t*(-2*sqrt(2)/3)}, {\t*(0)}, {\t*(1/3)});
    }

    \draw[red, very thick] (o) -- (a1);
    \draw[red, very thick] (o) -- (b1);

    \foreach \t in {0, 1/3, 2/3, 3/3} { % a, b1
        \draw[gray] (a) -- ({\t*(-2*sqrt(2)/3)}, {\t*(0)}, {\t*(1/3)});
    }
    \foreach \t in {0, 1/3, 2/3, 3/3} { % b, a1
        \draw[gray] (b) -- ({\t*(0)}, {\t*(0)}, {\t*(-1)});
    }

    \foreach \t in {0, 1/3, 2/3, 3/3} { % d, c1
        \draw[gray] (d) -- ({\t*(sqrt(2)/3)}, {\t*(-sqrt(6)/3)}, {\t*(1/3)});
    }
    \foreach \t in {0, 1/3, 2/3, 3/3} { % c, d1
        \draw[gray] (c) -- ({\t*(sqrt(2)/3)}, {\t*(sqrt(6)/3)}, {\t*(1/3)});
    }

    \foreach \t in {0, 1/2, 2/2} { % a, d, b1
        \foreach \n in {1/2, 2/2} {
            \draw[gray] (a) .. controls ({\n*\t*(-2*sqrt(2)/3) + (1-\n)*(-sqrt(2)/3)}, {\n*\t*(0) + (1-\n)*(-sqrt(6)/3)}, {\n*\t*(1/3) + (1-\n)*(5/3)}) and ({\n*\t*(-2*sqrt(2)/3) + (1-\n)*(-2*sqrt(2)/3)}, {\n*\t*(0) + (1-\n)*(-2*sqrt(6)/3)}, {\n*\t*(1/3) + (1-\n)*(1/3)}) .. (d);
        }
    }
    \foreach \t in {0, 1/2, 2/2} { % a, d, c1
        \foreach \n in {1/2, 2/2} {
            \draw[gray] (a) .. controls ({\n*\t*(sqrt(2)/3) + (1-\n)*(-sqrt(2)/3)}, {\n*\t*(-sqrt(6)/3) + (1-\n)*(-sqrt(6)/3)}, {\n*\t*(1/3) + (1-\n)*(5/3)}) and ({\n*\t*(sqrt(2)/3) + (1-\n)*(-2*sqrt(2)/3)}, {\n*\t*(-sqrt(6)/3) + (1-\n)*(-2*sqrt(6)/3)}, {\n*\t*(1/3) + (1-\n)*(1/3)}) .. (d);
        }
    }

    \foreach \t in {0, 1/2, 2/2} { % a, c, b1
        \foreach \n in {1/2, 2/2} {
            \draw[gray] (a) .. controls ({\n*\t*(-2*sqrt(2)/3) + (1-\n)*(-sqrt(2)/3)}, {\n*\t*(0) + (1-\n)*(sqrt(6)/3)}, {\n*\t*(1/3) + (1-\n)*(5/3)}) and ({\n*\t*(-2*sqrt(2)/3) + (1-\n)*(-2*sqrt(2)/3)}, {\n*\t*(0) + (1-\n)*(2*sqrt(6)/3)}, {\n*\t*(1/3) + (1-\n)*(1/3)}) .. (c);
        }
    }
    \foreach \t in {0, 1/2, 2/2} { % a, c, d1
        \foreach \n in {1/2, 2/2} {
            \draw[gray] (a) .. controls ({\n*\t*(sqrt(2)/3) + (1-\n)*(-sqrt(2)/3)}, {\n*\t*(sqrt(6)/3) + (1-\n)*(sqrt(6)/3)}, {\n*\t*(1/3) + (1-\n)*(5/3)}) and ({\n*\t*(sqrt(2)/3) + (1-\n)*(-2*sqrt(2)/3)}, {\n*\t*(sqrt(6)/3) + (1-\n)*(2*sqrt(6)/3)}, {\n*\t*(1/3) + (1-\n)*(1/3)}) .. (c);
        }
    }

    \foreach \t in {0, 1/2, 2/2} { % b, d, a1
        \foreach \n in {1/2, 2/2} {
            \draw[gray] (b) .. controls ({\n*\t*(0) + (1-\n)*(sqrt(2))}, {\n*\t*(0) + (1-\n)*(-sqrt(6)/3)}, {\n*\t*(-1) + (1-\n)*(-1)}) and ({\n*\t*(0) + (1-\n)*(0)}, {\n*\t*(0) + (1-\n)*(-2*sqrt(6)/3)}, {\n*\t*(-1) + (1-\n)*(-1)}) .. (d);
        }
    }
    \foreach \t in {0, 1/2, 2/2} { % b, d, c1
        \foreach \n in {1/2, 2/2} {
            \draw[gray] (b) .. controls ({\n*\t*(sqrt(2)/3) + (1-\n)*(sqrt(2))}, {\n*\t*(-sqrt(6)/3) + (1-\n)*(-sqrt(6)/3)}, {\n*\t*(1/3) + (1-\n)*(-1)}) and ({\n*\t*(sqrt(2)/3) + (1-\n)*(0)}, {\n*\t*(-sqrt(6)/3) + (1-\n)*(-2*sqrt(6)/3)}, {\n*\t*(1/3) + (1-\n)*(-1)}) .. (d);
        }
    }

    \foreach \t in {0, 1/2, 2/2} { % b, c, a1
        \foreach \n in {1/2, 2/2} {
            \draw[gray] (b) .. controls ({\n*\t*(0) + (1-\n)*(sqrt(2))}, {\n*\t*(0) + (1-\n)*(sqrt(6)/3)}, {\n*\t*(-1) + (1-\n)*(-1)}) and ({\n*\t*(0) + (1-\n)*(0)}, {\n*\t*(0) + (1-\n)*(2*sqrt(6)/3)}, {\n*\t*(-1) + (1-\n)*(-1)}) .. (c);
        }
    }
    \foreach \t in {0, 1/2, 2/2} { % b, c, d1
        \foreach \n in {1/2, 2/2} {
            \draw[gray] (b) .. controls ({\n*\t*(sqrt(2)/3) + (1-\n)*(sqrt(2))}, {\n*\t*(sqrt(6)/3) + (1-\n)*(sqrt(6)/3)}, {\n*\t*(1/3) + (1-\n)*(-1)}) and ({\n*\t*(sqrt(2)/3) + (1-\n)*(0)}, {\n*\t*(sqrt(6)/3) + (1-\n)*(2*sqrt(6)/3)}, {\n*\t*(1/3) + (1-\n)*(-1)}) .. (c);
        }
    }

    \foreach \t in {0, 1/3, 2/3, 3/3} { % a, c1 and b, c1
        \draw[gray] (a) -- ({\t*(sqrt(2)/3)}, {\t*(-sqrt(6)/3)}, {\t*(1/3)});
        \draw[gray] (b) -- ({\t*(sqrt(2)/3)}, {\t*(-sqrt(6)/3)}, {\t*(1/3)});
    }
    \foreach \t in {0, 1/3, 2/3, 3/3} { % a, d1 and b, d1
        \draw[gray] (a) -- ({\t*(sqrt(2)/3)}, {\t*(sqrt(6)/3)}, {\t*(1/3)});
        \draw[gray] (b) -- ({\t*(sqrt(2)/3)}, {\t*(sqrt(6)/3)}, {\t*(1/3)});
    }

    \draw[red, very thick] (o) -- (c1);
    \draw[red, very thick] (o) -- (d1);

    \filldraw[red] (o) circle (0.15em);
    \filldraw[red] (a1) circle (0.15em);
    \filldraw[red] (b1) circle (0.15em);

    \foreach \t in {0, 1/2, 2/2} { % a, b, c1
        \foreach \n in {1/2, 2/2} {
            \draw[gray] (a) .. controls ({\n*\t*(sqrt(2)/3) + (1-\n)*(2*sqrt(2)/3)}, {\n*\t*(-sqrt(6)/3) + (1-\n)*0},{\n*\t*(1/3) + (1-\n)*(5/3)}) and ({\n*\t*(sqrt(2)/3) + (1-\n)*(4*sqrt(2)/3)}, {\n*\t*(-sqrt(6)/3) + (1-\n)*0},{\n*\t*(1/3) + (1-\n)*(1/3)}) .. (b);
        }
    }
    \foreach \t in {0, 1/2, 2/2} { % a, b, d1
        \foreach \n in {1/2, 2/2} {
            \draw[gray] (a) .. controls ({\n*\t*(sqrt(2)/3) + (1-\n)*(2*sqrt(2)/3)}, {\n*\t*(sqrt(6)/3) + (1-\n)*0},{\n*\t*(1/3) + (1-\n)*(5/3)}) and ({\n*\t*(sqrt(2)/3) + (1-\n)*(4*sqrt(2)/3)}, {\n*\t*(sqrt(6)/3) + (1-\n)*0},{\n*\t*(1/3) + (1-\n)*(1/3)}) .. (b);
        }
    }

    \filldraw[red] (c1) circle (0.15em);
    \filldraw[red] (d1) circle (0.15em);

    \draw[very thick] (a) -- (b);
    \draw[very thick] (a) -- (c);
    \draw[very thick] (a) -- (d);
    \draw[very thick] (b) -- (c);
    \draw[very thick] (b) -- (d);
    
    \filldraw (a) circle (0.05em);
    \filldraw (b) circle (0.05em);
    \filldraw (c) circle (0.05em);
    \filldraw (d) circle (0.05em);
\end{scope}
\end{tikzpicture}
}}$
\caption{1-dimensional foliation on a tetrahedron}
\label{fig:tetrahedron_foliation}
\end{figure}

Consider the foliation inside a tetrahedron $\delta\in\Delta$ in a connected component $U_\delta\subset \delta\setminus\Delta_{(2)}^{\vee}$ of the complement of the branch cut. It admits two orientations, either all leaves are oriented away from the branch cut toward the ideal vertex or vice versa. We will lift these oriented foliations to the branched double cover. Consider two copies of $U_\delta$ that are the sheets of the double cover over $U_\delta$.
\begin{definition}[Sheet labeling convention]\label{defn: sheet labels}
In ``sheet 1'' over $U_\delta$, the leaves are oriented away from the branch cut. 
In ``sheet 2'' over $U_\delta$, they are oriented towards the branch cut.
\end{definition}
Note that as we go across the branch cut, the two sheets (hence the labels for these orientations) flip. 
See Figure \ref{fig:foliation_orientation_convention} for an illustration of this orientation convention. 
Consider the lift of the foliation in $M$ to the branched double cover $\widetilde{M}^{\mathrm{sing}}$. Note that the two orientations of the foliation in $M$ together give a genuine orientation of the lifted foliation in $\widetilde{M}^{\mathrm{sing}}$. 
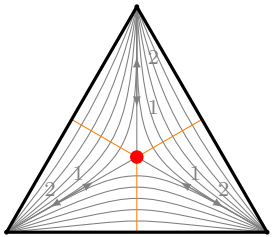
\begin{figure}
\centering
$\vcenter{\hbox{
\begin{tikzpicture}
\begin{scope}[scale = 2.0]
    \coordinate (o) at (0, 0);
    \coordinate (a) at (0, 1);
    \coordinate (a1) at (0, -1/2);
    \coordinate (b) at ({sqrt(3)/2}, -1/2);
    \coordinate (b1) at ({-sqrt(3)/4}, 1/4);
    \coordinate (c) at (-{sqrt(3)/2}, -1/2);
    \coordinate (c1) at ({sqrt(3)/4}, 1/4);

    \draw[gray] (a) -- (o);
    \draw[gray] (b) -- (o);
    \draw[gray] (c) -- (o);
    \foreach \n in {0, 1/5, 2/5, 3/5, 4/5} {
        \draw[gray] (a) .. controls ({1/(2*sqrt(3)) * \n}, {1/2 * \n}) and ({1/(sqrt(3)) * \n}, 0) .. (b);
        \draw[gray] (a) .. controls ({-1/(2*sqrt(3)) * \n}, {1/2 * \n}) and ({-1/(sqrt(3)) * \n}, 0) .. (c);
        \draw[gray] (b) .. controls ({-1/(2*sqrt(3)) * \n}, {-1/2 * \n}) and ({1/(2*sqrt(3)) * \n}, {-1/2 * \n}) .. (c);
    }

    \draw[gray, latex-latex] ({sqrt(3)/6}, -1/6) -- ({sqrt(3)/3}, -1/3);
    \node[gray, anchor = south] at ({sqrt(3)/6 + 0.1}, {-1/6 - 0.06}) {\tiny $1$};
    \node[gray, anchor = south] at ({sqrt(3)/3}, -1/3) {\tiny $2$};

    \draw[gray, latex-latex] ({-sqrt(3)/6}, -1/6) -- ({-sqrt(3)/3}, -1/3);
    \node[gray, anchor = south] at ({-sqrt(3)/6 - 0.1}, {-1/6 - 0.06}) {\tiny $1$};
    \node[gray, anchor = south] at ({-sqrt(3)/3}, -1/3) {\tiny $2$};

    \draw[gray, latex-latex] (0, 1/3) -- (0, 2/3);
    \node[gray, anchor = west] at (0, 1/3) {\tiny $1$};
    \node[gray, anchor = west] at (0, 2/3) {\tiny $2$};

    \draw[orange] (o) -- (a1);
    \draw[orange] (o) -- (b1);
    \draw[orange] (o) -- (c1);
    
    \draw[very thick] (a) -- (b);
    \draw[very thick] (a) -- (c);
    \draw[very thick] (b) -- (c);
    \filldraw[red] (o) circle (0.1em);
    \filldraw (a) circle (0.03em);
    \filldraw (b) circle (0.03em);
    \filldraw (c) circle (0.03em);
\end{scope}
\end{tikzpicture}
}}$
\caption{Orientation convention for the foliation, as seen on a boundary triangle of the tetrahedron}
\label{fig:foliation_orientation_convention}
\end{figure}

\subsection{Angle structures}
In this section we introduce the notions of angle groups and structures used to encode natural geometric structures on the 
the leaf spaces of the foliations discussed in Section \ref{ssec:brancheddoublecoversfoliations}. 

\begin{defn} \label{angle structure}
For a $3$-manifold $M$ with ideal triangulation $\Delta$, the \emph{angle group} $\mathbf{a}(\Delta)$ is the abelian group with generators a set of \emph{angle symbols}, one for each opposing pair of dihedral angles in each tetrahedron of $\Delta$ (see Figure \ref{fig:dihedral_angles}) and one auxiliary generator that we denote $\pi$,  subject to the following relations: 
\begin{itemize}
\begin{figure}\centering
$\vcenter{\hbox{
\tdplotsetmaincoords{60}{80}
\begin{tikzpicture}[tdplot_main_coords]
\begin{scope}[scale = 0.8, tdplot_main_coords]
    \coordinate (o) at (0, 0, 0);
    \coordinate (a) at (0, 0, 3);
    \coordinate (a1) at (0, 0, -1);
    \coordinate (b) at ({2*sqrt(2)}, 0, -1);
    \coordinate (b1) at ({-2*sqrt(2)/3}, 0, 1/3);
    \coordinate (c) at ({-sqrt(2)}, {sqrt(6)}, -1);
    \coordinate (c1) at ({sqrt(2)/3}, {-sqrt(6)/3}, 1/3);
    \coordinate (d) at ({-sqrt(2)}, {-sqrt(6)}, -1);    \coordinate (d1) at ({sqrt(2)/3}, {sqrt(6)/3}, 1/3);
    \coordinate (ab) at ({sqrt(2)}, 0, 1);
    \coordinate (ac) at ({-sqrt(2)/2}, {sqrt(6)/2}, 1);
    \coordinate (ad) at ({-sqrt(2)/2}, {-sqrt(6)/2}, 1);
    \coordinate (bc) at ({sqrt(2)/2}, {sqrt(6)/2}, -1);
    \coordinate (bd) at ({sqrt(2)/2}, {-sqrt(6)/2}, -1);
    \coordinate (cd) at ({-sqrt(2)}, 0, -1);
    \draw[very thick, dashed] (c) -- (d);
    \draw[white, ultra thick] (a) -- (b);
    \draw[very thick] (a) -- (b);
    \draw[very thick] (a) -- (c);
    \draw[very thick] (a) -- (d);
    \draw[very thick] (b) -- (c);
    \draw[very thick] (b) -- (d);
    \filldraw (a) circle (0.05em);
    \filldraw (b) circle (0.05em);
    \filldraw (c) circle (0.05em);
    \filldraw (d) circle (0.05em);
    \node[anchor = south east] at (ad) {$\theta'$};
    \node[anchor = south west] at (ac) {$\theta$};
    \node[anchor = north east] at (bd) {$\theta$};
    \node[anchor = north west] at (bc) {$\theta'$};
    \node[anchor = south west] at (ab) {$\theta''$};
    \node[anchor = north] at (cd) {$\theta''$};
\end{scope}
\end{tikzpicture}
}}$
\caption{Dihedral angles $\theta, \theta', \theta''$}
\label{fig:dihedral_angles}
\end{figure}
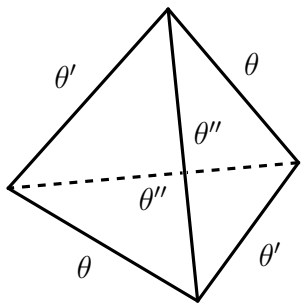
\item[$(1)$] The three angle symbols  $\theta$, $\theta'$, and $\theta''$ of any tetrahedron in $\Delta$ satisfies 
\[
\theta+\theta'+\theta''=\pi.
\]
\item[$(2)$] The sum of angle symbols around any internal edge is $2\pi$.
\item[$(2')$] In case $M$ has boundary, the sum of angle symbols around a boundary edge is $\pi$.
\end{itemize}
\end{defn}

In case $\Delta$ was an ideal hyperbolic triangulation, then specializing the angle symbols to the actual values of the angles gives a map $\mathbf{a}(\Delta) \to \R$ that takes the generator $\pi$ to the number $\pi$ and each angle symbol to some number in $(0, \pi)$. More generally, any map of the angle group with these properties is called an \emph{angle structure}.  Even more generally, any map $\mathbf{a}(\Delta) \to \R$ sending $\pi$ to $\pi$ is called a \emph{generalized angle structure}. 
If a generalized angle structure maps each angle symbol to $0$ or $\pi$, it is called a \emph{taut angle structure}. 

While the angle group may become trivial if some of the imposed relations are inconsistent, it is well-known, see e.g.\ \cite[Theorem 1]{Luo-Tillmann}, that any ideal triangulation of $M$ admits a generalized angle structure -- and hence has a non-trivial angle group -- if all cusps of $M$ are tori. 

We will write $\Z[\mathbf{a}(\Delta)]$ for the group ring of the angle group and denote the generators of this group ring as $a:= [\pi]$ and correspondingly $a^{\theta/\pi} := [\theta]$; $a$-powers then have the expected behavior of exponentials.

\subsection{An effectivity condition}\label{sec: combinatorial criterion for 1-form}
\begin{figure}
\centering
$\vcenter{\hbox{
\tdplotsetmaincoords{60}{65}
\begin{tikzpicture}[tdplot_main_coords]
\begin{scope}[scale = 0.8, tdplot_main_coords]
    \coordinate (o) at (0, 0, 0);
    \coordinate (a) at (0, 0, 3);
    \coordinate (a1) at (0, 0, -1);
    
    \coordinate (b) at ({2*sqrt(2)}, 0, -1);
    \coordinate (b1) at ({-2*sqrt(2)/3}, 0, 1/3);
    
    \coordinate (c) at ({-sqrt(2)}, {sqrt(6)}, -1);
    \coordinate (c1) at ({sqrt(2)/3}, {-sqrt(6)/3}, 1/3);
    
    \coordinate (d) at ({-sqrt(2)}, {-sqrt(6)}, -1);
    \coordinate (d1) at ({sqrt(2)/3}, {sqrt(6)/3}, 1/3);
    
    \coordinate (ab) at ({sqrt(2)}, 0, 1);
    \coordinate (ac) at ({-sqrt(2)/2}, {sqrt(6)/2}, 1);
    \coordinate (ad) at ({-sqrt(2)/2}, {-sqrt(6)/2}, 1);
    \coordinate (bc) at ({sqrt(2)/2}, {sqrt(6)/2}, -1);
    \coordinate (bd) at ({sqrt(2)/2}, {-sqrt(6)/2}, -1);
    \coordinate (cd) at ({-sqrt(2)}, 0, -1);

    \draw[red, very thick] (a1) .. controls (o) .. (c1);
    \draw[red, very thick] (b1) .. controls (o) .. (d1);

    \filldraw[red] (a1) circle (0.15em);
    \filldraw[red] (b1) circle (0.15em);
    \filldraw[red] (c1) circle (0.2em);
    \filldraw[red] (d1) circle (0.2em);

    \draw[blue, very thick] (a) -- (c);
    \draw[very thick] (a) -- (d);
    \draw[very thick] (b) -- (c);
    \draw[blue, very thick] (b) -- (d);
    \draw[very thick, dashed] (c) -- (d);

    \begin{scope}[thick, decoration={markings, mark=at position 0.5 with {\arrow{>}}}]
    \draw[very thick, darkgreen, postaction={decorate}] (ac) -- (ad);
    \draw[very thick, darkgreen, postaction={decorate}] (ad) -- (bd);
    \draw[very thick, darkgreen, postaction={decorate}] (bd) -- (bc);
    \draw[very thick, darkgreen, postaction={decorate}] (bc) -- (ac);
    \end{scope}
    \node[darkgreen, right] at ($1/2*(bc) + 1/2*(ac)$){$2$};
    \node[darkgreen, above] at ($1/2*(ac) + 1/2*(ad)$){$1$};
    \node[darkgreen, left] at ($1/2*(ad) + 1/2*(bd)$){$2$};
    \node[darkgreen, below] at ($1/2*(bd) + 1/2*(bc)$){$1$};

    \draw[very thick] (a) -- (b);

    \filldraw (a) circle (0.05em);
    \filldraw (b) circle (0.05em);
    \filldraw (c) circle (0.05em);
    \filldraw (d) circle (0.05em);

\end{scope}
\end{tikzpicture}
}}
\;\;\;=
\vcenter{\hbox{
\tdplotsetmaincoords{60}{65}
\begin{tikzpicture}[tdplot_main_coords]
\begin{scope}[scale = 0.8, tdplot_main_coords]
    \coordinate (o) at (0, 0, 0);
    \coordinate (a) at (0, 0, 3);
    \coordinate (a1) at (0, 0, -1);
    
    \coordinate (b) at ({2*sqrt(2)}, 0, -1);
    \coordinate (b1) at ({-2*sqrt(2)/3}, 0, 1/3);
    
    \coordinate (c) at ({-sqrt(2)}, {sqrt(6)}, -1);
    \coordinate (c1) at ({sqrt(2)/3}, {-sqrt(6)/3}, 1/3);
    
    \coordinate (d) at ({-sqrt(2)}, {-sqrt(6)}, -1);
    \coordinate (d1) at ({sqrt(2)/3}, {sqrt(6)/3}, 1/3);
    
    \coordinate (ab) at ({sqrt(2)}, 0, 1);
    \coordinate (ac) at ({-sqrt(2)/2}, {sqrt(6)/2}, 1);
    \coordinate (ad) at ({-sqrt(2)/2}, {-sqrt(6)/2}, 1);
    \coordinate (bc) at ({sqrt(2)/2}, {sqrt(6)/2}, -1);
    \coordinate (bd) at ({sqrt(2)/2}, {-sqrt(6)/2}, -1);
    \coordinate (cd) at ({-sqrt(2)}, 0, -1);

    \begin{scope}[thick, decoration={markings, mark=at position 0.5 with {\arrow{>}}}]
    \draw[very thick, darkgreen, postaction={decorate}] (bd) -- (cd);
    \draw[very thick, darkgreen, postaction={decorate}] (cd) -- (ac);
    \end{scope}

    \draw[white, line width=5] (a1) .. controls (o) .. (c1);
    \draw[white, line width=5] (b1) .. controls (o) .. (d1);
    \draw[red, very thick] (a1) .. controls (o) .. (c1);
    \draw[red, very thick] (b1) .. controls (o) .. (d1);

    \filldraw[red] (a1) circle (0.15em);
    \filldraw[red] (b1) circle (0.15em);
    \filldraw[red] (c1) circle (0.2em);
    \filldraw[red] (d1) circle (0.2em);

    \draw[white, line width=5] (ab) -- (bd);
    \draw[white, line width=5] (ac) -- (ab);
    \draw[very thick, darkgreen] (ac) -- ($1/2*(ac)+1/2*(cd)$);
    \draw[very thick, darkgreen] (bd) -- ($1/2*(bd)+1/2*(cd)$);
    
    \draw[blue, very thick] (a) -- (c);
    \draw[very thick] (a) -- (d);
    \draw[very thick] (b) -- (c);
    \draw[blue, very thick] (b) -- (d);
    \draw[very thick, dashed] (c) -- (d);

    \begin{scope}[thick, decoration={markings, mark=at position 0.5 with {\arrow{>}}}]
    \draw[very thick, darkgreen, postaction={decorate}] (ab) -- (bd);
    \draw[very thick, darkgreen, postaction={decorate}] (ac) -- (ab);
    \end{scope}
    \node[darkgreen, right] at ($1/2*(ac) + 1/2*(ab)$){$1$};
    \node[darkgreen, below] at ($1/2*(ab) + 1/2*(bd)$){$2$};
    \node[darkgreen, left] at ($1/2*(bd) + 1/2*(cd)$){$1$};
    \node[darkgreen, above] at ($1/2*(cd) + 1/2*(ac)$){$2$};

    \draw[very thick] (a) -- (b);

    \filldraw (a) circle (0.05em);
    \filldraw (b) circle (0.05em);
    \filldraw (c) circle (0.05em);
    \filldraw (d) circle (0.05em);

\end{scope}
\end{tikzpicture}
}}
$
\caption{Distinguished cycle (green) in the branched double cover associated to a marked ideal tetrahedron; the numbers indicate the sheet labels away from the standard branch cut (Figure \ref{fig:tetrahedron_branchcut}). These cycles remain the same if we reverse the orientation and flip the sheet labels at the same time. }
\label{fig:cycles-signed-taut-tetrahedra}
\end{figure}
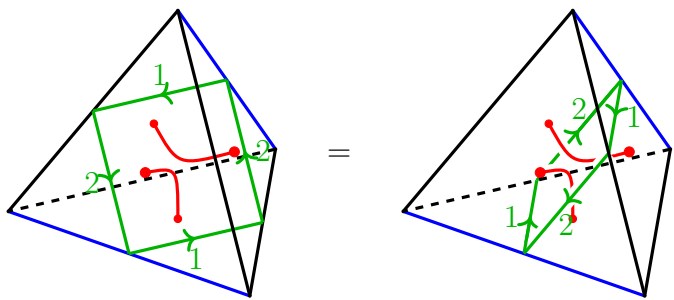

For each marked tetrahedron, there is a \emph{distinguished cycle} in the first homology of the corresponding branched double cover, see Figure \ref{fig:cycles-signed-taut-tetrahedra}.  Throughout this article, we will consider only covers satisfying the following: 

\begin{definition}[Effectivity condition]\label{defn: positivity condition}
We say that a marking on an ideal triangulation is {\em effective} if the distinguished cycles span a strictly convex cone in $H^1(\widetilde{M}, \R)$, or equivalently, if there exists a closed 1-form $\zeta$ on $\widetilde{M}$ that is positive on all distinguished cycles. 
\end{definition}

We will need this condition to define the following completion of the skein: 

\begin{definition}\label{defn: completed skein}
For a cover $\widetilde{M}$ associated to an effective marking, and some (any) choice of corresponding closed 1-form $\zeta$,
we write $\widehat{\Sk}^{\zeta}(\widetilde{M}; \widetilde{\sigma})$ for the completion of $\Sk(\widetilde{M}; \widetilde{\sigma})$ that consists of all infinite sums $\sum_{\alpha} \kappa_\alpha$,
$\kappa_\alpha\in \Sk(\widetilde{M},\widetilde{\sigma})$, such that if $[\kappa_\alpha]\in H_1(\widetilde{M})$ denotes the homology class represented by $\kappa_\alpha$ then for any $N>0$, $\int_{[\kappa_\alpha]}\zeta<N$ for only finitely many $\kappa_\alpha$.  
\end{definition}

We split the homology and cohomology of $\widetilde M$ by its characters for the involution $p$: 
\begin{eqnarray*}
H^1(\widetilde M, \R) & = & H^1(\widetilde M, \R)^{\mathrm{even}} \oplus H^1(\widetilde M, \R)^{\mathrm{odd}} \\ 
H_1(\widetilde M, \R) & = & H_1(\widetilde M, \R)^{\mathrm{even}} \oplus H_1(\widetilde M, \R)^{\mathrm{odd}}
\end{eqnarray*}
and we use similar notation for the corresponding splitting of 1-forms or 1-chains; i.e.
\[
\zeta^{\mathrm{odd}} := \frac{1}{2}(\zeta - p^*\zeta)\quad\text{and}\quad \zeta^{\mathrm{even}}:= \frac{1}{2}(\zeta + p^*\zeta).
\]
As always for Galois covers, the invariant co/homology is  identified with the co/homology of $M$ by pushforward or pullback.  A distinguished cycle has image contained in a tetrahedron of $M$, so has vanishing even class.  Thus if $\zeta$ is positive on all distinguished cycles, so is $\zeta^{\mathrm{odd}}$. 

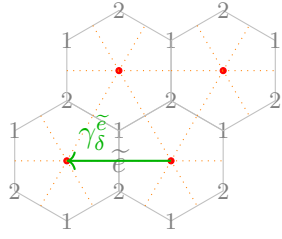
\begin{figure}
\centering
\[
\vcenter{\hbox{
\begin{tikzpicture}[scale=0.8]
\begin{scope}
    \filldraw[red] (0, 0) circle (0.05);
    \draw[lightgray] (0, 1) -- ({sqrt(3)/2}, 1/2) -- ({sqrt(3)/2}, -1/2) -- (0, -1) -- ({-sqrt(3)/2}, -1/2) -- ({-sqrt(3)/2}, 1/2) -- cycle;
    \draw[orange, dotted] ({-sqrt(3)/4}, -3/4) -- ({sqrt(3)/4}, 3/4);
    \draw[orange, dotted] ({-sqrt(3)/2}, 0) -- ({sqrt(3)/2}, 0);
    \draw[orange, dotted] ({-sqrt(3)/4}, 3/4) -- ({sqrt(3)/4}, -3/4);
    \node[gray] at ({sqrt(3)/2}, 1/2){\scriptsize $1$};
    \node[gray] at (0, 1){\scriptsize $2$};
    \node[gray] at ({-sqrt(3)/2}, 1/2){\scriptsize $1$};
    \node[gray] at ({-sqrt(3)/2}, -1/2){\scriptsize $2$};
    \node[gray] at (0, -1){\scriptsize $1$};
    \node[gray] at ({sqrt(3)/2}, -1/2){\scriptsize $2$};
\end{scope}
\begin{scope}[shift={({sqrt(3)}, 0)}]
    \filldraw[red] (0, 0) circle (0.05);
    \draw[lightgray] (0, 1) -- ({sqrt(3)/2}, 1/2) -- ({sqrt(3)/2}, -1/2) -- (0, -1) -- ({-sqrt(3)/2}, -1/2) -- ({-sqrt(3)/2}, 1/2) -- cycle;
    \draw[orange, dotted] ({-sqrt(3)/4}, -3/4) -- ({sqrt(3)/4}, 3/4);
    \draw[orange, dotted] ({-sqrt(3)/2}, 0) -- ({sqrt(3)/2}, 0);
    \draw[orange, dotted] ({-sqrt(3)/4}, 3/4) -- ({sqrt(3)/4}, -3/4);
    \node[gray] at ({sqrt(3)/2}, 1/2){\scriptsize $1$};
    \node[gray] at (0, 1){\scriptsize $2$};
    \node[gray] at (0, -1){\scriptsize $1$};
    \node[gray] at ({sqrt(3)/2}, -1/2){\scriptsize $2$};
\end{scope}
\begin{scope}[shift={({sqrt(3)/2}, 3/2)}]
    \filldraw[red] (0, 0) circle (0.05);
    \draw[lightgray] (0, 1) -- ({sqrt(3)/2}, 1/2) -- ({sqrt(3)/2}, -1/2) -- (0, -1) -- ({-sqrt(3)/2}, -1/2) -- ({-sqrt(3)/2}, 1/2) -- cycle;
    \draw[orange, dotted] ({-sqrt(3)/4}, -3/4) -- ({sqrt(3)/4}, 3/4);
    \draw[orange, dotted] ({-sqrt(3)/2}, 0) -- ({sqrt(3)/2}, 0);
    \draw[orange, dotted] ({-sqrt(3)/4}, 3/4) -- ({sqrt(3)/4}, -3/4);
    \node[gray] at ({sqrt(3)/2}, 1/2){\scriptsize $1$};
    \node[gray] at (0, 1){\scriptsize $2$};
    \node[gray] at ({-sqrt(3)/2}, 1/2){\scriptsize $1$};
\end{scope}
\begin{scope}[shift={({3*sqrt(3)/2}, 3/2)}]
    \filldraw[red] (0, 0) circle (0.05);
    \draw[lightgray] (0, 1) -- ({sqrt(3)/2}, 1/2) -- ({sqrt(3)/2}, -1/2) -- (0, -1) -- ({-sqrt(3)/2}, -1/2) -- ({-sqrt(3)/2}, 1/2) -- cycle;
    \draw[orange, dotted] ({-sqrt(3)/4}, -3/4) -- ({sqrt(3)/4}, 3/4);
    \draw[orange, dotted] ({-sqrt(3)/2}, 0) -- ({sqrt(3)/2}, 0);
    \draw[orange, dotted] ({-sqrt(3)/4}, 3/4) -- ({sqrt(3)/4}, -3/4);
    \node[gray] at ({sqrt(3)/2}, 1/2){\scriptsize $1$};
    \node[gray] at (0, 1){\scriptsize $2$};
    \node[gray] at ({sqrt(3)/2}, -1/2){\scriptsize $2$};
\end{scope}
\node[gray] at ({sqrt(3)/2}, 0){$\widetilde{e}$};
\draw[darkgreen, thick, <-] (0, 0) -- ({sqrt(3)}, 0);
\node[darkgreen, above left] at ({sqrt(3)/2}, 0){$\gamma_\delta^{\widetilde{e}}$};
\end{tikzpicture}
}}
\]
\caption{The 1-chain $\gamma_\delta^{\widetilde{e}} \in H_1(\widetilde{\delta}, \partial \widetilde{\sigma})$, drawn on the boundary, viewed from outside of $\widetilde{\delta}$.}
\label{fig:angular-1-chain}
\end{figure}

To an edge $\widetilde{e}$ of the double cover $\widetilde{\delta}$ of a marked tetrahedron $\delta \in \Delta^3$,
we associate a 1-chain $\gamma_\delta^{\widetilde{e}}$ in $\widetilde{\delta}$ connecting the two barycenters of the adjacent faces, oriented so that the puncture on sheet 1 (resp., 2) is to the right (resp., left); see Figure \ref{fig:angular-1-chain}. 
We will be interested in the periods 
\[
x_\delta^{\widetilde{e}} := \int_{\gamma_{\delta}^{\widetilde{e}}} \zeta^{\mathrm{odd}}. 
\]
If $\widetilde{e}_1$ and $\widetilde{e}_2$ are two edges of $\widetilde{\delta}$ that project down to the same edge $e$ of $\delta$, then 
$\gamma_\delta^{\widetilde{e}_1} + \gamma_\delta^{\widetilde{e}_2}$ is a cycle, see Figure \ref{fig:1-cycle-gamma}. 
This cycle is odd: $p( \gamma_\delta^{\widetilde{e}_1})= -\gamma_\delta^{\widetilde{e}_2}$. 
It follows that 
\[
\int_{\gamma_\delta^{\widetilde{e}_1} + \gamma_\delta^{\widetilde{e}_2}} \zeta = 
\int_{\gamma_\delta^{\widetilde{e}_1}} (\zeta -p^*\zeta) = 2 x_\delta^{\widetilde{e}_1} = 2x_\delta^{\widetilde{e}_2}. 
\]
Since $x_\delta^{\widetilde{e}_1} = x_\delta^{\widetilde{e}_2}$, we simply write $x_\delta^{e}$ for this common value. 

\begin{figure}
\centering
\[
\vcenter{\hbox{
\begin{tikzpicture}[scale=0.8]
\begin{scope}
    \filldraw[red] (0, 0) circle (0.05);
    \draw[lightgray] (0, 1) -- ({sqrt(3)/2}, 1/2) -- ({sqrt(3)/2}, -1/2) -- (0, -1) -- ({-sqrt(3)/2}, -1/2) -- ({-sqrt(3)/2}, 1/2) -- cycle;
    \draw[orange, dotted] ({-sqrt(3)/4}, -3/4) -- ({sqrt(3)/4}, 3/4);
    \draw[orange, dotted] ({-sqrt(3)/2}, 0) -- ({sqrt(3)/2}, 0);
    \draw[orange, dotted] ({-sqrt(3)/4}, 3/4) -- ({sqrt(3)/4}, -3/4);
    \node[gray] at ({sqrt(3)/2}, 1/2){\scriptsize $1$};
    \node[gray] at (0, 1){\scriptsize $2$};
    \node[gray] at ({-sqrt(3)/2}, 1/2){\scriptsize $1$};
    \node[gray] at ({-sqrt(3)/2}, -1/2){\scriptsize $2$};
    \node[gray] at (0, -1){\scriptsize $1$};
    \node[gray] at ({sqrt(3)/2}, -1/2){\scriptsize $2$};
    \node[gray] at ({-sqrt(3)/2}, 0){$\widetilde{e}_2$};
    \node[gray] at ({sqrt(3)/2}, 0){$\widetilde{e}_1$};
\end{scope}
\begin{scope}[shift={({sqrt(3)}, 0)}]
    \filldraw[red] (0, 0) circle (0.05);
    \draw[lightgray] (0, 1) -- ({sqrt(3)/2}, 1/2) -- ({sqrt(3)/2}, -1/2) -- (0, -1) -- ({-sqrt(3)/2}, -1/2) -- ({-sqrt(3)/2}, 1/2) -- cycle;
    \draw[orange, dotted] ({-sqrt(3)/4}, -3/4) -- ({sqrt(3)/4}, 3/4);
    \draw[orange, dotted] ({-sqrt(3)/2}, 0) -- ({sqrt(3)/2}, 0);
    \draw[orange, dotted] ({-sqrt(3)/4}, 3/4) -- ({sqrt(3)/4}, -3/4);
    \node[gray] at ({sqrt(3)/2}, 1/2){\scriptsize $1$};
    \node[gray] at (0, 1){\scriptsize $2$};
    \node[gray] at (0, -1){\scriptsize $1$};
    \node[gray] at ({sqrt(3)/2}, -1/2){\scriptsize $2$};
    \node[gray] at ({sqrt(3)/2}, 0){$\widetilde{e}_2$};
\end{scope}
\begin{scope}[shift={({sqrt(3)/2}, 3/2)}]
    \filldraw[red] (0, 0) circle (0.05);
    \draw[lightgray] (0, 1) -- ({sqrt(3)/2}, 1/2) -- ({sqrt(3)/2}, -1/2) -- (0, -1) -- ({-sqrt(3)/2}, -1/2) -- ({-sqrt(3)/2}, 1/2) -- cycle;
    \draw[orange, dotted] ({-sqrt(3)/4}, -3/4) -- ({sqrt(3)/4}, 3/4);
    \draw[orange, dotted] ({-sqrt(3)/2}, 0) -- ({sqrt(3)/2}, 0);
    \draw[orange, dotted] ({-sqrt(3)/4}, 3/4) -- ({sqrt(3)/4}, -3/4);
    \node[gray] at ({sqrt(3)/2}, 1/2){\scriptsize $1$};
    \node[gray] at (0, 1){\scriptsize $2$};
    \node[gray] at ({-sqrt(3)/2}, 1/2){\scriptsize $1$};
\end{scope}
\begin{scope}[shift={({3*sqrt(3)/2}, 3/2)}]
    \filldraw[red] (0, 0) circle (0.05);
    \draw[lightgray] (0, 1) -- ({sqrt(3)/2}, 1/2) -- ({sqrt(3)/2}, -1/2) -- (0, -1) -- ({-sqrt(3)/2}, -1/2) -- ({-sqrt(3)/2}, 1/2) -- cycle;
    \draw[orange, dotted] ({-sqrt(3)/4}, -3/4) -- ({sqrt(3)/4}, 3/4);
    \draw[orange, dotted] ({-sqrt(3)/2}, 0) -- ({sqrt(3)/2}, 0);
    \draw[orange, dotted] ({-sqrt(3)/4}, 3/4) -- ({sqrt(3)/4}, -3/4);
    \node[gray] at ({sqrt(3)/2}, 1/2){\scriptsize $1$};
    \node[gray] at (0, 1){\scriptsize $2$};
    \node[gray] at ({sqrt(3)/2}, -1/2){\scriptsize $2$};
\end{scope}
\draw[darkgreen, thick, <-] ({-sqrt(3)/4}, 3/4) -- ({sqrt(3)*7/4}, 3/4);
\end{tikzpicture}
}}
\;\;=\;\;
\vcenter{\hbox{
\begin{tikzpicture}[scale=0.8]
\begin{scope}
    \filldraw[red] (0, 0) circle (0.05);
    \draw[lightgray] (0, 1) -- ({sqrt(3)/2}, 1/2) -- ({sqrt(3)/2}, -1/2) -- (0, -1) -- ({-sqrt(3)/2}, -1/2) -- ({-sqrt(3)/2}, 1/2) -- cycle;
    \draw[orange, dotted] ({-sqrt(3)/4}, -3/4) -- ({sqrt(3)/4}, 3/4);
    \draw[orange, dotted] ({-sqrt(3)/2}, 0) -- ({sqrt(3)/2}, 0);
    \draw[orange, dotted] ({-sqrt(3)/4}, 3/4) -- ({sqrt(3)/4}, -3/4);
    \node[gray] at ({sqrt(3)/2}, 1/2){\scriptsize $1$};
    \node[gray] at (0, 1){\scriptsize $2$};
    \node[gray] at ({-sqrt(3)/2}, 1/2){\scriptsize $1$};
    \node[gray] at ({-sqrt(3)/2}, -1/2){\scriptsize $2$};
    \node[gray] at (0, -1){\scriptsize $1$};
    \node[gray] at ({sqrt(3)/2}, -1/2){\scriptsize $2$};
    \node[gray] at ({-sqrt(3)/2}, 0){$\widetilde{e}_2$};
    \node[gray] at ({sqrt(3)/2}, 0){$\widetilde{e}_1$};
\end{scope}
\begin{scope}[shift={({sqrt(3)}, 0)}]
    \filldraw[red] (0, 0) circle (0.05);
    \draw[lightgray] (0, 1) -- ({sqrt(3)/2}, 1/2) -- ({sqrt(3)/2}, -1/2) -- (0, -1) -- ({-sqrt(3)/2}, -1/2) -- ({-sqrt(3)/2}, 1/2) -- cycle;
    \draw[orange, dotted] ({-sqrt(3)/4}, -3/4) -- ({sqrt(3)/4}, 3/4);
    \draw[orange, dotted] ({-sqrt(3)/2}, 0) -- ({sqrt(3)/2}, 0);
    \draw[orange, dotted] ({-sqrt(3)/4}, 3/4) -- ({sqrt(3)/4}, -3/4);
    \node[gray] at ({sqrt(3)/2}, 1/2){\scriptsize $1$};
    \node[gray] at (0, 1){\scriptsize $2$};
    \node[gray] at (0, -1){\scriptsize $1$};
    \node[gray] at ({sqrt(3)/2}, -1/2){\scriptsize $2$};
    \node[gray] at ({sqrt(3)/2}, 0){$\widetilde{e}_2$};
\end{scope}
\begin{scope}[shift={({sqrt(3)/2}, 3/2)}]
    \filldraw[red] (0, 0) circle (0.05);
    \draw[lightgray] (0, 1) -- ({sqrt(3)/2}, 1/2) -- ({sqrt(3)/2}, -1/2) -- (0, -1) -- ({-sqrt(3)/2}, -1/2) -- ({-sqrt(3)/2}, 1/2) -- cycle;
    \draw[orange, dotted] ({-sqrt(3)/4}, -3/4) -- ({sqrt(3)/4}, 3/4);
    \draw[orange, dotted] ({-sqrt(3)/2}, 0) -- ({sqrt(3)/2}, 0);
    \draw[orange, dotted] ({-sqrt(3)/4}, 3/4) -- ({sqrt(3)/4}, -3/4);
    \node[gray] at ({sqrt(3)/2}, 1/2){\scriptsize $1$};
    \node[gray] at (0, 1){\scriptsize $2$};
    \node[gray] at ({-sqrt(3)/2}, 1/2){\scriptsize $1$};
\end{scope}
\begin{scope}[shift={({3*sqrt(3)/2}, 3/2)}]
    \filldraw[red] (0, 0) circle (0.05);
    \draw[lightgray] (0, 1) -- ({sqrt(3)/2}, 1/2) -- ({sqrt(3)/2}, -1/2) -- (0, -1) -- ({-sqrt(3)/2}, -1/2) -- ({-sqrt(3)/2}, 1/2) -- cycle;
    \draw[orange, dotted] ({-sqrt(3)/4}, -3/4) -- ({sqrt(3)/4}, 3/4);
    \draw[orange, dotted] ({-sqrt(3)/2}, 0) -- ({sqrt(3)/2}, 0);
    \draw[orange, dotted] ({-sqrt(3)/4}, 3/4) -- ({sqrt(3)/4}, -3/4);
    \node[gray] at ({sqrt(3)/2}, 1/2){\scriptsize $1$};
    \node[gray] at (0, 1){\scriptsize $2$};
    \node[gray] at ({sqrt(3)/2}, -1/2){\scriptsize $2$};
\end{scope}
\draw[darkgreen, thick, <-] (0, 0) -- ({sqrt(3)}, 0);
\draw[darkgreen, thick] (0, 0) -- ({-sqrt(3)/2}, 0);
\draw[darkgreen, thick, <-] ({sqrt(3)}, 0) -- ({sqrt(3)*3/2}, 0);
\end{tikzpicture}
}}
\]
\caption{
$\gamma_\delta^{\widetilde{e}_1} + \gamma_\delta^{\widetilde{e}_2}$ is the distinguished cycle if $e$ is marked.
}
\label{fig:1-cycle-gamma}
\end{figure}
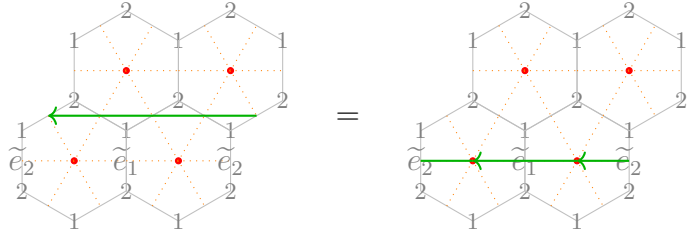

\begin{lemma}\label{lem: gamma relations}
    If three edges $\widetilde{e}, \widetilde{e}', \widetilde{e}''$ of $\widetilde{\delta}$ are adjacent to the same vertex, then $\gamma_{\delta}^{\widetilde{e}}+\gamma_{\delta}^{\widetilde{e}'}+\gamma_{\delta}^{\widetilde{e}''}$ is homologous to zero. 
    Moreover, fixing a lift $\widetilde{e}$ of an edge $e \in \Delta^1$, the sum (with multiplicity)
    \begin{equation}\label{eq:gluing-equation 1-chains}
\sum_{\delta\text{ {\rm abutting} }e}\gamma_{\delta}^{\widetilde{e}}
\end{equation}
    is also homologous to zero. 
\end{lemma}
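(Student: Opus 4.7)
The plan is to prove both parts by exhibiting each cycle as the boundary of a 2-chain in $\widetilde{M}$ obtained by lifting a 2-disk from the base.

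For Part 1, fix the common vertex $v\in\delta$ of the three edges. The three face barycenters adjacent to $v$ span a triangle on $\partial\delta\cong S^2$, which bounds a 2-disk $D\subset\delta$ lying on the side of this triangle containing $v$; I further push $D$ slightly into the interior of $\delta$ so that its interior is disjoint from the branch locus, touching it only at its three corners. Since the double cover is locally modeled on $z\mapsto z^2$ at each corner and trivial elsewhere on $D$, the disk $D$ admits a unique lift $\widetilde D\subset\widetilde\delta$ near the chosen vertex lift $\widetilde v$. With the sheet conventions of Definition \ref{defn: sheet labels} and the orientation convention of Figure \ref{fig:foliation_orientation_convention}, the boundary $\partial\widetilde D$ decomposes (up to a global sign) as the sum $\gamma_{\delta}^{\widetilde e}+\gamma_{\delta}^{\widetilde e'}+\gamma_{\delta}^{\widetilde e''}$ of the three chains associated to the edges adjacent to $\widetilde v$, so this sum is null-homologous in $\widetilde\delta\subset\widetilde M$.

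For Part 2, observe that the face barycenters of the internal faces in the book of tetrahedra around $e$ form a $k$-gon $P_e\subset M$ (where $k$ counts, with multiplicity, the tetrahedron-edge incidences at $e$), each of whose arcs is the projection of $\gamma_\delta^{\widetilde e}$ for one tetrahedron $\delta$ abutting $e$. The loop $P_e$ encircles $e$ once, so it is homologous in $M$ to a small meridian of $e$; explicitly, coning $P_e$ radially inward onto a small meridian disk of $e$ produces a 2-chain $C\subset M$ with $\partial C=P_e$, chosen to meet the branch locus only along $\partial C$. Lifting $C$ to the 2-chain $\widetilde C\subset\widetilde M$ compatible with the specified lift $\widetilde e$, the boundary $\partial\widetilde C$ is precisely $\sum_{\delta\text{ abutting }e}\gamma_\delta^{\widetilde e}$, which is therefore null-homologous.

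The principal technical point in both parts is the orientation and sheet-label bookkeeping. The chain $\gamma_\delta^{\widetilde e}$ is one of two possible lifts of the corresponding downstairs arc, distinguished by whether the branch puncture lies to its right or left in the convention of Definition \ref{defn: sheet labels}. In Part 1 one must verify that the choice to place $D$ on the $v$-side of the triangle (rather than the side containing the fourth face barycenter), combined with the vertex lift $\widetilde v$, produces a $\partial\widetilde D$ whose three arcs match the given $\gamma_\delta^{\widetilde e_i}$ with matching signs rather than some mixture of lifts. In Part 2 the analogous check is that the inward cone direction, together with the chosen lift $\widetilde e$, yields $\gamma_\delta^{\widetilde e}$ (and not its companion lift $\gamma_\delta^{p(\widetilde e)}$) on each arc of $\partial\widetilde C$. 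Once these orientation checks are carried out, each claim reduces to the statement that boundaries vanish in homology.
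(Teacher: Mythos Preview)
Your approach is correct and is essentially the same as the paper's, just more explicit: the paper simply observes that in each case the cycle is homologous to a small contractible loop (near the shared vertex for Part 1, linking $\widetilde{e}$ for Part 2), while you spell this out by constructing the bounding 2-chain as a lift of a disk from the base. The orientation/sheet-label checks you flag are indeed the only technical content, and the paper leaves them equally implicit.
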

\begin{proof}
For the first claim, it is enough to observe that $\gamma_{\delta}^{\widetilde{e}}+\gamma_{\delta}^{\widetilde{e}'}+\gamma_{\delta}^{\widetilde{e}''}$ is homologous to a small contractible loop close to the vertex shared by the three edges. 
For the second claim, the sum is homologous to a link of $\widetilde{e}$, which is contractible. 
\end{proof}

\begin{corollary}
$\quad$
\begin{enumerate}
\item If $e, e', e''$ are three edges of $\delta$ sharing a vertex,  $x_{\delta}^{e}+x_{\delta}^{e'}+x_{\delta}^{e''} = 0$. 
\item If $\{e,f\}$ is a pair of opposite edges of $\delta$, then $x_{\delta}^{e} = x_{\delta}^{f}$. 
\item If $e \in \Delta^1$ is an edge of the triangulation, then
\begin{equation}\label{eqn:gluing-rel}
\sum_{\delta\text{ {\rm abutting} }e}x_{\delta}^e = 0.
\end{equation}
(If $\delta$ has multiple edges abutting $e$, then $x_\delta^e$ should be counted with multiplicity accordingly in \eqref{eqn:gluing-rel}.)
\end{enumerate}
\end{corollary}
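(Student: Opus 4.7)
The plan is to deduce all three assertions by pairing the closed $1$-form $\zeta^{\mathrm{odd}}$ with the homology relations supplied by Lemma \ref{lem: gamma relations}, together with one bit of linear algebra for part (2).

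For (1), I would first lift the three edges $e, e', e''$ of $\delta$ meeting at a common vertex $v$ to edges $\widetilde{e}, \widetilde{e}', \widetilde{e}''$ of $\widetilde{\delta}$ that share a common lift $\widetilde{v}$ of $v$. By the first half of Lemma \ref{lem: gamma relations},
\[
\gamma_{\delta}^{\widetilde{e}}+\gamma_{\delta}^{\widetilde{e}'}+\gamma_{\delta}^{\widetilde{e}''} \sim 0
\]
in $\widetilde{\delta}$, and since $\zeta^{\mathrm{odd}}$ is closed, integrating against this null-homologous $1$-cycle gives $x_{\delta}^{e}+x_{\delta}^{e'}+x_{\delta}^{e''} = 0$ (using that $x_\delta^e$ is independent of the chosen lift, as established just before the statement). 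Assertion (3) is equally direct: the second half of Lemma \ref{lem: gamma relations} states that $\sum_{\delta \text{ abutting } e}\gamma_{\delta}^{\widetilde{e}}$ is null-homologous (with multiplicities when $\delta$ has several edges over $e$), and integrating $\zeta^{\mathrm{odd}}$ across this cycle yields \eqref{eqn:gluing-rel}.

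For (2), the opposite edges $e$ and $f$ share no vertex of $\delta$, so no single vertex-relation from (1) relates their periods. Instead, label the vertices of $\delta$ by $1,2,3,4$ with $e=\{1,2\}$ and $f=\{3,4\}$, and write $x^{ij}$ for the period attached to the edge $\{i,j\}$. Applying (1) at vertices $1$ and $2$ and summing gives
\[
2x^{12} + x^{13} + x^{14} + x^{23} + x^{24} = 0,
\]
while the same sum applied at vertices $3$ and $4$ gives
\[
2x^{34} + x^{13} + x^{14} + x^{23} + x^{24} = 0.
\]
Subtracting yields $x^{12}=x^{34}$, i.e.\ $x_{\delta}^{e} = x_{\delta}^{f}$.

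The only real subtlety is part (1), namely that three lifts $\widetilde{e},\widetilde{e}',\widetilde{e}''$ sharing a common lift of $v$ really exist and give rise to a cycle in $\widetilde{\delta}$, but this is exactly what Lemma \ref{lem: gamma relations} arranges; the rest is bookkeeping of closed forms against homology classes.
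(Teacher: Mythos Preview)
Your proof is correct and follows the same approach as the paper: parts (1) and (3) are obtained by integrating $\zeta^{\mathrm{odd}}$ over the null-homologous cycles provided by Lemma~\ref{lem: gamma relations}, and part (2) is deduced from (1). The paper simply asserts ``(2) follows from (1)'' without spelling out the linear-algebra step; your argument summing the vertex relations in pairs and subtracting is a clean way to make that explicit.
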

\begin{proof}
(1) and (3) are immediate corollaries of Lemma \ref{lem: gamma relations}, and (2) follows from (1). 
\end{proof}

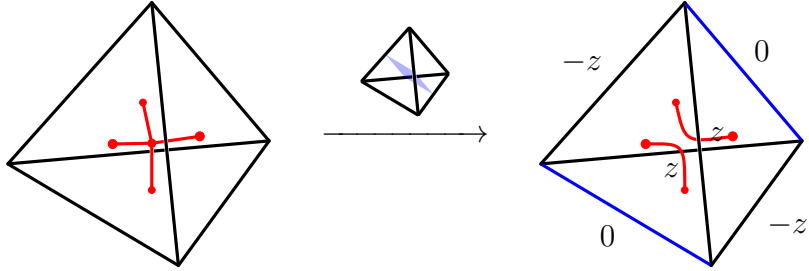
\begin{figure}
\centering
$\vcenter{\hbox{
\tdplotsetmaincoords{60}{80}
\begin{tikzpicture}[scale=0.9, tdplot_main_coords]
\begin{scope}[scale = 0.8, tdplot_main_coords]
    \coordinate (o) at (0, 0, 0);
    \coordinate (a) at (0, 0, 3);
    \coordinate (a1) at (0, 0, -1);
    
    \coordinate (b) at ({2*sqrt(2)}, 0, -1);
    \coordinate (b1) at ({-2*sqrt(2)/3}, 0, 1/3);
    
    \coordinate (c) at ({-sqrt(2)}, {sqrt(6)}, -1);
    \coordinate (c1) at ({sqrt(2)/3}, {-sqrt(6)/3}, 1/3);
    
    \coordinate (d) at ({-sqrt(2)}, {-sqrt(6)}, -1);
    \coordinate (d1) at ({sqrt(2)/3}, {sqrt(6)/3}, 1/3);
    
    \coordinate (ab) at ({sqrt(2)}, 0, 1);
    \coordinate (ac) at ({-sqrt(2)/2}, {sqrt(6)/2}, 1);
    \coordinate (ad) at ({-sqrt(2)/2}, {-sqrt(6)/2}, 1);
    \coordinate (bc) at ({sqrt(2)/2}, {sqrt(6)/2}, -1);
    \coordinate (bd) at ({sqrt(2)/2}, {-sqrt(6)/2}, -1);
    \coordinate (cd) at ({-sqrt(2)}, 0, -1);
    
    \draw[very thick] (c) -- (d);

    \draw[white, ultra thick] (o) -- (a1);
    
    \draw[red, very thick] (o) -- (a1);
    \draw[red, very thick] (o) -- (b1);
    \draw[red, very thick] (o) -- (c1);
    \draw[red, very thick] (o) -- (d1);

    \filldraw[red] (o) circle (0.18em);
    \filldraw[red] (a1) circle (0.15em);
    \filldraw[red] (b1) circle (0.15em);
    \filldraw[red] (c1) circle (0.2em);
    \filldraw[red] (d1) circle (0.2em);

    \draw[white, ultra thick] (a) -- (b);
    \draw[very thick] (a) -- (b);
    \draw[very thick] (a) -- (c);
    \draw[very thick] (a) -- (d);
    \draw[very thick] (b) -- (c);
    \draw[very thick] (b) -- (d);
    
    \filldraw (a) circle (0.05em);
    \filldraw (b) circle (0.05em);
    \filldraw (c) circle (0.05em);
    \filldraw (d) circle (0.05em);
\end{scope}
\end{tikzpicture}
}}
\overset{
\vcenter{\hbox{
\tdplotsetmaincoords{60}{80}
\begin{tikzpicture}[scale=0.3, tdplot_main_coords]
\begin{scope}[scale = 0.8, tdplot_main_coords]
    \coordinate (o) at (0, 0, 0);
    \coordinate (a) at (0, 0, 3);
    \coordinate (a1) at (0, 0, -1);
    
    \coordinate (b) at ({2*sqrt(2)}, 0, -1);
    \coordinate (b1) at ({-2*sqrt(2)/3}, 0, 1/3);
    
    \coordinate (c) at ({-sqrt(2)}, {sqrt(6)}, -1);
    \coordinate (c1) at ({sqrt(2)/3}, {-sqrt(6)/3}, 1/3);
    
    \coordinate (d) at ({-sqrt(2)}, {-sqrt(6)}, -1);
    \coordinate (d1) at ({sqrt(2)/3}, {sqrt(6)/3}, 1/3);
    
    \coordinate (ab) at ({sqrt(2)}, 0, 1);
    \coordinate (ac) at ({-sqrt(2)/2}, {sqrt(6)/2}, 1);
    \coordinate (ad) at ({-sqrt(2)/2}, {-sqrt(6)/2}, 1);
    \coordinate (bc) at ({sqrt(2)/2}, {sqrt(6)/2}, -1);
    \coordinate (bd) at ({sqrt(2)/2}, {-sqrt(6)/2}, -1);
    \coordinate (cd) at ({-sqrt(2)}, 0, -1);
    
    \draw[very thick] (c) -- (d);

    \fill[blue, opacity=0.3] (ab) -- (bc) -- (cd) -- (ad) -- cycle;

    \draw[white, ultra thick] (a) -- (b);
    \draw[very thick] (a) -- (b);
    \draw[very thick] (a) -- (c);
    \draw[very thick] (a) -- (d);
    \draw[very thick] (b) -- (c);
    \draw[very thick] (b) -- (d);
    
    \filldraw (a) circle (0.05em);
    \filldraw (b) circle (0.05em);
    \filldraw (c) circle (0.05em);
    \filldraw (d) circle (0.05em);
\end{scope}
\end{tikzpicture}
}}
}{\quad\xrightarrow{\hspace*{2cm}}}
\vcenter{\hbox{
\tdplotsetmaincoords{60}{80}
\begin{tikzpicture}[scale=0.9, tdplot_main_coords]
\begin{scope}[scale = 0.8, tdplot_main_coords]
    \coordinate (o) at (0, 0, 0);
    \coordinate (a) at (0, 0, 3);
    \coordinate (a1) at (0, 0, -1);
    
    \coordinate (b) at ({2*sqrt(2)}, 0, -1);
    \coordinate (b1) at ({-2*sqrt(2)/3}, 0, 1/3);
    
    \coordinate (c) at ({-sqrt(2)}, {sqrt(6)}, -1);
    \coordinate (c1) at ({sqrt(2)/3}, {-sqrt(6)/3}, 1/3);
    
    \coordinate (d) at ({-sqrt(2)}, {-sqrt(6)}, -1);
    \coordinate (d1) at ({sqrt(2)/3}, {sqrt(6)/3}, 1/3);
    
    \coordinate (ab) at ({sqrt(2)}, 0, 1);
    \coordinate (ac) at ({-sqrt(2)/2}, {sqrt(6)/2}, 1);
    \coordinate (ad) at ({-sqrt(2)/2}, {-sqrt(6)/2}, 1);
    \coordinate (bc) at ({sqrt(2)/2}, {sqrt(6)/2}, -1);
    \coordinate (bd) at ({sqrt(2)/2}, {-sqrt(6)/2}, -1);
    \coordinate (cd) at ({-sqrt(2)}, 0, -1);
    
    \draw[very thick] (c) -- (d);

    \draw[red, very thick] (a1) .. controls (o) .. (c1);
    \draw[red, very thick] (b1) .. controls (o) .. (d1);

    \filldraw[red] (a1) circle (0.15em);
    \filldraw[red] (b1) circle (0.15em);
    \filldraw[red] (c1) circle (0.2em);
    \filldraw[red] (d1) circle (0.2em);

    \draw[white, ultra thick] (a) -- (b);
    \draw[very thick] (a) -- (b);
    \draw[blue, very thick] (a) -- (c);
    \draw[very thick] (a) -- (d);
    \draw[very thick] (b) -- (c);
    \draw[blue, very thick] (b) -- (d);
    
    \filldraw (a) circle (0.05em);
    \filldraw (b) circle (0.05em);
    \filldraw (c) circle (0.05em);
    \filldraw (d) circle (0.05em);

    \node[anchor=west] at (ab){$z$};
    \node[anchor=north] at (cd){$z$};
    \node[anchor=south west] at (ac){$0$};
    \node[anchor=south east] at (ad){$-z$};
    \node[anchor=north west] at (bc){$-z$};
    \node[anchor=north east] at (bd){$0$};
\end{scope}
\end{tikzpicture}
}}
$
\caption{Branch locus for the smoothing of a marked tetrahedron.  The quantity $z>0$ records the half of the value of a 1-form on the distinguished 1-cycle (Figure \ref{fig:cycles-signed-taut-tetrahedra}) on the boundary.}
\label{fig:tetrahedron_resolved_Lagrangian}
\end{figure}

\begin{corollary}\label{cor:z_delta}
For a marked tetrahedron $\delta$ and a closed 1-form $\zeta$ on the branched double cover $\widetilde{M}$, there is a unique $z_\delta \in \R$ such that
\begin{equation}\label{eqn:x-to-z}
x_\delta^e = 
\begin{cases}
0 &\text{if }e\text{ is a marked edge}, \\
\pm z_\delta &\text{otherwise}, 
\end{cases}
\end{equation}
where the sign is $+$ (resp., $-$) if the marked edges are counterclockwise (resp., clockwise) from $e$ when viewed from vertices; 
see the right-hand side of Figure \ref{fig:tetrahedron_resolved_Lagrangian}.  
\end{corollary}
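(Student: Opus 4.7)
The plan is to deduce Corollary \ref{cor:z_delta} from the three relations of the preceding corollary together with one additional null-homology input specific to the marked pair.

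First, the opposite-edge equality collapses the six values $\{x_\delta^e\}$ to three, call them $\xi_1,\xi_2,\xi_3$, one per pair of opposite edges, with $\xi_1$ attached to the marked pair. Since each of the four vertices of $\delta$ is incident to exactly one edge from each pair, the vertex relation becomes
\[
\xi_1+\xi_2+\xi_3=0,
\]
leaving a two-dimensional family of solutions that still needs to be cut down by one parameter.

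The main step is to show $\xi_1=0$. Given $e$ a marked edge with lifts $\widetilde{e}_1,\widetilde{e}_2$ to the cover, form the 1-cycle $C_e:=\gamma_\delta^{\widetilde{e}_1}+\gamma_\delta^{\widetilde{e}_2}$; by the calculation preceding the previous corollary, $\int_{C_e}\zeta=2\xi_1$, so it suffices to show $C_e$ is null-homologous in $\widetilde{M}$. The key geometric observation is that, for a marked edge $e$, the two face barycenters connected by $\gamma_\delta^{\widetilde{e}}$ are precisely the endpoints of the branch arc $\widetilde{\alpha}$ associated to the \emph{other} marked edge of $\delta$ (e.g.\ for the marking $\{ac,bd\}$, the chain for a lift of $ac$ joins the barycenters $b_1$ and $d_1$ of the two faces containing $ac$, and those are the endpoints of the branch arc for $bd$). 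In the branched double cover $\widetilde W$ of the wedge $W$ of the two faces containing $e$, these two branch points are interior ramification points and $\widetilde W$ is an annulus whose two disk pieces are the preimages of the two faces, separated inside $\widetilde W$ by the two interior arcs $\widetilde e_1,\widetilde e_2$. Using this explicit local picture, and the sheet-labelling convention of Definition \ref{defn: sheet labels}, I would write down a 2-chain in $\widetilde W$ (a signed sum of the two disk pieces of $\widetilde W$, weighted by the sheet labels so that their boundary collects the correct orientations along $\widetilde e_1$ and $\widetilde e_2$) with boundary equal to $C_e$, thereby proving $C_e$ is a boundary in $\widetilde W\subset\widetilde M$. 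Closedness of $\zeta$ then forces $2\xi_1=0$.

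Once $\xi_1=0$ is established, the constraint $\xi_2+\xi_3=0$ gives a unique $z_\delta:=\xi_2$ with $\xi_3=-z_\delta$, so marked edges get $0$ and the two unmarked pairs get $\pm z_\delta$. The sign rule follows by reading the vertex relation at any one vertex: the cyclic ordering of the three incident edges (viewed from the vertex) places the marked edge either immediately counterclockwise or immediately clockwise of each unmarked edge, and because the three values must sum to zero the two non-marked neighbors of the marked edge necessarily receive opposite signs; matching against Figure \ref{fig:tetrahedron_resolved_Lagrangian} fixes the convention as stated. The main obstacle is the null-homology step for $C_e$: everything else is elementary linear algebra in the relations already in hand.
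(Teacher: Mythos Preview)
Your overall structure is right: reduce to three numbers $\xi_1,\xi_2,\xi_3$ with $\xi_1+\xi_2+\xi_3=0$, then show $\xi_1=0$ for the marked pair. The problem is the null-homology step.

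You claim $C_e=\gamma_\delta^{\widetilde e_1}+\gamma_\delta^{\widetilde e_2}$ bounds a 2-chain in $\widetilde W$, the annulus double-covering the wedge of the two faces through $e$. It does not. The two branch points sit one in each hexagon $D_1,D_2$, and $C_e$ is precisely a core circle of this annulus: it runs from one branch point across $\widetilde e_1$ into the other hexagon, to the second branch point, back across $\widetilde e_2$, and closes up. Any 2-chain $aD_1+bD_2$ has boundary supported on $\widetilde e_1,\widetilde e_2$ and on $\partial\widetilde W$, never on the interior arcs $\gamma_\delta^{\widetilde e_i}$; more invariantly, $C_e$ generates $H_1(\widetilde W)\cong\Z$. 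Indeed $C_e$ is a nontrivial class on the boundary torus $\partial\widetilde\delta$ (it is the distinguished cycle of Figure~\ref{fig:1-cycle-gamma}), and $H_1(\widetilde W)\to H_1(\partial\widetilde\delta)$ is injective, so $C_e$ cannot bound in any subsurface of $\partial\widetilde\delta$.

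The missing idea is that the null-homology of $C_e$ in $\widetilde M$ comes from the \emph{solid torus} $\widetilde\delta$, not from its boundary. The marking of $\delta$ specifies which of the three quadrilateral cycles $(1,0),(0,1),(-1,-1)$ on $\partial\widetilde\delta$ becomes the meridian of the solid-torus smoothing (Figures~\ref{fig:marked_tetrahedron_resolutions}, \ref{fig:tetrahedron-boundary-cycles}). By Figure~\ref{fig:1-cycle-gamma}, $C_e$ for a marked edge $e$ \emph{is} that meridian, so it bounds a disk in $\widetilde\delta\subset\widetilde M$ and hence $2\xi_1=\int_{C_e}\zeta=0$. This is exactly the paper's argument: ``the cycle corresponding to the marked edges is collapsed.'' Once you use the filling disk rather than looking for a chain on the boundary, your outline goes through.
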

\begin{proof}
The period of $\zeta$ along one of the three cycles $(1,0)$, $(0,1)$, and $(-1,-1)$ -- corresponding to quadrilateral slices in Figure \ref{fig:tetrahedron-boundary-cycles} -- is equal to $2 x_{\delta}^e$, where $e$ is an edge avoiding the quadrilateral slice; see Figure \ref{fig:1-cycle-gamma}.
Since the cycle corresponding to the marked edges is collapsed, 
the result follows. 
\end{proof}

We call $z_\delta$ the \emph{angular period} of $\zeta$ over $\delta$. 
The period of $\zeta$ along the distinguished cycle of $\widetilde{\delta}$ is $2 z_\delta$, so $\zeta$ is positive on all distinguished cycles if and only if the $z_\delta$ are all positive. 
By rewriting the gluing equations \eqref{eqn:gluing-rel} in terms of $\{z_\delta\}_{\delta \in \Delta}$ using \eqref{eqn:x-to-z}, we obtain a system of homogeneous equations 
\begin{equation}\label{eq:z-gluing-equations}
\left\{\sum_{\delta \text{ abutting }e}\epsilon_\delta^e z_\delta^e = 0 \right\}_{e\in \Delta^1},
\end{equation}
where $\Delta^{1}$ denotes the edges in the triangulation $\Delta$, and 
$\epsilon_\delta^e \in \{0, \pm 1\}$ is determined as in \eqref{eqn:x-to-z}. Since the sum of angles in the generalized angle structure around any edge $e\in\Delta^1$ equals zero, each boundary component of $M$ is a torus.  It follows that the Euler characteristic of $M$ is zero.  
Then, if there are $t$ tetrahedra in $\Delta^3$, there must be $2t$ faces in $\Delta^2$, and hence $t$ edges in $\Delta^1$. The $t$ equations \eqref{eq:z-gluing-equations} are well-studied in the context of normal surface theory \cite{Luo-Tillmann}, and it is well-known that if $M$ has $b$ boundary components then at most $t-b$ of the $t$ equations are linearly independent. In particular, it admits an at least $b$-dimensional space of solutions. 
\begin{lemma}
Any solution $(z_1, \cdots, z_t)$ to \eqref{eq:z-gluing-equations} induces a closed and bounded 1-form $\zeta$ on $\widetilde{M}$, odd with respect to $p$, with given periods. 
\end{lemma}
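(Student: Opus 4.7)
The plan is to realize $\zeta$ in two stages: first as a cellular $1$-cocycle on $\widetilde{M}$ whose class has the prescribed periods, and then to promote it to a smooth, odd, bounded representative.

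For the cohomological stage, work with a cell decomposition of $\widetilde M$ whose $1$-skeleton contains all the chains $\gamma_\delta^{\widetilde e}$ of Figure \ref{fig:angular-1-chain} (together with their images under $p$) and whose $2$-cells are the obvious faces. Define a $1$-cochain $c$ by assigning $c(\gamma_\delta^{\widetilde e}) := x_\delta^e$, where $x_\delta^e \in \{0,\pm z_\delta\}$ is produced from the solution $(z_1,\dots,z_t)$ as in Corollary \ref{cor:z_delta}, and extending oddly across the branch cut using $c(p_*\gamma) = -c(\gamma)$. To check that $c$ is a cocycle it suffices to check two families of boundary relations. Around each vertex of a tetrahedron $\widetilde{\delta}$ the boundary is a sum of three chains $\gamma_\delta^{\widetilde{e}}+\gamma_\delta^{\widetilde{e}'}+\gamma_\delta^{\widetilde{e}''}$, which is null-homologous by Lemma \ref{lem: gamma relations}(1), and the corresponding relation on $c$ follows from $x_\delta^{e}+x_\delta^{e'}+x_\delta^{e''}=0$. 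Around each edge $e \in \Delta^1$ of $M$, the boundary relation for $c$ is exactly \eqref{eq:z-gluing-equations}, which holds by hypothesis. Pairing $c$ with the distinguished cycle $\gamma_\delta^{\widetilde e_1} + \gamma_\delta^{\widetilde e_2}$ of $\widetilde\delta$ gives $2 z_\delta$, so the cohomology class $[c] \in H^1(\widetilde M, \R)^{\mathrm{odd}}$ has the prescribed periods.

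For the smoothing stage, de Rham theory provides a smooth closed $1$-form $\eta$ representing $[c]$. Replacing $\eta$ by $\tfrac12(\eta - p^*\eta)$ preserves the class (since $[c]$ is odd) while forcing odd symmetry. The only remaining point is boundedness. By the discussion preceding the lemma, every boundary component of $M$, and hence every end of $\widetilde M$, is toroidal; truncate $\widetilde M$ into a compact part and cylindrical collars $T^2 \times [0,\infty)$ at each end, and choose a partition of unity adapted to this decomposition. On the compact part any smooth form is bounded. On each toroidal end we may, without altering $[\eta]$, modify $\eta$ to agree with the pullback of a translation-invariant closed $1$-form on $T^2$ chosen to have the correct periods on the two generators of $H_1(T^2)$; such a form is manifestly bounded. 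Doing this symmetrically with respect to $p$ preserves oddness, and patching back via the partition of unity produces the desired $\zeta$.

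The essentially content-bearing step is the cocycle verification, which is already handled by Lemma \ref{lem: gamma relations} and the gluing equations; the main obstacle that deserves care is the boundedness at the toroidal ends, and this is where the fact that the cusps are \emph{tori}---a consequence of the angle relation at each edge---is used in an essential way.
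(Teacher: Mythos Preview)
Your approach is correct but takes a different route from the paper. The paper constructs $\zeta$ directly by a local-to-global argument: it first chooses a closed odd $1$-form on each solid torus $\widetilde\delta$ with the prescribed angular period $z_\delta$, observes that the periods along the chains $\gamma_\delta^{\widetilde e}$ force these forms to match near the (preimages of) face barycenters, and hence they glue to a closed $1$-form on $\widetilde M$ minus the preimages of the edges of $\Delta^1$; one then recovers $\widetilde M$ by attaching $2$-handles along the links of those edge preimages, and the gluing equations \eqref{eq:z-gluing-equations} are precisely the obstruction to extending the form over these handles. Boundedness is handled at the toroidal ends exactly as you do.

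Your cohomological route --- build a cellular $1$-cocycle, then invoke de~Rham --- is equivalent in content (both arguments reduce to the same two families of relations) but more abstract. Its weak point, as written, is that the cell decomposition is only gestured at (``whose $1$-skeleton contains\ldots'', ``the obvious faces''), and you assert without justification that the vertex and edge relations exhaust the cocycle conditions. This can be made precise by taking the CW structure on $\partial\widetilde\delta \cong T^2$ given by the four hexagonal lifts of faces, whose $1$-cells are exactly the $\gamma_\delta^{\widetilde e}$; then the two families you list are indeed all the $2$-cell boundaries one meets after gluing. The paper's constructive approach sidesteps this bookkeeping: since the form is built piece by piece, the only obstruction that can arise is the one at the $2$-handles, and that is visibly the edge relation.
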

\begin{proof}
Each $z_\delta$ determines a closed 1-form on $\widetilde{\delta}$, odd with respect to $p$, with the given period $z_\delta$. 
Since we are specifying the periods along the 1-chains ending on the barycenters, these 1-forms glue together to give a closed 1-form on the 3-manifold obtained by gluing $\widetilde{\delta}$'s along small neighborhoods of barycenters. 
The resulting 3-manifold is $\widetilde{M}$ minus the preimages of the edges in $\Delta^1$. We obtain $\widetilde{M}$ by attaching 2-handles along the links of those preimages. 
The gluing equations \eqref{eq:z-gluing-equations} are exactly the conditions that the closed 1-form extends over these 2-handles as a closed 1-form. After extension we get a closed $1$-form near the boundary of $\widetilde{M}$, $\sqcup_{j=1}^{2b} T^2\times [0,1)$, and we extend it to a bounded closed $1$-form over $\sqcup_{j=1}^{2b} T^2\times \R_+$.
\end{proof}
To this end, the effectivity condition simply requires all $z_\delta$'s to be positive. 
We summarize our discussion so far with the following:
\begin{proposition}\label{prop: existence of smoothing}
Let $\Delta$ be a marked ideal triangulation of $M$ with $t$ tetrahedra and $b$ boundary components. 
Then $\widetilde{M}$ admits a closed 1-form $\zeta$ positive on all distinguished cycles if and only if
there is a tuple of angular periods $(z_1,\dots,z_t)$ that satisfies \eqref{eq:z-gluing-equations} and has all $z_j > 0$. 
In particular, such solutions exist provided the $d$-dimensional solution space to \eqref{eq:z-gluing-equations}, $d\geq b \geq 1$, intersects the positive orthant $\bigcap_{1\leq j\leq t}\{z_j > 0\}$. \qed
\end{proposition}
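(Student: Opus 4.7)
The plan is to reduce the proposition to the two structural results already established: Corollary \ref{cor:z_delta}, which extracts angular periods $z_\delta$ from a closed 1-form, and the lemma immediately preceding the proposition, which conversely constructs a closed bounded 1-form from a tuple $(z_\delta)$ satisfying \eqref{eq:z-gluing-equations}. Once these are in hand, the equivalence is a matter of matching positivity on distinguished cycles with positivity of each $z_\delta$.

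In the forward direction, I would take a closed 1-form $\zeta$ on $\widetilde{M}$ positive on all distinguished cycles and apply Corollary \ref{cor:z_delta} tetrahedron by tetrahedron to define $z_\delta \in \R$. Since the distinguished cycle of $\widetilde{\delta}$ has $\zeta$-period equal to $2z_\delta$, the positivity hypothesis on $\zeta$ is equivalent to $z_\delta > 0$ for every $\delta$. The gluing relations \eqref{eqn:gluing-rel}, rewritten in terms of the $z_\delta$'s through the substitution \eqref{eqn:x-to-z}, become exactly the system \eqref{eq:z-gluing-equations}.

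Conversely, given a positive solution $(z_1,\dots,z_t)$ to \eqref{eq:z-gluing-equations}, I would invoke the preceding lemma to obtain a closed, bounded, $p$-odd 1-form $\zeta$ on $\widetilde{M}$ whose angular periods are the prescribed $z_\delta$. Positivity of the $z_\delta$ then directly translates back into positivity of $\zeta$ on each distinguished cycle, via the same relation $\int_{\mathrm{dist.\ cycle}} \zeta = 2 z_\delta$.

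For the ``in particular'' clause, I would use the dimension count from normal surface theory that has already been spelled out: because each boundary component of $M$ is a torus the Euler characteristic vanishes, forcing $|\Delta^1|=t$, so the $t$ equations of \eqref{eq:z-gluing-equations} have corank at least $b$ and yield a solution space of dimension $d\geq b\geq 1$. The conclusion then holds whenever this linear subspace meets the open positive orthant. The only genuinely nontrivial step in the whole chain is the one absorbed into the preceding lemma, namely the verification that the locally defined 1-forms on each $\widetilde{\delta}$ glue consistently across the barycenters and extend as a closed 1-form over the 2-handles attached along the links of edges in $\Delta^1$; the system \eqref{eq:z-gluing-equations} is exactly the obstruction to such extension, which is what makes the equivalence tight.
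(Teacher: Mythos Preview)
Your proposal is correct and matches the paper's own approach: the proposition is marked \qed\ in the paper precisely because it is a summary of the preceding discussion, and you have identified exactly the ingredients---Corollary~\ref{cor:z_delta} for the forward direction, the lemma just before the proposition for the converse, the relation $\int_{\text{dist.\ cycle}}\zeta = 2z_\delta$ for the positivity translation, and the normal-surface dimension count for the final clause.
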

\begin{remark}
By Stiemke's alternative \cite{Stiemke}, the criterion in Proposition \ref{prop: existence of smoothing} is equivalent to the the following condition: 
there is no $\{p_e\}_{e\in \Delta^1} \in \mathbb{R}^t$ such that $\{\sum_{e} p_e \epsilon_\delta^e\}_{\delta \in \Delta^3}$ is nonnegative but not identically zero. 
\end{remark}

\begin{example}
Consider the figure-eight knot complement ideally triangulated as in Figure \ref{fig:figure-eight-complement}. 
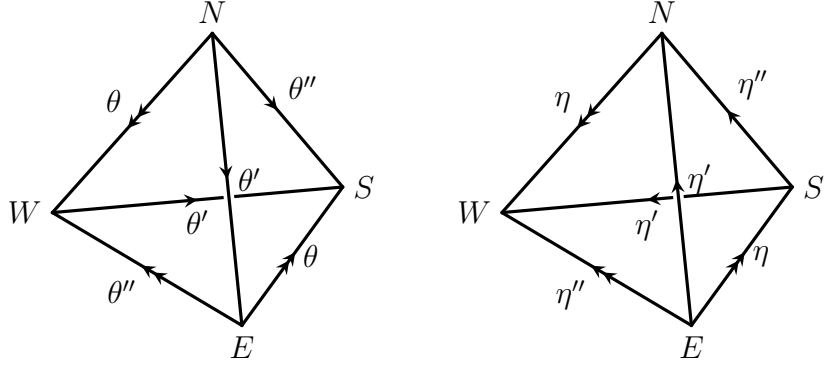
\begin{figure}
\centering
\[
\tdplotsetmaincoords{60}{80}
\begin{tikzpicture}[tdplot_main_coords]
\begin{scope}[scale = 0.8, tdplot_main_coords]
    \coordinate (o) at (0, 0, 0);
    \coordinate (a) at (0, 0, 3);
    \coordinate (a1) at (0, 0, -1);
    
    \coordinate (b) at ({2*sqrt(2)}, 0, -1);
    \coordinate (b1) at ({-2*sqrt(2)/3}, 0, 1/3);
    
    \coordinate (c) at ({-sqrt(2)}, {sqrt(6)}, -1);
    \coordinate (c1) at ({sqrt(2)/3}, {-sqrt(6)/3}, 1/3);
    
    \coordinate (d) at ({-sqrt(2)}, {-sqrt(6)}, -1);
    \coordinate (d1) at ({sqrt(2)/3}, {sqrt(6)/3}, 1/3);
    
    \coordinate (ab) at ({sqrt(2)}, 0, 1);
    \coordinate (ac) at ({-sqrt(2)/2}, {sqrt(6)/2}, 1);
    \coordinate (ad) at ({-sqrt(2)/2}, {-sqrt(6)/2}, 1);
    \coordinate (bc) at ({sqrt(2)/2}, {sqrt(6)/2}, -1);
    \coordinate (bd) at ({sqrt(2)/2}, {-sqrt(6)/2}, -1);
    \coordinate (cd) at ({-sqrt(2)}, 0, -1);
    
    \begin{scope}[thick,decoration={
    markings,
    mark=at position .5 with \arrow{stealth}}
    ] 
    \draw[postaction={decorate},very thick] (d) -- (c);
    \end{scope}
    
    %adds appearance of crossing info
    \draw[white, line width = 4] (a) -- (b);
   
    %draw double arrow edges
    \begin{scope}[thick,decoration={
    markings,
    mark=between positions 0.47 and .53 step 0.06 with \arrow{stealth}}
    ] 
    \draw[postaction={decorate},very thick] (b) -- (c);
    \draw[postaction={decorate},very thick] (b) -- (d);
    \draw[postaction={decorate},very thick] (a) -- (d);
    \end{scope}

    %draw single arrow edges
    \begin{scope}[thick,decoration={
    markings,
    mark=at position .5 with \arrow{stealth}}
    ] 
    \draw[postaction={decorate},very thick] (a) -- (b);
    \draw[postaction={decorate},very thick] (a) -- (c);
    \end{scope}
    
    \filldraw (a) circle (0.05em);
    \filldraw (b) circle (0.05em);
    \filldraw (c) circle (0.05em);
    \filldraw (d) circle (0.05em);

    \node[above] at (a) {$N$};
    \node[below] at (b) {$E$};
    \node[right] at (c) {$S$};
    \node[left] at (d) {$W$};
    \node[above right] at (ac) {$\theta''$};
    \node[right] at (ab) {$\theta'$};
    \node[above left] at (ad) {$\theta$};
    \node[right] at (bc) {$\theta$};
    \node[below left] at (bd) {$\theta''$};
    \node[below] at (cd) {$\theta'$};
\end{scope}
\end{tikzpicture}
\hspace{2em}
\tdplotsetmaincoords{60}{80}
\begin{tikzpicture}[tdplot_main_coords]
\begin{scope}[scale = 0.8, tdplot_main_coords]
    \coordinate (o) at (0, 0, 0);
    \coordinate (a) at (0, 0, 3);
    \coordinate (a1) at (0, 0, -1);
    
    \coordinate (b) at ({2*sqrt(2)}, 0, -1);
    \coordinate (b1) at ({-2*sqrt(2)/3}, 0, 1/3);
    
    \coordinate (c) at ({-sqrt(2)}, {sqrt(6)}, -1);
    \coordinate (c1) at ({sqrt(2)/3}, {-sqrt(6)/3}, 1/3);
    
    \coordinate (d) at ({-sqrt(2)}, {-sqrt(6)}, -1);
    \coordinate (d1) at ({sqrt(2)/3}, {sqrt(6)/3}, 1/3);
    
    \coordinate (ab) at ({sqrt(2)}, 0, 1);
    \coordinate (ac) at ({-sqrt(2)/2}, {sqrt(6)/2}, 1);
    \coordinate (ad) at ({-sqrt(2)/2}, {-sqrt(6)/2}, 1);
    \coordinate (bc) at ({sqrt(2)/2}, {sqrt(6)/2}, -1);
    \coordinate (bd) at ({sqrt(2)/2}, {-sqrt(6)/2}, -1);
    \coordinate (cd) at ({-sqrt(2)}, 0, -1);
    
    \begin{scope}[thick,decoration={
    markings,
    mark=at position .5 with \arrow{stealth}}
    ] 
    \draw[postaction={decorate},very thick] (c) -- (d);
    \end{scope}
    
    %adds appearance of crossing info
    \draw[white, line width = 4] (a) -- (b);
   
    %draw double arrow edges
    \begin{scope}[thick,decoration={
    markings,
    mark=between positions 0.47 and .53 step 0.06 with \arrow{stealth}}
    ] 
    \draw[postaction={decorate},very thick] (b) -- (c);
    \draw[postaction={decorate},very thick] (b) -- (d);
    \draw[postaction={decorate},very thick] (a) -- (d);
    \end{scope}

    %draw single arrow edges
    \begin{scope}[thick,decoration={
    markings,
    mark=at position .5 with \arrow{stealth}}
    ] 
    \draw[postaction={decorate},very thick] (b) -- (a);
    \draw[postaction={decorate},very thick] (c) -- (a);
    \end{scope}
    
    \filldraw (a) circle (0.05em);
    \filldraw (b) circle (0.05em);
    \filldraw (c) circle (0.05em);
    \filldraw (d) circle (0.05em);

    \node[above] at (a) {$N$};
    \node[below] at (b) {$E$};
    \node[right] at (c) {$S$};
    \node[left] at (d) {$W$};
    \node[above right] at (ac) {$\eta''$};
    \node[right] at (ab) {$\eta'$};
    \node[above left] at (ad) {$\eta$};
    \node[right] at (bc) {$\eta$};
    \node[below left] at (bd) {$\eta''$};
    \node[below] at (cd) {$\eta'$};
\end{scope}
\end{tikzpicture}
\]
\caption{Figure-eight knot complement triangulated into two ideal tetrahedra}
\label{fig:figure-eight-complement}
\end{figure}
Since there are only two tetrahedra (hence 2 edges), besides $\theta + \theta' + \theta'' = \pi$ and $\eta + \eta' + \eta'' = \pi$, there is only one equation that defines a generalized angle structure, which is
\[
2\theta + \theta'' + 2\eta + \eta'' = 2\pi.
\]
As a result, this triangulation admits 3 taut angle structures:
\begin{align*}
(\theta, \theta', \theta'', \eta, \eta', \eta'') &= (\pi, 0, 0, 0, \pi, 0), \\
\text{or } (\theta, \theta', \theta'', \eta, \eta', \eta'') &= (0, 0, \pi, 0, 0, \pi), \\
\text{or } (\theta, \theta', \theta'', \eta, \eta', \eta'') &= (0, \pi, 0, \pi, 0, 0). 
\end{align*}
Note, for any pair $\theta^{\square} \in \{\theta, \theta', \theta''\}$ and $\eta^{\square} \in \{\eta, \eta', \eta''\}$, there is a taut angle structure for which both $\theta^{\square}$ and $\eta^{\square}$ are zeros, meaning, for any choice of tangle (corresponding to a choice of marked quad type) in each tetrahedron, there is a taut angle structure compatible with it.

In this example, the equation \eqref{eq:z-gluing-equations} is given by
\begin{equation}\label{eq:figure-eight-example-gluing}
2 x_1^{\theta} + x_1^{\theta''} + 2 x_2^{\eta} + x_2^{\eta''} = 0,
\end{equation}
where 
\[
x_1^{\theta^{\bullet}} = 
\begin{cases}
0 &\text{if }\theta^{\bullet} \text{ is the marked quad type }\theta^{\square}, \\
z_1 &\text{if }\theta^{\square}\text{ is counterclockwise from }\theta^{\bullet},\\
-z_1 &\text{if }\theta^{\square}\text{ is clockwise from }\theta^{\bullet},
\end{cases}
\]
and likewise for $x_2^{\eta^{\bullet}}$. 
There are solutions $(z_1, z_2)$ such that $z_1, z_2 > 0$ if and only if the coefficient of $z_1$ and that of $z_2$ in \eqref{eq:figure-eight-example-gluing} have opposite signs, which is the case when the marking $(\theta^{\square}, \eta^{\square})$ is either $(\theta, \eta''), (\theta', \eta''), (\theta'', \eta)$, or $(\theta'', \eta')$. 
\end{example}

\subsection{HOMFLYPT skein modules of singular branched double covers}

\begin{defn}\label{defn:SkeinModuleOfBranchedDoubleCover}
Let $M$ be a 3-manifold with ideal triangulation $\Delta$, associated (singular) branched double cover $\widetilde{M}^{\mathrm{sing}}$ and branch locus $\widetilde{\tau}\subset \widetilde{M}^{\mathrm{sing}}$. 
We write $\mathrm{Sk}_{a,z}(\widetilde{M}^{\mathrm{sing}};\widetilde{\tau})$ for the $\Z[\mathbf{a}(\Delta)][z^{\pm}]$
module generated by the isotopy classes of framed oriented links in $\widetilde{M}^{\mathrm{sing}}\setminus \widetilde{\tau}$, subject to the following relations 
\begin{itemize}
\item
the usual skein relations \eqref{eq:skeinrel1}, \eqref{eq:skeinrel2}, \eqref{eq:skeinrel3},
\item 
relation \eqref{eq:skeinrel4} holds as links cross $\widetilde{\tau}$; i.e., the branch locus $\widetilde{\tau}$ is a sign line, 
\item
the following relation near the singular cone point in any tetrahedron of $\Delta$:
\begin{gather}
\vcenter{\hbox{
\begin{tikzpicture}[scale=0.7]
\draw[ultra thick, ->] ({-sqrt(3)/2}, 1/2) -- ({sqrt(3)/2}, 1/2);
\draw[white, line width=5] (0, 0) -- (0, 1);
\draw[ultra thick, red] (0, 0) -- (0, 1);
\draw[ultra thick, red] (0, 0) -- ({-sqrt(3)/2}, -1/2);
\draw[ultra thick, red] (0, 0) -- ({sqrt(3)/2}, -1/2);
\draw[ultra thick, red, dotted] (0, 0) -- (0, -1);
\filldraw[orange, opacity=0.2] ({sqrt(3)}, 1) -- ({-sqrt(3)}, 1) -- (0, -2) -- cycle;
\node[anchor=north west] at ({-sqrt(3)}, 1){$\theta'$};
\node[anchor=north east] at ({sqrt(3)}, 1){$\theta$};
\node[anchor=south] at (0, -2){$\theta''$};
\node[anchor=south] at ({-sqrt(3)/2}, 1/2){$1$};
\node[anchor=south] at ({sqrt(3)/2}, 1/2){$2$};
\end{tikzpicture}
}}
\;\;=\;\;
a^{\frac{\theta}{\pi}}\;
\vcenter{\hbox{
\begin{tikzpicture}[scale=0.7]
\node[anchor=north west] at ({-sqrt(3)}, 1){$\theta'$};
\node[anchor=north east] at ({sqrt(3)}, 1){$\theta$};
\node[anchor=south] at (0, -2){$\theta''$};
\draw[ultra thick, ->] (0.1, -1/2) to[out=0, in=180] ($({sqrt(3)/2}, 1/2)$);
\draw[white, line width=5] (0, 0) -- ({sqrt(3)/2}, -1/2);
\draw[ultra thick, red] (0, 0) -- ({sqrt(3)/2}, -1/2);
\draw[ultra thick, red] (0, 0) -- (0, 1);
\draw[ultra thick, red, dotted] (0, 0) -- (0, -1);
\draw[ultra thick, red] (0, 0) -- ({-sqrt(3)/2}, -1/2);
\draw[white, line width=5] ({-sqrt(3)/2}, 1/2) to[out=0, in=180] (0.1, -1/2);
\draw[ultra thick] ({-sqrt(3)/2}, 1/2) to[out=0, in=180] (0.1, -1/2);
\node[anchor=south] at ({-sqrt(3)/2}, 1/2){$1$};
\node[anchor=south] at ({sqrt(3)/2}, 1/2){$2$};
\filldraw[orange, opacity=0.2] ({sqrt(3)}, 1) -- ({-sqrt(3)}, 1) -- (0, -2) -- cycle;
\end{tikzpicture}
}}
\;\;+\;\;
a^{-\frac{\theta'}{\pi}}\;
\vcenter{\hbox{
\begin{tikzpicture}[scale=0.7]
\node[anchor=north west] at ({-sqrt(3)}, 1){$\theta'$};
\node[anchor=north east] at ({sqrt(3)}, 1){$\theta$};
\node[anchor=south] at (0, -2){$\theta''$};
\draw[ultra thick] ({-sqrt(3)/2}, 1/2) to[out=0, in=180] (-0.1, -1/2);
\draw[white, line width=5] (0, 0) -- ({-sqrt(3)/2}, -1/2);
\draw[ultra thick, red] (0, 0) -- ({-sqrt(3)/2}, -1/2);
\draw[ultra thick, red] (0, 0) -- (0, 1);
\draw[ultra thick, red] (0, 0) -- ({sqrt(3)/2}, -1/2);
\draw[ultra thick, red, dotted] (0, 0) -- (0, -1);
\draw[white, line width=5] (-0.1, -1/2) to[out=0, in=180] ($({sqrt(3)/2}, 1/2)$);
\draw[ultra thick, ->] (-0.1, -1/2) to[out=0, in=180] ($({sqrt(3)/2}, 1/2)$);
\node[anchor=south] at ({-sqrt(3)/2}, 1/2){$1$};
\node[anchor=south] at ({sqrt(3)/2}, 1/2){$2$};
\filldraw[orange, opacity=0.2] ({sqrt(3)}, 1) -- ({-sqrt(3)}, 1) -- (0, -2) -- cycle;
\end{tikzpicture}
}}
\;. \label{eq:skeinrel5}
\end{gather}
\end{itemize}

Unlike the first four relations in $\Sk(\widetilde{M}^{\mathrm{sing}};\widetilde{\tau})$, which are drawn in $\widetilde{M}^{\mathrm{sing}}$, the relation \eqref{eq:skeinrel5} is drawn in the projection to $M$ -- the numbers $1$ and $2$ denote the sheet labels in accordance with the convention in Definition \ref{defn: sheet labels}. 
It is a local relation near the singular cone point at the barycenter of some tetrahedron in $\Delta$, here projected from a vertex of tetrahedron to the face opposing it. 
For the corresponding 3-dimensional picture in $M$, see Figure \ref{fig:3termrelation}, and for the relation in $\widetilde{M}^{\mathrm{sing}}$, projected to the torus $T^2$ which is the link of the cone point in $\widetilde{M}^{\mathrm{sing}}$, see Figure \ref{fig:3termrelation_torus}. 
\begin{figure}
\centering
$\vcenter{\hbox{
\tdplotsetmaincoords{60}{80}
\begin{tikzpicture}[tdplot_main_coords]
\begin{scope}[scale = 1.0, tdplot_main_coords]
    \coordinate (o) at (0, 0, 0);
    \coordinate (a) at (0, 0, 3);
    \coordinate (a1) at (0, 0, -1);
    
    \coordinate (b) at ({2*sqrt(2)}, 0, -1);
    \coordinate (b1) at ({-2*sqrt(2)/3}, 0, 1/3);
    
    \coordinate (c) at ({-sqrt(2)}, {sqrt(6)}, -1);
    \coordinate (c1) at ({sqrt(2)/3}, {-sqrt(6)/3}, 1/3);
    
    \coordinate (d) at ({-sqrt(2)}, {-sqrt(6)}, -1);
    \coordinate (d1) at ({sqrt(2)/3}, {sqrt(6)/3}, 1/3);
    
    \coordinate (ab) at ({sqrt(2)}, 0, 1);
    \coordinate (ac) at ({-sqrt(2)/2}, {sqrt(6)/2}, 1);
    \coordinate (ad) at ({-sqrt(2)/2}, {-sqrt(6)/2}, 1);
    \coordinate (bc) at ({sqrt(2)/2}, {sqrt(6)/2}, -1);
    \coordinate (bd) at ({sqrt(2)/2}, {-sqrt(6)/2}, -1);
    \coordinate (cd) at ({-sqrt(2)}, 0, -1);

    \draw[white, ultra thick] (o) -- (a1);

    \draw[very thick] ({-sqrt(2)/3}, {-0.1}, 1/6) .. controls ({-sqrt(2)/3}, 0, {1/6-0.1}) .. ({-sqrt(2)/3}, {0.1}, 1/6);
    \filldraw ({-sqrt(2)/3}, {-0.1}, 1/6) circle (0.05em);
    \filldraw ({-sqrt(2)/3}, {0.1}, 1/6) circle (0.05em);
    
    \filldraw [very thin, fill= orange, opacity = 0.2] (o) -- (a1) -- (cd) -- (b1) -- (o);
    \filldraw [very thin, fill= orange, opacity = 0.2] (o) -- (b1) -- (ac) -- (d1) -- (o);
    \filldraw [very thin, fill= orange, opacity = 0.2] (o) -- (b1) -- (ad) -- (c1) -- (o);
    \filldraw [very thin, fill= orange, opacity = 0.2] (o) -- (a1) -- (bc) -- (d1) -- (o);
    \filldraw [very thin, fill= orange, opacity = 0.2] (o) -- (a1) -- (bd) -- (c1) -- (o);
    
    \draw[red, very thick] (o) -- (a1);
    \draw[red, very thick] (o) -- (b1);
    \draw[red, very thick] (o) -- (c1);
    \draw[red, very thick] (o) -- (d1);

    \filldraw[red] (o) circle (0.15em);

    \filldraw [very thin, fill= orange, opacity = 0.2] (o) -- (c1) -- (ab) -- (d1) -- (o);

    \draw[very thick] ({-sqrt(2)/4}, {-sqrt(6)/4}, {1/2 + 0.5}) -- ({-sqrt(2)/4}, {-0.1}, {1/2 + 0.5}) -- ({-sqrt(2)/3}, {-0.1}, 1/6); 
    \draw[very thick, ->] ({-sqrt(2)/3}, {0.1}, 1/6) -- ({-sqrt(2)/4}, {0.1}, {1/2 + 0.5}) -- ({-sqrt(2)/4}, {sqrt(6)/4}, {1/2 + 0.5});

    \node[anchor=south] at ({-sqrt(2)/4}, {-sqrt(6)/4}, {1/2 + 0.5}) {$1$};
    \node[anchor=south] at ({-sqrt(2)/4}, {sqrt(6)/4}, {1/2 + 0.5}) {$2$};

    \node[anchor = south east] at (ad) {$\theta'$};
    \node[anchor = south west] at (ac) {$\theta$};
    \node[anchor = south west] at (ab) {$\theta''$};
\end{scope}
\end{tikzpicture}
}}
\; = \;
a^{\frac{\theta}{\pi}}
\vcenter{\hbox{
\tdplotsetmaincoords{60}{80}
\begin{tikzpicture}[tdplot_main_coords]
\begin{scope}[scale = 1.0, tdplot_main_coords]
    \coordinate (o) at (0, 0, 0);
    \coordinate (a) at (0, 0, 3);
    \coordinate (a1) at (0, 0, -1);
    
    \coordinate (b) at ({2*sqrt(2)}, 0, -1);
    \coordinate (b1) at ({-2*sqrt(2)/3}, 0, 1/3);
    
    \coordinate (c) at ({-sqrt(2)}, {sqrt(6)}, -1);
    \coordinate (c1) at ({sqrt(2)/3}, {-sqrt(6)/3}, 1/3);
    
    \coordinate (d) at ({-sqrt(2)}, {-sqrt(6)}, -1);
    \coordinate (d1) at ({sqrt(2)/3}, {sqrt(6)/3}, 1/3);
    
    \coordinate (ab) at ({sqrt(2)}, 0, 1);
    \coordinate (ac) at ({-sqrt(2)/2}, {sqrt(6)/2}, 1);
    \coordinate (ad) at ({-sqrt(2)/2}, {-sqrt(6)/2}, 1);
    \coordinate (bc) at ({sqrt(2)/2}, {sqrt(6)/2}, -1);
    \coordinate (bd) at ({sqrt(2)/2}, {-sqrt(6)/2}, -1);
    \coordinate (cd) at ({-sqrt(2)}, 0, -1);
    
    \draw[white, ultra thick] (o) -- (a1);

    \draw[very thick] ({sqrt(2)/6 -0.1*(sqrt(3)/2)}, {sqrt(6)/6 -0.1*(1/2)}, 1/6) .. controls ({sqrt(2)/6}, {sqrt(6)/6}, {1/6-0.1}) .. ({sqrt(2)/6 +0.1*(sqrt(3)/2)}, {sqrt(6)/6 +0.1*(1/2)}, 1/6);
    \filldraw ({sqrt(2)/6 -0.1*(sqrt(3)/2)}, {sqrt(6)/6 -0.1*(1/2)}, 1/6) circle (0.05em);
    \filldraw ({sqrt(2)/6 +0.1*(sqrt(3)/2)}, {sqrt(6)/6 +0.1*(1/2)}, 1/6) circle (0.05em);
    
    \filldraw [very thin, fill= orange, opacity = 0.2] (o) -- (a1) -- (cd) -- (b1) -- (o);
    \filldraw [very thin, fill= orange, opacity = 0.2] (o) -- (b1) -- (ac) -- (d1) -- (o);
    \filldraw [very thin, fill= orange, opacity = 0.2] (o) -- (b1) -- (ad) -- (c1) -- (o);
    \filldraw [very thin, fill= orange, opacity = 0.2] (o) -- (a1) -- (bc) -- (d1) -- (o);
    \filldraw [very thin, fill= orange, opacity = 0.2] (o) -- (a1) -- (bd) -- (c1) -- (o);
    
    \draw[red, very thick] (o) -- (a1);
    \draw[red, very thick] (o) -- (b1);
    \draw[red, very thick] (o) -- (c1);
    \draw[red, very thick] (o) -- (d1);

    \filldraw[red] (o) circle (0.15em);

    \filldraw [very thin, fill= orange, opacity = 0.2] (o) -- (c1) -- (ab) -- (d1) -- (o);

    \draw[very thick, ->] ({sqrt(2)/6 -0.1*(sqrt(3)/2)}, {sqrt(6)/6 -0.1*(1/2)}, 1/6) -- ({sqrt(2)/6 -0.1*(sqrt(3)/2)}, {sqrt(6)/6 -0.1*(1/2)}, {1/2 + 0.5}) -- ({-sqrt(2)/4}, {sqrt(6)/4}, {1/2 + 0.5});
     \draw[orange!20, line width=0.1cm] ({0.3*(-sqrt(2)/4) + 0.7*(sqrt(2)/6 +0.1*(sqrt(3)/2))}, {0.3*(-sqrt(6)/4)+0.7*(sqrt(6)/6 +0.1*(1/2))}, {1/2 + 0.5}) -- ({sqrt(2)/6 +0.1*(sqrt(3)/2)}, {sqrt(6)/6 +0.1*(1/2)}, {1/2 + 0.5});
    \draw[very thick] ({-sqrt(2)/4}, {-sqrt(6)/4}, {1/2 + 0.5}) -- ({sqrt(2)/6 +0.1*(sqrt(3)/2)}, {sqrt(6)/6 +0.1*(1/2)}, {1/2 + 0.5}) -- ({sqrt(2)/6 +0.1*(sqrt(3)/2)}, {sqrt(6)/6 +0.1*(1/2)}, 1/6); 
    
    \node[anchor=south] at ({-sqrt(2)/4}, {-sqrt(6)/4}, {1/2 + 0.5}) {$1$};
    \node[anchor=south] at ({-sqrt(2)/4}, {sqrt(6)/4}, {1/2 + 0.5}) {$2$};

    \node[anchor = south east] at (ad) {$\theta'$};
    \node[anchor = south west] at (ac) {$\theta$};
    \node[anchor = south] at ({sqrt(2)}, -0.1, 1) {$\theta''$};
\end{scope}
\end{tikzpicture}
}}
\;+\;
a^{-\frac{\theta'}{\pi}}
\vcenter{\hbox{
\tdplotsetmaincoords{60}{80}
\begin{tikzpicture}[tdplot_main_coords]
\begin{scope}[scale = 1.0, tdplot_main_coords]
    \coordinate (o) at (0, 0, 0);
    \coordinate (a) at (0, 0, 3);
    \coordinate (a1) at (0, 0, -1);
    
    \coordinate (b) at ({2*sqrt(2)}, 0, -1);
    \coordinate (b1) at ({-2*sqrt(2)/3}, 0, 1/3);
    
    \coordinate (c) at ({-sqrt(2)}, {sqrt(6)}, -1);
    \coordinate (c1) at ({sqrt(2)/3}, {-sqrt(6)/3}, 1/3);
    
    \coordinate (d) at ({-sqrt(2)}, {-sqrt(6)}, -1);
    \coordinate (d1) at ({sqrt(2)/3}, {sqrt(6)/3}, 1/3);
    
    \coordinate (ab) at ({sqrt(2)}, 0, 1);
    \coordinate (ac) at ({-sqrt(2)/2}, {sqrt(6)/2}, 1);
    \coordinate (ad) at ({-sqrt(2)/2}, {-sqrt(6)/2}, 1);
    \coordinate (bc) at ({sqrt(2)/2}, {sqrt(6)/2}, -1);
    \coordinate (bd) at ({sqrt(2)/2}, {-sqrt(6)/2}, -1);
    \coordinate (cd) at ({-sqrt(2)}, 0, -1);

    \draw[white, ultra thick] (o) -- (a1);

    \draw[very thick] ({sqrt(2)/6 -0.1*(sqrt(3)/2)}, {-sqrt(6)/6 -0.1*(1/2)}, 1/6) .. controls ({sqrt(2)/6}, {-sqrt(6)/6}, {1/6-0.1}) .. ({sqrt(2)/6 +0.1*(sqrt(3)/2)}, {-sqrt(6)/6 +0.1*(1/2)}, 1/6);
    \filldraw ({sqrt(2)/6 -0.1*(sqrt(3)/2)}, {-sqrt(6)/6 -0.1*(1/2)}, 1/6) circle (0.05em);
    \filldraw ({sqrt(2)/6 +0.1*(sqrt(3)/2)}, {-sqrt(6)/6 +0.1*(1/2)}, 1/6) circle (0.05em);
    
    \filldraw [very thin, fill= orange, opacity = 0.2] (o) -- (a1) -- (cd) -- (b1) -- (o);
    \filldraw [very thin, fill= orange, opacity = 0.2] (o) -- (b1) -- (ac) -- (d1) -- (o);
    \filldraw [very thin, fill= orange, opacity = 0.2] (o) -- (b1) -- (ad) -- (c1) -- (o);
    \filldraw [very thin, fill= orange, opacity = 0.2] (o) -- (a1) -- (bc) -- (d1) -- (o);
    \filldraw [very thin, fill= orange, opacity = 0.2] (o) -- (a1) -- (bd) -- (c1) -- (o);
    
    \draw[red, very thick] (o) -- (a1);
    \draw[red, very thick] (o) -- (b1);
    \draw[red, very thick] (o) -- (c1);
    \draw[red, very thick] (o) -- (d1);

    \filldraw[red] (o) circle (0.15em);

    \filldraw [very thin, fill= orange, opacity = 0.2] (o) -- (c1) -- (ab) -- (d1) -- (o);

    \draw[very thick] ({-sqrt(2)/4}, {-sqrt(6)/4}, {1/2 + 0.5}) -- ({sqrt(2)/6 -0.1*(sqrt(3)/2)}, {-sqrt(6)/6 -0.1*(1/2)}, {1/2 + 0.5}) -- ({sqrt(2)/6 -0.1*(sqrt(3)/2)}, {-sqrt(6)/6 -0.1*(1/2)}, 1/6); 
    \draw[very thick, ->] ({sqrt(2)/6 +0.1*(sqrt(3)/2)}, {-sqrt(6)/6 +0.1*(1/2)}, 1/6) -- ({sqrt(2)/6 +0.1*(sqrt(3)/2)}, {-sqrt(6)/6 +0.1*(1/2)}, {1/2 + 0.5}) -- ({-sqrt(2)/4}, {sqrt(6)/4}, {1/2 + 0.5});
    
    \node[anchor=south] at ({-sqrt(2)/4}, {-sqrt(6)/4}, {1/2 + 0.5}) {$1$};
    \node[anchor=south] at ({-sqrt(2)/4}, {sqrt(6)/4}, {1/2 + 0.5}) {$2$};

    \node[anchor = south east] at (ad) {$\theta'$};
    \node[anchor = south west] at (ac) {$\theta$};
    \node[anchor = south west] at (ab) {$\theta''$};
\end{scope}
\end{tikzpicture}
}}
$
\caption{The 3-term relation \eqref{eq:skeinrel5} near the cone point}
\label{fig:3termrelation}
\end{figure}
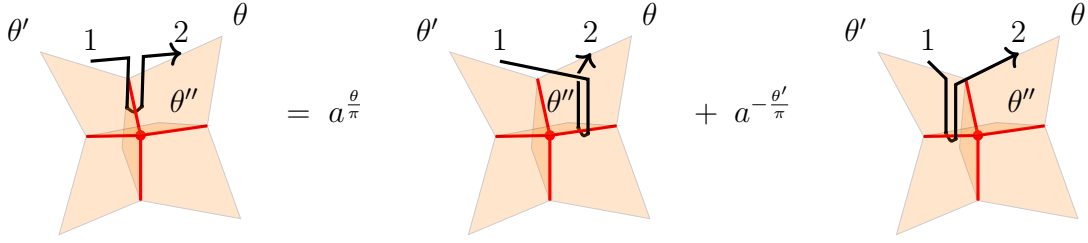
\begin{figure}
\centering
$
\vcenter{\hbox{
\begin{tikzpicture}[scale=0.8]
\begin{scope}
    \filldraw[red] (0, 0) circle (0.05);
    \draw[lightgray] (0, 1) -- ({sqrt(3)/2}, 1/2) -- ({sqrt(3)/2}, -1/2) -- (0, -1) -- ({-sqrt(3)/2}, -1/2) -- ({-sqrt(3)/2}, 1/2) -- cycle;
    \draw[orange, dotted] ({-sqrt(3)/4}, -3/4) -- ({sqrt(3)/4}, 3/4);
    \draw[orange, dotted] ({-sqrt(3)/2}, 0) -- ({sqrt(3)/2}, 0);
    \draw[orange, dotted] ({-sqrt(3)/4}, 3/4) -- ({sqrt(3)/4}, -3/4);
    \node[gray] at ({sqrt(3)/2}, 1/2){\scriptsize $1$};
    \node[gray] at (0, 1){\scriptsize $2$};
    \node[gray] at ({-sqrt(3)/2}, 1/2){\scriptsize $1$};
    \node[gray] at ({-sqrt(3)/2}, -1/2){\scriptsize $2$};
    \node[gray] at (0, -1){\scriptsize $1$};
    \node[gray] at ({sqrt(3)/2}, -1/2){\scriptsize $2$};
    \node[gray] at ({sqrt(3)/2}, 0){$\theta''$};
    \node[gray] at ({sqrt(3)/4}, 3/4){$\theta'$};
\end{scope}
\begin{scope}[shift={({sqrt(3)}, 0)}]
    \filldraw[red] (0, 0) circle (0.05);
    \draw[lightgray] (0, 1) -- ({sqrt(3)/2}, 1/2) -- ({sqrt(3)/2}, -1/2) -- (0, -1) -- ({-sqrt(3)/2}, -1/2) -- ({-sqrt(3)/2}, 1/2) -- cycle;
    \draw[orange, dotted] ({-sqrt(3)/4}, -3/4) -- ({sqrt(3)/4}, 3/4);
    \draw[orange, dotted] ({-sqrt(3)/2}, 0) -- ({sqrt(3)/2}, 0);
    \draw[orange, dotted] ({-sqrt(3)/4}, 3/4) -- ({sqrt(3)/4}, -3/4);
    \node[gray] at ({sqrt(3)/2}, 1/2){\scriptsize $1$};
    \node[gray] at (0, 1){\scriptsize $2$};
    \node[gray] at (0, -1){\scriptsize $1$};
    \node[gray] at ({sqrt(3)/2}, -1/2){\scriptsize $2$};
    \node[gray] at ({-sqrt(3)/4}, 3/4){$\theta$};
\end{scope}
\begin{scope}[shift={({sqrt(3)/2}, 3/2)}]
    \filldraw[red] (0, 0) circle (0.05);
    \draw[lightgray] (0, 1) -- ({sqrt(3)/2}, 1/2) -- ({sqrt(3)/2}, -1/2) -- (0, -1) -- ({-sqrt(3)/2}, -1/2) -- ({-sqrt(3)/2}, 1/2) -- cycle;
    \draw[orange, dotted] ({-sqrt(3)/4}, -3/4) -- ({sqrt(3)/4}, 3/4);
    \draw[orange, dotted] ({-sqrt(3)/2}, 0) -- ({sqrt(3)/2}, 0);
    \draw[orange, dotted] ({-sqrt(3)/4}, 3/4) -- ({sqrt(3)/4}, -3/4);
    \node[gray] at ({sqrt(3)/2}, 1/2){\scriptsize $1$};
    \node[gray] at (0, 1){\scriptsize $2$};
    \node[gray] at ({-sqrt(3)/2}, 1/2){\scriptsize $1$};
\end{scope}
\begin{scope}[shift={({3*sqrt(3)/2}, 3/2)}]
    \filldraw[red] (0, 0) circle (0.05);
    \draw[lightgray] (0, 1) -- ({sqrt(3)/2}, 1/2) -- ({sqrt(3)/2}, -1/2) -- (0, -1) -- ({-sqrt(3)/2}, -1/2) -- ({-sqrt(3)/2}, 1/2) -- cycle;
    \draw[orange, dotted] ({-sqrt(3)/4}, -3/4) -- ({sqrt(3)/4}, 3/4);
    \draw[orange, dotted] ({-sqrt(3)/2}, 0) -- ({sqrt(3)/2}, 0);
    \draw[orange, dotted] ({-sqrt(3)/4}, 3/4) -- ({sqrt(3)/4}, -3/4);
    \node[gray] at ({sqrt(3)/2}, 1/2){\scriptsize $1$};
    \node[gray] at (0, 1){\scriptsize $2$};
    \node[gray] at ({sqrt(3)/2}, -1/2){\scriptsize $2$};
\end{scope}
\draw[->, thick] ($({sqrt(3)/2}, 1)$) to[out=90, in=-90] ($({sqrt(3)/2 - 0.2}, 3/2)$);
\draw[thick] ($({sqrt(3)/2 - 0.2}, 3/2)$) to[out=90, in=-90] ($({sqrt(3)/2}, 2)$);
\node at ({sqrt(3)/2}, 1){\scriptsize $\otimes$};
\node at ({sqrt(3)/2}, 2){\scriptsize $\odot$}; 
\end{tikzpicture}
}}
\;=\;
a^{\frac{\theta}{\pi}}
\vcenter{\hbox{
\begin{tikzpicture}[scale=0.8]
\begin{scope}
    \filldraw[red] (0, 0) circle (0.05);
    \draw[lightgray] (0, 1) -- ({sqrt(3)/2}, 1/2) -- ({sqrt(3)/2}, -1/2) -- (0, -1) -- ({-sqrt(3)/2}, -1/2) -- ({-sqrt(3)/2}, 1/2) -- cycle;
    \draw[orange, dotted] ({-sqrt(3)/4}, -3/4) -- ({sqrt(3)/4}, 3/4);
    \draw[orange, dotted] ({-sqrt(3)/2}, 0) -- ({sqrt(3)/2}, 0);
    \draw[orange, dotted] ({-sqrt(3)/4}, 3/4) -- ({sqrt(3)/4}, -3/4);
    \node[gray] at ({sqrt(3)/2}, 1/2){\scriptsize $1$};
    \node[gray] at (0, 1){\scriptsize $2$};
    \node[gray] at ({-sqrt(3)/2}, 1/2){\scriptsize $1$};
    \node[gray] at ({-sqrt(3)/2}, -1/2){\scriptsize $2$};
    \node[gray] at (0, -1){\scriptsize $1$};
    \node[gray] at ({sqrt(3)/2}, -1/2){\scriptsize $2$};
    \node[gray] at ({sqrt(3)/2}, 0){$\theta''$};
    \node[gray] at ({sqrt(3)/4}, 3/4){$\theta'$};
\end{scope}
\begin{scope}[shift={({sqrt(3)}, 0)}]
    \filldraw[red] (0, 0) circle (0.05);
    \draw[lightgray] (0, 1) -- ({sqrt(3)/2}, 1/2) -- ({sqrt(3)/2}, -1/2) -- (0, -1) -- ({-sqrt(3)/2}, -1/2) -- ({-sqrt(3)/2}, 1/2) -- cycle;
    \draw[orange, dotted] ({-sqrt(3)/4}, -3/4) -- ({sqrt(3)/4}, 3/4);
    \draw[orange, dotted] ({-sqrt(3)/2}, 0) -- ({sqrt(3)/2}, 0);
    \draw[orange, dotted] ({-sqrt(3)/4}, 3/4) -- ({sqrt(3)/4}, -3/4);
    \node[gray] at ({sqrt(3)/2}, 1/2){\scriptsize $1$};
    \node[gray] at (0, 1){\scriptsize $2$};
    \node[gray] at (0, -1){\scriptsize $1$};
    \node[gray] at ({sqrt(3)/2}, -1/2){\scriptsize $2$};
    \node[gray] at ({-sqrt(3)/4}, 3/4){$\theta$};
\end{scope}
\begin{scope}[shift={({sqrt(3)/2}, 3/2)}]
    \filldraw[red] (0, 0) circle (0.05);
    \draw[lightgray] (0, 1) -- ({sqrt(3)/2}, 1/2) -- ({sqrt(3)/2}, -1/2) -- (0, -1) -- ({-sqrt(3)/2}, -1/2) -- ({-sqrt(3)/2}, 1/2) -- cycle;
    \draw[orange, dotted] ({-sqrt(3)/4}, -3/4) -- ({sqrt(3)/4}, 3/4);
    \draw[orange, dotted] ({-sqrt(3)/2}, 0) -- ({sqrt(3)/2}, 0);
    \draw[orange, dotted] ({-sqrt(3)/4}, 3/4) -- ({sqrt(3)/4}, -3/4);
    \node[gray] at ({sqrt(3)/2}, 1/2){\scriptsize $1$};
    \node[gray] at (0, 1){\scriptsize $2$};
    \node[gray] at ({-sqrt(3)/2}, 1/2){\scriptsize $1$};
\end{scope}
\begin{scope}[shift={({3*sqrt(3)/2}, 3/2)}]
    \filldraw[red] (0, 0) circle (0.05);
    \draw[lightgray] (0, 1) -- ({sqrt(3)/2}, 1/2) -- ({sqrt(3)/2}, -1/2) -- (0, -1) -- ({-sqrt(3)/2}, -1/2) -- ({-sqrt(3)/2}, 1/2) -- cycle;
    \draw[orange, dotted] ({-sqrt(3)/4}, -3/4) -- ({sqrt(3)/4}, 3/4);
    \draw[orange, dotted] ({-sqrt(3)/2}, 0) -- ({sqrt(3)/2}, 0);
    \draw[orange, dotted] ({-sqrt(3)/4}, 3/4) -- ({sqrt(3)/4}, -3/4);
    \node[gray] at ({sqrt(3)/2}, 1/2){\scriptsize $1$};
    \node[gray] at (0, 1){\scriptsize $2$};
    \node[gray] at ({sqrt(3)/2}, -1/2){\scriptsize $2$};
\end{scope}
\node at ({sqrt(3)/2}, 1){\scriptsize $\otimes$};
\node at ({sqrt(3)/2}, 2){\scriptsize $\odot$}; 
\draw[->, thick] ($({sqrt(3)/2}, 1)$) to[out=-90, in=120] ($({7/8*sqrt(3)}, -1/8)$);
\draw[thick] ($({7/8*sqrt(3)}, -1/8)$) to[out=-60, in=120] ($({11/8*sqrt(3)}, -5/8)$); 
\draw[thick] ($({3/8*sqrt(3)}, 19/8)$) to[out=-60, in=90] ($({sqrt(3)/2}, 2)$); 
\end{tikzpicture}
}}
\;+\;
a^{-\frac{\theta'}{\pi}}
\vcenter{\hbox{
\begin{tikzpicture}[scale=0.8]
\begin{scope}
    \filldraw[red] (0, 0) circle (0.05);
    \draw[lightgray] (0, 1) -- ({sqrt(3)/2}, 1/2) -- ({sqrt(3)/2}, -1/2) -- (0, -1) -- ({-sqrt(3)/2}, -1/2) -- ({-sqrt(3)/2}, 1/2) -- cycle;
    \draw[orange, dotted] ({-sqrt(3)/4}, -3/4) -- ({sqrt(3)/4}, 3/4);
    \draw[orange, dotted] ({-sqrt(3)/2}, 0) -- ({sqrt(3)/2}, 0);
    \draw[orange, dotted] ({-sqrt(3)/4}, 3/4) -- ({sqrt(3)/4}, -3/4);
    \node[gray] at ({sqrt(3)/2}, 1/2){\scriptsize $1$};
    \node[gray] at (0, 1){\scriptsize $2$};
    \node[gray] at ({-sqrt(3)/2}, 1/2){\scriptsize $1$};
    \node[gray] at ({-sqrt(3)/2}, -1/2){\scriptsize $2$};
    \node[gray] at (0, -1){\scriptsize $1$};
    \node[gray] at ({sqrt(3)/2}, -1/2){\scriptsize $2$};
    \node[gray] at ({sqrt(3)/2}, 0){$\theta''$};
    \node[gray] at ({sqrt(3)/4}, 3/4){$\theta'$};
\end{scope}
\begin{scope}[shift={({sqrt(3)}, 0)}]
    \filldraw[red] (0, 0) circle (0.05);
    \draw[lightgray] (0, 1) -- ({sqrt(3)/2}, 1/2) -- ({sqrt(3)/2}, -1/2) -- (0, -1) -- ({-sqrt(3)/2}, -1/2) -- ({-sqrt(3)/2}, 1/2) -- cycle;
    \draw[orange, dotted] ({-sqrt(3)/4}, -3/4) -- ({sqrt(3)/4}, 3/4);
    \draw[orange, dotted] ({-sqrt(3)/2}, 0) -- ({sqrt(3)/2}, 0);
    \draw[orange, dotted] ({-sqrt(3)/4}, 3/4) -- ({sqrt(3)/4}, -3/4);
    \node[gray] at ({sqrt(3)/2}, 1/2){\scriptsize $1$};
    \node[gray] at (0, 1){\scriptsize $2$};
    \node[gray] at (0, -1){\scriptsize $1$};
    \node[gray] at ({sqrt(3)/2}, -1/2){\scriptsize $2$};
    \node[gray] at ({-sqrt(3)/4}, 3/4){$\theta$};
\end{scope}
\begin{scope}[shift={({sqrt(3)/2}, 3/2)}]
    \filldraw[red] (0, 0) circle (0.05);
    \draw[lightgray] (0, 1) -- ({sqrt(3)/2}, 1/2) -- ({sqrt(3)/2}, -1/2) -- (0, -1) -- ({-sqrt(3)/2}, -1/2) -- ({-sqrt(3)/2}, 1/2) -- cycle;
    \draw[orange, dotted] ({-sqrt(3)/4}, -3/4) -- ({sqrt(3)/4}, 3/4);
    \draw[orange, dotted] ({-sqrt(3)/2}, 0) -- ({sqrt(3)/2}, 0);
    \draw[orange, dotted] ({-sqrt(3)/4}, 3/4) -- ({sqrt(3)/4}, -3/4);
    \node[gray] at ({sqrt(3)/2}, 1/2){\scriptsize $1$};
    \node[gray] at (0, 1){\scriptsize $2$};
    \node[gray] at ({-sqrt(3)/2}, 1/2){\scriptsize $1$};
\end{scope}
\begin{scope}[shift={({3*sqrt(3)/2}, 3/2)}]
    \filldraw[red] (0, 0) circle (0.05);
    \draw[lightgray] (0, 1) -- ({sqrt(3)/2}, 1/2) -- ({sqrt(3)/2}, -1/2) -- (0, -1) -- ({-sqrt(3)/2}, -1/2) -- ({-sqrt(3)/2}, 1/2) -- cycle;
    \draw[orange, dotted] ({-sqrt(3)/4}, -3/4) -- ({sqrt(3)/4}, 3/4);
    \draw[orange, dotted] ({-sqrt(3)/2}, 0) -- ({sqrt(3)/2}, 0);
    \draw[orange, dotted] ({-sqrt(3)/4}, 3/4) -- ({sqrt(3)/4}, -3/4);
    \node[gray] at ({sqrt(3)/2}, 1/2){\scriptsize $1$};
    \node[gray] at (0, 1){\scriptsize $2$};
    \node[gray] at ({sqrt(3)/2}, -1/2){\scriptsize $2$};
\end{scope}
\node at ({sqrt(3)/2}, 1){\scriptsize $\otimes$};
\node at ({sqrt(3)/2}, 2){\scriptsize $\odot$}; 
\draw[thick, ->] ($({sqrt(3)/2}, 1)$) to[out=-90, in=60] ($({-sqrt(3)/8}, 1/8)$);
\draw[thick] ({-sqrt(3)/8}, 1/8) -- ({-3/8*sqrt(3)}, -5/8);
\draw[thick] ($({5/8*sqrt(3)}, 19/8)$) to[out=-120, in=90] ($({sqrt(3)/2}, 2)$);
\end{tikzpicture}
}}
$
\caption{The 3-term relation \eqref{eq:skeinrel5} drawn on the projection to the torus}
\label{fig:3termrelation_torus}
\end{figure}
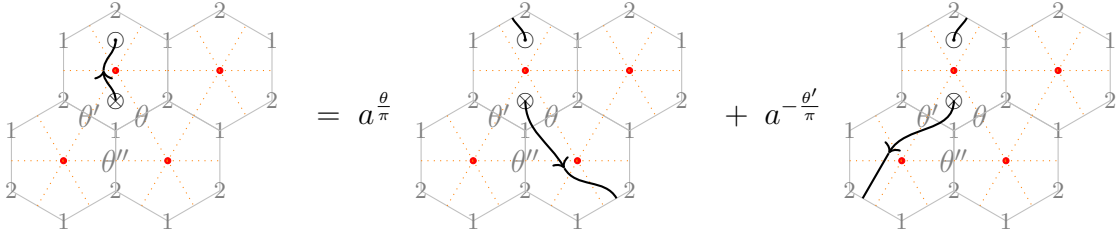
\end{defn}

\subsection{Specializing the singular skein}

Following \cite{Lackenby-taut}, we say that a generalized angle structure $\Theta: \mathbf{a}(\Delta) \to \mathbb{R}$ is {\em taut} if it maps each angle symbol to $0$ or $\pi$.\footnote{To be precise, our notion of ``taut'' is weaker than that of Lackenby, in that Lackenby's notion also requires a choice of transverse orientation. 
If we are also given a transverse orientation, then half-integer powers of $a$ would not appear in our skein trace map. See also Remark \ref{rmk:transverse-orientation}.} 

\begin{figure}
\centering
$\vcenter{\hbox{
\tdplotsetmaincoords{60}{80}
\begin{tikzpicture}[tdplot_main_coords]
\begin{scope}[scale = 0.8, tdplot_main_coords]
    \coordinate (o) at (0, 0, 0);
    \coordinate (a) at (0, 0, 3);
    \coordinate (a1) at (0, 0, -1);
    
    \coordinate (b) at ({2*sqrt(2)}, 0, -1);
    \coordinate (b1) at ({-2*sqrt(2)/3}, 0, 1/3);
    
    \coordinate (c) at ({-sqrt(2)}, {sqrt(6)}, -1);
    \coordinate (c1) at ({sqrt(2)/3}, {-sqrt(6)/3}, 1/3);
    
    \coordinate (d) at ({-sqrt(2)}, {-sqrt(6)}, -1);
    \coordinate (d1) at ({sqrt(2)/3}, {sqrt(6)/3}, 1/3);
    
    \coordinate (ab) at ({sqrt(2)}, 0, 1);
    \coordinate (ac) at ({-sqrt(2)/2}, {sqrt(6)/2}, 1);
    \coordinate (ad) at ({-sqrt(2)/2}, {-sqrt(6)/2}, 1);
    \coordinate (bc) at ({sqrt(2)/2}, {sqrt(6)/2}, -1);
    \coordinate (bd) at ({sqrt(2)/2}, {-sqrt(6)/2}, -1);
    \coordinate (cd) at ({-sqrt(2)}, 0, -1);
    
    \draw[very thick] (a) -- (b);
    \draw[very thick] (a) -- (c);
    \draw[very thick] (a) -- (d);
    \draw[very thick] (b) -- (c);
    \draw[very thick] (b) -- (d);
    \draw[very thick, dashed] (c) -- (d);

    \filldraw (a) circle (0.05em);
    \filldraw (b) circle (0.05em);
    \filldraw (c) circle (0.05em);
    \filldraw (d) circle (0.05em);

    \node[anchor=east,  yshift=1.5em, xshift = -0.3em] at (ab){$\pi$};
    \node[anchor=north, xshift = -1em, yshift = -0.3em] at (cd){$\pi$};
    \node[anchor=south west] at (ac){$0$};
    \node[anchor=south east] at (ad){$0$};
    \node[anchor=north west] at (bc){$0$};
    \node[anchor=north east] at (bd){$0$};
\end{scope}
\end{tikzpicture}
}}$
\caption{A taut ideal tetrahedron}
\label{fig:taut_tetrahedron}
\end{figure}
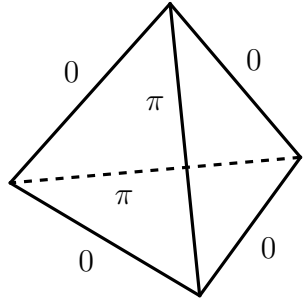

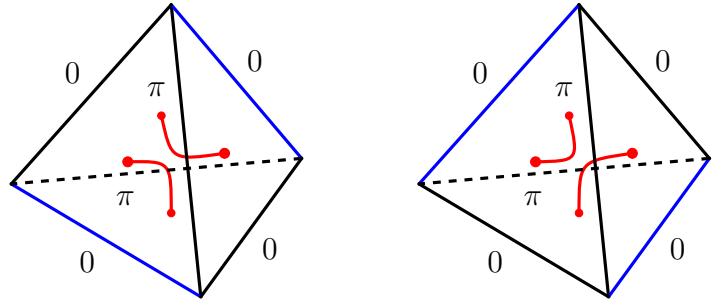
\begin{figure}
\centering
$\vcenter{\hbox{
\tdplotsetmaincoords{60}{80}
\begin{tikzpicture}[tdplot_main_coords]
\begin{scope}[scale = 0.8, tdplot_main_coords]
    \coordinate (o) at (0, 0, 0);
    \coordinate (a) at (0, 0, 3);
    \coordinate (a1) at (0, 0, -1);
    
    \coordinate (b) at ({2*sqrt(2)}, 0, -1);
    \coordinate (b1) at ({-2*sqrt(2)/3}, 0, 1/3);
    
    \coordinate (c) at ({-sqrt(2)}, {sqrt(6)}, -1);
    \coordinate (c1) at ({sqrt(2)/3}, {-sqrt(6)/3}, 1/3);
    
    \coordinate (d) at ({-sqrt(2)}, {-sqrt(6)}, -1);
    \coordinate (d1) at ({sqrt(2)/3}, {sqrt(6)/3}, 1/3);
    
    \coordinate (ab) at ({sqrt(2)}, 0, 1);
    \coordinate (ac) at ({-sqrt(2)/2}, {sqrt(6)/2}, 1);
    \coordinate (ad) at ({-sqrt(2)/2}, {-sqrt(6)/2}, 1);
    \coordinate (bc) at ({sqrt(2)/2}, {sqrt(6)/2}, -1);
    \coordinate (bd) at ({sqrt(2)/2}, {-sqrt(6)/2}, -1);
    \coordinate (cd) at ({-sqrt(2)}, 0, -1);
    
    \draw[red, very thick] (a1) .. controls (o) .. (c1);
    \draw[red, very thick] (b1) .. controls (o) .. (d1);

    \filldraw[red] (a1) circle (0.15em);
    \filldraw[red] (b1) circle (0.15em);
    \filldraw[red] (c1) circle (0.2em);
    \filldraw[red] (d1) circle (0.2em);

    \draw[very thick] (a) -- (b);
    \draw[very thick, blue] (a) -- (c);
    \draw[very thick] (a) -- (d);
    \draw[very thick] (b) -- (c);
    \draw[very thick, blue] (b) -- (d);
    \draw[very thick, dashed] (c) -- (d);

    \filldraw (a) circle (0.05em);
    \filldraw (b) circle (0.05em);
    \filldraw (c) circle (0.05em);
    \filldraw (d) circle (0.05em);

    \node[anchor=east,  yshift=2em, xshift = -0.3em] at (ab){$\pi$};
    \node[anchor=north, xshift = -1em, yshift = -0.3em] at (cd){$\pi$};
    \node[anchor=south west] at (ac){$0$};
    \node[anchor=south east] at (ad){$0$};
    \node[anchor=north west] at (bc){$0$};
    \node[anchor=north east] at (bd){$0$};
\end{scope}
\end{tikzpicture}
}}
\quad\quad
\vcenter{\hbox{
\tdplotsetmaincoords{60}{80}
\begin{tikzpicture}[tdplot_main_coords]
\begin{scope}[scale = 0.8, tdplot_main_coords]
    \coordinate (o) at (0, 0, 0);
    \coordinate (a) at (0, 0, 3);
    \coordinate (a1) at (0, 0, -1);
    
    \coordinate (b) at ({2*sqrt(2)}, 0, -1);
    \coordinate (b1) at ({-2*sqrt(2)/3}, 0, 1/3);
    
    \coordinate (c) at ({-sqrt(2)}, {sqrt(6)}, -1);
    \coordinate (c1) at ({sqrt(2)/3}, {-sqrt(6)/3}, 1/3);
    
    \coordinate (d) at ({-sqrt(2)}, {-sqrt(6)}, -1);
    \coordinate (d1) at ({sqrt(2)/3}, {sqrt(6)/3}, 1/3);
    
    \coordinate (ab) at ({sqrt(2)}, 0, 1);
    \coordinate (ac) at ({-sqrt(2)/2}, {sqrt(6)/2}, 1);
    \coordinate (ad) at ({-sqrt(2)/2}, {-sqrt(6)/2}, 1);
    \coordinate (bc) at ({sqrt(2)/2}, {sqrt(6)/2}, -1);
    \coordinate (bd) at ({sqrt(2)/2}, {-sqrt(6)/2}, -1);
    \coordinate (cd) at ({-sqrt(2)}, 0, -1);

    % \draw[white, ultra thick] (o) -- (a1);
    
    % \draw[red, very thick] (o) -- (a1);
    % \draw[red, very thick] (o) -- (b1);
    % \draw[red, very thick] (o) -- (c1);
    % \draw[red, very thick] (o) -- (d1);
    \draw[red, very thick] (a1) .. controls (o) .. (d1);
    \draw[red, very thick] (b1) .. controls (o) .. (c1);
    % \node at (a) {$a$};
    % \node at (b) {$b$};
    % \node at (c) {$c$};
    % \node at (d) {$d$};

    % \filldraw[red] (o) circle (0.15em);
    \filldraw[red] (a1) circle (0.15em);
    \filldraw[red] (b1) circle (0.15em);
    \filldraw[red] (c1) circle (0.2em);
    \filldraw[red] (d1) circle (0.2em);

    \draw[very thick] (a) -- (b);
    \draw[very thick] (a) -- (c);
    \draw[very thick, blue] (a) -- (d);
    \draw[very thick, blue] (b) -- (c);
    \draw[very thick] (b) -- (d);
    \draw[very thick, dashed] (c) -- (d);

    \filldraw (a) circle (0.05em);
    \filldraw (b) circle (0.05em);
    \filldraw (c) circle (0.05em);
    \filldraw (d) circle (0.05em);

    \node[anchor=east,  yshift=2em, xshift = -0.3em] at (ab){$\pi$};
    \node[anchor=north, xshift = -1em, yshift = -0.3em] at (cd){$\pi$};
    \node[anchor=south west] at (ac){$0$};
    \node[anchor=south east] at (ad){$0$};
    \node[anchor=north west] at (bc){$0$};
    \node[anchor=north east] at (bd){$0$};
\end{scope}
\end{tikzpicture}
}}
$
\caption{Positive (left) and negative (right) compatible tangles drawn in red.  The edges they link are drawn in blue.  The sign is the sign of the angle at a vertex from a $\pi$ edge to the linked (blue) edge, viewed from outside the tetrahedron.}
\label{fig:tetrahedron_resolutions}
\end{figure}

Given a taut structure, we produce a marking (and hence a tangle in $M$, and associated smooth branched double cover) specifying a sign $\{+, -\}$ on each tetrahedron, and following the local rules of Figure \ref{fig:tetrahedron_resolutions}.  We will say a taut triangulation equipped with such a choice of signs is a \emph{signed taut triangulation}.  In this case, as usual, we denote the smooth branched cover 
$\widetilde{M} \rightarrow M$ and its branch locus $\widetilde{\sigma}\subset\widetilde{M}$ over $\sigma\subset M$. 

We write $\Sk(\widetilde{M}^{\mathrm{sing}})_\Theta$ for specialization of the singular skein by using $\Theta$ to evaluate the angle variables. 

\begin{theorem} \label{smoothing skein module}
Fix a taut structure $\Theta$, and a signing so that the corresponding marking is effective.  Then
there is a unique morphism $\Sk(\widetilde{M}^{\mathrm{sing}}; \widetilde{\tau})_\Theta \to \widehat{\Sk}^{\zeta}(\widetilde{M}; \widetilde{\sigma})$ commuting with the map of skeins induced by the inclusion $\widetilde{M}^{\mathrm{sing}} \setminus \widetilde \tau \hookrightarrow \widetilde{M} \setminus \widetilde{\sigma}$. 
\end{theorem}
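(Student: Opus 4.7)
The approach is to first define the map on generators via the inclusion $\widetilde{M}^{\mathrm{sing}} \setminus \widetilde{\tau} \hookrightarrow \widetilde{M} \setminus \widetilde{\sigma}$ induced by the chosen smoothing of each tetrahedron, and then verify that each defining relation of $\Sk(\widetilde{M}^{\mathrm{sing}}; \widetilde{\tau})_\Theta$ is respected in $\widehat{\Sk}^{\zeta}(\widetilde{M}; \widetilde{\sigma})$. Uniqueness of the extension is then automatic, since framed links generate the source module.

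The verification of the relations splits into three parts. The standard HOMFLYPT relations \eqref{eq:skeinrel1}--\eqref{eq:skeinrel3} are supported in balls away from the cone points and from the branch locus, so they hold tautologically in the target. The sign-line relation \eqref{eq:skeinrel4} for $\widetilde{\tau}$ reduces to the same relation for $\widetilde{\sigma}$: after smoothing, each arc of $\widetilde{\tau}$ is isotopic, through a region disjoint from the rest of the link, to the corresponding arc of $\widetilde{\sigma}$, so crossings with the two contribute the same $\pm 1$ signs.

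The main step is the 3-term relation \eqref{eq:skeinrel5}. This is a local computation in a small neighborhood of a single cone point, which after smoothing becomes a solid torus $S^1\times D^2$ (the $S^1$ factor being the cycle collapsed by the marked quadrilateral of Figure \ref{fig:tetrahedron_resolved_Lagrangian}) containing the two smoothed arcs of $\widetilde{\sigma}$. The LHS strand of \eqref{eq:skeinrel5} becomes a single arc in the solid torus entering and exiting at two boundary points lying on different sheets (Definition \ref{defn: sheet labels}). One checks, using a direct isotopy together with \eqref{eq:skeinrel1}, \eqref{eq:skeinrel3}, and \eqref{eq:skeinrel4}, that this arc is equivalent to the sum of the two arcs on the RHS, with the coefficients $a^{\theta/\pi}$ and $a^{-\theta'/\pi}$ appearing via the framing relation \eqref{eq:skeinrel3}. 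In the taut case each exponent takes values in $\{0,\pm 1\}$ and records the signed count of extra framing turns produced by the isotopy, with sign dictated by the orientation convention of Figure \ref{fig:foliation_orientation_convention} and by which of $\theta,\theta',\theta''$ is the marked angle equal to $\pi$. The main obstacle is the case analysis: one case per choice of marked quad, with careful bookkeeping of sheet labels, sign-line crossings, and framing twists. The computation, however, is purely local and combinatorial, so once one case is carried out the others follow by symmetry of the setup.

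Finally, the image lies in the completion $\widehat{\Sk}^{\zeta}$ rather than the uncompleted skein because the singular skein is defined only up to the 3-term relation, and iterating this relation across a fixed cone point (as one might in a reduction to a normal form) produces terms whose $\zeta^{\mathrm{odd}}$-periods increase by at least $2 z_\delta > 0$ per application, by Corollary \ref{cor:z_delta} and the effectivity hypothesis. Hence for each bound $N$, at most finitely many terms $\kappa_\alpha$ satisfy $\int_{[\kappa_\alpha]} \zeta < N$, so any such iterated expression converges in the sense of Definition \ref{defn: completed skein}.
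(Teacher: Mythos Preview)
Your argument has a genuine gap at the key step, the verification of the 3-term relation \eqref{eq:skeinrel5}. You assert that after smoothing, the left-hand strand becomes equivalent to the weighted sum of the two right-hand strands by ``a direct isotopy together with \eqref{eq:skeinrel1}, \eqref{eq:skeinrel3}, and \eqref{eq:skeinrel4}.'' This cannot work. In the smoothed solid torus the three arcs appearing in \eqref{eq:skeinrel5} lie in distinct relative homology classes: they differ by the generators of $H_1(T^2)$, only one of which dies under the Dehn filling, so at least two of the three remain inequivalent. Since the HOMFLYPT relations preserve homology class, no finite sequence of skein moves and isotopies can produce the claimed identity. The relation \eqref{eq:skeinrel5} is simply \emph{false} as a relation in $\Sk(D^2\times S^1;\widetilde{\sigma})$.

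What the paper does instead is not to send a link $K$ to its naive inclusion, but to $K$ together with a skein dilogarithm $\Psi$ (or $\Psi^{-1}$, according to the sign of the tetrahedron) inserted in each of the solid tori that fill the torus boundaries of $\widetilde{M}^\circ$; see Figure~\ref{fig:holomorphic_disks}. The crucial observation is that under the taut specialisation of angles, the 3-term relation \eqref{eq:skeinrel5} becomes precisely the \emph{relative recurrence} \eqref{eq:dilog-relative-recurrence} characterising $\Psi$ (or its analogue for $\Psi^{-1}$). Thus the relation holds not as an identity among bare arcs, but as an identity among arcs threaded through $\Psi$. This is also the real reason the target must be completed: $\Psi$ is an infinite sum whose terms wind with increasing multiplicity around the distinguished cycle, and the effectivity hypothesis guarantees this cycle has positive $\zeta$-period, so the sum converges in $\widehat{\Sk}^\zeta(\widetilde{M};\widetilde{\sigma})$. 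Your explanation of the completion via ``iterating the relation'' misses this point.
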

\begin{proof}
Such a morphism is unique if it exists, since any skein in $\Sk(\widetilde{M}^{\mathrm{sing}}; \widetilde{\tau})_\Theta$ can be represented as a linear combination of links in $\widetilde{M}^{\mathrm{sing}} \setminus \widetilde \tau$. 

Consider $\widetilde{M}^\circ$, the complement in $\widetilde{M}^{\mathrm{sing}}$ of small neighborhoods of the singular points. 
This is a manifold with one boundary torus near each singular point. 
Then $(\widetilde{M}, \widetilde{\sigma})$ is obtained from $(\widetilde{M}^\circ, \widetilde{\tau})$ by gluing in solid tori with appropriate sign lines. 
This is illustrated in Figure \ref{fig:solid_torus}. 

\begin{figure}
\centering
\[
\vcenter{\hbox{
\begin{tikzpicture}
\begin{scope}[scale=0.4]
\draw[very thick, dotted] (0, 0) ellipse (1 and 0.5);
\draw[very thick] (3, 0) arc (0:-180:3 and 1.5);
\draw[very thick, dotted] (3, 0) arc (0:180:3 and 1.5);
\draw[very thick] (3, 0) -- (3, 4);
\draw[very thick] (-3, 0) -- (-3, 4);
\draw[very thick, dotted] (1, 0) -- (1, 4);
\draw[very thick, dotted] (-1, 0) -- (-1, 4);
\filldraw[red] (2, 0) circle (0.1);
\filldraw[red] (2, 4) circle (0.1);
\draw[red, very thick] (2, 0) -- (2, 4);
\filldraw[red] (-2, 0) circle (0.1);
\filldraw[red] (-2, 4) circle (0.1);
\draw[red, very thick] (-2, 0) -- (-2, 4);
\draw[very thick] (0, 4) ellipse (1 and 0.5);
\draw[very thick] (0, 4) ellipse (3 and 1.5);
\end{scope}
\end{tikzpicture}
}}
\quad\overset{2:1}{\rightarrow}\;\;
\vcenter{\hbox{
\tdplotsetmaincoords{65}{52}
\begin{tikzpicture}[tdplot_main_coords]
\begin{scope}[scale = 0.5, tdplot_main_coords]
    \newcommand*{\defcoords}{
        \coordinate (o) at (0, 0, 0);
        \coordinate (a) at (3, 0, 0);
        \coordinate (b) at ({3*cos(90)}, {3*sin(90)}, 0);
        \coordinate (c) at ({3*cos(2*90)}, {3*sin(2*90)}, 0);
        \coordinate (d) at ({3*cos(3*90)}, {3*sin(3*90)}, 0);
        \coordinate (abc) at ($1/3*(a)+1/3*(b)+1/3*(c)$);
        \coordinate (abd) at ($1/3*(a)+1/3*(b)+1/3*(d)$);
        \coordinate (bcd) at ($1/3*(b)+1/3*(c)+1/3*(d)$);
        \coordinate (acd) at ($1/3*(a)+1/3*(c)+1/3*(d)$);
    }
    \defcoords
    \draw[very thick] (a) -- (b);
    \draw[very thick, dotted] (b) -- (c);
    \draw[very thick, dotted] (c) -- (d);
    \draw[very thick] (d) -- (a);
    \draw[very thick] (a) -- (3, 0, 3);
    \draw[very thick] (b) -- ({3*cos(90)}, {3*sin(90)}, 3);
    \draw[very thick, dotted] (c) -- ({3*cos(2*90)}, {3*sin(2*90)}, 3);
    \draw[very thick] (d) -- ({3*cos(3*90)}, {3*sin(3*90)}, 3);
    \filldraw (a) circle (0.05em);
    \filldraw (b) circle (0.05em);
    \filldraw (c) circle (0.05em);
    \filldraw (d) circle (0.05em);
    \draw[white, line width=5] (0.5,0.5,0) -- ($(0.5,0.5,0)+(0,0,3)$);
    \draw[ultra thick, red] (0.5,0.5,0) -- ($(0.5,0.5,0)+(0,0,3)$);
    \filldraw[red] (0.5,0.5,0) circle (0.2em);
    \draw[white, line width=5] (-0.5,-0.5,0) -- ($(-0.5,-0.5,0)+(0,0,3)$);
    \draw[ultra thick, red] (-0.5,-0.5,0) -- ($(-0.5,-0.5,0)+(0,0,3)$);
    \filldraw[red] (-0.5,-0.5,0) circle (0.2em);
    \begin{scope}[shift={(0, 0, 3)}]
        \defcoords
        \draw[very thick] (a) -- (b);
        \draw[very thick] (b) -- (c);
        \draw[very thick] (c) -- (d);
        \draw[very thick] (d) -- (a);
        \filldraw (a) circle (0.05em);
        \filldraw (b) circle (0.05em);
        \filldraw (c) circle (0.05em);
        \filldraw (d) circle (0.05em);
        \filldraw[red] (0.5,0.5,0) circle (0.2em);
        \filldraw[red] (-0.5,-0.5,0) circle (0.2em);
    \end{scope}
\end{scope}
\end{tikzpicture}
}}
\]
\caption{Solid torus double-covering the 3-ball. The top and the bottom faces of the right-hand side each correspond to a pair of faces of the tetrahedron sharing a $\pi$-edge.}
\label{fig:solid_torus}
\end{figure}
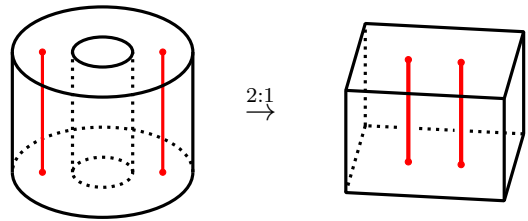

\begin{figure}
\centering
\[
P_{1,0} := 
\vcenter{\hbox{
\begin{tikzpicture}
\begin{scope}[scale=0.4]
\draw[very thick, dotted] (0, 0) ellipse (1 and 0.5);
\draw[very thick] (3, 0) arc (0:-180:3 and 1.5);
\draw[very thick, dotted] (3, 0) arc (0:180:3 and 1.5);
\draw[very thick] (3, 0) -- (3, 4);
\draw[very thick] (-3, 0) -- (-3, 4);
\draw[very thick, dotted] (1, 0) -- (1, 4);
\draw[very thick, dotted] (-1, 0) -- (-1, 4);
\filldraw[red] (2, 0) circle (0.1);
\filldraw[red] (2, 4) circle (0.1);
\filldraw[red] (-2, 0) circle (0.1);
\filldraw[red] (-2, 4) circle (0.1);
\draw[very thick] (0, 4) ellipse (1 and 0.5);
\draw[very thick] (0, 4) ellipse (3 and 1.5);
\draw[ultra thick] ($(0, 4) - ({1/sqrt(2)}, {0.5/sqrt(2)})$) -- ($(0, 4) - ({3/sqrt(2)}, {1.5/sqrt(2)})$) -- ($(0, 0) - ({3/sqrt(2)}, {1.5/sqrt(2)})$) -- ($(0, 0) - ({1/sqrt(2)}, {0.5/sqrt(2)})$) -- cycle;
\draw[ultra thick, ->] ($(0, 0) - ({3/sqrt(2)}, {1.5/sqrt(2)})$) -- ($(0, 2) - ({3/sqrt(2)}, {1.5/sqrt(2)})$);
\end{scope}
\end{tikzpicture}
}}
\;,\quad
P_{0,1} := 
\vcenter{\hbox{
\begin{tikzpicture}
\begin{scope}[scale=0.4]
\draw[very thick, dotted] (0, 0) ellipse (1 and 0.5);
\draw[very thick] (3, 0) arc (0:-180:3 and 1.5);
\draw[very thick, dotted] (3, 0) arc (0:180:3 and 1.5);
\draw[very thick] (3, 0) -- (3, 4);
\draw[very thick] (-3, 0) -- (-3, 4);
\draw[very thick, dotted] (1, 0) -- (1, 4);
\draw[very thick, dotted] (-1, 0) -- (-1, 4);
\filldraw[red] (2, 0) circle (0.1);
\filldraw[red] (2, 4) circle (0.1);
\filldraw[red] (-2, 0) circle (0.1);
\filldraw[red] (-2, 4) circle (0.1);
\draw[very thick] (0, 4) ellipse (1 and 0.5);
\draw[very thick] (0, 4) ellipse (3 and 1.5);
\draw[ultra thick, <-] (0, 3.25) arc (-90:270:1.5 and 0.75);
\end{scope}
\end{tikzpicture}
}}
\]
\caption{Skein operators on the boundary of the solid torus.}
\label{fig:meridian-longitude}
\end{figure}
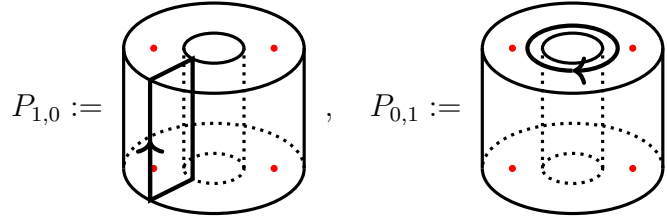

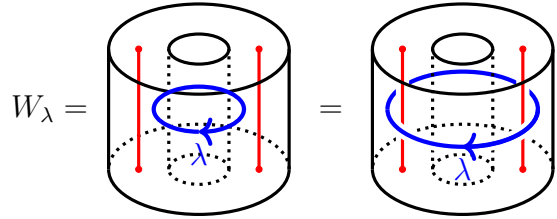
\begin{figure}
\centering
\[
W_{\lambda} = 
\vcenter{\hbox{
\begin{tikzpicture}
\begin{scope}[scale=0.4]
\draw[very thick, dotted] (0, 0) ellipse (1 and 0.5);
\draw[very thick] (3, 0) arc (0:-180:3 and 1.5);
\draw[very thick, dotted] (3, 0) arc (0:180:3 and 1.5);
\draw[very thick] (3, 0) -- (3, 4);
\draw[very thick] (-3, 0) -- (-3, 4);
\draw[very thick, dotted] (1, 0) -- (1, 4);
\draw[very thick, dotted] (-1, 0) -- (-1, 4);
\filldraw[red] (2, 0) circle (0.1);
\filldraw[red] (2, 4) circle (0.1);
\filldraw[red] (-2, 0) circle (0.1);
\filldraw[red] (-2, 4) circle (0.1);
\draw[very thick, red] (2, 0) -- (2, 4);
\draw[very thick, red] (-2, 0) -- (-2, 4);
\draw[very thick] (0, 4) ellipse (1 and 0.5);
\draw[very thick] (0, 4) ellipse (3 and 1.5);
\draw[blue, ultra thick, <-] (0, 1.25) arc (-90:270:1.5 and 0.75);
\node[blue, below] at (0, 1.25){$\lambda$};
\end{scope}
\end{tikzpicture}
}}
\;=\;
\vcenter{\hbox{
\begin{tikzpicture}
\begin{scope}[scale=0.4]
\draw[blue, ultra thick] (0, 2) [partial ellipse = 0 : 180 :2.5 and 1.25];
\draw[very thick, dotted] (0, 0) ellipse (1 and 0.5);
\draw[very thick] (3, 0) arc (0:-180:3 and 1.5);
\draw[very thick, dotted] (3, 0) arc (0:180:3 and 1.5);
\draw[very thick] (3, 0) -- (3, 4);
\draw[very thick] (-3, 0) -- (-3, 4);
\draw[very thick, dotted] (1, 0) -- (1, 4);
\draw[very thick, dotted] (-1, 0) -- (-1, 4);
\draw[white, line width=5] (2, 0) -- (2, 4);
\draw[very thick, red] (2, 0) -- (2, 4);
\draw[white, line width=5] (-2, 0) -- (-2, 4);
\draw[very thick, red] (-2, 0) -- (-2, 4);
\filldraw[red] (2, 0) circle (0.1);
\filldraw[red] (2, 4) circle (0.1);
\filldraw[red] (-2, 0) circle (0.1);
\filldraw[red] (-2, 4) circle (0.1);
\draw[very thick] (0, 4) ellipse (1 and 0.5);
\draw[very thick] (0, 4) ellipse (3 and 1.5);
\draw[white, line width=5] (0, 2) [partial ellipse = 0 : -180 :2.5 and 1.25];
\draw[blue, ultra thick] (0, 2) [partial ellipse = 0 : -180 :2.5 and 1.25];
\draw[blue, ultra thick, ->] (0, 2) [partial ellipse = 0 : -90 :2.5 and 1.25];
\node[blue, below] at (0, 0.75){$\lambda$};
\end{scope}
\end{tikzpicture}
}}
\]
\caption{$W_\lambda$ in the solid torus with sign lines}
\label{fig:W_lambda-sign-line}
\end{figure}

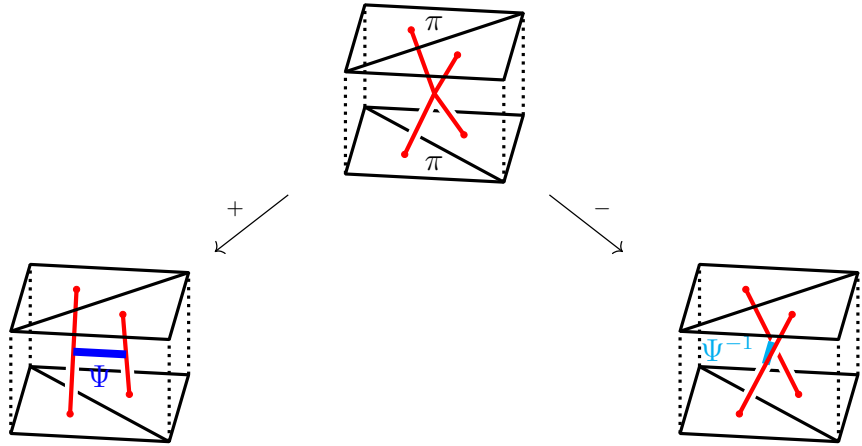
\begin{figure}
\centering
\begin{math}
\begin{tikzcd}
 &
\vcenter{\hbox{
\tdplotsetmaincoords{65}{52}
\begin{tikzpicture}[tdplot_main_coords]
\begin{scope}[scale = 0.5, tdplot_main_coords]
    \newcommand*{\defcoords}{
        \coordinate (o) at (0, 0, 0);
        \coordinate (a) at (3, 0, 0);
        \coordinate (b) at ({3*cos(90)}, {3*sin(90)}, 0);
        \coordinate (c) at ({3*cos(2*90)}, {3*sin(2*90)}, 0);
        \coordinate (d) at ({3*cos(3*90)}, {3*sin(3*90)}, 0);
        \coordinate (abc) at ($1/3*(a)+1/3*(b)+1/3*(c)$);
        \coordinate (abd) at ($1/3*(a)+1/3*(b)+1/3*(d)$);
        \coordinate (bcd) at ($1/3*(b)+1/3*(c)+1/3*(d)$);
        \coordinate (acd) at ($1/3*(a)+1/3*(c)+1/3*(d)$);
    }
    \defcoords
    \draw[very thick] (a) -- (b);
    \draw[very thick] (b) -- (c);
    \draw[very thick] (c) -- (d);
    \draw[very thick] (d) -- (a);
    \draw[very thick] (a) -- (c);
    \draw[very thick, dotted] (a) -- (3, 0, 3);
    \draw[very thick, dotted] (b) -- ({3*cos(90)}, {3*sin(90)}, 3);
    \draw[very thick, dotted] (c) -- ({3*cos(2*90)}, {3*sin(2*90)}, 3);
    \draw[very thick, dotted] (d) -- ({3*cos(3*90)}, {3*sin(3*90)}, 3);
    \filldraw (a) circle (0.05em);
    \filldraw (b) circle (0.05em);
    \filldraw (c) circle (0.05em);
    \filldraw (d) circle (0.05em);
    \draw[white, line width=5] (abc) -- (0, 0, 1.5);
    \draw[white, line width=5] (acd) -- (0, 0, 1.5);
    \draw[ultra thick, red] (abc) -- (0, 0, 1.5);
    \draw[ultra thick, red] (acd) -- (0, 0, 1.5);
    \filldraw[red] (abc) circle (0.2em);
    \filldraw[red] (acd) circle (0.2em);
    \node[below] at (o){$\pi$};
    \begin{scope}[shift={(0, 0, 3)}]
        \defcoords
        \draw[ultra thick, red] (abd) -- (0, 0, -1.5);
        \draw[ultra thick, red] (bcd) -- (0, 0, -1.5);
        \draw[very thick] (a) -- (b);
        \draw[very thick] (b) -- (c);
        \draw[very thick] (c) -- (d);
        \draw[very thick] (d) -- (a);
        \draw[very thick] (b) -- (d);
        \filldraw (a) circle (0.05em);
        \filldraw (b) circle (0.05em);
        \filldraw (c) circle (0.05em);
        \filldraw (d) circle (0.05em);
        \filldraw[red] (abd) circle (0.2em);
        \filldraw[red] (bcd) circle (0.2em);
        \node[above] at (o){$\pi$};
    \end{scope}
\end{scope}
\end{tikzpicture}
}}
 \arrow[dl, "+", swap] \arrow[dr, "-"] & \\
\vcenter{\hbox{
\tdplotsetmaincoords{65}{52}
\begin{tikzpicture}[tdplot_main_coords]
\begin{scope}[scale = 0.5, tdplot_main_coords]
    \newcommand*{\defcoords}{
        \coordinate (o) at (0, 0, 0);
        \coordinate (a) at (3, 0, 0);
        \coordinate (b) at ({3*cos(90)}, {3*sin(90)}, 0);
        \coordinate (c) at ({3*cos(2*90)}, {3*sin(2*90)}, 0);
        \coordinate (d) at ({3*cos(3*90)}, {3*sin(3*90)}, 0);
        \coordinate (abc) at ($1/3*(a)+1/3*(b)+1/3*(c)$);
        \coordinate (abd) at ($1/3*(a)+1/3*(b)+1/3*(d)$);
        \coordinate (bcd) at ($1/3*(b)+1/3*(c)+1/3*(d)$);
        \coordinate (acd) at ($1/3*(a)+1/3*(c)+1/3*(d)$);
    }
    \defcoords
    \draw[very thick] (a) -- (b);
    \draw[very thick] (b) -- (c);
    \draw[very thick] (c) -- (d);
    \draw[very thick] (d) -- (a);
    \draw[very thick] (a) -- (c);
    \draw[very thick, dotted] (a) -- (3, 0, 3);
    \draw[very thick, dotted] (b) -- ({3*cos(90)}, {3*sin(90)}, 3);
    \draw[very thick, dotted] (c) -- ({3*cos(2*90)}, {3*sin(2*90)}, 3);
    \draw[very thick, dotted] (d) -- ({3*cos(3*90)}, {3*sin(3*90)}, 3);
    \filldraw (a) circle (0.05em);
    \filldraw (b) circle (0.05em);
    \filldraw (c) circle (0.05em);
    \filldraw (d) circle (0.05em);
    \draw[white, line width=5] (abc) -- ($(abd)+(0,0,3)$);
    \draw[ultra thick, red] (abc) -- ($(abd)+(0,0,3)$);
    \filldraw[red] (abc) circle (0.2em);
    \draw[white, line width=5] (acd) -- ($(bcd)+(0,0,3)$);
    \draw[ultra thick, red] (acd) -- ($(bcd)+(0,0,3)$);
    \filldraw[red] (acd) circle (0.2em);
    \draw[blue, line width=3] ($1/2*(abc) + 1/2*(abd) + (0, 0, 3/2)$) -- ($1/2*(acd) + 1/2*(bcd) + (0, 0, 3/2)$);
    \node[blue, anchor=north] at ($1/4*(abc) + 1/4*(abd) + 1/4*(acd) + 1/4*(bcd) + (0, 0, 3/2)$){$\Psi$};
    \begin{scope}[shift={(0, 0, 3)}]
        \defcoords
        \draw[very thick] (a) -- (b);
        \draw[very thick] (b) -- (c);
        \draw[very thick] (c) -- (d);
        \draw[very thick] (d) -- (a);
        \draw[very thick] (b) -- (d);
        \filldraw (a) circle (0.05em);
        \filldraw (b) circle (0.05em);
        \filldraw (c) circle (0.05em);
        \filldraw (d) circle (0.05em);
        \filldraw[red] (abd) circle (0.2em);
        \filldraw[red] (bcd) circle (0.2em);
    \end{scope}
\end{scope}
\end{tikzpicture}
}}
& & \vcenter{\hbox{
\tdplotsetmaincoords{65}{52}
\begin{tikzpicture}[tdplot_main_coords]
\begin{scope}[scale = 0.5, tdplot_main_coords]
    \newcommand*{\defcoords}{
        \coordinate (o) at (0, 0, 0);
        \coordinate (a) at (3, 0, 0);
        \coordinate (b) at ({3*cos(90)}, {3*sin(90)}, 0);
        \coordinate (c) at ({3*cos(2*90)}, {3*sin(2*90)}, 0);
        \coordinate (d) at ({3*cos(3*90)}, {3*sin(3*90)}, 0);
        \coordinate (abc) at ($1/3*(a)+1/3*(b)+1/3*(c)$);
        \coordinate (abd) at ($1/3*(a)+1/3*(b)+1/3*(d)$);
        \coordinate (bcd) at ($1/3*(b)+1/3*(c)+1/3*(d)$);
        \coordinate (acd) at ($1/3*(a)+1/3*(c)+1/3*(d)$);
    }
    \defcoords
    \draw[very thick] (a) -- (b);
    \draw[very thick] (b) -- (c);
    \draw[very thick] (c) -- (d);
    \draw[very thick] (d) -- (a);
    \draw[very thick] (a) -- (c);
    \draw[very thick, dotted] (a) -- (3, 0, 3);
    \draw[very thick, dotted] (b) -- ({3*cos(90)}, {3*sin(90)}, 3);
    \draw[very thick, dotted] (c) -- ({3*cos(2*90)}, {3*sin(2*90)}, 3);
    \draw[very thick, dotted] (d) -- ({3*cos(3*90)}, {3*sin(3*90)}, 3);
    \filldraw (a) circle (0.05em);
    \filldraw (b) circle (0.05em);
    \filldraw (c) circle (0.05em);
    \filldraw (d) circle (0.05em);
    \draw[white, line width=5] (abc) -- ($(bcd)+(0,0,3)$);
    \draw[ultra thick, red] (abc) -- ($(bcd)+(0,0,3)$);
    \filldraw[red] (abc) circle (0.2em);
    \draw[white, line width=5] (acd) -- ($(abd)+(0,0,3)$);
    \draw[cyan, line width=3] ($1/2*(abc) + 1/2*(bcd) + (0, 0, 3/2)$) -- ($1/2*(acd) + 1/2*(abd) + (0, 0, 3/2)$);
    \node[cyan, anchor=east] at ($1/4*(abc) + 1/4*(bcd) + 1/4*(acd) + 1/4*(abd) + (0, 0, 3/2)$){$\Psi^{-1}$};
    \draw[ultra thick, red] (acd) -- ($(abd)+(0,0,3)$);
    \filldraw[red] (acd) circle (0.2em);
    \begin{scope}[shift={(0, 0, 3)}]
        \defcoords
        \draw[very thick] (a) -- (b);
        \draw[very thick] (b) -- (c);
        \draw[very thick] (c) -- (d);
        \draw[very thick] (d) -- (a);
        \draw[very thick] (b) -- (d);
        \filldraw (a) circle (0.05em);
        \filldraw (b) circle (0.05em);
        \filldraw (c) circle (0.05em);
        \filldraw (d) circle (0.05em);
        \filldraw[red] (abd) circle (0.2em);
        \filldraw[red] (bcd) circle (0.2em);
    \end{scope}
\end{scope}
\end{tikzpicture}
}}
\end{tikzcd}
\end{math}
\caption{Resolving the singular point by inserting a skein dilogarithm}
\label{fig:holomorphic_disks}
\end{figure}

Now, by definition, $\Sk(\widetilde{M}^{\mathrm{sing}}; \widetilde{\tau})$ is the quotient of 
$\Sk(\widetilde{M}^{\circ}; \widetilde{\tau})$
by the 3-term relation \eqref{eq:skeinrel5} (imposed near each singularity). 
Thus a map from $\Sk(\widetilde{M}^{\mathrm{sing}}; \widetilde{\tau})$ is the same as a map from $\Sk(\widetilde{M}^{\circ}; \widetilde{\tau})$ which annihilates this relation. 

But with the specialization of angles, the 3-term skein relation \eqref{eq:skeinrel5} becomes the 3-term recurrence relation \eqref{eq:dilog-relative-recurrence} satisfied by the skein dilogarithm in case of the positive resolution, or the corresponding recurrence relation for its inverse in case of the negative resolution, inserted along the distinguished cycle.\footnote{The sign lines don't change the recursion since we can pull the sign lines away symmetrically. See Figure \ref{fig:meridian-longitude} for the meridian and longitude operators drawn on the boundary torus, and Figure \ref{fig:W_lambda-sign-line} for the elements $W_\lambda$ in the presence of sign lines in the solid torus. 
The orientation of the longitude $P_{0,1}$ is chosen to match that of the distinguished cycle. 
}
Thus, we construct our desired map by gluing a solid torus $D^2\times S^1$ containing a skein dilogarithm $\Psi\in\Sk(D^2\times S^1)$ (or its inverse $\Psi^{-1}\in\Sk(D^2\times S^1)$) cabling the distinguished cycle to each boundary of $\widetilde{M}^\circ$; 
see Figure \ref{fig:holomorphic_disks}. 
\end{proof}

\section{A skein trace for double covers of 3-manifolds}
\label{combinatorial skein trace}

Building on ideas from \cite{Gaiotto-Moore-Neitzke-WKB, Gaiotto-Moore-Neitzke-spectral, Freed-Neitzke} and especially \cite{Neitzke-Yan-nonabelianization, Neitzke-Yan-gl3}, we will prove: 

\begin{thm}\label{thm:UV-IRmap}
Let $M$ be a 3-manifold with ideal triangulation $\Delta$ and associated (singular) branched double cover $\widetilde{M}^{\mathrm{sing}}$ with branch locus $\widetilde{\tau}$.  
Then there is a well-defined $\Z[a^\pm, z^\pm]$-module homomorphism 
\[
[\,\cdot\,]_{\widetilde{M}^{\mathrm{sing}}}^{\mathrm{net}} \colon \mathrm{Sk}_{a^2,z}(M) \rightarrow \mathrm{Sk}_{a, z}(\widetilde{M}^{\mathrm{sing}}; \widetilde{\tau}). 
\]
\end{thm}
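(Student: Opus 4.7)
The plan is to construct $[\cdot]^{\mathrm{net}}_{\widetilde{M}^{\mathrm{sing}}}$ by a spectral-network (nonabelianization) lift in the spirit of Gaiotto–Moore–Neitzke and Neitzke–Yan, upgraded from $\mathfrak{gl}(2)/\mathfrak{gl}(1)$ to HOMFLYPT. Concretely, first isotope a framed oriented link $K\subset M$ into \emph{generic position} with respect to $\Delta$: disjoint from the branch locus $\tau$ (the dual $1$-skeleton), transverse to the branch-cut quadrilaterals of Figure \ref{fig:tetrahedron_branchcut}, transverse to the $3$d spectral network of Figure \ref{fig:tetrahedron_spectralnetwork}, and with every self-crossing of $K$ localized in the bulk of a tetrahedron away from the spectral network. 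The preimage $\pi^{-1}(K)$ is then a framed oriented link in $\widetilde{M}^{\mathrm{sing}}\setminus \widetilde{\tau}$ avoiding the singular cone points, defining a tentative element $[K]^{\mathrm{net}}:=[\pi^{-1}(K)]\in \Sk(\widetilde{M}^{\mathrm{sing}};\widetilde{\tau})$. The two remaining tasks are to check invariance under the choice of generic representative, and to verify descent through the HOMFLYPT relations of $\Sk_{a^2,z}(M)$.

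Generic-isotopy invariance reduces to three types of local moves, each matched with one relation defining the target. Bulk Reidemeister moves lift to Reidemeister moves on the two sheets of $\pi^{-1}(K)$ and are absorbed by the standard HOMFLYPT relations \eqref{eq:skeinrel1}–\eqref{eq:skeinrel3} in $\widetilde{M}^{\mathrm{sing}}$. Sliding a strand across one of the branch-cut quadrilaterals corresponds, in the cover, to moving the strand across $\widetilde{\tau}$; by the sheet-labelling convention of Definition \ref{defn: sheet labels}, this is exactly the sign-line relation \eqref{eq:skeinrel4} through $\widetilde{\tau}$. The crucial move is sliding a strand across one of the twelve triangles of the $3$d spectral network inside a tetrahedron: in the cover this drags a segment of $\pi^{-1}(K)$ past the cone point at the barycenter, and the ``before'' and ``after'' configurations are to be matched using precisely the angle-weighted $3$-term relation \eqref{eq:skeinrel5}, with the $a^{\theta/\pi}$ and $a^{-\theta'/\pi}$ coefficients read off from the dihedral angles of the abutting edges.

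It remains to check compatibility with the HOMFLYPT relations on $M$. The framing normalization $a_M=a^2$ is forced and verified by \eqref{eq:skeinrel3}: a kink in $K\subset M$ lifts to two kinks in $\pi^{-1}(K)$, one per sheet, contributing $a\cdot a = a^2$; the unknot relation \eqref{eq:skeinrel2} is analogous. The substantive point is the crossing relation $K_+ - K_- = z\, K_0$. A bulk crossing in $M$ has preimage consisting of two disjoint crossings in $\widetilde{M}^{\mathrm{sing}}$, one on each sheet, and applying \eqref{eq:skeinrel1} in the cover to the two crossings in sequence produces $z\,\pi^{-1}(K_0)$ together with ``mixed'' terms of the form (crossing on sheet $i$, smoothing on sheet $j$) with $i\neq j$. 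The main obstacle is to show that these mixed terms cancel in $\Sk(\widetilde{M}^{\mathrm{sing}};\widetilde{\tau})$: the plan is to use the deck involution together with the sign-line relation \eqref{eq:skeinrel4} to identify the two mixed terms (with a sign), and then to apply the $3$-term relation \eqref{eq:skeinrel5} at the cone points of the tetrahedra abutting the crossing region, the relations $\theta+\theta'+\theta''=\pi$ and the gluing sums $\sum_e \theta = 2\pi$ in the angle group $\mathbf{a}(\Delta)$ performing exactly the combinatorial bookkeeping needed for the $a^{\pm\theta/\pi}$ weights to conspire. An alternative, potentially cleaner, route is to rework the definition so that at each base crossing a single ``dominant'' sheet carries the crossing while the other sheet contributes via a detour through the spectral network; under that presentation the base HOMFLYPT relation lifts to a single-sheet HOMFLYPT relation in the cover, and independence of the dominant-sheet choice is again governed by \eqref{eq:skeinrel5}.
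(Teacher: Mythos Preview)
Your proposal has a genuine gap at the most basic level: the map you define, $K\mapsto [\pi^{-1}(K)]$, is not the right one and does not satisfy the first skein relation. As you compute, a bulk crossing in $M$ lifts to two independent crossings in $\widetilde{M}^{\mathrm{sing}}$, and applying \eqref{eq:skeinrel1} on each sheet in turn gives
\[
\pi^{-1}(K_+)-\pi^{-1}(K_-)=z\bigl[(0,+)+(-,0)\bigr],
\]
where $(0,+)$ means ``smooth on sheet 1, positive crossing on sheet 2'', etc. This is \emph{not} $z\,\pi^{-1}(K_0)=z\,(0,0)$, and the mixed terms do not cancel. The deck involution interchanges $(0,+)$ with $(+,0)$, not with $(-,0)$, so your proposed cancellation mechanism via \eqref{eq:skeinrel4} does not apply; and \eqref{eq:skeinrel5} lives near cone points, not near a bulk crossing. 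Already the trivial double cover of $\R^3$ (Section~\ref{sec: trivial double cover}) shows the naive preimage is wrong: the unknot should map to a sum of single-sheet unknots with $a$-weights, not to the two-component preimage link.

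The paper's construction is genuinely different from your main proposal and is essentially the ``alternative route'' you allude to in your last sentence, but fully worked out. One does not take the preimage; rather one projects $K$ to the leaf space of the foliation and then forms a \emph{sum over all possible lifts} $\widetilde{K}_\beta$, built from three local pieces: direct lifts of arcs to either sheet, \emph{detours} along critical leaves to the branch locus, and \emph{exchanges} at self-crossings of the leaf-space diagram. Each lift carries a monomial weight $c(\widetilde{K}_\beta)$ assembled from a turning-number factor $a^{\mathrm{turn}(\widetilde K_\beta)}$ (this is where the angle-group coefficients enter), an exchange factor $\pm z$ per exchange, and a framing factor $a^{\pm 1/2}$ per framing tangency. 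With this definition, the bulk HOMFLYPT relations go through by the Neitzke--Yan argument verbatim after $(q-q^{-1})\mapsto z$, $q\mapsto a$; the only new check is the move of a strand across a critical leaf ending on the $4$-valent singular point, and there the difference between the two sides is exactly the $3$-term relation \eqref{eq:skeinrel5}. Your intuition that \eqref{eq:skeinrel5} is the decisive relation is correct, but it governs invariance of the sum-over-lifts map under this particular isotopy, not a cancellation inside the naive preimage.
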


Here, ``net'' is to remind us of spectral networks. 
The morphism is an explicit formula for lifting links, described in \eqref{def : F(K)} below.  We prove it is well defined, in Section \ref{sec:q-UV-IR-map}, by checking by hand that the explicit formula respects the skein relations.   The fact that the morphism is nontrivial can be seen already by calculations for the trivial double cover, which we give in Section \ref{sec: trivial double cover}.  By the results of the previous section, we derive the following consequence for smooth double covers: 

\begin{cor}\label{thm:UV-IRmap smooth}
Let $M$ be a 3-manifold equipped with effective signed taut ideal triangulation, 
and let $\widetilde{M} \to M$ be the branched double cover associated to the tangle $\sigma$ it specifies. 
Then there is a well-defined map
\[
[\,\cdot\,]_{\widetilde{M}}^{\mathrm{net}} \colon \mathrm{Sk}_{a^2,z}(M) \rightarrow \widehat{\mathrm{Sk}}_{a, z}(\widetilde{M};\widetilde{\sigma}). 
\]
\end{cor}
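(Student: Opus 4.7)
The idea is simply to compose the two main ingredients already established: Theorem \ref{thm:UV-IRmap}, which constructs the combinatorial spectral-network map into the singular skein, and Theorem \ref{smoothing skein module}, which resolves the singular skein to the completed smooth one after specializing the angle data. The effectivity hypothesis is exactly what is needed to feed into the latter.

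First, I would apply Theorem \ref{thm:UV-IRmap} to the underlying ideal triangulation $\Delta$, producing a homomorphism
\[
[\,\cdot\,]_{\widetilde{M}^{\mathrm{sing}}}^{\mathrm{net}} \colon \mathrm{Sk}_{a^2,z}(M) \longrightarrow \mathrm{Sk}_{a, z}(\widetilde{M}^{\mathrm{sing}};\widetilde{\tau}),
\]
whose target is a module over $\Z[\mathbf{a}(\Delta)][z^{\pm}]$. The signed taut structure associated to our marking is a generalized angle structure $\Theta \colon \mathbf{a}(\Delta) \to \R$ sending $\pi \mapsto \pi$ and each angle symbol to $0$ or $\pi$. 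Specializing the coefficient ring along $\Theta$ converts every factor $a^{\theta/\pi}$ appearing in the local three-term relation \eqref{eq:skeinrel5} into either $1$ or $a$, and thus converts $\Sk_{a,z}(\widetilde{M}^{\mathrm{sing}};\widetilde{\tau})$ into $\Sk_{a,z}(\widetilde{M}^{\mathrm{sing}};\widetilde{\tau})_\Theta$, a module over $\Z[a^{\pm},z^{\pm}]$.

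Next I would invoke the effectivity hypothesis: since the chosen marking is effective, Proposition \ref{prop: existence of smoothing} (via Corollary \ref{cor:z_delta}) yields a closed, bounded, odd $1$-form $\zeta$ on $\widetilde{M}$ with strictly positive angular periods $z_\delta$ on every distinguished cycle. With this $\zeta$ in hand, Theorem \ref{smoothing skein module} supplies the unique specialization map
\[
\Sk_{a,z}(\widetilde{M}^{\mathrm{sing}};\widetilde{\tau})_\Theta \longrightarrow \widehat{\Sk}^{\zeta}_{a,z}(\widetilde{M};\widetilde{\sigma}),
\]
constructed by gluing a copy of the (exponentiated) skein dilogarithm, or its inverse according to the sign, along the distinguished cycle at the boundary torus associated to each singular cone point. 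Composing the three maps produces the desired
\[
[\,\cdot\,]^{\mathrm{net}}_{\widetilde{M}} \colon \mathrm{Sk}_{a^2,z}(M) \longrightarrow \widehat{\mathrm{Sk}}_{a,z}(\widetilde{M};\widetilde{\sigma}).
\]

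The one point that merits explicit verification, and which I expect to be the only non-formal step, is that the composite indeed lands in the completion $\widehat{\Sk}^{\zeta}$ rather than in some larger object requiring extra convergence hypotheses. The image of a single link under Theorem \ref{thm:UV-IRmap} is a finite $\Z[\mathbf{a}(\Delta)][z^{\pm}]$-linear combination of links in $\widetilde{M}^{\mathrm{sing}}\setminus\widetilde{\tau}$. Applying Theorem \ref{smoothing skein module} replaces each such link with a possibly infinite sum indexed by cablings of the distinguished cycles by $W_\lambda$'s coming from the skein dilogarithm. By Corollary \ref{cor:z_delta}, each added copy of the distinguished cycle at $\delta$ contributes $2z_\delta > 0$ to $\int \zeta$, so for any $N$ only finitely many summands have $\zeta$-period below $N$; this is precisely the finiteness condition defining $\widehat{\Sk}^{\zeta}$ in Definition \ref{defn: completed skein}. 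The compatibility with the $\Z[a^{\pm},z^{\pm}]$-module structure and with the skein relations is automatic, having been verified in each of the two constituent theorems.
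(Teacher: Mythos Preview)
Your proposal is correct and follows exactly the paper's approach: the paper's proof is the single sentence ``Compose the map of Theorem \ref{thm:UV-IRmap} with that of Theorem \ref{smoothing skein module},'' and you have simply unpacked this composition (specialize along $\Theta$, then apply the smoothing map) and added a routine check that the output lies in the $\zeta$-completion, which is already guaranteed by the target of Theorem \ref{smoothing skein module}.
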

\begin{proof}
Compose the map of Theorem \ref{thm:UV-IRmap} with that of Theorem \ref{smoothing skein module}. 
\end{proof}

\subsection{Turning numbers}

We describe additional geometric data on a 3-manifold $M$ that allows us to compute framing data numerically. Assume that the tangent bundle of $M$ is decomposed as $TM=\R\oplus E$, where $\R$ is the trivial bundle trivialized by some vector field $\xi$ and $E$ is an $SO(2)$-bundle, then links in $M$ that are nowhere tangent to $\xi$ gets an induced framing as follows. If the tangent to $K$ is $t$, let $e_1$ denote the projection of $t$ to $E$ and frame $K$ by the unit normal vector field $e_2$ in $E$ that is perpendicular to $e_1$ in $E$ and such that $(e_1,e_2)$ is a positively oriented basis. If furthermore $E$ is equipped with a flat connection $\nabla$ then the section $e_2$ along $K$ can be compared to a parallel section in $E$, if $s\in [0,L]$ parametrizes $K$ then 
\begin{equation}\label{eq : turning general}
\int_0^L \langle\nabla_s e_2,-e_1\rangle \,ds \ = \ \int_0^L \langle\nabla_s e_1,e_2\rangle \,ds \ \in \ 2\pi\Z,
\end{equation}
measures to total turning of $e_2$, or equivalently $e_1$, with respect to the parallel section along $K$. We thus have an integer that measures the total turning of the induced framing of a link nowhere parallel to $\xi$. Consider now a general framing of $K$, given by a vector field $n$ generic with respect to $\xi$. Then $n$ is parallel to $\xi$ only at a finite number of points along $K$, we call such a point a \emph{framing tangency}, and if $n_E$ denotes the projection of $n$ to $E$ and $p$ is a point where $n_E$ vanishes then $\nabla_s n_E(p)$ is linearly independent with $e_1(p)$. We take the sign $\sigma(p)=\pm 1$ of the framing tangency $p$ to be the orientation sign of $(e_1(p),\nabla_s n_E(p))$. It is easy to see that the framing given by $n$ is homotopic to the framing given by $\hat n_E=\pm e_2$, the unit vector in direction $n_E$ outside small intervals of framing tangencies and a $\sigma(p)\pi$-rotation
$$
\hat n_E(\text{before}) \ \to \ \sigma(p)\xi \ \to \ \hat n_E(\text{after})
$$ 
inside the small intervals. 
\begin{lemma}\label{l:turning and angles}
Two generic framings of a knot $K$ are homotopic if and only if their
sums of the turning numbers and discrete $\pi$-turns at framing tangencies:
\[
\int_0^L\langle\nabla_s e_1,e_2\rangle \,ds + \sum_{p} \sigma(p)\pi \ \in \ 2\pi\Z
\]
agree.
\end{lemma}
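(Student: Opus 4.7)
The plan is to recognize $T(n) := \int_0^L \langle \nabla_s e_1, e_2\rangle\,ds + \sum_p \sigma(p)\pi$ as a complete invariant of the homotopy class of a generic framing $n$, taking values in $2\pi\Z$. Since homotopy classes of framings of $K$ form a $\Z$-torsor and the winding number is the standard invariant of this torsor, the lemma will reduce to two pieces: (i) homotopy invariance of $T(n)$, and (ii) the identification of $T(n)$ with $2\pi$ times the winding of $n$ against a (locally defined) parallel reference in $E$.

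For (i), the first term depends only on the pair $(K,\xi)$ and is independent of $n$. The tangency sum is unchanged under any homotopy of $n$ that preserves the tangency set. Under a generic one-parameter family $n_\theta$ of framings, the only transition is a canceling birth/death of a pair of tangencies. In a local normal form $n_E(s,\theta) \approx A(s-s_0)^2\,v + B(\theta-\theta_0)\,v$ around a birth point, the two zeros appearing for $\theta>\theta_0$ sit at $s = s_0 \pm \sqrt{B(\theta-\theta_0)/A}$, and $\nabla_s n_E$ takes opposite values $\pm 2A\sqrt{B(\theta-\theta_0)/A}\,v$ at them. Hence the orientation signs of $(e_1,\nabla_s n_E)$ are opposite, and the pair contributes $\sigma\pi + (-\sigma)\pi = 0$ to the sum.

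For (ii), I use the homotopy already described in the paragraph preceding the lemma to replace $n$ with its model form $\tilde n$: on each component of $K$ minus the tangencies, $\tilde n$ equals $\hat{n}_E$, a unit section of $E|_K$; at each tangency $p$, $\tilde n$ performs a continuous $\sigma(p)\pi$-rotation $\hat{n}_E(\text{before}) \to \sigma(p)\xi \to \hat{n}_E(\text{after})$ in the plane spanned by $\hat{n}_E$ and $\xi$. Within each smooth interval I further homotope $\hat{n}_E$ (rel endpoints) to the induced framing $e_2$, switching sign as dictated by the flip of $\hat{n}_E$ across each tangency. Integrating the winding rate of $\tilde n$ against a parallel section of $E|_K$, the smooth intervals contribute $\int_0^L \langle\nabla_s e_1, e_2\rangle\,ds$ by \eqref{eq : turning general}, while the tangencies contribute $\sum_p \sigma(p)\pi$. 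The resulting $T(n)$ equals $2\pi$ times the framing number of $n$; in particular, varying $n$ by one full twist changes $T(n)$ by $\pm 2\pi$. Combining with (i), two generic framings are homotopic iff their $T$-values agree.

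The main technical obstacle is the orientation bookkeeping in (ii): matching the algebraic sign $\sigma(p) = \operatorname{orient}(e_1, \nabla_s n_E)$ with the geometric direction of the $\pi$-rotation through $\sigma(p)\xi$, and checking that the piecewise model framing with these discrete jumps glues continuously in $S(TM)|_K \setminus \{\pm t\}$ so that its total winding is faithfully computed by the formula. Once this local sign compatibility is verified at one tangency, globalization via the piecewise description is immediate.
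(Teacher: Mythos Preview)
Your proposal is correct and follows essentially the same two-step structure as the paper's proof: first establish invariance of the sum under generic homotopy, then reduce to a standard form where comparison is immediate. The paper's proof is very terse (it simply asserts invariance as ``clearly'' true and then homotopes any framing to one equal to $e_2$ outside a single small arc where all $\pi$-turns are concentrated), whereas you supply the birth/death normal form for (i) and an explicit winding computation for (ii). One minor remark: in your step (ii), the extra homotopy of $\hat n_E$ to $e_2$ on each smooth interval is unnecessary, since $\hat n_E = \pm e_2$ already has the same winding rate as $e_2$ against a parallel section; dropping this step avoids the sign bookkeeping you flag as the ``main technical obstacle.''
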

\begin{proof}
The sum of the turning number and the $\pi$-turns is clearly invariant under generic homotopy of framings. Further one can homotope any framing to a framing given by $e_2$ outside a small arc where all $\pi$ rotations take place and it is clear that any two such framings with the same algebraic number of $\pi$-turns are homotopic.
\end{proof}

We now consider an analogue of this for branched double covers. Consider an ideal triangulation $\Delta$ on $M$ equipped with a generalized angle structure. The generalized angle structure gives a geometric polygonal decomposition of the leaf space of $\Delta$, or equivalently the 2-skeleton $\Delta^\vee_{(2)}$ of the dual polygonal decomposition, as follows. Consider a $2$-dimensional polygon $\eta$ of $\Delta^{\vee}_{(2)}$ inside a tetrahedron of $\Delta$. It has four corners, two at barycenters of the boundary triangles, one at the barycenter of an edge and one at the barycenter of the tetrahedron itself. We represent $\eta$ as a planar polygon with angles at the corners corresponding to barycenters of the boundary triangles equal to $\frac{\pi}{2}$, the angle at the corner of the barycenter of the edge equal to $\theta$ as determined by the generalized angle structure and the angle at the corner of the barycenter equal to $\pi-\theta$, see Figure \ref{fig:leafspace_Euclideanstructure}. 
\begin{figure}
\centering
$\vcenter{\hbox{
\tdplotsetmaincoords{60}{80}
\begin{tikzpicture}[tdplot_main_coords]
\begin{scope}[scale = 0.8, tdplot_main_coords]
    \coordinate (o) at (0, 0, 0);
    \coordinate (a) at (0, 0, 3);
    \coordinate (a1) at (0, 0, -1);
    
    \coordinate (b) at ({2*sqrt(2)}, 0, -1);
    \coordinate (b1) at ({-2*sqrt(2)/3}, 0, 1/3);
    
    \coordinate (c) at ({-sqrt(2)}, {sqrt(6)}, -1);
    \coordinate (c1) at ({sqrt(2)/3}, {-sqrt(6)/3}, 1/3);
    
    \coordinate (d) at ({-sqrt(2)}, {-sqrt(6)}, -1);
    \coordinate (d1) at ({sqrt(2)/3}, {sqrt(6)/3}, 1/3);
    
    \coordinate (ab) at ({sqrt(2)}, 0, 1);
    \coordinate (ac) at ({-sqrt(2)/2}, {sqrt(6)/2}, 1);
    \coordinate (ad) at ({-sqrt(2)/2}, {-sqrt(6)/2}, 1);
    \coordinate (bc) at ({sqrt(2)/2}, {sqrt(6)/2}, -1);
    \coordinate (bd) at ({sqrt(2)/2}, {-sqrt(6)/2}, -1);
    \coordinate (cd) at ({-sqrt(2)}, 0, -1);
    
    \draw[very thick] (c) -- (d);

    \draw[white, ultra thick] (o) -- (a1);
    
    \filldraw [very thin, fill= orange, opacity = 0.2] (o) -- (a1) -- (cd) -- (b1) -- (o);
    \filldraw [very thin, fill= blue, opacity = 0.2] (o) -- (b1) -- (ac) -- (d1) -- (o);
    \node[anchor = south west] at (ac) {$\theta$};
    \filldraw [very thin, fill= orange, opacity = 0.2] (o) -- (b1) -- (ad) -- (c1) -- (o);
    \filldraw [very thin, fill= orange, opacity = 0.2] (o) -- (a1) -- (bc) -- (d1) -- (o);
    \filldraw [very thin, fill= orange, opacity = 0.2] (o) -- (a1) -- (bd) -- (c1) -- (o);
    
    \draw[red, very thick] (o) -- (a1);
    \draw[red, very thick] (o) -- (b1);
    \draw[red, very thick] (o) -- (c1);
    \draw[red, very thick] (o) -- (d1);
    % \node at (a) {$a$};
    % \node at (b) {$b$};
    % \node at (c) {$c$};
    % \node at (d) {$d$};

    \filldraw[red] (o) circle (0.15em);
    \filldraw[red] (a1) circle (0.15em);
    \filldraw[red] (b1) circle (0.15em);
    \filldraw[red] (c1) circle (0.15em);
    \filldraw[red] (d1) circle (0.15em);

    \filldraw [very thin, fill= orange, opacity = 0.2] (o) -- (c1) -- (ab) -- (d1) -- (o);

    \draw[white, ultra thick] (a) -- (b);
    \draw[very thick] (a) -- (b);
    \draw[very thick] (a) -- (c);
    \draw[very thick] (a) -- (d);
    \draw[very thick] (b) -- (c);
    \draw[very thick] (b) -- (d);
    
    \filldraw (a) circle (0.05em);
    \filldraw (b) circle (0.05em);
    \filldraw (c) circle (0.05em);
    \filldraw (d) circle (0.05em);
\end{scope}
\end{tikzpicture}
}}
\;\;\rightarrow\;\;
\vcenter{\hbox{
\begin{tikzpicture}
\filldraw[fill= blue, opacity = 0.2] (0, 0) -- (0, 1) -- ({sqrt(3)}, 1) -- ({sqrt(3)/2}, -1/2) -- cycle;
\node[anchor=west] at ({sqrt(3)}, 1){$\theta$};
\node[anchor=east] at (0, 1){$\frac{\pi}{2}$};
\node[anchor=north] at ({sqrt(3)/2}, -1/2){$\frac{\pi}{2}$};
\node[anchor=north east] at (0, 0){$\pi - \theta$};
\end{tikzpicture}
}}$
\caption{Euclidean structure on each sheet of the leaf space}
\label{fig:leafspace_Euclideanstructure}
\end{figure}
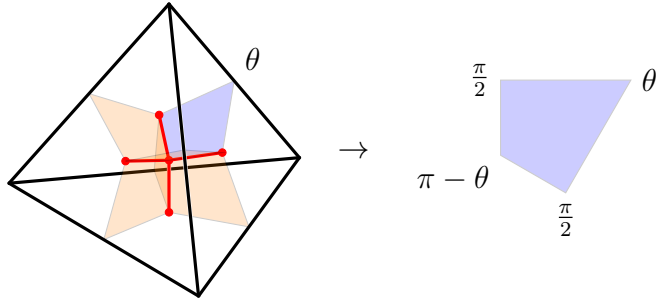

Consider the tangent bundles $T\eta$ of such polygonal representatives of the 2-dimensional polygons $\eta$ of $\Delta^\vee$ and equip them with the $SO(2)$-structure induced by the flat metric on $\eta\subset\R^2$ as above. Note that these bundles glue naturally along the edges of the polygons in $\eta$ and thus gives a flat $SO(2)$-bundle over $\Delta^{\vee}_{(2)}\setminus \Delta^{\vee}_{(0)}$. Pulling back along the foliation gives a flat $SO(2)$-bundle $E$ over $\widetilde{M}^{\mathrm{sing}}\setminus (\widetilde{\tau}\cup \widetilde{\Delta}_{(1)})$.

\begin{lemma}
The $SO(2)$-bundle $E$ extends over the $1$-skeleton as a flat bundle over $\widetilde{M}^{\mathrm{sing}}\setminus \widetilde{\tau}$ and gives a decomposition $T(\widetilde{M}^{\mathrm{sing}} \setminus \widetilde{\tau})= E\oplus \R$, where $\R$ is trivial bundle with section the tangent $\xi$ to the foliation of $\widetilde{M}^{\mathrm{sing}}$ induced by $\Delta$. Moreover, the monodromy in $E$ around the a loop in the three faces of $\Delta^{\vee}$ adjacent to a vertex of a tetrahedron in $\Delta$ equals $2\pi$. 
\end{lemma}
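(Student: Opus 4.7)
The plan is to establish the three assertions of the lemma in sequence: extension of $E$ over the $1$-skeleton $\widetilde{\Delta}_{(1)}$, the direct sum decomposition of the tangent bundle, and the monodromy computation.

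For the extension, I would argue locally near each ideal edge $e\in\Delta^1$. By construction $E$ is pulled back along the foliation from the piecewise-Euclidean flat structure on the quadrilaterals of $\Delta^\vee_{(2)}$, and the leaves near $e$ run parallel to $e$ (with $e$ itself a leaf), so it suffices to show that the flat structure on the union of quadrilaterals meeting at the edge-barycenter $\mathrm{bc}(e)$ has no cone singularity there. Each such quadrilateral arises from a tetrahedron $\delta$ containing $e$, and by Figure \ref{fig:leafspace_Euclideanstructure} its corner angle at $\mathrm{bc}(e)$ equals the dihedral angle $\theta_e^\delta$ of $\delta$ at $e$. The gluing relation Definition \ref{angle structure}(2) (resp.\ (2')) gives $\sum_{\delta\supset e}\theta_e^\delta=2\pi$ for interior edges and $\pi$ for boundary edges, so the local model is a flat disk (resp.\ half-disk) and $E$ extends flatly; since $e$ is disjoint from $\tau$, its preimage in $\widetilde{M}^{\mathrm{sing}}$ is unramified, and the same extension passes to $\widetilde{M}^{\mathrm{sing}}\setminus\widetilde{\tau}$. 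The decomposition is then immediate: the nowhere-vanishing $\xi$ spans an oriented trivial rank-$1$ subbundle $\R\xi\subset T$, and the leaf-space retraction to $\widetilde{\Delta}^\vee_{(2)}$ is a submersion with vertical bundle $\R\xi$ and horizontal distribution pulled back from the tangent bundle of the leaf space, which by construction is $E$; orientations match via the sheet-labeling convention of Definition \ref{defn: sheet labels}.

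For the monodromy, I observe that at an ideal vertex $v$ of a tetrahedron $\delta$, the three edges $vw_1,vw_2,vw_3$ meeting $v$ come from three distinct pairs of opposite edges of $\delta$ and hence carry, between them, the three angle symbols $\theta,\theta',\theta''$. The three quadrilateral $2$-cells of $\Delta^\vee$ dual to these edges share $\mathrm{bc}(\delta)$ as a common corner with angles $\pi-\theta,\pi-\theta',\pi-\theta''$, and by Definition \ref{angle structure}(1) we have $\theta+\theta'+\theta''=\pi$, so the total angle of the union around $\mathrm{bc}(\delta)$ is $3\pi-\pi=2\pi$; parallel transport of $E$ once around $\mathrm{bc}(\delta)$ inside this union thus accumulates exactly $2\pi$ in the universal cover of $SO(2)$.

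The main obstacle is not the angle identities themselves (which are immediate from the defining relations of the angle group) but rather verifying that the pullback of the piecewise-flat bundle along the foliation genuinely extends through $\widetilde{\Delta}_{(1)}$ to a globally flat bundle compatible with the sheet structure of the cover. This reduces to the observation that the Euclidean model in Figure \ref{fig:leafspace_Euclideanstructure} is unchanged under the sheet-exchange involution of Definition \ref{defn: sheet labels}, so that the flat metric on each quadrilateral is well-defined independently of which sheet it is viewed from.
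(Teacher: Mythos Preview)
Your proposal is correct and follows essentially the same approach as the paper: the extension over the $1$-skeleton comes from the angle-sum condition $\sum_\delta \theta_e^\delta = 2\pi$ at each edge barycenter, the splitting is the quotient of the tangent bundle by the foliation direction, and the monodromy computation is exactly the paper's calculation $(\pi-\theta)+(\pi-\theta')+(\pi-\theta'')=2\pi$. Your treatment is simply more detailed, and your closing remark about sheet-invariance of the Euclidean model is a reasonable extra check that the paper leaves implicit.
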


\begin{proof}
The first statement follows from the condition that angles sum to $2\pi$ around edges of the triangulation. To see the second statement, note that $E$ can be viewed as the quotient of the tangent bundle by the foliation. The final statement follows from adding the angles at the barycenter of the three faces of the leaf space that are adjacent to a vertex of the tetrahedron:
\[
(\pi - \theta)+(\pi - \theta')+(\pi - \theta'') = 3\pi - (\theta + \theta' + \theta'') = 2\pi.
\]
\end{proof}

The orientation on the foliation together with the orientation of $\widetilde{M}^{\mathrm{sing}}$ induces an orientation on $E$. We use this flat bundle $E$ to define the turning number of a link diagram on the leaf space:
\begin{defn}
Let $\widetilde{K}$ be an oriented link in $\widetilde{M}^{\mathrm{sing}}\setminus \widetilde{\tau}$ with tangent vectors everywhere transverse to the foliation. Equip $\tilde K$ with the framing given by the tangent $\xi$ to the foliation. Then projecting to the leaf space, we get a link diagram framed by a vector in the projection direction.  
Consider the $SO(2)$-bundle $E$ along $\widetilde{K}$ with the section $e_1$ which is the normalized projection of the tangent vector to $\widetilde{K}$ to $E$. Define the \emph{turning number} of $\widetilde{K}$, $\operatorname{turn}(\widetilde{K}) \in \mathbb{R} = T_1 SO(2)$,  to be the monodromy of this global section with respect to the flat connection $\nabla$ determined by the generalized angle structure, with notation as in \eqref{eq : turning general}:
\[
\operatorname{turn} (\widetilde{K}) \ = \ \frac{1}{2\pi} \int_0^L\langle\nabla_s e_1,e_2\rangle \,ds \ \in \ \R.
\]
\end{defn}

We next show how to compute turning numbers diagrammatically. Let $\widetilde{K}$ be a knot that is generic with respect to the triangulation $\Delta$. Project $\widetilde{K}$ first to $K\subset M$ and then to the leaf space $\kappa\subset\Delta^{\vee}_{(2)}$. Let $\eta$ be a face in $\Delta^{\vee}_{(2)}$. Then $\kappa\cap\eta$ is a collection of curves $\kappa[\eta]$ in $\eta$ the lift of the curve $\kappa[\eta]$ to $\widetilde{K}$ then subdivides $\kappa[\eta]$ into $\kappa[\eta]_1\cup\kappa[\eta]_2$ where $\kappa[\eta]_j$ has lift that lies in sheet $j\in\{1,2\}$ (with respect to the branch cut). We write 
\[
\operatorname{turn}_{ij} (\widetilde{K}_i) = \sum_\eta \operatorname{turn}_{ij} (\kappa[\eta]_i)
,\quad i\ne j\in\{1,2\},
\]
where $\operatorname{turn}_{ij}(\kappa[\eta]_i)$ denotes the turning number, in the usual elementary sense, of $\kappa[\eta]_i$ projected to that face of the $ij$-leaf space, oriented in such a way that the $i$- (resp., $j$-) direction of the foliation is pointing away (resp., toward) the point of projection. 

\begin{lemma}\label{l:generalturningproperties}
We have     
\[
\operatorname{turn} (\widetilde{K}) = \operatorname{turn}_{12} (\widetilde{K}_1) + \operatorname{turn}_{21} (\widetilde{K}_2).
\]
Furthermore, $\operatorname{turn}(\widetilde{K})$ is invariant under isotopies of $\widetilde{K}$ that are nowhere tangent to $\xi$ and changes by $2\pi$ ($-2\pi$) at positive (negative) generic tangencies with $\xi$.  
\end{lemma}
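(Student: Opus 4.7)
The plan is to prove the decomposition formula by reducing the abstract monodromy integral defining $\operatorname{turn}(\widetilde{K})$ to a sum of elementary planar turning numbers on each 2-face of $\Delta^{\vee}_{(2)}$, then to deduce the invariance and tangency formulas from the argument already used in Lemma \ref{l:turning and angles}.

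First I would unpack the construction of $E$. Each 2-face $\eta \subset \Delta^{\vee}_{(2)}$ carries the flat Euclidean structure described just above (angles $\pi/2, \pi/2, \theta, \pi-\theta$), and by definition $E$ is the pullback of the tangent bundle of this polygonal 2-complex along the foliation. Over a component $U$ of $\widetilde{M}^{\mathrm{sing}} \setminus (\widetilde{\tau} \cup \widetilde{\Delta}_{(1)}^{\vee})$ retracting onto a face $\eta$, the connection on $E|_U$ is flat and canonically trivialized by the Euclidean trivialization of $T\eta$. Hence on an arc $\kappa[\eta]_i \subset \widetilde{K}_i$ the integrand $\langle \nabla_s e_1, e_2\rangle\,ds$ is precisely the standard planar turning 1-form of the image of $\kappa[\eta]_i$ in $\eta$, provided $\eta$ is oriented with the $i$-direction of the foliation pointing \emph{away} from the vantage point. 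This is exactly the orientation convention stipulated in the definition of $\operatorname{turn}_{ij}$, so
\[
\frac{1}{2\pi}\int_{\kappa[\eta]_i} \langle \nabla_s e_1, e_2\rangle\, ds \;=\; \operatorname{turn}_{ij}(\kappa[\eta]_i).
\]

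Next I would verify that summing over faces introduces no correction from the 1-skeleton of $\Delta^\vee$. Across an interior edge of $\Delta^{\vee}_{(2)}$, two adjacent Euclidean polygons are glued isometrically, so the trivializations of $E$ agree up to a rotation consistent with the flat connection, and no extra holonomy appears. Across the branch cut $\widetilde{\tau}$ the sheet labels swap, but simultaneously the pair $(i,j)$ swaps, so the local contribution $\operatorname{turn}_{ij}(\kappa[\eta]_i)$ on one side glues consistently with $\operatorname{turn}_{ji}(\kappa[\eta]_j)$ on the other. Around a preimage of a tetrahedron vertex the angles of the three adjacent faces sum to $2\pi$ so $E$ extends as an honest flat $SO(2)$-bundle with trivial local monodromy, by the Lemma preceding the one we are proving. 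Summing the face-by-face identities over all $\eta$ and both sheets therefore yields
\[
\operatorname{turn}(\widetilde{K}) \;=\; \operatorname{turn}_{12}(\widetilde{K}_1) + \operatorname{turn}_{21}(\widetilde{K}_2).
\]

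For the second half I would apply the same reasoning as in Lemma \ref{l:turning and angles}, now to the flat $SO(2)$-bundle $E$ along $\widetilde{K}$. So long as $\widetilde{K}$ is nowhere tangent to $\xi$, the section $e_1$ is nowhere vanishing and the monodromy integral is continuous under smooth deformations; combined with the flatness of $E$ this shows invariance under such isotopies. At a generic tangency $p$ with $\xi$, the section $e_1$ passes through zero and rotates by $\sigma(p)\pi$ on each side of a small arc, producing a jump of $\sigma(p)\cdot 2\pi$ in $\operatorname{turn}(\widetilde{K})$, which is the claimed behavior.

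The main obstacle will be the book-keeping in the second paragraph: the orientation conventions for the leaf-space faces, the sheet labels across $\widetilde{\tau}$, and the angle-sum condition at vertices must be tracked simultaneously to confirm that the face contributions glue into a single global integral. Geometrically nothing new is happening — the angle structure axioms $\theta+\theta'+\theta''=\pi$ and the $2\pi$ condition around interior edges are precisely what makes $E$ flat — but the combinatorial verification that the piecewise-planar turnings add correctly across branch cuts, edges, and vertices is the one place where honest care is required.
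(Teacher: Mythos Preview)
Your proof is correct and takes essentially the same approach as the paper's: reduce the global monodromy integral to a sum of planar turning numbers face-by-face (the paper compresses your careful gluing verification into a single ``by definition''), and deduce invariance and the $\pm 2\pi$ jump from the framework of Lemma~\ref{l:turning and angles}. The only stylistic difference is that the paper phrases the tangency jump diagrammatically---a generic tangency with $\xi$ is the birth or death of a kink in the leaf-space projection, which manifestly changes the planar turning by $\pm 2\pi$---rather than via your ``$e_1$ passes through zero'' description, which is the same phenomenon seen from the bundle side.
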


\begin{proof}
By definition the contribution to the turning number from a subarc of the link diagram that lies in a polygonal face equals the usual turning number of the tangent of the arc considered as plane curve. The first formula follows. The invariance statement is immediate from the definition. To see the last statement note that positive and negative tangencies correspond to the appearance or disappearance of positive or negative kinks in the diagram.  
\end{proof}

\begin{remark}
The turning number is a $\mathbb{Z}$-linear combination of ratios $\frac{\theta}{2\pi}$, where $\theta$ comes from the generalized angle structure, and $\frac{1}{2}$, corresponding to half-turns near the seams of the leaf space, so the turning number is in general \emph{not} an integer. 
\end{remark}

\begin{remark}
A generic framing of a knot $\widetilde{K}$ is tangent to $\xi$ at a finite number of points. Adding $\pi$-turns at such tangencies to the turning number exactly as in Lemma \ref{l:turning and angles} gives a numerical invariant of framed knots in $\widetilde{M}$ that remains invariant under framed isotopies by Lemma \ref{l:generalturningproperties}.
\end{remark}

\subsection{Proof of Theorem \ref{thm:UV-IRmap}}\label{sec:q-UV-IR-map}
Let $M$ be a 3-manifold with ideal triangulation $\Delta$, associated branched double cover $\widetilde{M}^{\mathrm{sing}}$, with branch locus $\tau\subset M$ and correspondingly $\widetilde{\tau}\subset\widetilde{M}^{\mathrm{sing}}$. In this section, we construct a $\Z[a^\pm, z^\pm]$-module homomorphism:
\[
[\,\cdot\,]_{\widetilde{M}^{\mathrm{sing}}}^{\mathrm{net}}\colon \mathrm{Sk}_{a^2,z}(M) \rightarrow \mathrm{Sk}_{a, z}(\widetilde{M}^{\mathrm{sing}},\widetilde{\tau}). 
\]
The construction is almost identical to that of \cite{Neitzke-Yan-nonabelianization, Panitch-Park-compatibility}, except that we are working with HOMFLYPT skeins, instead of $\mathfrak{gl}_2$-skeins. 

Let $K \subset M\setminus \tau$ be a framed link in general position with respect to the foliation. Projecting $K$ to the leaf space of the foliation gives a link diagram on the leaf space. 
Thinking of $K$ as a ribbon link, the link diagram will be flat on the leaf space, except at finitely many points where it is half-twisted (i.e., where the framing is parallel to the foliation). 

We will define $[K]_{\widetilde{M}^{\mathrm{sing}}}^{\mathrm{net}}$ as a finite linear combination $\sum_\beta c(\widetilde{K}_\beta) [\widetilde{K}_\beta]$, where the summands $\widetilde{K}_\beta \subset \widetilde{M}^{\mathrm{sing}}\setminus\widetilde{\tau}$ ranges over all possible lifts of $K$ that can be constructed by applying combinations of three local lifts: \emph{direct lifts}, \emph{detours}, and \emph{exchanges} defined as follows: 
\begin{itemize}
\item \emph{Direct lift:} an arc of $K$ can be lifted to either sheet of the branched double cover, we label the arc by the number of the sheet to where it lifts: 
\[
\vcenter{\hbox{
\begin{tikzpicture}
    \draw[very thick, ->] (0, 0) to [out=60,in=-120] (1, 1);
    \node at (1.5, 0.5) {$\rightsquigarrow$};
    \draw[very thick, ->] (2, 0) to [out=60,in=-120] (3, 1);
    \node[anchor = south] at (2.5, 0.5) {$i$};
    \node at (4.5, 0.5) {$i \in \{1,2\}$};
\end{tikzpicture}
}}
\]
\item \emph{Detour:} arcs in $K$ that intersect a critical leaf  (i.e.\ that cross the binding of the leaf space), can be lifted as follows, where the numbering indicates to which sheet we lift: 
\[
\vcenter{\hbox{
\begin{tikzpicture}
    \draw[red, very thick] (-0.5,-0.3) to (-0.5,1.3);
    
    \draw[green] (-0.5, 0.5) -- (0.5, 0.5);
    \draw[green, latex-latex] (-0.3, 0.5) -- (0.3, 0.5);
    \node[green, anchor = south] at (-0.2, 0.5) {\tiny $i$};
    \node[green, anchor = south] at (0.2, 0.5) {\tiny $j$}; 

    \draw[very thick, ->] (0, 0) to [out=60,in=-120] (1, 1);

    \draw[green] (0.5, 0.5) -- (0.8, 0.5);

    \node at (1.5, 0.5) {$\rightsquigarrow$};

    \draw[red, very thick] (2.5,-0.3) to (2.5,1.3);

    \draw[very thick] (3, 0) to [out=60,in=-145] (3.4, 0.4);
    \draw[very thick, ->] (3.6, 0.6) to [out=35,in=-120] (4, 1);
    
     \draw[white, line width=.15cm] (2.6, 0.4) -- (2.4, 0.4);
    \draw[very thick] (3.4, 0.4) -- (2.4, 0.4);
    \draw[very thick] (3.6, 0.6) -- (2.6, 0.6);
    \draw[very thick] (2.4, 0.4) to [out=180, in=-135] (2.37, 0.5) to [out=45, in=180] (2.6, 0.6);

    \filldraw (3.6, 0.6) circle (0.04em);
    \filldraw (3.4, 0.4) circle (0.04em);

     \draw[white, line width=.15cm] (2.5, 0.5) to (2.5,1.3);
    \draw[red, very thick] (2.5, 0.5) to (2.5,1.3);

    \node[anchor = west] at (3.2, 0.1) {$i$};
    \node[anchor = west] at (3.7, 0.6) {$j$};

\end{tikzpicture}
}}
\]
\item \emph{Exchange:} two arcs in $K$ that lie on the same line in the foliation (i.e.\ crossings in the link diagram of $K$ on the leaf space) can be lifted in the following way, where the numbering indicates to which sheet we lift: 
\[
\vcenter{\hbox{
\begin{tikzpicture}
    \draw[green] (-0.8, 0.5) -- (-0.5, 0.5); 
    
    \draw[very thick, ->] (-0.5,1.3) to [out=-75, in=105] (-0.5,-0.3);
    
    \draw[green] (-0.5, 0.5) -- (0.5, 0.5);
    \draw[green, latex-latex] (-0.3, 0.5) -- (0.3, 0.5);
    \node[green, anchor = south] at (-0.2, 0.5) {\tiny $i$};
    \node[green, anchor = south] at (0.2, 0.5) {\tiny $j$}; 

    \draw[very thick, ->] (0, 0) to [out=60, in=-120] (1, 1);

    \draw[green] (0.5, 0.5) -- (0.8, 0.5);

    \node at (1.5, 0.5) {$\rightsquigarrow$};

    \draw[very thick] (2.5, 1.3) to [out=-75, in=80] (2.5, 0.6);
    \draw[very thick, ->] (2.5, 0.4) to [out=-100, in=105] (2.5, -0.3);

    \draw[very thick] (3, 0) to [out=60,in=-145] (3.4, 0.4);
    \draw[very thick, ->] (3.6, 0.6) to [out=35,in=-120] (4, 1);
    
    \draw[very thick] (3.4, 0.4) -- (2.5, 0.4);
    \draw[very thick] (3.6, 0.6) -- (2.5, 0.6);

    \filldraw (3.6, 0.6) circle (0.04em);
    \filldraw (3.4, 0.4) circle (0.04em);

    \filldraw (2.5, 0.6) circle (0.04em);
    \filldraw (2.5, 0.4) circle (0.04em);

    \node[anchor = west] at (3.2, 0.1) {$i$};
    \node[anchor = west] at (3.7, 0.6) {$j$};

\end{tikzpicture}
}}
\]
\end{itemize}
It is easy to see that there are only finitely many possible lifts. 

We will associate a weight $c(\widetilde{K}_{\beta}) \in R^{\Theta}$ to each lift $\widetilde{K}_\beta\subset M^{\mathrm{sing}}\setminus\widetilde{\tau}$ of $K\subset M\setminus\tau$ that is constructed by the these local lifts. 
The weight is the product of \emph{turning factors}, \emph{exchange factors}, and \emph{framing factors} defined as follows.
\begin{itemize}
\item \emph{Turning factor:} 
\[
a^{\operatorname{turn}(\widetilde{K}_\beta)} = \prod_{\substack{1\leq i,j \leq 2\\ i \neq j}}a^{\operatorname{turn}_{ij}\widetilde{K}_{\beta;i}},
\]
where $\widetilde{K}_{\beta;i}$ is the part of the link diagram of $\widetilde{K}_\beta$ that lies in sheet $i$ over a polygon in the leaf space and where the product ranges over all polygons in the leaf space. 
\item \emph{Exchange factor:} 
\[
\operatorname{exch}(\widetilde{K}_{\beta})=\prod_{\text{exchanges}} \pm z
\]
where the product ranges over all exchanges of $\widetilde{K}_\beta$ and where the sign is the sign of the crossing on the leaf space. 
\item \emph{Framing factor:} 
\[
a^{\operatorname{frame}(\widetilde{K}_\beta)}=\prod_{\text{framing tangencies}} a^{\pm\frac12},
\]
where the product ranges over all framing tangencies and the sign in the exponent is the sign of the corresponding half-twist in the ribbon link diagram of $K$. 
\end{itemize}
We then define the weight of $\widetilde{K}_\beta$ as the monomial in $a^\pm,z^\pm$ given by
\[
c(\widetilde{K}_\beta)=a^{\operatorname{turn}(\widetilde{K}_\beta)}a^{\operatorname{frame}(\widetilde{K}_\beta)}\operatorname{exch}(\widetilde{K}_{\beta}).
\]
Finally, we define
\begin{equation}\label{def : F(K)}
[K]_{\widetilde{M}^{\mathrm{sing}}}^{\mathrm{net}} = \sum_{\beta} c(\widetilde{K}_\beta) [\widetilde{K}_\beta] \;\in \mathrm{Sk}_{a, z}(\widetilde{M}^{\mathrm{sing}};\widetilde{\tau}),
\end{equation}
where the sum is over all possible lifts $\widetilde{K}_\beta$. 

We show that $[K]_{\widetilde{M}^{\mathrm{sing}}}^{\mathrm{net}}\in \mathrm{Sk}_{a, z}(\widetilde{M}^{\mathrm{sing}}; \widetilde{\tau})$ depends only on the class $[K] \in \mathrm{Sk}_{a^2,z}(M)$; i.e., that it is invariant under framed isotopy of $K$ and that it respects the skein relations. Any framed isotopy of $K$ can be decomposed into isotopies that induce isotopies of of the diagram of $K$ on the leaf space, which clearly does not affect $[K]_{\widetilde{M}^{\mathrm{sing}}}^{\mathrm{net}}$, and a finite composition of diagram moves that are isotopies of the following types:
\begin{enumerate}[label= (\Roman*)]
\item\label{item:Reidemeister} 3 types of framed Reidemeister moves
\item\label{item:MovesNearBinding} 4 types of moves near the bindings:
\begin{enumerate}
    \item moving an exchange across a critical leaf
    \[
    \vcenter{\hbox{
    \tdplotsetmaincoords{60}{40}
    \begin{tikzpicture}[tdplot_main_coords]
    \begin{scope}[scale = 0.7, tdplot_main_coords]
        \coordinate (o) at (0, 0, 1.5);
        \coordinate (a) at (2, 0, 1.5);
        \coordinate (b) at ({2*cos(120)}, {2*sin(120)}, 1.5);
        \coordinate (c) at ({2*cos(240)}, {2*sin(240)}, 1.5);
        \coordinate (o1) at (0, 0, -1.5);
        \coordinate (a1) at (2, 0, -1.5);
        \coordinate (b1) at ({2*cos(120)}, {2*sin(120)}, -1.5);
        \coordinate (c1) at ({2*cos(240)}, {2*sin(240)}, -1.5);
        
        \filldraw [very thin, fill= orange, opacity = 0.2] (o) -- (a) -- (a1) -- (o1) -- cycle;
        \filldraw [very thin, fill= orange, opacity = 0.2] (o) -- (b) -- (b1) -- (o1) -- cycle;
        \filldraw [very thin, fill= orange, opacity = 0.2] (o) -- (c) -- (c1) -- (o1) -- cycle;
        
        \draw[red, very thick] (o) -- (o1);

        \draw[very thick] ({1.5*cos(240)}, {1.5*sin(240)}, {-1/2}) -- ({0.87*cos(240)}, {0.87*sin(240)}, {-0.08});
        \draw[very thick] ({0.63*cos(240)}, {0.63*sin(240)}, {0.08}) -- (0, 0, 0.5) -- ({1.5*cos(0)}, {1.5*sin(0)}, {1/2});
        \draw[very thick] ({1.5*cos(240)}, {1.5*sin(240)}, {1/2}) -- (0, 0, -0.5) -- ({1.5*cos(0)}, {1.5*sin(0)}, {-1/2});
    \end{scope}
    \end{tikzpicture}
    }}
    \;\;\;\leftrightarrow
    \vcenter{\hbox{
    \tdplotsetmaincoords{60}{40}
    \begin{tikzpicture}[tdplot_main_coords]
    \begin{scope}[scale = 0.7, tdplot_main_coords]
        \coordinate (o) at (0, 0, 1.5);
        \coordinate (a) at (2, 0, 1.5);
        \coordinate (b) at ({2*cos(120)}, {2*sin(120)}, 1.5);
        \coordinate (c) at ({2*cos(240)}, {2*sin(240)}, 1.5);
        \coordinate (o1) at (0, 0, -1.5);
        \coordinate (a1) at (2, 0, -1.5);
        \coordinate (b1) at ({2*cos(120)}, {2*sin(120)}, -1.5);
        \coordinate (c1) at ({2*cos(240)}, {2*sin(240)}, -1.5);
        
        \filldraw [very thin, fill= orange, opacity = 0.2] (o) -- (a) -- (a1) -- (o1) -- cycle;
        \filldraw [very thin, fill= orange, opacity = 0.2] (o) -- (b) -- (b1) -- (o1) -- cycle;
        \filldraw [very thin, fill= orange, opacity = 0.2] (o) -- (c) -- (c1) -- (o1) -- cycle;
        
        \draw[red, very thick] (o) -- (o1);

        \draw[very thick] ({1.5*cos(240)}, {1.5*sin(240)}, {-1/2}) -- (0, 0, -0.5) -- ({0.63*cos(0)}, {0.63*sin(0)}, {-0.08});
        \draw[very thick] ({0.87*cos(0)}, {0.87*sin(0)}, 0.08) -- ({1.5*cos(0)}, {1.5*sin(0)}, {1/2});
        \draw[very thick] ({1.5*cos(240)}, {1.5*sin(240)}, {1/2}) -- (0, 0, 0.5) -- ({1.5*cos(0)}, {1.5*sin(0)}, {-1/2});
    \end{scope}
    \end{tikzpicture}
    }}
    \]
    \item height exchange for detours
    \[
    \vcenter{\hbox{
    \tdplotsetmaincoords{60}{40}
    \begin{tikzpicture}[tdplot_main_coords]
    \begin{scope}[scale = 0.7, tdplot_main_coords]
        \coordinate (o) at (0, 0, 1.5);
        \coordinate (a) at (2, 0, 1.5);
        \coordinate (b) at ({2*cos(120)}, {2*sin(120)}, 1.5);
        \coordinate (c) at ({2*cos(240)}, {2*sin(240)}, 1.5);
        \coordinate (o1) at (0, 0, -1.5);
        \coordinate (a1) at (2, 0, -1.5);
        \coordinate (b1) at ({2*cos(120)}, {2*sin(120)}, -1.5);
        \coordinate (c1) at ({2*cos(240)}, {2*sin(240)}, -1.5);
    
        \draw[very thick] ({1.5*cos(120)}, {1.5*sin(120)}, {0}) -- (0, 0, 0.5);
        
        \filldraw [very thin, fill= orange, opacity = 0.2] (o) -- (a) -- (a1) -- (o1) -- cycle;
        \filldraw [very thin, fill= orange, opacity = 0.2] (o) -- (b) -- (b1) -- (o1) -- cycle;
        \filldraw [very thin, fill= orange, opacity = 0.2] (o) -- (c) -- (c1) -- (o1) -- cycle;
        
        \draw[red, very thick] (o) -- (o1);

        \draw[very thick] (0, 0, 0.5) -- ({1.5*cos(0)}, {1.5*sin(0)}, {1/2});
        \draw[very thick] ({1.5*cos(240)}, {1.5*sin(240)}, {0}) -- (0, 0, -0.5) -- ({1.5*cos(0)}, {1.5*sin(0)}, {-1/2});
    \end{scope}
    \end{tikzpicture}
    }}
    \;\;\;\leftrightarrow
    \vcenter{\hbox{
    \tdplotsetmaincoords{60}{40}
    \begin{tikzpicture}[tdplot_main_coords]
    \begin{scope}[scale = 0.7, tdplot_main_coords]
        \coordinate (o) at (0, 0, 1.5);
        \coordinate (a) at (2, 0, 1.5);
        \coordinate (b) at ({2*cos(120)}, {2*sin(120)}, 1.5);
        \coordinate (c) at ({2*cos(240)}, {2*sin(240)}, 1.5);
        \coordinate (o1) at (0, 0, -1.5);
        \coordinate (a1) at (2, 0, -1.5);
        \coordinate (b1) at ({2*cos(120)}, {2*sin(120)}, -1.5);
        \coordinate (c1) at ({2*cos(240)}, {2*sin(240)}, -1.5);

        \draw[very thick] ({1.5*cos(120)}, {1.5*sin(120)}, {0}) -- (0, 0, -0.5);
        
        \filldraw [very thin, fill= orange, opacity = 0.2] (o) -- (a) -- (a1) -- (o1) -- cycle;
        \filldraw [very thin, fill= orange, opacity = 0.2] (o) -- (b) -- (b1) -- (o1) -- cycle;
        \filldraw [very thin, fill= orange, opacity = 0.2] (o) -- (c) -- (c1) -- (o1) -- cycle;
        
        \draw[red, very thick] (o) -- (o1);

        \draw[very thick] (0, 0, -0.5) -- ({0.63*cos(0)}, {0.63*sin(0)}, {-0.08});
        \draw[very thick] ({0.87*cos(0)}, {0.87*sin(0)}, 0.08) -- ({1.5*cos(0)}, {1.5*sin(0)}, {1/2});
        \draw[very thick] ({1.5*cos(240)}, {1.5*sin(240)}, {0}) -- (0, 0, 0.5) -- ({1.5*cos(0)}, {1.5*sin(0)}, {-1/2});
    \end{scope}
    \end{tikzpicture}
    }}
    \]
    \item moving a strand across a critical leaf
    \[
    \vcenter{\hbox{
    \tdplotsetmaincoords{60}{40}
    \begin{tikzpicture}[tdplot_main_coords]
    \begin{scope}[scale = 0.7, tdplot_main_coords]
        \coordinate (o) at (0, 0, 1.5);
        \coordinate (a) at (2, 0, 1.5);
        \coordinate (b) at ({2*cos(120)}, {2*sin(120)}, 1.5);
        \coordinate (c) at ({2*cos(240)}, {2*sin(240)}, 1.5);
        \coordinate (o1) at (0, 0, -1.5);
        \coordinate (a1) at (2, 0, -1.5);
        \coordinate (b1) at ({2*cos(120)}, {2*sin(120)}, -1.5);
        \coordinate (c1) at ({2*cos(240)}, {2*sin(240)}, -1.5);
        
        \filldraw [very thin, fill= orange, opacity = 0.2] (o) -- (a) -- (a1) -- (o1) -- cycle;
        \filldraw [very thin, fill= orange, opacity = 0.2] (o) -- (b) -- (b1) -- (o1) -- cycle;
        \filldraw [very thin, fill= orange, opacity = 0.2] (o) -- (c) -- (c1) -- (o1) -- cycle;
        
        \draw[red, very thick] (o) -- (o1);

        \draw[very thick] ({1.5*cos(240)}, {1.5*sin(240)}, {-1/2}) .. controls (0, 0, -0.3) and (0, 0, 0.3) .. ({1.5*cos(240)}, {1.5*sin(240)}, {1/2});
    \end{scope}
    \end{tikzpicture}
    }}
    \;\;\;\leftrightarrow
    \vcenter{\hbox{
    \tdplotsetmaincoords{60}{40}
    \begin{tikzpicture}[tdplot_main_coords]
    \begin{scope}[scale = 0.7, tdplot_main_coords]
        \coordinate (o) at (0, 0, 1.5);
        \coordinate (a) at (2, 0, 1.5);
        \coordinate (b) at ({2*cos(120)}, {2*sin(120)}, 1.5);
        \coordinate (c) at ({2*cos(240)}, {2*sin(240)}, 1.5);
        \coordinate (o1) at (0, 0, -1.5);
        \coordinate (a1) at (2, 0, -1.5);
        \coordinate (b1) at ({2*cos(120)}, {2*sin(120)}, -1.5);
        \coordinate (c1) at ({2*cos(240)}, {2*sin(240)}, -1.5);
        
        \filldraw [very thin, fill= orange, opacity = 0.2] (o) -- (a) -- (a1) -- (o1) -- cycle;
        \filldraw [very thin, fill= orange, opacity = 0.2] (o) -- (b) -- (b1) -- (o1) -- cycle;
        \filldraw [very thin, fill= orange, opacity = 0.2] (o) -- (c) -- (c1) -- (o1) -- cycle;
        
        \draw[red, very thick] (o) -- (o1);

        \draw[very thick] ({1.5*cos(240)}, {1.5*sin(240)}, {-1/2}) -- (0, 0, -0.5);
        \draw[very thick] ({1.5*cos(240)}, {1.5*sin(240)}, {1/2}) -- (0, 0, 0.5);
        \draw[very thick] (0, 0, -0.5) .. controls (1.3, 0, -0.3) and (1.3, 0, 0.3) .. (0, 0, 0.5);

    \end{scope}
    \end{tikzpicture}
    }}
    \]
    \item moving a strand across the branch locus\footnote{The strands are drawn thicker here to emphasize the half-twist on the right-hand side. }
    \[
    \vcenter{\hbox{
    \tdplotsetmaincoords{60}{40}
    \begin{tikzpicture}[tdplot_main_coords]
    \begin{scope}[scale = 0.7, tdplot_main_coords]
        \coordinate (o) at (0, 0, 1.5);
        \coordinate (a) at (2, 0, 1.5);
        \coordinate (b) at ({2*cos(120)}, {2*sin(120)}, 1.5);
        \coordinate (c) at ({2*cos(240)}, {2*sin(240)}, 1.5);
        \coordinate (o1) at (0, 0, -1.5);
        \coordinate (a1) at (2, 0, -1.5);
        \coordinate (b1) at ({2*cos(120)}, {2*sin(120)}, -1.5);
        \coordinate (c1) at ({2*cos(240)}, {2*sin(240)}, -1.5);

        \filldraw[black] ({1.5*cos(120)}, {1.5*sin(120)}, {1/2 + 0.1}) -- ({1.5*cos(120)}, {1.5*sin(120)}, {1/2 - 0.1}) -- (0, 0, {0 - 0.1}) -- (0, 0, {0 + 0.1}) -- cycle;
        
        \filldraw [very thin, fill= orange, opacity = 0.2] (o) -- (a) -- (a1) -- (o1) -- cycle;
        \filldraw [very thin, fill= orange, opacity = 0.2] (o) -- (b) -- (b1) -- (o1) -- cycle;
        \filldraw [very thin, fill= orange, opacity = 0.2] (o) -- (c) -- (c1) -- (o1) -- cycle;
        
        \filldraw[black] ({1.5*cos(240)}, {1.5*sin(240)}, {-1/2 + 0.1}) -- ({1.5*cos(240)}, {1.5*sin(240)}, {-1/2 - 0.1}) -- (0, 0, {0 - 0.1}) -- (0, 0, {0 + 0.1}) -- cycle;

        \draw[red, very thick] (o) -- (o1);
        
    \end{scope}
    \end{tikzpicture}
    }}
    \;\;\;\leftrightarrow
    \vcenter{\hbox{
    \tdplotsetmaincoords{60}{40}
    \begin{tikzpicture}[tdplot_main_coords]
    \begin{scope}[scale = 0.7, tdplot_main_coords]
        \coordinate (o) at (0, 0, 1.5);
        \coordinate (a) at (2, 0, 1.5);
        \coordinate (b) at ({2*cos(120)}, {2*sin(120)}, 1.5);
        \coordinate (c) at ({2*cos(240)}, {2*sin(240)}, 1.5);
        \coordinate (o1) at (0, 0, -1.5);
        \coordinate (a1) at (2, 0, -1.5);
        \coordinate (b1) at ({2*cos(120)}, {2*sin(120)}, -1.5);
        \coordinate (c1) at ({2*cos(240)}, {2*sin(240)}, -1.5);

        \filldraw[black] ({1.5*cos(120)}, {1.5*sin(120)}, {1/2 + 0.1}) -- ({1.5*cos(120)}, {1.5*sin(120)}, {1/2 - 0.1}) -- (0, 0, {0.3 - 0.1}) -- (0, 0, {0.3 + 0.1}) -- cycle;
        
        \filldraw [very thin, fill= orange, opacity = 0.2] (o) -- (b) -- (b1) -- (o1) -- cycle;
        \filldraw [very thin, fill= orange, opacity = 0.2] (o) -- (c) -- (c1) -- (o1) -- cycle;

        \filldraw[black] (0, 0, {0.3 + 0.1}) -- (0, 0, {0.3 - 0.1}) -- (0.6, 0, {0 - 0.1}) -- (0.6, 0, {0 + 0.1}) -- cycle;
        \draw[white, line width=2] (0, 0, {-0.3 + 0.1}) -- (0.6, 0, {0 + 0.1});
        
        \filldraw [very thin, fill= orange, opacity = 0.2] (o) -- (a) -- (a1) -- (o1) -- cycle;
        \draw[red, very thick] (o) -- (o1);
        
        \filldraw[black] (0, 0, {-0.3 + 0.1}) -- (0, 0, {-0.3 - 0.1}) -- (0.6, 0, {0 - 0.1}) -- (0.6, 0, {0 + 0.1}) -- cycle;

        \filldraw[black] ({1.5*cos(240)}, {1.5*sin(240)}, {-1/2 + 0.1}) -- ({1.5*cos(240)}, {1.5*sin(240)}, {-1/2 - 0.1}) -- (0, 0, {-0.3 - 0.1}) -- (0, 0, {-0.3 + 0.1}) -- cycle;
        
    \end{scope}
    \end{tikzpicture}
    }}
    \]
\end{enumerate}
\item\label{item:3dmove} Moving a strand across a critical leaf ending on the 4-valent singular point
\[
\vcenter{\hbox{
\tdplotsetmaincoords{30}{80}
\begin{tikzpicture}[tdplot_main_coords]
\begin{scope}[scale = 0.7, tdplot_main_coords]
    \coordinate (o) at (0, 0, 0);
    \coordinate (a) at (0, 0, 3);
    \coordinate (a1) at (0, 0, -1);
    
    \coordinate (b) at ({2*sqrt(2)}, 0, -1);
    \coordinate (b1) at ({-2*sqrt(2)/3}, 0, 1/3);
    
    \coordinate (c) at ({-sqrt(2)}, {sqrt(6)}, -1);
    \coordinate (c1) at ({sqrt(2)/3}, {-sqrt(6)/3}, 1/3);
    
    \coordinate (d) at ({-sqrt(2)}, {-sqrt(6)}, -1);
    \coordinate (d1) at ({sqrt(2)/3}, {sqrt(6)/3}, 1/3);
    
    \coordinate (ab) at ({sqrt(2)}, 0, 1);
    \coordinate (ac) at ({-sqrt(2)/2}, {sqrt(6)/2}, 1);
    \coordinate (ad) at ({-sqrt(2)/2}, {-sqrt(6)/2}, 1);
    \coordinate (bc) at ({sqrt(2)/2}, {sqrt(6)/2}, -1);
    \coordinate (bd) at ({sqrt(2)/2}, {-sqrt(6)/2}, -1);
    \coordinate (cd) at ({-sqrt(2)}, 0, -1);

    \draw[white, ultra thick] (o) -- (a1);
    
    \filldraw [very thin, fill= orange, opacity = 0.2] (o) -- (a1) -- (cd) -- (b1) -- (o);
    \filldraw [very thin, fill= orange, opacity = 0.2] (o) -- (b1) -- (ac) -- (d1) -- (o);
    \filldraw [very thin, fill= orange, opacity = 0.2] (o) -- (b1) -- (ad) -- (c1) -- (o);
    \filldraw [very thin, fill= orange, opacity = 0.2] (o) -- (a1) -- (bc) -- (d1) -- (o);
    \filldraw [very thin, fill= orange, opacity = 0.2] (o) -- (a1) -- (bd) -- (c1) -- (o);
    
    \draw[red, very thick] (o) -- (a1);
    \draw[red, very thick] (o) -- (b1);
    \draw[red, very thick] (o) -- (c1);
    \draw[red, very thick] (o) -- (d1);

    \filldraw[red] (o) circle (0.15em);

    \filldraw [very thin, fill= orange, opacity = 0.2] (o) -- (c1) -- (ab) -- (d1) -- (o);

    \draw[very thick, ->] ({-sqrt(2)/4}, {-sqrt(6)/4}, {1/2}) -- ({-sqrt(2)/3}, 0, 1/6) -- ({-sqrt(2)/4}, {sqrt(6)/4}, {1/2});
\end{scope}
\end{tikzpicture}
}} 
\;\;\;\;\leftrightarrow
\vcenter{\hbox{
\tdplotsetmaincoords{30}{80}
\begin{tikzpicture}[tdplot_main_coords]
\begin{scope}[scale = 0.7, tdplot_main_coords]
    \coordinate (o) at (0, 0, 0);
    \coordinate (a) at (0, 0, 3);
    \coordinate (a1) at (0, 0, -1);
    
    \coordinate (b) at ({2*sqrt(2)}, 0, -1);
    \coordinate (b1) at ({-2*sqrt(2)/3}, 0, 1/3);
    
    \coordinate (c) at ({-sqrt(2)}, {sqrt(6)}, -1);
    \coordinate (c1) at ({sqrt(2)/3}, {-sqrt(6)/3}, 1/3);
    
    \coordinate (d) at ({-sqrt(2)}, {-sqrt(6)}, -1);
    \coordinate (d1) at ({sqrt(2)/3}, {sqrt(6)/3}, 1/3);
    
    \coordinate (ab) at ({sqrt(2)}, 0, 1);
    \coordinate (ac) at ({-sqrt(2)/2}, {sqrt(6)/2}, 1);
    \coordinate (ad) at ({-sqrt(2)/2}, {-sqrt(6)/2}, 1);
    \coordinate (bc) at ({sqrt(2)/2}, {sqrt(6)/2}, -1);
    \coordinate (bd) at ({sqrt(2)/2}, {-sqrt(6)/2}, -1);
    \coordinate (cd) at ({-sqrt(2)}, 0, -1);

    \draw[white, ultra thick] (o) -- (a1);
    
    \filldraw [very thin, fill= orange, opacity = 0.2] (o) -- (a1) -- (cd) -- (b1) -- (o);
    \filldraw [very thin, fill= orange, opacity = 0.2] (o) -- (b1) -- (ac) -- (d1) -- (o);
    \filldraw [very thin, fill= orange, opacity = 0.2] (o) -- (b1) -- (ad) -- (c1) -- (o);
    \filldraw [very thin, fill= orange, opacity = 0.2] (o) -- (a1) -- (bc) -- (d1) -- (o);
    \filldraw [very thin, fill= orange, opacity = 0.2] (o) -- (a1) -- (bd) -- (c1) -- (o);
    
    \draw[red, very thick] (o) -- (a1);
    \draw[red, very thick] (o) -- (b1);
    \draw[red, very thick] (o) -- (c1);
    \draw[red, very thick] (o) -- (d1);

    \filldraw[red] (o) circle (0.15em);

    \filldraw [very thin, fill= orange, opacity = 0.2] (o) -- (c1) -- (ab) -- (d1) -- (o);

    \draw[very thick, ->] ({-sqrt(2)/4}, {-sqrt(6)/4}, {1/2}) -- ({sqrt(2)/6}, {-sqrt(6)/6}, 1/6) .. controls +({sqrt(2)/6}, {sqrt(6)/6}, 1/6) and +({sqrt(2)/6}, {-sqrt(6)/6}, 1/6) .. ({sqrt(2)/6}, {sqrt(6)/6}, 1/6) -- ({-sqrt(2)/4}, {sqrt(6)/4}, {1/2});
\end{scope}
\end{tikzpicture}
}} 
\]
\end{enumerate}
The proof of invariance of $[K]_{\widetilde{M}^{\mathrm{sing}}}^{\mathrm{net}}$ under isotopies of type \ref{item:Reidemeister} and \ref{item:MovesNearBinding}, as well as the proof that $[\,\cdot\,]_{\widetilde{M}^{\mathrm{sing}}}^{\mathrm{net}}$ respects the skein relations of $\mathrm{Sk}_{a,z}(M)$, follows almost verbatim from the proof in \cite{Neitzke-Yan-nonabelianization} after the substitutions $(q-q^{-1})\mapsto z$ and $q\mapsto a$, see \cite[Section 7]{Neitzke-Yan-nonabelianization} for the details. 

It then only remains to check invariance under the move \ref{item:3dmove}, where 
the only non-trivial case is when there are detours. 
The image of the left hand side of \ref{item:3dmove} under $F$ that uses a detour is 
\[
\vcenter{\hbox{
\begin{tikzpicture}[scale=0.7]
\draw[ultra thick, ->] ({-sqrt(3)/2}, 1/2) -- ({sqrt(3)/2}, 1/2);
\draw[white, line width=5] (0, 0) -- (0, 1);
\draw[ultra thick, red] (0, 0) -- (0, 1);
\draw[ultra thick, red] (0, 0) -- ({-sqrt(3)/2}, -1/2);
\draw[ultra thick, red] (0, 0) -- ({sqrt(3)/2}, -1/2);
\draw[ultra thick, red, dotted] (0, 0) -- (0, -1);
\filldraw[orange, opacity=0.2] ({sqrt(3)}, 1) -- ({-sqrt(3)}, 1) -- (0, -2) -- cycle;
\node[anchor=north west] at ({-sqrt(3)}, 1){$\theta'$};
\node[anchor=north east] at ({sqrt(3)}, 1){$\theta$};
\node[anchor=south] at (0, -2){$\theta''$};
\node[anchor=south] at ({-sqrt(3)/2}, 1/2){$1$};
\node[anchor=south] at ({sqrt(3)/2}, 1/2){$2$};
\end{tikzpicture}
}},
\]
whereas the corresponding image of the right hand side of \ref{item:3dmove} under $F$ is a sum of two terms, each using one detour: 
\[
a^{\frac{\theta}{\pi}}\;
\vcenter{\hbox{
\begin{tikzpicture}[scale=0.7]
\node[anchor=north west] at ({-sqrt(3)}, 1){$\theta'$};
\node[anchor=north east] at ({sqrt(3)}, 1){$\theta$};
\node[anchor=south] at (0, -2){$\theta''$};
\draw[ultra thick, ->] (0.1, -1/2) to[out=0, in=180] ($({sqrt(3)/2}, 1/2)$);
\draw[white, line width=5] (0, 0) -- ({sqrt(3)/2}, -1/2);
\draw[ultra thick, red] (0, 0) -- ({sqrt(3)/2}, -1/2);
\draw[ultra thick, red] (0, 0) -- (0, 1);
\draw[ultra thick, red, dotted] (0, 0) -- (0, -1);
\draw[ultra thick, red] (0, 0) -- ({-sqrt(3)/2}, -1/2);
\draw[white, line width=5] ({-sqrt(3)/2}, 1/2) to[out=0, in=180] (0.1, -1/2);
\draw[ultra thick] ({-sqrt(3)/2}, 1/2) to[out=0, in=180] (0.1, -1/2);
\node[anchor=south] at ({-sqrt(3)/2}, 1/2){$1$};
\node[anchor=south] at ({sqrt(3)/2}, 1/2){$2$};
\filldraw[orange, opacity=0.2] ({sqrt(3)}, 1) -- ({-sqrt(3)}, 1) -- (0, -2) -- cycle;
\end{tikzpicture}
}}
\;\;+\;\;
a^{-\frac{\theta'}{\pi}}\;
\vcenter{\hbox{
\begin{tikzpicture}[scale=0.7]
\node[anchor=north west] at ({-sqrt(3)}, 1){$\theta'$};
\node[anchor=north east] at ({sqrt(3)}, 1){$\theta$};
\node[anchor=south] at (0, -2){$\theta''$};
\draw[ultra thick] ({-sqrt(3)/2}, 1/2) to[out=0, in=180] (-0.1, -1/2);
\draw[white, line width=5] (0, 0) -- ({-sqrt(3)/2}, -1/2);
\draw[ultra thick, red] (0, 0) -- ({-sqrt(3)/2}, -1/2);
\draw[ultra thick, red] (0, 0) -- (0, 1);
\draw[ultra thick, red] (0, 0) -- ({sqrt(3)/2}, -1/2);
\draw[ultra thick, red, dotted] (0, 0) -- (0, -1);
\draw[white, line width=5] (-0.1, -1/2) to[out=0, in=180] ($({sqrt(3)/2}, 1/2)$);
\draw[ultra thick, ->] (-0.1, -1/2) to[out=0, in=180] ($({sqrt(3)/2}, 1/2)$);
\node[anchor=south] at ({-sqrt(3)/2}, 1/2){$1$};
\node[anchor=south] at ({sqrt(3)/2}, 1/2){$2$};
\filldraw[orange, opacity=0.2] ({sqrt(3)}, 1) -- ({-sqrt(3)}, 1) -- (0, -2) -- cycle;
\end{tikzpicture}
}},
\]
where the coefficients come from alteration of the turning factors. 
Thus, the difference between $[\,\cdot\,]_{\widetilde{M}^{\mathrm{sing}}}^{\mathrm{net}}$ acting on the left- and right-hand sides is exactly the 3-term relation \eqref{eq:skeinrel5} near the four-valent point of the branch locus at the barycenter of a tetrahedron of $\Delta$ in the definition of $\mathrm{Sk}_{a, z}(\widetilde{M}^{\mathrm{sing}}; \widetilde{\tau})$ (Definition \ref{defn:SkeinModuleOfBranchedDoubleCover}). The theorem follows. 
$\qed$

\subsection{The trivial double cover} \label{sec: trivial double cover}

In this section, we consider trivial double covers. If $M$ is a 3-manifold and $\alpha$ is an everywhere non-zero $1$-form on $M$, the trivial double cover of $M$ is the two component Lagrangian in $T^\ast M$ with one component the zero section and the other the graph of $\alpha$.   

We first calculate the skein trace map 
\[
\Sk_{a_1a_2, z}(\R^3) \rightarrow \Sk_{a_1, z}(\R^3) \otimes \Sk_{a_2, z}(\R^3)
\]
for the trivial double cover of $\R^3$. 
For triangulated 3-manifolds, this situation applies to links which lie in a ball in the interior of a tetrahedron that is disjoint from the brach locus. Since the double cover has two $\R^3$ components it is possible to use separate framing variables $a_1$ and $a_2$ for the two sheets. The framing variable of the underlying $\R^3$ is then $a_1a_2$, and the case with non-separated framing variables is obtained simply by setting $a_1=a_2=a$. We express the results below with separate framing variables $a_1$ and $a_2$. 
In this case, the turning factor should be replaced by
\[
a_1^{\mathrm{turn}_1(\widetilde{K}_\beta)}a_2^{\mathrm{turn}_2(\widetilde{K}_\beta)} \ = \ 
\prod_{\substack{1\leq i,j \leq 2\\ i \neq j}}a_j^{\mathrm{turn}_{ij}\widetilde{K}_{\beta;i}}.
\]
While this example may appear trivial -- since $\Sk(\R^3)$ is freely spanned by the empty skein $[\emptyset]$, and the skein trace in this case sends $[\emptyset] \mapsto [\emptyset] \otimes [\emptyset]$ -- it nonetheless reveals, as we illustrate below, intricate algebraic structures underlying the Hecke algebra. 

\begin{eg}
We discuss the skein lift of the unknot to the trivial double cover of $\R^3$. 
The skein trace map applied to the unknot is as follows:
\begin{table}[H]
    \centering
    \begin{tabular}{c c c}
        \begin{tikzpicture}[anchorbase, scale=.5]
        \draw[very thick, ->] (1,2) to [out=180,in=90] (0,1) to [out=-90,in=-180] (1,0)to [out=0,in=-90] (2,1)to [out=90,in=0] (1,2);
        \node at (-1.2, 1.8) {\tiny $\otimes 1$};
        \node at (-1.2, 1.4) {\tiny $\odot 2$};
        \end{tikzpicture} & 
        $\mapsto$ & 
        $a_{2}$ \begin{tikzpicture}[anchorbase, scale=.5]
        \draw[very thick, ->] (1,2) to [out=180,in=90] (0,1) to [out=-90,in=-180] (1,0)to [out=0,in=-90] (2,1)to [out=90,in=0] (1,2);
        \node at (2, 2) {$1$};
        \end{tikzpicture} $+$ $a_{1}^{-1}$
        \begin{tikzpicture}[anchorbase, scale=.5]
        \draw[very thick, ->] (1,2) to [out=180,in=90] (0,1) to [out=-90,in=-180] (1,0)to [out=0,in=-90] (2,1)to [out=90,in=0] (1,2);
        \node at (2, 2) {$2$};
        \end{tikzpicture}
        \\
         &  & \\
        $\parallel$ &  & $\parallel$ \\
         &  & \\
        $\frac{a_{1}a_{2} - a_{1}^{-1}a_{2}^{-1}}{z}\;\emptyset$ & $\mapsto$ 
         & 
        $\qty( a_{2}\frac{a_{1}-a_{1}^{-1}}{z} + a_{1}^{-1}\frac{a_{2}-a_{2}^{-1}}{z}) \; \emptyset$
    \end{tabular},
\end{table}
\noindent where $\otimes 1$ and $\odot2$ indicates the natural oriented lift to the two sheets of the vector field dual of $\alpha$.\footnote{That is, the foliation is orthogonal to the page, oriented in such a way that the orientation of the leaves in sheet 1 (resp., sheet 2) is pointed away from (resp., toward) our eyes.}   
\end{eg}

\begin{eg}
We compute the the skein lift of a positive kink:
\begin{table}[H]
    \centering
    \begin{tabular}{c c c}
        \begin{tikzpicture}[anchorbase, scale=.5]
        \draw [very thick, ->] (0,-0.3) to [out=180,in=-60] (-0.35,0) to [out=120,in=-90] (-0.5,1);
         \draw [white, line width=.15cm] (-0.5,-1) to [out=90,in=-120] (-0.35,0) to [out=60,in=180] (0,0.3);
        \draw [very thick] (-0.5,-1) to [out=90,in=-120] (-0.35,0) to [out=60,in=180] (0,0.3) to [out=0,in=90] (0.3,0) to [out=-90,in=0] (0,-0.3);
        \node at (-1.8, 1) {\tiny $\otimes 1$};
        \node at (-1.8, 0.6) {\tiny $\odot 2$};
        \end{tikzpicture} & 
        $\mapsto$ & 
        $a_{2}^{-1}$ \begin{tikzpicture}[anchorbase, scale=.5]
        \draw [very thick, ->] (0,-0.3) to [out=180,in=-60] (-0.35,0) to [out=120,in=-90] (-0.5,1);
         \draw [white, line width=.15cm] (-0.5,-1) to [out=90,in=-120] (-0.35,0) to [out=60,in=180] (0,0.3);
        \draw [very thick] (-0.5,-1) to [out=90,in=-120] (-0.35,0) to [out=60,in=180] (0,0.3) to [out=0,in=90] (0.3,0) to [out=-90,in=0] (0,-0.3);
        \node at (0.1, 0.8) {$1$};
        \end{tikzpicture} $+$ $a_{1}$
        \begin{tikzpicture}[anchorbase, scale=.5]
        \draw [very thick, ->] (0,-0.3) to [out=180,in=-60] (-0.35,0) to [out=120,in=-90] (-0.5,1);
         \draw [white, line width=.15cm] (-0.5,-1) to [out=90,in=-120] (-0.35,0) to [out=60,in=180] (0,0.3);
        \draw [very thick] (-0.5,-1) to [out=90,in=-120] (-0.35,0) to [out=60,in=180] (0,0.3) to [out=0,in=90] (0.3,0) to [out=-90,in=0] (0,-0.3);
        \node at (0.1, 0.8) {$2$};
        \end{tikzpicture}
         $+$ $a_{1}$ $z$ \begin{tikzpicture}[anchorbase, scale=.5]
        \draw [very thick, ->] (0,0) to (0,2);
        \draw[very thick, <-] (1,1.4) to [out=180,in=90] (0.6,1) to [out=-90,in=-180] (1,0.6)to [out=0,in=-90] (1.4,1)to [out=90,in=0] (1,1.4);
        \node at (0.5, 1.8) {$1$};
        \node at (1.6, 1.4) {$2$};
        \end{tikzpicture}\\
         &  & \\
         &  & $\parallel$ \\
         &  & \\
        $\parallel$ & 
         & 
        $a_{1}a_{2}^{-1}$ \begin{tikzpicture}[anchorbase, scale=.5]
        \draw [very thick, ->] (0,0) to (0,2);
        \node at (0.5, 1.8) {$1$};
        \end{tikzpicture}
        $+ a_{1}a_{2}$ \begin{tikzpicture}[anchorbase, scale=.5]
        \draw [very thick, ->] (0,0) to (0,2);
        \node at (0.5, 1.8) {$2$};
        \end{tikzpicture}
        $+ a_{1}z \frac{a_{2}-a_{2}^{-1}}{z}$ \begin{tikzpicture}[anchorbase, scale=.5]
        \draw [very thick, ->] (0,0) to (0,2);
        \node at (0.5, 1.8) {$1$};
        \end{tikzpicture}\\
         &  & \\
         &  & $\parallel$ \\
         &  & \\
        $a_{1}a_{2}$ \begin{tikzpicture}[anchorbase, scale=.5]
        \draw [very thick, ->] (0,0) to (0,2);
        \end{tikzpicture} &
        $\mapsto$ & 
        $a_{1}a_{2} \left( \right.\;\;$ \begin{tikzpicture}[anchorbase, scale=.5]
        \draw [very thick, ->] (0,0) to (0,2);
        \node at (0.5, 1.8) {$1$};
        \end{tikzpicture}
        $+$
        \begin{tikzpicture}[anchorbase, scale=.5]
        \draw [very thick, ->] (0,0) to (0,2);
        \node at (0.5, 1.8) {$2$};
        \end{tikzpicture}
        $\left. \right)$
    \end{tabular}
\end{table}
\noindent This illustrates that skein lifting respects change of framing. 
\end{eg}

We next consider lifting skein elements $\gamma$ which are of the form: a framed link $K$ with components colored by representations (i.e., Young diagrams) other than the fundamental representation. To use our definition of the skein lifting map $F$ above, we would first express $\gamma$ as a linear combination of links in a tubular neighborhood of $K$ all colored by the fundamental representation and then apply $F$ from \eqref{def : F(K)}.
This is a rather complicated procedure, and one could ask for some way to carry out the calculation of the skein trace map acting on $\gamma$ that bypasses going to the fundamental representation. 
In the case of trivial double cover and when $K$ has no crossings (i.e., when $K$ is an unlink), there is such a way: 
\begin{thm}\label{thm:coproduct}
Under the skein trace map applied to the trivial double cover of the thickened annulus $S^1\times[0,1]\times\R$, with turning numbers corresponding to the product metric on $S^1\times[0,1]$ where the turning number of the central circle equals zero,   
\begin{equation}\label{eq:coproduct}
W_\lambda \mapsto \sum_{\mu, \nu}c_{\mu\nu}^{\lambda} W_\mu \otimes W_\nu,
\end{equation}
where $c_{\mu\nu}^{\lambda}$ denotes the Littlewood-Richardson coefficients. (Note that this 
is just the standard coproduct of the self-dual Hopf algebra of symmetric functions.)
\end{thm}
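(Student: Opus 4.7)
The plan is to leverage the algebra-homomorphism property of the skein trace together with the polynomial-generator structure of the positive skein, and then invoke self-duality of the Hopf algebra of symmetric functions.

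First, by Theorem \ref{algebra structure} applied to the trivial double cover $\Sigma = S \sqcup \mathrm{graph}(\alpha) \subset T^*S$ of the annulus $S = S^1\times[0,1]$ (which bounds no nonconstant holomorphic curves for $\alpha$ chosen generically), the skein trace is an algebra homomorphism
\[
[\,\cdot\,]_\Sigma \colon \Sk_{a^2}(S^1\times[0,1]\times\R) \longrightarrow \Sk_a(S^1\times[0,1]\times\R)^{\otimes 2},
\]
where the tensor-product decomposition of the target reflects the fact that $\Sigma$ is the disjoint union of two annuli. Under the identification of $\Sk_+(S^1\times[0,1]\times\R)$ with the ring $\Lambda$ of symmetric functions sending $W_\lambda \leftrightarrow s_\lambda$ and $P_d \leftrightarrow p_d$ \cite{Aiston-powersum, Morton_powersum}, the positive skein is a polynomial algebra generated by the power sums $P_d$. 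Hence it suffices to compute $[P_d]_\Sigma$ for each $d \geq 1$.

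Next, I would represent $P_d$ by a single loop winding $d$ times around the core circle, positioned to be transverse to the foliation with no self-crossings in the leaf-space projection. Because the cover is trivial and unbranched, the branch locus is empty and there are no critical leaves, so no detours can occur; and because the diagram is crossingless, no exchanges occur either. The only allowed lifts are therefore the two direct lifts, one into each sheet. By the hypothesis that the product metric gives zero turning number to the central circle, the turning factor is trivial on each lift, and the chosen representative has no framing tangencies. This yields
\[
[P_d]_\Sigma \;=\; P_d \otimes 1 \;+\; 1 \otimes P_d,
\]
so each $p_d$ is mapped to a primitive element.

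Finally, the unique algebra homomorphism $\Lambda \to \Lambda \otimes \Lambda$ sending every $p_d$ to $p_d \otimes 1 + 1 \otimes p_d$ is the standard coproduct $\Delta$ of the self-dual Hopf algebra of symmetric functions, and the classical identity $\Delta(s_\lambda) = \sum_{\mu,\nu} c^\lambda_{\mu\nu}\, s_\mu \otimes s_\nu$ then translates back via $W_\lambda \leftrightarrow s_\lambda$ to the claimed formula \eqref{eq:coproduct}.

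The main step is the direct computation $[P_d]_\Sigma = P_d\otimes 1 + 1\otimes P_d$; once this is established, everything else is a consequence of the algebra-homomorphism property and the standard combinatorics of $\Lambda$. The subtlety is purely bookkeeping: one must ensure that the chosen representative of $P_d$ admits no detours or exchanges and that the turning and framing factors vanish, all of which follow from the trivial, unbranched nature of the cover and the vanishing turning hypothesis on the central circle.
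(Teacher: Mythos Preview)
Your overall strategy --- reduce to the algebra generators $P_d$ via the algebra-homomorphism property, then show $P_d$ is primitive --- is exactly the paper's. The fatal gap is your claimed representative for $P_d$.

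You assert that $P_d$ can be represented by ``a single loop winding $d$ times around the core circle \ldots\ with no self-crossings in the leaf-space projection.'' The leaf space here is the annulus $S^1\times[0,1]$ itself (the foliation is along the $I$-direction of $\mathrm{Ann}\times I$). But for $d\ge 2$ there is \emph{no} embedded closed curve in an annulus winding $d$ times: if such a curve were $\theta\mapsto(e^{i\theta},f(\theta))$ on $\R/2\pi d\Z$, embeddedness forces $f(\theta)-f(\theta+2\pi)$ to have constant sign, contradicting periodicity. So any honest diagram for a $d$-fold winding loop has crossings, and the skein trace of such a diagram necessarily involves exchange terms. More fundamentally, $P_d$ is not a single framed knot in the skein at all for $d\ge 2$: it is the linear combination $P_n = \tfrac{1}{[n]}\sum_{i+j=n-1} A_{i,j}$, and no single knot represents it.

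The paper confronts this directly. It computes the trace on each $A_{i,j}$ (the closure of $\sigma_1\cdots\sigma_i\sigma_{i+1}^{-1}\cdots\sigma_{i+j}^{-1}$), where exchange lifts do appear, and then shows by a telescoping calculation that when summed over $i+j=n-1$ the non-trivial lifts cancel, leaving $P_n\mapsto P_n\otimes 1 + 1\otimes P_n$. That cancellation is the actual content of the proof, and your argument skips it by positing a representative that does not exist.
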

\begin{proof}
Gluing the inner boundary of an annulus with the outer boundary of another gives the usual product on $\SkAlg_{a,z}^+(\mathrm{Ann}) \cong \Lambda$, where $\Lambda$ is the algebra of symmetric functions. 
Since the leaves of the foliation are along the $I$-direction of $\mathrm{Ann} \times I$, it follows that the skein trace map 
\[
\Lambda \cong \SkAlg_{a_1 a_2,z}^+(\mathrm{Ann}) \rightarrow \SkAlg_{a_1,z}^+(\mathrm{Ann}) \otimes \SkAlg_{a_2,z}^+(\mathrm{Ann}) \cong \Lambda \otimes \Lambda
\]
is an algebra map, and hence $\Lambda$ is a bialgebra. 
We need to show that this coproduct is the standard coproduct \eqref{eq:coproduct}. 
Since the power sum elements $p_n$ generate $\Lambda$, it suffices to show that the corresponding elements in $\SkAlg^+(\mathrm{Ann})$, 
\[
P_n := \frac{1}{[n]}\qty(A_{0, n-1} + A_{1, n-2} + \cdots + A_{n-1, 0}),
\]
where $A_{i,j}$ is the closure of the braid $\sigma_1 \cdots \sigma_i \sigma_{i+1}^{-1} \cdots \sigma_{i+j}^{-1}$ read from bottom to top \cite{Aiston-powersum, Morton_powersum} and $[n]=(q^{n/2}-q^{-n/2})/(q^{1/2}-q^{-1/2})$ is the quantum integer $n$, 
are primitive with respect to our coproduct, i.e., 
\[
P_n \mapsto P_n \otimes 1 + 1 \otimes P_n. 
\]
In other words, we need to show that all the non-trivial lifts of $P_n$ cancel out so that only the two direct lifts survive. 

By direct calculation, observe that 
\begin{align*}
A_{i,j} &\mapsto A_{i,j} \otimes 1 + 1 \otimes A_{i,j} \\
&\quad +z \sum_{0\leq k\leq i-1} A_{k,0} \otimes A_{i-1-k,j}  \;-\; z \sum_{0\leq l \leq j-1} A_{0, l} \otimes A_{i,j-1-l}  \\
&\quad - z^2 \sum_{\substack{0\leq k\leq i-1 \\ 0\leq l\leq j-1}} A_{k,0}A_{0,l}\otimes A_{i-1-k, j-1-l},
\end{align*}
so that
\begin{align}
P_n &\mapsto P_n \otimes 1 + 1 \otimes P_n \nonumber\\
&\quad +\frac{z}{[n]} \left( \sum_{\substack{i_1, i_2, j_2 \geq 0 \\ i_1 + i_2 + j_2 = n-2}} A_{i_1, 0} \otimes A_{i_2, j_2} - \sum_{\substack{i_2, j_1, j_2 \geq 0 \\ i_2 + j_1 + j_2 = n-2}} A_{0, j_1} \otimes A_{i_2, j_2} \right. \label{eqn:P_n-calculation}\\
&\quad\hspace{10em} \left.- z \sum_{\substack{i_1, i_2, j_1, j_2 \geq 0 \\ i_1 + i_2 + j_1 + j_2 = n-3}} A_{i_1, 0} A_{0, j_1} \otimes A_{i_2, j_2} \right). \nonumber
\end{align}
Since
\begin{align*}
z \sum_{\substack{i_1, i_2, j_1, j_2 \geq 0 \\ i_1 + i_2 + j_1 + j_2 = n-3}} A_{i_1, 0} A_{0, j_1} \otimes A_{i_2, j_2} &= \sum_{\substack{i_1, i_2, j_1, j_2 \geq 0 \\ i_1 + i_2 + j_1 + j_2 = n-3}} \qty(A_{i_1+1, j_1} - A_{i_1, j_1 + 1}) \otimes A_{i_2, j_2} \\
&= \sum_{\substack{k, i_2, j_2 \geq 0 \\ k + i_2 + j_2 = n-2}} \qty(A_{k, 0} - A_{0, k}) \otimes A_{i_2, j_2},
\end{align*}
all the terms inside the brackets in \eqref{eqn:P_n-calculation} cancel out, and therefore 
\[
P_n \mapsto P_n \otimes 1 + 1 \otimes P_n.
\]
\end{proof}

We next take the knot $K$ as the framed unknot in $\R^3$ represented by a circle in a plane framed by the outward planar normal. Identify the thickened annulus in Theorem \ref{thm:coproduct} with a neighborhood of the unknot. Since the turning number of the unknot in the plane equals $\pm 1$ (depending on the orientation) and the turning number of the central circle in Theorem \ref{thm:coproduct} equals $0$, we must substitute $W_\mu \otimes W_\nu \mapsto a_2^{|\mu|}W_\mu \otimes a_1^{-|\nu|}W_\nu$ in the right hand side of \eqref{eq:coproduct} when the links are included in the plane. 
We get: 
\begin{cor}
Consider the unknot colored by $\lambda$. 
Then, under the skein trace map 
$\mathrm{Sk}_{a_1a_2, z}(\mathbb{R}^3) \rightarrow \mathrm{Sk}_{a_1, z}(\mathbb{R}^3) \otimes \mathrm{Sk}_{a_2, z}(\mathbb{R}^3)$,
\[
\vcenter{\hbox{
\begin{tikzpicture}[scale=.5]
    \draw[very thick, ->] (1,2) to [out=180,in=90] (0,1) to [out=-90,in=-180] (1,0)to [out=0,in=-90] (2,1)to [out=90,in=0] (1,2);
    \node at (-1.2, 1.8) {\tiny $\otimes 1$};
    \node at (-1.2, 1.4) {\tiny $\odot 2$};
    \node[anchor=north] at (1, 0){$\lambda$};
\end{tikzpicture}
}}
\;\;\mapsto\;\;
\sum_{\mu, \nu} c_{\mu \nu}^{\lambda}
\;
a_2^{|\mu|}
\vcenter{\hbox{
\begin{tikzpicture}[scale=.5]
    \draw[very thick, ->] (1,2) to [out=180,in=90] (0,1) to [out=-90,in=-180] (1,0)to [out=0,in=-90] (2,1)to [out=90,in=0] (1,2);
    \node[anchor=north] at (1, 0){$\mu$};
    \node at (2, 2) {$1$};
\end{tikzpicture}
}}
\otimes
a_1^{-|\nu|}
\vcenter{\hbox{
\begin{tikzpicture}[scale=.5]
    \draw[very thick, ->] (1,2) to [out=180,in=90] (0,1) to [out=-90,in=-180] (1,0)to [out=0,in=-90] (2,1)to [out=90,in=0] (1,2);
    \node[anchor=north] at (1, 0){$\nu$};
    \node at (2, 2) {$2$};
\end{tikzpicture}
}}
\;,
\]
and taking HOMFLYPT polynomials gives
\begin{align*}
\prod_{\square \in \lambda} &\frac{a_1a_2 q^{c(\square)/2}  - a_1^{-1}a_2^{-1}q^{-c(\square)/2}}{q^{h(\square)/2} - q^{-h(\square)/2}}\\
& \qquad = \sum_{\mu,\nu}c_{\mu,\nu}^{\lambda}a_2^{|\mu|} a_1^{-|\nu|} 
\prod_{\square \in \mu}\frac{a_1 q^{c(\square)/2} - a_1^{-1} q^{-c(\square)/2}}{q^{h(\square)/2} - q^{-h(\square)/2}} 
\prod_{\square \in \nu}\frac{a_2 q^{c(\square)/2} - a_2^{-1}q^{-c(\square)/2}}{q^{h(\square)/2} - q^{-h(\square)/2}}. \qed
\end{align*}
\end{cor}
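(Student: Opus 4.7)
The plan is to reduce the computation to Theorem~\ref{thm:coproduct} by identifying a tubular neighborhood of the framed unknot $K \subset \mathbb{R}^3$ with the thickened annulus $S^1\times[0,1]\times\mathbb{R}$, so that the $\lambda$-colored unknot corresponds to the basis element $W_\lambda$ in the positive annular skein. Applying Theorem~\ref{thm:coproduct} in this annular setup immediately gives $W_\lambda \mapsto \sum_{\mu,\nu} c_{\mu\nu}^\lambda\, W_\mu \otimes W_\nu$, valid for the product foliation where the central circle has turning number $0$.

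Next I would correct for the change of ambient geometry: when the annulus is embedded into the plane, the central circle becomes the planar unknot with turning number $+1$. Consequently, the lift $W_\mu$ to sheet $1$ picks up $|\mu|$ extra units of turning, contributing a factor $a_2^{|\mu|}$ in the turning factor $a_j^{\operatorname{turn}_{ij}(\widetilde{K}_{\beta;i})}$ of Section~\ref{sec:q-UV-IR-map}; the lift $W_\nu$ to sheet $2$ contributes $a_1^{-|\nu|}$, where the sign flip reflects the sheet-dependent orientation convention of the foliation recalled in Definition~\ref{defn: sheet labels} (equivalently, the planar unknot has opposite turning numbers as viewed from sheets~$1$ and~$2$). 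This yields the first displayed equation. The HOMFLYPT identity then follows by evaluating both sides via the standard colored-unknot formula
\[
W_\lambda \ \longmapsto\ \prod_{\square\in\lambda}\frac{a\, q^{c(\square)/2} - a^{-1} q^{-c(\square)/2}}{q^{h(\square)/2} - q^{-h(\square)/2}},
\]
applied with $a = a_1 a_2$ on the left-hand side and $a \in \{a_1, a_2\}$ on the right, according to the sheet.

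The main obstacle is justifying the sign asymmetry between sheet $1$ and sheet $2$ in the turning-number correction, i.e.\ why sheet $2$ contributes $a_1^{-|\nu|}$ rather than $a_1^{+|\nu|}$. This is forced by the sheet-dependent sign convention built into the definition of the turning factor. As a sanity check I would verify it against the $\lambda = \square$ computation at the start of Section~\ref{sec: trivial double cover}, where the stated formula specializes to $a_2\, W_\square \otimes 1 + a_1^{-1}\cdot 1\otimes W_\square$, matching the explicit calculation there.
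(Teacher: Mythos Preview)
Your proposal is correct and follows essentially the same route as the paper: identify a tubular neighborhood of the planar unknot with the thickened annulus, apply Theorem~\ref{thm:coproduct}, and then correct for the discrepancy between the turning number $0$ of the central circle in the annular model and the turning number $\pm 1$ of the planar unknot, yielding the substitution $W_\mu\otimes W_\nu\mapsto a_2^{|\mu|}W_\mu\otimes a_1^{-|\nu|}W_\nu$. Your added sanity check against the $\lambda=\square$ example is a nice touch that the paper leaves implicit.
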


\section{Conormals} \label{sec: conormals}

The basic construction of Definition \ref{main definition} requires a non-exact push-off of a link conormal.  Our proof of Theorem \ref{skein trace} will involve considering a family of such conormals along the crossing change for the link.  Here we give explicit constructions of such push-offs and discuss their basic properties. 

\begin{figure}
    \centering
    \includegraphics[width=0.55\linewidth]{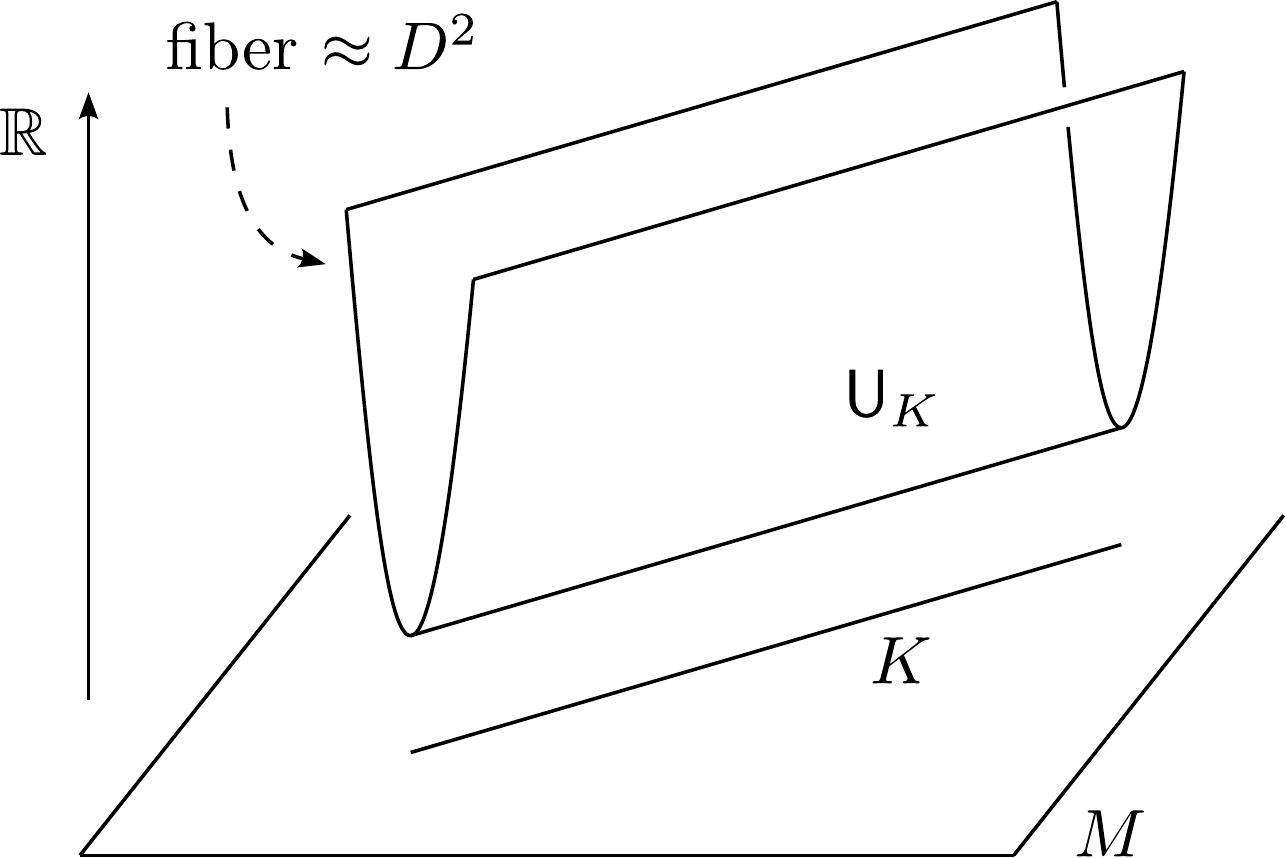}
    \caption{The sharp front in $J^0(M)\approx M\times \R$ of the conormal of a knot.}
    \label{fig:frontofconormal}
\end{figure}
We first give a front description of the conormal of the central curve $I\times 0 \subset I\times D^2$, where $I$ is a 1-manifold. 

Let $I$ denote either an interval $I\approx [0,1]$ or a circle $I\approx S^1$. 
Let $D^2$ denote the unit disk in $\R^2$, $D^2=\{v=(v_1,v_2)\colon v_1^2+v_2^2<1\}$. 
Consider the product $I \times D^2$. 
Take coordinates $(t,v)\in I\times D^2$. Let $c>0$ and consider a proper function $h\colon [0,1)\to \R$ with the following properties
\begin{itemize}
\item $h(r)=c r^2$ for $0\le r\le \frac12$,
\item $r=0$ is the only critical point of the convex function $h$, and 
\item $h(r)\to\infty$ as $|r|\to 1$. 
\end{itemize}
Let $f\colon I\times D^2 \rightarrow \mathbb{R}$ be given by 
\begin{equation}\label{eq : def function f}
f(t,v)=f(v_1,v_2)= h\left(\sqrt{v_1^2+v_2^2}\right)
\end{equation}
and consider a closed $1$-form $\tau$ along $I\times D^2$ with everywhere non-zero $dt$-component. 
Then the graph $\Gamma\subset T^\ast (I\times D^2)$ of $df + \tau$ is a graphical Lagrangian push-off of the conormal to $I \times 0 \subset T^\ast (I\times D^2)$. 
We denote it $\mathsf{U}_\tau$. 

Given any embedding $K\colon S^1 \to M$ with an (embedded) tubular neighborhood $N\colon D^2 \times S^1 \to M$, $N(0\times S^1)=K$, we may transport the above perturbed shifted conormal to $T^*M$, giving a graphical Lagrangian push-off of the conormal of $K$. 
We denote it by $\mathsf{U}_{N,\tau}$. 
When the choice of $N$ is implicit or irrelevant and $\tau=dt$, we sometimes write $\mathsf{U}_K$ instead. 
The construction is local along $K$, so makes sense when $K$ is only immersed, or when (even if $K$ is embedded) its tubular neighborhood is only immersed; though in these cases the resulting conormal push-off need not be graphical or even embedded.

We will now fix specific choices for tubular neighborhoods and shifting 1-forms for a model family of links participating in the first skein relation, and make some geometric observations regarding the resulting conormals.  

Consider coordinate cubes 
\[
B_\rho=\{(x_1,x_2,x_3)\colon |x_j|\le \rho; j=1,2,3\},
\] 
and a 1-parameter family of immersed intervals, $k(\sigma)$, $-\epsilon<\sigma<\epsilon$, in $B_4$:  
\begin{equation}\label{eq: def K(t) with phi}
k(\sigma) \ := \ k_1(\sigma) \ \cup \ k_2(\sigma)   \ :=  \{x_2=x_3=0\} \ \cup \ \{x_1=0,x_3=\sigma\phi(x_2)\},
\end{equation}
where $\phi\colon [-4,4]\to[0,1]$ is a smooth cut-off function equal to $0$ outside $[-\frac34,\frac34]$ and equal to $1$ in $[-\frac12,\frac12]$. 
We think of the intervals as oriented in the positive $x_1$- and $x_2$-directions, respectively.
Then the sign of $\sigma$ determines the sign of the crossing, see Figure \ref{fig:three pieces} (leftmost and middle links), and when $\sigma=0$, $k(0)$ is immersed with a double point at the origin and when $\sigma\ne 0$, $k(\sigma)$ is embedded. 
Note that $k(\sigma)\cap \partial B_1$ is the four points
\[
p_{1,\pm}= (\pm 1,0,0),\qquad p_{2,\pm} = (0,\pm 1,0),
\]
for all $\sigma$, where the subscript sign is $-$ where $k(\sigma)$ enters the ball and $+$ where it exits. 

Fix tubular neighborhoods of the components $k_1(\sigma), k_2(\sigma)$.
Let $\delta>0$, $0<\eta\ll\delta$, and let $\phi\colon [-1,1]\to [0,1]$ be as above. 
We choose closed $1$-forms in these tubular neighborhoods:
\begin{align*}
    \tau_1 = (\delta+\eta x_2 \phi'(x_1)) dx_1 + \eta\phi(x_1) dx_2,\\
    \tau_2 = (\delta-\eta x_1 \phi'(x_2)) dx_2 - \eta\phi(x_2) dx_1.
\end{align*}
Let $\mathsf{U}_{k(\sigma)}$ denote the union of the corresponding two graphical Lagrangian conormals shifted by $\tau_1$ and $\tau_2$, 
\begin{equation}\label{eq : def U_k(sigma)}
\mathsf{U}_{k(\sigma)}=\mathsf{U}_{k_1(\sigma),\tau_1}\cup \mathsf{U}_{k_2(\sigma),\tau_2}.
\end{equation}
We use coordinates $(x,y)$ on $T^\ast\R^3$, $x=(x_1,x_2,x_3)$ and $y=(y_1,y_2,y_3)$.

\begin{lemma}\label{l: intersection L_1 and L_2}
In the above situation, $\mathsf{U}_{k(\sigma)}$, $\sigma\ne 0$ is embedded and $\mathsf{U}_{k(0)}$ has a clean self-intersection along the line ($\approx\R$):
$$
(x_1,x_2,x_3,y_1,y_2,y_3)=\left(\xi,\xi,x_3,\delta,\delta,\frac{\partial f}{\partial v_2}(\xi,x_3)\right),\quad |x_3|<\sqrt{1-\xi^2},
$$ 
where $\xi=\frac{\delta-\eta}{c}$ and $f$ is as in \eqref{eq : def function f}.
\end{lemma}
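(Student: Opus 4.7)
The proof is a direct calculation with the graphical Lagrangians in ambient coordinates $(x,y) = (x_1, x_2, x_3, y_1, y_2, y_3)$ on $T^*\R^3$. I would write $\mathsf{U}_{k_1,\tau_1}$ as the image of the section $x \mapsto (x, df_1(x) + \tau_1(x))$, where $f_1(x) = h(\sqrt{x_2^2+x_3^2})$, and similarly $\mathsf{U}_{k_2(\sigma),\tau_2}$ as the graph of $df_2 + \tau_2$ with $f_2(x) = h(\sqrt{x_1^2 + (x_3 - \sigma\phi(x_2))^2})$ in tubular coordinates for $k_2(\sigma)$. The self-intersection locus is then the solution set of the three component equations extracted from $df_1 + \tau_1 = df_2 + \tau_2$ in the overlap of the two tubular neighborhoods.

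In the case $\sigma = 0$, I restrict to the regime $|x_1|, |x_2| \le \tfrac12$, where $\phi(x_j) = 1$, $\phi'(x_j) = 0$, and $h$ is quadratic. The $y_1$ and $y_2$ equations become linear in $x_1, x_2$ and pin them down to the common value $\xi$. The $y_3$ equation reads $(h'(r_1)/r_1)\,x_3 = (h'(r_2)/r_2)\,x_3$, which on the slice $x_1 = x_2 = \xi$ holds automatically because $r_1 = r_2 = \sqrt{\xi^2 + x_3^2}$; both sides then equal $\partial f/\partial v_2(\xi,x_3)$, leaving $x_3$ as a free parameter subject to the constraint $r_i < 1$, i.e.\ $|x_3| < \sqrt{1-\xi^2}$. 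Cleanness reduces to a tangent-space check: the $\partial_{x_3}$-lifts of the two sections coincide in $T^*\R^3$ (again because $r_1 = r_2$ along the line), while their $\partial_{x_1}$- and $\partial_{x_2}$-lifts are linearly independent in the fiber directions (since $f_1$ is independent of $x_1$ and $f_2$ is independent of $x_2$), so $T_p\mathsf{U}_{k_1,\tau_1} \cap T_p\mathsf{U}_{k_2,\tau_2}$ is exactly one-dimensional, matching the dimension of the intersection locus.

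For $\sigma \neq 0$, the $y_1$ and $y_2$ equations still pin $x_1, x_2$ close to $\xi$ by continuity, so in particular $\phi(x_2) = 1$ and $\phi'(x_2) = 0$ at any would-be solution. The $y_3$ equation then becomes $(h'(r_1)/r_1)\,x_3 = (h'(r_2)/r_2)(x_3 - \sigma)$, with $r_2 = \sqrt{x_1^2 + (x_3 - \sigma)^2}$; the additive term $-\sigma\,(h'(r_2)/r_2)$ on the right, which is nonvanishing by strict convexity of $h$, makes this equation incompatible with the $y_1, y_2$ constraints. The main obstacle will be ruling out stray intersections outside the regime $|x_j| \le \tfrac12$, where $\phi$ is no longer constant; here I would use the strict convexity of $h$ together with the divergence $h(r) \to \infty$ as $r \to 1$ to show that the $y_1, y_2$ equations admit no solutions with $x_1$ or $x_2$ large, reducing the analysis to the compact regime just treated.
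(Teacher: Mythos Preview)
Your proposal is correct and follows essentially the same route as the paper: write both pieces as graphs of $\lambda_j = df_j + \tau_j$, solve the $dx_1$- and $dx_2$-components to pin $x_1 = x_2 = \xi$, and then analyze the $dx_3$-component. The paper's treatment of the $\sigma \neq 0$ case is slightly slicker than yours: once $x_1 = x_2 = \xi$, the two $y_3$-values are $\frac{\partial f}{\partial v_2}(\xi, x_3)$ and $\frac{\partial f}{\partial v_2}(\xi, x_3 - \sigma)$, and strict monotonicity of $\frac{\partial f}{\partial v_2}(\xi,\cdot)$ (from convexity of $h$) immediately forces these to disagree---no need to isolate an ``additive term'' or argue incompatibility with the other equations. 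Your explicit tangent-space verification of cleanness and your acknowledgment that one must rule out stray solutions outside the regime $|x_j| \le \tfrac12$ are both points the paper passes over silently.
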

\begin{proof}
 The Lagrangians $\mathsf{U}_{k_j(\sigma),\tau_j}$ are graphs of $1$-forms $\lambda_j$, $j=1,2$, where $\lambda_1$ and $\lambda_2$ are supported in
\[
\{(x,y)\colon |x_1|\le 1, |(x_2,x_3)|<r\} \quad\text{ and }\quad
\{(x,y)\colon |x_2|\le 1, |(x_1,x_3-\sigma\phi(x_2))|<r\},
\]
respectively. 
By definition $\lambda_j=df_j + \tau_j$, and for sufficiently small $|x_1|$ and $|x_2|$ we have
\begin{align}\label{eq: forms for immersed}
&\tau_1 = \delta dx_1+\eta dx_2,\quad \tau_2=\delta dx_2-\eta dx_1,\\\notag 
&df_1 = \left(\frac{\partial f}{\partial v_1}(x_2,x_3)dx_2 + \frac{\partial f}{\partial v_2}(x_2,x_3)dx_3\right)
=cx_2dx_2 + \frac{\partial f}{\partial v_2}(x_2,x_3)dx_3,\\\notag
&df_2 = \left(\frac{\partial f}{\partial v_1}(x_2,x_3-\sigma)dx_2 + \frac{\partial f}{\partial v_2}(x_1,x_3-\sigma)dx_3\right)
=cx_1dx_1 + \frac{\partial f}{\partial v_2}(x_1,x_3-\sigma)dx_3, 
\end{align}
see \eqref{eq : def function f}, for small $\delta>0$, $0<\eta\ll\delta$, and large $c>0$. 

Intersections correspond to zeros of the difference of the defining forms. Now, $\lambda_1-\lambda_2=0$ only if
\[
((\delta-\eta)- c x_1)dx_1 + (c x_2-(\delta-\eta))dx_2=0, 
\]    
which holds exactly when $x_1=x_2=\frac{\delta-\eta}{c}=\xi$ and then $y_1=y_2=\delta$. 
It remains to consider the $(x_3,y_3)$-coordinates where the Lagrangians are given by
\begin{align*}
\mathsf{U}_{k_1(\sigma),\tau_1}&\colon \ (x_3,y_3) = \left(x_3,\frac{\partial f}{\partial v_2}(\xi,x_3)\right),\quad |x_3|<\sqrt{1-\xi^2},\\
\mathsf{U}_{k_2(\sigma),\tau_2}&\colon \ (x_3,y_3) = \left(x_3,\frac{\partial f}{\partial v_2}(\xi,x_3-\sigma)\right),\quad |x_3-\sigma|<\sqrt{1-\xi^2}.    
\end{align*}
By convexity of $f$, the partial derivative $\frac{\partial f}{\partial v_2}(\xi,\cdot)$ is a strictly increasing function. 
Consequently, the curves are distinct for $\sigma\ne 0$ and agree for $\sigma=0$. 
The lemma follows. 
\end{proof}

The curves $k(\sigma)$ and the corresponding Lagrangians $\mathsf{U}_{k(\sigma)}$ give the local model for links in a family with a crossing.  

The link corresponding to the spliced crossing meets our box in an embedded curve $k_\asymp\subset B_1\cap\{x_3=0\}$ connecting $p_{1,-}$ to $p_{2,+}$ and $p_{2,-}$ to $p_{1,+}$, as in Figure \ref{fig:three pieces}. 
Let $N$ be an embedded tubular neighborhood of $k_\asymp$, and $dt$ a pullback to $N$ of a 1-form on $k_\asymp$ dual to the tangent vector.  Then we take the (embedded, graphical) conormal 
\begin{equation}\label{eq : def U_asymp}
\mathsf{U}_{k_\asymp}:=\mathsf{U}_{N, dt}. 
\end{equation}

\begin{figure}
    \centering
    \includegraphics[width=0.75\linewidth]{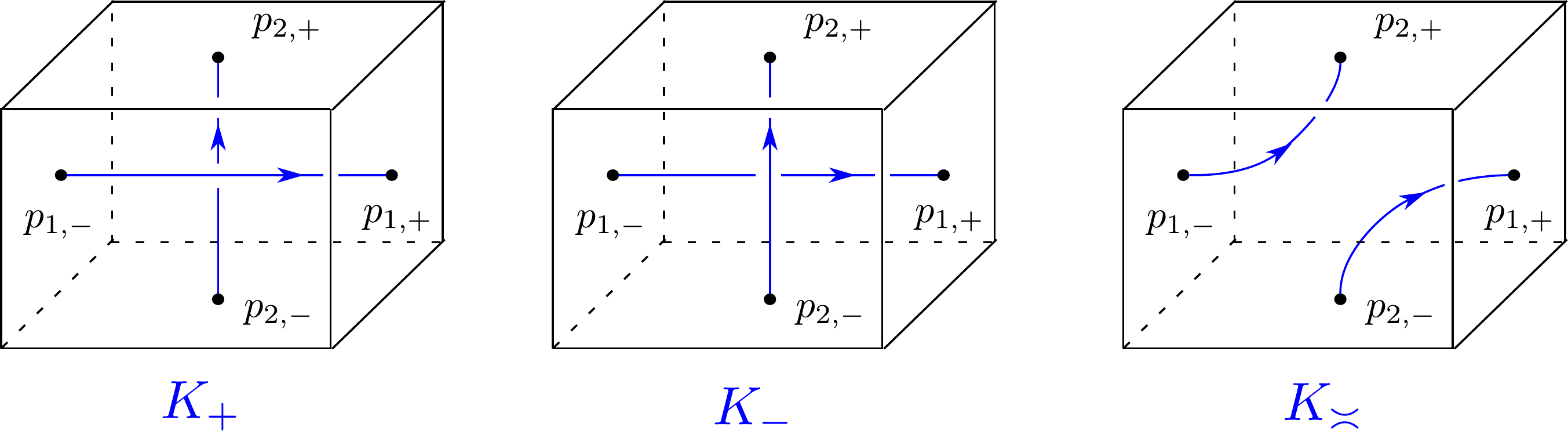}
    \caption{Three terms of the skein relation}
    \label{fig:three pieces}
\end{figure}

\begin{figure}
    \centering
    \includegraphics[width=0.65\linewidth]{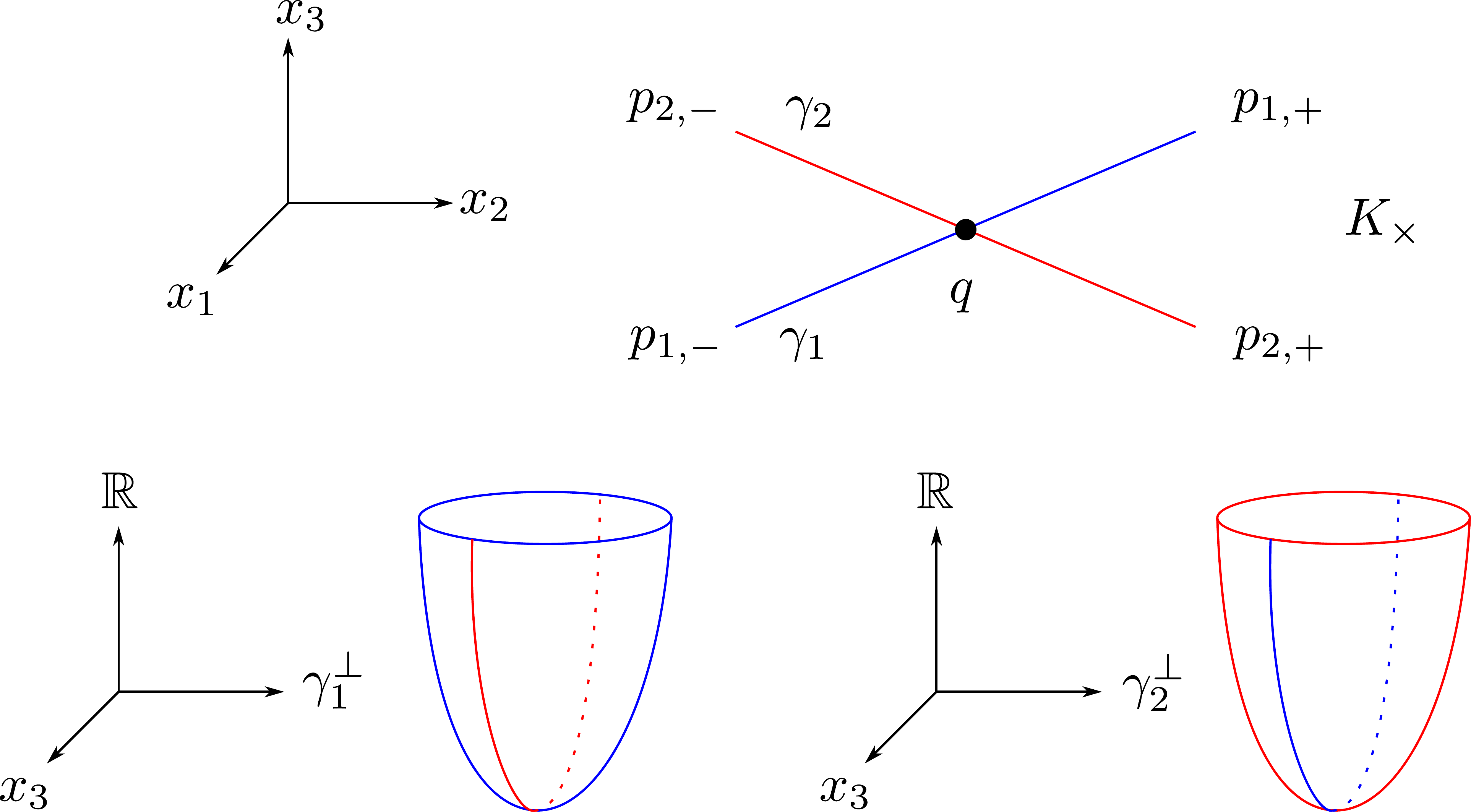}
    \caption{The link $k(0)$ and the front conormal fibers at the intersection point, with the clean intersection line. }
    \label{fig:intersction}
\end{figure}

It will later be necessary to localize the difference between the model knots close to the self-intersection of the original link. We achieve this by scaling the above models as follows.

We first consider the links $k(\sigma)\subset B_4$, $-\epsilon<\sigma<\epsilon$ and $k_\asymp\subset B_4$. For $0<\rho\le 1$ consider the scaling map
\[
s_\rho\colon B_4\to B_{4\rho},\quad s_\rho(x_1,x_2,x_3)=(\rho x_1,\rho x_2,\rho x_3).
\]
Let $k(\sigma;\rho)$ and $k_\asymp(\rho)$ denote the links in $B_4$ that consists of $s_\rho(k(\sigma))$ and $s_\rho(k_\asymp)$, respectively, with the segments of the $x_1$ and $x_2$ coordinate axes in $B_4\setminus B_{4\rho}$ added.

We next consider the Lagrangians. Consider a tubular neighborhood $N_\rho$ of $k_\asymp(\rho)$ that agrees with $N$ near $\partial B_4$ and agrees with $s_\rho(N)$ in $B_{4\rho}$. 
Note that the symplectomorphism $S_\rho\colon T^*B_1\to T^*B_\rho$, 
\[
S_\rho(x,y)=(\rho\cdot x, \rho^{-1}\cdot y)
\]
takes $\mathsf{U}_{k(\sigma)}\cap T^\ast (B_4\setminus B_{4(1-\epsilon)})$ to $\mathsf{U}_{s_\rho(k(\sigma))}\cap T^\ast (B_{4\rho}\setminus B_{4\rho(1-\epsilon)})$ and similarly for $\mathsf{U}_{k_\asymp}$ and $\mathsf{U}_{s_\rho(k_\asymp)}$. We take 
\begin{align}\label{eq : def U cross}
\mathsf{U}_{k(\sigma;\rho)} \ &= \ \left(\mathsf{U}_{N_\rho}\cap T^\ast({B_4\setminus B_{4\rho}})\right) \cup S_\rho(\mathsf{U}_{k(\sigma)}),\\\label{eq : def U splice}
\mathsf{U}_{k_\asymp(\rho)} \ &= \ \left(\mathsf{U}_{N_\rho}\cap T^\ast({B_4\setminus B_{4\rho}})\right) \cup S_\rho(\mathsf{U}_{k_\asymp}).
\end{align}

Finally, consider an immersed link $K_\times\subset M$ with a single double point where the tangent vectors of the branches are linearly independent.  We will choose conormals of the corresponding three links participating in the first skein relation by choosing models which agree with the above standard constructions inside some box \begin{equation} \label{coordinate box} 
b\colon B_4 = [-4,4]^3 \to M \qquad \qquad b^{-1}(K_\times)=k(0).
\end{equation}
Let $K_{\mathrm{out}}=K_\times\setminus b(B_4)$. Define links
\begin{align*}
    K(\sigma) &= K_{\mathrm{out}}\cup b(k(\sigma)),\quad -\epsilon<\sigma<\epsilon,\\
    K_\asymp &= K_{\mathrm{out}}\cup b(k_\asymp).
\end{align*}
Fix a tubular neighborhood $N_{\mathrm{out}}$ of $K_\times\cap (M\setminus b(B_{3}))$ that agrees with the tubular neighborhood $b(N)$ of $b(k(0))$ in $b(B_4\setminus B_3)$. Define 
\begin{align}\label{eq : conormal cross parameters}
\mathsf{U}(\sigma;\rho) &= \mathsf{U}_{N_{\mathrm{out}}}\cup b_\ast(\mathsf{U}_{k(\sigma;\rho)}),\quad -\epsilon<\sigma<\epsilon,\; 0<\rho\le 1,\\\label{eq : conormal splice parameters}
\mathsf{U}_\asymp(\rho) &= \mathsf{U}_{N_{\mathrm{out}}}\cup b_\ast(\mathsf{U}_{k_\asymp(\rho)}),\quad  0<\rho\le 1.
\end{align}

Then $\mathsf{U}(\sigma;\rho)$ are Lagrangian conormals of $K(\sigma)$ and $\mathsf{U}_\asymp(\rho)$ are Lagrangian conormals of $K_\asymp(\rho)$.

If $K_\times$ is connected, then $\mathsf{U}(\sigma,\rho)$, $\sigma\ne 0$, and $\mathsf{U}_\asymp(\sigma)$ have either one or two connected components and that $\mathsf{U}(\sigma,\rho)$ has one component exactly when $\mathsf{U}_\asymp(\rho)$ has two and vice versa. We call the homology classes in $\mathsf{U}(\sigma;\rho)$ and $\mathsf{U}_\asymp{\rho}$ represented by the natural lifts of the oriented links $K(\sigma;\rho)$ and $K_\asymp(\rho)$, respectively the \emph{basic homology class}.

\section{Flow graphs and the first skein relation}\label{sec : Lag conormals and flow graphs}

The purpose of this section is to describe the behavior of flow graph counts associated to curves stretching between shifted knot conormals and the zero section, as the knot undergoes a crossing change.  That is, we study the flow graphs for the family $\mathsf{U}(\sigma;\rho)$
for $|\rho|$ small.  

\subsection{Flow graph review}

Consider the cotangent bundle $T^\ast M$ of a manifold $M$ and let $S\subset T^\ast M$ be a Lagrangian. We take $S$ to be generic with respect to the projection $\pi\colon T^\ast M\to M$, meaning that $\pi|_S$ is an immersion outside a codimension one submanifold and outside the image of this locus $S$ is locally given by the graph of a finite number of $1$-forms, $\Gamma_{\beta_1}\cup\dots\cup \Gamma_{\beta_m}$.  (The numbering of the 1-forms can only be locally defined.)  Having fixed a metric on $M$, the duals of the differences $(\beta_i-\beta_j)^\ast$ give local vector fields along $M$. Consider a flow line of $(\beta_i-\beta_j)^\ast$ and its cotangent lift, the flow line lifted to the graphs of the $1$-forms that defines it. A flow graph is a union of flow lines such that the closure of a corresponding union of cotangent lifts of its flow lines gives a closed oriented curve in $S\subset T^\ast M$, see \cite[Definition 2.10]{Ekholm-morse} for details. 

Given a flow graph we associate a continuous map of a surface $\Sigma$ into $S\subset T^\ast M$ with boundary in $S$ as follows. Consider its \emph{flow strips}, i.e., the union of the lines in the fibers of $T^\ast M$ over the flow lines of the flow graph. The union of all its flow strips is a surface with two types of boundary, the cotangent lift of the flow graph and residual boundary components in the cotangent fiber over vertices of valency $3$ or higher (flow graphs in general position have no vertices of valency $> 3$). The residual boundary components are closed polygonal curves in a cotangent fiber, we fill them by polygonal disk. The filled surface is the surface $\Sigma$ associated to the flow graph. Note that $\Sigma$ is naturally oriented by the complex orientation of its flow strips.  We will refer to $\Sigma$ as the {\em flow surface}.  

The following is a trivial but important example of a flow graph. 
Let $K\subset M$ be a knot, $N\colon S^1\times D^2 \to M$ a fixed tubular neighborhood, and $\mathsf{U}_N\subset T^\ast M$ the corresponding shifted conormal.  

\begin{lemma}\label{l : basic flow loop}
There is a unique embedded flow graph for $(T^*M, \mathsf{U}_N \sqcup M)$. 
It is a loop, and the boundary of its flow surface is $S^1\times 0$ in $\mathsf{U}_N$ and $K$ in $M$. 
\end{lemma}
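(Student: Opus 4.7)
The plan is to reduce the claim to an analysis of closed orbits of a single vector field. Over the tubular neighborhood $N$, the Lagrangian $S = M \sqcup \mathsf{U}_N$ has two graphical sheets given by the $1$-forms $0$ and $df + \tau = h'(r)\,dr + dt$ in cylindrical coordinates $(t,r,\theta)$ coming from the tubular parametrization $N \colon D^2 \times S^1 \to M$, with $K = \{r = 0\}$. Outside $N$ there is only the zero section, and $\mathsf{U}_N$ escapes to infinity at $\partial N$ because $h(r) \to \infty$ as $r \to 1$, so no flow line can exit $N$. I would fix a product metric in these tubular coordinates, under which the dual vector field of the difference $1$-form is
\[
v \;=\; \partial_t \;+\; h'(r)\,\partial_r.
\]

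I would then show that $K$ is the unique periodic orbit of $v$. Because $h$ is strictly convex with minimum at $r = 0$, one has $h'(r) > 0$ for $r > 0$ and $h'(0) = 0$, so $r$ is strictly increasing along any nonconstant flow line; thus $K = \{r = 0\}$ is the unique periodic orbit. Ruling out more complicated flow graphs is automatic since $S$ has only two sheets, precluding Y-vertices (which would require three sheets to meet generically). Hence every flow graph is a single periodic orbit of $v$ together with its two cotangent lifts, and uniqueness of the flow graph reduces to uniqueness of the periodic orbit.

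Finally, I would verify that $K$ is a bona fide embedded flow graph and identify its flow surface. The cotangent lift of $K$ to the zero section is $K$ itself. Over $r = 0$ the $1$-form $df + \tau$ restricts to $dt$ (since $df$ vanishes along $K$), so the lift of $K$ to $\mathsf{U}_N$ is the core circle $S^1 \times 0$ of the disk bundle parametrizing $\mathsf{U}_N$. Both lifts close up, confirming that $K$ is an embedded loop-type flow graph, and the associated flow surface is the cylinder $K \times [0,1]$ mapping into $T^*M$ by fiberwise interpolation from the zero covector to $dt$ above each point of $K$, with boundary $K$ in $M$ and $S^1 \times 0$ in $\mathsf{U}_N$. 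The main (minor) point requiring care is the choice of metric, but any metric that is a product in the tubular trivialization yields the stated form of $v$, and the analysis is insensitive to this choice.
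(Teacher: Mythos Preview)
Your proof is correct and follows essentially the same approach as the paper: both identify the difference vector field in tubular coordinates as $\partial_t \pm \nabla f$ and use that $f$ has a unique non-degenerate critical point at the origin to conclude that $K$ is the unique closed orbit. Your write-up is considerably more detailed than the paper's one-line proof (you explicitly rule out trivalent vertices via the two-sheet observation and explicitly identify the flow surface), and your sign differs from the paper's convention ($\partial_t - \nabla f$, so that the flow contracts the fibers), but this does not affect the argument here.
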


\begin{proof}
By definition of $\mathsf{U}_N$, the difference vector field in $(S^1\times D^2)$-coordinates is $\partial_t - \nabla f$ and $f\colon D^2\to [0,\infty)$ has only one critical point, a non-degenerate minimum at the origin.
\end{proof}

Flow graphs are related to holomorphic curves \cite{FloerMorseWitten,FukayaOhZeroloop,HutchingsLee}. 
This relation was studied in \cite{Ekholm-morse} for Legendrians in 1-jet spaces that project to immersed exact Lagrangians in the corresponding cotangent bundles. 
Under fiber scaling, holomorphic curves ending on these Lagrangians (Gromov-) limit to holomorphic maps into the zero-section (necessarily constant), but may be recovered by rescaling leading to a one-to-one correspondence between rigid holomorphic disks and rigid flow trees. 
A more general situation was considered in \cite{EENS} where the limit Lagrangian does carry non-constant holomorphic curves; in this case, the one-to-one correspondence relates the curves in the original geometry with holomorphic curves on the limit Lagrangian with flow trees attached. 
Here we consider somewhat more general situations but the results we need can be extracted from the analysis in the papers mentioned. 
We review and extract the relevant results relating holomorphic curves and flow graphs in Appendix \ref{flow graphs and holomorphic curves}.

\subsection{Description of the result} \label{ssec: description}

Let us outline the result of the calculations in the remainder of this section.  We study the family $\mathsf{U}(\sigma;\rho)$.  For $\sigma \gg 0$, these conormals are graphical, hence their flow objects are described entirely by Lemma \ref{l : basic flow loop}.  However, this is not the case in a neighborhood of $\sigma=0$. 

For these conormals, we will call a flow graph {\em basic} if the boundary of the flow surface traces out the basic class on the conormal. 

We will further perturb the conormal to the singular knot to a $\widetilde{\mathsf{U}}(0;\rho)$, which we will show carries exactly two basic flow graphs.  One is a flow loop tracing out a knot isotopic to $K_-$, and this flow loop persists for similarly perturbed $\widetilde{\mathsf{U}}(\sigma;\rho)$ with $|\sigma|$ sufficiently small (Lemma \ref{l: flow loops near cross}). 

The other is a flow graph `with corners' corresponding to a holomorphic curve with two corners along the self-intersection of $\widetilde{\mathsf{U}}(\sigma;0)$ (Lemma \ref{l: flow loops = flows with corners}).   This graph `glues up' (as would the curve) to a flow graph on  
$\widetilde{\mathsf{U}}(\sigma;\rho)$ for $0 < \sigma < \epsilon$.  The boundary of the holomorphic curve corresponding to this flow graph traces out $K_{\asymp}$ along the zero section.  

This behavior illustrates the well-definedness (and need for) skein-valued curve counting.  Indeed, we should have $\psi(\widetilde{\mathsf{U}}(\sigma;\rho))$ independent of $\sigma > 0$.  For $\sigma \gg 0$, the conormal is graphical, and so the only flow graph is the loop tracing out $K_-$ itself.  For $\sigma < \epsilon$, we have flow graphs tracing out $K_+$ and $K_{\asymp}$; we will see later (confer the proof of Theorem \ref{explicit description} for the sign effect of attaching a flow line) that both come with positive sign.  The holomorphic curve corresponding to the nontrivial graph has Euler characteristic $-1$.  Then with $\sigma_+\gg 0$ and $0<\sigma<\epsilon$, invariance demands: 
\begin{equation}[K_+]  = \psi(\widetilde{\mathsf{U}}(\sigma_+;\rho)) = \psi(\widetilde{\mathsf{U}}(\sigma;\rho)) = [K_-] + z[K_\asymp].
\end{equation}
This is precisely the first skein relation.   

More relevant to work in the present article is a {\em different} appearance of the first skein relation.  The above description of flow graphs (and the correspondence between flow graphs and holomorphic curves) amounts to the assertion, for $0 < \sigma < \epsilon$, 
\begin{eqnarray*}
    \psi(\widetilde{\mathsf{U}}(\sigma;\rho)) & = & [K_-] + z [K_\asymp] \\
    \psi(\widetilde{\mathsf{U}}(-\sigma;\rho)) & = & [K_-].
\end{eqnarray*}
We could have deduced these formulas by comparing to the corresponding $\psi$ for $|\sigma|$ large, but our analysis here has shown something sharper, which will be crucial for our later work: the terms in the above formulas are a direct description of the holomorphic curves which appear. 
But in any case, the promised appearance of the skein relation, following from the above, is: 
$$\psi(\widetilde{\mathsf{U}}(\sigma;\rho)) - \psi(\widetilde{\mathsf{U}}(-\sigma;\rho)) = [K_\asymp] = \psi(\mathsf{U}_\asymp).$$

\subsection{A choice of perturbation}

Recall we have fixed in \eqref{coordinate box} a choice of coordinates $B_4 = [-4,4]^3 \to M$; all the knots of interest agree in the complement of the image of  $B_1 = [-1,1]^3$.   We keep our notation for rescalings: $s_\rho(x_1, x_2, x_3) = (\rho x_1, \rho x_2, \rho x_3)$, and $S_\rho := T^* s_\rho$.  

Let
\begin{equation}\label{eq : U_out}
\mathsf{U}^{\mathrm{out}}(\rho)= \mathsf{U}(\sigma;\rho)\cap T^\ast (M\setminus B_\rho) = \mathsf{U}_\asymp(\rho)\cap T^\ast (M\setminus B_\rho).
\end{equation}
Note that the first intersection is independent of $\sigma$. Also, let
\begin{align}
&\mathsf{U}^{\mathrm{in}}(\sigma,\rho) = \mathsf{U}(\sigma;\rho)\cap S_\rho(T^\ast B_1),\\ 
&\mathsf{U}_\asymp^{\mathrm{in}}(\rho) = \mathsf{U}_\asymp(\rho)\cap S_\rho(T^\ast B_1).
\end{align}
Then $S_\rho$ identifies $\mathsf{U}^{\mathrm{in}}(\sigma,\rho)$ and $\mathsf{U}_\asymp^{\mathrm{in}}(\rho)$ with $\mathsf{U}_{k(\sigma)}\subset T^\ast B_1$, see \eqref{eq : def U_k(sigma)} and $\mathsf{U}_{k_\asymp}\subset T^\ast B_1$, see \eqref{eq : def U_asymp}, respectively. Similarly, $S_\rho$ identifies $\mathsf{U}^{\mathrm{out}}(\rho)\cap T^\ast B_{3\rho}\setminus B_{2\rho}$ with $\mathsf{U}_{k(\rho)}\cap T^\ast (B_{3}\setminus B_{2})=\mathsf{U}_{k_\asymp}\cap T^\ast (B_{3}\setminus B_{2})$.

Recall the intersection points
\[
p_{1,\pm}= (\pm 1,0,0),\qquad p_{2,\pm} = (0,\pm 1,0).
\]
of $k(\sigma)$ and $k_\asymp$ with $\partial B_1$. Identify $\partial B_1$ with $\partial B_\rho \subset M$. 

The Lagrangian $\mathsf{U}^{\mathrm{out}}(\rho)$ is the graph of a closed $1$-form in a tubular neighborhood of $K$ in $M\setminus B_\rho$. 
Let $V^{\mathrm{out}}(\rho)$ denote the vector field dual to this 1-form.  Then the flow $\Phi^{\mathrm{out}}(\rho)$ of $V^{\mathrm{out}}(\rho)$ has flow lines that start at $p_{1,+}$ and $p_{2,+}$ and end at $p_{1,-}$ and $p_{2,-}$. The flow correspondingly takes a neighborhood $P_+\subset\partial B_1$ of $\{p_{1,+},p_{2,+}\}$ to a neighborhood $P_-\subset \partial B_1$ of $\{p_{1,-},p_{2,-}\}$. 

We write
$P_\pm(\rho)=P_{1,\pm}(\rho)\cup P_{2,\pm}(\rho)$ for the components of these neighborhoods around the corresponding $\{p_{1,\pm},p_{2,\pm}\}$. 
Note that when $K_\asymp$ is connected, $\Phi^{\mathrm{out}}(\rho)$ takes $P_{1,+}(\rho)$ to $P_{1,-}(\rho)$ and $P_{2,+}(\rho)$ to $P_{2,-}(\rho)$; when $K_\asymp$ has  two components, $\Phi^{\mathrm{out}}(\rho)$ takes $P_{1,+}(\rho)$ to $P_{2,-}(\rho)$ and $P_{2,+}(\rho)$ to $P_{1,-}(\rho)$.

The vector field $V^{\mathrm{out}}(\rho)$ has the form
$$
V^{\mathrm{out}}= \partial_t -\nabla f,
$$
in coordinates $(t,v)\in I\times D^2$ on a tubular neighborhood, where $f$ is as in \eqref{eq : def function f}. This implies that its flow contracts the fibers. More precisely, if we pick coordinates $\xi\in D^2$ on $P_{1,\pm}$ and $P_{2,\pm}$, respectively, centered at the flow line where $\nabla f=0$ we find that the flow map acts as multiplication by $e^{-kT_j}$, $k>0$, where $T_j$ is the flow time (the flow time $T_j$ is proportional to $\delta^{-1}$, where $\delta$ is the size of the shift of the conormal),
\begin{equation}\label{eq : flow contractions}
\Phi^{\mathrm{out}}(\xi) = e^{-kT_j}\xi, \quad j=1,2 
\end{equation}
where $\xi$ in the left-hand lies in $P_{j,+}$ and the right-hand side lies in $P_{i,-}$, where $i=j$, if $K_\asymp$ is connected and $i\ne j$ is if it has two components. 

We will consider small perturbations of the flow, corresponding to 
a fixed model perturbation of $\mathsf{U}_{k(\sigma)}|_{T^*(B_3\setminus B_2) }=\mathsf{U}_{k_\asymp}|_{T^*(B_3\setminus B_2) }$, supported in a neighborhood of the negative $x_1$-axis. 
We will denote perturbed objects as e.g., $\widetilde{\mathsf{U}}, \widetilde{V}$.  Note that $\widetilde{V}^{\mathrm{out}}(\rho)$ then differs 
from $V^{\mathrm{out}}(\rho)$ only on $B_{3 \rho} \setminus B_{2\rho}$.  
The exact form of the perturbation will not be important but its effect on the $x_3$-coordinate of the return map will. 

\begin{definition} \label{positive negative perturbation}
    We say that the perturbation is positive (negative) if: 
    \begin{itemize}
        \item  $K_\asymp$ is connected and 
        $x_3(\widetilde{\Phi}^{\mathrm{out}}(0 \in P_{1,+}))$ is positive (negative). 
        \item  $K_\asymp$ is disconnected and 
        $x_3(\widetilde{\Phi}^{\mathrm{out}}(0 \in P_{2,+}))$ is positive (negative). 
    \end{itemize}    
    We will write $\kappa>0$ for the size of the perturbation with respect to the standard metric on $B_4$. 
\end{definition}

\begin{lemma}
    For given perturbation of size $\kappa>0$, if the shift $\delta$ is sufficiently small, then for small enough choice of the original disk $P_{2,+}$ (or $P_{1,+}$ as the case may be), its image under $\widetilde{\Phi}^{\mathrm{out}}(\rho)$ in $P_{1,-}$ lies in the region $x_3>0$ ($x_3<0$) if the perturbation is positive (negative). 
\end{lemma}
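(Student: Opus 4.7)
The plan is to combine three ingredients: (i) the definition of positive/negative perturbation, which pins down the sign of the $x_3$-coordinate of the image of $0$ under the perturbed flow; (ii) the strong fiber-contraction of the unperturbed flow when $\delta$ is small, which forces images of small disks to land in a small neighborhood of the image of $0$; and (iii) continuity of the perturbed flow.

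More concretely, first I would fix the quantity $x_{3,*} := x_3(\widetilde{\Phi}^{\mathrm{out}}(0))$ at the relevant endpoint; by Definition \ref{positive negative perturbation}, $x_{3,*}>0$ (resp.\ $<0$) in the positive (resp.\ negative) case, and $|x_{3,*}|$ is bounded below by a quantity depending only on $\kappa$ and the model perturbation, not on $\delta$ or the size of the starting disk. Next I would invoke the contraction estimate \eqref{eq : flow contractions}: the unperturbed return map acts on the transverse fiber coordinate $\xi$ by multiplication by $e^{-kT_j}$, with $T_j\propto \delta^{-1}$. Since the perturbation $\widetilde V^{\mathrm{out}}(\rho)-V^{\mathrm{out}}(\rho)$ is supported in $B_{3\rho}\setminus B_{2\rho}$ and has bounded size $\kappa$, the perturbed flow map still contracts the transverse coordinate by a factor $C\,e^{-kT_j}$, with $C$ depending only on the model perturbation: the perturbation is traversed in uniformly bounded time and can only deform the contracting estimate by a bounded multiplicative factor.

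Given this, the argument concludes by continuity. The perturbed flow $\widetilde\Phi^{\mathrm{out}}(\rho)\colon P_{j,+}\to P_{i,-}$ is smooth, and by the contraction estimate of the previous paragraph, shrinking $P_{j,+}$ shrinks its image by at least the factor $C\,e^{-kT_j}$. Thus for any $\varepsilon>0$, one can choose $P_{j,+}$ small enough (or $\delta$ small enough, so that $e^{-kT_j}$ is tiny) that $\widetilde\Phi^{\mathrm{out}}(\rho)(P_{j,+})$ is contained in the $\varepsilon$-ball around $\widetilde\Phi^{\mathrm{out}}(0)$. Taking $\varepsilon < |x_{3,*}|$ forces $x_3$ of every point in the image to have the same sign as $x_{3,*}$, which is the desired conclusion.

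The main technical point is verifying that the perturbed flow still contracts fibers after adding a perturbation of fixed size $\kappa$: one must check that the perturbation, being localized in a shell of bounded flow-time width, only multiplies the contraction constant by a bounded factor and does not destroy the qualitative behavior inherited from $V^{\mathrm{out}}=\partial_t-\nabla f$. Once that is in hand, the rest of the argument is essentially a continuity-plus-contraction estimate, and the freedom to shrink both $\delta$ and the starting disk makes the separation between the systematic $x_3$-drift from the perturbation and the residual spread of the image strictly enforceable.
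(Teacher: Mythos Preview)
Your proposal is correct and follows essentially the same approach as the paper: use that $T_j\to\infty$ as $\delta\to 0$ so that the contraction factor $e^{-kT_j}$ from \eqref{eq : flow contractions} squeezes the image of the whole disk into an arbitrarily small neighborhood of $\widetilde{\Phi}^{\mathrm{out}}(0)$, whose $x_3$-coordinate has the prescribed sign. The paper's proof is terser (it simply cites \eqref{eq : flow contractions} and concludes the image lies in $\tfrac12\eta<x_3<\tfrac32\eta$), whereas you spell out more carefully why the fixed-size perturbation supported in a shell of bounded flow-time does not spoil the contraction estimate; this is a reasonable elaboration but not a different argument.
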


\begin{proof}
Note that the flow time $T_j\to\infty$ as $\delta\to 0$. Thus, for sufficiently small shift $\delta$, if $x_3(\widetilde{\Phi}^{\mathrm{out}}(0))=\eta>0$ then $\widetilde{\Phi}^{\mathrm{out}}$ takes the whole disk $P_{j,+}$ to the region $\frac12\eta<x_3<\frac32\eta$, see \eqref{eq : flow contractions}.    
\end{proof}

\subsection{Flow loops}

As the Lagrangian $\mathsf{U}_\asymp(\rho)$ is graphical, Lemma \ref{l : basic flow loop} shows that it supports exactly one flow graph, the basic flow loop(s), in the basic homology class. 
The same is true for the perturbed $\widetilde{\mathsf{U}}_\asymp(\rho)$, so we have: 
\begin{lemma}\label{l : L_=(rho)}
 The flow surface of the basic flow loop(s) of $\widetilde{\mathsf{U}}_\asymp(\rho)$ has boundary which is at $C^1$-distance $\mathcal{O}(\kappa\rho)$ to $S^1\times 0$ in all components of $\widetilde{\mathsf{U}}_\asymp(\rho)$ and of $C^1$-distance $\mathcal{O}(\kappa\rho)$ to $K_\asymp(\rho)$ in $M$. In particular, the boundary component of the flow loop(s) in $M$ converges to $K_\times$ as $\rho\to 0$.   
\end{lemma}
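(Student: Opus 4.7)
The plan is to reduce this to Lemma~\ref{l : basic flow loop} together with a straightforward perturbation estimate. First I would verify that $\widetilde{\mathsf{U}}_\asymp(\rho)$ is still a graphical Lagrangian with respect to the chosen tubular neighborhood $N_\rho$ of $K_\asymp(\rho)$: the unperturbed $\mathsf{U}_\asymp(\rho)$ is by construction the graph of $df+\tau$ in tubular coordinates (with $f$ the fiber-convex function of \eqref{eq : def function f}); the perturbation is by assumption a $C^1$-small (of size $\kappa$ in the scaled coordinates of $B_4$) modification of the defining $1$-form, supported in the annular region $B_{3\rho}\setminus B_{2\rho}$ near the negative $x_1$-axis. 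Thus for $\kappa$ sufficiently small the graph property persists.

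Next I would apply Lemma~\ref{l : basic flow loop} directly to the unperturbed $\mathsf{U}_\asymp(\rho)$ (one component at a time). This yields a unique basic flow loop with boundary $S^1\times 0$ on the Lagrangian and $K_\asymp(\rho)$ on $M$. The associated difference vector field is $\partial_t-\nabla f$, which has the circle $\{v=0\}$ as a normally-hyperbolic (attracting in $v$) Morse--Bott minimum; the basic flow loop is the cotangent lift of this circle.

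The third step is to run normally-hyperbolic persistence for the perturbed flow. A perturbation of the model $1$-form of $C^1$-size $\kappa$ in $B_4$ induces a perturbation of the dual vector field on $M$ supported in a physical region of diameter $\mathcal{O}(\rho)$; flow lines crossing the support therefore accumulate a $C^1$-displacement of order $\kappa\cdot\rho$, where the $\rho$ arises as the traversal time of the support. The unique normally hyperbolic invariant circle then persists at $C^1$-distance $\mathcal{O}(\kappa\rho)$ from the unperturbed one, and its cotangent lift is the perturbed basic flow loop with the asserted bounds. Since $K_\asymp(\rho)$ differs from $K_\times$ only inside $B_{4\rho}$, whose diameter tends to $0$, the convergence of the $M$-boundary of the flow surface to $K_\times$ is automatic from the $C^1$-estimate.

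The hardest part will be making the rescaling bookkeeping precise. The symplectomorphism $S_\rho(x,y)=(\rho x,\rho^{-1}y)$ contracts base distances by $\rho$ while expanding fiber distances by $\rho^{-1}$, whereas the model perturbation has size $\kappa$ in the \emph{standard} metric on $B_4$. I would need to compare, component by component, the $S_\rho$-scaling of $d f+\tau$ and of the perturbation $1$-form, and verify that the induced $C^1$-perturbation of the return map on the tubular neighborhood of $K_\asymp(\rho)$ comes out as $\mathcal{O}(\kappa\rho)$, rather than the naive $\mathcal{O}(\kappa)$ or $\mathcal{O}(\kappa/\rho)$ one might first guess from fiber scaling.
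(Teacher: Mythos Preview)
Your proposal follows the same approach as the paper's own proof, which is very terse: the paper simply invokes transversality of the unperturbed flow loop (from Lemma~\ref{l : basic flow loop}) for uniqueness under small perturbation, and for the convergence to $K_\times$ observes that $K_\asymp(\rho)$ agrees with $K_\times$ outside $s_\rho(B_1)$, which shrinks to a point. The paper does not spell out the $\mathcal{O}(\kappa\rho)$ estimate at all, so your steps~1--4 are a reasonable expansion of what is left implicit.

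Your instinct that the rescaling bookkeeping is the delicate point is correct, and your heuristic is sound. One clean way to phrase it, avoiding the traversal-time argument: the entire return map problem on $\partial B_\rho$ is conjugate, via $s_\rho$, to a fixed problem on $\partial B_1$ in which the model perturbation has size $\kappa$ and the outside flow appears as a (strongly contracting) map depending on $\rho$ only through the contraction factor $e^{-kT}$. The perturbed fixed point on $\partial B_1$ is therefore $\mathcal{O}(\kappa)$ from the origin, and $s_\rho$ carries this to an $\mathcal{O}(\kappa\rho)$ displacement on $\partial B_\rho\subset M$; smooth dependence of flow lines on initial conditions then propagates this bound along the whole loop. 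This sidesteps having to track whether the vector-field perturbation in $M$ is $\mathcal{O}(\kappa)$ or $\mathcal{O}(\kappa/\rho)$ (it is the latter, but the crossing time scales as $\rho^2$, giving the same answer).
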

\begin{proof}
Uniqueness of flow loops under small perturbation follows from the transversality of the original flow loop (which we discussed in  Lemma \ref{l : basic flow loop}). To see the convergence, note that $K_{\asymp}(\rho)$ agrees with $K_\times$ in $M\setminus s_\rho(B_1)$ by construction, and that $s_\rho(B_1)$ tends to a point as $\rho\to 0$.
\end{proof}

We will re-describe the determination of the above loop as a fixed point calculation.  Inside $B_1$, the conormal $\mathsf{U}_{k_\asymp}\subset T^\ast B_1$ is the graph of a 1-form; we denote its dual vector field as  $V^{\mathrm{in}}_\asymp$ and the corresponding flow map 
\[\Phi_\asymp^{\mathrm{in}}\colon P_-\to P_+.
\]
It is clear from the definition of $\mathsf{U}_{k_\asymp}$ that the map $\Phi^{\mathrm{in}}_\asymp$ is a contraction of the $2$-disks in $P_\pm$.  

We denote by $\Phi^{\mathrm{in}}_\asymp(\rho)$ the corresponding map when rescaled and transplanted to the image of $B_\rho$ in $M$, which gives the corresponding flow map for $\mathsf{U}^{\mathrm{in}}(\rho)$.  

\begin{lemma} \label{fixed point treatment smoothing}
If $K_\asymp(\rho)$ has one component then its flow loop appears as the unique fixed point of 
\[
\widetilde{\Phi}^{\mathrm{out}}(\rho)\circ \Phi^{\mathrm{in}}_\asymp(\rho)\circ \widetilde{\Phi}^{\mathrm{out}}(\rho)\circ \Phi^{\mathrm{in}}_\asymp(\rho)
\]
and if it has two, then the two components of the basic flow loop correspond to the  unique fixed points of 
\[
\widetilde{\Phi}^{\mathrm{out}}(\rho)\circ \Phi^{\mathrm{in}}_\asymp.
\] 
\end{lemma}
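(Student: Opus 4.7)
The plan is to realize basic flow loops as fixed points of a Poincar\'e return map on $\partial B_\rho$. Because $\widetilde{\mathsf{U}}_\asymp(\rho)$ is the graph of a $1$-form with strictly positive $dt$-component along every component of $K_\asymp(\rho)$, its dual vector field is transverse to $\partial B_\rho$, entering through $P_-(\rho)$ and exiting through $P_+(\rho)$. First I would cut a periodic orbit at its intersections with $\partial B_\rho$ and factor the time-$\ast$ map as an alternation of the inside flow $\Phi^{\mathrm{in}}_\asymp(\rho)\colon P_-\to P_+$ and the outside flow $\widetilde\Phi^{\mathrm{out}}(\rho)\colon P_+\to P_-$. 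Using the prescribed index combinatorics of $k_\asymp$ (which swaps the labels $1,2$) together with the outside topology of $K_\times$, I would match the two branches of the lemma: when $K_\times$ is connected (so $K_\asymp$ has two components), the outside strands must also swap, so a single round returns to the starting disk, and $\widetilde\Phi^{\mathrm{out}}(\rho)\circ\Phi^{\mathrm{in}}_\asymp(\rho)$ restricts to a self-map of $P_{j,-}(\rho)$ for each $j\in\{1,2\}$; when $K_\times$ has two components (so $K_\asymp$ is connected), the outside is straight and two full rounds are needed before returning, giving the $4$-fold composition as a self-map of $P_{1,-}(\rho)$. In either case fixed points correspond bijectively to basic flow loops.

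Second, I would show that each return map is a strict contraction of the relevant $2$-disk, so that Banach's fixed point theorem yields exactly one fixed point per component of $K_\asymp(\rho)$. For the outside factor, $\widetilde\Phi^{\mathrm{out}}(\rho)$ differs from the unperturbed $\Phi^{\mathrm{out}}(\rho)$ only inside a region of diameter $\mathcal{O}(\kappa\rho)$, and by \eqref{eq : flow contractions} the latter contracts fibers by a factor $e^{-kT_j}$ with $T_j\propto\delta^{-1}$, so for $\delta$ small the perturbed map remains a strict contraction. The inside factor $\Phi^{\mathrm{in}}_\asymp(\rho)$ is a contraction because $\mathsf{U}_{k_\asymp}$ is graphical over $B_1$ as the graph of a form $df+dt$ with $f$ strictly convex having a unique nondegenerate minimum along $k_\asymp$; its dual vector field therefore attracts trajectories uniformly to $k_\asymp$ over the traversal $P_-\to P_+$, a property that transports under $S_\rho$ to $B_\rho$. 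The resulting unique fixed point then automatically lies at $C^1$-distance $\mathcal{O}(\kappa\rho)$ from $K_\asymp(\rho)$ (both factors preserve the zero section up to an $\mathcal{O}(\kappa\rho)$ error), and hence matches the unique basic flow loop described in Lemma~\ref{l : L_=(rho)}.

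The main obstacle I anticipate is a careful accounting of the contractivity constants. The outside map can be made as contracting as one wishes by shrinking the shift parameter $\delta$, independently of $\rho$, but the inside map involves the rescaling $S_\rho\colon T^*B_1\to T^*B_\rho$, whose effect on the various contraction rates needs to be tracked so that the full $2$-fold or $4$-fold composition remains uniformly contracting in the parameter range of interest. A secondary point will be to rule out spurious short periodic orbits that would re-enter $B_\rho$ before completing the expected number of rounds; this should follow directly from the transversality of the flow to $\partial B_\rho$ and the positivity of the $dt$-component along each component of $K_\asymp(\rho)$, but it needs to be written out so that the bijection between flow loops and fixed points is established without ambiguity.
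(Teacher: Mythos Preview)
Your approach is essentially the paper's: flow loops are fixed points of the first-return map on $P_-(\rho)$, and uniqueness follows because both $\widetilde\Phi^{\mathrm{out}}(\rho)$ and $\Phi^{\mathrm{in}}_\asymp(\rho)$ are contractions. The paper's proof is two sentences, because the contraction properties of both factors were already stated in the text immediately preceding the lemma; your more detailed justification of them is fine but not new content.

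Your anticipated obstacles are not real. The rescaling $S_\rho$ conjugates $\Phi^{\mathrm{in}}_\asymp$ on $B_1$ to $\Phi^{\mathrm{in}}_\asymp(\rho)$ on $B_\rho$, so the contraction constant is unchanged and there is nothing to track in $\rho$. As for spurious short orbits: your own combinatorics already excludes them, since in the connected-$K_\asymp$ case a single round $\widetilde\Phi^{\mathrm{out}}(\rho)\circ\Phi^{\mathrm{in}}_\asymp(\rho)$ sends $P_{1,-}$ to $P_{2,-}$, so no fixed point of the 4-fold composite can come from a shorter period.
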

\begin{proof}
    Certainly any flow loop is such a fixed point.  Uniqueness holds because both $\widetilde{\Phi}^{\mathrm{out}}$ and $\Phi^{\mathrm{in}}_\asymp$ are contractions. 
\end{proof}

We turn to study the flow loops for $\widetilde{\mathsf{U}}(\sigma,\rho)$. As above we will consider these as fixed points of flow maps. Outside $B_\rho$, $\widetilde{\mathsf{U}}(\sigma,\rho)=\widetilde{\mathsf{U}}(\rho)$ and we have the corresponding flow map $\widetilde{\Phi}^{\mathrm{out}}(\rho)\colon P_+\to P_-$ exactly as before. Inside $B_\rho\approx B_1$ we have instead the flow $\Phi^{\mathrm{in}}(\sigma,\rho)\colon P_{j,-}\to P_{j,+}$, $j=1,2$, defined by $\mathsf{U}_{k(\sigma;\rho)}$, see \eqref{eq : conormal cross parameters} and \eqref{eq : def U_k(sigma)}. As for the flows discussed above also this flow gives a contraction $P_{j;-}\to P_{j,+}$, $j=1,2$.

\begin{lemma}\label{l: flow loops near cross}
For all sufficiently small $|\sigma|$, 
there is a unique basic flow loop (or union of flow loops) for $\mathsf{U}(\sigma,\rho)$.
This loop is isotopic to $K_+$ for positive perturbations and to $K_-$ for negative perturbations.
\end{lemma}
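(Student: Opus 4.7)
The plan is to realize basic flow loops as fixed points of a composition of the interior and exterior flow maps, deduce existence and uniqueness from the Banach contraction theorem, and finally identify the isotopy class by tracking the sign of the $x_{3}$-coordinate of the fixed point through the interior flow.

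First I set up the fixed-point equation. Any basic flow loop for $\widetilde{\mathsf{U}}(\sigma,\rho)$ traverses each component of $K(\sigma)$ exactly once, and therefore alternates interior pieces $\Phi^{\mathrm{in}}(\sigma,\rho)\colon P_{j,-}\to P_{j,+}$ with exterior pieces $\widetilde{\Phi}^{\mathrm{out}}(\rho)\colon P_{j,+}\to P_{i,-}$, where the assignment $j\mapsto i$ is dictated by the component structure of $K_{\asymp}$. Going once around the loop and reading the result on a chosen disk $P_{j,-}$ produces a fixed-point equation for $(\widetilde{\Phi}^{\mathrm{out}}(\rho)\circ\Phi^{\mathrm{in}}(\sigma,\rho))^{n}$, where $n=1$ if $K_{\asymp}$ is connected (one fixed point per $j=1,2$, giving two loops for the two components of $K(\sigma)$) and $n=2$ otherwise (one fixed point, giving a single loop that visits both strands of $K(\sigma)$).

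Second, each of the two factors is a strict contraction: the exterior flow $\widetilde{\Phi}^{\mathrm{out}}(\rho)$ by the exponential factor $e^{-kT_{j}}$ of \eqref{eq : flow contractions}, and the interior flow $\Phi^{\mathrm{in}}(\sigma,\rho)$ by convexity of $f$ along the fibers, as already noted in the paragraph preceding the lemma. Thus the composition is a strict contraction from $P_{j,-}$ to itself, and the Banach fixed point theorem delivers a unique fixed point, yielding the asserted unique basic flow loop (or union of flow loops). Continuity of $\Phi^{\mathrm{in}}(\sigma,\rho)$ in $\sigma$ moreover gives continuity of the fixed point in $\sigma$ for $|\sigma|$ small.

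Third, I identify the isotopy class via the $x_{3}$-sign of the fixed point. The lemma immediately preceding Definition \ref{positive negative perturbation} shows that for sufficiently small shift $\delta$ and sufficiently small starting disks, the image $\widetilde{\Phi}^{\mathrm{out}}(\rho)(P_{\bullet,+})$ lies entirely in the half-disk $\{x_{3}>0\}\subset P_{1,-}$ for a positive perturbation and in $\{x_{3}<0\}$ for a negative one. The fixed point $\xi_{0}$ must lie in this image and therefore shares the sign. The interior flow along the strand $k_{j}(\sigma)$ is graphical over the strand and only contracts the transverse $(x_{2},x_{3})$-coordinates toward zero without changing signs, so the projection of the loop to $M$ crosses the self-intersection region $x_{1}=x_{2}=0$ with $x_{3}$-coordinate of that same sign, up to a correction of size $O(|\sigma|)$ coming from the interior deformation $\sigma\phi(x_{2})$. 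For $|\sigma|\ll\kappa$ this sign is preserved, the strand ordering at the self-crossing of $K_{\times}$ matches the sign of the perturbation, and the flow loop is isotopic to $K_{+}$ (resp.\ $K_{-}$) for positive (resp.\ negative) perturbation.

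The main obstacle is the quantitative step in the isotopy argument: verifying that the perturbation enforces $|x_{3}(\xi_{0})|\gtrsim\kappa$ for the fixed point, uniformly in $\sigma$ near zero, and that the interior flow does not shift $x_{3}$ by more than $O(|\sigma|)$. Only then can one conclude that the perturbation strictly dominates the interior $\sigma$-deformation at the crossing for all sufficiently small $|\sigma|$. This amounts to carefully combining the contraction estimates for the inside and outside flows and is where the smallness of $|\sigma|$ relative to the fixed perturbation size $\kappa$ is actually used.
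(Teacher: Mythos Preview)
Your approach is essentially the same as the paper's: realize basic flow loops as fixed points of $\widetilde{\Phi}^{\mathrm{out}}\circ\Phi^{\mathrm{in}}$ (or its square, according to the component structure), invoke contraction for existence and uniqueness, and then determine the crossing type by tracking the sign of the $x_3$-coordinate of the fixed point through the interior flow. The paper carries out exactly these steps, splitting explicitly into the cases $K_\pm$ connected vs.\ two components, and noting that along the $k_1$-strand the interior flow has the form $(x_2,x_3)=e^{-kt}(x_2^0,x_3^0)$, so the sign of $x_3$ is preserved from the fixed point all the way to the crossing.

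Two small points. First, the half-disk lemma you cite is immediately \emph{after} Definition~\ref{positive negative perturbation}, not before. Second, in the two-component case you should be explicit that only the fixed point in $P_{1,-}$ inherits the perturbation sign; the other fixed point (in $P_{2,-}$) is unperturbed and its flow loop is exactly the central curve $k_2(\sigma)$, which near the crossing sits at $x_3\approx\sigma$. The isotopy class is then read off by comparing the two strands at the crossing: one at $x_3$ of definite perturbation-determined sign and magnitude independent of $\sigma$, the other at $x_3\approx\sigma$. Your $O(|\sigma|)$ correction is really about this second strand, not a correction to the first, and making this distinction explicit is what cleanly finishes the argument for all sufficiently small $|\sigma|$.
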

\begin{proof}
The flow loops correspond to fixed points of flow maps $\widetilde{\Phi}^{\mathrm{out}}(\rho)\circ \Phi^{\mathrm{in}}(\sigma,\rho)$ in  case $K_\pm$ has two components and  $\widetilde{\Phi}^{\mathrm{out}}(\rho)\circ \Phi^{\mathrm{in}}(\sigma,\rho)\circ \widetilde{\Phi}^{\mathrm{out}}(\rho)\circ \Phi^{\mathrm{in}}(\sigma,\rho)$ in case it is connected.

Consider first the two component case. Then the flow loop of the conormal of the component that contains the $x_2$-axis lies in the $x_3=0$ plane near the crossing: the map $\widetilde{\Phi}^{\mathrm{out}}(\rho)\colon P_{2,+}\to P_{2,-}$ was not perturbed, $\widetilde{\Phi}^{\mathrm{out}}(\rho)=\Phi^{\mathrm{out}}(\rho)\colon P_{2,+}\to P_{2,-}$, the knot agrees with the $x_2$-axis near the origin, and the flow loop is just the original standard flow loop. 

Consider now the other flow loop as a fixed point of the map $\widetilde{\Phi}^{\mathrm{out}}(\rho)\circ\Phi^{\mathrm{in}}(\sigma;\rho)\colon P_{1,-}\to P_{1,-}$. This fixed point has positive or negative $x_3$-coordinate depending on the sign of the perturbation. Since the flow of the fiber disk that defines $\Phi^{\mathrm{in}}(\sigma)$ has the form $(x_2,x_3)=e^{-kt}(x_2^0,x_3^0)$, the sign of the $x_3$-coordinate does not change along the flow line and the result follows in this case. 

The case of a single knot is similar: The sign of the $x_3$-coordinate does not change along the flow lines near the crossing (the flow lines giving $\Phi^{\mathrm{in}}(\sigma;\rho)$). Along the flow lines corresponding to $\widetilde{\Phi}^{\mathrm{out}}(\rho)=\Phi^{\mathrm{out}}(\rho)\colon P_{1,+}\to P_{2,-}$, the $x_3$-coordinate is scaled along the flow by $e^{-kT}$, where $T$ is the flow time. It follows as before that the sign of the crossing agrees with that of the perturbation.  
\end{proof}

\subsection{Flow graphs with corners for $\widetilde{\mathsf{U}}(0,\rho)$}

In this subsection, we will study flow graphs for the immersed Lagrangian $\widetilde{\mathsf{U}}(0;\rho)$ which is our perturbed conormal  of the singular knot $K_\times$.  

Recall that the flow graphs appropriate to immersed Lagrangians may have edges asymptotic to the critical locus of differences of the local defining functions of the Lagrangians i.e., zeroes of differences of the local 1-forms of the Lagrangian. Such flow graphs correspond to  holomorphic curves with corners at the immersed points of the Lagrangian over the critical locus \cite{Ekholm-morse}. We will refer to these as flow graphs with corners. 

The local model near the immersed point of $K_\times$ was described in \eqref{eq : def U_k(sigma)}; it has two components  $\mathsf{U}_{k(0)}=\mathsf{U}_{k_1(0);\tau_1}\cup \mathsf{U}_{k_2(0);\tau_2}$.
We have the following.
\begin{lemma}\label{l : flow tree uniqueness}
For $\mathsf{U}_{k(0)}$ and the zero section, there is a unique flow graph which starts at $q_{-,1}\in P_{-,1}$ ($q_{-,2}\in P_{-,2}$) and has a corner corresponding to the intersection $\mathsf{U}_{k_1(0),\tau_1}\cap\mathsf{U}_{k_2(0),\tau_2}$.  It is a rigid flow tree with exactly one corner, and ends at some point $q_{+,2} \in P_{+, 2}$ ($q_{+,1} \in P_{+,1}$). 

We denote the resulting map as 
\begin{equation*}
    \Phi^{\mathrm{in}}_0 \colon P_-  \to  P_+,\quad\Phi^{\mathrm{in}}(q_{-, k}) = \Phi^{\mathrm{in}}(q_{+, k'}),
\end{equation*}
where $k,k'\in\{1,2\}$ and $k\ne k'$.
\end{lemma}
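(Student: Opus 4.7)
The plan is to identify the tree as having a \emph{T-shape}: an interior Y-vertex $p\in B_1$ with three emanating edges---a $(\mathsf{U}_1,M)$-edge to the free end $q_{-,1}$ (flow line of $\lambda_1^\sharp$), a $(\mathsf{U}_2,M)$-edge to the free end $q_{+,2}$ (flow line of $\lambda_2^\sharp$), and a $(\mathsf{U}_1,\mathsf{U}_2)$-edge running asymptotically to a Morse--Bott corner $a$ on the clean intersection line (flow line of $(\lambda_1-\lambda_2)^\sharp$). The tree topology is pinned down by counting: a tree with $L$ leaves and $Y$ trivalent internal vertices satisfies $Y=L-2$; with $L=3$ leaves (two free ends plus one corner) this forces $Y=1$ and three edges, which by the sheet-labelling at each leaf must distribute over the three pair-types as above. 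The naive alternative without an interior Y-vertex, where two $(\mathsf{U}_i,M)$-edges meet directly at a corner on the intersection, is ruled out separately: it would require the $\lambda_1^\sharp$-flow line from $q_{-,1}$ to hit the $1$-dimensional intersection line inside $3$-dimensional $B_1$, a codimension-$2$ condition generically not met.

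Next I would pin down the T-tree directly. The asymptotic $(\mathsf{U}_1,\mathsf{U}_2)$-edge forces $p$ onto the center-stable manifold of the intersection for the $(\lambda_1-\lambda_2)^\sharp$-flow; reading off the linearization of $\lambda_1-\lambda_2$ at the intersection from the proof of Lemma~\ref{l: intersection L_1 and L_2} (attractive in $x_1$, repulsive in $x_2$), one identifies this manifold with the hyperplane $\{x_2=\xi\}$. On the other hand $p$ lies on the explicit $\lambda_1^\sharp$-flow line from $q_{-,1}=(-1,x_2^0,x_3^0)$, along which the $x_2$-coordinate $(x_2^0+\eta/c)e^{ct}-\eta/c$ is strictly monotone in $t$, so the equation $x_2(T_1)=\xi$ has a unique solution $T_1>0$ when $q_{-,1}$ is taken in the appropriate subdomain of $P_{-,1}$. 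This determines $p$; then $a=(\xi,\xi,p_3)$ is the asymptotic limit of the third edge, and $q_{+,2}$ is the unique exit point of the forward $\lambda_2^\sharp$-flow from $p$ through $\partial B_1$. The Y-vertex balance
\[
-\lambda_1^\sharp(p)+\lambda_2^\sharp(p)+(\lambda_1-\lambda_2)^\sharp(p)=0
\]
is automatic, and rigidity is immediate from the dimension count: the candidate parameters $(T_1,T_2)$ span a $2$-parameter family, and the two conditions ``$p\in\{x_2=\xi\}$'' and ``the $\lambda_2^\sharp$-flow from $p$ meets $\partial B_1$'' cut it down to a discrete slice.

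The main obstacle will be to justify rigorously the Morse--Bott treatment of the corner, i.e.\ that an edge ending asymptotically on the cleanly intersecting critical submanifold is indeed the correct flow-tree counterpart of the boundary corner at $\mathsf{U}_1\cap\mathsf{U}_2$ in the holomorphic disk picture. This follows from the general flow-tree / holomorphic-curve correspondence for cleanly intersecting sheets reviewed in Appendix~\ref{flow graphs and holomorphic curves}, and the correspondence simultaneously guarantees that only T-trees (and no exotic configurations with additional interior complexity) contribute. The symmetric assertion for $q_{-,2}\in P_{-,2}$ and $q_{+,1}\in P_{+,1}$ follows by exchanging the roles of the two components $k_1\leftrightarrow k_2$ and $\mathsf{U}_1\leftrightarrow\mathsf{U}_2$, under which the whole argument goes through verbatim.
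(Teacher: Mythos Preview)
Your argument is correct and follows the same approach as the paper. Both proofs identify the flow tree as the T-shape with a single trivalent vertex $p$, determined as the unique point where the $(\mathsf{U}_1,M)$-flow line from $q_{-,1}$ meets the (center-)stable manifold of the clean intersection line for the difference flow; the paper phrases this as ``the initial flow line between $L_1$ and the zero-section must intersect the stable/un-stable manifold of the difference vector field that is oriented toward the intersection'' and calls $p$ the $Y_0$-vertex, while you make the same point by identifying the hyperplane explicitly and checking monotonicity of the transverse coordinate along the flow.
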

\begin{proof}
Consider flow graphs with one corner that starts at $q_{-,1}\in P_{-,1}$. The flow of the difference vector field dual to the difference of $1$-forms defining $L_1$ and $L_2$ oriented as the cotangent lift are shown in blue in Figure \ref{fig:flows}. In order for the one corner flow tree to have a corner at the intersection of $L_1$ and $L_2$, the initial flow line between $L_1$ and the zero-section must intersect the stable/un-stable manifold of the difference vector field that is oriented toward the intersection in $L_1$ (vertical in Figure \ref{fig:flows}). (In the language of \cite{Ekholm-morse}, the flow tree has a $Y_0$-vertex at this point.) Following the flow line to the intersection between $L_1$ and $L_2$, then jumping to $L_2$ and follow the difference flow line back to the $Y_1$-vertex we switch flows and then follow the flow line between $L_2$ and the zero section to $P_{2,+}$.    

Figure \ref{fig:flows} shows $\mathsf{U}_{k_j(0),\tau_j}$, $j=1,2$, with the flows of their difference together with the flows of their respective differences with the zero section. The first statement follows from the fact that for each flow line between $L_1$ ($L_2$) and the zero section there is a unique intersection point (which then gives the $Y_0$-vertex) with the stable/unstable manifolds of the (the projection to $B_1$) of the intersection line in $\mathsf{U}_{k_1(0);\tau_1}\cap \mathsf{U}_{k_2(0);\tau_2}$. (See Figure \ref{fig:flowtreemap} for an illustration of a flow tree.)

The case of flow graphs that start at $q_{-,2}\in P_{-,2}$ is similar using instead the stable/unstable manifold oriented toward the critical locus in $L_2$ (the horizontal line in Figure \ref{fig:flows}). 
\end{proof}

\begin{lemma}
The map $\Phi^{\mathrm{in}}_0\colon P_-\to P_+$ is a contraction. 
\end{lemma}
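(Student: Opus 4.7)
The plan is to factor $\Phi^{\mathrm{in}}_0$ along the three stages of the flow graph described in Lemma~\ref{l : flow tree uniqueness}, writing $\Phi^{\mathrm{in}}_0 = \Phi_2 \circ \Psi \circ \Phi_1$. Here $\Phi_1$ is the $L_1$--zero-section flow from $P_{-,1}$ to the $2$-dimensional stable manifold $W_1$ of the critical line $L_1\cap L_2$ (on which the $Y_0$-vertex lies); $\Psi$ is the switch map that follows the $V_{12}$ flow from $W_1$ into the critical line, switches sheets at the corner, and returns along $V_{12}$ on the $L_2$-side to the analogous unstable manifold $W_2$ (on which the $Y_1$-vertex lies); and $\Phi_2$ is the $L_2$--zero-section flow from $W_2$ to $P_{+,2}$. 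Each factor is a map between $2$-dimensional disks, and the strategy is to show $\Phi_1,\Phi_2$ are strict contractions (as in Lemma~\ref{fixed point treatment smoothing}), while $\Psi$ has uniformly bounded derivative.

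For $\Phi_1$ and $\Phi_2$, I plan to reuse the contraction mechanism of Lemma~\ref{fixed point treatment smoothing}. By the explicit formulas \eqref{eq: forms for immersed}, the projected flow on $L_j$ reads $\delta\partial_{x_j} + O(\eta) - \nabla f_j$ with $f_j = h\!\left(\sqrt{v_1^2+v_2^2}\right)$ strictly convex, so it translates along $k_j$ while contracting the normal disk at exponential rate of order $e^{-ct}$. The flow times needed to travel between $\partial B_1$ and a neighborhood of $\{x_1=x_2=\xi\}$ are bounded below by a positive constant determined by $\delta,\eta$ and the geometry of $B_1$, yielding uniform contraction constants $\mu_1,\mu_2<1$; both can be made arbitrarily small by taking $\delta$ small, in line with \eqref{eq : flow contractions}.

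For $\Psi$, the plan is to linearize $V_{12}$ near the critical line. From \eqref{eq: forms for immersed} and Lemma~\ref{l: intersection L_1 and L_2}, the linearization in coordinates transverse to the line $\{x_1=x_2=\xi\}$ is a nondegenerate hyperbolic saddle in the $(x_1-\xi,\,x_2-\xi)$-plane, while $x_3$ is the degenerate direction along the line. The switch map is then the corresponding stable-to-unstable holonomy (with the corner position on $L_1\cap L_2$ acting as the $x_3$-parameter), and classical hyperbolic estimates give a derivative bounded by a constant $C$ depending only on the linearization. Composing, $\|D\Phi^{\mathrm{in}}_0\|\le \mu_2 \cdot C \cdot \mu_1$, which is $<1$ once $\delta$ is sufficiently small. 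The main obstacle will be handling the degenerate $x_3$-direction of $\Psi$: as the corner moves along the $1$-dimensional critical line, one must verify that this $1$-parameter family of flow graphs assembles into a well-defined $2\to 2$ map of disks and that its holonomy derivative remains bounded uniformly in the corner position, which follows from the hyperbolicity of $V_{12}$ transverse to the critical line.
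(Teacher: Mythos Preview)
Your strategy—factor into contracting flows—matches the paper's, but your three-stage decomposition $\Phi_2\circ\Psi\circ\Phi_1$ inserts an unnecessary middle piece. In the flow tree of Lemma~\ref{l : flow tree uniqueness}, the $Y_0$ and $Y_1$ vertices coincide as points of $M$: there is a \emph{single} trivalent vertex $v$ where the three edges $(L_1,0)$, $(L_1,L_2)$, $(L_2,0)$ meet (balance: $(L_1-0)=(L_1-L_2)+(L_2-0)$), and the names ``$Y_0$-vertex'' and ``$Y_1$-vertex'' refer to that same point seen from the $L_1$-side and the $L_2$-side of the cotangent lift as it traverses the $(L_1,L_2)$-edge to the corner and back along the same flow line. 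The corner excursion is a separate edge of the tree and does not enter the endpoint-to-endpoint map $P_-\to P_+$. Hence $\Phi^{\mathrm{in}}_0$ is simply the $L_1$/zero flow from $P_{-,1}$ until it hits the stable manifold of $V_{12}$, followed directly by the $L_2$/zero flow from that same point to $P_{+,2}$—exactly the paper's description. Your ``stable-to-unstable holonomy'' $\Psi$ is not a well-posed map here (flowing into the critical line and out again gives no canonical landing point on the unstable side), and in any case the obstacle you anticipate in the degenerate $x_3$-direction does not arise.

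The paper then just writes the two flows as explicit linear ODEs in the $(x_1,x_2)$-plane, notes that both contract $x_3$ in the standard way, and argues that each flow focuses the remaining transverse coordinate onto its central flow line (at $x_2=\eta/c$ for the first, $x_1=\eta/c$ for the second), so the composite is a contraction. Your contraction estimates for $\Phi_1,\Phi_2$ already capture this; the work you plan on $\Psi$ is addressing a non-issue.
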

\begin{proof}
This follows from the properties of the flows of the vector fields dual to $d\tau_j+df$, which are the difference forms of $\mathsf{U}_{k_j(0),\tau_j}$ and the zero section $B_1$. The flows are depicted in Figure \ref{fig:flows}. Consider the flow map with initial condition in $P_{-,1}$ near $(x_1,x_2,x_3)=(-1,0,0)$. The map follows the flow line of $d\tau_1+df_1$ until it hits the stable/unstable manifold of $(d\tau_j+df_1)-(d\tau_2+df_2)$, then it follows the flow line of $d\tau_2+df_2$. We note that both flows contract the $x_3$-direction in the standard way. We check that the total flow map contracts the remaining direction in $P_{-,1}\to P_{+,2}$. The first flow is the solution to the differential equation
\[
\begin{cases}
\dot x_1 &= \delta,\\
\dot x_2 &= cx_2 + \eta
\end{cases}
\]
with general solution
\[
\begin{cases}
\dot x_1(t) = x_1(0) + \delta t,\\
\dot x_2(t)= \left(x_2(0)+\frac{\eta}{c}\right) e^{ct} -\frac{\eta}{c}
\end{cases}.
\]
The second is similar with the roles of $x_1$ and $x_2$ switched. This means the the flow lines of the first flow focuses around the central flow line starting at $x_2=\frac{\eta}{c}$, and the flow lines of the second focuses around the flow line ending at $x_1=\frac{\eta}{c}$. It is then straightforward to see that the flow map is a contraction.
\end{proof}

\begin{figure}
    \centering
    \includegraphics[width=0.75\linewidth]{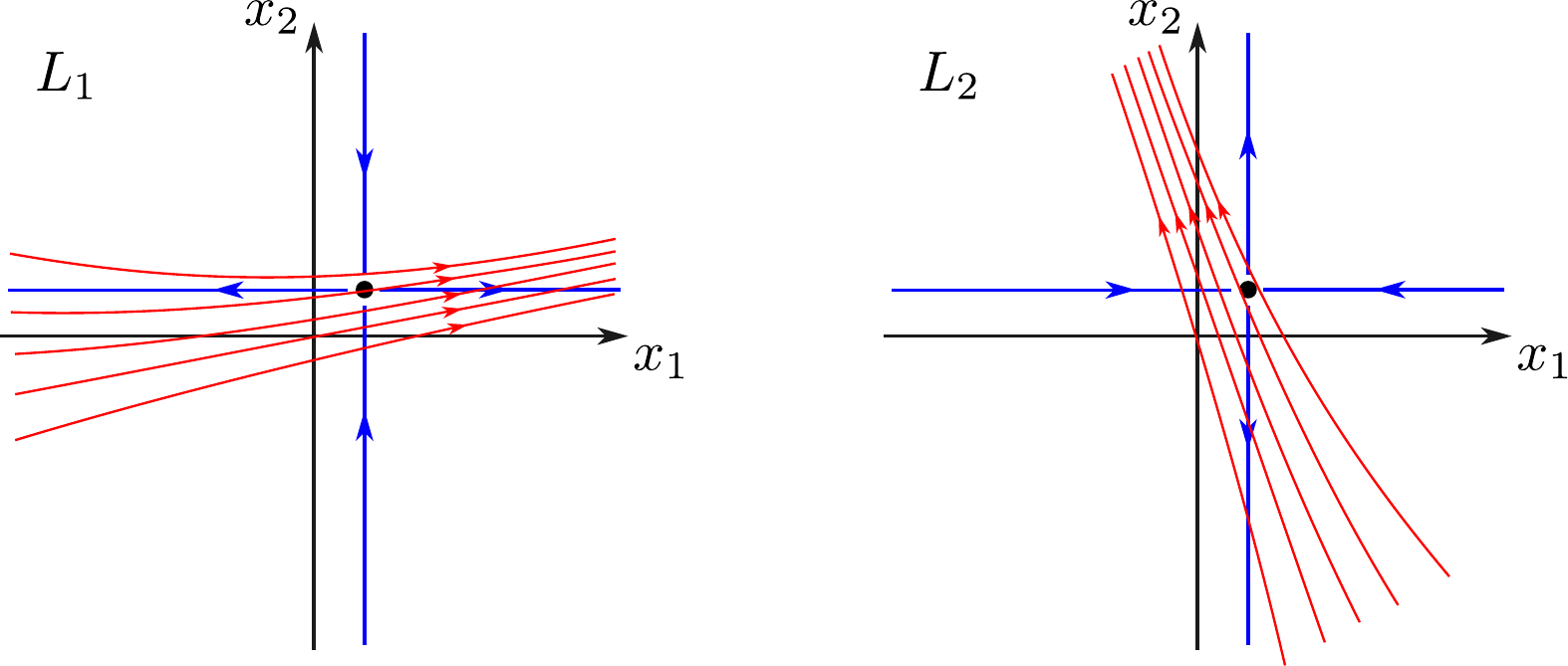}
    \caption{Flows along $L_1$ and $L_2$; flow lines in blue refer to the difference between $L_1$ and $L_2$; flow lines in red refer to flows between $L_j$ and the zero section. The small parameter $\eta>0$ in \eqref{eq: forms for immersed} makes the central flow lines of $L_j$ and the $0$-section non-parallel to the stable/unstable flow lines for the flow of the difference between $L_1$ and $L_2$. The critical point in the flow between $L_1$ and $L_2$ corresponds to the intersection line in $L_1\cap L_2$. }
    \label{fig:flows}
\end{figure}

\begin{figure}
    \centering
    \includegraphics[width=0.75\linewidth]{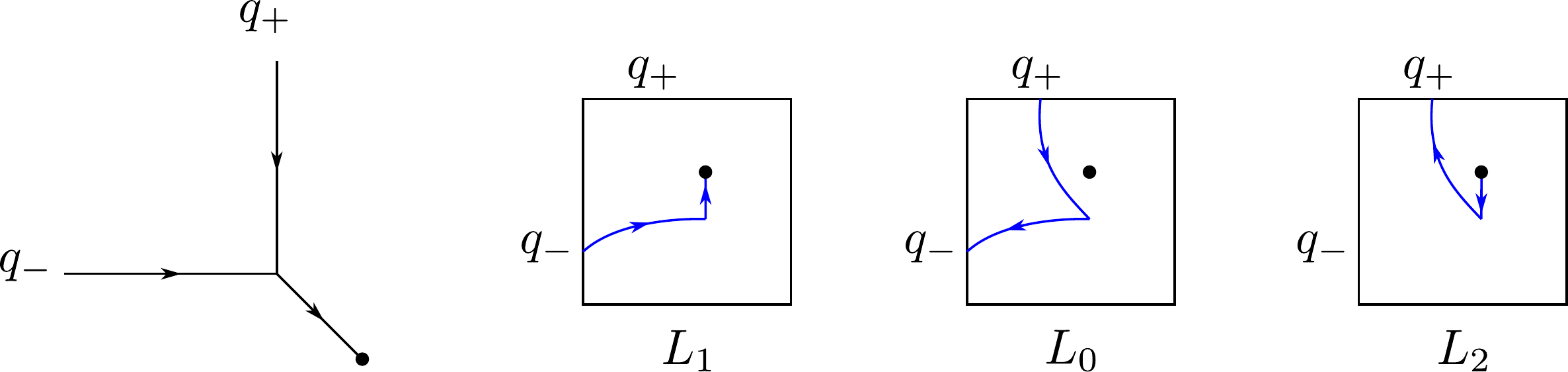}
    \caption{The unique flow tree with initial condition $q_{1,-}\in P_{1,-}$. To the left the tree itself, to the right, the boundary of its flow surface in the various Lagrangians.}
    \label{fig:flowtreemap}
\end{figure}

Recall that  $\widetilde{\mathsf{U}}_\asymp(\rho)$ also
had a unique flow loop; we write $\gamma_\asymp(\rho)$ for the boundary of the corresponding flow surface in the $0$-section $M$ (which is just the flow loop itself). Fix a tubular neighborhood $N$ of the central immersed curve $K_\times$ close to $\gamma_\asymp(\rho)$ lies in an $\mathcal{O}(\rho)$ disk sub-bundle of $N$ and $\gamma_\asymp(\rho)\cap N\setminus s_\rho(B_1)$ is given by a smooth section over $K_\times\setminus s_\rho(B_1)$ since it is a flow of a smooth differential equation. .

\begin{lemma}\label{l: flow loops = flows with corners}
There is a unique basic flow graph with corners for $\widetilde{\mathsf{U}}(0,\rho)$.  It has two corners.  
We write  $\gamma(0;\rho)$ for the boundary in the zero section $M$ of the corresponding flow surface. The curve $\gamma(0,\rho)$ lies in an $\mathcal{O}(\rho)$ disk subbundle of $N$ and is given by a smooth section over $K_\times\setminus s_\rho(B_1)$. Furthermore, the $C^1$-distance between the section of $\gamma_\asymp(\rho)$ and $\gamma(0;\rho)$ in $N\setminus s_\rho(B_1)$ is of the order of magnitude $\rho$.  
%$$ d_{C^1}\left(\gamma_\asymp(\rho),\gamma(0;\rho)\right)=\mathcal{O}(\rho). $$ 	
\end{lemma}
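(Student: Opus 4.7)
The argument runs parallel to the proofs of Lemmas \ref{fixed point treatment smoothing} and \ref{l: flow loops near cross}: decompose each candidate basic flow graph into its inside portion (in $s_\rho(B_1)$) and outside portion, then realize flow cycles as fixed points of a composed contraction on $P_-\subset \partial s_\rho(B_1)$. The two ingredients are the outside flow $\widetilde{\Phi}^{\mathrm{out}}(\rho)\colon P_+ \to P_-$, which is the same as for $\widetilde{\mathsf{U}}_\asymp(\rho)$ since the two Lagrangians coincide outside $s_\rho(B_1)$, and the inside flow map $\Phi^{\mathrm{in}}_0\colon P_-\to P_+$ produced by Lemma \ref{l : flow tree uniqueness}, which takes each point $q_{-,k}\in P_{-,k}$ to the endpoint of the unique one-cornered flow tree starting there, lying in $P_{+,k'}$ with $k'\neq k$.

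For uniqueness and the corner count: because $\Phi^{\mathrm{in}}_0$ swaps the labels on $P_\pm$ in exactly the same manner as $\Phi^{\mathrm{in}}_\asymp(\rho)$, the combinatorics of closing up a flow cycle is identical to the smoothed case of Lemma \ref{fixed point treatment smoothing}. When $K_\asymp$ has two components (and so $\widetilde{\Phi}^{\mathrm{out}}(\rho)$ also swaps labels), the relevant self-map is $\widetilde{\Phi}^{\mathrm{out}}(\rho)\circ \Phi^{\mathrm{in}}_0$, yielding one fixed point over each of $P_{1,-}, P_{2,-}$ and hence two flow loops with one corner each. When $K_\asymp$ is connected, the self-map is $(\widetilde{\Phi}^{\mathrm{out}}(\rho)\circ \Phi^{\mathrm{in}}_0)^2$, yielding a single flow loop that threads the inside twice and hence has two corners. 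Either way the total corner count is two. Uniqueness in each case follows from the Banach fixed-point theorem, using the contractivity of $\widetilde{\Phi}^{\mathrm{out}}(\rho)$ established in \eqref{eq : flow contractions} together with the contraction property of $\Phi^{\mathrm{in}}_0$, which follows from the same flow analysis (Figure \ref{fig:flows}) that governed the smoothed case.

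For the proximity estimate: pulling back by $S_\rho$, the inside portion of $\gamma(0,\rho)$ becomes a fixed flow tree of bounded size in $B_1$, so its image in $M$ is contained in an $\mathcal{O}(\rho)$-neighborhood of the singular point, and the full curve therefore lies in an $\mathcal{O}(\rho)$-disk subbundle of $N$. Outside $s_\rho(B_1)$, both $\gamma(0,\rho)$ and $\gamma_\asymp(\rho)$ are integral curves of the same smooth vector field, namely the dual of the defining $1$-form of $\widetilde{\mathsf{U}}^{\mathrm{out}}(\rho)$, with initial data on $P_-$ determined by the respective fixed-point problems for $\Phi^{\mathrm{in}}_0$ and $\Phi^{\mathrm{in}}_\asymp(\rho)$. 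Both sets of initial data lie in $\mathcal{O}(\rho)$-sized disks of $P_-$, and standard smooth dependence of ODE solutions on their initial conditions upgrades this $\mathcal{O}(\rho)$ $C^0$-bound to a $C^1$-bound along the flow, so $\gamma(0,\rho)$ is realized as a smooth section with $\|\gamma(0,\rho) - \gamma_\asymp(\rho)\|_{C^1} = \mathcal{O}(\rho)$ on $N \setminus s_\rho(B_1)$.

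The principal obstacle is the $C^1$-comparison: although both curves solve the same outside ODE, their initial conditions come from fixed points of \emph{different} contractions, and one must show that the fixed points of $\widetilde{\Phi}^{\mathrm{out}}(\rho)\circ\Phi^{\mathrm{in}}_0$ and of $\widetilde{\Phi}^{\mathrm{out}}(\rho)\circ\Phi^{\mathrm{in}}_\asymp(\rho)$ are $\mathcal{O}(\rho)$-close in both position and tangent directions. This reduces to the observation that both inside maps take values in the same $\mathcal{O}(\rho)$-disk of $P_+$ and depend smoothly on their inputs, so a uniform implicit function theorem estimate supplies the $C^1$-proximity of the fixed points uniformly in $\rho$.
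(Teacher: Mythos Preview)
Your proof is correct and follows essentially the same route as the paper: replace $\Phi^{\mathrm{in}}_\asymp(\rho)$ by the rescaled one-corner flow-tree map $\Phi^{\mathrm{in}}_0(\rho)$ in the fixed-point scheme of Lemma~\ref{fixed point treatment smoothing}, use contractivity of both factors for uniqueness, and deduce the $\mathcal{O}(\rho)$ $C^1$-closeness from the fact that both inside maps arise by applying $s_\rho$ to fixed maps on the unit box together with the common outside ODE. The paper's version is terser but the argument is the same; your explicit case split on $\pi_0(K_\asymp)$ and your remark that the total corner count is two in either case are exactly what is implicit in the paper's reference to Lemma~\ref{fixed point treatment smoothing}.
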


\begin{proof}
The flow graphs can be computed exactly as in Lemma \ref{fixed point treatment smoothing}, save with $\Phi^{\mathrm{in}}_\asymp(\rho)$ replaced with the appropriate rescaling $\Phi^{\mathrm{in}}_0(\rho)$
of $\Phi^{\mathrm{in}}_0$. 

Both the actual inside maps are defined by applying $s_\rho$ to fixed maps on the unit box.  This implies that their distance in $N$ is $\mathcal{O}(\rho)$. The fact that the outside map is a standard contraction given by flows of the same differential equation then gives the 1-1 correspondence with corresponding sections at $\mathcal{O}(\rho)$ $C^1$-distance as claimed.  
\end{proof}

\subsection{Flow graphs for $\widetilde{\mathsf{U}}(\sigma,\rho)$, $\sigma\ne 0$}
The family of Lagrangians $\widetilde{\mathsf{U}}(\sigma,\rho)$, $-\sigma_0<\sigma<\sigma_0$ constitutes a Lagrangian regular homotopy. For $\sigma\ne 0$, $\mathsf{U}(\sigma,\rho)$ are embedded and $\widetilde{\mathsf{U}}(0,\rho)$ self-intersects cleanly along a curve, i.e., has a Bott degenerate intersection along $\R$, see Lemma \ref{l: intersection L_1 and L_2}. Take $\sigma_0$ small, then far from the intersection curve the Lagrangian regular homotopy is simply a small Lagrangian isotopy, which corresponds to a shift along a closed $1$-form in the cotangent bundle of $\mathsf{U}(0,\rho)$.

From the point of view of the Bott intersection, Lagrangians nearby the Bott degenerate Lagrangian correspond to closed $1$-forms on the Bott manifold. We will next describe $\mathsf{U}(\sigma,\rho)$ from this perspective and then show that it allows us to determine flow graphs of $\mathsf{U}(\sigma;\rho)$ from those of $\mathsf{U}(0,\rho)$.

We use $T^\ast B_4$-coordinates near the self-intersection of $\mathsf{U}(0,\rho)$. 
Here, the components of the Lagrangian $\mathsf{U}(\sigma,\rho)$ have two components $\mathsf{U}_{k_1(\sigma),\tau_1}$ and $\mathsf{U}_{k_2(\sigma),\tau_2}$ given by graphs of the 1-forms in \eqref{eq: forms for immersed}. Lemma \ref{l: intersection L_1 and L_2} shows that the intersection curve $\mathsf{U}_{k_1(0),\tau_1}\cap\mathsf{U}_{k_2(0),\tau_2}$ lies over a line segment at $(x_1,x_2)=(\xi,\xi)$ parallel to the $x_3$-axis where the difference 1-form of $\mathsf{U}_{k_2(\sigma),\tau_2}$ and $\mathsf{U}_{k_1(\sigma),\tau_1}$ along this intersection is given by
\begin{equation}\label{eq: difference along intersection}
(d\tau_2 + df_2) - (d\tau_1 + df_1) = \frac{\partial f}{\partial v_2}(\xi,x_3-\sigma) - \frac{\partial f}{\partial v_2}(\xi,x_3)=\sigma g(x_3)dx_3 ,
\end{equation}
where $g(x_3)>0$ for all $x_3$. We use this line segment parallel to the $x_3$-axis to parameterize the intersection, the shifting $1$-form is then $\sigma g(x_3)dx_3$. 

Describing flow graphs of $\mathsf{U}(\sigma,\rho)$ from data on $\mathsf{U}(0,\rho)$ is a simple case of Morse theory for functions with Bott-degenerate critical loci, see e.g., \cite{FloerMorseWitten, pozniak, EkholmEtnyreSabloff} for applications to Lagrangians with clean intersections. Note first that any flow graph of $\mathsf{U}(\sigma,\rho)\cup M$ converges to a flow graph of $\mathsf{U}(0,\rho)$, possibly with corners at the self-intersection of $\mathsf{U}(0,\rho)$. (This convergence should be understood in the Gromov compactness sense: the convergence of flow lines is uniform on compact subsets of the domain of the flow graph.)
Further, if we rescale $\mathsf{U}(\sigma,\rho)$ in a neighborhood of its intersection we see that if the limit is a flow graph with corners, then the rescaled flow converges to a flow line of $\pm g(x_3)\partial_{x_3}$ connecting corners. We call such a flow line of $\pm g(x_3)\partial_{x_3}$ a \emph{flow line inside the intersection line}. Using standard finite dimensional Morse theory, it is easy to glue a flow graph with corners of $\mathsf{U}(0,\rho)$, connected by a flow line in the intersection line to a flow graph of $\mathsf{U}(\sigma,\rho)$ provided the direction of the flow line (given by $\pm g(x_3)\partial_{x_3}$) agrees with the shift given by $\sigma g(x_3)dx_3$, $\sigma\ne 0$. As in the case of flow graphs, we consider the cotangent lift of the flow line inside the intersection line, which is the flow line itself lifted to the components and oriented as cotangent lifts of edges of flow graphs.

We will use the following notation for flow graphs with corners connected by flow lines in the intersection line. Given a flow graph of $\mathsf{U}(0,\rho)$, possibly with  corners, a {\em positive (resp. negative) pregluing} is the data of flow lines of $g(x_3)dx_3$ (resp. of $-g(x_3)dx_3$) inside the intersection line that connect the corners in pairs such that the flow graph together with the flow line in the intersection admits a continuous cotangent lift to $\mathsf{U}(\sigma_0;\rho)$ (resp. to $\mathsf{U}(-\sigma_0;\rho)$) for $\sigma_0 > 0$. Note that flow graphs without corners tautologically admit both positive and negative gluing.

The correspondence between flow graphs with flow lines in the intersection line of the immersed Lagrangian and actual flow graph on nearby Lagrangians is then as follows. 

\begin{lemma}\label{l : flow graphs U(sigma,rho)}
As $\sigma\to 0$, flow graphs determined by $\mathsf{U}(\sigma;\rho)$, $\sigma\ne 0$, converge (in the sense of Gromov compactness  to flow graphs on $\mathsf{U}(0;\rho)$ with corners at the intersection. 

Furthermore, for all sufficiently small $|\sigma|$ there is a $1-1$ correspondence between flow graphs on $\mathsf{U}(\sigma;\rho)$, $\sigma<0$ ($\sigma>0$) 
and flow graphs of $\mathsf{U}(0,\rho)$ with positive (negative) pregluing. The cotangent lifts of corresponding flow graphs are arbitrarily $C^0$-close and $C^1$-close outside $\mathcal{O}(\sigma)$-neighborhoods of the corners.  
\end{lemma}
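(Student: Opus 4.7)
The plan is to treat this as an instance of Morse--Bott degeneration for gradient-like flow trajectories, where the Bott-degenerate critical locus for the relevant difference 1-form is precisely the clean intersection line $C := \mathsf{U}_{k_1(0),\tau_1}\cap \mathsf{U}_{k_2(0),\tau_2}$ of Lemma \ref{l: intersection L_1 and L_2}, and the perturbation away from Morse--Bott to Morse is the restriction to $C$ of the 1-form $\sigma g(x_3)\, dx_3$ identified in \eqref{eq: difference along intersection}. Modulo re-expressing flow graphs as gradient flow lines of local primitives, the statement then follows from a standard Morse--Bott analysis as in \cite{FloerMorseWitten, pozniak, EkholmEtnyreSabloff}, adapted to flow graphs with the vertex combinatorics of \cite{Ekholm-morse}; here the analysis is especially clean because all flows in question are those of ODE's on finite-dimensional spaces.

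For the convergence claim, consider a sequence $\sigma_n \to 0$ and corresponding flow graphs $\Gamma_n \subset \mathsf{U}(\sigma_n;\rho)$ of fixed combinatorial type. Away from any fixed neighborhood of $C$, the vector fields governing the flow equation on $\mathsf{U}(\sigma_n;\rho)$ converge in $C^\infty$ to those on $\mathsf{U}(0;\rho)$, since only the defining 1-forms are varying and their difference has uniformly bounded $C^1$-norm off $C$. Standard compactness arguments for flow graphs then give $C^\infty_{\mathrm{loc}}$-convergence of $\Gamma_n$ to a flow graph of $\mathsf{U}(0;\rho)$ on the complement of any neighborhood of $C$, after passing to a subsequence. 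For pieces of $\Gamma_n$ that pass close to $C$, I would rescale transversely to $C$ with scale $|\sigma_n|^{-1}$; by \eqref{eq: difference along intersection}, the difference 1-form becomes, to leading order, $g(x_3)\, dx_3$ with sign $\mathrm{sgn}(\sigma_n)$, whose dual is $\pm g(x_3)\partial_{x_3}$. The limit is therefore a flow line inside the intersection line, yielding a flow graph of $\mathsf{U}(0;\rho)$ with corners and a consistent internal sign along $C$.

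For the $1$-$1$ correspondence, I would construct an explicit pregluing and then invoke an implicit function theorem. Given a flow graph $\Gamma_0$ of $\mathsf{U}(0;\rho)$ with, say, positive pregluing data --- i.e.\ positive $g(x_3)\partial_{x_3}$-flow segments inside $C$ joining the corners of $\Gamma_0$ in such a way that a continuous cotangent lift to $\mathsf{U}(\sigma;\rho)$ exists for $\sigma > 0$ --- assemble for each small $\sigma>0$ an approximate flow graph $\Gamma_0^{\mathrm{pre}}(\sigma)$ on $\mathsf{U}(\sigma;\rho)$ by interpolating between the cotangent lifts of the edges of $\Gamma_0$ and the cascade segments across the narrow slab where $\mathsf{U}(\sigma;\rho)$ departs from $\mathsf{U}(0;\rho)$. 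The linearization of the flow-graph equation, enriched by the Morse--Bott flow-in-$C$ equation, is Fredholm of index zero at $\Gamma_0$; the positivity of the pregluing is precisely the condition ensuring that the connecting term has the correct sign for the implicit function theorem to produce, for all sufficiently small $\sigma>0$, a genuine flow graph $\Gamma(\sigma)$ of $\mathsf{U}(\sigma;\rho)$ that is $C^0$-close to $\Gamma_0^{\mathrm{pre}}(\sigma)$ and $C^1$-close to $\Gamma_0$ outside $\mathcal{O}(\sigma)$-neighborhoods of the corners. The case $\sigma<0$ is identical with sign reversed. Combined with the convergence statement, these constructions are mutually inverse, establishing the bijection and the regularity assertions.

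The main obstacle I anticipate is the transversality/surjectivity of the enriched linearized operator at corners, together with a careful bookkeeping of signs so that a positive pregluing glues to $\sigma>0$ and a negative one to $\sigma<0$, rather than the reverse. Since the underlying flows are those of smooth ODE's that act as contractions in the transverse directions (as exploited in Lemmas \ref{fixed point treatment smoothing} and \ref{l : flow tree uniqueness}), I expect the Fredholm and gluing theory to reduce to a finite-dimensional Banach-space contraction mapping argument parameterized by $\sigma$, modeled on the analysis of \cite{EkholmEtnyreSabloff} but considerably simpler, since there are no Cauchy--Riemann operators in play.
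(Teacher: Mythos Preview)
Your proposal is correct and shares the paper's conceptual framing --- the paragraph preceding the lemma explicitly sets this up as a Morse--Bott degeneration, citing the same references you do. The difference is in execution: where you invoke abstract compactness, pregluing, and an implicit function theorem for the enriched linearized operator, the paper simply writes down the explicit local ODE
\[
\dot x_1 = c x_1,\qquad \dot x_2 = c x_2,\qquad \dot x_3 = \sigma,
\]
governing the difference flow near the intersection line, and reads off both the convergence and the $C^0$/$C^1$ closeness directly from its elementary solution $(x_1(0)e^{ct}, x_2(0)e^{ct}, x_3(0)+\sigma t)$. The contraction in $(x_1,x_2)$ and the slow $\sigma$-drift in $x_3$ make the passage from corners-plus-cascade to a genuine flow line on $\mathsf{U}(\sigma;\rho)$ transparent, and the sign of $\sigma$ visibly dictates which pregluing direction is realized. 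Your anticipated ``main obstacle'' of transversality and sign bookkeeping thus evaporates: there is no Fredholm analysis, only the explicit flow of a linear system. You already suspected this in your final paragraph; the paper's proof confirms that the reduction is not just simpler but essentially trivial once the ODE is written down.
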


\begin{proof}
The first statement is standard as discussed above. The second statement of $C^0$- and $C^1$-convergence follows easily from properties of the flow of the ordinary differential equation
\[
\begin{cases}
\dot x_1 &= cx_1,\\
\dot x_2 &= cx_2,\\
\dot x_3 &= \sigma
\end{cases},
\]
for small $|\sigma|\ge 0$. See Figure \ref{fig:flow tree gluing} for illustration.
\end{proof}

\begin{figure}
    \centering
    \includegraphics[width=0.5\linewidth]{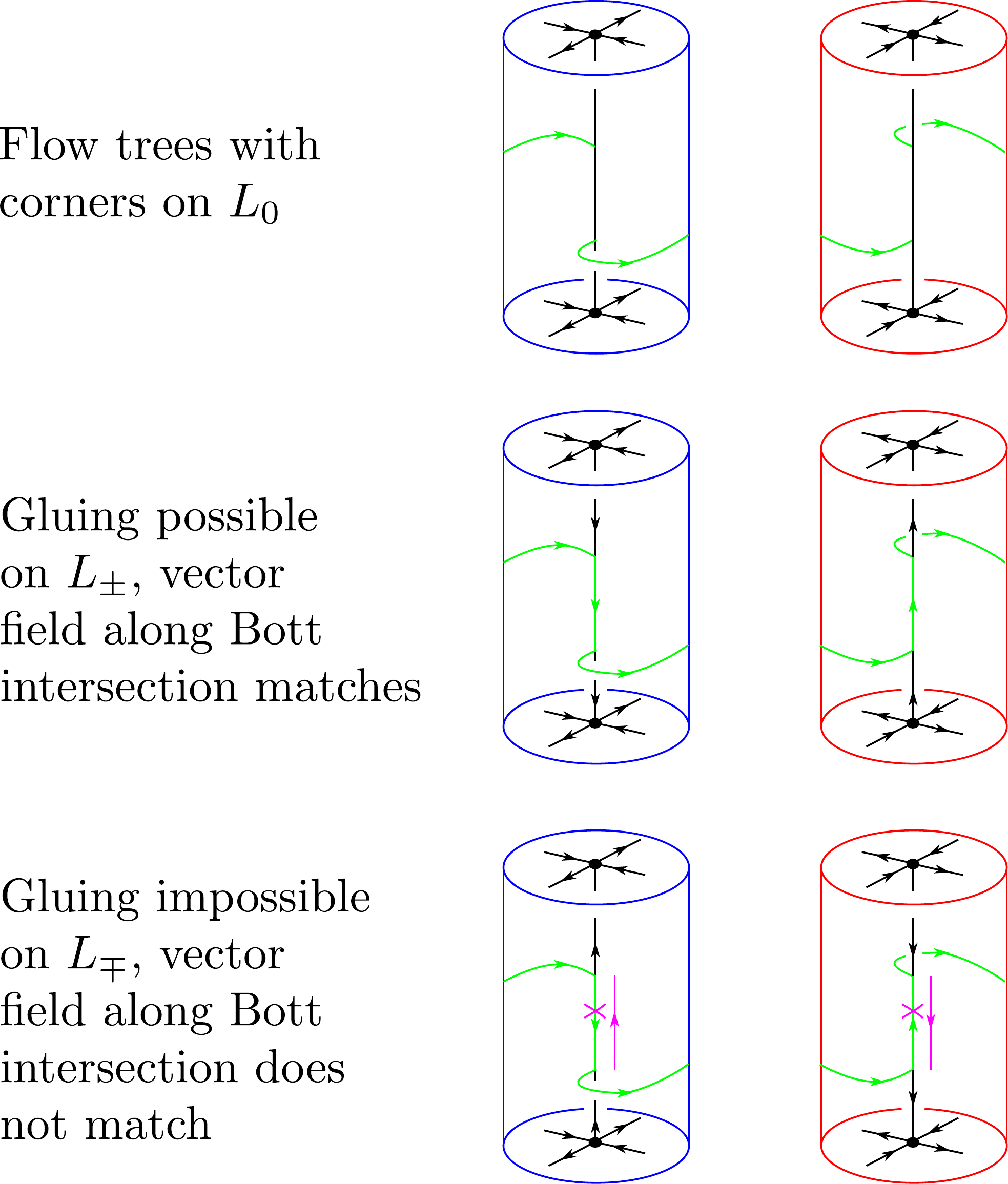}
    \caption{Flow tree gluing}
    \label{fig:flow tree gluing}
\end{figure}

\begin{corollary}\label{c: flow graphs U(sigma,rho)}
    For $|\rho| \ll 0$, there is at most one basic flow graph with (a non-zero number of) vertices  for 
    $\widetilde{\mathsf{U}}(\sigma;\rho)$. For positive (negative) perturbation, this graph appears only for
    $\sigma>0$ ($\sigma<0$). It has two trivalent vertices and the components of its cotangent lift in $M$ is $C^0$-close to $K_\asymp(\rho)$ and the corresponding holomorphic curve has boundary smoothly isotopic to it. 
\end{corollary}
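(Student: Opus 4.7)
The plan is to combine Lemma \ref{l: flow loops = flows with corners} with Lemma \ref{l : flow graphs U(sigma,rho)}. By the latter, for all sufficiently small $|\sigma|\ne 0$, basic flow graphs on $\widetilde{\mathsf{U}}(\sigma;\rho)$ correspond bijectively to basic flow graphs on $\widetilde{\mathsf{U}}(0;\rho)$ (possibly with corners) admitting a positive pregluing if $\sigma>0$ and a negative pregluing if $\sigma<0$. Flow graphs without corners persist from $\sigma=0$ by Lemma \ref{l: flow loops near cross} and account for the basic flow loops already classified there, so to count basic flow graphs with a nonzero number of vertices it suffices to analyze basic flow graphs on $\widetilde{\mathsf{U}}(0;\rho)$ with corners together with their admissible preglueings. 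By Lemma \ref{l: flow loops = flows with corners}, there is a unique such flow graph $\Gamma_0$, and it has exactly two corners along the Bott intersection line $\mathsf{U}_{k_1(0),\tau_1}\cap\mathsf{U}_{k_2(0),\tau_2}$.

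Next, I would verify trivalence and count vertices. Each corner of $\Gamma_0$ arises from the one-corner flow tree of Lemma \ref{l : flow tree uniqueness}, whose vertex structure consists of a $Y_0$-type switch where a flow edge between a sheet and the zero section meets the stable/unstable manifold of the difference vector field of the two sheets; this contributes a single trivalent vertex. Thus the two corners of $\Gamma_0$ produce exactly two trivalent vertices, as claimed.

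The key step is to determine which sign of pregluing is admissible. Using \eqref{eq: difference along intersection}, the intersection line is equipped with the directed $1$-form $\sigma g(x_3)\,dx_3$ with $g>0$. Continuity of the cotangent lift of $\Gamma_0$ (together with a flow line inside the intersection line) fixes a preferred direction along the intersection line from one corner to the other, determined purely by the relative $x_3$-ordering of the two corners. This ordering is controlled by the flow of the perturbation outside $B_\rho$ (Definition \ref{positive negative perturbation}): the positive (resp., negative) perturbation places the entry and exit corners in an order that matches positive (resp., negative) pregluing. Consequently, $\Gamma_0$ admits at most one pregluing, and its sign is $+$ for positive perturbation and $-$ for negative perturbation, so the resulting flow graph on $\widetilde{\mathsf{U}}(\sigma;\rho)$ exists only for the corresponding sign of $\sigma$. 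I expect this to be the main technical obstacle, as it requires tracking the $x_3$-coordinates of both corners through the contractive flow map $\widetilde\Phi^{\mathrm{out}}(\rho)$ and verifying directional consistency with the orientation convention for cotangent lifts.

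Finally, the $C^0$-closeness of the cotangent lift's projection to $K_\asymp(\rho)$ follows by chaining three estimates. Lemma \ref{l: flow loops = flows with corners} gives that the projection $\gamma(0;\rho)$ of $\Gamma_0$ to $M$ is $C^1$-close (of order $\rho$) to the projection $\gamma_\asymp(\rho)$ of the flow loop on $\widetilde{\mathsf{U}}_\asymp(\rho)$; the second part of Lemma \ref{l : flow graphs U(sigma,rho)} then gives $C^0$-convergence of the projection of the glued flow graph on $\widetilde{\mathsf{U}}(\sigma;\rho)$ to $\gamma(0;\rho)$ as $\sigma\to 0$; and Lemma \ref{l : L_=(rho)} relates $\gamma_\asymp(\rho)$ to $K_\asymp(\rho)$. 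For the smooth isotopy statement between the boundary of the associated holomorphic curve and $K_\asymp(\rho)$, I would invoke the correspondence between rigid flow graphs (with corners) and rigid holomorphic curves recalled in Appendix \ref{flow graphs and holomorphic curves}, under which the boundary of the holomorphic curve is a smoothing of the boundary of the flow surface at the trivalent vertices and is therefore smoothly isotopic to a small perturbation of $\gamma(0;\rho)$, hence to $K_\asymp(\rho)$.
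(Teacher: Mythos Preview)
Your proposal is correct and takes essentially the same approach as the paper. The paper carries out explicitly the $x_3$-ordering computation you flag as the main obstacle (using that the $x_3$-component is constant along the difference flow at $\sigma=0$, so the corner reached from $P_{1,-}$ has $x_3$-coordinate $e^{-kT}$ times that of the fixed point, and the corner reached from $P_{2,-}$ is further scaled by $e^{-kT'}$), and for the isotopy statement it cites the local gluing model for $Y_0$-vertices from \cite[Section~6.1.5]{Ekholm-morse} rather than the general correspondence in Appendix~\ref{flow graphs and holomorphic curves}.
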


\begin{proof}
    By Lemma \ref{l: flow loops = flows with corners}, there is a unique flow graph with corners for $\widetilde{\mathsf{U}}(\sigma;\rho)$.  It has exactly two corners, hence admits either positive or negative pregluing (there is a flow line between them going one way or the other).  To determine which, the key point is the following fact: if the perturbation of the outside flow is positive (negative) then the corner of the flow tree emanating from $P_{1,-}$ lies above (below) the corner in the flow tree emanating from $P_{2,-}$.
    
    To see this consider the positive case, the negative case is similar. Note first that the $x_3$-component of along flow line of the difference of the two components of $L_{k(0)}$ (the flow line that asymptotes to the intersection) is constant, set $\sigma=0$ in \eqref{eq: forms for immersed}. It follows that the corner in the flow tree of the flow map $\Phi^{\mathrm{in}}_0$ emanating from the fixed point in $P_{1,-}$ has $x_3$-coordinate given by scaling the $x_3$-coordinate of the fixed point by  $e^{-kT}$ of the fixed point, where $T$ is the flow time from the fixed point to the splitting $Y_0$-vertex. Similarly the following corner of the flow tree of the flow tree map $\Phi^{\mathrm{in}}_0$ emanating from the fixed point in $P_{2,-}$ lies at further scaling by $e^{-kT'}$, where $T'$ is the flow time along the unperturbed outer arc and the short inside flow time to the splitting. It follows that the $x_3$-coordinates of the corners are as claimed. 

    Then Lemma \ref{l : flow graphs U(sigma,rho)} asserts the existence of the desired flow graph for $\widetilde{\mathsf{U}}(\sigma;\rho)$, and the non-existence of any others.  (Note we already studied the flow graphs without vertices and/or corners in Lemma \ref{l: flow loops near cross}.)  

    Finally, the boundary of the cotangent lift in $M$ are the flow lines through the fixed point that intersect the  $\widetilde{\mathsf{U}}(0,\rho)$ with the stable/unstable manifolds of the line of intersections. For small perturbation this is $C^0$-close to $K_\asymp(\rho)$. The isotopy statement follows for the local gluing model for $Y_0$-vertices, see \cite[Section 6.1.5]{Ekholm-morse}.
\end{proof}

\section{Review of skein-valued curve counting} \label{svcc review}

Here we briefly recall the setup and main results of \cite{SOB, ghost, bare}, with particular attention to the various topological `brane data' involved (as these brane data choices will play an important role in our subsequent calculations).  Let $X$ be a symplectic 6-manifold with trivial(ized) first Chern class, and $L \subset X$ a (not necessarily connected) smooth Lagrangian submanifold with trivial(ized) Maslov class.  Fix an almost complex structure $J$, standard in a neighborhood of $L$.  In case $X$ and $L$ are non-compact, we require that $(X, L, J)$ at infinity is such as to ensure that Gromov compactness arguments remain valid.  
Fix (as usual for holomorphic curve counting) a spin structure on $L$, or more generally, a background class in $H^2(X, \Z/2\Z)$ and a relative spin structure.

The skein valued curve counting requires additional `brane data' to control various framings.  We
fix a vector field $\xi$ on $L$ with transverse zeroes, and a 4-chain $\Xi$ with $\partial \Xi = 2 L$, such that, in a Weinstein neighborhood of $L$, the neighborhood of $\partial \Xi$ in $\Xi$ is $\overline{\R_{\ge 0} \cdot \pm J \xi}$.  Fix a bounding chain $b$ for $\gamma := (\Xi \setminus \partial \Xi) \cap L$.  Given a map $u\colon (C, \partial C) \to (X, L)$ which is holomorphic and immersed near $\partial L$,  the {\em corrected framing} of $\partial C$ is 
\begin{equation} \label{corrected framing}
\xi + u(C) \cap \Xi + u(\partial C) \cap b.
\end{equation} 
Here, the first term is a vector field; for generic $u$ it will be nowhere parallel to $\partial C$ and hence define a framing; the next two terms are integers, and the sum is understood via the fact that topological types of framings of a knot in a 3-manifold form a $\Z$-torsor. 
As explained in \cite{SOB}, this corrected framing is invariant under deformations of 
$\xi, \Xi, b$.  The space of vector fields is connected, and the dependence on global aspects of the choices $(\Xi, b)$ is as follows: the difference between two such choices gives an element of $H_4^{BM}(X) \oplus H_2^{BM}(L)$, and the corresponding difference in corrected framings is the intersection of this class with $[u] \in H_2(X, L)$. 

Fix a class $d \in H_2(X, L)$.  Recall that we call a map $u\colon (C, \partial C) \to (X, L)$ {\em bare} if $u^{\ast}(\omega)$ gives positive symplectic area to every irreducible component of $C$.  We write $\chi(u)$ for Euler characteristic of a smoothing of the domain of $u$.  Given some system of (weighted, multivalued) perturbations for the holomorphic curve equation, if $u$ is a transverse solution to the perturbed equation, we write $\mathrm{wt}(u)$ for the local contribution of this solution.  
The main result of \cite{SOB, ghost, bare} is that there is a system of perturbations to the holomorphic curve equation so that the following sum over bare solutions is well defined and deformation invariant: 
\begin{equation}
    Z_{X, L, d} = \sum_{[u] = d} \mathrm{wt}(u) \cdot z^{-\chi(u)} \cdot [u(\partial \Sigma)] \in \Sk(L) \otimes_{\Z[z]} \Q[[z]]
\end{equation}

To sum over $d$, one generally must extend scalars by the Novikov ring $\Q[ H_2(X, L)]$.  However, when (as will be the case throughout this article) the symplectic form is exact, $\omega = d \lambda$, we detect the symplectic area by integrating $\lambda$ over the image of $[u]$ in $H_1(L)$, which in turn can be read from the class $[u(\partial \Sigma)] \in \Sk(L)$.  Thus we may instead simply complete $\Sk(L)$ along the cone of $\lambda$ positive classes. Going forward, we will omit this completion from the notation, and write $Z_{X, L} = \sum_d Z_{X, L, d} \in \Sk(L)$.

\subsection{Spin structures -- remarks and extensions} \label{sec: remarks spin}

\begin{enumerate}

\item Recall that the choice of spin structure on $L$ is used to orient the moduli space of maps, so ultimately to determine signs.  If we change the spin structure on $L$, we change the signs by a factor determined by the degree $d \in H_2(X, L)$.  In our exact setting, this factors through $H_1(L)$ and hence through the skein.  That is, changing the spin structure is intertwined with a certain automorphism of the skein, multiplying each curve by a sign depending on its homology class. 
\item Rather than choose a spin structure on $L$, we may fix a link $\eta \subset L$ and choose a spin structure on $L \setminus \eta$ which does not extend across $\eta$.  Generically, the boundaries of holomorphic curves will not meet $\eta$, so this causes no problem for defining the count.  As far as invariance is concerned, we pick up a sign when the boundary of a curve crosses $\eta$.  So, the skein-valued curve count is well defined in the skein of $L$ with a sign line, see \eqref{eq:skeinrel4}, along $\eta$. 
\end{enumerate}

\subsection{4-chains -- remarks and extensions} \label{sec: remarks 4-chain}

\begin{enumerate} 
\item The theory depends on the vector field $\xi$ on $L$ only through the corresponding section of $\underline{\xi}$ of the sphere bundle $SL$.  The condition that $\xi$ has transverse zeroes translates to asking $\underline{\xi}$ is defined away from finitely many points $p_i$, and near each such point, $\underline{\xi}$ defines a degree $\pm 1$ map from boundary of some $\epsilon$ ball around $p_i$ to the tangent sphere. (Note that the extra boundary fiber disks of the 4-chains of $\xi$ and of $-\xi$ at zeros of $\xi$ cancel in the 4-chain of $\pm\xi$ with boundary $2L$.)  We could work throughout specifying only $\underline{\xi}$ rather than $\xi$. 
\item 
Rather than choose a class $b$ with $\gamma = (\Xi \setminus L) \cap L$, we may work with a skein with a {\em framing line} along $\gamma$.  That is, we take framed links in $L \setminus \gamma$, and impose the usual skein relations, along with the relation: 
\begin{gather}
\vcenter{\hbox{
\begin{tikzpicture}[scale=0.7]
\draw[dotted] (0,0) circle (1);
\draw[ultra thick, <- , green] (-1, 0) -- (1, 0);
\draw[white, line width=2.5mm] (0, -1) -- (0, 1);
\draw[ultra thick, ->] (0, -1) -- (0, 1);
\node[text=green] at (1.5,0) {$\gamma$};
\end{tikzpicture}
}}
\;\;=\;\;
a\cdot
\vcenter{\hbox{
\begin{tikzpicture}[scale=0.7]
\draw[dotted] (0,0) circle (1);
\draw[ultra thick, ->] (0, -1) -- (0, 1);
\draw[white, line width=2.5mm] (-1, 0) -- (1, 0);
\draw[ultra thick, <- , green] (-1, 0) -- (1, 0);
\node[text=green] at (1.5,0) {$\gamma$};
\end{tikzpicture}
}}
\;. \label{eq:skeinrel6}
\end{gather}
The sign $\pm 1$ in the exponent of $a$ in \eqref{eq:skeinrel6} reflects the change in local linking number with $\gamma$.
\item If $L = \bigsqcup L_i$ and compatibly $\Xi = \sum \Xi_i$ (i.e. $\partial \Xi_i = 2 L_i$), then we may use a different framing variables $a_i$ on each $L_i$.
\item 
Similarly, we may take several 4-chains $\xi_i, \Xi_i$ for one given Lagrangian $L$.  Then we may consider the 4-chain $\Xi_L := \frac{1}{n} \sum \Xi_i$.  Of course we should count 4-chain intersections with each $\Xi_i$ by $a_L^{1/n}$.   
\item \label{4-chain with monodromy} Given a local system of sets of $n$ vector fields on $L$, the collection of which however may undergo monodromy around $L$, we may consider a 4-chain $\Xi$ for $L$ which is locally near $L$ of the form $\Xi_L := \frac{1}{n} \sum \Xi_i$. Counting holomorphic curves in the skein using such a 4-chain gives a deformation invariant result since the relationship between the 4-chain/vector field pair and the skein relations in \cite{SOB} are all local over $L$. 

\item We may specialize at $a=1$, or, if there are many components as above, specialize some of the $a$ variables to $1$.  Evidently, for a component $L_i$ for which we set $a_i = 1$, the invariant is independent of the choice of $\xi_i, \Xi_i$; in fact, an inspection of \cite{SOB} shows that we do not need to make such choices at all to define the $a_i=1$ specialization of the invariant. In the $a=1$ skein of $S^3$, the class of every nontrivial knot or link is zero, but in the solid torus that is not the case, see Section \ref{basic classes}. 
\end{enumerate}

\subsection{Basic classes} \label{basic classes}
Suppose $X$ is an exact symplectic manifold, and $L \subset X$ is a Maslov zero Lagrangian with the topology of the solid torus; assume $L$ is equipped with vector field and 4-chain as appropriate to define skein-valued curve counts per \cite{SOB}.   
Fix %set-theoretically 
a generic longitude of $L$.  The longitude acquires a framing from the vector field, and an orientation by asking that the fixed symplectic primitive be positive on the longitude. 

A choice of framed oriented longitude determines expansions as in \eqref{torus skein expansion} for elements in the skein of $L$.  
We will write $\psi(L)$ for the coefficient of $\psi_{\square, \emptyset}$ in the skein-valued curve count for $(X, L)$.    
More generally, if $M$ is another Maslov zero Lagrangian, again equipped with appropriate data and disjoint from $L$, then we write  $\psi(L) \in \Sk(M)$ for the element obtained by taking the original curve count in $\Sk(M \sqcup L) = \Sk(M) \otimes \Sk(L)$ and extracting the coefficient of $W_{\square, \emptyset}$ in the second factor.

Similarly, when $L$ is a disjoint union of $n$ solid tori, we similarly write $\psi(L)$ for the coefficient of $W_{\square, \emptyset}^{\otimes n}$ in the skein-valued curve count; again $\psi(L)$ is valued in the skein of the other Lagrangians.

For $\Psi$ which are known a priori to admit an expansion in $W_{\lambda, \emptyset}$, the foregoing discussion and definition of $\psi(L) \in \Sk(M)$ remain valid in the $a=1$ specialization.

\subsection{Normalization of the basic cylinder} \label{cylinder normalization}

Let $M$ be a 3-manifold.  Let $K \subset M$ be a link.  Then $\psi(\mathsf{U}_K)$ is the count of one embedded cylinder for each component of $K$; their geometric boundary in $M$ is $K$.  Here we explain how to normalize the 4-chain on $M$ so that, if we put the $a=1$ skein on $\mathsf{U}_K$, then $$\psi(\mathsf{U}_K) = [K] \in \Sk(M).$$

Fix a vector field $\xi$ on $M$ and take the 4-chain for the zero section $M \subset T^*M$
to be $\Xi := \overline{\R_{\ge 0} \cdot \pm J \xi}$.   

Fix a metric on $M$ such that $\xi$ is perpendicular to $K$ only at isolated points and that $\langle\xi,\tau\rangle$ has transverse zeros at these points.  Then:

\begin{lemma}
    Let 
    $K_{\xi > 0} := \langle \xi, TK \rangle^{-1}(\R_{> 0})$ and similarly 
    $K_{\xi < 0} := \langle \xi, TK \rangle^{-1}(\R_{< 0})$.  Then, for small push-offs $\mathsf{U}_K$, the locus $\Xi \cap \mathsf{U}_K$ is obtained by smoothing the loci
    $$K_{\xi> 0} \cup \R_{\le 0}\xi|_{\partial K_{\xi > 0}} \qquad \mathrm{and} \qquad K_{\xi< 0} \cup \R_{\ge 0}\xi|_{\partial K_{\xi < 0}}.$$
    In particular, $\Xi \cap \mathsf{U}_K = 0 \in H_1^{BM}(\mathsf{U}_K, \Z)$. $\qed$
\end{lemma}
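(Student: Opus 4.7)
The plan is to compute the intersection directly in local coordinates on a tubular neighborhood $I\times D^2\subset M$ of $K$, using the explicit description of $\mathsf{U}_K$ as the graph of $df+\tau$ from Section~\ref{sec: conormals}: here $f(t,v)=h(|v|)$ is fiber-quadratic (with Hessian $c=h''(0)/2>0$ at the zero section) and $\tau$ is a closed $1$-form approximately equal to $dt$ along $K$. In each cotangent fiber $T^\ast_x M$, the $4$-chain $\Xi=\overline{\R_{\geq 0}\cdot\pm J\xi}$ is the line through the origin in the direction determined by the covector $J\xi(x)$, split into its two rays. A point $(x,\alpha)\in\mathsf{U}_K$ therefore lies in $\Xi$ precisely when $\alpha$ is a real multiple of $J\xi(x)$, with the sign of the multiplier selecting which half $\Xi^\pm$ of $\Xi$ contains it.

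Next, decomposing $\xi=\xi_t\partial_t+\xi_\perp$ with $\xi_\perp$ tangent to the disk fiber and expanding to leading order $df+\tau\approx c\,v\cdot dv + dt$, the proportionality $\alpha\parallel J\xi(x)$ forces (componentwise) $v=\xi_\perp/(c\xi_t)$, with scalar multiplier proportional to $1/\xi_t$. The intersection is thus supported exactly on the locus $\xi_t=\langle\xi,TK\rangle\ne 0$: one of the halves $\Xi^\pm$ meets $\mathsf{U}_K$ above $K_{\xi>0}$, and the other above $K_{\xi<0}$. As $\xi_t\to 0$ at a boundary point $p\in\partial K_{\xi>0}$, the fiber offset $v=\xi_\perp/(c\xi_t)$ diverges in the direction of $\xi_\perp(p)=\xi(p)$ (since $\xi_t(p)=0$); so, projected to $M$, the intersection curve traces out $K_{\xi>0}$ close to $K$ and then departs along a ray in the direction of $\pm\xi$ at each endpoint. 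A careful check, with the adopted sign conventions for $J$ on $T^\ast M$ and for the intersection orientation on $\Xi^\pm\cap\mathsf{U}_K$, pins these rays down to be $\R_{\leq 0}\cdot\xi|_{\partial K_{\xi>0}}$ and $\R_{\geq 0}\cdot\xi|_{\partial K_{\xi<0}}$, producing the two loci of the lemma after smoothing the corners where arcs meet rays.

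For the ``In particular'' statement, observe that $\Xi\cap\mathsf{U}_K$ is a proper disjoint union of ``broken arcs'' in the open solid torus $\mathsf{U}_K\cong S^1\times D^2$, one for each connected component of $K_{\xi>0}$ or of $K_{\xi<0}$: each broken arc consists of an arc of $K$ capped at its two endpoints by rays that escape to infinity in the disk fiber along $\pm\xi$. Such a broken arc has no net winding around the longitude of $\mathsf{U}_K$, and since BM $1$-classes in an open solid torus are detected by this winding---in fact $H^{BM}_1(\mathsf{U}_K,\Z)$ vanishes by Poincar\'e duality, $H^{BM}_1\cong H^2(S^1)=0$---the intersection chain represents the zero class.

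The hard part is entirely one of sign bookkeeping: the precise convention for $J$ on $T^\ast M$ (Sasaki versus its opposite) decides whether the diverging rays at $\partial K_{\xi>0}$ point in the $+\xi$ or the $-\xi$ direction, and the orientation on $\Xi\cap\mathsf{U}_K$ inherited from the fiber product of oriented chains must be matched to the orientations implicit in the statement. Once these conventions are pinned down, the leading-order computation $v=\xi_\perp/(c\xi_t)$ above does all of the geometric work.
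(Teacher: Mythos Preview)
The paper gives no proof of this lemma: it is stated with a $\qed$ symbol inside the lemma environment and no \texttt{proof} block follows. Your plan supplies precisely the direct computation the authors evidently had in mind, and it is correct.

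A couple of remarks. First, your leading-order formula $v=\xi_\perp/(c\,\xi_t)$ is only valid where $|v|$ is small; as $\xi_t\to 0$ you must use the full $df=h'(|v|)\,\hat v\cdot dv$, which gives $h'(|v|)=|\xi_\perp|/|\xi_t|\to\infty$ and hence $|v|\to 1$ while $\hat v$ stays aligned with $\pm\xi_\perp$. You seem aware of this, but it is worth saying explicitly since it is what actually shows the intersection curve escapes to infinity in $\mathsf{U}_K$. Second, your sign caveat is well placed: the set $\Xi=\R\cdot J\xi$ is insensitive to the sign convention for $J$, so the escape direction $\hat v=\mathrm{sign}(\xi_t)\,\xi_\perp/|\xi_\perp|$ is convention-independent; matching it to the paper's labeling $\R_{\le 0}\xi$ versus $\R_{\ge 0}\xi$ is purely a matter of how one records which half of $\Xi$ carries which orientation. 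Finally, your Poincar\'e-duality observation that $H_1^{BM}(S^1\times D^2)\cong H^2(S^1)=0$ is the cleanest way to get the ``in particular'' clause, and indeed shows it holds for \emph{any} BM $1$-cycle in $\mathsf{U}_K$, independent of the explicit description.
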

 Note that the transverse disk generating $H_2^{BM}(\mathsf{U}_K, \Z)$ pairs as $1$ with the longitude $\ell$ of the solid torus; thus, we may choose a Borel-Moore chain $b$ with $db = \Xi \cap \mathsf{U}_K$ so that 
$\ell \cap b \in \Z$ takes any value.  We choose $b$ so that $\ell \cap b = 0$.

\begin{remark}
    A version of the above discussion, identifying $\psi(\mathsf{U}_K) = K$ was the first step of the proof of the Ooguri-Vafa conjecture in \cite{SOB}.  In that article, we were free to use the full skein (not specialized at $a=1$) on $\mathsf{U}_K$ because we were working with $M = S^3$, so $K$ necessarily bounds and $\mathsf{U}_K$ admits a 4-chain.   
    Indeed, since $T^*M \cong M \times \R^3$, pulling back gives an isomorphism $H^{BM}_\star(M) \cong  H^{BM}_{\star + 3}$ carrying $K \mapsto \mathsf{U}_K$.  That is, $\mathsf{U}_K$ admits a 4-chain if and only if  $[K]$ is in the image of $H_1(\partial M) \to H_1(M)$.  This is why in the present more general setting, we will use the $a=1$ skein on $\mathsf{U}_K$ to avoid restricting attention to links which are null-homologous rel. $\partial M$. 
\end{remark}

\section{Skein trace from curve counting}\label{sec : curves graphs skein}

In this section, we will prove Theorem \ref{skein trace} on the well-definedness of the skein trace.  First we must clarify the statement, or more precisely Definition \ref{main definition}, by giving the choice of 4-chain on $L \subset T^*M$ used to define the skein-valued curve counting.  Then we check that the prescription of Definition \ref{main definition} for the map $K \mapsto [K]_L \in \Sk(L)$ satisfies the three skein relations in $M$, hence factors through $\Sk(M)$.  

\subsection{4-chains for branched covers} \label{sec: brane data}

Let $M$ be a 3-manifold; fix a vector field with transverse zeros $\xi_M$ on $M$ and 4-chain $\Xi_M := \overline{\R_{\ge 0} \cdot \pm J \xi}$.  We write $\Xi=\Xi[J]$ when we want to emphasize the dependence of the almost complex structure. 

Consider a Lagrangian $L \subset T^*M$ so that $\pi\colon L\to M$ is a degree $n$ cover.  
We will say that a 4-chain $\Xi_L$ for $L$ is {\em compatible} with $\Xi_M$ if, in the complement of a fiberwise compact subset of $T^*M$, we have $\Xi_L = n \Xi_M$. 

If $B\subset M$ is an open ball where $\pi$ is not branched then, over $B$, $L$ is given by the graphs of the differentials $df_1,\dots, df_n$ of $n$ functions. We parameterize the graph of $df_j$ by a map $\gamma_{j}\colon B\to T^\ast M$: for $q\in B$, $\gamma_j(q)=(q,df_j(q))\in T^\ast M$ and extend $\gamma_j$ to a symplectomorphism $\Gamma_j\colon T^\ast B\to T^\ast B$ fiberwise, $\Gamma_j(q,p)=(q,df_j(q)+p)$.

\begin{definition}\label{def : standard over B}
Choose local coordinates $M \supset B \to \R^3$, on a region where $\xi$ has no zeroes and  $L \to M$ is a (not branched) cover. We say that $L$ is {\em standard over $B$} if, inside some fiberwise compact region of $T^*B$ containing $L \cap T^*B$, one has, in local coordinates: 
\begin{enumerate}
    \item The 4-chain for $L$ is $\Xi_L = \sum \Xi_{L_i}$ where $\Xi_{L_i}$ corresponds to the push-forward vector field on the graph of $df_i$:
    \begin{align}\label{eq : push forward 4chain}
    \xi_i(\gamma_i(q)) \ &:= \ \left(d\gamma_i(q)\right) \xi(q),\\\notag
    \Xi_{L_i} \ &= \ \Gamma_i\left(\Xi[J_\Gamma]\right) \ = \ \bigcup_{q\in B} \left(\gamma_i(q) \pm J(\gamma_i(q)) \cdot \R_{> 0} \xi_i(\gamma_i(q))\right),
    \end{align}
    where $J_\Gamma=d\Gamma^{-1}\circ J\circ d\Gamma$.
    \item $\Xi_{L_i}$ is disjoint from the graph of $df_j$ for all $j \ne i$. 
\end{enumerate} 
\end{definition}

We will show that Theorem \ref{skein trace} holds for any 4-chain which is standard over some ball $B$.  In the remainder of this subsection, we establish the existence of such chains. 

Let $\xi$ be a vector fields on $M$ and let $B\subset M$ be a ball with compact closure where $\xi$ has no zeros. Let $\Xi=\sum \Xi_{L_i}$, where $\Xi_{L_i}$ is as in \eqref{eq : push forward 4chain}. Let $S_\lambda\colon T^\ast M\to T^\ast M$ denote the fiber scaling by $\lambda$.
\begin{lemma}\label{l : not parallel no intersection}
If $df_i-df_j$, $1\le i<j\le n$, is not a multiple of the co-vector $\xi^\ast$ dual to $\xi$ then condition $(2)$ of Definition \ref{def : standard over B} holds for $S_\lambda(L)$ for all sufficiently small $\lambda>0$.
\end{lemma}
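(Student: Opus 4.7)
The plan is to work in the rescaling limit $\lambda\to 0$, in which the graphs of $\lambda df_i$ all collapse onto the zero section. In local coordinates over $B$, the pushed-forward vector field $\xi_i$ on the graph of $\lambda df_i$ is given by $d\gamma_i^\lambda(q)\xi(q)=(\xi(q),\lambda d^2f_i(q)\xi(q))$, and $J$ is evaluated at $\gamma_i^\lambda(q)=(q,\lambda df_i(q))$. Both converge, uniformly on compact subsets of $B$, to $\xi$ and $J(q,0)$ as $\lambda\to 0$. In particular, the fiber component of $\pm J(\gamma_i)\xi_i$, which controls the direction in which $\Xi_{L_i}$ leaves the Lagrangian, tends uniformly to $\pm\xi^\ast(q)$, the metric dual of $\xi$.

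An intersection of $\Xi_{S_\lambda(L_i)}$ with the graph of $\lambda df_j$ over a point $q$ corresponds to a parameter $t$ (of definite sign) solving, in the fiber over $q$,
\[
t\cdot\eta_{i,\lambda}(q) \;=\; \lambda\bigl(df_j(q)-df_i(q)\bigr),
\]
where $\eta_{i,\lambda}(q)$ denotes the fiber component of $\pm J(\gamma_i(q))\xi_i(\gamma_i(q))$. The hypothesis that $df_j-df_i$ is nowhere proportional to $\xi^\ast$ on $\bar B$, combined with compactness of $\bar B$ and non-vanishing of $\xi$ there, furnishes a uniform positive lower bound $\epsilon_0$ on the angle between $df_j(q)-df_i(q)$ and $\xi^\ast(q)$. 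For all sufficiently small $\lambda$, $\eta_{i,\lambda}$ is within angle $\epsilon_0/2$ of $\xi^\ast$ uniformly on $\bar B$, so the displayed equation admits no solution with $t\neq 0$. Since $\pi$ is unbranched over $B$, the sheets are pairwise disjoint and so $df_j(q)\neq df_i(q)$ for all $q\in B$, ruling out $t=0$. Restricting to a prescribed fiberwise compact region only further constrains $t$, and taking the minimum of the admissible $\lambda$ over the finitely many pairs $i<j$ yields the disjointness claim.

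The only substantive ingredient is the pair of uniform statements on $\bar B$: uniform convergence of the 4-chain direction $\eta_{i,\lambda}$ to $\xi^\ast$ and uniform non-parallelism of $df_j-df_i$ with $\xi^\ast$. Both follow from continuity of the data together with compactness of $\bar B$, so I do not anticipate a deeper obstacle; the bulk of the writeup will be fixing the local trivialization in which the displayed equation is derived and spelling out the uniform estimates.
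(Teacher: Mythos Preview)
Your proposal is correct and follows essentially the same approach as the paper: both arguments observe that as $\lambda\to 0$ the 4-chain direction $J(\gamma_i)\xi_i$ converges to the purely vertical vector $\xi^\ast$, so an intersection with the $j$-th sheet would force $df_j-df_i$ to be proportional to $\xi^\ast$. The paper phrases this as a limiting/contrapositive argument (if a solution exists, rescale and pass to the limit), while you package it as a direct uniform-angle estimate via compactness of $\bar B$; these are equivalent. One minor point worth tightening in your write-up: your displayed fiber equation implicitly takes the base point of the intersection to coincide with $q$, whereas the 4-chain ray also has an $\mathcal{O}(\lambda)$ horizontal component; this only introduces an $\mathcal{O}(\lambda^2)$ correction to the right-hand side once you note $|t|=\mathcal{O}(\lambda)$, so it is harmless, but you should say so explicitly.
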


\begin{proof}
Since $d\Gamma= 1 + \mathcal{O}(\lambda)$, $J_\Gamma \xi_i= J \xi +\mathcal{O}(\lambda)$. Then, if, for some $s\in\R$, 
\[
\lambda df_j(q) = \lambda df_i(q) + s J\xi_i(q),
\]
we have
\[
df_j(q)-df_i(q) = \lambda^{-1} s\left(\xi^\ast + \mathcal{O}(\lambda) \right)
\]
and in the limit $\lambda\to 0$ we find that $df_j-df_i$ is a multiple of $\xi^\ast$. The lemma follows. 
\end{proof}

We next consider how condition $(2)$ fails generically. Consider a vector field $\xi$ in general position with respect to $df_i-df_j$. Then the locus where $df_i-df_j$ is parallel to $\xi$ is a transversely cut out $1$-submanifold $K_{ij}\subset B$.
\begin{lemma}\label{l : parallel along link}
For all sufficiently small $\lambda>0$, the intersection $\Xi_{L_i}\cap L_j$ is a $1$-submanifold isotopic to $\Gamma_j(K_{ij})$; in fact the intersection $C^1$-converges to $\Gamma_j(K_{ij})$ as $\lambda\to 0$. 
\end{lemma}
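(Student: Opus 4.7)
The strategy is to realize $\Xi_{L_i} \cap L_j$ as a transverse zero set of a smooth map parametrized by $\lambda$, whose $\lambda \to 0$ limit cuts out exactly $K_{ij}$, and then apply the implicit function theorem fiberwise along $K_{ij}$. The plan is to make the intersection equation explicit via the parametrization \eqref{eq : push forward 4chain}, expand in powers of $\lambda$, and use the transversality hypothesis on $K_{ij}$ as the nondegeneracy input.

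First, I would parametrize $\Xi_{L_i}$ over $B$ by $(q, s) \in B \times \R$ via
\[
\Psi_i^{\lambda}(q, s) \;=\; \Gamma_i\!\left(q,\; s \cdot J_\Gamma\,\xi_i(\gamma_i(q))\right),
\]
so that after scaling by $S_\lambda$ the condition $\Psi_i^{\lambda}(q, s) \in S_\lambda(L_j)$ amounts, after projecting to $B$, to $q$ being the basepoint of a point of $L_j$, and then fiberwise to the single vector equation
\[
\lambda\bigl(df_j(q) - df_i(q)\bigr) \;=\; s\cdot J_\Gamma\,\xi_i(\gamma_i(q)).
\]
Since $\gamma_i(q) = (q, \lambda\, df_i(q))$ and $J_\Gamma = J + \mathcal{O}(\lambda)$, this reads
\[
df_j(q) - df_i(q) \;=\; \lambda^{-1} s\,\bigl(J\xi(q) + \mathcal{O}(\lambda)\bigr),
\]
with $J\xi(q)$ the vertical covector dual to $\xi(q)$. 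The leading-order condition as $\lambda \to 0$ asserts precisely that $df_j(q) - df_i(q)$ is parallel to $\xi^\ast(q)$, i.e.\ $q \in K_{ij}$, and in this case the parameter $s$ is uniquely determined and of size $\mathcal{O}(\lambda)$.

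Next, I would set up the implicit function theorem along $K_{ij}$. Pick $q_0 \in K_{ij}$ and a local $2$-dimensional slice $T_{q_0} \subset B$ transverse to $K_{ij}$. Decompose $df_j - df_i$ into its component along $\xi^\ast$ and a component in the $2$-plane $\xi^\perp$; the latter vanishes exactly on $K_{ij}$, and by the transversality hypothesis the differential of its restriction to $T_{q_0}$ is an isomorphism at $q_0$. Rescaling $s \mapsto \lambda s$ to remove the singular factor of $\lambda^{-1}$, the intersection equation becomes a smooth map $T_{q_0} \times \R \times [0, \lambda_0) \to \R^3$ whose value at $(q_0, 0, 0)$ vanishes with surjective differential in the $(T_{q_0}, s)$-directions. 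The implicit function theorem then produces, for each sufficiently small $\lambda$, a unique nearby solution $(q(\lambda), s(\lambda))$ with $q(0) = q_0$, depending smoothly on $\lambda$ and the basepoint along $K_{ij}$.

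Globalizing over $K_{ij}$ (which is compactly contained in $\overline{B}$) yields a smooth $1$-manifold of solutions whose projection to $B$ is $C^1$-close to $K_{ij}$ and converges to it as $\lambda \to 0$. Lifting by $\Gamma_j$ shows that $\Xi_{L_i} \cap S_\lambda(L_j)$ is $C^1$-close to $\Gamma_j(K_{ij})$, hence a $1$-submanifold isotopic to it for small $\lambda$. The main obstacle is really a bookkeeping one: verifying uniformity of the implicit function theorem along $K_{ij}$ (so that no additional branches of the intersection can escape from the picture), which amounts to checking that the rescaled equation has a uniform lower bound on the relevant derivatives, guaranteed by the compactness of $K_{ij}$ in $\overline{B}$ together with the transversality hypothesis. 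Once that is in hand the isotopy and $C^1$-convergence statements are immediate.
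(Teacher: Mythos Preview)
Your proposal is correct and follows essentially the same approach as the paper: both reduce the intersection $\Xi_{L_i}\cap L_j$ to solutions of $df_j(q)-df_i(q)=\lambda^{-1}s\bigl(\xi^\ast+\mathcal{O}(\lambda)\bigr)$, observe that the $\lambda\to 0$ limit is exactly the defining equation for $K_{ij}$, and conclude from transversality of $K_{ij}$. The paper compresses the final step to the single phrase ``the lemma follows by transversality of the solutions to the last equation,'' whereas you unpack this into an explicit implicit-function-theorem argument with the rescaling $s\mapsto\lambda s$ and uniformity along the compact $K_{ij}$; this added detail is helpful but not a different route.
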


\begin{proof}
As above, the intersection $\Xi_{L_i}\cap L_j$ corresponds to the solutions $q\in B$ to the equation
\[
df_j(q)-df_i(q) = \lambda^{-1} s\left(\xi^\ast + \mathcal{O}(\lambda) \right)
\]
and $K_{ij}$ corresponds to solutions to the equation
\[
df_j(q)-df_i(q) = \sigma \xi^\ast. 
\]
The lemma follows by transversality of the solutions to the last equation.
\end{proof}

We next consider compatibility of the 4-chain $\Xi$. 
\begin{lemma}\label{l : interopolate outside}
Let $\Xi=\sum_i\Xi_{L_i}$ be a 4-chain given by the formula \eqref{eq : push forward 4chain} in $D_1^\ast M|_B$. Then, for all sufficiently small $\lambda>0$, $\Xi$ can be continued to a $4$-chain in $T^\ast M|_B$ that agrees with $n\Xi_M$ outside $D_2^\ast M|_B$.  
\end{lemma}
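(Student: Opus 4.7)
The plan is to construct the extension by a cutoff interpolation in the annular region $D_2^* M|_B \setminus D_1^* M|_B$, carried out separately on each summand $\Xi_{L_i}$ to match $\Xi_M$ outside $D_2^*$; summing the interpolated chains yields an extension agreeing with $\sum_i \Xi_{L_i}$ on $D_1^*$ and with $n \Xi_M$ outside $D_2^*$. The core observation is that for small $\lambda > 0$, each $\Xi_{L_i}$ is $C^1$-close to $\Xi_M$ on the annular region, so an honest interpolating embedding exists.

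First I would parametrize the two 4-chains as images of smooth maps $F_M, F_i \colon B \times \R \to T^* M|_B$: the map $F_M(q,s)$ is the ray parametrization of $\Xi_M$ in direction $\pm J \xi$ over the basepoint $q$, and $F_i$ is the analogous parametrization of $\Xi_{L_i}$ obtained from \eqref{eq : push forward 4chain} by shifting the basepoint to $\gamma_i(q) \in L_i$ and exponentiating in direction $\pm J(\gamma_i(q))\xi_i(\gamma_i(q))$. Since $\Gamma_i$ is a fiber translation by $\lambda df_i$, so that $d\Gamma_i = \mathrm{Id} + O(\lambda)$, and consequently $J_\Gamma = J + O(\lambda)$ and $d\gamma_i \xi = \xi + O(\lambda)$, a straightforward expansion in local coordinates on $T^*M|_B$ yields $F_i - F_M = O(\lambda)$ in the $C^1$-norm, uniformly on the compact parameter set $\{(q,s) : F_M(q,s) \in \overline{D_3^* M|_B}\}$, since $d^2 f_i$ is bounded on $\overline{B}$.

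Next I would choose a smooth cutoff $\rho \colon [0,\infty) \to [0,1]$ with $\rho(t) = 1$ for $t \le 1$ and $\rho(t) = 0$ for $t \ge 2$, and define the interpolating parametrization in local coordinates by
\[
\hat F_i(q,s) \;:=\; F_M(q,s) \;+\; \rho\bigl(|F_M(q,s)|_{\mathrm{fiber}}\bigr)\bigl(F_i(q,s) - F_M(q,s)\bigr),
\]
where $|\cdot|_{\mathrm{fiber}}$ is the fiberwise norm on $T^*M$. For $\lambda$ sufficiently small, $\hat F_i$ is a $C^1$-small perturbation of the embedding $F_M$, hence itself a smooth embedding onto a 4-chain $\hat\Xi_i \subset T^*M|_B$. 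By construction $\hat F_i = F_i$ wherever $\rho = 1$, so $\hat\Xi_i = \Xi_{L_i}$ inside $D_1^*M|_B$; and $\hat F_i = F_M$ wherever $\rho = 0$, so $\hat\Xi_i = \Xi_M$ outside $D_2^*M|_B$. Setting $\Xi := \sum_i \hat\Xi_i$ produces the desired extension. Its boundary is $\sum_i 2L_i = 2L$ inside $D_1^*$ (where all $L_i$ lie for small $\lambda$), and there are no extra boundary contributions in the annulus or outside $D_2^*$ because neither $L$ nor the zero section enters either region.

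The main point requiring care is that $\hat F_i$ remain an embedding across the transition region. Away from the zero section, the fiber-norm function has no critical points, so $F_M$ meets its level sets transversely; then for $\lambda$ small enough the $C^1$-small perturbation $\hat F_i$ is still transverse to these level sets and injective by the implicit function theorem. All the smallness conditions on $\lambda$ are controlled by a priori $C^1$-bounds on fixed data ($\xi$, $\rho$, $d^2 f_i$, and the fiber norm on $\overline{D_3^* M|_B}$), none of which depend on $\lambda$; hence a uniform threshold $\lambda_0$ works for all $i$ simultaneously.
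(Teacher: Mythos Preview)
Your proof is correct and takes essentially the same approach as the paper: the paper's brief argument simply observes that $\Xi \to n\Xi_M$ as $\lambda \to 0$, so a homotopy of chains from $\Xi \cap \partial D_1^*M|_B$ to $n\Xi_M \cap \partial D_1^*M|_B$ exists, and uses this homotopy to interpolate in the annulus $D_2^*M|_B \setminus D_1^*M|_B$. Your explicit cutoff construction $\hat F_i = F_M + \rho(|F_M|_{\mathrm{fiber}})(F_i - F_M)$ is a concrete realization of exactly that homotopy, with the $C^1$-closeness $F_i - F_M = O(\lambda)$ playing the role of the convergence $\Xi \to n\Xi_M$.
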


\begin{proof}
Since $\Xi\to n\Xi_M$ as $\lambda\to 0$, there is a homotopy of chains that take $\Xi\cap \partial D^\ast_1 M|_B$ to $n\Xi_M\cap\partial D_1^\ast M|_B$. We use this homotopy to interpolate in $D_2^\ast M|_B \setminus D^\ast_1 M|_B$. The lemma follows.
\end{proof}

In fact, 4-chains which are standard in the complement of some neighborhood of the branch locus are generic for all sufficiently small scalings of a given branched cover. 

Consider a degree $n$ branched cover $\pi\colon L\to M$ where $L\subset T^\ast M$ is a Lagrangian. Let $\tau\subset M$ be the link which is the branch locus of $\pi$. Then $L$ defines a system of $n$ closed differential forms $(\alpha_1,\dots,\alpha_n)$ over $M\setminus \tau$. Let $K\subset M$ denote the subset where the dual of $\xi^\ast$ is proportional to some difference $\alpha_i-\alpha_j$. For $\xi$ in general position $K$ is a transversely cut out 1-submanifold. Let $\widetilde{K}\subset K$ denote the natural lift of $K$ that lifts each arc of $K$ where $\xi^\ast$ is parallel to $\alpha_i-\alpha_j$ to the corresponding sheet of $L$. 
We assume that $\xi$ is generic and that the closure of $K$ is disjoint from $\tau$. Then $K\subset M$ is a closed submanifold. 

\begin{corollary} \label{existence of 4-chain} 
    Let $N(\tau)$ and $N(K)$ be a tubular neighborhoods of $\tau$ and $K$ respectively. Then for all sufficiently small $\lambda>0$ there is a $4$-chain $\Xi_L$ on $S_\lambda(L)$ that is compatible, and standard over any ball $B\subset M\setminus (N(\tau)\cup N(K))$. Furthermore, the intersection $\mathrm{int}(\Xi_L)\cap L$ is isotopic to $\widetilde{K}\subset L$ and $C^1$-converges to $\widetilde K$ as $\lambda\to 0$.   
\end{corollary}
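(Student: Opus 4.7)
The plan is to build $\Xi_L$ by a patching argument: first construct standard pieces locally on balls in the complement of $N(\tau)\cup N(K)$, then interpolate to the fiberwise standard chain $n\Xi_M$ in the rest of $T^\ast M$, keeping $\lambda$ small enough that the previous lemmas apply uniformly. Since the required properties of $\Xi_L$ are local over $M$ outside the bad set, essentially all the analytic work has already been done in Lemmas \ref{l : not parallel no intersection}, \ref{l : parallel along link}, and \ref{l : interopolate outside}; what remains is a bookkeeping/covering argument.

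First I would cover $M\setminus(N(\tau)\cup N(K))$ by countably many (locally finite) open balls $B_\alpha$ with compact closure on each of which $L$ is unbranched (so $L|_{B_\alpha}$ is the union of graphs of $df_{\alpha,1},\ldots,df_{\alpha,n}$) and on each of which the differences $df_{\alpha,i}-df_{\alpha,j}$ are nowhere parallel to $\xi^\ast$; such balls exist because $K$ is precisely the locus of such parallelism and has been excised. On each $B_\alpha$, define $\Xi_{L,\alpha}$ on $S_\lambda(L|_{B_\alpha})$ by the push-forward formula \eqref{eq : push forward 4chain}. By Lemma \ref{l : not parallel no intersection}, there is $\lambda_\alpha>0$ so that for $0<\lambda<\lambda_\alpha$ condition (2) of Definition \ref{def : standard over B} holds on $B_\alpha$; condition (1) is immediate by construction. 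Using local finiteness and by shrinking $N(\tau),N(K)$ slightly, one can choose a single $\lambda_0>0$ that works on every ball of a fixed locally finite refinement, so that the local $\Xi_{L,\alpha}$ agree on overlaps (they are determined intrinsically by the formula) and glue to a genuine 4-chain on $S_\lambda(L)$ over a neighborhood of $M\setminus(N(\tau)\cup N(K))$.

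Next, I would extend this chain across $N(\tau)\cup N(K)$ and out to infinity in the fibers. Outside a compact-in-the-fiber region, Lemma \ref{l : interopolate outside} (applied on each $B_\alpha$ and then globally via local finiteness) provides a homotopy taking $\Xi|_{\partial D_1^\ast M}$ to $n\Xi_M|_{\partial D_1^\ast M}$ for $\lambda$ small, which gives a 4-chain interpolating between the two inside $D_2^\ast M\setminus D_1^\ast M$ and compatible with $n\Xi_M$ on $T^\ast M\setminus D_2^\ast M$; that takes care of the compatibility requirement. For the remaining extension over the tubes $N(\tau)$ and $N(K)$, note that what we need is a 4-chain with boundary $2\,S_\lambda(L)$, and on the boundary of these tubes we already have such a chain; since $T^\ast M$ is 6-dimensional and the obstruction to extending a 4-chain with prescribed boundary over a thickened 1-complex vanishes for dimensional reasons, we can fill in arbitrarily (any choice will only affect the 4-chain inside a small neighborhood of the forbidden locus, which is harmless because $B$ avoids it).

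Finally, the statement about $\mathrm{int}(\Xi_L)\cap L$ follows directly from Lemma \ref{l : parallel along link}: on each $B_\alpha$ meeting $K$, the intersection of $\Xi_{L,i}$ with the graph of $df_{\alpha,j}$ is a 1-submanifold $C^1$-close to $\Gamma_j(K_{ij})$, which by definition is exactly the arc of $\widetilde K$ where $\xi^\ast$ is proportional to $\alpha_i-\alpha_j$; these arcs glue together to $\widetilde K$, and the extensions inside $N(\tau)\cup N(K)$ contribute no additional intersection with $L$ if one takes them to lie fiberwise off of $L$ (which is easily arranged since $L$ is 3-dimensional in a 6-manifold). The $C^1$-convergence as $\lambda\to 0$ is the corresponding statement in Lemma \ref{l : parallel along link}. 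The main obstacle is to make the patching genuinely global in a way compatible with both the branching of $\pi$ along $\tau$ and the self-intersection structure along $K$; the key points that make it work are that $\overline K\cap\tau=\emptyset$ by hypothesis, so the two bad loci can be handled independently, and that the defining formula \eqref{eq : push forward 4chain} is coordinate-independent enough to glue without choosing transition data.
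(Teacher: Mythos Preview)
Your overall approach is sound and close to the paper's, but there is a genuine inconsistency in how you handle the tube $N(K)$. You cover only $M\setminus(N(\tau)\cup N(K))$ by balls $B_\alpha$ and use the explicit push-forward formula \eqref{eq : push forward 4chain} there; over $N(K)$ you extend by an abstract obstruction argument, asserting that this extension can be arranged to avoid $L$. But then in the final paragraph you invoke Lemma~\ref{l : parallel along link} ``on each $B_\alpha$ meeting $K$'' --- and none of your $B_\alpha$ meet $K$, since they were chosen in the complement of $N(K)$. As written, your construction produces $\mathrm{int}(\Xi_L)\cap L=\emptyset$, not $\widetilde K$.

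The fix is simple: use the push-forward formula \eqref{eq : push forward 4chain} on balls covering all of $M\setminus N(\tau)$, including $N(K)$. The formula makes sense there (it only requires $\pi$ to be unbranched, not that the differences $df_i-df_j$ avoid $\xi^\ast$); what fails over $N(K)$ is condition~(2) of Definition~\ref{def : standard over B}, and that failure is exactly what Lemma~\ref{l : parallel along link} describes --- the intersection $\Xi_{L_i}\cap L_j$ is a curve $C^1$-converging to $\Gamma_j(K_{ij})$, whose union over $i,j$ is $\widetilde K$. Your obstruction-theoretic extension should then be applied only over $N(\tau)$. The paper's proof follows precisely this route, handling compatibility near the ideal ends separately and then using a finite cover of the remaining compact part by balls where the three lemmas apply directly.
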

\begin{proof}
For a fixed ball in $B\subset M\setminus (N(\tau)\cup N(K))$ this follows from Lemmas \ref{l : not parallel no intersection}, \ref{l : parallel along link}, and \ref{l : interopolate outside}. To see that it holds for any ball first observe that the we can get compatibility near any vertex. The closure of the complement of the vertices in $M\setminus (N(\tau)\cup N(K))$ is compact and can then be covered by a finite number of balls. The argument for a fixed ball then implies the lemma. 
\end{proof}

\begin{remark}
    The analogous proposition holds with the identical proof if in place of single vector fields $\xi$, one has local systems of vector fields as in \eqref{4-chain with monodromy} in Section \ref{sec: remarks 4-chain}. 
\end{remark}

\subsection{Proof of Theorem \ref{skein trace}} \label{existence of skein trace}

\subsubsection{First skein relation}
Consider a Lagrangian contained in a unit cotangent ball, $L \subset B^* M \subset T^\ast M$.  In particular, under the fiber scaling, $\lambda L \to M$ as $\lambda \to 0$.    We have the following general fact:

\begin{lemma} \label{limit configurations}
    Consider $L \subset T^*M$ as above, and a Lagrangian $U \subset T^*M \setminus B^*M$. 
    
    Then in the flow tree limit, holomorphic curves for $(T^*M, U \sqcup \lambda L)$ converge to holomorphic curves for $(T^*M, U \sqcup M)$ with attached flow graphs for $\lambda L$ in $T^*M$.  
\end{lemma}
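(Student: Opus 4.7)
My plan is to combine a Gromov-type compactness argument with a rescaling analysis in a neighborhood of the zero section $M$, following the Fukaya--Oh/Ekholm paradigm relating holomorphic curves on Lagrangians close to the zero section with gradient flow objects. This is of the type carried out in the Appendix \ref{flow graphs and holomorphic curves}, and I would largely reduce to that setup; the contribution of the present lemma is mainly to organize the degeneration when a second, far-away Lagrangian $U$ is present.

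First I would pick a sequence $\lambda_k \to 0$ and a sequence of bare $J$-holomorphic curves $u_k\colon (C_k,\partial C_k)\to (T^*M, U \sqcup \lambda_k L)$, and apply Gromov compactness. Since $U$ is contained in the cylindrical complement $T^*M\setminus B^*M$, while $\lambda_k L$ shrinks into an arbitrarily small tubular neighborhood of the zero section, the area of any component of $u_k$ is bounded by the homology class, so no bubbling at infinity occurs. A subsequence converges to a nodal holomorphic map $u_\infty$ whose boundary components on $\lambda_k L$ are collapsed onto $M$. The non-constant components of $u_\infty$ therefore naturally land in $(T^*M, U \sqcup M)$.

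Second, near each boundary arc collapsing onto $M$, I would carry out the standard Floer rescaling: at a point of $\partial C_k$ mapping close to $\lambda_k L$, rescale the cotangent fiber coordinate by $\lambda_k^{-1}$. The Lagrangian $\lambda_k L$ becomes $L$ itself and the zero section stays put; the rescaled limits of the thin strip-like regions of $u_k$ are then precisely gradient (difference-)flow half-strips and trees for the pair $(M,L)$ in $T^*M$, following Appendix \ref{flow graphs and holomorphic curves}. The gluing of these pieces with the main-body curve ending on $U\sqcup M$ matches along the boundary of the main component; generically there is a finite collection of flow graphs attached at points where the main curve meets $M$, and each flow graph carries the outgoing boundary on $L$. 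The compactness of the relevant moduli and the conservation of Euler characteristic and homology class then pair the limiting combinatorial data with the original class $d\in H_2(T^*M, U\sqcup \lambda_k L)$ in the expected way.

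Conversely, I would invoke the standard gluing theorem from Appendix \ref{flow graphs and holomorphic curves} to show that any such limit configuration (a bare curve with boundary on $U\sqcup M$, together with rigid flow graphs on $\lambda_kL$ attached at marked boundary points) admits a unique nearby honest holomorphic curve for $(T^*M, U\sqcup \lambda_k L)$ for all sufficiently small $\lambda_k$; this gives the one-to-one correspondence in the flow-tree limit.

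The main obstacle I anticipate is ensuring uniform control of the thin parts of $u_k$ near $\lambda_k L$ in the presence of the possibly immersed points and branch locus of $L$: at points over which $L$ has a clean self-intersection or branches, the standard rescaling picks up ``flow graphs with corners'' at the intersection (as studied in Section \ref{sec : Lag conormals and flow graphs}) rather than ordinary flow trees, and one must keep track of signs, orientations, and the $a$-weights of turning, exchange, and framing contributions. This bookkeeping, however, is exactly what Appendix \ref{flow graphs and holomorphic curves} is designed to handle, so I expect the lemma to follow once the pieces are assembled.
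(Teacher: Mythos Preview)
Your proposal is correct and follows the same approach as the paper. The paper's proof is a single sentence citing Lemma~\ref{l : flow graph converegnce 3} from the appendix (with $Q = L$ and the far Lagrangian playing the role of your $U$); what you have written is essentially a sketch of the proof of that appendix lemma specialized to this situation, so your argument and the paper's differ only in that you unpack the cited result rather than invoke it directly.
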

\begin{proof}
    This is a special case of Lemma \ref{l : flow graph converegnce 3}. 
\end{proof}

We also have: 

\begin{lemma}\label{l : no graph at the intersection}
Consider $L \subset T^\ast M$. Let $K$ be an immersed link in $M$ with conormal $U_{\times}$ with clean intersection and let $U_{\pm}(\rho)$ and $U_\asymp(\rho)$ be the nearby conormals of the links $K_\pm$ and $K_\asymp$ as above. Let $p$ be the intersection point of $K$. Then for generic $L$ (open dense set of $L$) there is a neighborhood of $V$ around $p$ in $M$ such that no rigid flow graph in the basic class on $U_\times$ has any flow line of $L$ inside $V$. It follows that the same holds for generic resolutions $U_\pm(\rho)$ and $U_\asymp(\rho)$ for all sufficiently small $\rho$. 
\end{lemma}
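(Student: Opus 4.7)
The strategy combines a finiteness result for rigid flow graphs with a standard genericity argument, followed by a Gromov-convergence transfer to the nearby resolutions.

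First, I would enumerate the set of rigid flow graphs on the disjoint union $M \cup U_\times \cup L$ whose boundary realizes the basic class on $U_\times$. Using the index theory for flow graphs from Appendix~\ref{flow graphs and holomorphic curves} together with Gromov compactness applied to bounded-action moduli, this set is finite; the immersed nature of $U_\times$ allows graphs with corners along the clean self-intersection, but the index contribution of such corners is finite and does not affect the finiteness. Rigidity further ensures that, for generic $L$, each such graph is transversely cut out.

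Second, for each such rigid flow graph $\Gamma$, let $E(\Gamma) \subset M$ denote the projection to $M$ of those edges of $\Gamma$ which are flow lines of a difference involving (a local defining form of) $L$. Each $E(\Gamma)$ is a compact one-dimensional complex in the three-manifold $M$, and it depends smoothly on the local defining forms of $L$. As $L$ varies in a finite-dimensional family of Lagrangian perturbations, the trace of $E(\Gamma)$ sweeps out a two-complex in $M$, so the condition $p \in E(\Gamma)$ cuts out a positive-codimension subset of the parameter space. Since there are only finitely many such $\Gamma$'s by the first step, the union $E(L) := \bigcup_\Gamma E(\Gamma)$ is a compact one-complex which, for generic $L$, avoids $p$; taking $V$ to be any sufficiently small open neighborhood of $p$ contained in $M \setminus E(L)$ establishes the first statement.

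Third, the claim for the resolutions $U_\pm(\rho)$ and $U_\asymp(\rho)$ follows from the convergence results of Section~\ref{sec : Lag conormals and flow graphs}. Specifically, Lemma~\ref{l : flow graphs U(sigma,rho)} and Corollary~\ref{c: flow graphs U(sigma,rho)} imply that rigid flow graphs on $U_\pm(\rho)$ or $U_\asymp(\rho)$ Gromov-converge as $\rho \to 0$ to rigid flow graphs on $U_\times$ (with appropriate corners in the limit), with all edges converging in $C^1$ away from the corner locus. Hence the $L$-edges of flow graphs on the resolutions lie in an arbitrarily small $C^0$-neighborhood of $E(L)$ for $\rho$ small, and after slightly shrinking $V$ they remain disjoint from $V$ for all sufficiently small $\rho$.

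The principal obstacle is the finiteness assertion used in the first step. Although index theory pins down the expected dimension of each component of the moduli space, bounding the total combinatorial complexity of flow graphs in a fixed class on $U_\times$ --- with edges also allowed along $L$ --- requires a uniform action or length bound. This should be derived from the boundary homology class on $U_\times$ together with the cylindrical structure of $L$ at infinity, via the standard Stokes-type argument relating action to boundary integrals of the Liouville form; the concern to be ruled out is attachment of a rigid flow decoration supported on $L$, which one must exclude either on index or on homological grounds.
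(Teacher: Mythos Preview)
Your first two steps are close in spirit to the paper's argument, which handles the $U_\times$ case by a single dimension count: adding the constraint that an $L$-flow line of a rigid graph be attached at the specific point $p$ drops the formal dimension to $-1$, so generically no such graph exists. Your version, collecting the $L$-edges into a one-complex $E(L)$ and arguing it generically misses $p$, is essentially equivalent. The finiteness concern you flag at the end is real but is implicit in the paper's argument as well (the dimension count only gives emptiness at $p$; passing to a neighborhood $V$ still needs the set of attachment points to be closed and discrete).

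There is, however, a genuine gap in your third step. Convergence of rigid flow graphs on $U_\pm(\rho)$, $U_\asymp(\rho)$ to rigid flow graphs on $U_\times$ as $\rho\to 0$ only controls $L$-edges that \emph{leave} $V$. It does not rule out short $L$-flow lines contained entirely in $V$, with both ends attached to the link near the crossing, which shrink to the constant at $p$ in the limit and hence leave no visible trace on any flow graph of $U_\times$. The paper addresses this case separately: such a flow line would have tangent direction given by a gradient difference of the sheets of $L$ at $p$, so an additional generic condition on $L$ --- that these gradient differences are not parallel to the shift directions of the two branches of $K_\pm(\rho)$ and $K_\asymp(\rho)$ at the crossing --- is required to exclude them. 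Without this extra step the transfer to the resolutions is incomplete. (Also, a minor point: Lemma~\ref{l : flow graphs U(sigma,rho)} and Corollary~\ref{c: flow graphs U(sigma,rho)} concern the limit $\sigma\to 0$ for the conormal family alone; the convergence you need here, $\rho\to 0$ with $L$-edges present, is analogous but not literally those statements.)
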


\begin{proof}
The genericity statement is a consequence of general position: the formal dimension of a rigid flow graph of $L$ on $K_\times$ with the additional constraint that some flow line in the graph is attached at a particular point along $K_\times$ equals $-1$. The statement about $U_\times$ follows. 

For $U_\pm(\rho)$ and $U_\asymp(\rho)$ rigid flow graphs on $K_\pm(\rho)$ and $K_\asymp(\rho)$ converge to rigid flow graphs on $K_\times$ as $\rho\to 0$. Thus, if $K_\times$ is generic there is a neighborhood $V$ of $p$ such that for large enough $\rho$, rigid flow graphs of $K_\pm(\rho)$ ($K_\asymp(\rho)$) do not have any flow graphs attached to $K_\pm(\rho)$ ($K_\asymp(\rho)$) in $V$ that leave $V$. 

It remains to show that they also do not have flow lines entirely contained in $V$. Such a flow line would converge to the constant flow line at the intersection point, the limit of its tangent vector is the limit of a gradient difference of sheets of $L$ at $p$. Thus, as long as all gradient differences are not parallel to the shifts of the two resolution directions of $K_{\pm}(\rho)$ and $K_\asymp(\rho)$ there are no such flow graphs. The lemma follows.
\end{proof}

We now prove: 

\begin{proposition} \label{first skein relation}
    Let $K_\times \subset M$ be a link with an immersed point, nowhere parallel to (and hence framed by) the fixed vector field $v_M$.  Let 
    $K_+, K_-, K_\asymp \subset M$ be the three corresponding links for the first skein relation (so $K_+ - K_- = z K_\asymp$ in $\Sk(M)$).  Then $$[K_+]_L - [K_-]_L = z [K_\asymp]_L \in \Sk(L)$$
\end{proposition}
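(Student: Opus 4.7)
The plan is to mimic the wall-crossing argument sketched in Section~\ref{ssec: description}, enhanced to include the Lagrangian $L$. Using the local model of Section~\ref{sec: conormals}, I would consider the one-parameter family of push-off conormals $\widetilde{\mathsf{U}}(\sigma;\rho)$ from \eqref{eq : conormal cross parameters}, which interpolates, as $\sigma$ crosses $0$, between conormals of $K_+$ and $K_-$. These Lagrangians are embedded for $\sigma\neq 0$ and acquire a clean one-dimensional self-intersection at $\sigma=0$ (Lemma~\ref{l: intersection L_1 and L_2}). By invariance of skein-valued curve counts under Lagrangian isotopy through embedded Lagrangians, the $W_{\square,\emptyset}$-coefficients of $\psi(\widetilde{\mathsf{U}}(\sigma;\rho)\sqcup L)$ are locally constant on each of $\{\sigma>0\}$ and $\{\sigma<0\}$ and, after matching with standard conormals at large $|\sigma|$, identify with $[K_+]_L$ and $[K_-]_L$ respectively. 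This reduces the proposition to computing the wall-crossing at $\sigma=0$.

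To compute this wall-crossing I would apply the flow-tree limit in the $L$-direction: scale $L\mapsto\lambda L$ and let $\lambda\to 0$. By Lemma~\ref{limit configurations}, holomorphic curves in $(T^*M,\widetilde{\mathsf{U}}(\sigma;\rho)\sqcup\lambda L)$ then converge to holomorphic curves in $(T^*M,\widetilde{\mathsf{U}}(\sigma;\rho)\sqcup M)$ with flow graphs on $L$ attached along their boundary in $M$. For generic $L$, Lemma~\ref{l : no graph at the intersection} supplies a neighborhood $V$ of the immersed point of $K_\times$ that is not met by any rigid $L$-flow-graph; this decouples the $L$-attachments from the conormal-side wall-crossing. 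On the conormal side, Corollary~\ref{c: flow graphs U(sigma,rho)} together with Lemma~\ref{l : flow graphs U(sigma,rho)} identifies the wall-crossing with the passage of a single basic flow graph with two corners, whose flow surface has Euler characteristic $-1$ and whose boundary in $M$ is isotopic to $K_\asymp$. Attaching $L$-flow-graphs to that boundary converts this into a contribution of $z\,[K_\asymp]_L$ to the $W_{\square,\emptyset}$-coefficient---the factor $z$ coming from the $z^{-\chi}$ weight in the skein-valued count and the sign from the orientation at $Y_0$-vertices, see \cite[Section 6.1.5]{Ekholm-morse}---yielding
\[
[K_+]_L - [K_-]_L \;=\; z\,[K_\asymp]_L.
\]

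The principal obstacle will be the joint flow-tree limit: one must simultaneously carry out the Gromov limit $\lambda\to 0$ (turning curves on $\lambda L$ into curves on $M$ with attached flow graphs on $L$), the correspondence between curves in $(T^*M,\widetilde{\mathsf{U}}(\sigma;\rho)\sqcup M)$ and the flow graphs of Section~\ref{sec : Lag conormals and flow graphs}, and the Morse--Bott gluing of Lemma~\ref{l : flow graphs U(sigma,rho)} across $\sigma=0$. Each ingredient follows the pattern of \cite{Ekholm-morse} but is applied on a different Lagrangian; Lemma~\ref{l : no graph at the intersection} provides precisely the transversality input needed to separate the two types of degeneration in $M$, after which the remaining gluing and compactness statements are standard.
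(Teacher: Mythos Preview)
Your proposal is correct and takes essentially the same approach as the paper: both pass to the flow-tree limit via Lemma~\ref{limit configurations}, use Lemma~\ref{l : no graph at the intersection} to decouple the $L$-flow-graph attachments from the conormal-side degeneration near the immersed point, and identify the extra curve on the $\sigma>0$ side (via Corollary~\ref{c: flow graphs U(sigma,rho)}) as contributing $z[K_\asymp]_L$ through the Euler-characteristic drop. The one point the paper makes explicit that you omit is first isotoping $K_\times$ so its immersed point lies in a ball where the 4-chain of $L$ is standard (Definition~\ref{def : standard over B}); this is what ensures no stray $a$-powers appear when comparing $\mathrm{Count}(C'_\asymp)$ with $z\cdot\mathrm{Count}(C_\asymp)$, and you should include it.
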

\begin{proof}
    We should compare holomorphic curves between $L$ and (any choice of, see Section \ref{basic classes}) perturbed conormals 
    to $K_+, K_-, K_{\asymp}$. Isotope $K_\times$ so that its singular point is in the ball in $M$ where the 4-chain of $L$ satisfies the `standardness-condition', and moreover is in the open set $U$ of Lemma \ref{l : no graph at the intersection}.  We will choose $\rho$ sufficiently small that the $B_\rho$ neighborhood around the singular point remains in $U$. 

    We use the corresponding $\widetilde{\mathsf{U}}_\asymp(\rho)$, $\widetilde{\mathsf{U}}(\pm \sigma;\rho)$ for sufficiently small $\sigma,\rho >0$, and negative perturbation in the sense of Definition \ref{positive negative perturbation} and choose them so that Lemma \ref{l : no graph at the intersection} holds. 

    Then, as described in Lemma \ref{l : flow graphs U(sigma,rho)} and Corollary \ref{c: flow graphs U(sigma,rho)},  there is one holomorphic curve each in the basic class for $(T^*M, M \sqcup \widetilde{\mathsf{U}}(-\sigma, \rho))$
    and $(T^*M, M \sqcup \widetilde{\mathsf{U}}_{\asymp}(\rho))$.  Denote these curves as $C_-$ and $C_\asymp$, respectively.  There are two holomorphic curves for $(T^*M, M \sqcup \widetilde{\mathsf{U}}(\sigma, \rho))$, which we will denote $C'_-, C'_\asymp$. 
    These curves have the following properties: 
    \begin{itemize}
        \item $C'_-$ is $C^1$ close to $C_-$ near $M$
        \item $C'_\asymp$ is $C^1$ close to $C_\asymp$ at least near $M \setminus B_\rho$
        \item there is an isotopy $\partial C'_\asymp \sim \partial C'_\asymp$ which is $C^1$ small in $M \setminus B_\rho$ and standard in $B_\rho$, see Lemma \ref{l : flow graphs U(sigma,rho)}.
        \item The domains of $C_-, C'_-, C_\asymp$ are (possibly unions of) cylinders, while $C'$ is a three-holed sphere, $\chi(C'_{\asymp}) = -1$. 
    \end{itemize}

    By Lemma \ref{limit configurations}, the holomorphic curves ending on $L \sqcup \widetilde{\mathsf{U}}_{\asymp}(\rho)$ limit to flow trees for $M \cup L$ attached to $C_\asymp$.  The limit is not necessarily transverse, but 
    since no tree enters the region $U$ where $C_\asymp$ and $C_\asymp'$ differs and since perturbations can be taken supported in a small neighborhood of these flow trees it follows, using the same perturbation for the gluing in the two cases that the solutions for $C_\asymp$ are in canonical bijection with the solutions to the corresponding problem for $C'_\asymp$. Let us write respectively $\mathrm{Count}(C_\asymp)$ and $\mathrm{Count}(C'_\asymp)$ for the skein count of the resulting glued curves.  

    The same argument applied to $C_-$ and $C'_-$ again gives canonical identifications of moduli spaces and we use analogous notation. We conclude: 
    \begin{align*}
    \psi(\widetilde{\mathsf{U}}(\sigma, \rho)) &= \mathrm{Count}(C'_-) + \mathrm{Count}(C'_\asymp) = \mathrm{Count}(C_-) + z \cdot \mathrm{Count}(C_\asymp)\\ 
    &= \psi(\widetilde{\mathsf{U}}(-\sigma, \rho)) + z\psi(\widetilde{\mathsf{U}}_\asymp(\rho)).
    \end{align*}
    Here the $z$ power appears because, as noted above, $\chi(C'_\asymp) = \chi(C_\asymp) - 1$, and we count curves by $z^{-\chi}$. 
    No $a$ powers appear because the geometries match away from $B_\rho$, and the 4-chain in $B_\rho$ is standard. 
    This completes the proof. 
\end{proof}

\subsubsection{Second skein relation}\label{subsubsec:second-skein-rel}

We first study a model geometry.  

\begin{definition} \label{model geometry}
Consider $M = \R^3$.  Consider a trivial cover $L = L_1 \sqcup \cdots \sqcup L_n \subset T^* \R^3$ where 
each $L_i$ is a constant Lagrangian of the form $\R^3 \times (\ell_i^1, \ell_i^2, \ell_i^3) \subset \R^3 \times (\R^3)^* = T^* \R^3$.  We assume all the $\ell_i^3$ are distinct and order the $L_i$ such that $\ell_1^3 >  \ell_2^3 > \ldots > \ell_n^3$.

We fix the vector field $v_M = \partial/\partial x_3$, and the corresponding linear 4-chain $W_M =  \pm \R_{>0} J \cdot \partial/\partial x_3$.  (Here $J$ is determined by the choice of a metric on $\R^3$; we take the standard Euclidean metric.) 

We choose 4-chains for the $L_i$ by taking the linear 4-chain whose corresponding vector field $v_{L_i}$ is $(\pi_i: L_i \to M)^* v_i = v_i + (\ell_i^1, \ell_i^2, \ell_i^3)$.  
We write $a_i$ for the `a' variable associated to the component $L_i$. 
\end{definition}

\begin{lemma} \label{unknot model geometry}
Let $U$ be a standard unknot given by $(\cos \theta, \sin \theta, 0) \subset \R^3$ in the model geometry of Def. \ref{model geometry}. 
Then 
$$[U]_L = \frac{a_1\dots a_n-a_1^{-1}\dots  a_n^{-1}}{z} \in \Sk(L)$$
\end{lemma}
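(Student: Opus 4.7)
The plan is to directly enumerate and weigh the rigid bare holomorphic curves of basic class in $(T^\ast\R^3, L\sqcup N^\circ U)$. Because $U$ is the standard unknot (bounding an embedded disk in $\R^3$) and the $L_i$'s are pairwise disjoint constant sections, the relevant curves are all cylinders. Applying the flow-graph analysis of Lemma~\ref{l : basic flow loop} on each sheet, and observing that no flow graph can connect distinct $L_i, L_j$ (their difference $1$-form is constant and nowhere-zero, so no critical points exist to anchor a flow graph), I expect that for each $i\in\{1,\dots,n\}$ there is exactly one rigid cylinder $C_i$, with one boundary on $N^\circ U$ winding once around the longitude and the other boundary the lift $U_i\subset L_i$ of the unknot to sheet $i$.

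The second step, which is the main work, is to determine the skein weight of each $C_i$. By the normalization fixed in Section~\ref{cylinder normalization}, the contribution from the boundary $U_i$ alone is $\tfrac{a_i-a_i^{-1}}{z}\,[\emptyset_{L_i}]$, the standard HOMFLYPT value of the unknot in $\Sk(L_i)\cong\Sk(\R^3)$. However, the cylinder $C_i$ also crosses the 4-chains $\Xi_{L_j}$ for $j\ne i$: in the model geometry each $\Xi_{L_j}$ is a linear pushforward of $W_M$ extending in the fiber-vertical $\pm J\partial_{x_3}$-direction from $L_j$, so each intersection $C_i\cap\Xi_{L_j}$ is transverse and $0$-dimensional, with sign determined by the sign of $\ell_j^3-\ell_i^3$. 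With the ordering $\ell_1^3>\dots>\ell_n^3$, a careful bookkeeping, cross-checked against the $n=2$ computation in Section~\ref{sec: trivial double cover}, gives the correction factor $\prod_{j<i}a_j^{-1}\prod_{k>i}a_k$.

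Summing over $i$ yields
\[
[U]_L \;=\; \sum_{i=1}^n \Big(\prod_{j<i}a_j^{-1}\Big)\Big(\prod_{k>i}a_k\Big)\,\frac{a_i-a_i^{-1}}{z}\,[\emptyset_L],
\]
and the telescoping identity
\[
\sum_{i=1}^n \Big(\prod_{j<i}a_j^{-1}\Big)\Big(\prod_{k>i}a_k\Big)(a_i-a_i^{-1}) \;=\; a_1\cdots a_n - a_1^{-1}\cdots a_n^{-1}
\]
(immediate by writing the $i$-th summand as $B_{i-1}-B_i$ with $B_i := a_1^{-1}\cdots a_i^{-1}\cdot a_{i+1}\cdots a_n$) gives the claimed value. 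The main obstacle is the sign bookkeeping in Step~2; an efficient alternative is to first place every $L_j$ far from $U$, so that the problem decouples into disjoint single-sheet unknot computations, and then use Hamiltonian isotopies to move each $L_j$ back to its configured height, tracking the accumulated $a_j^{\pm 1}$ factors via the framing-line relation \eqref{eq:skeinrel6}.
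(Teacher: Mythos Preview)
Your proposal is correct and follows essentially the same approach as the paper: enumerate the $n$ cylinders $C_i$, record the boundary contribution $(a_i-a_i^{-1})/z$ on $L_i$, compute the 4-chain crossings with $\Xi_{L_j}$ for $j\ne i$ to get the factor $\prod_{j<i}a_j^{-1}\prod_{k>i}a_k$, and telescope. The paper's version of your Step~2 is slightly more direct: it projects both the cylinder and the 4-chain to the cotangent fiber $(\R^3)^*$, where the cylinder becomes the half-plane $(\ell_i^1,\ell_i^2,\ell_i^3)+t(\cos\theta,\sin\theta,0)$, $t>0$, and $\Xi_{L_j}$ becomes the vertical line $(\ell_j^1,\ell_j^2,\ell_j^3+s)$; the unique intersection then has $s=\ell_i^3-\ell_j^3$, making the sign transparent without appeal to the $n=2$ check.
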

\begin{proof}
    Let $L_U$ be the shifted conormal of the unknot.  It is straightforward to see that there is a unique holomorphic cylinder between $L_U$ and each $L_i$, meeting $L_i$ in the lift of $U$ to $L_i$.  By construction, $U$ is the standardly framed unknot with respect to $v_{L_i}$ (i.e. projects to an embedded unknot in the leaf space of the vector field), hence the boundary contribution of this holomorphic curve is $(a_i - a_i^{-1})/z$.  
    
    It remains to compute the 4-chain contributions.  Because the Lagrangians are constant, we may compute the intersection after projecting to the cotangent fiber coordinates.  The image of the holomorphic cylinder is   
    $(\ell_i^1, \ell_i^2, \ell_i^3) + t (\cos(\theta), \sin(\theta), 0)$ with $t > 0$.  The image of the 4-chain for $L_j$ is the line $(\ell_j^1, \ell_j^2, \ell_j^3 + s)$.  There is a unique $(t \cos(\theta), t \sin(\theta))$ so that the first two coordinates match.  Considering the last coordinate, we see there is a unique intersection, which is in the positive part of the 4-chain if  $s = \ell_i^3 - \ell_j^3 > 0$, and the negative part of the 4-chain otherwise.   That is, the contribution of the 4-chain of $L_j$ to the cylinder ending on $L_i$ is $a_j$ if $j > i$ and $a_j^{-1}$ if $j < i$. 

    All in all, we find
    $$[U]_L =  \sum_{j=1}^n a_1^{-1} a_2^{-1}\dots a_{j-1}^{-1}\, \left(\frac{a_j - a_j^{-1}}{z} \right) \, a_{j+1}\dots a_{n} \ = \ \frac{a_1\dots a_n-a_1^{-1}\dots  a_n^{-1}}{z}$$
    as desired. 
 \end{proof}

\begin{corollary} \label{second skein relation}
    Fix a Lagrangian $L \subset T^*M$ such that $L \to M$ is a degree $n$ cover, and $L$ is equipped with a somewhere standard 4-chain.  Then we have, for a standardly framed unknot $U$ in $M$, 
    $$[U]_L = \left(\frac{a_L^n - a_L^{-n}}{z} \right) [\emptyset]_L \in \Sk(L)$$
\end{corollary}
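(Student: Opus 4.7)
The plan is to localize the calculation to the ball $B \subset M$ over which the 4-chain of $L$ is standard, and there reduce to the explicit model computation of Lemma \ref{unknot model geometry}. By framed isotopy invariance of $[\,\cdot\,]_L$ (which follows from the skein-valued curve counting invariance), I may move the standardly framed unknot $U$ until it lies inside $B$. Over $B$, by Definition \ref{def : standard over B}, the restriction $L|_B = L_1 \sqcup \cdots \sqcup L_n$ consists of graphs of differentials $df_i$ with 4-chains $\Xi_{L_i}$ obtained by pushing forward $\Xi_M$ to each sheet, and each $\Xi_{L_i}$ is disjoint from the other sheets $L_j$ inside $B$. Choosing local coordinates on $B$ and shrinking $U$ to a tiny unknot at a generic point, the triple $(B, L|_B, \Xi_L|_B)$ becomes identifiable, up to a $C^1$-small deformation, with the model geometry of Definition \ref{model geometry} (restricted to a small box), with the framing variable $a_i$ on each component specialized uniformly to $a_L$.

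The key step is a factorization argument for holomorphic curves. I would apply a fiber scaling $S_\lambda$ with $\lambda \to 0$ to the part of $L$ outside a slightly larger ball $B' \supset \overline{B}$; by the Lagrangian isotopy invariance of the skein count we may assume $L$ is as $C^1$-close to the zero section as we like away from $B$. By Gromov/SFT compactness in the spirit of Lemma \ref{limit configurations}, any holomorphic curve in $(T^*M;\, L \sqcup N^\circ U)$ whose boundary on $N^\circ U$ represents the generator going once positively around the longitude must, in this limit, split into a local curve living over a neighborhood of $U$ (ending on $N^\circ U$ and on the sheets of $L$ there) together with curves and flow graphs on $L$ alone (with no boundary on $N^\circ U$). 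The ``local'' factor is computed by the model: Lemma \ref{unknot model geometry}, with all $a_i = a_L$, gives exactly $(a_L^n - a_L^{-n})/z$ times the class of the empty skein on $N^\circ U$. The ``global'' factor gives, by definition, the contribution to $[\emptyset]_L$. Standardness of the 4-chain over $B$ is precisely what is needed so that no additional intersections between the local cylinders and the sheets or 4-chains of $L$ arise inside $B$ to spoil this clean factorization.

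The main obstacle is making the factorization argument rigorous: one must check that the perturbation scheme for the holomorphic curve equation can be chosen to respect the local/global splitting, so that the weighted count of curves in $(T^*M;\, L \sqcup N^\circ U)$ actually equals the product of the two counts. This is essentially a gluing/decoupling argument of the same flavor as the proofs in \cite{SOB, bare}, leveraging the fact that all 4-chain crossings contributing to the local curves occur inside $B$ where the standardness hypothesis ensures the weights computed in Lemma \ref{unknot model geometry} are unaltered by global contributions. Once the factorization is in hand, substituting $a_1 = \cdots = a_n = a_L$ into the formula of Lemma \ref{unknot model geometry} produces the stated coefficient $(a_L^n - a_L^{-n})/z$, multiplying $[\emptyset]_L$, as desired.
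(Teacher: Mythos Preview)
Your overall strategy—localize to the standard ball and reduce to Lemma \ref{unknot model geometry}—is the same as the paper's. The difference is in how the factorization is obtained, and here you are working harder than necessary.

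The ``main obstacle'' you flag dissolves once you use a genericity argument that you have omitted. In the flow graph limit (Lemma \ref{limit configurations}), the holomorphic curves contributing to $[U]_L$ are the basic cylinders from $N^\circ U$ to $M$ with flow graphs of $L$ attached along their boundary in $M$. The rigid flow graphs of $L$ that contribute to $[\emptyset]_L$ are finitely many (below any Euler characteristic bound), so by exactly the genericity principle behind Lemma \ref{l : no graph at the intersection}, a generic point of the standard region $B$ has a small ball around it through which no such flow graph passes. Place the tiny unknot there. Then every contributing configuration is literally a disjoint union: one model cylinder from $N^\circ U$ to a sheet $L_i$ (entirely inside the small ball), together with a collection of $L$-flow-graphs (entirely outside it). The external flow graphs are the same for every choice of sheet $i$, and their total is by definition $[\emptyset]_L$; the sum over $i$ of the model cylinders is Lemma \ref{unknot model geometry} with all $a_i=a_L$. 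No gluing/decoupling analysis is required.

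By contrast, your proposed mechanism—scaling only the portion of $L$ outside a ball $B'$ and invoking SFT compactness—is both unnecessary and problematic as stated: a partial fiber scaling of $L$ is not a Lagrangian isotopy, and Lemma \ref{limit configurations} already scales all of $L$ globally. The paper simply works in that global flow-tree limit and uses the genericity step above to separate the local and global pieces.
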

\begin{proof}
    We may put our unknot $U$ anywhere on $M$, and take it arbitrarily small. 
    
    We fix attention at an open ball of the standard neighborhood through which no flow trees pass (which exists per Lemma \ref{l : no graph at the intersection}), and there use $v_M$ to choose coordinates so that $v_M = \partial/\partial x_3$.  Taking an arbitrarily small neighborhood of a point, we may arbitrarily closely approximate the model geometry of Definition \ref{model geometry}. 

    When we compute $[U]_L$, each contribution is a disjoint union of holomorphic curves corresponding to $L$ flow graphs (which are by hypothesis disjoint from the neighborhood) and the model cylinders.  The contribution of the external flow graphs are identical for each model cylinder, and total, by definition, $[\emptyset]_L$.  

    The result then follows from Lemma \ref{unknot model geometry}. 
\end{proof}

\subsubsection{Third skein relation}\label{subsubsec:third-skein-rel}

\begin{proposition}
    Fix a Lagrangian $L \subset T^*M$ such that $L \to M$ is a degree $n$ cover, and $L$ is equipped with a somewhere standard 4-chain.
    Let $\beta \subset M$ be a knot framed by $v_M$, and $\beta^+$ an isotopic knot whose $v_M$ framing is one greater, so in particular $[\beta^+] = a_M [\beta] \in \Sk(M)$.  Then 
    $$[\beta^+]_L = a_L^n [\beta]_L.$$
\end{proposition}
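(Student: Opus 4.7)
The plan is to localize the framing change to a small open ball $B \subset M$, reduce by flow-tree convergence to the model geometry of Definition \ref{model geometry}, and track how each of the $n$ basic cylinders picks up a factor of $a_L^n$ in going from $\beta$ to $\beta^+$.

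First I would isotope the framed knot $\beta^+$ to one that agrees with $\beta$ outside a small ball $B$, with the difference inside $B$ being exactly one positive $v_M$-kink; then by an ambient isotopy of $\beta$ (which does not affect $[\beta]_L$) I would move $B$ into the standard region of Definition \ref{def : standard over B}, so that $L$ restricts over $B$ to a trivial unbranched cover $L_1 \sqcup \cdots \sqcup L_n$ with $\Xi_L = \sum_i \Xi_{L_i}$ in standard form. Using Lemma \ref{l : no graph at the intersection}, I can further shrink $B$ so that no non-trivial flow graph of $L$ enters it. After rescaling I identify $T^*M$ over $B$ with the flat model of Definition \ref{model geometry}, where $v_{L_i} = v_M$ on every sheet.

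Next, by the flow-tree limit of Lemma \ref{limit configurations}, each holomorphic curve contributing to $[\beta]_L$ (respectively $[\beta^+]_L$) decomposes in the degeneration into a basic cylinder $C_i$ (respectively $C_i^+$) over $B$, ending on some sheet $L_i$, glued to a flow graph on $L$ in the complement of $B$. Because the exterior geometry agrees and no flow graphs enter $B$, the external contributions match term-by-term between the two cases, and it suffices to show that, for each $i=1,\dots,n$, the local $B$-contribution of $C_i^+$ equals $a_L^n$ times that of $C_i$.

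The local contribution of $C_i$, as in Lemma \ref{unknot model geometry}, is the class of $\partial C_i \subset L_i$ with its $v_{L_i}$-induced framing, multiplied by $a_L^{\sum_{j \neq i} C_i \cap \Xi_{L_j}}$. Going from $C_i$ to $C_i^+$ changes this in two ways. Since $v_{L_i}$ is the pushforward of $v_M$, the positive $v_M$-kink on $\beta$ lifts to a positive $v_{L_i}$-kink on $\partial C_i^+$, contributing one factor of $a_L$ via the third skein relation \eqref{eq:skeinrel3} on $L_i$. For each $j \neq i$, I would show that the signed intersection number $C_i^+ \cap \Xi_{L_j}$ exceeds $C_i \cap \Xi_{L_j}$ by exactly one, giving the remaining $n-1$ factors of $a_L$. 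Combined, these give $a_L \cdot a_L^{n-1} = a_L^n$ per cylinder, and summing over $i$ yields the theorem.

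The hard part will be the intersection count change in the previous paragraph. In the flat model, $\Xi_{L_j}$ is the line bundle over $M$ with fiber $\ell_j + \mathbb{R}\,\partial/\partial y_3$, while the fiber coordinates of $C_i^+$ on its $\mathsf{U}_{\beta^+}$-boundary are determined by the tangent direction to the projection of that boundary to $M$. A positive $v_M$-kink rotates this tangent through one full turn in a plane containing $\partial/\partial x_3$, and this rotation sweeps the $(y_1,y_2,y_3)$-values of the cylinder once around the point $(\ell_j^1,\ell_j^2,\ell_j^3)$, producing exactly one additional positive intersection with $\Xi_{L_j}$. Turning this geometric picture into a rigorous signed count (e.g.\ via a direct parametrized calculation in the model, mirroring the intersection computation in the proof of Lemma \ref{unknot model geometry}) is the main technical task; once established, the rest of the proof is a bookkeeping exercise.
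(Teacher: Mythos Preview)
Your localization strategy and the matching of external flow graphs are correct and mirror the paper's setup. The gap is in what happens \emph{inside} the ball $B$.

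Lemma \ref{l : no graph at the intersection} only prevents flow graphs with legs leaving $B$; it does not rule out flow graphs contained entirely in $B$. The positive kink in $\beta^+$ brings two arcs of $\beta^+$ onto the same leaf of the $v_M$-foliation (explicitly, in the paper's parametrization, the points at $t=\pm\delta$ both sit at $(x_1,x_2)=(\delta^2,0)$). This creates an internal exchange flow line, and for each ordered pair $i>j$ of sheets there is a corresponding holomorphic curve whose boundary on $L$ is $\beta_i$ together with a small unknot $U_j$ on $L_j$, weighted by an extra factor of $z$. Your decomposition into ``basic cylinders $C_i^+$ only'' misses all of these terms.

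Relatedly, your claim that $C_i^+\cap\Xi_{L_j}$ exceeds $C_i\cap\Xi_{L_j}$ by $+1$ for every $j\neq i$ is false. In the model geometry the direct lift $\beta_i^+$ (no exchanges) carries the $a$-weight $a_1^{-1}\cdots a_{i-1}^{-1}\,a_{i+1}\cdots a_n$, i.e.\ the sign of the extra intersection with $\Xi_{L_j}$ depends on whether $j<i$ or $j>i$. The exchange contributions $z\,\beta_i\,U_j\,(\cdots)$ with $zU_j=a_j-a_j^{-1}$ are precisely what convert each $a_j^{-1}$ into $a_j$, yielding $a_1\cdots a_n$ overall. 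The paper's proof carries out exactly this telescoping computation; without the internal exchanges the argument cannot close.
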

\begin{proof}
    We isotope $\beta, \beta^+$ so that they differ only in the standard neighborhood and in a region $\Omega$ in the complement of all flow graphs.  We take this region small enough to be arbitrarily close to the model geometry \ref{model geometry} and choose coordinates so that $\beta$ and $\beta^+$ are as follows. Let $\psi\colon \R\to[0,1]$ be a smooth function that equals $0$ on $[-2,2]$ and equals $1$ on $\R\setminus [-3,3]$ and let $\epsilon,\delta>0$ be small, then 
    \begin{itemize}
    \item $\beta$ is parameterized by
    \[
    t\mapsto (1,t,0),
    \]
    \item and $\beta^+$ is parametrized by
    \[
    t\mapsto \left((1-\psi(t))t^2 +\psi(t)\; ,\; t((1-\psi(t))(t^2-\delta^2) +\psi(t))\; ,\; (1-\psi(t))\epsilon t \right)
    \]
    \end{itemize}
    
    We take graphical perturbed conormals for $\beta, \beta^+$.  As in the proof of Proposition \ref{first skein relation}, the total curve count is given by gluing the basic cylinders for $\beta$ (resp. $\beta^+$) to flow graphs ending on $\beta$ (resp. $\beta^+$).  By general position, compare Lemma \ref{l : no graph at the intersection}, all flow graphs either attach outside our neighborhood, or lie entirely contained in our neighborhood.  
    The former kind are exactly the same for $\beta, \beta^+$, and, while they may not necessarily be transverse, their gluing problems are similarly identical so they will contribute identically. 
    Of the latter kind, there is only one: the interval $\{ (\delta^2, 0, \epsilon s)\,|\, -\delta\le s \le \delta\}$.  

    Let us fix some choice of external contributions (which again, will be the same in both cases), and analyze how the internal geometry contributes.  We now pass to the model geometry.  Let us write $\beta_1, \ldots, \beta_n$ for the lifts of $\beta$ to $L_i$; similarly $\beta^+_i$ for the corresponding lifts of $\beta^+$.  In addition to these trivial lifts, we have the contributions corresponding to $\beta^+$ with the attached flow line; there is one such contribution for each $i > j$.  The corresponding holomorphic curve has boundary approximately the union of $\beta_i$ and a standard-framed unknot $U_j$ on $L_j$; the curve itself has Euler characteristic one less (corresponding to the flow line) than the trivial contributions. 

    The $a$ power computation is (for small enough $\epsilon$) similar to that of
    Lemma \ref{unknot model geometry}. (See the end of Section \ref{sec:4chain and turning} for a detailed discussion of flow line gluing and framings). 
    We compute: 
    \begin{align*}
[\beta^+|_\Omega]_L &= 
\beta_1^+ a_2 \ldots a_n \\ & + 
\left( a_1^{-1} \beta_2^+ a_3 \ldots a_n + z \beta_2 U_1 a_2 \ldots a_n \right) \\
&+ \left( a_1^{-1} a_2^{-1} \beta_3^+ a_4 \ldots a_n + z \beta_3 (a_1^{-1} U_2 a_3 \ldots a_n + U_1 a_2 \ldots a_n) \right) \\ & + \cdots \\
&=
(a_1 \ldots a_n) (\beta_1 + \cdots + \beta_n) \\ &= (a_1 \ldots a_n) [\beta|_\Omega]_L
\end{align*}
    The above equality would make sense in a skein-with-boundary-marked-points for $(L|_\Omega, \partial L_\Omega)$, but what we really mean by it here is that it holds after attaching the remainder of $\beta$ and any fixed external flow graph contribution.  The result  follows.
\end{proof}

\subsection{A Symplectic Field Theory perspective}

Theorem \ref{skein trace} and the results it is built on can be used to express related skein-valued curve counts in many situations. 
We discuss some of them in this section but start out with the symplectic field theory perspective that underlies all such applications.

\subsubsection{Sketch of another proof} \label{sketch}
Let us note another approach to the skein trace result; the idea is to stretch along a cosphere bundle $S^*M \subset T^*M$ which separates $L$ from $\mathsf{U}_K$.  

Recall the Abbondondalo-Schwarz isomorphism comparing the symplectic homology of a cotangent bundle with the chains on the loop space of the base \cite{Abbondandolo-Schwarz}; recall it has also an $S^1$-equivariant version.  We will give elsewhere a higher genus analogue for three dimensional cotangent bundles, which compares a `bare curve' version of the degree zero (all genus) SFT of $S^*M$ with the skein module $\Sk(M)$.   
The isomorphism will be constructed by counting curves in $(T^*M, 0_M)$ that are asymptotic to Reeb orbits at infinity, by their boundary in the skein of $M$.  Accept for now (or regard as conjectural) this isomorphism; we denote it:
\[
\Omega: \mathrm{SFT}^0_{\mathrm{bare}}(S^*M) \xrightarrow{\sim} \Sk(M).
\]
Given this isomorphism, Theorem \ref{skein trace} can be deduced by SFT stretching from the well-definedness of skein-valued curve counting.
Indeed, consider the geometry $(T^*M, \mathsf{U}_K \sqcup L)$.  By definition, we set $[K]_L := \psi_{T^*M, L}(\mathsf{U}_K) \in \Sk(L)$.  However, let us now consider the geometry 
$(S^*M \times \R, \mathsf{U}_K)$.  Counting holomorphic curves here which may end at $\mathsf{U}_K$ and at the negative end of $S^*M \times \R$, we may define correspondingly 
$\psi_{S^*M \times \R}(\mathsf{U}_K) \in \mathrm{SFT}^0_{\mathrm{bare}}(S^*M)$.   Finally, let us define a similar map (now not an isomorphism)
$$\Omega_L: \mathrm{SFT}^0_{\mathrm{bare}}(S^*M) \to \Sk(L)$$
by counting holomorphic curves in $T^*M$ which are asymptotic to Reeb orbits and end on L. 

Now, by SFT stretching,
\begin{eqnarray*} 
    \psi_{T^*M, M} & =& \Omega \circ \psi_{S^*M} \\
    \psi_{T^*M, L} & =& \Omega_L \circ \psi_{S^*M} 
\end{eqnarray*} 
Now, 
$$\Omega \circ \psi_{S^*M}(\mathsf{U}_K)  = \psi_{T^*M, M}(\mathsf{U}_K) = [K] \in \Sk(M)$$
and, by definition, 
$$\Omega_L \circ \psi_{S^*M}(\mathsf{U}_K)  = \psi_{T^*M, L}(\mathsf{U}_K) = [K]_L \in \Sk(L).$$
Thus we find that the skein trace is given by the formula 
$$[K]_L = \Omega_L \Omega^{-1}[K]$$
and, in particular, $[K]_L$ depends only on the class of $K$ in the skein of $M$, as desired. 

\subsubsection{The skein trace map and Lagrangian cabling} 
Now, let $X$ be any Calabi-Yau 3-fold, and $M\subset X$ a (Maslov zero) Lagrangian submanifold.  Choose $L \subset T^\ast M$ a Lagrangian which branch covers $M$; after fiber rescaling, of the cotangent bundle, we may implant $L$ into $X$.  Fix, if desired, also some other Lagrangian $\Lambda \subset X$ away from a neighborhood of $M$ containing $L$.  Fix brane data for all involved objects, and assume that the 4-chain for $L$ is compatible with that of $M$, and somewhere standard.

\begin{corollary}\label{cor : skein trace and curves on nearby Lags}
The skein-valued curve counts $Z(X,\Lambda \sqcup M) \in \Sk(\Lambda) \otimes \Sk(M)$ and $Z(X,\Lambda \sqcup L) \in \Sk(\Lambda) \otimes \Sk(L)$ are related by the skein trace: 
$$Z(X,\Lambda \sqcup L)= [(Z(X,\Lambda \sqcup M))]_L.$$ 
\end{corollary}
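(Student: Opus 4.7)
The plan is to prove the identity by symplectic neck-stretching along a small cosphere bundle of $M$ inside $X$, and then to reduce to a relative form of Theorem \ref{skein trace}. Fix $\epsilon > 0$ small enough that the rescaled $L$ lies in the open codisk bundle $D^*_\epsilon M \subset T^*M \subset X$, and such that $\Lambda$ is disjoint from $\overline{D}^*_\epsilon M$. Perform SFT neck stretching along the cosphere bundle $S^*_\epsilon M = \partial D^*_\epsilon M$ in the two geometries $(X,\Lambda \sqcup M)$ and $(X, \Lambda \sqcup L)$. In the stretched limit, bare holomorphic curves Gromov--SFT converge to broken configurations consisting of an \emph{outer} piece in $(X \setminus D^*_\epsilon M,\Lambda)$ asymptotic to a tuple $\gamma$ of Reeb orbits on $S^*_\epsilon M$, together with \emph{inner} pieces living in the completion of $(D^*_\epsilon M, M)$ or $(D^*_\epsilon M, L)$, asymptotic to the same tuple $\gamma$.

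Second, by the hypothesis of brane data compatibility -- the $4$-chain of $L$ agrees with $n$ copies of that of $M$ outside a fiberwise compact region -- the outer skein-valued counts $Z^{\mathrm{out}}(\gamma)$, valued in $\Sk(\Lambda)$ tensored with the chain-level data attached to the Reeb tuple $\gamma$, are literally identical in the two geometries. Writing $Z^{\mathrm{in}}_M(\gamma)$ and $Z^{\mathrm{in}}_L(\gamma)$ for the inner counts, the stretched formula gives
\[
Z(X,\Lambda \sqcup M) \;=\; \sum_\gamma Z^{\mathrm{out}}(\gamma)\, Z^{\mathrm{in}}_M(\gamma),
\qquad
Z(X,\Lambda \sqcup L) \;=\; \sum_\gamma Z^{\mathrm{out}}(\gamma)\, Z^{\mathrm{in}}_L(\gamma),
\]
with the same $Z^{\mathrm{out}}(\gamma)$ appearing in both formulas. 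It therefore suffices to establish the relative identity $Z^{\mathrm{in}}_L(\gamma) = [Z^{\mathrm{in}}_M(\gamma)]_L$ for each $\gamma$, where $[\,\cdot\,]_L$ is interpreted as acting on a relative skein of $M$ with marked points at the Reeb asymptotics.

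Third, this reduced statement is a relative variant of Theorem \ref{skein trace}. All of the ingredients used to prove well-definedness of $[\,\cdot\,]_L$ -- the flow-graph calculus of Section \ref{sec : Lag conormals and flow graphs}, the crossing-change analysis (Proposition \ref{first skein relation}), and the model-geometry computations for the unknot and framing relations in Section \ref{existence of skein trace} -- are local near the zero section and do not interact with the Reeb behavior far out along the neck. Consequently the same skein-relation verifications go through with Reeb asymptotics held fixed, yielding the claimed identity, and hence the corollary.

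The main obstacle is foundational: a fully rigorous skein-valued bare SFT framework for $S^*_\epsilon M$, with skein-level compactness and gluing, is not in the literature (cf.\ the discussion in Section \ref{sketch}). In practice one sidesteps this by working with anchored configurations, in which each neck tube is capped directly by a cylinder or punctured disk into $M$ inside $D^*_\epsilon M$; this trades the Reeb asymptotics for ordinary skein boundary data on $M$, at the cost of bookkeeping $z$-weights for the capping chains, and then permits a direct appeal to Theorem \ref{skein trace} in its already-proved non-relative form.
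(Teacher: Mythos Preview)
Your SFT neck-stretching approach is conceptually sound and in fact the paper explicitly records it as an alternative argument in the remark immediately following the corollary. However, the paper's own proof is much more direct and avoids the foundational obstacle you identify. Rather than stretching along $S^*_\epsilon M$ and invoking a (not yet established) skein-valued bare SFT gluing theorem, the paper simply applies the flow graph limit of Lemma~\ref{l : flow graph converegnce 3} directly to the pair $(X, \Lambda \sqcup L)$ as $L$ is fiber-rescaled toward $M$: holomorphic curves with boundary on $\Lambda \sqcup L$ converge to holomorphic curves with boundary on $\Lambda \sqcup M$ with flow graphs of $L$ attached along the $M$-boundary, and the skein-valued count of these attached flow graphs is precisely what defines $[\,\cdot\,]_L$. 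The only additional observation needed is that any bare curve with boundary on $M$ is $C^1$-close to the basic cylinder on its boundary near the zero section, so the flow-graph attachment analysis applies.

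What your approach buys is a cleaner conceptual decomposition into ``outer'' and ``inner'' problems, at the cost of requiring SFT-level compactness and gluing in the skein. What the paper's approach buys is that it stays entirely within the flow-graph framework already set up in the appendix, so no new analytic input is needed beyond Lemma~\ref{l : flow graph converegnce 3}. Your ``anchored configurations'' workaround in the final paragraph is in effect converging back toward this flow-graph picture anyway.
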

\begin{proof}
Any bare holomorphic curve with boundary on $M$ is arbitrarily $C^1$-close to the basic cylinder on its boundary in a small neighborhood of the zero section. 
The result then follows from Lemma \ref{l : flow graph converegnce 3}.
\end{proof}

\begin{remark} 
    There is also an argument along the lines of the sketch above: we may count curves in $X \setminus M$ ending on $\Lambda$ and asymptotic to Reeb orbits in the negative $S^*M$ end; this will take value in $Z(X \setminus M; \Lambda) \in \Sk(\Lambda) \otimes \mathrm{SFT}^0_{bare}(S^*M)$.  Now, by SFT stretching,
    \begin{eqnarray*}
        Z(X,\Lambda \sqcup M) = \Omega (Z(X \setminus M; \Lambda)) \\
        Z(X,\Lambda \sqcup L) = \Omega_L (Z(X \setminus M; \Lambda))
    \end{eqnarray*}
    so the result follows by inverting $\Omega$.  
\end{remark}

\begin{example}\label{rmk:arbitrary-color}
For a framed knot $K \subset M$ and any skein $\sigma$ in the standard solid torus, we write $\sigma(K) \in \Sk(M)$ for the cabling of $K$ by $\sigma$, i.e.\ we implant the solid torus in a neighborhood of $K$ and take the image of $\sigma$.  It is well known that 
$W_{\mu, \emptyset}(K)$ is the skein counterpart of the coloring of a knot $K$ by the representation corresponding to the partition $\mu$.  

We recall from \cite{ekholm-shende-colored} the formula: 
\[
Z(T^\ast M, \mathsf{U}_K \sqcup M) = \sum_\mu W_{\mu,\emptyset} \otimes W_{\mu,\emptyset}(K) \in \Sk(\mathsf{U}_K) \otimes \Sk(M),
\]  
Applying Corollary \ref{cor : skein trace and curves on nearby Lags}, 
\[
Z(T^\ast M, \mathsf{U}_K \sqcup L) = \sum_\mu W_{\mu,\emptyset} \otimes [W_{\mu,\emptyset}(K)]_L \in \Sk(\mathsf{U}_K) \otimes \Sk(L),
\]  
This is a geometric interpretation of the skein traces of all colorings $W_{\mu, \emptyset}(K)$: they are obtained by extracting the coefficients of the curve count $Z(T^\ast M, \mathsf{U}_K \sqcup L)$ in the $W_{\lambda, \mu}$ basis of $\Sk(\mathsf{U}_K)$.  

This argument leads to an alternative proof of Theorem \ref{thm:coproduct} after we observe that the only holomorphic curves there are the two annuli with boundary in the two sheets and their multiple covers:
\begin{align*}
\sum_{\mu, \nu} W_{\mu,\emptyset} W_{\nu,\emptyset} \otimes W_{\mu,\emptyset}(K) \otimes W_{\nu,\emptyset}(K) &= \sum_{\lambda} W_{\lambda, \emptyset} \otimes \qty(\sum_{\mu, \nu} c_{\mu, \nu}^{\lambda} W_{\mu, \emptyset}(K) \otimes W_{\nu, \emptyset}(K)) 
\end{align*}
in $\Sk(\mathsf{U}_K) \otimes \Sk(\mathrm{Ann}) \otimes \Sk(\mathrm{Ann})$, where $K$ is the core of the thickened annulus. 
\end{example}

\section{Branched double covers, skein counts, and flow graphs}
This section considers holomorphic curves on Lagrangians associated to a branched double cover. In Sections \ref{ssec:SingLagofdc} and \ref{subsec:smooth-cone} we construct Lagrangians from branched double covers. In Section \ref{subsec: double cover 4-chain} we equip them with 4-chains. The Lagrangian give flow graphs which we determine in Section \ref{ssec:flowgraphsondc} . The flow graphs correspond to holomorphic curves and we give combinatorial formulas for the intersections of such curves with the 4-chain in Section \ref{sec:4chain and turning}. Finally in Section \ref{subsec:moduli-interpretation-leafspace}, we show that the foliation induced by an ideal triangulation has a direct interpretation in terms of moduli spaces of holomorphic disks with two positive punctures on the Lagrangian cobordism corresponding to a double cover.   

\subsection{Lagrangian double covers away from the cone points}\label{ssec:SingLagofdc}
We recall from \cite{Treumann-Zaslow, Schrader-Shen-Zaslow} certain constructions of Lagrangian double covers in $T^*M$ from fronts in $M$.  

Let us first consider the case of a surface $M$ and cubic graph $\Gamma \subset M$.  Then there is a singular hypersurface $F_\Gamma \subset M  \times \R$  such that (1) it is a trivial two sheet cover over $M \setminus \Gamma$, (2) it has transverse self-crossings over $\Gamma$, and (3) over the vertices of $\Gamma$, it has the $D_4^-$ singularity of Arnol'd. It is the front of a smooth Legendrian $\Lambda_\Gamma \subset J^1 M$, see Figure \ref{fig:Lagrangianbranchpt}. The figure also indicates a choice of perturbation of the Lagrangian to one whose front has generic singularities (only double points, cusps, and swallowtails), which we will later use in flow tree calculations.  

\begin{figure}
\centering
\setlength{\tabcolsep}{2em}
\begin{tabular}{c c c}
 & \underline{Symmetric cover} & \underline{Perturbed cover}\\
\noalign{\vskip 2em}
In $L$ & 
$
\vcenter{\hbox{
\begin{tikzpicture}[scale=0.7]
\draw[orange, very thick] (0, -2) -- (0, 2);
\draw[orange, very thick] ({-sqrt(3)}, -1) -- ({sqrt(3)}, 1);
\draw[orange, very thick] ({-sqrt(3)}, 1) -- ({sqrt(3)}, -1);
\filldraw[red] (0, 0) circle (0.1);
\end{tikzpicture}
}}
$
& 
$
\vcenter{\hbox{
\begin{tikzpicture}[scale=0.7]
\begin{scope}[shift={(0.6, 0)}]
\draw[orange, very thick] (0, -2) -- (0, 2);
\filldraw (0, 0) circle (0.05);
\node[right] at (0, 0){$1$};
\end{scope}
\begin{scope}[shift={(-0.3, {0.3*sqrt(3)})}]
\draw[orange, very thick] ({-sqrt(3)}, -1) -- ({sqrt(3)}, 1);
\filldraw (0, 0) circle (0.05);
\node[above left] at (0, 0){$2$};
\end{scope}
\begin{scope}[shift={(-0.3, {-0.3*sqrt(3)})}]
\draw[orange, very thick] ({-sqrt(3)}, 1) -- ({sqrt(3)}, -1);
\filldraw (0, 0) circle (0.05);
\node[below left] at (0, 0){$3$};
\end{scope}
\filldraw[red] (0, 0) circle (0.05);
\draw[very thick] (0, 0) circle (0.6);
\end{tikzpicture}
}}
$
\\
\noalign{\vskip 1em}
In $M$ & 
$
\vcenter{\hbox{
\begin{tikzpicture}[scale=0.7]
\draw[orange, very thick] (-2, 0) -- (0, 0);
\draw[orange, very thick] (0, 0) -- (1, {sqrt(3)});
\draw[orange, very thick] (0, 0) -- (1, {-sqrt(3)});
\filldraw[red] (0, 0) circle (0.1);
\end{tikzpicture}
}}
$ 
& 
$
\vcenter{\hbox{
\begin{tikzpicture}[scale=0.7]
\draw[orange, very thick] (-2, 0) -- (0.6, 0);
\draw[orange, very thick] (-0.3, {-0.3*sqrt(3)}) -- (1, {sqrt(3)});
\draw[orange, very thick] (-0.3, {0.3*sqrt(3)}) -- (1, {-sqrt(3)});
\filldraw[red] (0, 0) circle (0.05);
\draw [very thick] plot [domain=0:360, samples=200, smooth] ({0.2*(cos(2*\x)+2*cos(-\x))}, {0.2*(sin(2*\x)+2*sin(-\x))});
\filldraw (0.6, 0) circle (0.05);
\node[right] at (0.6, 0){$1$};
\filldraw (-0.3, {0.3*sqrt(3)}) circle (0.05);
\node[above left] at (-0.3, {0.3*sqrt(3)}){$3$};
\filldraw (-0.3, {-0.3*sqrt(3)}) circle (0.05);
\node[below left] at (-0.3, {-0.3*sqrt(3)}){$2$};
\end{tikzpicture}
}}
$
\end{tabular}
\caption{Left: the front of the Lagrangian near the vertex of a cubic vertex projected to the base.  Right: the front determining our choice of perturbed Lagrangian.
The projection of the perturbed Lagrangian to  the zero section  is modeled by the map $z \mapsto w=z^2 + \epsilon \bar{z}$. 
The circle $|z| = |\epsilon|/2$ is the critical set, and the deltoid $w = \frac{\epsilon^2}{4}(e^{i\theta} + 2e^{-i\theta})$ is the caustic. 
}
\label{fig:Lagrangianbranchpt}
\end{figure}
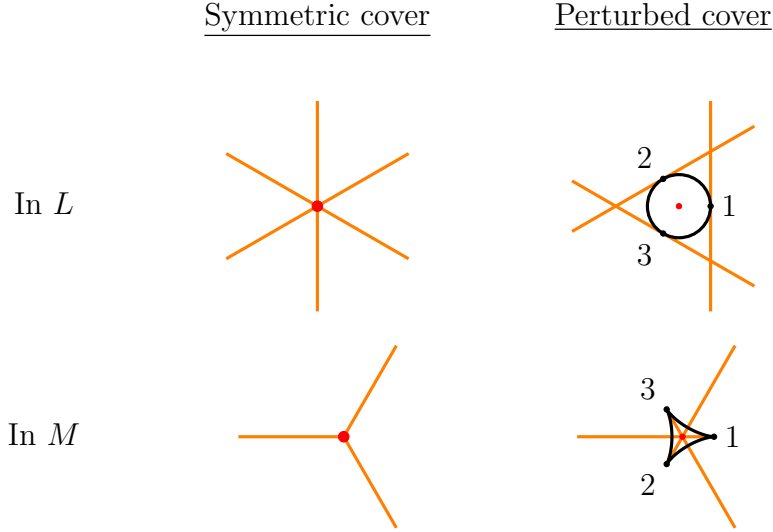

Assuming the connected components of $M \setminus \Gamma$ (the faces of the graph) are contractible, we may ensure $\Lambda_\Gamma$ has exactly one Reeb chord over each such component, corresponding to a maximum of the difference between the local functions whose jets give the two branches of the Legendrian.  So, the corresponding Lagrangian immersion similarly has one double point over each face.  This construction was described and explored in \cite{Treumann-Zaslow}.  
An analogous discussion goes through for any $M$ and any singular hypersurface $\Gamma \subset M$ which is locally a product (cubic graph in surface) times (smooth manifold).  

A motivating instance for this construction is the case when $\Gamma$ is the 1-skeleton $\mathbb{T}$ of a tetrahedron, viewed as a graph on a sphere.  In this case, the corresponding $$\Lambda_\mathbb{T} \subset J^1 (S^2 = \partial_\infty \R^3) \subset S^5 = \partial_\infty T^* \R^3 $$ is isotopic to the ideal contact boundary of the Harvey-Lawson cone $L_{\mathrm{HL}}$.  In coordinates, $L_{\mathrm{HL}}\approx \mathbb{T} \times\R$ 
is the cone on the torus 
$$
\Lambda_{\mathrm{HL};\theta} = \{(e^{i \theta_1}, e^{i\theta_2}, e^{i\theta_3})\, | \,  \theta_1 + \theta_2 + \theta_3 = \theta\} \subset S^5 \subset \C^3;
$$
we sometimes write $\Lambda_{\mathrm{HL}}:=\Lambda_{\mathrm{HL};\frac{\pi}{2}}$. 

Let us now turn to to a 3-manifold $M$ equipped with an ideal triangulation $\Delta$; we write $M^\circ$ for a complement of the neighborhood of the barycenters of the tetrahedra of $\Delta$.  We will write $\Delta^\vee$ for the dual `polyhedral' decomposition; note however that the maximal cells will be topologically of the form $S \times \R$ for some component $S$ of the ideal boundary $\partial_\infty M$.  
Consider the hypersurface $\Delta^\vee_{\le 2}$ given by the 2-skeleton of $\Delta^\vee$.  Then $(M^\circ, \Delta^\vee_{\le 2})$ is a manifold equipped with a hypersurface with cubic planar singularities, and so we may form the corresponding Lagrangian $L_{\Delta}^\circ := L_{\Delta^\vee_{\le 2}} \subset T^*(M^\circ)$.  We arrange that in each cell of $\Delta^\vee$, the difference of local defining functions has no maximum and grows as one approaches $\partial_\infty M$ and decreases as one approaches the vertices of $\Delta^\vee_{\le 2}$.  

We will also write $\Lambda_{\partial \Delta}$ for the Legendrian in $J^1 \partial_\infty M$ corresponding to the cubic graph given by $\Delta^\vee|_{\partial_\infty M}$.  We glue in Harvey-Lawson cones at the negative end and obtain a singular Lagrangian $L_{\Delta} \subset T^*M$.

We collect some properties of this construction: 

\begin{proposition}\label{prp: exact branched double cover}
    Let $M$ be a 3-manifold with ideal triangulation $\Delta$. 
    Let $(T^*M)^\circ$ be the complement in $T^*M$ of the barycenters of the tetrahedra of $\Delta$. We regard this as a Weinstein cobordism with positive end $\partial_\infty T^*M$ and negative end $\bigsqcup S^5\times[0-\infty)$.  

    Then $L_\Delta^\circ$ is an embedded exact Lagrangian cobordism with positive end $\Lambda_{\partial \Delta} \subset J^1 (\partial_\infty M) \subset \partial_\infty T^*M$ and negative end identified with $\bigsqcup \Lambda_{\mathbb{T}} \subset \bigsqcup S^5$. 
    
    Furthermore, the positive end $\Lambda_{\partial \Delta}$ is a two-component Legendrian link with components obtained by shifting $\partial_\infty M$ in the positive and negative Reeb direction. There are thus natural $\partial_\infty M$ Bott families of Reeb chords connecting the two shifted components. At the negative end, each component $\Lambda_{\mathbb{T}}\approx \Lambda_{\mathrm{HL}}$ has $\mathbb{T}$ family of minimal action Reeb chords of action $\frac{2\pi}{3}$, see \cite{Ekholm-Shende-unknot}.   
\end{proposition}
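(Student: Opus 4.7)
The plan is to build $L_\Delta^\circ$ by gluing the local Treumann--Zaslow construction \cite{Treumann-Zaslow} along $\Delta^\vee_{\le 2}$, then identify the asymptotic behavior at the two ends. On each tetrahedron $\delta \in \Delta^3$, the subset $\Delta^\vee_{\le 2} \cap \delta$ consists of six quadrilateral faces meeting along four tripods at the face barycenters; away from the tetrahedral barycenter it is locally a product of a cubic planar graph with an interval, so the $D_4^-$-front construction produces an embedded smooth Lagrangian, provided (as we assume) that the difference of local defining functions has no interior maximum in any face. Exactness is essentially a local matter: each open face of $\Delta^\vee$ gives two graphical sheets $\Gamma_{df_i}$, and the $D_4^-$ and cubic-crossing gluings identify the primitives $f_i$ up to additive constants, which can be chosen globally since each open face of $\Delta^\vee_{\le 2}$ is contractible.

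For the positive end, restricting the construction to a cylindrical neighborhood of $\partial_\infty M$ gives a cubic-graph model over the surface $\partial_\infty M$ with graph $\Delta^\vee|_{\partial_\infty M}$. Under our normalization, where the defining functions $f_i$ diverge as one approaches the ideal boundary, the difference $f_1 - f_2$ also diverges, so at the contact boundary $\partial_\infty T^*M = J^1(\partial_\infty M)$ the two sheets of $L_\Delta^\circ$ separate into two disjoint horizontal copies of $\partial_\infty M$, shifted in opposite Reeb directions. The Reeb chords of $\Lambda_{\partial\Delta}$ are then the vertical chords connecting the two components, naturally parameterized (as a Bott family) by the points of $\partial_\infty M$.

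The negative end follows from the identification, near each tetrahedral barycenter, of the local model with the cone on $(S^2, \mathbb{T})$, where $\mathbb{T}$ is the 1-skeleton of the tetrahedron viewed as a cubic graph on the sphere. The Treumann--Zaslow Legendrian $\Lambda_\mathbb{T} \subset J^1 S^2 \subset S^5$ is isotopic via an exact Legendrian isotopy to the ideal contact boundary $\Lambda_{\mathrm{HL}}$ of the Harvey--Lawson cone, and the $T^2$ Bott family of Reeb chords of action $2\pi/3$ is the standard one for $\Lambda_{\mathrm{HL}}$, recalled in \cite{Ekholm-Shende-unknot}. Arranging that $f$ decreases toward each barycenter (as specified in the construction of $L_\Delta^\circ$) makes the cobordism cylindrical on this negative end.

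The main obstacle will be managing the local primitive gluings around each edge and each vertex of $\Delta^\vee_{\le 2}$ -- three quadrilateral faces meet along each such edge, and twelve meet at each tetrahedral barycenter -- so that the additive constants assembled face by face satisfy the global cocycle condition. This is a finite cohomological verification closely analogous to the angular-period gluing equations \eqref{eq:z-gluing-equations}: in both settings one chooses a compatible system of sheet labelings on the dual 2-skeleton and checks closure of a system of periods around each edge of $\Delta$. A secondary subtlety is verifying that the Lagrangian $L_\Delta^\circ$ is genuinely \emph{embedded}, not merely immersed, away from the excised barycenters -- the only candidates for self-intersection are the intended $D_4^-$ crossings at $\Delta^\vee_{\le 2}$ itself, which are resolved to smooth Legendrian crossings by the standard construction.
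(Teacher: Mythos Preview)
Your approach is essentially the same as the paper's---both rest on the preceding construction of $L_\Delta^\circ$ via the Treumann--Zaslow front over $\Delta^\vee_{\le 2}$---and the paper's own proof is simply ``By inspection of the above construction.'' So you are correct in spirit, and your identification of the positive and negative ends is accurate.

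However, you manufacture an obstacle that is not present. Exactness is not a ``local matter'' requiring a cocycle check on additive constants of primitives $f_i$ across faces: the construction builds a global front $F_\Gamma \subset M^\circ \times \R$ first, and the Legendrian $\Lambda_\Gamma \subset J^1(M^\circ)$ is its canonical lift. The Lagrangian projection of a Legendrian in a jet space is automatically exact, with the $z$-coordinate serving as a global primitive. There is no patching of constants and no analogue of the angular-period gluing equations here. Your ``main obstacle'' therefore dissolves once you work from the front rather than from locally defined $df_i$. Similarly, embeddedness is not verified by ruling out stray self-intersections case by case; it follows directly from the paper's stipulation that the difference of local defining functions has no interior critical points in any cell of $\Delta^\vee$ (hence no Reeb chords of $\Lambda_\Gamma$, hence no double points of the Lagrangian projection). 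You gesture at this but obscure it by framing it as a secondary subtlety.
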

\begin{proof}
    By inspection of the above construction. 
\end{proof}

\subsection{Smoothing the cones}\label{subsec:smooth-cone}

The Harvey-Lawson cone $L_{\mathrm{HL}}$ has three distinguished smoothings to non-exact Lagrangian solid tori $L_{\mathrm{HL}}^\star$, known also as the `toric Aganagic-Vafa branes' for $\C^3$.  They are distinguished from each other by the 1-cycle in $\Lambda_{\mathrm{HL}}$ which is collapsed; with respect to some choice of basis of $H_1(\Lambda_{\mathrm{HL}},\Z)$, these cycles are $(0,1), (1,0), (-1,-1)$.  In the branched double cover setting, these three smoothings are topologically the smoothings we associated to a marking of the tetrahedron in Section \ref{ssec:smoothcoversfromtriang}. Furthermore, the pull-back of of the Liouville form on $\C^3$ to the smoothing $L_{\mathrm{HL}}^\star$ gives a $1$-form that is positive on the distinguished cycle of the tetrahedron. i.e., it gives an effective marking in the sense of Definition \ref{defn: positivity condition}.  

Consider a 3-manifold $M$ with a marked ideal triangulation $\Delta$ with corresponding smooth branch locus $\tau$ and corresponding distinguished cycle in each tetrahedron of $\Delta$, see Section \ref{sec: combinatorial criterion for 1-form}. Proposition \ref{prp: exact branched double cover} gives an exact Lagrangian cobordism $L^\circ_\Delta\subset(T^\ast M)^\circ$ with negative conical ends on the Legendrian tori $\Lambda_{\mathbb{T}}\subset S^5$ near the barycenter of each tetrahedron, identified with $\Lambda_{\mathrm{HL}}\subset S^5$. We think of $T^\ast M$ as obtained from $(T^\ast M)^\circ$ by gluing symplectic $\C^3$'s at the negative ends of $(T^\ast M)^\circ$.

We say that a Lagrangian $L_\tau\subset T^\ast M$ is a \emph{smoothing of $L_\Delta^\circ$ respecting $\tau$} if it agrees with smoothings $L_{\mathrm HL}^\star$ in balls $B_j(\delta)$ around the barycenter of each tetrahedron $\delta\in \Delta$ that are positive on the distinguished cycle over $\delta$ determined by the smoothing $\tau$, and if $L_\tau \setminus \bigcup_j B_j$ is graphical over $L_\Delta^\circ\setminus \bigcup_j B_j$. We then have the following result connecting smoothings and effective $1$-forms.

\begin{proposition} \label{prop: 1-form criterion}
The following are equivalent: 
\begin{enumerate}
    \item\label{itm:1-form criterion 1} 
    There is a smoothing $L_\tau$ of $L^\circ_\Delta$ respecting $\tau$.
    \item\label{itm:1-form criterion 2} 
    The marking of $\Delta$ that gives the smoothing $\tau$ is effective in the sense of Definition \ref{defn: positivity condition}.
\end{enumerate}
\end{proposition}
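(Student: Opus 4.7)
The strategy is to match a smoothing $L_\tau$ with a closed $1$-form on $\widetilde M$, via restriction of the Liouville form $\lambda$, and to identify the periods on distinguished cycles with the K\"ahler (size) parameters of the Aganagic--Vafa smoothings $L_{\mathrm{HL}}^\star$ glued in at the barycenters. In both directions the key input is the fact that $L_{\mathrm{HL}}^\star \subset \C^3$ has an essentially unique positive real parameter, its size, which equals the Liouville period along the non-collapsed generator of $H_1(L_{\mathrm{HL}}^\star)$.

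For $(\ref{itm:1-form criterion 1})\Rightarrow(\ref{itm:1-form criterion 2})$: since $L_\tau$ is Lagrangian, $\lambda|_{L_\tau}$ is closed; under the diffeomorphism $L_\tau\cong \widetilde M$ it gives a closed $1$-form $\zeta$ on $\widetilde M$. A distinguished cycle $\gamma_\delta$ can be represented inside $B_j(\delta)$, where $L_\tau$ coincides with an Aganagic--Vafa smoothing $L_{\mathrm{HL}}^\star$; by hypothesis the collapsed generators of $H_1$ are those corresponding to the marked edges, so $\gamma_\delta$ maps to the non-collapsed generator. Its Liouville period is the size parameter of $L_{\mathrm{HL}}^\star$, which is strictly positive (compare Corollary \ref{cor:z_delta}). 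Hence $\zeta$ is positive on every distinguished cycle, so the marking is effective.

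For $(\ref{itm:1-form criterion 2})\Rightarrow(\ref{itm:1-form criterion 1})$: by Proposition \ref{prop: existence of smoothing} effectivity gives a tuple of positive angular periods $\{z_\delta\}$ satisfying \eqref{eq:z-gluing-equations}, together with a closed, odd $1$-form $\zeta$ on $\widetilde M$ whose periods recover the $z_\delta$. I would build $L_\tau$ in two pieces. Inside each $B_j(\delta)$, glue in the Aganagic--Vafa smoothing $L_{\mathrm{HL}}^\star$ whose collapsed cycles are those specified by the marking and whose size parameter equals $z_\delta$. Outside the balls, use the identification $L_\Delta^\circ\cong \widetilde M^\circ$ to pull back $\zeta$ to a closed $1$-form on $L_\Delta^\circ$; its graphical push-off in $T^*(M^\circ)$ is Lagrangian by closedness, and since $\zeta$ is odd under the covering involution the result is a genuine non-Hamiltonian deformation. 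The gluing equations \eqref{eq:z-gluing-equations} are precisely the compatibility conditions guaranteeing that the boundary behavior of this push-off on $\partial B_j(\delta)$ matches that of an Aganagic--Vafa brane of size $z_\delta$.

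The principal technical step is the smooth Lagrangian gluing in $(\ref{itm:1-form criterion 2})\Rightarrow(\ref{itm:1-form criterion 1})$: verifying that the graphical push-off of $L_\Delta^\circ$ by $\zeta$ can be matched, as a smooth embedded Lagrangian, to the conical ends of $L_{\mathrm{HL}}^\star$ inside each ball. My plan is to arrange $\zeta$, without changing its cohomology class, to take a prescribed standard form in a collar of each $\partial B_j(\delta)$ by modifying it by exact forms supported there; a Weinstein neighborhood and cut-off argument then interpolates between the outside push-off and the inside Aganagic--Vafa cone. The closedness of $\zeta$ and the gluing equations \eqref{eq:z-gluing-equations} ensure that the resulting Lagrangian is closed and embedded, producing the desired smoothing $L_\tau$.
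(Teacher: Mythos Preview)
Your proposal is correct and follows essentially the same approach as the paper: for (\ref{itm:1-form criterion 1})$\Rightarrow$(\ref{itm:1-form criterion 2}) the paper uses the graphical $1$-form of $L_\tau$ over $L_\Delta^\circ$ rather than $\lambda|_{L_\tau}$, but since $L_\Delta^\circ$ is exact these differ by an exact form and either gives the required periods; for (\ref{itm:1-form criterion 2})$\Rightarrow$(\ref{itm:1-form criterion 1}) both arguments take the graph of $\zeta$ over $L_\Delta^\circ$ and glue in Harvey--Lawson smoothings, with an exact adjustment near the negative ends to match. One detail the paper makes explicit that you omit: in (\ref{itm:1-form criterion 2})$\Rightarrow$(\ref{itm:1-form criterion 1}) one should replace $\zeta$ by $\epsilon\zeta$ for sufficiently small $\epsilon>0$, so that the graph actually lies in a Weinstein neighborhood of $L_\Delta^\circ$ and the smoothings of the prescribed sizes fit inside the balls $B_j(\delta)$ --- since effectivity is scale-invariant this costs nothing, but without it your gluing need not take place inside $T^\ast M$.
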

\begin{proof}
    If \eqref{itm:1-form criterion 1} holds then along $L_\Delta^\circ\setminus \bigcup_j B_j$, the desired 1-form in \eqref{itm:1-form criterion 2} is the $1$-form of the graphical submanifold $L_\tau\subset T^\ast L_\Delta^\circ$. Since $L_\tau$ is a smoothing of $L_\Delta^\circ$, the 1-form is a pull-back of a closed 1-form on $\Lambda_{\mathbb{T}}$ which then extends and gives a $1$-form positive on all distinguished cycles. 
    
    Conversely, given a $1$-form $\zeta$ positive on distinguished cycles we find, for all sufficiently small $\epsilon>0$, (graphical) exact Lagrangian isotopies near each negative end such that the 1-form $\epsilon\zeta$  agrees with the form of the smoothing $L_{\mathrm{HL}}^\star$ of the Harvey Lawson cone. Let $L_\tau$ be the Lagrangian that agrees with the smoothing of the Harvey Lawson cones near the negative ends and that outside the negative end is the graph of $\epsilon\zeta$ over $L_\Delta^\circ$. 
\end{proof}
    
Note, the only condition on the shift form $\zeta$ in each torus $\Lambda_{\mathbb{T}}$ is that, it vanishes along the collapsing cycle and is positive along the distinguished cycle. 
Therefore, \eqref{itm:1-form criterion 2} is equivalent to the existence of a closed 1-form $\zeta$ used in Definition \ref{defn: completed skein} to define a completion of the skein module.

\subsection{4-chains for branched double covers} \label{subsec: double cover 4-chain}

We will use a 4-chain associated to a local system $\xi_L$ of pairs of vector fields over the double cover $L$, as in \eqref{4-chain with monodromy} of Section \ref{sec: remarks 4-chain}.  To construct such a chain, it suffices to exhibit the local system of vector fields satisfying the hypotheses of Proposition \ref{existence of 4-chain}.

Recall that over a ball in $M$ where $\pi\colon L \to M$ is not branched, there are $\frac12 n(n-1)$ vector fields on $M$ given by 
$(\alpha_i - \alpha_j)^\ast$, $i<j$ where $\alpha_i$ is a local 1-form on $M$ whose graph is the $i^{\rm th}$ local sheet of $L$.  In the present case of interest, $n=2$, so there is only one such local vector field. In fact, outside the branch locus difference $1$-form is defined up to sign and hence gives a field of involutive line segments. Note that this field of line segments has a natural lift to a vector field $v$ on $L\setminus \widetilde{\tau}$: if for $p\in M$ if $=\pi^{-1}(p)=\{p_1,p_2\}\subset L$ then $v(p_1)=d\Gamma_{\alpha_1}(\alpha_1(p_1)-\alpha_2(p_2))^\ast$ and $v(p_2)=d\Gamma_{\alpha_2}(\alpha_2(p_2)-\alpha_1(p_1))^\ast$. In order to extend $v$ over the branch locus, fix a tubular neighborhood of the branch locus $\tau\subset M$. Let $r$ denote the radius of the tubular neighborhood and note that the difference vector fields decay as $\sqrt{r}$ as we approach $\tau$. In order to get a $C^k$-extension we scale the difference vector field by $r^{k+1}$ in the fiber direction of the tubular neighborhood. We write $\xi_{\mathrm{Mor}}$ for the corresponding vector field on $L$; extended by zero along the branch locus.  

We write $\overset{\leftrightarrow}{\xi}_{\mathrm{Mor}}$ for the field of involutive intervals on $M$ with positive and negative halves that pullback to $\xi_{\mathrm{Mor}}$ on $L$. 
We will use a small perturbation of $\xi_{\mathrm{Mor}}$ to construct a 4-chain for $L$ using Proposition \ref{existence of 4-chain}. Consider a $3$-manifold $M$ with a taut triangulation $\Delta$ and corresponding branched double cover $L\subset T^\ast M$. Then the line field $\overset{\leftrightarrow}{\xi}_{\mathrm{Mor}}$ is tangent to the foliation with leaf space given by the 2-cells of the polygonal decomposition dual to $\Delta$, see Section \ref{ssec:brancheddoublecoversfoliations}, or equivalently, the vector field ${\xi}_{\mathrm{Mor}}$ is tangent to the foliation of $L$ by boundaries of holomorphic strips in $\mathcal{M}(L^\circ_\Delta)$, see Section \ref{subsec:moduli-interpretation-leafspace}. 

\begin{lemma}
With $M$ and $\Delta$ as above, there exists a line field $\overset{\leftrightarrow}{\xi}_{\mathrm{pert}}$ on $M$ which has the following properties: 
\begin{itemize}
\item $\overset{\leftrightarrow}{\xi}_{\mathrm{pert}}$ is invariant under the flow of the foliation of $\Delta$ and induces a line field on the leaf space,
\item $\overset{\leftrightarrow}{\xi}_{\mathrm{pert}}$ is perpendicular to the faces of all tetrahedra in $\Delta$,
\item $\overset{\leftrightarrow}{\xi}_{\mathrm{pert}}$ is everywhere non-vanishing (see Remark \ref{r: angle condition and framing lines}),
\item $\overset{\leftrightarrow}{\xi}_{\mathrm{pert}}$ is parallel to the branch locus $\tau$,
\item away from $\tau$ and the edges of $\Delta$, $\overset{\leftrightarrow}{\xi}_{\mathrm{pert}}$ is nowhere parallel to $\overset{\leftrightarrow}{\xi}_{\mathrm{Mor}}$, 
\end{itemize}
\end{lemma}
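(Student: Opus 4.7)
The plan is to construct the line field first on the leaf space $\Delta^\vee_{(2)}$ of the foliation, and then lift it back to $M$ via the foliation projection. Any line field on $M$ that is invariant under the foliation and transverse to the leaves descends to (resp., is pulled back from) a line field on the leaf space, so this strategy automatically gives conditions (1) and (5): the transversality to $\overset{\leftrightarrow}{\xi}_{\mathrm{Mor}}$ in the latter is the assertion that our line field is transverse to the foliation, since $\overset{\leftrightarrow}{\xi}_{\mathrm{Mor}}$ is itself tangent to the foliation leaves away from $\tau$.

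On each quadrilateral $\eta$ of $\Delta^\vee_{(2)}$, I would choose a nowhere-vanishing line field $\overset{\leftrightarrow}{\eta}|_\eta$ with prescribed boundary behavior: tangent to $\tau$ at the two corners of $\eta$ lying on $\tau$ (yielding condition (4)), and perpendicular to the edges of $\eta$ that sit on the $2$-faces of $\Delta$, measured in the Euclidean structure of Figure \ref{fig:leafspace_Euclideanstructure} (ultimately giving condition (2) after the lift). Since $\eta$ is a contractible disk, the obstruction to extending the prescribed boundary data to a nowhere-vanishing line field on $\eta$ is purely a rotation-number count along $\partial\eta$; by direct inspection of the corner angles the required rotation number matches $\chi(\eta)=1$, so non-vanishing extensions exist. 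The boundary direction along any edge of $\eta$ depends only on the $2$-face of $\Delta$ carrying that edge and not on the quadrilateral approaching it, so the individual fields glue continuously across the $1$-skeleton $\Delta^\vee_{(1)}$ to give a global line field on $\Delta^\vee_{(2)}$ away from the vertices.

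Pulling back by the foliation then produces $\overset{\leftrightarrow}{\xi}_{\mathrm{pert}}$ on $M$ away from the branch locus $\tau$ and the $1$-skeleton $\Delta^{1}$. Along $\tau$ the built-in tangency condition allows a continuous tangent extension. The step I expect to be the main obstacle is extending across the edges of $\Delta$, where the leaves of the foliation degenerate: one must glue the line field consistently around the link of each such edge. The winding of the line field around this link is a signed sum of contributions from the tetrahedra meeting the edge, and is controlled by the angles of the generalized angle structure; the combinatorial condition recorded in Remark \ref{r: angle condition and framing lines} is exactly what ensures this winding is trivial modulo $\pi$, and hence that a continuous non-vanishing line field exists globally on $M$.
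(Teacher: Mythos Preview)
Your approach is genuinely different from the paper's, and more elaborate than necessary.  The paper does not go through the leaf space or any obstruction theory.  It observes that a taut tetrahedron, having dihedral angles $0,0,\pi$, can be realized as a flat ``pillow'' (a square with the two diagonals as the $\pi$-edges, pictured in Figure~\ref{fig:line_field}), and simply takes $\overset{\leftrightarrow}{\xi}_{\mathrm{pert}}$ to be the direction normal to this flat square.  Perpendicularity to all four faces, parallelism to the (vertical) branch locus, transversality to $\overset{\leftrightarrow}{\xi}_{\mathrm{Mor}}$, and foliation-invariance are then all visible directly from the picture; since the line field is the face normal on each face, it glues across faces automatically, and only the extension over edges requires the $2\pi$ angle-sum condition.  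Your abstract rotation-number argument is replaced by one explicit direction per tetrahedron.

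Your argument is plausible in outline but has gaps.  First, you write that the line field should be ``tangent to $\tau$ at the two corners of $\eta$ lying on $\tau$'', but in fact three of the four corners of $\eta$ lie on $\tau$ (the tetrahedron barycenter and the two face barycenters); what you need is tangency along the two \emph{edges} of $\eta$ contained in $\tau$.  Second, in the taut case the Euclidean structure on $\eta$ from Figure~\ref{fig:leafspace_Euclideanstructure} has corner angles $\pi/2,\pi/2,0,\pi$, i.e.\ is degenerate; the rotation-number check you defer to ``direct inspection'' therefore requires care and is not actually carried out.  Third, ``perpendicular'' on the leaf space is measured in this flat structure, and you do not explain why that yields condition~(2) for the lifted field on $M$.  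None of these is fatal, but the paper's explicit construction avoids all of them.
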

\begin{proof}
We may define the line field in the complement of the edges using only the condition that the dihedral angles of each individual tetrahedron are $0,0, \pi$.  We do so as follows: 
Choose one of the two $\pi$-edges and call the two adjacent faces ``incoming'' and call the other two faces ``outgoing''. 
Then, upon choosing an orientation, the line field $\overset{\leftrightarrow}{\xi}_{\mathrm{pert}}$ looks like a vector field coming in through the incoming faces and going out through the outgoing faces; see Figure \ref{fig:line_field}.  It is clear from the picture that we may choose the line field parallel to the branch locus.  
This line field glues along the faces to give a line field on $M$ minus the edges.
Finally, the line field extends over the edges as a nonzero line field if the sum of angles around edges is $2\pi$
, i.e., when the ideal triangulation is taut. 
\end{proof}
\begin{remark}\label{r: angle condition and framing lines}
If the condition on the triangulation $\Delta$ that the angle sum around edges is relaxed from being $2\pi$ to being just a multiple of $\pi$, then the line field vanishes along edges with angle sum $\ne 2\pi$, and there is a weighted framing line (compare \eqref{eq:skeinrel6}) there. 
\end{remark}

\begin{figure}
\centering
\begin{math}
\vcenter{\hbox{
\tdplotsetmaincoords{65}{52}
\begin{tikzpicture}[tdplot_main_coords]
\begin{scope}[scale = 0.7, tdplot_main_coords]
    \newcommand*{\defcoords}{
        \coordinate (o) at (0, 0, 0);
        \coordinate (a) at (3, 0, 0);
        \coordinate (b) at ({3*cos(90)}, {3*sin(90)}, 0);
        \coordinate (c) at ({3*cos(2*90)}, {3*sin(2*90)}, 0);
        \coordinate (d) at ({3*cos(3*90)}, {3*sin(3*90)}, 0);
        \coordinate (abc) at ($1/3*(a)+1/3*(b)+1/3*(c)$);
        \coordinate (abd) at ($1/3*(a)+1/3*(b)+1/3*(d)$);
        \coordinate (bcd) at ($1/3*(b)+1/3*(c)+1/3*(d)$);
        \coordinate (acd) at ($1/3*(a)+1/3*(c)+1/3*(d)$);
    }
    \defcoords
    \draw[very thick] (a) -- (b);
    \draw[very thick] (b) -- (c);
    \draw[very thick] (c) -- (d);
    \draw[very thick] (d) -- (a);
    \draw[very thick] (a) -- (c);
    \draw[very thick, dotted] (a) -- (3, 0, 3);
    \draw[very thick, dotted] (b) -- ({3*cos(90)}, {3*sin(90)}, 3);
    \draw[very thick, dotted] (c) -- ({3*cos(2*90)}, {3*sin(2*90)}, 3);
    \draw[very thick, dotted] (d) -- ({3*cos(3*90)}, {3*sin(3*90)}, 3);
    \filldraw (a) circle (0.05em);
    \filldraw (b) circle (0.05em);
    \filldraw (c) circle (0.05em);
    \filldraw (d) circle (0.05em);
    \draw[white, line width=5] (abc) -- ($(abd)+(0,0,3)$);
    \draw[ultra thick, red] (abc) -- ($(abd)+(0,0,3)$);
    \filldraw[red] (abc) circle (0.2em);
    \draw[white, line width=5] (acd) -- ($(bcd)+(0,0,3)$);
    \draw[ultra thick, red] (acd) -- ($(bcd)+(0,0,3)$);
    \filldraw[red] (acd) circle (0.2em);
    \begin{scope}[shift={(0, 0, 3)}]
        \defcoords
        \draw[very thick] (a) -- (b);
        \draw[very thick] (b) -- (c);
        \draw[very thick] (c) -- (d);
        \draw[very thick] (d) -- (a);
        \draw[very thick] (b) -- (d);
        \filldraw (a) circle (0.05em);
        \filldraw (b) circle (0.05em);
        \filldraw (c) circle (0.05em);
        \filldraw (d) circle (0.05em);
        \filldraw[red] (abd) circle (0.2em);
        \filldraw[red] (bcd) circle (0.2em);
    \end{scope}
    \draw[very thick, <->] (-2.5, -2.5, 0) -- (-2.5, -2.5, 3);
    \node[anchor=east] at (-2.5, -2.5, 1.5){$\overset{\leftrightarrow}{\xi}_{\mathrm{pert}}$};
    \node[anchor=north] at (0, 0, 0){$\pi$};
    \node[anchor=south] at (0, 0, 3){$\pi$};
\end{scope}
\end{tikzpicture}
}}
\end{math}
\caption{The line field $\overset{\leftrightarrow}{\xi}_{\mathrm{pert}}$, and the branch locus of the double cover (drawn in red).}
\label{fig:line_field}
\end{figure}
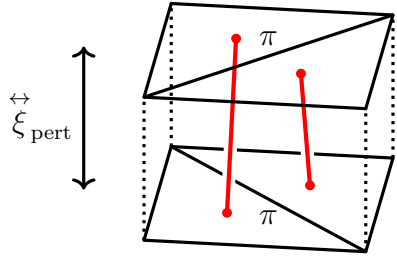

Because $\xi_{\mathrm{pert}}$ is parallel to the branch locus $\tau$, it lifts to a line field on $L$ which is tangent to $\widetilde{\tau}$. 

Consider an ideal tetrahedron $\delta\in \Delta$ and its preimage in $\widetilde{\delta}\subset L$. Let $\overset{\rightarrow}{\xi}_{\mathrm{pert}}$ and $\overset{\leftarrow}{\xi}_{\mathrm{pert}}$ denote the two vector fields with opposite orientations on $\widetilde{\delta}$ that lift the line field $\overset{\leftrightarrow}{\xi}_{pert}$ on $\delta$.
Let 
$$
\xi_{\mathrm{pert}} := \tfrac{1}{2}(\overset{\rightarrow}{\xi}_{\mathrm{pert}} \oplus \overset{\leftarrow}{\xi}_{\mathrm{pert}})
$$ 
be the \emph{formal} average of these two vector fields. 
Then $\xi_{\mathrm{pert}}$ glues smoothly over the faces of tetrahedra in $\Delta$ to give a local system of two vector fields along $L$. 

Define $C_L = C_{\xi_L}$ to be the compatible 4-chain of $L$ constructed as in Proposition \ref{existence of 4-chain} from the vector field 
$\pm\left({\xi}_{\mathrm{Mor}} \pm \epsilon{\xi}_{\mathrm{pert}}\right)$, i.e., so that the pushforward vector fields of the $4$-chain along $L$ is the system of vector fields 
\[
\xi_{L} := \xi_{\mathrm{Mor}} + \epsilon\xi_{\mathrm{pert}}. 
\]
Here we require that $\epsilon > 0$ is sufficiently small in the following sense. Fix a tubular neighborhood of $\widetilde{\tau}$ such that the tangent vector of the lift of any flow tree near the branch locus is linearly independent from $\xi_{\mathrm{pert}}$, then take $\epsilon>0$ sufficiently small compared to the separation of the two branches of the double cover outside the tubular neighborhood, so that here $|\epsilon \xi_{\mathrm{pert}}| \ll |\xi_{\mathrm{Mor}}|$.

\begin{rmk}\label{rmk:transverse-orientation}
If we are given a taut ideal triangulation in the sense of Lackenby, i.e., if we are also given a compatible choice of transverse orientation on each tetrahedron, then the local system of vector fields constructed above has a global section, namely the vector field $\vec{\xi}_{\mathrm{pert}}$ along the direction of the transverse orientation. 
In this case, we can simply use the vector field $\vec{\xi}_{\mathrm{pert}}$ to get a genuine 4-chain (instead of a local system of it). 
\end{rmk}

\subsection{Intersections of holomorphic curves with the 4-chain}\label{sec:4chain and turning}
In this section, we give a diagrammatic calculation of the $a$-powers in the skein lifting map. To this end, we consider a holomorphic curve $u$ in $T^\ast M$ with boundary on $M$. Assume that $M$ has an ideal triangulation $\Delta$ and let $L$ be the corresponding branched double cover. Then by Lemma \ref{l : flow graph converegnce 3}, $u$ gives rise to holomorphic curves $\widetilde{u}$ with boundary on $L$ which are obtained from $u$ by attaching flow graphs to $u$. Here we will give a combinatorial formula for the intersection numbers between $\widetilde{u}$ and the 4-chain $C_L$ constructed in Section \ref{subsec: double cover 4-chain} provided $L$ is sufficiently close to $M$. 

Note that there are three different local models for points in $\widetilde{u}$:
\begin{itemize}
\item points where $\tilde u$ is simply a lift of $u$, 
\item points where $\tilde u$ agrees with the holomorphic strip of a flow line, 
\item and junction points where a flow line is joined to $u$.  
\end{itemize}

We consider first simple lifts. Consider an arc $\gamma$ in the boundary $\partial u$ that maps to a region in a tetrahedron of $\Delta$ in the complement of the 3d spectral network, see Section \ref{ssec:brancheddoublecoversfoliations}. We pick local coordinates $x=(x_1,x_2,x_3)\in\R^2\times\R$ in this region of the tetrahedron in such a way that the foliation corresponds to the foliation of lines in the $x_3$-direction (with constant flow speed around $\gamma$) and the leaf space corresponds to the $x_1x_2$-plane. We write $\pi_{\R^2}$ for the projection to the $x_1x_2$-plane and $\pi_\R$ for the projection to the $x_3$-axis. The coordinates $x$ give corresponding coordinates $(x,y)$ on the cotangent bundle $T^\ast M$, where $y=(y_1,y_2,y_3)$ are momenta dual to $x$.  

With this choice of coordinates we then have the two sheets of the $\lambda$-fiber scaled Lagrangian $\lambda\cdot L$ 
given by
\[
\lambda\cdot L_{\pm} = \Gamma_{\pm\lambda\alpha},\quad \alpha=\lambda \left(\pm\tfrac12\delta dy_3 + \mathcal{O}(\lambda)\right),
\]
for some $\delta>0$, see Section \ref{ssec:brancheddoublecoversfoliations}. On $\lambda\cdot L_\pm$, the associated vector field, the dual of the difference covector pushed forward, is
\[
\xi_{\mathrm{Mor}} = \lambda\left(\pm\delta\partial_{x_3} + \mathcal{O}(\lambda)\right).
\]

Consider next the perturbation vector field $\xi_{\mathrm{pert}}$. Since it is invariant under the flow of the foliation and is small compared to $\xi_{\mathrm{Mor}}$, we write
\[
\xi_{\mathrm{pert}} = \pm\lambda\epsilon\; v(x_1,x_2), 
\]
where $\epsilon\ll\delta$ and where $v(x_1,x_2)$ is a unit length vector field on $\R^2$ pulled back to $\R^3$.

The framing vector field (i.e. the 4-chain vector field) on $L\pm$ is then given by: 
\[
\xi_{L_\pm} = \xi_{\mathrm{Mor}} + \xi_{\mathrm{pert}}=\lambda\left(\pm\delta\partial_{x_3} +\epsilon v +\mathcal{O}(\lambda)\right).
\]
In fact, we have a local system of two such vector fields, the second component is obtained by changing the sign of $v$, see Section \ref{subsec: double cover 4-chain}.
Let $C_L=C_{L_+}+C_{L_-}$ be the compatible 4-chain with $\partial C_{L_\pm}=2L_\pm$ constructed as in Section \ref{existence of 4-chain} using this data. Consider a holomorphic curve $u$ with boundary arc $\gamma\subset\partial u$ on $M$ and the corresponding lifts $\widetilde{u}_\pm$ with boundary in $L_\pm$.

\begin{lemma}\label{l: a power easy lift}
For all $\lambda>0$ sufficiently small, there is exactly one intersection point in $\widetilde{u}_\pm\cap C_{L_\mp}$ in a neighborhood of any point on $\gamma$ where the projection of $\gamma$ to the leaf space $\R^2$ has tangent vector $\pi_{\R^2}\dot\gamma$ equal to a negative multiple of $v$, and no other intersections. Furthermore, the orientation sign of the intersection point in $\widetilde{u}_\pm\cap C_{L_\mp}$ agrees with the sign of the tangency, i.e. the orientation sign of $\pi_{\R^2}\ddot\gamma\wedge \pi_{\R^2}\dot\gamma$ in the orientation of $\R^2$ induced by $\xi_{\mathrm{Mor}}=\pm\delta\partial_{x_3}$ and the orientation of $L_\pm$. 
\end{lemma}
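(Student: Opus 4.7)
My plan is to prove the lemma by a direct local coordinate calculation to leading order in $\lambda$, followed by a standard implicit function theorem argument to handle the higher-order terms. The intersection count is a topological quantity preserved under small perturbations, so it suffices to verify the leading-order picture.

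First I would parameterize $\widetilde{u}_\pm$ near $\gamma$. Since $u$ is $J$-holomorphic with boundary on $M$, choosing local coordinates $(s,r)$ on the source half-disc so that $r=0$ parameterizes $\gamma$, the tangent space along the boundary is spanned by $\dot\gamma(s)$ and $J\dot\gamma(s)\in T^\ast_{\gamma(s)}M$. Lifting to $L_\pm$ amounts to shifting the boundary by the 1-form defining $L_\pm$, and to leading order yields
\[
\widetilde{u}_\pm(s,r) \ = \ \bigl(\gamma(s),\;\pm\tfrac{\lambda\delta}{2}\partial_{y_3}\;+\;r\,J\dot\gamma(s)\bigr) \ + \ \mathcal{O}(r^2,\lambda^2).
\]
Symmetrically, using $\xi_{L_\mp}=\mp\lambda(\delta\partial_{x_3}+\epsilon v)+\mathcal{O}(\lambda^2)$, the two components of $C_{L_\mp}$ take the form $\{p\pm tJ\xi_{L_\mp}(p):t\ge 0\}$ with $p$ ranging over $L_\mp$, giving an explicit four-parameter coordinate description.

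Next I would set up and solve the intersection equations. The matching in the $y_3$-direction forces $t\to 1$ and shows that the transverse depth $r$ into the curve must be of order $\lambda\epsilon$, localizing the intersection within a neighborhood of the boundary of $\widetilde{u}_\pm$. After rescaling $r=\lambda\epsilon\rho$, the matching in the leaf-space directions $(y_1,y_2)$ reduces to the single proportionality condition $\pi_{\R^2}\dot\gamma=-\rho^{-1}v$, with $\rho>0$, which is the codim-$1$ condition that $\pi_{\R^2}\dot\gamma$ be a negative multiple of $v$. The other component of $C_{L_\mp}$ forces $t<0$ and contributes no intersection. The implicit function theorem then promotes this leading-order solution to a unique solution for all sufficiently small $\lambda>0$, and a standard compactness argument excludes intersections away from a fixed tubular neighborhood of $\gamma$.

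For the sign, I would compute the orientation of the intersection as the sign of the determinant of the four tangent vectors $du(\partial_s)$, $du(\partial_r)$, $\partial_t$, and the three base tangents of $L_\mp$ at the intersection point, in a compatible orientation of $T^\ast M$. After the substitutions dictated by the leading-order solution, all contributions except the leaf-space wedge cancel, reducing the sign to that of $\pi_{\R^2}\ddot\gamma\wedge\pi_{\R^2}\dot\gamma$ measured in the orientation of $\R^2$ induced by $\xi_{\mathrm{Mor}}=\pm\delta\partial_{x_3}$ together with the orientation of $L_\pm$. The main obstacle is the bookkeeping of orientations: the sign depends on interlocking conventions for the ambient $J$ on $T^\ast M$, the orientation of $L_\pm$, the sign convention used to construct $C_{L_\mp}$ from $J\xi_{L_\mp}$, and the sheet-dependent signs in $\xi_{L_\pm}=\xi_{\mathrm{Mor}}+\xi_{\mathrm{pert}}$. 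Checking that these combine to produce exactly the claimed sign --- and in particular that the $\pm$ ambiguity in the lemma statement matches the $\pm$-dependence of the leaf-space orientation --- is the most delicate step in the argument.
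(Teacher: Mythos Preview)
Your proposal is correct and follows essentially the same route as the paper: parameterize $\widetilde u_\pm$ near the boundary as $(\gamma(\sigma),\pm\tfrac{\lambda\delta}{2}\partial_{y_3}+\tau\dot\gamma(\sigma))+\mathcal O(\tau^2,\lambda^2)$, write $C_{L_\mp}$ explicitly, match $y$-coordinates to force $\tau=\mathcal O(\lambda\epsilon)$ and $t\approx 1$ on the correct branch (the paper calls this the $\beta=-1$ branch), and reduce to the antiparallel condition $\pi_{\R^2}\dot\gamma\parallel -v$; the sign is then read off from $\partial_\sigma u\wedge\partial_\tau u\wedge J\xi_{\mathrm{Mor}}$, which collapses to $\pi_{\R^2}\ddot\gamma\wedge\pi_{\R^2}\dot\gamma\wedge\xi_{\mathrm{Mor}}$. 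The paper omits the explicit appeal to the implicit function theorem and compactness that you include, but the content is the same.
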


\begin{proof}
Parameterize $u$ by $\sigma+i\tau$ in a neighborhood of the origin in the upper half plane, so that its boundary curve $\gamma$ has projected tangent vector field
$\pi_{\R^2} (\dot \gamma(\sigma))$ of length $1$. 

Near $\lambda\cdot L_{\pm}$, the holomorphic curve is then as follows in $\R^3_x\times \R^3_y$-coordinates on $T^\ast M$
\[
\tilde{u}_\pm (\sigma,\tau)= \left(\gamma(\sigma), (0,0,\pm\lambda\delta/2) + \left(\tau \dot\gamma(\sigma)+\mathcal{O}(\tau^2)\right)\right)+\mathcal{O}(\lambda^2).
\]
Similarly, the 4-chain $C_{L_\pm}$ is
\begin{align*}
C_{L_\pm} &= \left\{\left((x_1,x_2,x_3), \pm (0,0,\lambda\delta/2) + 
t\lambda\left((\epsilon v, \pm\delta)+\mathcal{O}(\lambda)\right)\right);\;t\ge 0\right\} \\ 
&\cup 
\left\{\left((x_1,x_2,x_3), \pm (0,0,\lambda\delta/2) + 
t\lambda\left(-(\epsilon v, \pm \delta)+\mathcal{O}(\lambda)\right)\right);\;t\ge 0\right\},
\end{align*}
oriented so that
\[
\partial C_{L_{\pm}} = 2L_{\pm}. 
\]

The intersections of $\widetilde{u}_\pm$ and $C_{L_\mp}$ then correspond to solutions of the following equation 
\[
(0,0,\pm\lambda\delta/2) + \tau \dot\gamma(\sigma) +\mathcal{O}(\tau^2)= (0,0,\mp\lambda\delta/2)+t\lambda\left(\beta(\epsilon v ,\mp\delta)+\mathcal{O}(\lambda)\right),\quad t\ge 0,
\] 
where $\beta=\pm 1$. For any solution $\tau=\mathcal{O}(\lambda)$, and hence 
for all sufficiently small $\lambda>0$ solutions correspond to solutions to  
\begin{align*}
\tau \pi_{\R^2}\dot\gamma(\sigma)  &= t\lambda\beta\epsilon \;v(\pi(\gamma(\sigma))), \\
\tau \pi_{\R}\dot\gamma(\sigma)  &= -\lambda\beta\delta t - \lambda\delta.
\end{align*}
From the first equation, we see that $\tau = \lambda\epsilon |t|$. The second equation then gives $\beta=-1$: if $\beta=1$ the right hand side is of size $|\lambda\delta t|$, whereas the left hand side is of the smaller size $|\lambda\epsilon t|$. For $\beta=-1$ we get   
\[
\tau \pi_{\R^2}\dot\gamma  = -t \lambda\epsilon v,\quad\quad
\tau \pi_{\R} \dot\gamma = \lambda\delta(t-1) .
\]
It follows that any point where $\pi_{\R^2}\dot\gamma$ is negatively parallel to $v$ is an intersection point, take
\[
\tau=t\lambda\epsilon>0, \quad t=\frac{\lambda\delta}{\lambda\delta-\lambda\epsilon\pi_\R\dot\gamma} >0.
\]
Note, that the intersection lies very close to the boundary of the holomorphic curve, close to the boundary point where $\pi_{\R^2}\dot\gamma$ is negatively parallel to $v$;
see Figure \ref{fig:intersection_points}. 
\begin{figure}
\centering
\begin{tikzpicture}
\node at (-1.2, 1.4){$\otimes$};
\node[anchor=south] at (-1.2, 1.5){$\xi_{\mathrm{Mor}}$};
\node[anchor=south] at (0, 1.4){$\xi_{\mathrm{pert}}$};
\draw[->] (-0.2, 1.4) -- (0.2, 1.4);
\draw[very thick] (2, 0) to[out=90, in=0] (1.5, 1.2) to[out=180, in=0] (0.7, 0.3) to[out=180, in=0] (0, 1) to[out=180, in=90] (-0.5, 0.3) to[out=-90, in=180] (0.8, -1) to[out=0, in=-90] (2, 0);
\draw[very thick, ->] (2, 0) -- (2, 0);
\node[anchor=west] at (2, 0){$\pi_{\mathbb{R}^2}\gamma$};
\filldraw[gray] (0, 1) circle (0.07);
\filldraw[gray] (0.7, 0.3) circle (0.07);
\filldraw[gray] (1.5, 1.2) circle (0.07);
\node[gray, anchor=north] at (0, 0.9){$+1$};
\node[gray, anchor=north] at (0.7, 0.3){$-1$};
\node[gray, anchor=north] at (1.5, 1.1){$+1$};
\end{tikzpicture}
\caption{Projected to the leaf space of the Morse flow, intersection points with the 4-chain are in 1-1 correspondence with the points of $\gamma$ where the tangent vector is negatively parallel to the perturbation vector field.}
\label{fig:intersection_points}
\end{figure}
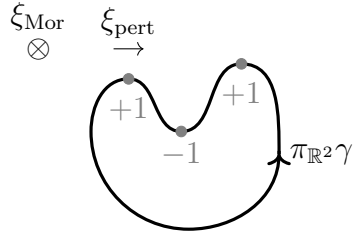
The sign of the intersection point is given by the sign of the orientation of
\[
\partial_{\sigma}u \wedge \partial_{\tau} u \wedge J\xi_{\mathrm{Mor}}
\]
at that point, which is the same as the sign of the orientation of
\[
\pi_{\R^2}\ddot\gamma  \wedge \pi_{\R^2}\dot\gamma  \wedge \xi_{\mathrm{Mor}},
\]
see Figure \ref{fig:intersection_signs}. 
\begin{figure}
\centering
\begin{tikzpicture}
\node at (-1.2, 1.0){$\otimes$};
\node[anchor=south] at (-1.2, 1.1){$\xi_{\mathrm{Mor}}$};
\node[anchor=south] at (-1.2, 0.2){$\xi_{\mathrm{pert}}$};
\draw[->] (-1.2, 0.2) -- (-1.2, -0.2);
\draw[very thick, ->] ({-2+sqrt(3)},-1) arc (-30:30:2);
\node[anchor=west] at ({-2+sqrt(3)},1){$\pi_{\mathbb{R}^2}\gamma$};
\filldraw[gray] (0, 0) circle (0.07);
\node[gray, anchor=west] at (0, 0){$+1$};
\draw[very thick, ->] ({2+2-sqrt(3)},-1) arc (210:150:2);
\node[anchor=east] at ({2+2-sqrt(3)},1){$\pi_{\mathbb{R}^2}\gamma$};
\filldraw[gray] (2, 0) circle (0.07);
\node[gray, anchor=east] at (2, 0){$-1$};
\end{tikzpicture}
\caption{Sign of the intersection points}
\label{fig:intersection_signs}
\end{figure}
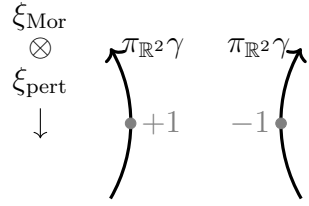
The lemma follows.
\end{proof}

\begin{remark}
In Lemma \ref{l: a power easy lift}, we computed intersections from one local 4-chain. Our actual 4-chain is built from a local system of vector fields and the intersection is the weighted sum of two contributions. 
\end{remark}

We next consider contributions from Morse flow line parts of the lift $\widetilde{u}$. Let $u_{\mathrm{strip}}$ denote the holomorphic strip close to a flow line in $\widetilde{u}$. 
\begin{lemma}\label{l:strip4chain}
    For all $\lambda>0$ sufficiently small, $C_L\cap u_{\mathrm{strip}}=\emptyset$.
\end{lemma}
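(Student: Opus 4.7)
The plan is to work in local coordinates adapted to the flow line and show that the perturbation $\epsilon\xi_{\mathrm{pert}}$ pushes the $4$-chain off the thin locus in the cotangent fiber where the holomorphic strip lives. First I would pick coordinates $(x_1,x_2,x_3)$ on $M$ near a point of the underlying flow line so that the foliation tangent (equivalently, the direction of $\xi_{\mathrm{Mor}}$ on $L$) is along $\partial_{x_3}$ and the flow line itself lies on the $x_3$-axis. In these coordinates the two sheets of $\lambda\cdot L$ are graphs of $\pm\tfrac{\lambda\delta}{2}\,dx_3$ up to order $\lambda^2$, so $L_\pm$ has fiber coordinate $(y_1,y_2,y_3)\approx(0,0,\pm\tfrac{\lambda\delta}{2})$. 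In the $\lambda\to 0$ limit the holomorphic strip $u_{\mathrm{strip}}$ converges to the underlying flow line, and standard estimates (compare Appendix \ref{flow graphs and holomorphic curves}) give that for small $\lambda$ the strip admits a $C^1$ parametrization
\[
u_{\mathrm{strip}}(s,t)=\bigl(0,0,x_3(s),\,O(\lambda^2),\,O(\lambda^2),\,y_3(s,t)\bigr),\qquad (s,t)\in\R\times[0,1],
\]
where $y_3(s,t)$ interpolates between $-\tfrac{\lambda\delta}{2}$ and $+\tfrac{\lambda\delta}{2}$. Thus, to leading order, $u_{\mathrm{strip}}$ lies in the codimension-$2$ subspace $\{y_1=y_2=0\}$ inside the cotangent fibers above the flow line.

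Next I would examine the $4$-chain direction. By the construction of $C_L$, at a point $p\in L_\pm$ the chain is locally $\overline{\R_{\ge 0}\cdot(\pm J\xi_{L_\pm}(p))}$, and
\[
\xi_{L_\pm}(p)=\lambda\bigl(\pm\delta\,\partial_{x_3}+\epsilon\,v(x_1,x_2)\bigr)+O(\lambda^2),
\]
where $v$ is a unit vector field purely in the $\partial_{x_1},\partial_{x_2}$ directions (since $\xi_{\mathrm{pert}}$ is invariant along the foliation and lives in the leaf space). Applying $J$ and using $J\partial_{x_i}=\partial_{y_i}$, one obtains
\[
J\xi_{L_\pm}(p)=\lambda\bigl(\pm\delta\,\partial_{y_3}+\epsilon(v^1\partial_{y_1}+v^2\partial_{y_2})\bigr)+O(\lambda^2).
\]
So a point of $C_{L_\pm}$ above $x=(0,0,x_3)$ has fiber coordinates
\[
\bigl(\pm r\,\lambda\epsilon v^1,\;\pm r\,\lambda\epsilon v^2,\;\pm\tfrac{\lambda\delta}{2}\pm r\,\lambda\delta\bigr)+O(\lambda^2),\qquad r\ge 0.
\]

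For this to meet $u_{\mathrm{strip}}$, the $y_1,y_2$ coordinates of $C_{L_\pm}$ must equal $O(\lambda^2)$; since $\epsilon$ and $|v|$ are fixed and nonzero along the flow line, this forces $r=O(\lambda)$. Substituting into the $y_3$ equation forces $y_3=\pm\tfrac{\lambda\delta}{2}+O(\lambda^2)$, i.e.\ the putative intersection point lies within $O(\lambda^2)$ of the boundary $L_\pm$ of $u_{\mathrm{strip}}$. But in the interior of the strip $y_3$ is bounded away from $\pm\tfrac{\lambda\delta}{2}$ by a constant multiple of $\lambda\delta$, which dominates $O(\lambda^2)$ for all $\lambda$ sufficiently small. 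Hence no interior intersection exists, and the same argument applied with $v$ replaced by $-v$ handles the other component of the local system used to build $C_L$. Thus $C_L\cap u_{\mathrm{strip}}=\emptyset$ for $\lambda$ small.

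The main subtlety is verifying that the $O(\lambda^2)$ error estimate for the displacement of $u_{\mathrm{strip}}$ away from $\{y_1=y_2=0\}$ is actually uniform along the (compact portion of the) flow line; this is the place where one must invoke the quantitative convergence of holomorphic strips to their Morse-theoretic limit, e.g.\ as in \cite{FloerMorseWitten, Ekholm-morse}. Once this uniform estimate is in hand, the displacement argument above is straightforward, since $\epsilon$ is held fixed while $\lambda\to 0$.
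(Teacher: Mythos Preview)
Your argument is correct and follows the same line as the paper's proof, only spelled out in much more detail. The paper's two-sentence argument simply observes that the holomorphic strip is, up to $O(\lambda^2)$, the fiber strip over the flow line with fibers in direction $\xi_{\mathrm{Mor}}^*$, whereas $C_L$ leaves $L_\pm$ with inner normal $\xi_{\mathrm{Mor}}+\epsilon v$; since $v$ is not parallel to $\xi_{\mathrm{Mor}}$, the two cannot meet. Your coordinate computation makes this explicit and correctly identifies the displacement $\epsilon v$ in the $(y_1,y_2)$-plane as the mechanism.

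One minor imprecision: the sentence ``in the interior of the strip $y_3$ is bounded away from $\pm\tfrac{\lambda\delta}{2}$ by a constant multiple of $\lambda\delta$'' is not literally true, since $y_3$ tends to those boundary values as $t\to 0,1$. What your computation actually shows is that any putative intersection is forced into the regime $r=O(\lambda)$, $1-t=O(\lambda)$, i.e.\ $O(\lambda^2)$-close to the common boundary on $L_\pm$. To finish cleanly in that regime one notes that the strip and the $4$-chain emanate from $L_\pm$ with distinct inward normals in the fiber ($-\partial_{y_3}$ versus $\pm(\partial_{y_3}+\epsilon v^i\partial_{y_i})$), hence separate to first order; equivalently, the $(y_1,y_2)$-separation grows like $(1-t)\lambda\epsilon|v|$ while the strip's $(y_1,y_2)$-variation is only $O(\lambda^2)$, forcing $r=0$. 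The paper's proof is equally informal on this boundary issue, so your version is at least as complete.
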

\begin{proof}
The holomorphic strip is within distance $\mathcal{O}(\lambda^2)$ of the fiber strip over the flow line, with fibers in direction $\xi_{\mathrm{Mor}}^\ast+\mathcal{O}(\lambda)$. Since $C_L$ has inner normal $\xi_{\mathrm{Mor}}+\epsilon v$, where $v$ is not parallel to $\xi_{\mathrm{Mor}}$, the lemma follows.
\end{proof}

We finally consider the contribution at junction points, where a flow line strip $u_{\mathrm{strip}}$ is attached to $u$. Let $\gamma$ denote the are in $\partial u$ where $u_{\mathrm{strip}}$ is attached.
\begin{lemma}\label{l: junction4chain}
    If the tangent $2$-plane containing $\dot\gamma$ and $\overset{\leftrightarrow}{\xi}_{\mathrm{Mor}}$ at the junction point does not contain $\overset{\leftrightarrow}{\xi}_{\mathrm{pert}}$ then there are no $4$-chain intersections of $\widetilde{u}$ near the junction point. 
\end{lemma}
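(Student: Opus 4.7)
The plan is to apply the tangent-direction analysis of Lemma \ref{l: a power easy lift} to each local piece of $\widetilde{u}$ near the junction point $p$, after observing that the boundary of $\widetilde{u}$ is confined, up to errors of order $\lambda$, to the $2$-plane $P := \mathrm{span}(\dot\gamma, \xi_{\mathrm{Mor}})$.

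First, I would fix local coordinates $(x_1,x_2,x_3)$ on $M$ near $p$ as in Section \ref{sec:4chain and turning}, so that $\xi_{\mathrm{Mor}}=\pm\delta\,\partial_{x_3}$ and $P=\mathrm{span}(\dot\gamma(p),\partial_{x_3})$. Since $\pi_{\mathbb{R}^2}$ annihilates $\xi_{\mathrm{Mor}}$, and since $\dot\gamma(p)$ must have a non-zero leaf-space projection (else $\dot\gamma\parallel\xi_{\mathrm{Mor}}$ and $P$ degenerates), the hypothesis $\xi_{\mathrm{pert}}\notin P$ translates to the statement that $\pi_{\mathbb{R}^2}\xi_{\mathrm{pert}}(p)$ is not a scalar multiple of $\pi_{\mathbb{R}^2}\dot\gamma(p)$. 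By continuity, there is a coordinate ball $B\ni p$ on which this non-parallelism is uniform.

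Second, I would decompose $\widetilde{u}\cap T^*B$ into three pieces and treat each in turn. The \emph{lift-of-$u$} pieces on $L_\pm$ have boundary arcs on $\gamma$; Lemma \ref{l: a power easy lift} shows that any intersection with $C_{L_\mp}$ requires $\pi_{\mathbb{R}^2}(\mathrm{tangent})$ to be a negative multiple of $\pi_{\mathbb{R}^2}\xi_{\mathrm{pert}}$, which is ruled out on $B$ by the transversality above. The \emph{strip} piece $u_{\mathrm{strip}}$ does not meet $C_L$ by Lemma \ref{l:strip4chain}. This leaves the \emph{gluing region} where the lift smoothly transitions into the strip. There, the boundary of $\widetilde{u}$ turns from being tangent to the lifted $\gamma$ into being tangent to the lifted Morse flow line, and back out on the opposite sheet. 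The tangent direction through this turn lies in $P$ up to $O(\lambda)$ and $O(\lambda\epsilon)$ perturbations, so its $\pi_{\mathbb{R}^2}$-image stays in an arbitrarily narrow cone around $\mathbb{R}\pi_{\mathbb{R}^2}\dot\gamma(p)$; repeating the computation of Lemma \ref{l: a power easy lift} in this region then yields no intersections for sufficiently small $\lambda$.

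The main obstacle is controlling the tangent direction in the gluing region, which is not packaged as a lemma in Section \ref{sec : Lag conormals and flow graphs}. The required input is the local $Y_1$-vertex gluing model of \cite{Ekholm-morse}, which exhibits the glued boundary explicitly inside a two-dimensional complex slice containing $P$; the resulting $C^1$-bound on the boundary tangent, combined with the openness of the transversality $\pi_{\mathbb{R}^2}\xi_{\mathrm{pert}}\not\parallel \pi_{\mathbb{R}^2}\dot\gamma$ and the fact that the transition region shrinks as $\lambda\to 0$, lets one absorb the perturbation errors. Once this is in place, the three cases combine to give the claimed vanishing of $4$-chain intersections near $p$.
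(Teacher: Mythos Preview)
Your proposal is correct and follows essentially the same approach as the paper: both argue that the boundary tangent of the glued curve $\widetilde{u}$ near the junction lies arbitrarily close to the plane $P = \mathrm{span}(\dot\gamma,\xi_{\mathrm{Mor}})$ by appealing to explicit Floer gluing models, and then use the hypothesis $\xi_{\mathrm{pert}}\notin P$ (equivalently, your non-parallelism of leaf-space projections) to rule out 4-chain intersections via the mechanism of Lemma~\ref{l: a power easy lift}. Your three-piece decomposition is a more explicit packaging of what the paper phrases as ``outside a small neighborhood \ldots\ inside a small region'', but the content is the same.

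One minor correction: the relevant local model at the junction is not the $Y_1$-vertex of \cite{Ekholm-morse} (that is a trivalent flow-tree vertex), but the junction model where a flow strip attaches to a big holomorphic curve; the paper cites the pregluing models of \cite[Section~6]{Ekholm-morse} and, in the proof of Lemma~\ref{l : flow graph converegnce 3}, \cite[Section~5.4.2]{EENS} for this. The $C^1$-control you need comes from closeness to the pregluing, exactly as the paper states.
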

\begin{proof}
The holomorphic curve $\widetilde{u}$ with boundary on $L$ close to this configuration is constructed by Floer gluing as in \cite{Ekholm-morse}. It follows in particular that $\widetilde{u}$ is arbitrarily close to the pregluing of the explicit local models of the curves given in \cite[Section 6]{Ekholm-morse}. Thus, outside a small neighborhood of the point where the flow line is joined to the curve, the tangent vectors of the curve on $L$ are proportional to $\xi_{\mathrm{Mor}}$ and $\dot\gamma$, where $\gamma$ is the arc in $u$ where $u_{\mathrm{strip}}$ is attached. Inside a small region, the tangent vector is arbitrarily close to the linear interpolation of these vectors and in particular lies arbitrarily close to the plane they span. It follows that, as long as $\xi_{\mathrm{pert}}$ at the junction point does not lie in this plane, there are no 4-chain intersections near the boundary. 
\end{proof}

The results above lead to the following combinatorial description of the $a$-power of the skein lift.
\begin{corollary}
    The $a$-power of the skein lift is given by the turning number of the lifted curve projected to the leaf space. 
\end{corollary}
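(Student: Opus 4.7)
The plan is to use the three preparatory lemmas to reduce the computation of the bulk intersection number $\widetilde u\cap C_L$ to a purely two-dimensional signed count on the leaf space of the foliation of $L$, and then to identify this count with the turning number defined earlier. By Lemma \ref{l:strip4chain}, no intersections come from the holomorphic-strip portions of $\widetilde u$ that are close to Morse flow lines, and by Lemma \ref{l: junction4chain} generic junction points where flow graphs are attached to $u$ contribute nothing either. Thus every intersection in $\widetilde u\cap C_L$ must arise along a simple lift of a boundary arc $\gamma\subset\partial u$ lying in the complement of the 3d spectral network, where Lemma \ref{l: a power easy lift} applies.

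Next, I would work one polygonal face $\eta$ of $\Delta^\vee_{(2)}$ at a time. The construction of $\xi_{\mathrm{pert}}$ guarantees that it is invariant under the foliation flow and perpendicular to the faces of the tetrahedra, so on each leaf-space face $\eta$ it descends to a constant (hence parallel, in the flat Euclidean structure of Figure \ref{fig:leafspace_Euclideanstructure}) vector field. Applying Lemma \ref{l: a power easy lift} to each of the two halves of the local system $\tfrac12(\overset{\rightarrow}{\xi}_{\mathrm{pert}}\oplus\overset{\leftarrow}{\xi}_{\mathrm{pert}})$, the contribution of $\eta$ to $\widetilde u\cap C_L$ is exactly $\tfrac12$ of the signed count of tangencies of the projected boundary curve $\pi_{\R^2}\gamma$ to the line field $\overset{\leftrightarrow}{\xi}_{\mathrm{pert}}|_\eta$. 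Since $\overset{\leftrightarrow}{\xi}_{\mathrm{pert}}|_\eta$ is a flat reference direction, the classical Gauss-map argument identifies this half-count of signed line tangencies with the usual planar turning number of the projected arc on $\eta$, which is precisely the local contribution $\operatorname{turn}_{ij}(\widetilde u_i|_\eta)$ of Lemma \ref{l:generalturningproperties}. Summing over all faces reassembles the total $\operatorname{turn}(\widetilde u)=\operatorname{turn}_{12}(\widetilde u_1)+\operatorname{turn}_{21}(\widetilde u_2)$, where the matching of the two halves of the local system with the two terms $(i,j)=(1,2)$ and $(2,1)$ is forced by the sheet-labeling convention of Definition \ref{defn: sheet labels}: the orientation of the foliation flips between sheets, so the two signs of the perturbation direction $v$ record the two choices of the ordered pair $(i,j)$.

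Finally, one must verify that the half-turn contributions appearing in the diagrammatic description of the turning number (the $\pi$-corrections at framing tangencies and the rotation by $\pi-\theta$ and $\tfrac{\pi}{2}$ at the corners of the polygonal faces) are accounted for correctly. These arise from arcs of $\partial\widetilde u$ crossing either the $1$-skeleton $\Delta^\vee_{(1)}$ (between two adjacent faces) or the branch locus $\widetilde\tau$. At a crossing of the $1$-skeleton, the constant vector field $\xi_{\mathrm{pert}}$ rotates discretely by the corresponding angle from the angle structure; at a crossing of $\widetilde\tau$, sheet labels swap and the sign of $v$ flips. In both cases, Lemma \ref{l: a power easy lift} records the half-turn contribution to the intersection count, matching the definition of $\operatorname{turn}(\widetilde u)$.

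The main obstacle is this last bookkeeping step, namely matching half-turn contributions along $\Delta^\vee_{(1)}$ and $\widetilde\tau$ with the corrections built into the turning number, and confirming that the tubular-neighborhood regularization used to define $\xi_{\mathrm{pert}}$ near $\widetilde\tau$ introduces no spurious intersections for all sufficiently small $\epsilon$. Away from these loci, the identification of intersections with tangencies is the direct consequence of Lemma \ref{l: a power easy lift}, and the corollary follows.
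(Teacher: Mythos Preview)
Your argument has a genuine gap at the junction points.  You correctly invoke Lemma~\ref{l: junction4chain} to say that there are no $4$-chain intersections near a junction, and then you implicitly treat junctions as contributing nothing at all.  But the $a$-power of the skein lift is not the bulk intersection number $\widetilde u\cap C_L$ alone: it is the corrected framing of $\partial\widetilde u$, which combines the framing by the vector field $\xi_L$ with the bulk $4$-chain intersections.  At a junction the glued boundary curve $\partial\widetilde u$ has a kink when projected to the leaf space (the pregluing joins a simple-lift arc to a flow-line arc at an angle), and this kink carries a nonzero contribution to the turning number of the full curve.  Your identification ``$4$-chain intersections on the simple-lift arcs $=\operatorname{turn}(\partial\widetilde u)$'' therefore fails: the left side is the tangency count on the smooth arcs only, while the right side also contains the exterior-angle contributions at the corners.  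The paper's proof handles exactly this point: after recording that junctions give no $4$-chain intersection, it observes that the projected boundary has kinks there, and then appeals to Remark~\ref{r:framingturning} to check that these kinks contribute to the $a$-power (via the $\xi_L$-framing, or equivalently via a $4$-chain intersection on the opposite sheet) in a way that matches the turning number of the smoothing.  Without this step, your sum over faces does not reassemble the turning number of the lifted curve.

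Your final paragraph is aimed at the wrong target.  The half-turn bookkeeping you worry about---crossings of $\Delta^\vee_{(1)}$ and of $\widetilde\tau$---is already absorbed into the definition of $\operatorname{turn}$ via the flat $SO(2)$-bundle $E$ (Lemma~\ref{l:generalturningproperties}); the perturbation field $\xi_{\mathrm{pert}}$ is constructed to be perpendicular to the faces and parallel to the branch locus precisely so that nothing special happens there.  The actual obstacle is the junction kinks, which you do not address.
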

\begin{proof}
Contributions from points where $\tilde u$ is simply a lift of $u$, and points where $\tilde u$ agrees with the holomorphic strip of a flow line are given by the turning number by Lemmas \ref{l: a power easy lift} and \ref{l:strip4chain}. Consider next a junction point, by Lemma \ref{l: junction4chain} there are no local 4-chain intersections near the junction point. However, as the boundary of the glued curve is arbitrarily close to the pregluing we find that there are kinks in the projection to the leaf space. It is straightforward to check that these give the same contribution as turning numbers of smoothings, see Remark \ref{r:framingturning}.  
\end{proof}

\begin{remark}\label{r:framingturning}
As in the proofs above, to check the relation between actual gluing and smoothing near junctions for all small $\lambda>0$, it suffices to carry out the calculation in a model situation. We take $\R^3$ coordinates with leaf space $\R^2$ of the first two coordinates and use the model described in Figure \ref{fig:junctions}. 
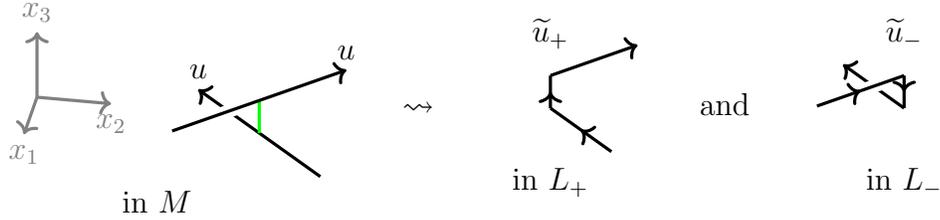
\begin{figure}
\centering
\[
\vcenter{\hbox{
\tdplotsetmaincoords{60}{100}
\begin{tikzpicture}[tdplot_main_coords]
\newcommand*{\defcoords}{
    \coordinate (o) at (0, 0, 0);
    \coordinate (x1) at (1, 0, 0);
    \coordinate (x2) at (0, 1, 0);
    \coordinate (x3) at (0, 0, 1);
    \coordinate (UI) at (1, 1, -0.25);
    \coordinate (UO) at (-1, -1, -0.25);
    \coordinate (OI) at (1, -1, 0.25);
    \coordinate (OO) at (-1, 1, 0.25);
    \coordinate (UM) at ($1/2*(UI) + 1/2*(UO)$);
    \coordinate (OM) at ($1/2*(OI) + 1/2*(OO)$);
}
\defcoords
\draw[gray, very thick, ->] (o) -- (x1);
\draw[gray, very thick, ->] (o) -- (x2);
\draw[gray, very thick, ->] (o) -- (x3);
\node[gray, below] at (x1){$x_1$};
\node[gray, below] at (x2){$x_2$};
\node[gray, above] at (x3){$x_3$};
\node[right] at (0, 1, -1.5){$\text{in }M$};
\begin{scope}[shift={(0, 3, 0)}]
    \defcoords
    \draw[very thick, ->] (UI) -- (UO);
    \draw[white, line width = 5] (OI) -- (OO);
    \draw[green, very thick] (UM) -- (OM);
    \draw[very thick, ->] (OI) -- (OO);
    \node[above] at (UO){$u$};
    \node[above] at (OO){$u$};
\end{scope}
\end{tikzpicture}
}}
\;\;\rightsquigarrow\;\;
\vcenter{\hbox{
\tdplotsetmaincoords{60}{100}
\begin{tikzpicture}[tdplot_main_coords]
\newcommand*{\defcoords}{
    \coordinate (o) at (0, 0, 0);
    \coordinate (x1) at (1, 0, 0);
    \coordinate (x2) at (0, 1, 0);
    \coordinate (x3) at (0, 0, 1);
    \coordinate (UI) at (1, 1, -0.25);
    \coordinate (UO) at (-1, -1, -0.25);
    \coordinate (OI) at (1, -1, 0.25);
    \coordinate (OO) at (-1, 1, 0.25);
    \coordinate (UM) at ($1/2*(UI) + 1/2*(UO)$);
    \coordinate (OM) at ($1/2*(OI) + 1/2*(OO)$);
}
\defcoords
\begin{scope}[very thick,decoration={markings, mark=at position 0.5 with {\arrow{>}}}] 
    \draw[postaction={decorate}] (UI) -- (UM);
    \draw[postaction={decorate}] (UM) -- (OM);
    \draw[->] (OM) -- (OO);
\end{scope}
\node[above] at (0, 0, 0.5){$\widetilde{u}_+$};
\node[below] at (0, 0, -1){$\text{in }L_+$};
\end{tikzpicture}
}}
\;\quad\text{ and }\;
\vcenter{\hbox{
\tdplotsetmaincoords{60}{100}
\begin{tikzpicture}[tdplot_main_coords]
\newcommand*{\defcoords}{
    \coordinate (o) at (0, 0, 0);
    \coordinate (x1) at (1, 0, 0);
    \coordinate (x2) at (0, 1, 0);
    \coordinate (x3) at (0, 0, 1);
    \coordinate (UI) at (1, 1, -0.25);
    \coordinate (UO) at (-1, -1, -0.25);
    \coordinate (OI) at (1, -1, 0.25);
    \coordinate (OO) at (-1, 1, 0.25);
    \coordinate (UM) at ($1/2*(UI) + 1/2*(UO)$);
    \coordinate (OM) at ($1/2*(OI) + 1/2*(OO)$);
}
\defcoords
\begin{scope}[very thick,decoration={markings, mark=at position 0.5 with {\arrow{>}}}] 
    \draw[->] (UM) -- (UO);
    \draw[white, line width = 5] (OI) -- (OM);
    \draw[postaction={decorate}] (OI) -- (OM);
    \draw[postaction={decorate}] (OM) -- (UM);
\end{scope}
\node[above] at (0, 0, 0.5){$\widetilde{u}_-$};
\node[below] at (0, 0, -1){$\text{in }L_-$};
\end{tikzpicture}
}}
\]
\caption{A local model for $\widetilde{u}$ with a holomorphic strip attached. Here, $\xi_{\mathrm{Mor}} = \pm \partial_{x_3}$ in $L_{\pm}$ and $\xi_{\mathrm{pert}} = -\epsilon \partial_{x_1}$.}
\label{fig:junctions}
\end{figure}
The same model projected along the framing vector field $\xi_{\mathrm{Mor}} + \xi_{\mathrm{pert}}$ is shown in Figure \ref{fig:junctions-a-power}. 
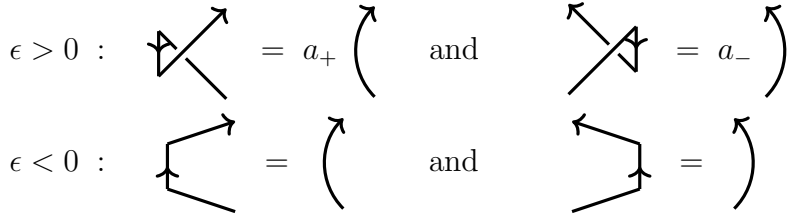
\begin{figure}
\centering
\begin{align*}
\epsilon > 0 \;: &\quad
\vcenter{\hbox{
\begin{tikzpicture}[scale=0.6]
\begin{scope}[very thick, decoration={markings, mark=at position 0.5 with {\arrow{>}}}] 
    \draw (1, -1) -- (-0.5, 0.5);
    \draw[white, line width=5] (-0.5, -0.5) -- (1, 1);
    \draw[->] (-0.5, -0.5) -- (1, 1);
    \draw[postaction={decorate}] (-0.5, 0.5) -- (-0.5, -0.5);
\end{scope}
\end{tikzpicture}
}}
\;=\;
a_+
\vcenter{\hbox{
\begin{tikzpicture}[scale=0.6]
\draw[very thick, ->] (1, -1) to[out=135, in=-135] (1, 1);
\end{tikzpicture}
}}
\quad\text{ and }
\qquad
\vcenter{\hbox{
\begin{tikzpicture}[scale=0.6]
\begin{scope}[very thick, decoration={markings, mark=at position 0.5 with {\arrow{>}}}] 
    \draw[->] (0.5, -0.5) -- (-1, 1);
    \draw[white, line width=5] (-1, -1) -- (0.5, 0.5);
    \draw (-1, -1) -- (0.5, 0.5);
    \draw[postaction={decorate}] (0.5, 0.5) -- (0.5, -0.5);
\end{scope}
\end{tikzpicture}
}}
\;=\;
a_-
\vcenter{\hbox{
\begin{tikzpicture}[scale=0.6]
\draw[very thick, ->] (-1, -1) to[out=45, in=-45] (-1, 1);
\end{tikzpicture}
}}
\\
\epsilon < 0 \;: &\quad 
\;
\vcenter{\hbox{
\begin{tikzpicture}[scale=0.6]
\begin{scope}[very thick, decoration={markings, mark=at position 0.5 with {\arrow{>}}}] 
    \draw (1, -1) -- (-0.5, -0.5);
    \draw[->] (-0.5, 0.5) -- (1, 1);
    \draw[postaction={decorate}] (-0.5, -0.5) -- (-0.5, 0.5);
\end{scope}
\end{tikzpicture}
}}
\;=\;
\vcenter{\hbox{
\begin{tikzpicture}[scale=0.6]
\draw[very thick, ->] (1, -1) to[out=135, in=-135] (1, 1);
\end{tikzpicture}
}}
\quad\quad\text{ and }
\qquad\;
\vcenter{\hbox{
\begin{tikzpicture}[scale=0.6]
\begin{scope}[very thick, decoration={markings, mark=at position 0.5 with {\arrow{>}}}] 
    \draw[->] (0.5, 0.5) -- (-1, 1);
    \draw (-1, -1) -- (0.5, -0.5);
    \draw[postaction={decorate}] (0.5, -0.5) -- (0.5, 0.5);
\end{scope}
\end{tikzpicture}
}}
\;=\;
\vcenter{\hbox{
\begin{tikzpicture}[scale=0.6]
\draw[very thick, ->] (-1, -1) to[out=45, in=-45] (-1, 1);
\end{tikzpicture}
}}
\end{align*}
\caption{The same model projected along the framing vector field $\xi_{\mathrm{Mor}} + \xi_{\mathrm{pert}}$}
\label{fig:junctions-a-power}
\end{figure}
Note that while the framing of the link $\partial \widetilde{u}_{\pm}$ changes depending on the sign of $\epsilon$, the extra $a$-powers are exactly compensated by the 4-chain intersections of $\widetilde{u}_{\mp}$: 
the extra $a$-powers appearing in the case $\epsilon > 0$ can be interpreted in two skein-equivalent ways, either as the framing factor for $\partial \widetilde{u}_{\pm}$ (left-hand sides) or as the 4-chain intersection of $\widetilde{u}_{\mp}$ (right-hand sides). 
This shows that, up to skein equivalence, the lifted link obtained by attaching a flow line can be taken to be the resolution of the crossing, in the projection along the Morse flow vector field $\xi_{\mathrm{Mor}}$. 
\end{remark}

\subsection{Spin structures} \label{double cover spin}

We will make the following spin structure choices.  We fix a spin structure on the base 3-manifold $M$.\footnote{A 3-manifold is spin if orientable.  We can generalize to the case of non-orientable $M$ as follows. 
On $T^*M$, there is a choice of symplectic background class coming from the Lagrangian polarization by cotangent fibers, and there is a canonical choice of relative spin structure on the base manifold with respect to the background class.  A spin structure on the base manifold gives an identification of this choice of background class with the standard symplectic background class.}
On $L$, we pull back the spin structure along $L \to M$.  This makes sense away from the branch locus, and defines a spin structure on the complement of the branch locus which does not extend across the branch locus. As noted in Section \ref{sec: remarks spin} above, we may still do skein-valued curve counting using this broken spin structure, but must correspondingly introduce a sign line into the skein of $L$ along the branch locus. 

The significance of this particular spin structure choice is that for flow graphs corresponding holomorphic curves along $L$, because both edges of the holomorphic curve corresponding to a given flow line are along pieces of Lagrangian with canonically identified spin structures, the signs introduced by the spin structure cancel, save possibly where the flow graph ends along image of the branch locus.

\subsection{Flow graphs on a branched double cover}\label{ssec:flowgraphsondc}

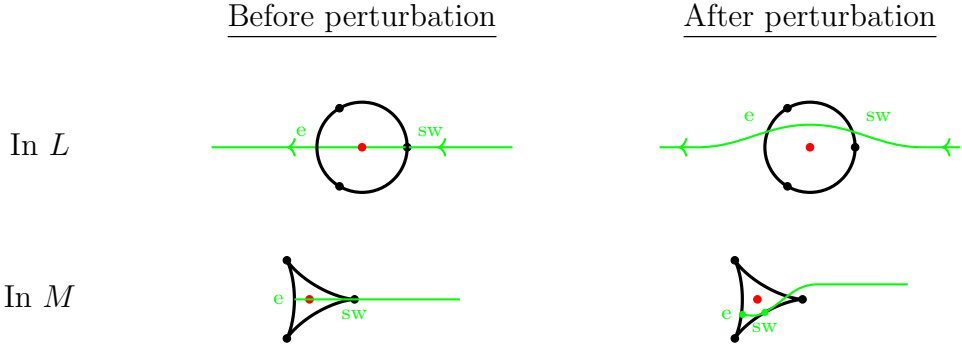
\begin{figure}
\centering
\setlength{\tabcolsep}{2em}
\begin{tabular}{c c c}
 & \underline{Before perturbation} & \underline{After perturbation}\\
\noalign{\vskip 2em}
In $L$ & 
$
\vcenter{\hbox{
\begin{tikzpicture}[scale=1.0]
\begin{scope}[shift={(0.6, 0)}]
\filldraw (0, 0) circle (0.05);
\end{scope}
\begin{scope}[shift={(-0.3, {0.3*sqrt(3)})}]
\filldraw (0, 0) circle (0.05);
\end{scope}
\begin{scope}[shift={(-0.3, {-0.3*sqrt(3)})}]
\filldraw (0, 0) circle (0.05);
\end{scope}
\begin{scope}[thick, decoration={markings, mark=at position 0.5 with {\arrow{>}}}]
    \draw[green, postaction={decorate}] (2, 0) -- (0, 0);
    \draw[green, postaction={decorate}] (0, 0) -- (-2, 0);
\end{scope}
\filldraw[red] (0, 0) circle (0.05);
\draw[very thick] (0, 0) circle (0.6);
\node[green, above left] at (-0.6, 0){\tiny $\mathrm{e}$};
\node[green, above right] at (0.6, 0){\tiny $\mathrm{sw}$};
\end{tikzpicture}
}}
$
& 
$
\vcenter{\hbox{
\begin{tikzpicture}[scale=1.0]
\begin{scope}[shift={(0.6, 0)}]
\filldraw (0, 0) circle (0.05);
\end{scope}
\begin{scope}[shift={(-0.3, {0.3*sqrt(3)})}]
\filldraw (0, 0) circle (0.05);
\end{scope}
\begin{scope}[shift={(-0.3, {-0.3*sqrt(3)})}]
\filldraw (0, 0) circle (0.05);
\end{scope}
\filldraw[red] (0, 0) circle (0.05);
\draw[very thick] (0, 0) circle (0.6);
\begin{scope}[thick, decoration={markings, mark=at position 0.5 with {\arrow{>}}}]
    \draw[green, postaction={decorate}] (2, 0) -- (1.5, 0);
    \draw[green] (1.5, 0) to[out=180, in=0] (0, 0.3) to[out=180, in=0] (-1.5, 0);
    \draw[green, postaction={decorate}] (-1.5, 0) -- (-2, 0);
\end{scope}
\node[green, above right] at (0.6, 0.2){\tiny $\mathrm{sw}$};
\node[green, above left] at (-0.6, 0.2){\tiny $\mathrm{e}$};
\end{tikzpicture}
}}
$
\\
\noalign{\vskip 2em}
In $M$ & 
$
\vcenter{\hbox{
\begin{tikzpicture}[scale=1.0]
\filldraw[red] (0, 0) circle (0.05);
\draw [very thick] plot [domain=0:360, samples=200, smooth] ({0.2*(cos(2*\x)+2*cos(-\x))}, {0.2*(sin(2*\x)+2*sin(-\x))});
\filldraw (0.6, 0) circle (0.05);
\filldraw (-0.3, {0.3*sqrt(3)}) circle (0.05);
\filldraw (-0.3, {-0.3*sqrt(3)}) circle (0.05);
\draw[green, thick] (-0.2, 0) -- (2, 0);
\node[green, left] at (-0.2, 0){\tiny $\mathrm{e}$};
\node[green, below] at (0.6, 0){\tiny $\mathrm{sw}$};
\end{tikzpicture}
}}
$
& 
$
\vcenter{\hbox{
\begin{tikzpicture}[scale=1.0]
\filldraw[red] (0, 0) circle (0.05);
\draw [very thick] plot [domain=0:360, samples=200, smooth] ({0.2*(cos(2*\x)+2*cos(-\x))}, {0.2*(sin(2*\x)+2*sin(-\x))});
\filldraw (0.6, 0) circle (0.05);
\filldraw (-0.3, {0.3*sqrt(3)}) circle (0.05);
\filldraw (-0.3, {-0.3*sqrt(3)}) circle (0.05);
\draw[green, thick] (2, 0.2) -- (0.8, 0.2) to[out=180, in=30] (0.1, {-0.1*sqrt(3)}) to[out=-150, in=-10] (-0.2, -0.2);
\filldraw[green] (-0.2, -0.2) circle (0.04);
\node[green, left] at (-0.2, -0.2){\tiny $\mathrm{e}$};
\filldraw[green] (0.1, {-0.1*sqrt(3)}) circle (0.04);
\node[green, below] at (0.1, {-0.1*sqrt(3)}){\tiny $\mathrm{sw}$};
\end{tikzpicture}
}}
$
\end{tabular}
\caption{Flow tree near the branch locus with a 2-valent switch vertex and a 1-valent end vertex. ($\mathrm{e}$ denotes an end and $\mathrm{sw}$ denotes a switch.)}
\label{fig:SwitchEnd}
\end{figure}

Let $M$ be a 3-manifold with a signed taut triangulation $\Delta$ with $t$ tetrahedra. Let $L^\circ\subset T^\ast M^\circ$ be the singular double cover as in Proposition \ref{prp: exact branched double cover}. This is an exact Lagrangian cobordism connecting $t$ Legendrian tori $\Lambda_{\mathbb{T}}$ at the barycenter of each tetrahedron in $\Delta$ in the negative end $\approx \bigsqcup S^5$ of $T^\ast M^\circ$ to $b$ Legendrian tori at the vertices of $\Delta$ in the positive end of $T^\ast M^\circ$.

Let $K\subset M$ be a link and let $\mathsf{U}_K\subset T^\ast M$ be its pushed-off conormal. By Lemma \ref{l : flow graph converegnce 3}, holomorphic curves ending on $\mathsf{U}_K \sqcup L^\circ$ with boundary in $\mathsf{U}_K$ that go once around the longitude are in 1-1 correspondence with configurations $\Gamma$ of the standard holomorphic cylinder between $\mathsf{U}_K$ and $M$ with flow graphs for $L^\circ \sqcup M$ on it.
\begin{lemma}\label{l:flowgraphs on Lcirc}
In any flow configuration $\Gamma$ of the basic cylinder of $\mathsf{U}_K$ with flow graphs of $L^\circ$ the flow graph does not have 3-valent vertices. Furthermore, the flow graphs attached are of two types
\begin{itemize}
    \item[$(i)$] Flow line segments connecting two points on $K$.
    \item[$(ii)$] Flow line segments with an end and a switch connecting a point of $K$ to the branch locus, see Figure \ref{fig:SwitchEnd}. 
\end{itemize}
\end{lemma}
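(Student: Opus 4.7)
The plan is to exploit the two-sheetedness of the cover $L^\circ \to M^\circ$ together with the perturbed front model of Figure \ref{fig:Lagrangianbranchpt} to enumerate the local vertex types that can appear, and then use a formal dimension count on the basic cylinder plus attached graph to cut these down to the two listed possibilities. The correspondence from Lemma \ref{l : flow graph converegnce 3} (specialized in Lemma \ref{limit configurations}) already tells us that the attached object is a flow graph for $(T^*M, L^\circ \sqcup M)$ whose zero section edges lie on the boundary of the basic cylinder, so the issue is really local to $L^\circ$.

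First I would rule out $3$-valent vertices. A $Y$-vertex in the sense of \cite{Ekholm-morse} is supported at a transverse common zero of three distinct sheet differences $df_i-df_j$, $df_j-df_k$, $df_i-df_k$, and requires three locally distinct sheets at a common point of $M$; this needs $n\ge 3$. Since $L^\circ\to M^\circ$ has degree two, there is at most one nontrivial local sheet difference at any point, and no such $Y$-configuration can occur. The same count rules out higher-valent interior vertices.

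Next I would classify the remaining generic vertex types. Away from the front singularities, edges of the flow graph are flow lines of the unique difference vector field, which is smooth. Near the branch locus $\tau\subset M$, the perturbed front of $L^\circ$ has only cusp edges (projecting, in the $z\mapsto z^2+\epsilon\bar z$ model, to the three cusps of the deltoid crossed with $\tau$) and transverse self-intersection strata. By \cite{Ekholm-morse} these support only $1$-valent \emph{end} vertices, where a flow line terminates at a cusp edge, and $2$-valent \emph{switch} vertices, where the lift hops from one sheet-pair to the other across a self-intersection arc. Thus any attached flow graph is a union of trees whose interior vertices are ends (lying over $\tau$) and switches, with external vertices attached to $K$ on the basic cylinder.

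Finally, I would use the formal dimension of such a tree, attached to the rigid basic cylinder, to impose that the total index is zero. Each switch has index $0$, each end contributes a fixed non-positive index correction, and each additional edge/attachment contributes index in a controlled way; balancing these forces each component of the flow graph to be a single flow line with at most one end. If it has no end, both of its boundary attachments land on $K$, giving case $(i)$; if it has one end, it has exactly one attachment on $K$ and exactly one switch is needed to arrive at the cusp edge on the correct sheet-pair, giving case $(ii)$. The main obstacle will be this last bookkeeping step: one must check carefully that trees with several switches, several ends, or several attachments to $K$ all give strictly negative formal dimension under our choice of perturbation (using the transversality afforded by the generic $\xi_{\mathrm{pert}}$), so that rigid configurations are exactly those of types $(i)$ and $(ii)$.
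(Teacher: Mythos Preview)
Your approach is essentially the same as the paper's: rule out trivalent vertices by the two-sheetedness of $L^\circ\to M^\circ$, then classify the remaining vertex types near the branch locus using the perturbed front model. There are two points worth noting.

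First, you assert that the attached flow graph is a union of \emph{trees}, but you do not justify the absence of cycles. With no $Y$-vertices, a connected flow graph is a priori a path or a loop; you need to observe, as the paper does, that $L^\circ$ is an \emph{exact} Lagrangian cobordism (Proposition~\ref{prp: exact branched double cover}), so the difference vector field has no closed orbits and flow loops are excluded. Without this, the reduction to the two cases is incomplete.

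Second, you frame the final step as a formal-dimension bookkeeping exercise (ends, switches, attachments to $K$) and flag it as the main obstacle. The paper avoids this entirely: rather than a global index count, it simply does the local check in the perturbed $D_4^-$ model (Figure~\ref{fig:SwitchEnd}) that rigid flow trees emanating from the branch locus have exactly one switch and one end, and there are three of them at each point of $\tau$. Their flow manifolds then sweep out $2$-dimensional surfaces in $M$, whose transverse intersections with the $1$-dimensional $K$ give isolated configurations of type~$(ii)$. This is more direct than the dimension balancing you propose, and sidesteps the need to verify that extra switches or multiple attachments drive the index strictly negative. Your argument would work if completed, but the local-model route is shorter and is what the paper actually does.
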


\begin{proof}
By exactness $L^\circ$ has no flow loops, and since the multiplicity of $L^\circ\to M^\circ$ is two outside a small neighborhood of the branch locus, there can be no trivalent vertices. Therefore, in this region, all flow graphs are simply flow lines. It remains to understand flow graphs near the branch locus. It is straightforward to check that any such flow graph of dimension zero has one switch and one end and that there are three such flow graphs along the branch locus, see Figure \ref{fig:SwitchEnd}. It follows that rigid flow graph components are either flow lines connecting distinct points on $K$ or intersections of $K$ with the flow manifolds of flow lines with a switch and an end emanating from the branch locus.     
\end{proof}

Assume next that the ideal triangulation $\Delta$ of $M$ is equipped with signed taut structure that satisfies the criterion of Proposition \ref{prop: existence of smoothing}. Let $L_\tau \subset T^\ast M$ denote the smoothing of $L^\circ$ with these markings and periods. 
\begin{theorem} \label{explicit description} 
      Fix the brane data on $L_\tau$ constructed as in Sections \ref{subsec: double cover 4-chain} and \ref{double cover spin}.  Then the maps from Theorems \ref{skein trace} and \ref{thm:UV-IRmap smooth} agree, i.e., for all knots $K \subset M$, we have
    $$[K]_{L} = [K]^{\mathrm{net}}_{L}  \in \Sk(L)$$
\end{theorem}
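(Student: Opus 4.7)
The plan is to reduce the curve count defining $[K]_L$ to an explicit enumeration matching the diagrammatic formula $[K]^{\mathrm{net}}_L$. Since $L_\tau$ is obtained by an arbitrarily small fiber-scaling of a smoothing of the singular $L^\circ$, and since $\mathsf{U}_K$ can be pushed arbitrarily close to the zero section $M$, I would apply the Morse flow limit of Lemma \ref{l : flow graph converegnce 3} to degenerate every contributing bare holomorphic curve to the unique basic cylinder of $\mathsf{U}_K$ (Lemma \ref{l : basic flow loop}) with finitely many flow graphs on $L^\circ \sqcup M$ attached, plus contributions concentrated at the cone points. Theorem \ref{smoothing skein module} already identifies the cone-point contribution with an insertion of the skein dilogarithm $\Psi$ along the distinguished cycle of each tetrahedron, which implements exactly the 3-term relation \eqref{eq:skeinrel5} at the barycenter; this handles the Harvey--Lawson smoothing piece of both maps.

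By Lemma \ref{l:flowgraphs on Lcirc}, the remaining flow graphs on $L^\circ \sqcup M$ have no trivalent vertices; they are either single flow lines between two points of $K$ or switch-end segments from $K$ to the branch locus. I would match these bijectively to the two remaining lifting moves in the definition of $[K]^{\mathrm{net}}_L$: an exchange at a crossing in the leaf-space projection corresponds to attaching a single flow line between its two preimages, and a detour across a critical leaf corresponds to attaching a switch-end segment. Arcs with no attachment account for direct lifts, and the choice of sheet is recorded by the lifted boundary of the basic cylinder.

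Having enumerated contributing configurations bijectively, I would match the weights term by term. The $z$-exponent comes from $z^{-\chi}$ in skein-valued curve counting: each flow-line gluing reduces the smoothed Euler characteristic by one, producing a factor of $z$ with sign equal to the sign of the crossing in the leaf-space projection, matching the exchange factor $\pm z$. The $a$-exponent is read from 4-chain intersections using the combinatorial framework of Section \ref{sec:4chain and turning}: Lemma \ref{l: a power easy lift} shows that intersections of the lifted boundary with the 4-chain $C_L$ are in bijection with points where the projected tangent is negatively parallel to $\xi_{\mathrm{pert}}$, summing over arcs to the turning number $\mathrm{turn}_{ij}$ relative to the flat $SO(2)$-bundle $E$ built from the angle structure; Lemmas \ref{l:strip4chain} and \ref{l: junction4chain} then rule out further contributions from strip and junction regions, so junctions enter only via the diagrammatic smoothing of corners as in Remark \ref{r:framingturning}. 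Framing tangencies along the cotangent fiber direction contribute $a^{\pm 1/2}$ factors from the transverse 4-chain intersection. The pulled-back spin structure of Section \ref{double cover spin} is the source of the sign line along $\widetilde{\tau}$ present on both sides.

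The hard part will be the sign bookkeeping at switch-end vertices, where orientations of the flow strip, the 4-chain intersection signs from Lemma \ref{l: a power easy lift}, and the change of spin-structure sign across the branch locus must all cohere; pinning this down requires a careful check in a standard local model near the branch locus, using the explicit front of $\Lambda_{\mathbb{T}}$ in Figure \ref{fig:Lagrangianbranchpt} together with the local system of vector fields $\xi_L$ introduced in Section \ref{subsec: double cover 4-chain}. A secondary technical issue is verifying that no additional higher-genus bare configurations appear under fiber scaling $\lambda \to 0$; here exactness of $L^\circ_\Delta$ in Proposition \ref{prp: exact branched double cover} forbids closed flow loops, and a standard dimension count at the limit excludes trivalent flow-graph vertices away from the branch locus and barycenters.
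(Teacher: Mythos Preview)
Your overall architecture matches the paper's: degenerate to the flow-graph limit, identify the attached graphs via Lemma \ref{l:flowgraphs on Lcirc} with the direct lifts, exchanges, and detours of the combinatorial map, and read the $a$-powers from Section \ref{sec:4chain and turning}. Two points need correction.

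First, your handling of the cone points has a genuine gap. Theorem \ref{smoothing skein module} is a purely skein-theoretic statement: it constructs the map $\Sk(\widetilde{M}^{\mathrm{sing}})_\Theta \to \widehat{\Sk}(\widetilde{M})$ by inserting $\Psi$. It says nothing about holomorphic curves. On the geometric side you must separately determine the actual curve count near each barycenter. The paper does this by SFT-stretching along the $S^5$'s around the barycenters: since $\Lambda_{\mathbb{T}}$ has no Reeb chords of index $\le 0$, the problem decouples into curves on $L^\circ$ plus curves on each smoothed Harvey--Lawson cone, and the latter count was computed in \cite{Ekholm-Shende-unknot} to be $\Psi$. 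Only then do the two dilogarithm insertions match. Invoking Theorem \ref{smoothing skein module} alone for ``both maps'' skips this.

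Second, for the detour signs the paper uses a cleaner device than a local-model computation. After orienting the $1$-dimensional moduli of switch-end disks (using the branch-locus orientation twice, hence canonically), the paper appeals to deformation invariance: isotope the curve boundary across the branch locus so that the single-detour lift becomes a direct lift with no detour, for which the sign is trivially $+1$. This pins down the detour sign without any delicate spin/orientation bookkeeping in a model. Your proposed direct check would have to track the interaction of the pulled-back spin structure, the gluing-sequence orientation, and the sign line simultaneously; it can be made to work, but the isotopy argument is the idea you are missing.

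A minor point: the absence of trivalent vertices is not a dimension count but the observation that a degree-two cover has only one local difference vector field away from the branch locus (Lemma \ref{l:flowgraphs on Lcirc}); and you should also invoke Lemma \ref{l : simple implies simple} to exclude multiply covered edges on graphs attached to the embedded basic cylinder.
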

\begin{proof}
We will compute $[K]_{L}$ directly from its definition in the flow tree limit. Let $\mathsf{U}_K$ be the pushed-off conormal; we count holomorphic curves ending on $\mathsf{U}_K \sqcup L$ whose boundary in $\mathsf{U}_K$ goes once around the longitude. 

We first consider the cobordism $L^\circ$ with negative ends. By Lemmas \ref{l : flow graph converegnce 3} and \ref{l : simple implies simple}, there is 1-1 correspondence between flow graphs on the holomorphic cylinder of $\mathsf{U}_K$ and holomorphic curves with boundary on $L^\circ$. By Lemma \ref{l:flowgraphs on Lcirc} the flow graphs are exactly the configurations that contribute to $[K]^{\mathrm{net}}_L$. We need to check signs. The sign of a glued curve can be calculated from a gluing sequence as in \cite{EENS}, as there this calculation reduces to a comparison of evaluations of kernel/cokernel elements at the junction points and the resulting sign is the crossing sign for exchanges and the crossing sign with the detour manifold at detours. 

For exchanges, the kernel of the marked points gives the tangent vectors of the curve branches at the crossing and the kernels of the strip of the flow line gives the vertical vector. 

For detours, we use the following model. We think of the critical leaves near the branch locus as a moduli space of disks with Reeb chord asymptotics, see Figure \ref{fig:flow lines and disks}, where we stretch around the perturbed branch locus. 

We orient this moduli space as follows. Choosing an orientation of the branch locus orients the normal bundle of the branch locus which in turn gives oriented zero dimensional disks, see Figure \ref{fig:flow lines and disks}. We then orient the 1-dimensional moduli space using the branch locus orientation again. Note that this uses the branch locus orientation twice and hence the resulting orientation is well defined. This then gives a coorientation to the two-dimensional surface in $L$ that is the evaluation along the boundary of the disks in the moduli space.

We next use the gluing sequence to orient the disks that results from gluing flow graphs as discussed to holomorphic curves with boundary on $L$, compare \cite{ENS}. Here we should compare the oriented kernel vector fields of the gluing point evaluation maps along the moduli spaces and the orientation of $L$. As in \cite{ENS}, signs of relevant $L^2$ inner products can be computed using evaluation maps and here the gluing sign is given by the intersection sign of the boundary orientation of the big curve and the oriented moduli space. This matches the combinatorial calculation up to sign, see Figure \ref{fig:combvsholo}. 

To see that the holomorphic curve sign matches the combinatorial sign exactly, we use deformation invariance: 
Consider the boundary of a holomorphic curve that intersects the projection of the evaluation map of the moduli space of detour disks, i.e., a critical leaf transversely at one point. Consider its lifts, with the boundary condition that specifies the sheet to which we lift at the incoming and out going ends -- this corresponds to specifying a Reeb chord of the detour attached. We take these lifting conditions so that the only possible choice is the lift with one detour. On the other hand, if we isotope the boundary of the curve across the branch locus, then it intersects two critical leaves, but with the given boundary conditions, the only possible lift is the direct lift without using any detour and for the direct lift there is no extra sign. The result follows for lifting to the Lagrangian $L^\circ_\Delta$.

Consider now the smoothing $L_\tau$. Note the angular periods can be scaled $(z_1,\dots,z_t)\mapsto \epsilon(z_1,\dots,z_t)$ and still satisfy the condition in Proposition \ref{prop: existence of smoothing}. This means we can consider the SFT-stretched picture, where $L_\tau$ is obtained by gluing Harvey Lawson cones $\bigsqcup L_{\mathbb{T}}$ to $L^\circ$ in $(T^*M)^\circ$ to $\bigsqcup \Lambda_{\mathbb{T}}$ at the negative end, according to $\epsilon(z_1,\dots,z_t)$ for all sufficiently small $\epsilon>0$. Because $\Lambda_{\mathbb{T}} \subset S^5$ has no index $\le 0$ Reeb chords or orbits, the curve counting problem separates.  We have already determined the curves for $L^\circ$, and the curves in the smoothed Harvey-Lawson cone $L_{\mathbb{T}}$ were determined in \cite{Ekholm-Shende-unknot}. The theorem follows.
\end{proof}

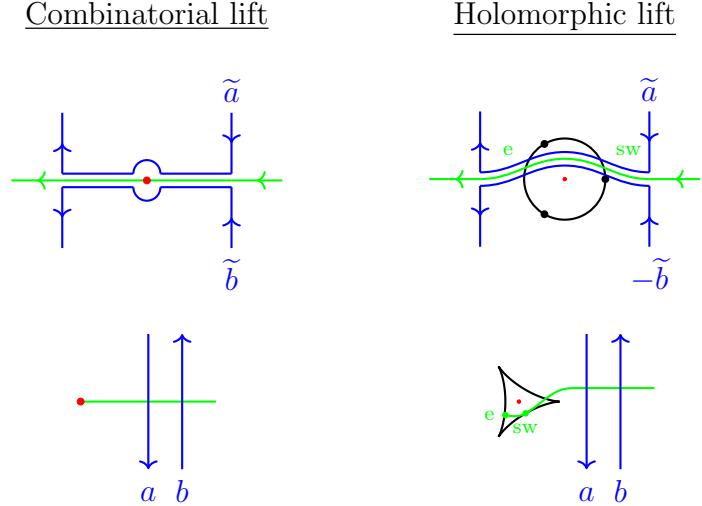
\begin{figure}
\centering
\setlength{\tabcolsep}{2em}
\begin{tabular}{c c}
\underline{Combinatorial lift} & \underline{Holomorphic lift} \\
\noalign{\vskip 1em}
$
\vcenter{\hbox{
\begin{tikzpicture}[scale = 0.9]
\begin{scope}[thick,decoration={markings, mark=at position 0.5 with {\arrow{>}}}]
    \draw[green, postaction={decorate}] (2, 0) -- (1.25, 0);
    \draw[green] (1.25, 0) -- (-1.25, 0);
    \draw[green, postaction={decorate}] (-1.25, 0) -- (-2, 0);
    \draw[blue, postaction={decorate}] (1.25, 1) -- (1.25, 0.1);
    \draw[blue] (1.25, 0.1) -- (0.2, 0.1);
    \draw[blue] (0, 0.1) [partial ellipse = 0 : 180 : 0.2];
    \draw[blue] (-0.2, 0.1) -- (-1.25, 0.1);
    \draw[blue, postaction={decorate}] (-1.25, 0.1) -- (-1.25, 1);
    \draw[blue, postaction={decorate}] (1.25, -1) -- (1.25, -0.1);
    \draw[blue] (1.25, -0.1) -- (0.2, -0.1);
    \draw[blue] (0, -0.1) [partial ellipse = -180 : 0 : 0.2];
    \draw[blue] (-0.2, -0.1) -- (-1.25, -0.1);
    \draw[blue, postaction={decorate}] (-1.25, -0.1) -- (-1.25, -1);
\end{scope}
\filldraw[red] (0, 0) circle (0.05);
\node[blue, above] at (1.25, 1){$\widetilde{a}$};
\node[blue, below] at (1.25, -1){$\widetilde{b}$};
\end{tikzpicture}
}}
$ 
& 
$
\vcenter{\hbox{
\begin{tikzpicture}[scale=0.9]
\begin{scope}[shift={(0.6, 0)}]
\filldraw (0, 0) circle (0.05);
\end{scope}
\begin{scope}[shift={(-0.3, {0.3*sqrt(3)})}]
\filldraw (0, 0) circle (0.05);
\end{scope}
\begin{scope}[shift={(-0.3, {-0.3*sqrt(3)})}]
\filldraw (0, 0) circle (0.05);
\end{scope}
\filldraw[red] (0, 0) circle (0.025);
\draw[thick] (0, 0) circle (0.6);
\begin{scope}[thick, decoration={markings, mark=at position 0.5 with {\arrow{>}}}]
    \draw[green, postaction={decorate}] (2, 0) -- (1.25, 0);
    \draw[green] (1.25, 0) to[out=180, in=0] (0, 0.3) to[out=180, in=0] (-1.25, 0);
    \draw[green, postaction={decorate}] (-1.25, 0) -- (-2, 0);
\end{scope}
\node[green, above right] at (0.6, 0.2){\tiny $\mathrm{sw}$};
\node[green, above left] at (-0.6, 0.2){\tiny $\mathrm{e}$};
\begin{scope}[shift={(0, 0.1)}, thick, decoration={markings, mark=at position 0.5 with {\arrow{>}}}]
    \draw[blue, postaction={decorate}] (1.25, 0.9) -- (1.25, 0);
    \draw[blue] (1.25, 0) to[out=180, in=0] (0, 0.3) to[out=180, in=0] (-1.25, 0);
    \draw[blue, postaction={decorate}] (-1.25, 0) -- (-1.25, 0.9);
\end{scope}
\begin{scope}[shift={(0, -0.1)}, thick, decoration={markings, mark=at position 0.5 with {\arrow{>}}}]
    \draw[blue, postaction={decorate}] (1.25, -0.9) -- (1.25, 0);
    \draw[blue] (1.25, 0) to[out=180, in=0] (0, 0.3) to[out=180, in=0] (-1.25, 0);
    \draw[blue, postaction={decorate}] (-1.25, 0) -- (-1.25, -0.9);
\end{scope}
\node[blue, above] at (1.25, 1){$\widetilde{a}$};
\node[blue, below] at (1.25, -1){$-\widetilde{b}$};
\end{tikzpicture}
}}
$
\\
\noalign{\vskip 1em}
$
\vcenter{\hbox{
\begin{tikzpicture}[scale = 0.9]
\draw[green, thick] (0, 0) -- (2, 0);
\filldraw[red] (0, 0) circle (0.05);
\draw[blue, <-, thick] (1, -1) -- (1, 1);
\draw[blue, ->, thick] (1.5, -1) -- (1.5, 1);
\node[blue, below] at (1, -1.1){$a$};
\node[blue, below] at (1.5, -1){$b$};
\end{tikzpicture}
}}
$ 
& 
$
\vcenter{\hbox{
\begin{tikzpicture}[scale = 0.9]
\draw [thick] plot [domain=0:360, samples=200, smooth] ({0.2*(cos(2*\x)+2*cos(-\x))}, {0.2*(sin(2*\x)+2*sin(-\x))});
\draw[green, thick] (2, 0.2) -- (0.8, 0.2) to[out=180, in=30] (0.1, {-0.1*sqrt(3)}) to[out=-150, in=-10] (-0.2, -0.2);
\filldraw[green] (-0.2, -0.2) circle (0.04);
\node[green, left] at (-0.2, -0.2){\tiny $\mathrm{e}$};
\filldraw[green] (0.1, {-0.1*sqrt(3)}) circle (0.04);
\node[green, below] at (0.1, {-0.1*sqrt(3)}){\tiny $\mathrm{sw}$};
\filldraw[red] (0, 0) circle (0.025);
\draw[blue, <-, thick] (1, -1) -- (1, 1);
\draw[blue, ->, thick] (1.5, -1) -- (1.5, 1);
\node[blue, below] at (1, -1.1){$a$};
\node[blue, below] at (1.5, -1){$b$};
\end{tikzpicture}
}}
$
\end{tabular}
\caption{The combinatorial and holomorphic lifts agree: isotope $\tilde b$ in the combinatorial lift across the sign line. }
\label{fig:combvsholo}
\end{figure}

\begin{remark}
It is interesting to see how the sign rules above are affected by an orientation reversal of $M$. 
For the exchanges, all crossings change sign, and hence there is a $-1$ for each strip attached. Since the strips also change the number of boundary components, we find that the change of sign matches the overall sign. 
For the detours, they do not change the number of boundary components, so the sign changes of boundary components of curves before and after lifting commute. 
Note also that the curves in the relation at negative ends are unchanged but that the $a$-powers changes sign which takes recursion relations of disks to those of anti-disks.
\end{remark}

\subsection{3d spectral network as a moduli space of holomorphic disks}\label{subsec:moduli-interpretation-leafspace} 
In this section, we show that the 2-skeleton $\Delta^\vee_{\le 2}$ of the polygonal decomposition dual to an ideal triangulation $\Delta$ on a 3-manifold $M$ has a natural interpretation as the compactified moduli space of certain holomorphic curves with boundary on $L_\Delta^\circ$. This gives a holomorphic curve interpretation of the foliation used to define the skein lifting map that will be useful if one considers similar questions for `large' deformations of the Lagrangian to which we lift in the cotangent bundle.

Let $M$ and $\Delta$ be as above and construct the Lagrangian cobordism $L_\Delta^\circ$. Consider the moduli space $\mathcal{M}(L_\Delta^\circ)$ of holomorphic disks in $T^\ast M$ with two positive punctures asymptotic to Reeb chords in $\Lambda_{\partial\Delta}$. 
\begin{lemma}\label{l: 2-dim strata mdli}
The interior of $\mathcal{M}(L_\Delta^\circ)$ is naturally diffeomorphic to the union of the interiors of the 2-cells of $\Delta^{\vee}$.
\end{lemma}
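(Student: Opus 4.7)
The plan is to identify $\mathcal{M}(L_\Delta^\circ)$ via the flow-tree correspondence from Appendix \ref{flow graphs and holomorphic curves}. After fiber-scaling $L_\Delta^\circ$ by a small $\lambda>0$, any holomorphic disk with two positive punctures asymptotic to Reeb chords in the Bott families of $\Lambda_{\partial\Delta}$ and no other ends must Gromov-converge to a generalized flow graph for the difference of the two local defining $1$-forms of $L_\Delta^\circ$, with no interior vertices, no switches, and no ends other than the two positive ones at $\partial_\infty M$. In other words, any such disk collapses to a single flow line of the difference vector field between the two local sheets, connecting two points in the Bott family at $\partial_\infty M$. The strategy is therefore to identify the set of such flow lines combinatorially, and then lift back to disks using the rigid flow-line $\leftrightarrow$ disk correspondence.

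First I would establish the needed Morse-theoretic structure of the sheets. In each component $U$ of the complement of $\Delta^\vee_{\le 2}$ inside a tetrahedron of $\Delta$, the two local sheets of $L_\Delta^\circ$ are graphs of exact $1$-forms $df_1, df_2$, where $f_1-f_2$ was arranged in Section \ref{ssec:SingLagofdc} to have no interior critical point in $U$, to vanish at the vertex of $\Delta$ adjacent to $U$, and to grow monotonically towards $\partial_\infty M$. Consequently every flow line of $\nabla(f_1-f_2)$ in $U$ enters from the vertex-facing portion of $\partial U$, crosses the 2-cell of $\Delta^\vee$ contained in $\overline{U}$ transversely exactly once, and exits either to $\partial_\infty M$ or to a neighboring component across a face of $\Delta$.

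Next I would identify the global leaf space of the difference flow with the interiors of the 2-cells of $\Delta^\vee$. The one-dimensional foliation of Section \ref{ssec:brancheddoublecoversfoliations} (Figure \ref{fig:tetrahedron_foliation}) has as its transverse leaf space exactly $\bigsqcup_{\eta\in\Delta^\vee_2}\mathrm{int}(\eta)$, each leaf meeting the $2$-skeleton $\Delta^\vee_{\le 2}$ in a single interior point. The key verification at this step is that this combinatorial foliation coincides, up to reparametrization, with the flow of $\nabla(f_1-f_2)$ across faces of $\Delta$ and in the symmetric $D_4^-$ model near the cubic vertices of $\Delta^\vee_{\le 2}$. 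By the local model at those vertices and the monotonicity from Step 1, a flow line of $\nabla(f_1-f_2)$ starting near $\partial_\infty M$ on sheet $1$ is forced into the cell $U$, crosses $\Delta^\vee_{\le 2}$ in a single 2-cell, and continues on sheet $2$ back out to $\partial_\infty M$. This gives a bijection between flow lines (with matching positive-end data) and interior points of 2-cells of $\Delta^\vee$.

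Finally I would promote the bijection to a diffeomorphism using the flow-tree / holomorphic disk correspondence recalled in Lemma \ref{l : flow graph converegnce 3}. A flow line with no interior vertices, switches, or ends is automatically transversely cut out in the flow-tree moduli space, so for all sufficiently small scaling parameter there is a unique bare holomorphic disk near it with the prescribed asymptotics, and every interior element of $\mathcal{M}(L_\Delta^\circ)$ arises this way. Sending a disk to the unique intersection of its limiting flow line with $\Delta^\vee_{\le 2}$ then yields the claimed diffeomorphism on interiors.

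The hardest part will be Step 2: making precise the identification of the combinatorial foliation with the gradient flow of the difference of the front functions, uniformly across the branch locus where the two sheets swap labels and near the cubic $D_4^-$ singular points of $\Delta^\vee_{\le 2}$, so that the resulting parametrization of $\mathcal{M}(L_\Delta^\circ)$ by the 2-cells is smooth and not merely a set-theoretic bijection.
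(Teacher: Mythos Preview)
Your approach is correct and follows the same route as the paper's proof: use the flow-line/holomorphic-strip correspondence, identify the interior disks with smooth leaves of the difference-flow foliation, and parametrize those leaves by their unique transverse intersection with $\Delta^\vee_{(2)}$. The paper's argument is much terser because the foliation and its leaf space were already set up in Section~\ref{ssec:brancheddoublecoversfoliations}.

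Two points where you over-complicate. First, your Step~2 worries about behavior across the branch locus and near the $D_4^-$ singularities of the front, but for the \emph{interior} of $\mathcal{M}(L_\Delta^\circ)$ only smooth leaves---those avoiding the 1-skeleton of $\Delta^\vee$---are relevant; these lie entirely in the region where $L_\Delta^\circ \to M$ is a genuine two-sheeted cover with smooth defining functions. The branch-locus analysis (switches, ends, disk-breaking) belongs to the next two lemmas on $\partial_\times\mathcal{M}$ and $\partial_-\mathcal{M}$, not here. Second, your description of the flow has a geometric slip: a smooth leaf does not ``exit to a neighboring component across a face of $\Delta$.'' Each smooth leaf stays within a single tetrahedron (or lies in a shared face) and connects two of its ideal vertices, as in Figure~\ref{fig:tetrahedron_foliation}; it crosses exactly one 2-cell of $\Delta^\vee$ transversely. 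Once you drop these extraneous concerns, your argument is the paper's argument.
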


\begin{proof}
We use the correspondence between flow lines and holomorphic disks. There is exactly one smooth flow line in the foliation determined by difference of the sheets of $L_\Delta^\circ$ through each interior point of a 2-cell. This flow line connects vertices of $\Delta$ and has unique asymptotics in $\partial_\infty M$. By the correspondence, we then find associated holomorphic strips. The diffeomorphism is obtained by intersection with $\Delta^{\vee}_{2}$. It is clearly a diffeomorphism for the Morse flows, and since this is an open condition, also for the associated holomorphic curves.   
\end{proof}

We next consider the boundary of $\mathcal{M}(L_\Delta^\circ)$. There is one boundary configuration that corresponds to boundary breaking, where the strip with two positive punctures breaks into two disks with one positive puncture each. We denote this boundary component $\partial_{\times}\mathcal{M}$.

\begin{lemma}\label{l: 1-dim strata mdli}
The boundary $\partial_{\times}\mathcal{M}(L_\Delta^\circ)$ is naturally diffeomorphic to the union of the 1-cells of $\Delta^{\vee}$.
\end{lemma}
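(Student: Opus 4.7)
The plan is to extend the flow-line parameterization of Lemma~\ref{l: 2-dim strata mdli} from the interiors of 2-cells to their 1-cell boundaries. By the flow-tree/holomorphic-curve dictionary of Lemma~\ref{l : flow graph converegnce 3}, a boundary breaking of a strip in $\mathcal{M}(L_\Delta^\circ)$ corresponds in the flow-tree limit to a flow trajectory connecting two Reeb chords at $\partial_\infty M$ that acquires a critical point in its interior and splits there into two half-leaves, each limiting to an ideal vertex of $\Delta$ and each giving rise, after rescaling, to a one-puncture holomorphic disk with boundary on $L_\Delta^\circ$.

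The critical locus in question can be identified geometrically with the 3d spectral network of Section~\ref{ssec:brancheddoublecoversfoliations}: in each tetrahedron $\delta\in\Delta$ there are exactly twelve critical triangles with vertices at (ideal vertex, face barycenter, tetrahedron barycenter), shown in green in Figure~\ref{fig:tetrahedron_spectralnetwork}. Comparing Figures~\ref{fig:tetrahedron_spectralnetwork} and~\ref{fig:tetrahedron_branchcut}, the only intersection of a green spectral network triangle with an orange slicing quadrilateral of $\Delta^{\vee}_{(2)}$ is along a shared edge joining a tetrahedron barycenter to a face barycenter, and these shared edges are precisely the 1-cells of $\Delta^\vee$ (open in $M^\circ$ because tetrahedron barycenters are removed). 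Sending a broken strip to the intersection of its splitting point with $\Delta^{\vee}_{(2)}$ thus produces a map $\partial_\times\mathcal{M}(L_\Delta^\circ) \to \{\text{1-cells of }\Delta^\vee\}$, parallel to the map of Lemma~\ref{l: 2-dim strata mdli} at the boundary.

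It remains to upgrade this map to a diffeomorphism. Smoothness and local invertibility follow from the smooth parameter dependence of the flow-tree/holomorphic-curve correspondence near a simple boundary breaking. The main obstacle is the combinatorial bookkeeping needed for bijectivity: at a given point $p$ of a 1-cell of $\Delta^\vee$ there are three green triangles sharing that 1-cell as an edge (one for each ideal vertex of the relevant face of $\Delta$) on each of the two sides of $p$, so a priori there are several candidate pairs of flow half-leaves meeting at $p$; one must use the orientation conventions of Definition~\ref{defn: sheet labels} together with the matching of asymptotics at $\partial_\infty M$ to show that exactly one such pair arises as the limit of a genuine broken strip. A secondary subtlety is to rule out contributions to $\partial_\times\mathcal{M}$ from other SFT breakings at the positive or negative ends of the cobordism, so that $\partial_\times\mathcal{M}$ captures only the type of degeneration treated here.
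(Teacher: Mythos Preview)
Your direction is right and close to the paper's: as a two-puncture strip moves toward the 1-cell boundary of its 2-cell, it boundary-breaks into a nodal pair of one-puncture disks. The gap is in how you identify the pieces and then establish the diffeomorphism. The paper's argument hinges on recognizing the one-puncture disks concretely as the flow trees with a switch vertex and an end vertex at the branch locus (Figure~\ref{fig:SwitchEnd}, cf.\ Lemma~\ref{l:flowgraphs on Lcirc}). Two facts then do the work: these are the \emph{only} flow trees with one positive puncture, so there are no extra broken configurations; and the lifts to $L_\Delta^\circ$ of two of them intersect transversely in a single point, so the boundary-evaluation maps are transverse and standard Floer gluing gives the local diffeomorphism between $\partial_\times\mathcal{M}$ and the 1-cell. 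Your description in terms of the flow line ``acquiring a critical point in its interior'' does not isolate this structure, and your substitutes---``smooth parameter dependence'' and the sheet-label conventions of Definition~\ref{defn: sheet labels}---do not establish the required transversality of the node.

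Your combinatorial worry is also based on a miscount: at a point on a 1-cell there are three spectral-network triangles in total (one per ideal vertex of the adjacent face of $\Delta$), arranged symmetrically around the 1-cell, not three on each of two sides. The three pairs of one-puncture disks at that point are exactly the boundary breakings of the three adjacent 2-cells, so once the switch--end description is in hand there is nothing left to resolve. Finally, $\partial_\times\mathcal{M}$ is by definition the boundary-node stratum; the negative-end SFT breaking is the separate stratum $\partial_-\mathcal{M}$ of Lemma~\ref{l: 0-dim strata mdli}, not something to exclude here.
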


\begin{proof}
Consider first the flow lines. As a flow line goes to the boundary of a $2$-cell it breaks into two flow lines from the vertices at its end to the branch locus which is the $1$-cell at its boundary. Note that these flow lines correspond to flow trees with a switch and an end, see Figure \ref{fig:SwitchEnd}, 
and that their lifts intersect transversely in a single point and that these are the only such flow trees. We thus find that there are pairs of intersecting disks corresponding to the boundary configurations, and by transversality of the evaluation maps and uniqueness of gluing we find that the pairs of disks give the boundary $\partial_{\times}\mathcal{M}$ as claimed, see Figure \ref{fig:flow lines and disks}. 
\end{proof}

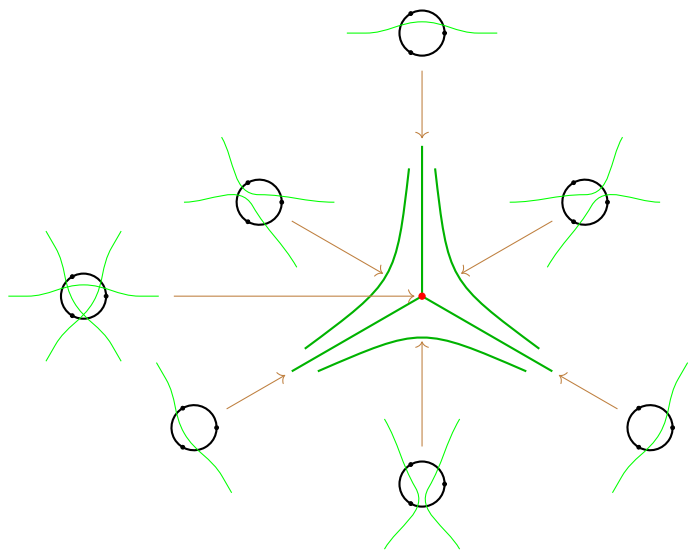
\begin{figure}
\centering
\[
\vcenter{\hbox{
\begin{tikzpicture}
\coordinate (o) at (0, 0);
\coordinate (a) at (0, 2);
\coordinate (b) at ({sqrt(3)}, -1);
\coordinate (c) at ({-sqrt(3)}, -1);
\draw[darkgreen, thick] (o) -- (a);
\draw[darkgreen, thick] (o) -- (b);
\draw[darkgreen, thick] (o) -- (c);
\filldraw[red] (o) circle (0.1em);
\draw[darkgreen, thick] ($0.9*(a)+0.1*(b)$) .. controls ($-0.2*(c)$) .. ($0.1*(a)+0.9*(b)$);
\draw[darkgreen, thick] ($0.9*(b)+0.1*(c)$) .. controls ($-0.2*(a)$) .. ($0.1*(b)+0.9*(c)$);
\draw[darkgreen, thick] ($0.9*(c)+0.1*(a)$) .. controls ($-0.2*(b)$) .. ($0.1*(c)+0.9*(a)$);
\draw[brown, ->] (-3.3, 0) -- ($(o)-(0.1, 0)$);
\begin{scope}[shift={(-4.5, 0)}, scale=0.5]
    \begin{scope}[shift={(0.6, 0)}]
    \filldraw (0, 0) circle (0.05);
    \end{scope}
    \begin{scope}[shift={(-0.3, {0.3*sqrt(3)})}]
    \filldraw (0, 0) circle (0.05);
    \end{scope}
    \begin{scope}[shift={(-0.3, {-0.3*sqrt(3)})}]
    \filldraw (0, 0) circle (0.05);
    \end{scope}
    % \filldraw[red] (0, 0) circle (0.05);
    \draw[thick] (0, 0) circle (0.6);
    \draw[green] (2, 0) -- (1.5, 0) to[out=180, in=0] (0, 0.3) to[out=180, in=0] (-1.5, 0) -- (-2, 0);
    \begin{scope}[rotate=120]
        \draw[green] (2, 0) -- (1.5, 0) to[out=180, in=0] (0, 0.3) to[out=180, in=0] (-1.5, 0) -- (-2, 0);
    \end{scope}
    \begin{scope}[rotate=240]
        \draw[green] (2, 0) -- (1.5, 0) to[out=180, in=0] (0, 0.3) to[out=180, in=0] (-1.5, 0) -- (-2, 0);
    \end{scope}
\end{scope}
\draw[brown, ->] (0, -2) -- (0, -0.6);
\begin{scope}[shift={(0, -2.5)}, scale=0.5]
    \begin{scope}[shift={(0.6, 0)}]
    \filldraw (0, 0) circle (0.05);
    \end{scope}
    \begin{scope}[shift={(-0.3, {0.3*sqrt(3)})}]
    \filldraw (0, 0) circle (0.05);
    \end{scope}
    \begin{scope}[shift={(-0.3, {-0.3*sqrt(3)})}]
    \filldraw (0, 0) circle (0.05);
    \end{scope}
    % \filldraw[red] (0, 0) circle (0.05);
    \draw[thick] (0, 0) circle (0.6);
    \draw[green] (-1, {sqrt(3)}) to[out=-60, in=120] ($({-0.1*sqrt(3)}, -0.1)$) to[out=-60, in=60] (-1, {-sqrt(3)});
    \draw[green] (1, {sqrt(3)}) to[out=-120, in=60] ($({0.1*sqrt(3)}, -0.1)$) to[out=-120, in=120] (1, {-sqrt(3)});
\end{scope}
\begin{scope}[rotate=120]
    \draw[brown, ->] (0, -2) -- (0, -0.6);
    \begin{scope}[shift={(0, -2.5)}, scale=0.5]
        \begin{scope}[shift={(0.6, 0)}]
        \filldraw (0, 0) circle (0.05);
        \end{scope}
        \begin{scope}[shift={(-0.3, {0.3*sqrt(3)})}]
        \filldraw (0, 0) circle (0.05);
        \end{scope}
        \begin{scope}[shift={(-0.3, {-0.3*sqrt(3)})}]
        \filldraw (0, 0) circle (0.05);
        \end{scope}
        % \filldraw[red] (0, 0) circle (0.05);
        \draw[thick] (0, 0) circle (0.6);
        \draw[green] (-1, {sqrt(3)}) to[out=-60, in=120] ($({-0.1*sqrt(3)}, -0.1)$) to[out=-60, in=60] (-1, {-sqrt(3)});
        \draw[green] (1, {sqrt(3)}) to[out=-120, in=60] ($({0.1*sqrt(3)}, -0.1)$) to[out=-120, in=120] (1, {-sqrt(3)});
    \end{scope}
\end{scope}
\begin{scope}[rotate=240]
    \draw[brown, ->] (0, -2) -- (0, -0.6);
    \begin{scope}[shift={(0, -2.5)}, scale=0.5]
        \begin{scope}[shift={(0.6, 0)}]
        \filldraw (0, 0) circle (0.05);
        \end{scope}
        \begin{scope}[shift={(-0.3, {0.3*sqrt(3)})}]
        \filldraw (0, 0) circle (0.05);
        \end{scope}
        \begin{scope}[shift={(-0.3, {-0.3*sqrt(3)})}]
        \filldraw (0, 0) circle (0.05);
        \end{scope}
        % \filldraw[red] (0, 0) circle (0.05);
        \draw[thick] (0, 0) circle (0.6);
        \draw[green] (-1, {sqrt(3)}) to[out=-60, in=120] ($({-0.1*sqrt(3)}, -0.1)$) to[out=-60, in=60] (-1, {-sqrt(3)});
        \draw[green] (1, {sqrt(3)}) to[out=-120, in=60] ($({0.1*sqrt(3)}, -0.1)$) to[out=-120, in=120] (1, {-sqrt(3)});
    \end{scope}
\end{scope}
\draw[brown, ->] (0, 3) -- (0, 2.1);
\begin{scope}[shift={(0, 3.5)}, scale=0.5]
    \begin{scope}[shift={(0.6, 0)}]
    \filldraw (0, 0) circle (0.05);
    \end{scope}
    \begin{scope}[shift={(-0.3, {0.3*sqrt(3)})}]
    \filldraw (0, 0) circle (0.05);
    \end{scope}
    \begin{scope}[shift={(-0.3, {-0.3*sqrt(3)})}]
    \filldraw (0, 0) circle (0.05);
    \end{scope}
    % \filldraw[red] (0, 0) circle (0.05);
    \draw[thick] (0, 0) circle (0.6);
    \draw[green] (2, 0) -- (1.5, 0) to[out=180, in=0] (0, 0.3) to[out=180, in=0] (-1.5, 0) -- (-2, 0);
\end{scope}
\begin{scope}[rotate=120]
    \draw[brown, ->] (0, 3) -- (0, 2.1);
    \begin{scope}[shift={(0, 3.5)}, scale=0.5]
    \begin{scope}[shift={(0.6, 0)}]
    \filldraw (0, 0) circle (0.05);
    \end{scope}
    \begin{scope}[shift={(-0.3, {0.3*sqrt(3)})}]
    \filldraw (0, 0) circle (0.05);
    \end{scope}
    \begin{scope}[shift={(-0.3, {-0.3*sqrt(3)})}]
    \filldraw (0, 0) circle (0.05);
    \end{scope}
    % \filldraw[red] (0, 0) circle (0.05);
    \draw[thick] (0, 0) circle (0.6);
    \draw[green] (2, 0) -- (1.5, 0) to[out=180, in=0] (0, 0.3) to[out=180, in=0] (-1.5, 0) -- (-2, 0);
    \end{scope}
\end{scope}
\begin{scope}[rotate=240]
    \draw[brown, ->] (0, 3) -- (0, 2.1);
    \begin{scope}[shift={(0, 3.5)}, scale=0.5]
    \begin{scope}[shift={(0.6, 0)}]
    \filldraw (0, 0) circle (0.05);
    \end{scope}
    \begin{scope}[shift={(-0.3, {0.3*sqrt(3)})}]
    \filldraw (0, 0) circle (0.05);
    \end{scope}
    \begin{scope}[shift={(-0.3, {-0.3*sqrt(3)})}]
    \filldraw (0, 0) circle (0.05);
    \end{scope}
    % \filldraw[red] (0, 0) circle (0.05);
    \draw[thick] (0, 0) circle (0.6);
    \draw[green] (2, 0) -- (1.5, 0) to[out=180, in=0] (0, 0.3) to[out=180, in=0] (-1.5, 0) -- (-2, 0);
    \end{scope}
\end{scope}
\end{tikzpicture}
}}
\]
\caption{The disks in the boundary corresponding to flow lines ending in the branch locus.}
\label{fig:flow lines and disks}
\end{figure}

Finally, we consider the boundary of the moduli space where parts of the curve fall into the negative end. We call this boundary $\partial_-\mathcal{M}$. 

\begin{lemma}\label{l: 0-dim strata mdli}
The boundary $\partial_-\mathcal{M}$ corresponds to the barycenters in $\Delta$. 
Curves in $\partial_-\mathcal{M}$ consist of two strips with a positive and a negative puncture each and two disks in the negative end, one at each negative asymptotic. 
The curves in the negative end form a 2-dimensional space: each disk can be translated in the $\R$-direction.
\end{lemma}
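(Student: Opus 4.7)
The plan is to combine the flow-tree description of $\mathcal{M}(L_\Delta^\circ)$ developed in Lemma \ref{l: 2-dim strata mdli} with SFT compactness for broken curves at the negative ends $\Lambda_{\mathbb{T}} \subset S^5$. First I would note that any degenerating sequence in $\mathcal{M}$ has a subsequential limit that is either an interior curve (treated in Lemma \ref{l: 2-dim strata mdli}), a boundary-broken configuration in $\partial_\times \mathcal{M}$ (treated in Lemma \ref{l: 1-dim strata mdli}), or a configuration with at least one component falling into a negative end. Exactness of $L_\Delta^\circ$ (Proposition \ref{prp: exact branched double cover}) together with the Calabi--Yau and Maslov-zero conditions on $(T^*M, L_\Delta^\circ)$ rules out sphere bubbles, disk bubbles on $L_\Delta^\circ$, and closed-orbit breaking. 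Moreover, the positive asymptotics $\Lambda_{\partial\Delta}$ only contribute the Bott families of Reeb chords shifting $\partial_\infty M$ in the Reeb direction, so there is no breaking of the positive punctures themselves. The only remaining codimension-one stratum is breaking at some of the tori $\Lambda_{\mathbb{T}}$, which is $\partial_- \mathcal{M}$.

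Next, I would unwind the combinatorics of such a limit. The original domain is a genus-zero disk with two positive and no negative punctures, so an SFT limit is a genus-zero tree of punctured disks whose punctures match along the positive and negative Reeb chords. In the cobordism $(T^*M)^\circ$, each component inherits a subset of the two positive punctures on $\Lambda_{\partial\Delta}$ and new negative punctures on various $\Lambda_{\mathbb{T}}$; each component of the symplectization level(s) $\R\times S^5$ has boundary on $\R\times \Lambda_{\mathbb{T}}$. The minimal-codimension limit, which produces a $1$-dimensional boundary stratum as required by the index count for the $2$-dimensional moduli $\mathcal{M}$, has precisely two strips in the cobordism --- each with one positive and one negative puncture --- together with two disks with one positive puncture each, all in the symplectization over a single barycenter. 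Both negative asymptotics must land at the same tetrahedron: if they lay over distinct barycenters, the broken configuration would be disconnected since the symplectization levels over different barycenters are independent, contradicting the fact that the original disk is connected. More complicated breakings, with additional symplectization components or negative asymptotics at several barycenters, have strictly higher codimension and thus do not contribute.

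The main obstacle will be the rigidity and dimension computation for the limit configuration. I would argue that, for fixed matching Reeb chord, each cobordism strip is rigid because its underlying flow tree is the unique flow half-line in the $2$-cell joining a vertex of $\Delta$ at $\partial_\infty M$ to the barycenter in question; near the barycenter this flow line models the Harvey--Lawson geometry of Proposition \ref{prp: exact branched double cover}, asymptotic to one of the minimal-action Reeb chords of $\Lambda_{\mathbb{T}}\cong \Lambda_{\mathrm{HL}}$. For the disks in $\R\times S^5$ with boundary on $\R\times\Lambda_{\mathrm{HL}}$ and a single positive puncture at a minimal Reeb chord, I would invoke the classification from \cite{Ekholm-Shende-unknot}: once the Reeb chord is specified by the matching condition with the cobordism strip, the disk is rigid up to $\R$-translation in the symplectization direction. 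Since there are two such disks, each contributing one $\R$-translation, the moduli of the broken configuration over a fixed barycenter is $\R\times\R$, i.e.\ $2$-dimensional. The standard SFT quotient by the overall $\R$-action then gives a $1$-dimensional stratum, matching the codimension-$1$ structure of $\partial_-\mathcal{M}$. Finally, gluing transversely cut-out broken configurations back to curves in $\mathcal{M}$ shows that every barycenter is realized and that the identification $\partial_-\mathcal{M}\leftrightarrow \Delta^3$ is a bijection, concluding the lemma.
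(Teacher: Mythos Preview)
Your approach via SFT compactness is more elaborate than the paper's, which simply observes that after Lemmas~\ref{l: 2-dim strata mdli} and~\ref{l: 1-dim strata mdli} the remaining flow lines are exactly those connecting ideal vertices to barycenters (giving the two cobordism strips), and defers the description of the negative-end disks to \cite{Ekholm-Longhi-Shende}. Your version reconstructs more of the SFT structure, but two steps do not hold as written.

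First, you repeatedly treat $\partial_-\mathcal{M}$ as a codimension-one stratum, concluding that after a single $\R$-quotient the broken configurations form a $1$-dimensional family. But the Corollary following the lemma identifies $\partial_-\mathcal{M}$ with the $0$-dimensional stratum of $\Delta^\vee_{\le 2}$, namely the barycenters themselves. Geometrically, each $2$-cell of $\Delta^\vee$ has the barycenter as a \emph{corner}: approaching it from the interior you first meet $\partial_\times\mathcal{M}$ (the $1$-cells) and only then the barycenter. So $\partial_-\mathcal{M}$ sits at codimension two, not one, and your ``minimal-codimension'' index count as well as the single-$\R$ quotient are both off. The lemma's ``$2$-dimensional space'' refers to the unreduced moduli of the two negative-end disks, not to the dimension of the stratum in the compactification.

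Second, your connectedness argument for why both negative asymptotics lie over the same barycenter is circular. You assert the limit is two cobordism strips plus two symplectization disks, then argue that distinct barycenters would force disconnection. But with that four-piece structure the SFT building is already a disjoint union of two trees (strip--disk, strip--disk) \emph{even when both lie over the same barycenter}, so the argument does not distinguish the two cases. The actual reason both punctures land at one barycenter is the flow-line picture you invoke later: each $2$-cell of $\Delta^\vee$ lies in a single tetrahedron, so any degenerating family of flow lines in that cell limits to that tetrahedron's barycenter. You should lead with this, as the paper does, rather than with an abstract codimension count.

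Finally, a minor point: the relevant reference for curves in the symplectization with boundary on $\R\times\Lambda_{\mathbb{T}}$ is \cite{Ekholm-Longhi-Shende}; \cite{Ekholm-Shende-unknot} treats the smoothed Aganagic--Vafa brane in $\C^3$, which is a different moduli problem.
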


\begin{proof}
Remaining flow lines connect vertices to barycenters. These correspond to strips with a positive and a negative Reeb chord asymptote. Remaining claims follow from the description of the moduli space of curve on $\Lambda_{\mathbb{T}}$, see \cite{Ekholm-Longhi-Shende}. 
\end{proof}

We summarize the results in Lemmas \ref{l: 2-dim strata mdli}, \ref{l: 1-dim strata mdli}, and \ref{l: 0-dim strata mdli}.
\begin{corollary}
The moduli space $\mathcal{M}(L_\Delta^\circ)$ of holomorphic disks in $T^\ast M^\circ$ with two positive punctures and boundary on $L_\Delta^\circ$ is naturally isomorphic to $\Delta^{\vee}_{\le 2}$, where 2-dimensional stratum correspond to one level unbroken curves, the 1-dimensional stratum to nodal disks with one boundary node, and the $0$-dimensional stratum to SFT broken curves with fixed positive end two disks with one positive and one negative puncture each and two negative end disks at the negative end Reeb chords, each parameterized by $\R$. 

In particular, evaluation at a boundary marked point $\mathrm{ev}\colon\mathcal{M}(L_\Delta^\circ)\to L_\Delta^\circ$ is an injective degree one map.\qed       
\end{corollary}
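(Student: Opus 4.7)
The plan is to assemble the three preceding lemmas into a coherent description of the compactified moduli space, verify the compatibility of the stratum-by-stratum identifications, and then deduce the evaluation-map claim. Let me denote by $\Phi \colon \mathcal{M}(L_\Delta^\circ) \to \Delta^{\vee}_{\le 2}$ the map that sends a holomorphic curve to the intersection of its image (after projection to $M$) with the 2-skeleton $\Delta^{\vee}_{\le 2}$; equivalently, $\Phi$ is evaluation at the interior point of the strip that lies over the branch-cut quadrilateral.

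First, I would check that $\Phi$ is well-defined and that it sends each of the three strata of $\mathcal{M}(L_\Delta^\circ)$ into the corresponding skeletal piece of $\Delta^{\vee}_{\le 2}$. For the top stratum, this is exactly the content of Lemma \ref{l: 2-dim strata mdli}: there is a unique strip through each interior point of a 2-cell, cut out transversely, so $\Phi$ restricts to a diffeomorphism from the interior of $\mathcal{M}(L_\Delta^\circ)$ onto the union of the interiors of the 2-cells of $\Delta^\vee$. On $\partial_\times \mathcal{M}$, Lemma \ref{l: 1-dim strata mdli} identifies boundary-broken configurations with flow-trees of end/switch type that terminate on the branch locus; since the branch locus coincides with the 1-skeleton of $\Delta^\vee$ in the obvious way (it is precisely the set of 1-cells shared by adjacent quadrilaterals), this matches the stratum with the 1-cells. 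On $\partial_- \mathcal{M}$, Lemma \ref{l: 0-dim strata mdli} identifies SFT-broken configurations with the barycenters of the tetrahedra, which are the 0-cells of $\Delta^\vee_{\le 2}$.

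Next, I would verify compatibility of these three identifications so that $\Phi$ extends to a homeomorphism on the compactification. The key point is to show that as a curve in the 2-dimensional stratum approaches $\partial_\times \mathcal{M}$, the point $\Phi(u)$ tends to the corresponding 1-cell rather than to some other location, and similarly that curves approaching $\partial_- \mathcal{M}$ converge in $\Phi$ to the barycenter. Both statements follow from the flow-tree/holomorphic-curve correspondence, since the Gromov limit of the flow lines along the boundary of the 2-cell is precisely the pair of flow trees terminating on the 1-cell, and the SFT limit at the negative end is localized at the barycenter. Because $\Phi$ restricts to a continuous bijection on each stratum and is consistent with convergence across strata, and the compactified moduli space is compact and Hausdorff, $\Phi$ is a homeomorphism.

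Finally, for the evaluation-map statement, I would simply observe that over the top stratum $\mathcal{M}$ is the space of strips sweeping out foliation leaves, hence the boundary of each disk meets $L_\Delta^\circ$ in a pair of arcs and the evaluation at a boundary marked point is an immersion. Injectivity and degree-one follow from the fact that the foliation of $L_\Delta^\circ$ by strip boundaries is nonsingular away from the preimage of the 1-skeleton of $\Delta^\vee$, hence each point of $L_\Delta^\circ$ has a unique preimage. The only potential obstacle here is confirming that no two distinct strata can have the same image under evaluation; this follows because a boundary arc in the top stratum cannot limit to the same point on $L_\Delta^\circ$ as one from a nodal limit without the underlying flow-tree configurations already being identified.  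Thus $\mathrm{ev}$ is an injective degree-one map and the corollary follows.
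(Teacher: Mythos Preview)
Your proposal is correct and takes essentially the same approach as the paper: the corollary carries a \qed{} in its statement, meaning the authors regard it as an immediate summary of Lemmas \ref{l: 2-dim strata mdli}, \ref{l: 1-dim strata mdli}, and \ref{l: 0-dim strata mdli}, and you have simply spelled out the stratum-by-stratum assembly and the compatibility check that they leave implicit. One small clarification: the evaluation map $\mathrm{ev}$ into $L_\Delta^\circ$ is a map between 3-dimensional spaces (the domain carries the extra boundary marked point), so ``degree one'' refers to the fact that the strip boundaries foliate $L_\Delta^\circ$ with each leaf hit once---your argument via the foliation being nonsingular captures exactly this.
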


\section{Wall crossings} \label{section simple wall crossings}

In this section, we will prove Theorems \ref{general wall crossing} and \ref{skein valued cluster}.  The basic idea of the proofs of these results is the following. Suppose there was a Lagrangian $L \subset T^*(S \times \R)$ whose symplectic reductions over $t \in \R$ gave $\Sigma_t\subset T^\ast S$. For a knot $K \subset S$, we could then consider $[K]_L$ and compute it by putting $K$ at either very negative or very positive $t$.  Then by Theorem \ref{algebra structure}, we would deduce as desired: 
\[
[K]_{\Sigma_-} [\emptyset]_L = [K]_L = [\emptyset]_L [K]_{\Sigma_+}.
\]

Such a Lagrangian $L$ exists when the Lagrangian isotopy is exact, but the isotopies of interest here (for which $[\emptyset]_L \ne 0$) are not exact and the Lagrangian $L$ does not exist. However, as we will see below, it is possible to construct a totally real $\widetilde{L}$ which, despite not being Lagrangian, can play a role analogous to that of $L$ and using $\widetilde{L}$ we prove Theorem \ref{general wall crossing}. To then establish Theorem \ref{skein valued cluster} we show that under its hypotheses, $\widetilde{L}$ bounds a single disk, whose multiple cover count was previously determined in \cite{Ekholm-Shende-unknot}. 

\subsection{Lagrangian isotopies and proof of Theorem \ref{general wall crossing}}\label{wall crossing lifts}
We consider first a general construction of totally real submanifolds associated to `slowed down' Lagrangian isotopies and show that there exists almost complex structures for which these totally real submanifolds are good boundary conditions for holomorphic curves.  

Let $L\subset X$ be a Lagrangian, possibly non-compact in which case we assume it is asymptotic to a Legendrian. Let $\iota_t\colon L\to X$, $0\le t\le 1$ be a Lagrangian isotopy which we take to be constant near the ends. We will slow down the isotopy so that its trace is suitable for holomorphic curve counting.

Fix an integer $N>0$, which we will ultimately take sufficiently large. First, rescale the isotopy to $\phi'_t=\iota_{\frac{1}{N} t}$, $0\le t\le N$. Then $|\frac{\partial \phi_{t}'}{\partial t}|=\mathcal{O}(1/N)$. Second, transport the isotopy $\phi'_t$ to $[0,2N]$ and make it constant in intervals $[2j-1,2j]$. We call the resulting isotopy $\phi_t$, $t\in\R$, where we extend it to be constant in $(-\infty,0]$ and $[2N,\infty)$. More formally, let $\kappa\colon [0,1]\to [0,1]$ be an approximation of the identity map that is constant in small neighborhoods of the end points. Then
\[
\phi_t=
\begin{cases}
\iota_0, & t\in (-\infty,0],\\
\iota_{\frac{\kappa(t-2j)+j}{N}}, & t\in [2j,2j+1],\quad 0\le j\le N-1,\\
\iota_{\frac{j+1}{N}}, & t\in [2j+1,2j+2],\quad 0\le j\le N-1,\\
\iota_1, & t\in [2N,\infty).
\end{cases}
\]

Consider now the trace $\Phi\colon L\times\R\to X\times T^\ast\R$ of $\phi_t$:
\[
\Phi(q,t) = (\phi_t(q),t,0).
\]
\begin{lemma}\label{l : slow}
The trace is Lagrangian over $(-\infty,0]$, over $[2N,\infty]$, and over each interval $[2j+1,2j+2]$, $0\le j\le N-1$. Furthermore over each interval $[2j,2j+1]$, $0\le j\le N-1$, there is a diffeomorphism $\psi_{2j}\colon X\times T^\ast [2j,2j+1]\to X\times T^\ast [2j,2j+1]$ of $C^1$-distance $\mathcal{O}(1/N)$ from the identity such that $\psi_{2j}(\Phi(q,t))=(\phi_{2j+\frac12}(q),t,0)$.
\end{lemma}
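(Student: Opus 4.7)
The two assertions are of quite different character. The Lagrangian claim on $(-\infty,0]$, on $[2N,\infty)$, and on each odd interval $[2j+1,2j+2]$ is immediate: on each such interval $\phi_t$ is independent of $t$, so the trace is simply $\phi_t(L)\times I\times\{0\}\subset X\times T^*I$, a product of the Lagrangian $\phi_t(L)\subset X$ with the Lagrangian zero section $I\times\{0\}\subset T^*I$. So the work is entirely in producing the diffeomorphism $\psi_{2j}$ on the even interval $[2j,2j+1]$, where $\phi_t=\iota_{(\kappa(t-2j)+j)/N}$ is genuinely $t$-dependent but with velocity bounded by $\mathcal{O}(1/N)$ because of the factor $1/N$ in the argument of $\iota$.

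First I would introduce the partial diffeomorphism $\Psi_t:=\phi_t\circ\phi_{2j+1/2}^{-1}\colon \phi_{2j+1/2}(L)\to\phi_t(L)$. Integrating the velocity bound $|\partial_t\phi_t|_{C^1}=\mathcal{O}(1/N)$ over $|t-(2j+\tfrac12)|\le \tfrac12$ shows that $\Psi_t$ is at $C^1$-distance $\mathcal{O}(1/N)$ from the identity of $\phi_{2j+1/2}(L)$, and is the identity near the cylindrical ends (where $\iota_s$ is already constant). Next I would extend $\Psi_t$ to a time-dependent ambient diffeomorphism $\widetilde\Psi_t\colon X\to X$ supported in a bounded tubular neighborhood of $\phi_{2j+1/2}(L)$, still at $C^1$-distance $\mathcal{O}(1/N)$ from $\mathrm{id}_X$. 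This is a standard extension: write the time-dependent velocity of $\Psi_t$ as a vector field along $\phi_{2j+1/2}(L)$, extend it to a tubular neighborhood by any smooth linear extension, multiply by a compactly supported bump function, and integrate. The $\mathcal{O}(1/N)$ size of the original vector field is preserved up to a multiplicative constant depending only on the cutoff.

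Having produced $\widetilde\Psi_t$, I would simply set
\[
\psi_{2j}(x,t,p):=\bigl(\widetilde\Psi_t^{-1}(x),\,t,\,p\bigr),\qquad (x,t,p)\in X\times T^*[2j,2j+1].
\]
Then $\psi_{2j}(\Phi(q,t))=(\widetilde\Psi_t^{-1}(\phi_t(q)),t,0)=(\phi_{2j+1/2}(q),t,0)$, as required, and $\psi_{2j}$ inherits the $\mathcal{O}(1/N)$ $C^1$-closeness-to-identity estimate from $\widetilde\Psi_t$. Note that $\psi_{2j}$ is \emph{not} a symplectomorphism, and this is the whole point: the trace $\Phi$ is not Lagrangian over $[2j,2j+1]$, while $\phi_{2j+1/2}(L)\times[2j,2j+1]\times\{0\}$ is, so no symplectic map can carry one to the other.

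The main obstacle in executing this plan is the quantitative $C^1$-control on the extension step, especially in the non-compact setting where $L$ has cylindrical ends. This is however entirely straightforward once one records that the isotopy $\iota_s$ is constant outside a compact set and thus the vector field to be extended is compactly supported; the tubular neighborhood construction and cutoff therefore introduce only a bounded multiplicative constant in the $C^1$-norm, preserving the $\mathcal{O}(1/N)$ decay that is the essential output of the lemma and that will later allow one to exhibit a family of almost complex structures making $\Phi$ totally real.
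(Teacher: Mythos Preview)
Your proposal is correct and supplies the details that the paper omits: the paper's entire proof is the single sentence ``Direct consequence of the construction of $\phi_t$.'' Your argument is precisely the natural unpacking of that sentence---the Lagrangian claim is immediate from $t$-independence of $\phi_t$ on the indicated intervals, and the straightening diffeomorphism comes from the $\mathcal{O}(1/N)$ velocity bound via isotopy extension---so there is nothing to compare.
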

\begin{proof}
Direct consequence of the construction of $\phi_t$.
\end{proof}

It follows from Lemma \ref{l : slow} that the trace $\Phi(L\times\R)$ is totally real for $N$ sufficiently large. Furthermore, if $J_X$ is an almost complex structure on $X$ compatible with its symplectic form $\omega$, then the complex structures $J_{2j}= d\psi_{2j}^{-1}\circ J_X\circ d\psi_{2j}$, $0\le j\le N$, are compatible with form $\omega$ provided $N$ is sufficiently large. Define an almost complex structure $J$ compatible with $\omega_0=\omega+dt\wedge d\tau$ on $X\times T^\ast \R$ by taking $J=J_{2j}\oplus J_\R$ over $[2j,2j+1]$, where $J_\R$ is the standard complex structure on $T^\ast\R$, and $J_t\oplus J_\R$, $t\in[2j+1,2j+2]$, where $J_t$ interpolates between $J_{2j}$ and $J_{2j+2}$ over $[2j+1,2j+2]$. More formally,
\[
\begin{cases}
J|_{X\times T^\ast(-\infty,0]} &= J_X\oplus J_\R,\\
J|_{X\times T^\ast[2j,2j+1]} &= J_{2j}\oplus J_\R,\quad 0\le j\le N-1,\\
J|_{X\times T^\ast[2j+1,2j+2]_t} &= J_{t}\oplus J_\R,\quad 0\le j\le N-1,\\
J|_{X\times T^\ast[2N,\infty)} &= J_X\oplus J_\R.
\end{cases}
\]

We next show that $J$-holomorphic curves in $X\times T^\ast\R$ with boundary on $\Phi(L\times\R)$ are confined to intervals of length $1$. 

\begin{lemma}\label{l : sits in slice}
Any connected $J$-holomorphic curve with boundary on $\Phi(L\times\R)$ that takes some point into $X\times T^\ast[2j,2j+1]$, into $X\times T^\ast (-\infty,0]$ or into $X\times T^\ast[2N,\infty)$ lies entirely in a slice $(t,\tau)=(c,0)$.
\end{lemma}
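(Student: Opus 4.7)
The plan is to project the holomorphic curve onto the $T^\ast\R$ factor and show that this projection is constant.

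The key observation is that, by construction, the almost complex structure $J$ has a product form $J' \oplus J_\R$ on all of $X \times T^\ast\R$, where $J'$ equals $J_X$, $J_{2j}$, or the interpolating $J_t$ depending on the region, and $J_\R$ is the standard structure on $T^\ast\R \cong \C$. Consequently the projection $\pi_\R \colon X \times T^\ast\R \to T^\ast\R$ is $(J, J_\R)$-holomorphic, so the composition $v := \pi_\R \circ u \colon C \to \C$ is holomorphic for any $J$-holomorphic $u$. Moreover, the trace $\Phi(L \times \R) = \{(\phi_t(q), t, 0)\}$ has vanishing $\tau$-coordinate by construction, so $v$ carries $\partial C$ into the real line $\R \subset \C$.

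Writing $v = t + i\tau$, the component $\tau \colon C \to \R$ is a harmonic function with vanishing boundary values. My main task is then to argue $\tau \equiv 0$: if the domain $C$ is compact this is the strong maximum principle applied to $\pm\tau$; for domains with punctures (say, asymptotic to Reeb chords/orbits at infinity in $X$), one must argue that $v$ remains bounded on a neighborhood of each puncture. This is where the hypothesis that the curve passes through one of the listed regions enters: the Lagrangian/totally real boundary $\Phi(L\times\R)$ is by construction cylindrical at infinity in the $X$ factor with $t$ and $\tau$ bounded, so the SFT asymptotic behavior (or a removable-singularity estimate) forces $v$ to extend boundedly across punctures. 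Once boundedness is established, the harmonic $\tau$ with zero boundary values must vanish identically.

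With $\tau \equiv 0$ on $C$, the holomorphic map $v$ is $\R$-valued, hence locally constant by the open mapping theorem. Since $C$ is connected, $v \equiv c$ for some $c \in \R$, giving $u(C) \subset X \times \{(c, 0)\}$ as claimed.

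The main obstacle will be handling the asymptotic behavior at punctures: one needs to verify that the projection $v$ does not escape to infinity in the $\tau$-direction at a puncture. I expect this to reduce to a standard SFT/finite-energy argument using the cylindrical structure of $\Phi(L\times\R)$ over each Lagrangian region and the fact that Reeb chords/orbits at infinity live over fixed values of $(t,\tau)$; all the creative work is in the product decomposition of $J$, which is set up precisely so that this projection argument runs.
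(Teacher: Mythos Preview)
Your core argument---that $J$ is globally block-diagonal of the form $J'\oplus J_\R$ (even where $J'$ depends on $t$), so that $\pi_\R\circ u$ is $J_\R$-holomorphic with boundary on $\R$ and hence constant by the maximum principle---is exactly the paper's proof.

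Two small corrections. First, the hypothesis that the curve meets one of the listed regions plays no role in the argument: the projection argument works for \emph{every} compact connected curve, and the hypothesis is there only so that the resulting constant $c$ lies in the indicated interval (which is what the application needs). Second, your detour about punctures and SFT asymptotics is unnecessary---the bare curves being counted here have compact domains---and in any case the special-region hypothesis has nothing to do with controlling $v$ near a puncture; your attempt to tie the two together is a misreading.
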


\begin{proof}
By our choice of complex structure and the definition of $\Phi$, the $J$-holomorphic curve is holomorphic for a split complex structure $J_t\oplus J_\R$ and has boundary on a Lagrangian of the form $L\times\R\subset T^\ast \R$. The projection of the curve to $T^\ast \R$ is then holomorphic and hence constant by the maximum principle.
\end{proof}

\begin{proof}[Proof of Theorem \ref{general wall crossing}]
We use notation as above for $L=\Sigma$ and $X=T^\ast S$. Assume that the moduli space of $\phi_0(\Sigma)$ and $\phi_{2N}(\Sigma)$ in $T^\ast S$ are empty. 

We claim then that there exists a perturbation for bare curves with boundary on $\Phi(\Sigma\times\R)\subset T^\ast S\times T^\ast\R$ and $\epsilon>0$ such that no bare curve intersects the regions $T^\ast S\times T^\ast [-\infty,0]$ and  $T^\ast S\times T^\ast [2N,\infty)$.

To see this, note that by Lemma \ref{l : sits in slice}, any connected $J$-holomorphic curve which intersects these regions in fact lies in the fiber over $(t,0)\in T^\ast\R$ and is $J_{0}$-holomorphic respectively $J_{2N}$-holomorphic. Since the corresponding moduli spaces are empty, moduli spaces are empty in $4\epsilon$-intervals around $t=0$ and $t=2N$ by Gromov compactness. Since bare curve perturbations are supported in a small neighborhood of the original unperturbed moduli spaces, the claim holds.

Equation \eqref{general skein mutation} then follows: take $\Omega(\phi)$ to be the skein-valued bare curve count of 
$$
\Phi(L)\cap T^\ast S\times T^\ast\left[\epsilon,2N-\epsilon\right]
$$ 
in $\Sk(L)$ and observe that the left and right hand sides correspond to including the conormal of $K$ on the far left and on the far right, respectively; they are equal by deformation isotopy invariance of the skein count where the isotopy translates the conormal in the $\R$-direction. 

We next consider homotopy invariance. Let $\phi_{s,t}$, $0\le s\le 1$, be a $1$-parameter families of isotopies. By continuity in $s$ we find $N$ so that after rescaling, the size of the derivative $\frac{\partial\phi_{s,t}}{\partial t}$ is sufficiently small for all $s$. As for $\Phi(L\times\R)$, the $1$-parameter family of totally real submanifolds $\Phi_s(L\times\R)$ provides an isotopy of good boundary conditions for bare curve counts, and hence  
the desired homotopy invariance of $\Omega(\phi)$ follows from deformation invariance of the skein-valued curve count. 
\end{proof}

\subsection{Transverse $(-1)$-disk and the proof of Theorem \ref{skein valued cluster}}\label{(-1)disks}
We next consider specific $1$-parameter families $\phi_t\colon\Sigma\to T^\ast S$ such that there are no holomorphic curves for $t\ne 0$ and such that there is a single embedded disk $D$ of index $-1$ and boundary on $\phi_0(\Sigma)$. We assume that this disk is transversely cut out in the sense of $1$-parameter families. In terms of Cauchy-Riemann operators, this means the following. 

Consider a cotangent neighborhood $T^\ast\Sigma$ of $\phi_0(\Sigma)$. The Lagrangian isotopy $\phi_t$ moves $\phi_0(\Sigma)$ inside $T^\ast\Sigma$ by shifting along closed $1$-forms. We extend it to an ambient isotopy by cutting off and denote it $\tilde\phi_t$. 
The transversality condition then means that the linearized $\bar\partial_J$ at $D$ has cokernel of dimension $1$ and that the linearized variation of $D$ given by $\frac{\partial\tilde\phi_t}{\partial_t}|_{t=0}$ spans the cokernel. We take the sign of the disk to be the sign induced by the standard orientation of the cokernel.

\begin{proof}[Proof of Theorem \ref{skein valued cluster}]
Consider the trace $\Phi\colon L\times\R\to T^\ast S\times T^\ast\R$ around $0$ and note that under the assumptions above, the disk $D$ is an embedded disk with boundary on $\Phi(L\times\R)$. 

The tangent bundle $T(T^\ast S)|_D$ splits as two complex line bundles $TD\oplus ND$, the tangent and normal bundles to $D$. Consider first the multiplication of $\phi_0(\Sigma)$ with $T^\ast\R$. In this case, the boundary condition splits as the sum of two real line bundles in $TD$ and $ND$, respectively. Trivializing these line bundles, the boundary conditions give rotating real lines along the boundary. Since the index of the disk equals $-1$ and the cokernel is $1$-dimensional it follows that the Maslov index of the line in the tangent direction equals $2$ and the line in the normal direction equals $-2$. The additional direction is not rotating at all, and the kernel consists of constant sections in this direction. This means that the linearized Cauchy-Riemann equation in the normal direction for this trivial product is conjugate to the standard $\bar\partial$-operator on the unit disk $D$ with values in $\C^2$ and Lagrangian boundary condition given by the columns in the matrix
\[
\left(\begin{matrix}
e^{-i\theta} & 0\\
0 & 1
\end{matrix}\right), \quad e^{i\theta}\in\partial D.
\]
Consider next the boundary condition of the trace of a deformation that spans the cokernel. Since it is a small deformation of the constant condition, Fourier expanding the boundary condition, it is straightforward to check that it is conjugate to 
\[
\frac{1}{1+r^2}\left(\begin{matrix}
e^{-i\theta} & - r e^{-i\theta/2}\\
r e^{-i\theta/2} & 1,
\end{matrix}\right), \quad e^{i\theta}\in\partial D,
\]
where $r>0$ is small. This in turn agrees with the boundary condition of the toric brane, up to $\mathcal{O}(r^2)$, studied in \cite{Ekholm-Shende-unknot}, where the multiple cover formula is shown to be the exponentiated skein dilogarithm. The theorem follows.
\end{proof}

\begin{remark} 
Using Morse flow graphs one can give another proof of the recursion relation for multiple covers of a disk corresponding to an isolated flow line as follows.
Consider a 3-manifold $M$ with a branched double cover $L$ with branch locus $\tau\subset M$. Consider a rigid basic disk $w$ corresponding to a flow line of $L\to M$ that connects two points on $\tau$. We study what happens when a moving big holomorphic curve $u_s$, $-\epsilon<s<\epsilon$ with boundary on $M$ crosses the boundary of $w$ at $s=0$. (The map $u_s$ could for example be the basic cylinder of a conormal of a knot $K_s$ that moves in such a way that $K_0$ intersects the boundary of $w$ and such that the tangent vector of $K_0$ together with the deformation vector and the tangent vector of $\partial w$ spans the tangent space to $M$.) 

The basic disk $w$ itself has multiple covers that correspond to flow lines with multiplicity. Thus, at the moment $s=0$ when $u_0$ hits the disk, there are broken flow graph and big curve configurations $u_0\cup w$, but at moments before or after there are not. As $u_s$ is embedded, Lemma \ref{l : simple implies simple} shows that there can be no curves with multiply covered edges attached in the limit. Looking at possible limits before (one flow line to the branch locus attached to $u$) and after the crossing (two flow lines to the branch locus attached) then reveals the recursion relation for the disk, where the operators correspond to the curves with simple flow lines attached. This gives another illustration of why there are simple recursion relations that control the many skein instances involved in a wall crossing: there are no very complicated big curves nearby.
\end{remark}

\section{Examples of skein-valued wall-crossing formulas} \label{sec:wall-crossing-examples}
In this section, we apply Theorems \ref{general wall crossing} and \ref{skein valued cluster} to derive skein-valued versions of well-known wall crossing formulas. 

Let $C$ be a Riemann surface. A quadratic differential $\phi \in H^0(C,K_C^{\otimes 2})$ on $C$ defines a two-fold ramified covering $\Sigma\to C$:
\begin{equation}\label{eq:spectral-cuurve}
	\lambda^2 = \phi
\end{equation}
with $\lambda$ the complex Liouville 1-form in $T^*C$.
The cover $\Sigma$ is smooth so long as $\phi$ has nondegenerate zeroes.  

We will consider such $\Sigma$ as real Lagrangians with respect to the real symplectic form $\omega = \mathrm{Re} \, d\lambda$.  We will count holomorphic curves in a complex structure $J$ tamed by $\omega$, {\em not} the complex structure on $T^*C$ coming from the complex structure of $C$.   

Let us recall the basic consequences of Theorem \ref{general wall crossing}. 
Suppose given  1-parameter family of such quadratic differentials, $\phi_t$ for $t$ varying in some interval $I = [0,1]$, such that the corresponding $\Sigma_0$ and $\Sigma_1$ bound no $J$-holomorphic curves.  Then, by Theorem \ref{general wall crossing}, this family determines an element $Z(\phi) \in \widehat{\Sk}(\Sigma)$, where here we write $\Sigma$ to indicate the total space of the family $\Sigma_t$.  
Moreover, suppose given a 2-parameter family $\phi_{t}[u]$ over some $(t, u) \in [0,1] \times I$ where the corresponding $\Sigma_0[\cdot]$ and $\Sigma_1[\cdot]$ never bound holomorphic curves.  Then, again by Theorem \ref{general wall crossing}, the element $Z(\phi[u])$ is independent of $u$.  (This independence should be regarded as a general ``wall crossing formula''.)

To actually evaluate $Z(\phi)$, we will use Theorem \ref{skein valued cluster}.  To do so we must show that our family $\Sigma_t$ bound holomorphic curves at some isolated collection of $t$, and each such instance is an embedded disk, transverse in an appropriate sense.  Choosing a metric on the base and passing to flow graphs, we must ask that the only flow graphs which appear are flow lines (and that they appear transversely in $t$). Such flow lines necessarily connect the branch points of the cover.  

\begin{lemma}For an appropriate choice of metric, Morse flow lines agree with 
the horizontal foliation on $C$ defined by the quadratic differential, i.e.
\begin{equation}
	\mathrm{Im}\,(e^{i\theta} \sqrt{\phi}) = 0.
\end{equation}
\end{lemma}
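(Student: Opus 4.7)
The plan is to choose the metric $g$ to be the flat singular metric $|\phi|$ determined by the quadratic differential itself (with conical singularities at the zeros of $\phi$, smoothed out in arbitrarily small neighborhoods of those zeros so that the analytic hypotheses of the flow-tree formalism apply). Away from the zero locus, this metric is genuinely flat, and one can introduce local coordinates $w$ in which $\phi = dw^2$.

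To set up the computation, I would first write down the local model. Away from branch points, the two sheets of $\Sigma$ are graphs of the real 1-forms $\pm\mathrm{Re}(e^{i\theta}\sqrt{\phi})$ (after rotating the symplectic form by the phase $e^{i\theta}$), so the difference 1-form whose dual vector field drives the Morse flow is $\alpha_\theta = 2\,\mathrm{Re}(e^{i\theta}\sqrt{\phi})$. In the flat $w = u+iv$ coordinates one has $\sqrt{\phi} = dw$, and $\alpha_\theta = 2(\cos\theta\, du - \sin\theta\, dv)$.

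The central calculation is then immediate: the vector field $V_\theta$ dual to $\alpha_\theta$ with respect to $|dw|^2 = du^2+dv^2$ is $2(\cos\theta\,\partial_u - \sin\theta\,\partial_v)$, whose flow lines are the straight lines in the direction $e^{-i\theta}$. On the other hand, the horizontal foliation of $e^{2i\theta}\phi$ in the $w$-chart is cut out by $\mathrm{Im}(e^{i\theta}\,dw) = \sin\theta\, du + \cos\theta\, dv = 0$, whose leaves are again straight lines in the direction $e^{-i\theta}$. The two foliations coincide leaf-wise, and the identification is independent of the choice of flat chart because the transition functions for $w$ are of the form $w \mapsto \pm w + c$, which preserve both the flat metric and the direction of the horizontal foliation.

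The main (and essentially only) obstacle is the behavior at the zeros of $\phi$, where $g = |\phi|$ has cone singularities and where the two sheets of the cover meet. I would handle this by smoothing $g$ inside a small disk around each simple zero in an $SO(2)$-equivariant way that preserves the rotational symmetry of the local model $\phi = z\,dz^2$; by the genericity statements used throughout Sections \ref{sec : Lag conormals and flow graphs}–\ref{combinatorial skein trace}, the isolated rigid flow lines we care about are the critical horizontal trajectories connecting branch points, and these are transverse in any such perturbation of the metric. Thus the outcome of the flow-graph computation is insensitive to the particular smoothing, and the conclusion of the lemma holds for the metric $g = |\phi|$ (together with an arbitrary small smoothing near the zeros). \qed
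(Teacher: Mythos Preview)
Your proposal is correct and follows essentially the same approach as the paper: both use the singular flat metric $|\phi|$ associated to the quadratic differential, observe that the correspondence of Morse flow lines with horizontal trajectories is immediate in local flat coordinates, and then smooth the metric near the conical singularities at the zeros of $\phi$. The paper's proof is terser, citing references (Strebel and Casals--Nho) for the well-known local identification that you carry out explicitly in the $w$-coordinate.
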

\begin{proof}
Indeed, this correspondence of flow lines and horizontal foliation is well known for the singular conformal metric associated to the quadratic differential, see e.g., \cite{strebel1984quadratic} and \cite[Example 2.23]{Casals-Nho-spectral}. This metric is flat except at a finite number of isolated points that are modeled on Euclidean cones. Rounding the tip of these cones we get a smooth metric that agrees with the flat metric outside arbitrarily small disk neighborhoods of the cone points. In particular, flow lines in the smooth metric converge to flow lines for the singular metric.  
\end{proof}

Flow lines connecting branch points are the `saddle connections' of such foliations, which are precisely what enter into the Gaiotto-Moore-Neitzke formulas \cite{Gaiotto-Moore-Neitzke-WKB}; correspondingly, these lines have already been determined in the examples we study below.

\subsection{The pentagon relation} 
\label{pentagon}

\begin{figure}
\begin{center}
\includegraphics[width=0.19\textwidth]{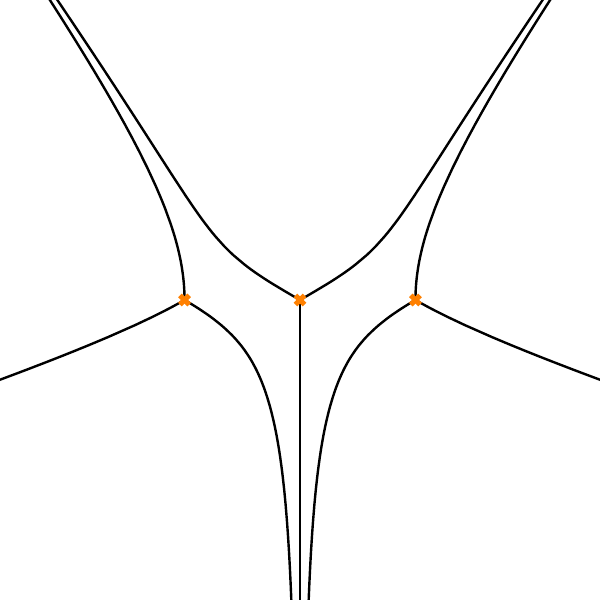}
\includegraphics[width=0.19\textwidth]{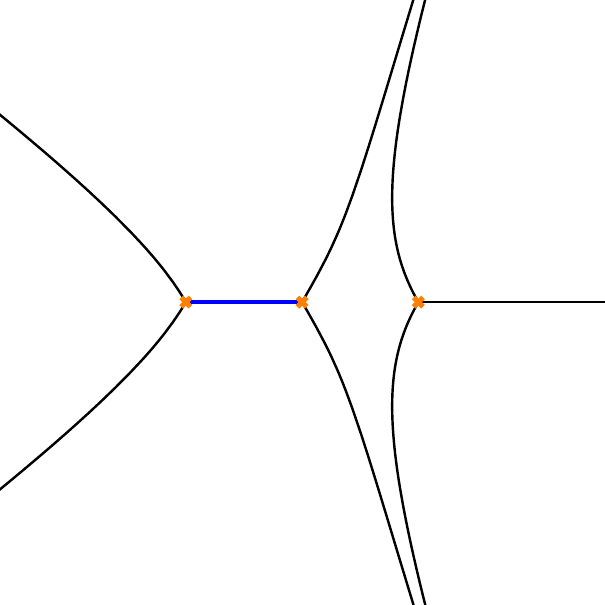}
\includegraphics[width=0.19\textwidth]{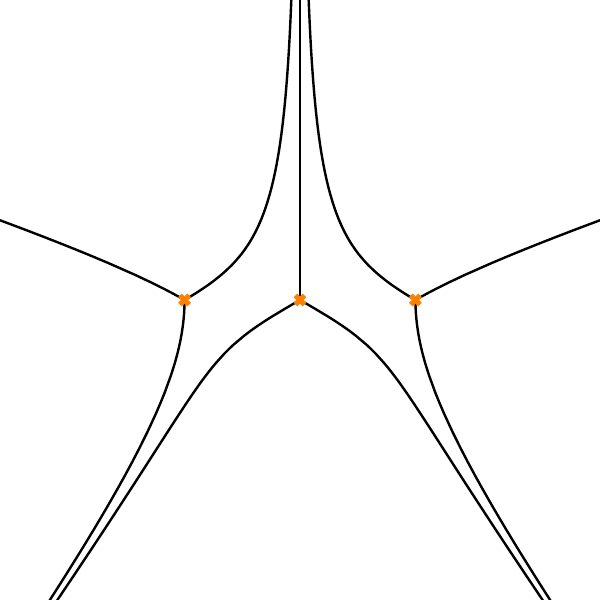}
\includegraphics[width=0.19\textwidth]{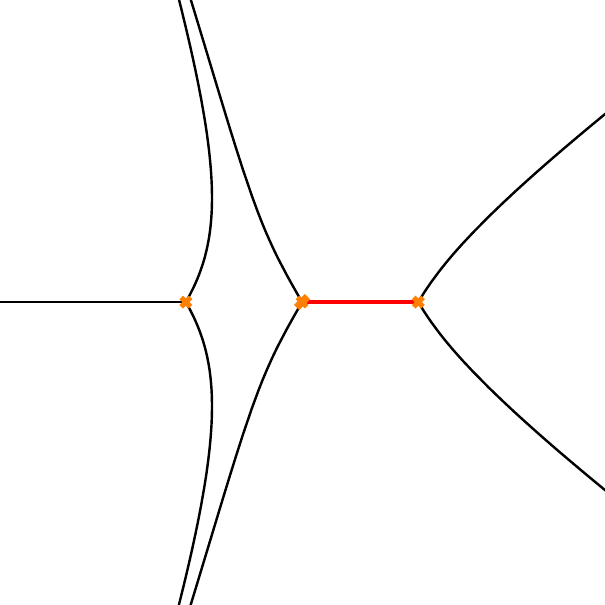}
\includegraphics[width=0.19\textwidth]{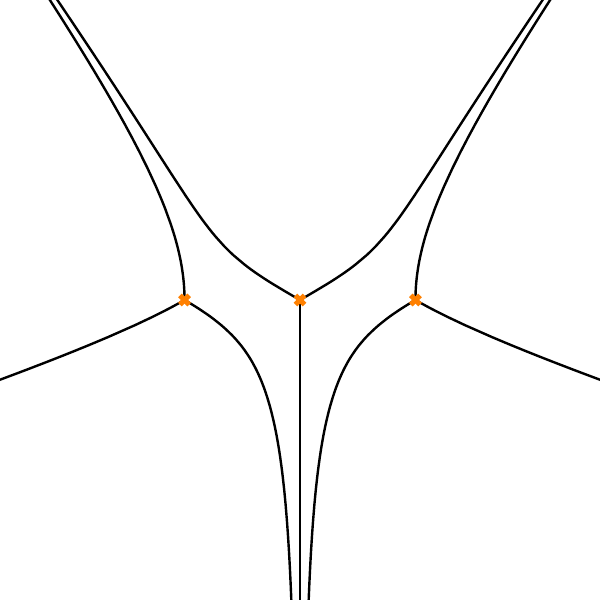}
\caption{Two holomorphic disks at $u=0$ for $0\leq \theta\leq \pi$.}
\label{fig:AD-strong}
\end{center}
\end{figure}

\begin{figure}
\begin{center}
\includegraphics[width=0.19\textwidth]{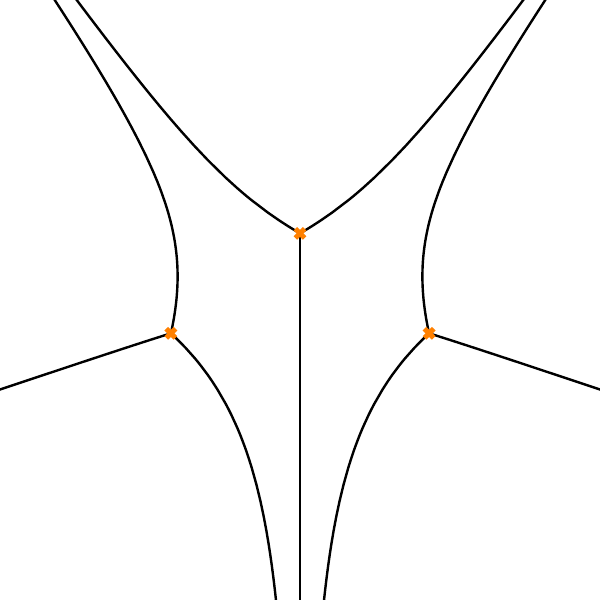}
\includegraphics[width=0.19\textwidth]{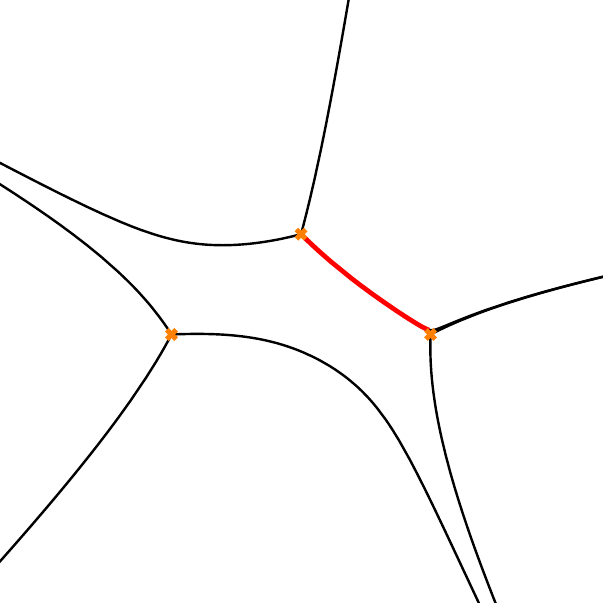}
\includegraphics[width=0.19\textwidth]{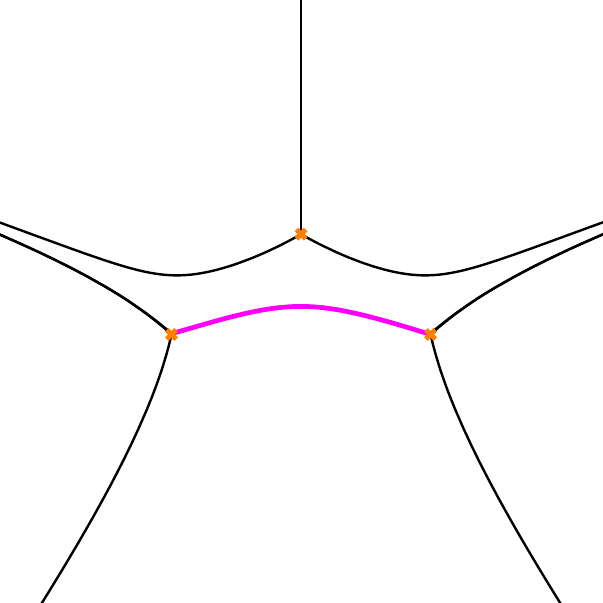}
\includegraphics[width=0.19\textwidth]{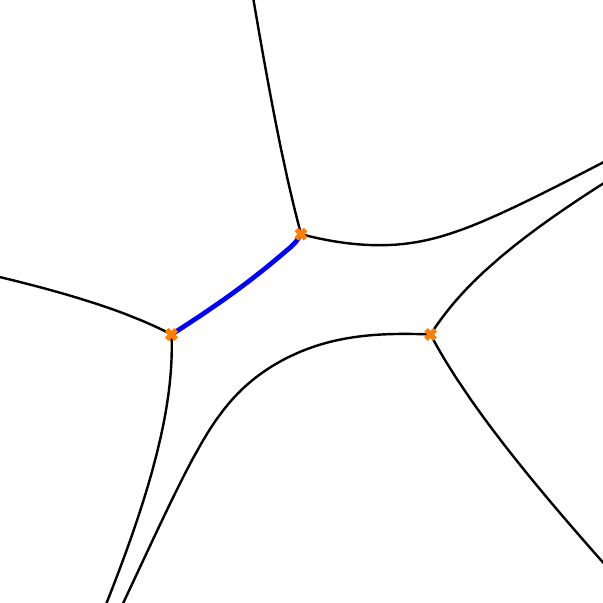}
\includegraphics[width=0.19\textwidth]{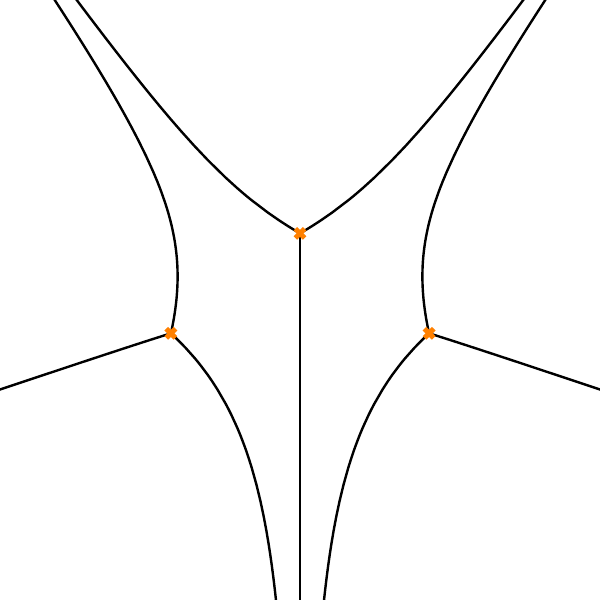}
\caption{Three holomorphic disks at $u=2i$ for $0\leq \theta\leq \pi$.}
\label{fig:AD-weak}
\end{center}
\end{figure}

Let $C=\mathbb{C}$ with quadratic differential 
\begin{equation}
	\phi_t[u](z) = e^{2\pi i t}(z^3 - 3 z+2 u)\, dz^2.
\end{equation}
This has no double zeros so long as  $u\in \mathbb{C} \setminus \{\pm 1\}$.
The variety in $T^*C$ given by \eqref{eq:spectral-cuurve} is the Seiberg-Witten curve for Argyres-Douglas theory introduced in \cite{Argyres:1995jj} and presented in this form in \cite{Gaiotto-Moore-Neitzke-WKB}. 
It is well-known that the count of holomorphic disks depends on $u$ piecewise continuously, and there are two regions corresponding to $|u|\lesssim 1$ and $|u|\gtrsim 1$.

For $u=0$ there are two holomorphic disks, corresponding to the trajectories shown in Figure~\ref{fig:AD-strong}.
Disk boundaries correspond to generators of $H_1(\Sigma,\mathbb{Z})$, respectively denoted $(0,1)$ for the first disk (with smaller value of $t$) and $(1,0)$ for the second one. 

Moving across the `line of marginal stability', approximately described by $|u|\approx 1$, causes the two disks to swap ordering in $t$.
For $u=2i$ there are three holomorphic disks, shown in Figure~\ref{fig:AD-weak}. In addition to disks with boundaries $(1,0)$ (now with largest value of $t$) and $(0,1)$ (with lowest value of $t$) there is now another disk whose boundary has class $(1,1)$.

\begin{figure}
\begin{center}
\includegraphics[width=0.4\textwidth]{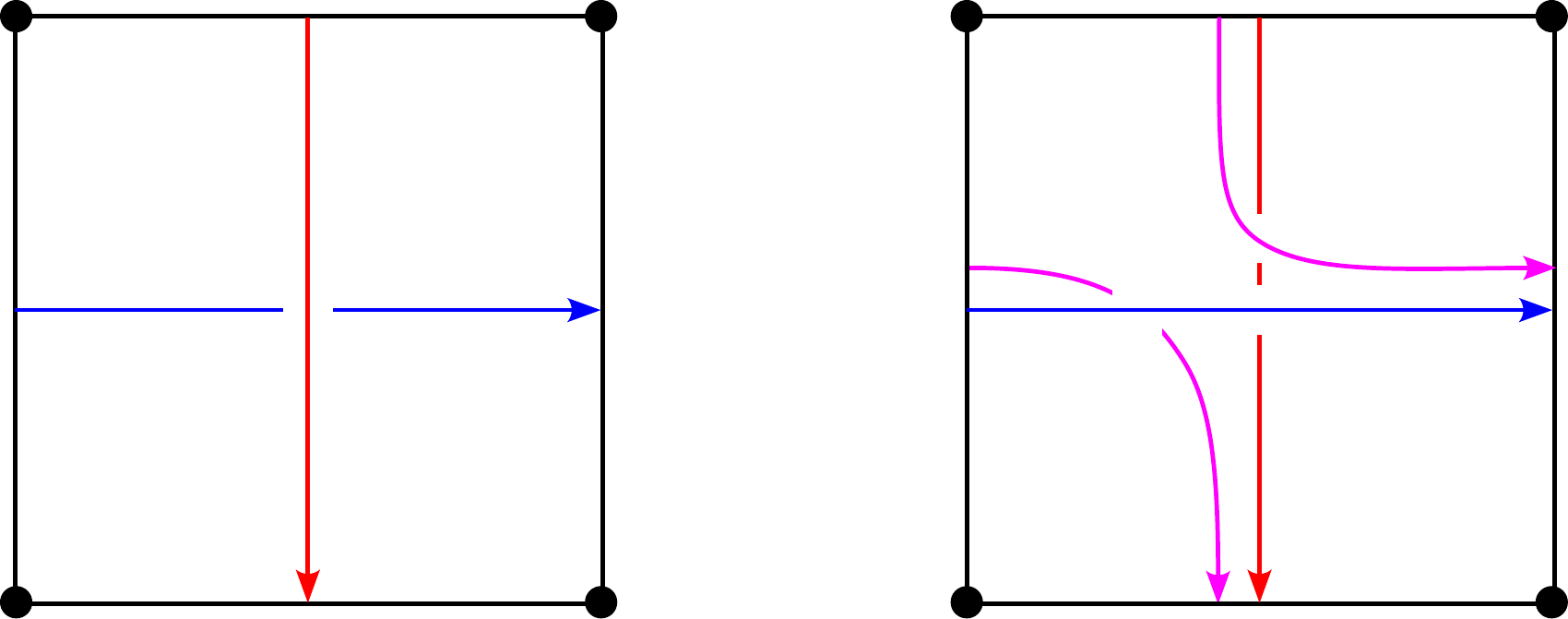}
\caption{The skein-valued pentagon relation on the punctured torus.}
\label{fig:pentagon}
\end{center}
\end{figure}

From  Theorems \ref{general wall crossing} and \ref{skein valued cluster}, we deduce the skein-valued pentagon relation: 
\begin{equation}\label{eq:pentagon-wcf}
	\Psi_{(1,0)} \Psi_{(0,1)} = \Psi_{(0,1)} \Psi_{(1,1)}[-1] \Psi_{(1,0)}.
\end{equation}
Here $\Psi_{(i,j)}$ is the skein-valued dilogarithm \eqref{eq:explicit-skein-dilog} with $P_d$ replaced by $P_{di,dj}$, corresponding to insertions on the boundary torus in homology class $(di,dj)$, see \cite[Definition 2.4]{Morton-Samuelson}.
Relation \eqref{eq:pentagon-wcf} is illustrated in Figure \ref{fig:pentagon}, and was originally derived in \cite{Nakamura, Hu}, and generalized in \cite{Scharitzer}.  As noted in those references, the skein-valued pentagon identity in the punctured torus implies, via \cite{Morton-Samuelson-DAHA, BCMN-DAHA}, the pentagon identity in the elliptic Hall algebra, originally proven in \cite{Garsia-Mellit, Zenkevich-pentagon}.

Specializing to the $\mathfrak{gl}_1$ skein algebra of $T^2$ with generators $\hat y\hat x = q \hat x\hat y$ is given by the replacements
\begin{equation}\label{eq:HOMLYPT-2-gl1}
	P_{i,j} \  \to\  q^{-ij/2}\hat y^i \hat x^j \,,
	\qquad
	\Psi_{i,j}[\xi] \  \to\  	(\xi q^{1/2-ij/2}\hat y^i \hat x^j ; q)^{-1}_\infty\,,
\end{equation}
and recovers the pentagon identity for the quantum dilogarithm, originally established in \cite{Faddeev:1993rs}. 
The pentagon identity has a celebrated derivation via a wall crossing in the space of stability conditions for the $A_2$ quiver \cite{Reineke-HN}, later reintepreted in terms of the Donaldson-Thomas invariants of an appropriate CY3 category \cite{Kontsevich:2008fj}.
We do not know a similar interpretation of the skein-valued pentagon identity. 

\begin{remark}
    The sign in $\Psi_{(1,1)}[-1]$ can be understood geometrically as follows. In Figures \ref{fig:AD-strong} and \ref{fig:AD-weak} disk boundaries projected to $C$ attach directly to zeroes of $\phi_2$, which correspond to projections of sign defects of the double covering.
    The direct lift of disk boundaries to the punctured torus is shown in the top row of Figure \ref{fig:pentagon-curves}, where sign defects are also marked.
    In the same figure, bottom row, a perturbation of disk boundaries is shown. Note that the perturbed boundstate disk (in purple) differs from the curve obtained by smoothing the intersection of generating curves (purple line in Figure \ref{fig:pentagon}), by crossing the sign defect in the center. This is the geometric origin of the sign in $\Psi_{(1,1)}[-1]$.

    Algebraically, the sign can be absorbed by twisting the skein algebra of the torus by a quadratic refinement, such as the one adopted in \cite{Gaiotto-Moore-Neitzke-WKB}.
    Let $\sigma:\mathbb{Z}^2\to \mathbb{C}$ such that $\sigma(i,j) \sigma(k,l) = \sigma(i+k, j+l)  (-1)^{jk-il} $. Note that this implies $\sigma(di, dj) = \sigma(i,j)^d$. 
    We twist the skein algebra of the torus by introducing $\tilde P_{i,j} := \sigma(i,j) P_{i,j}$. Replacing $P_{i,j}$ into \eqref{eq:explicit-skein-dilog} gives the twisted skein-valued dilogarithm $\tilde\Psi_{(i,j)} \equiv \Psi_{(i,j)}[\sigma(i,j)]$. 
    This obeys the pentagon identity without additional signs
\begin{equation}
	\tilde\Psi_{(1,0)} \tilde\Psi_{(0,1)} = \tilde\Psi_{(0,1)} \tilde \Psi_{(1,1)} \tilde\Psi_{(1,0)}.
\end{equation}
\end{remark}

\begin{figure}
\begin{center}
\includegraphics[width=0.65\textwidth]{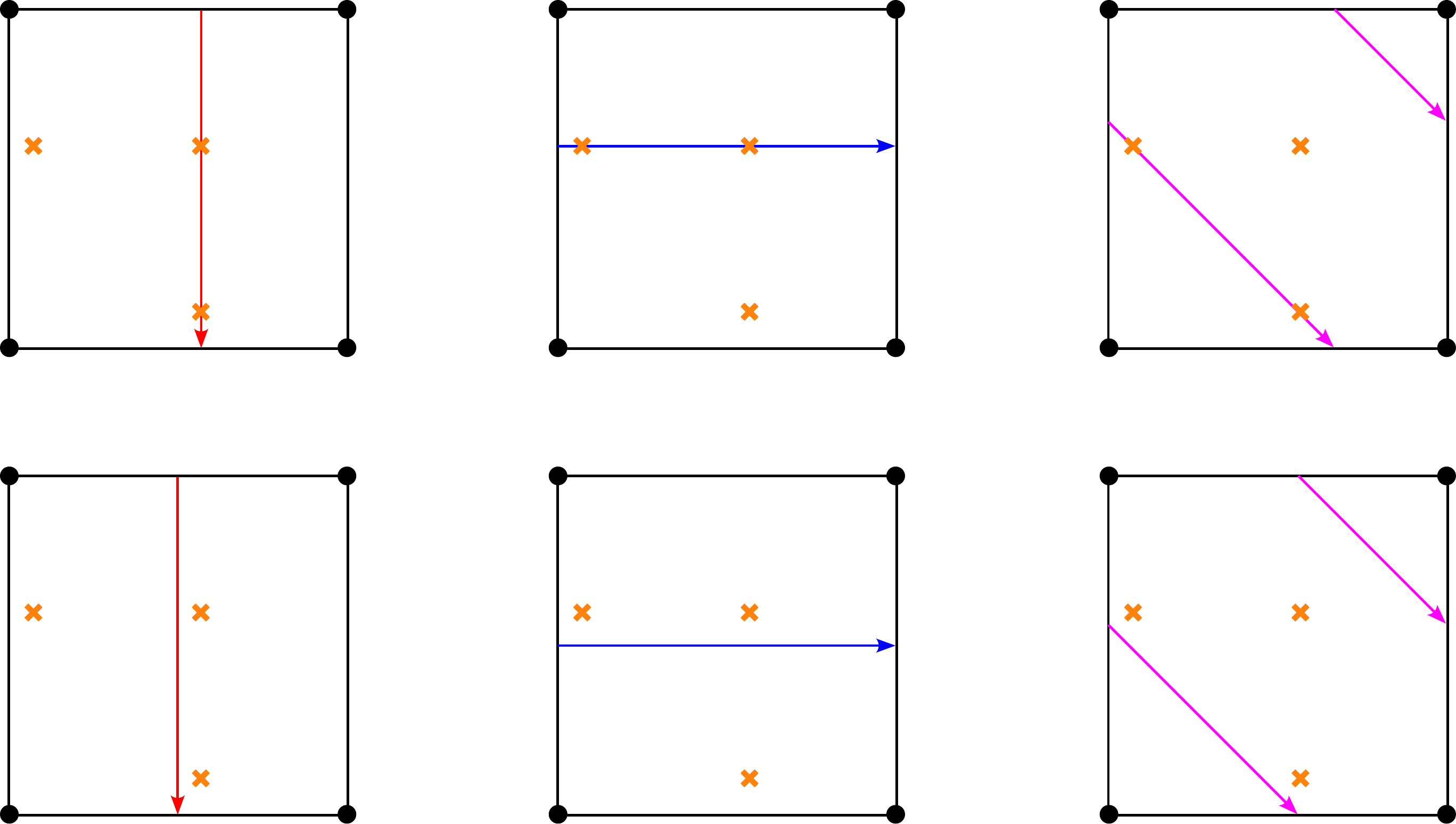}
\caption{Top row: direct lift of disk boundaries from Figure \ref{fig:AD-weak} to the torus double covering of $C$. Sign defects are marked by crosses. An additional sign defect is located in correspondence of the puncture, which is marked by a thick dot. Bottom row: a perturbation of disk boundaries away from sign defects.}
\label{fig:pentagon-curves}
\end{center}
\end{figure}

\subsection{The Seiberg-Witten wall-crossing}

Let $C=\mathbb{C}^*$ with a family of quadratic differentials
\begin{equation}
	\phi_t[u](z) = e^{2\pi i t} \left(\frac{1}{z^3} +\frac{2u}{z^2} +\frac{1}{z}\right)\, dz^2
\end{equation}
parameterized by $u\in \mathbb{C} \setminus \{\pm 1\}$.
The variety in $T^*C$ given by \eqref{eq:spectral-cuurve} is the Seiberg-Witten curve for $\mathcal{N}=2$ supersymmetric $SU(2)$  Yang-Mills theory introduced in \cite{Seiberg:1994rs} and presented in this form in \cite{Gaiotto-Moore-Neitzke-WKB}. 

Again there are two regions in parameter space, corresponding (roughly) to $|u|< 1$ and $|u|> 1$, with different curve counts.

Choosing $u=0$ gives two holomorphic disks. The corresponding flow lines on $C$ are shown in Figure~\ref{fig:YM-strong}.
In a suitable choice of basis for $H_1(\Sigma,\mathbb{Z})$, the boundaries of the two curves are in class
$(-1,1)$ for the first disk (with smaller value of $t$) and $(1,1)$ for the second one.

Deforming parameters to $|u|> 1$,  the two disks  swap order in $t$, but there are (in contrast to the previous example) infinitely many more embedded holomorphic disks, see Figure~\ref{fig:YM-weak}.

\begin{figure}
\begin{center}
\includegraphics[width=0.19\textwidth]{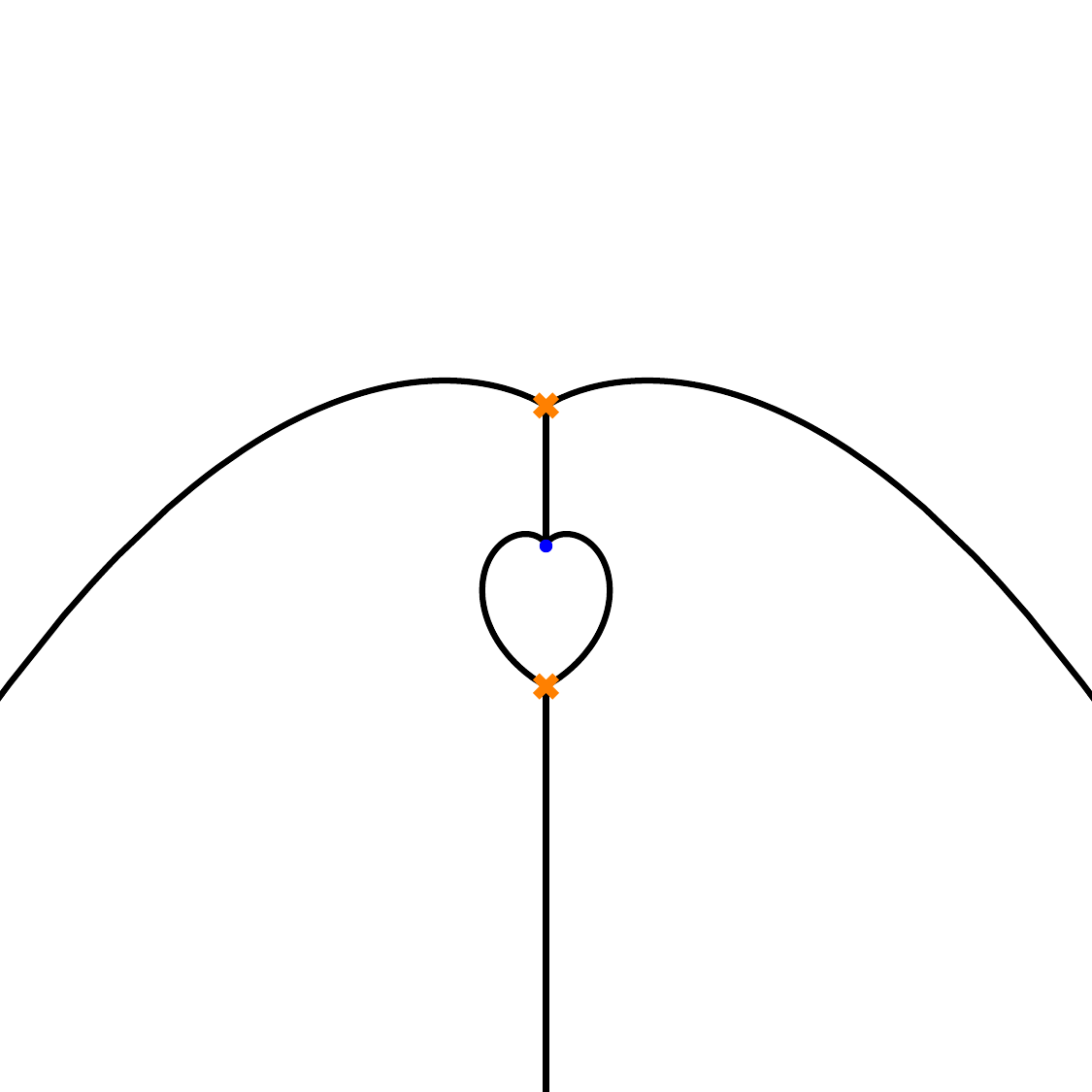}
\includegraphics[width=0.19\textwidth]{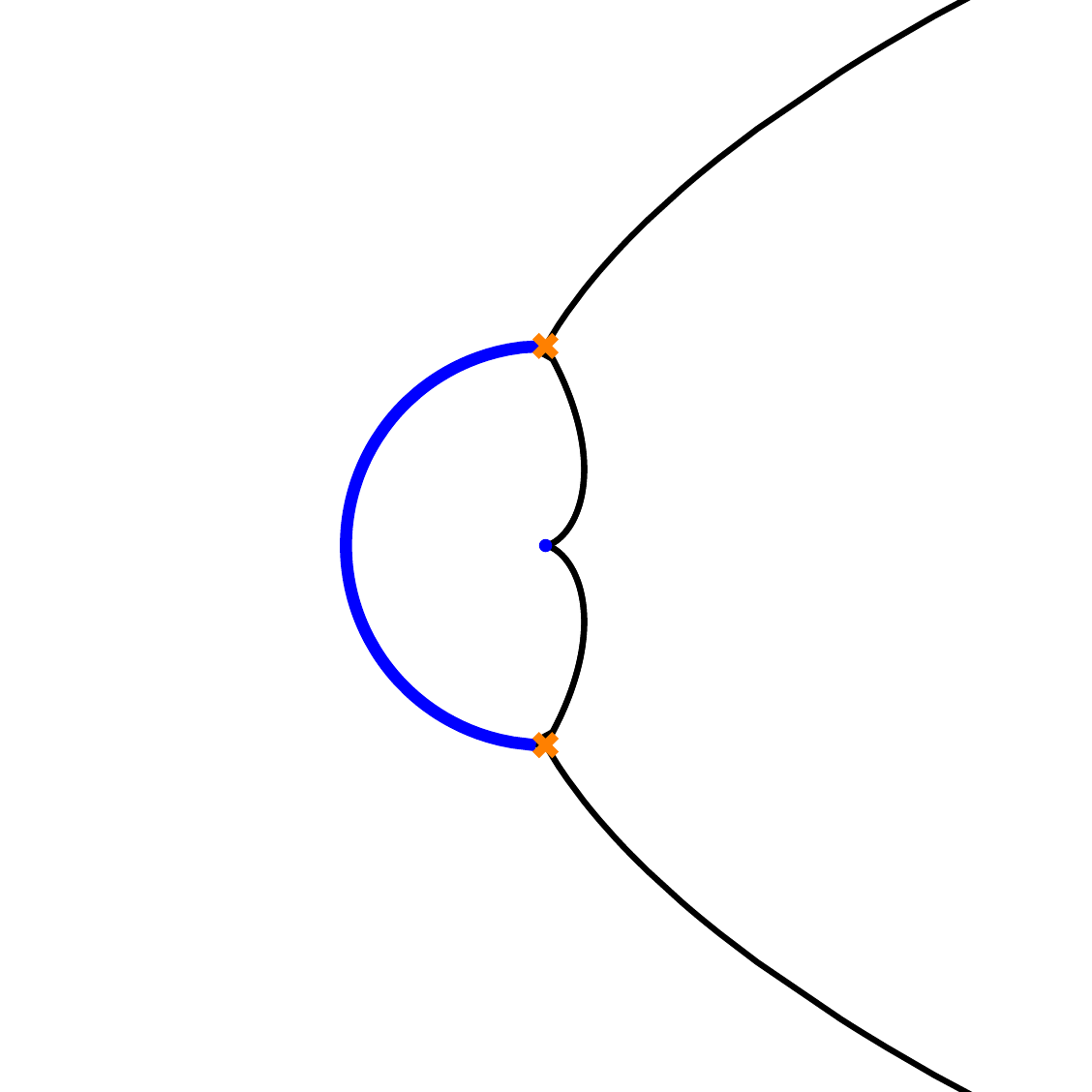}
\includegraphics[width=0.19\textwidth]{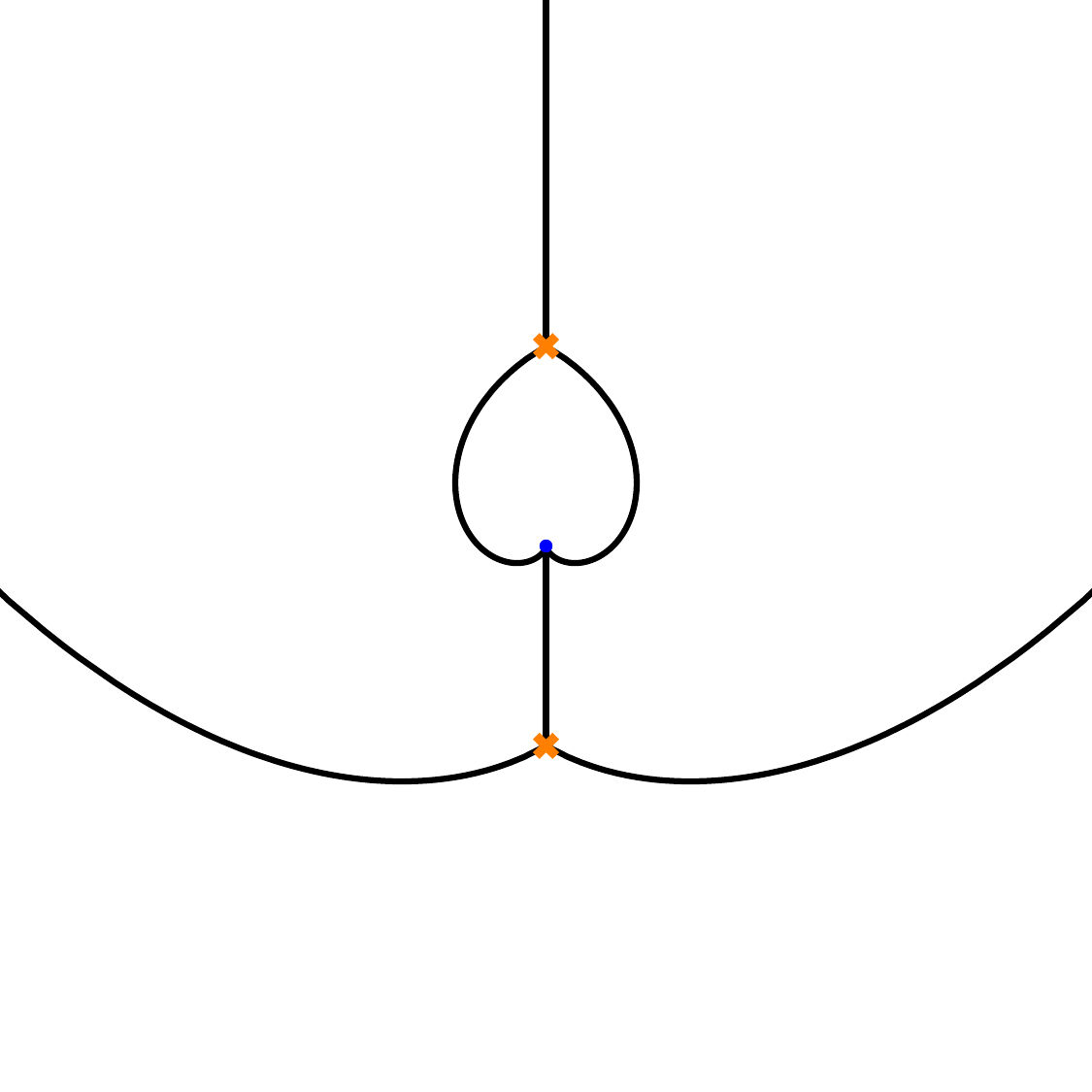}
\includegraphics[width=0.19\textwidth]{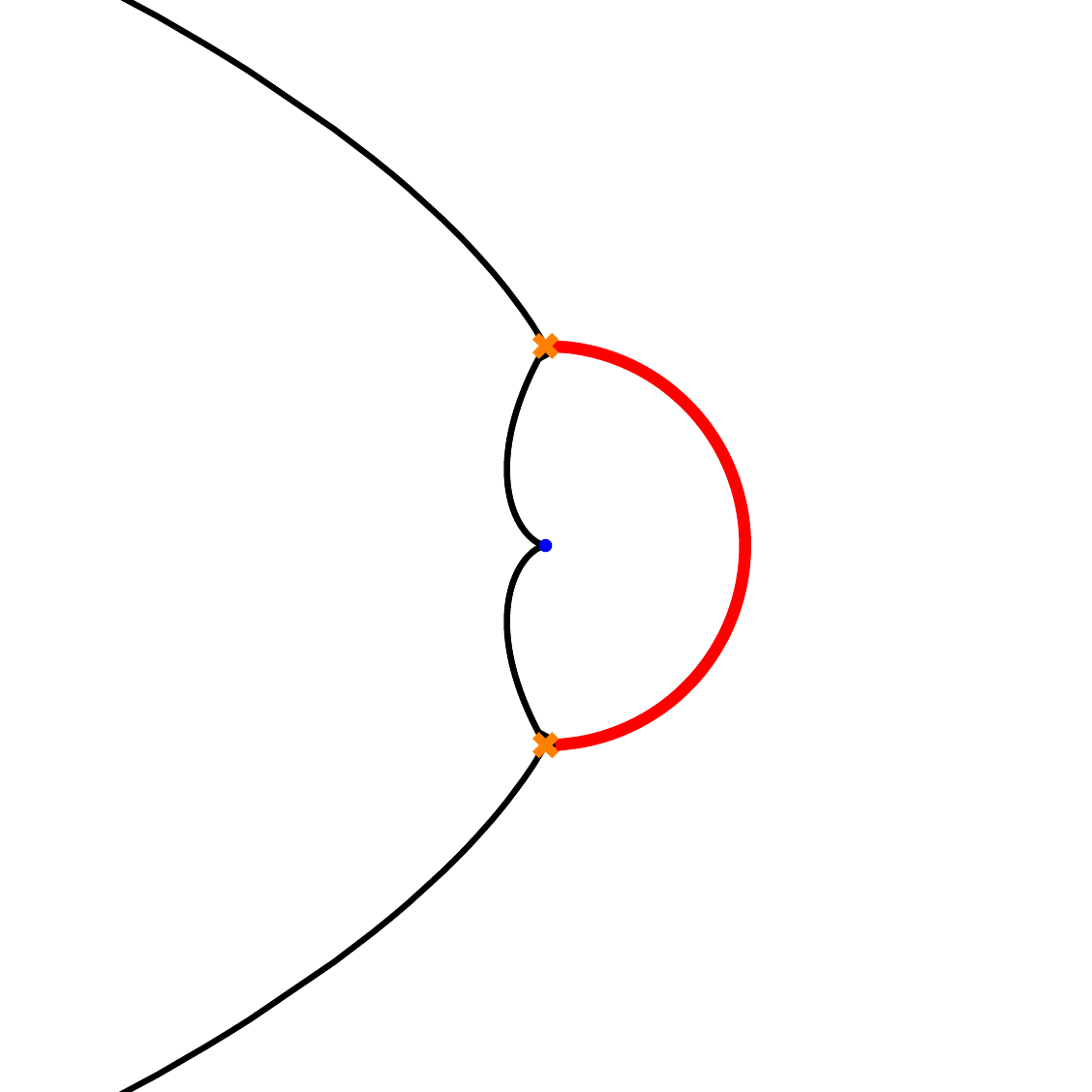}
\includegraphics[width=0.19\textwidth]{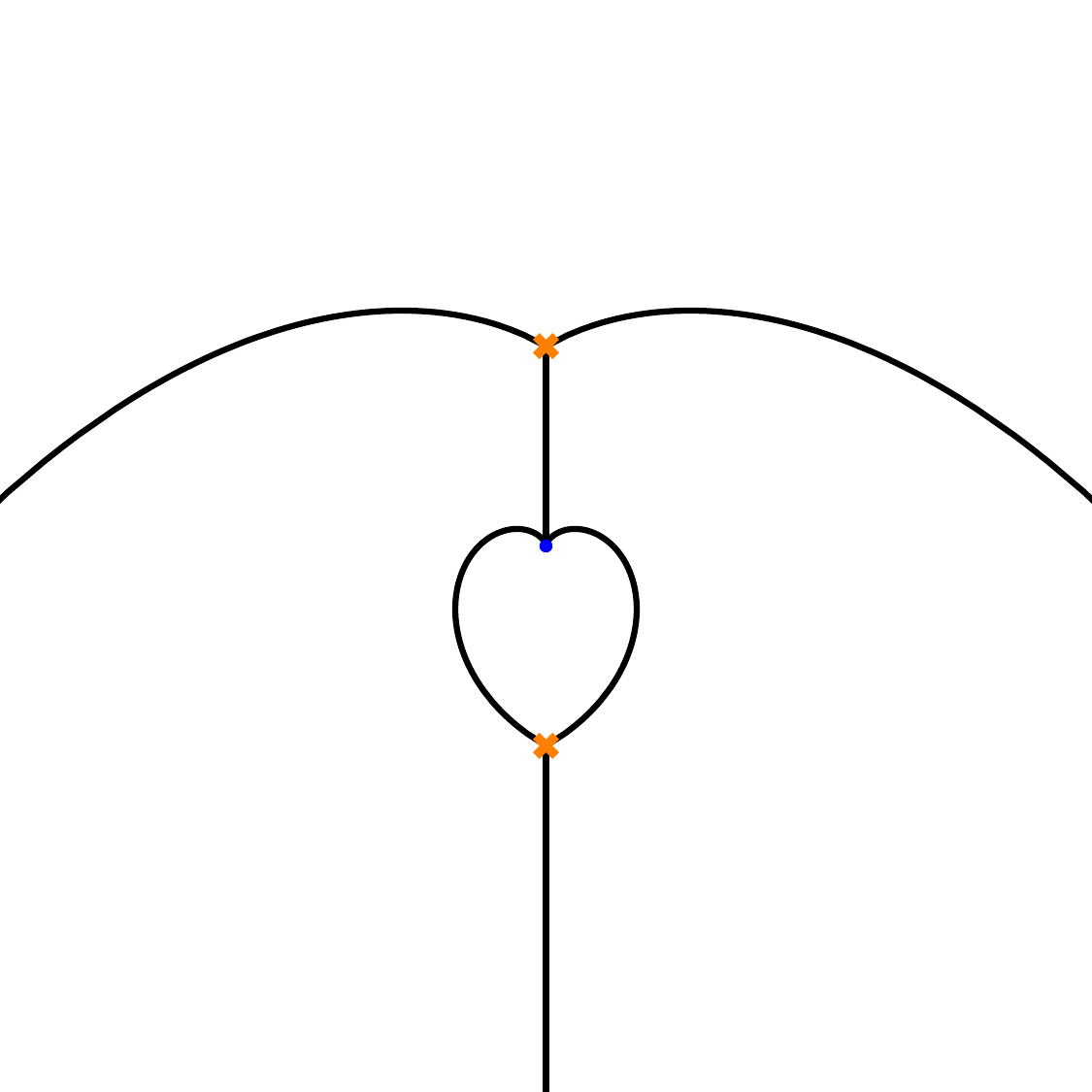}
\caption{Two holomorphic disks at $u=0$ for $0\leq \theta\leq \pi$.}
\label{fig:YM-strong}
\end{center}
\end{figure}

\begin{figure}
\begin{center}
\includegraphics[width=0.19\textwidth]{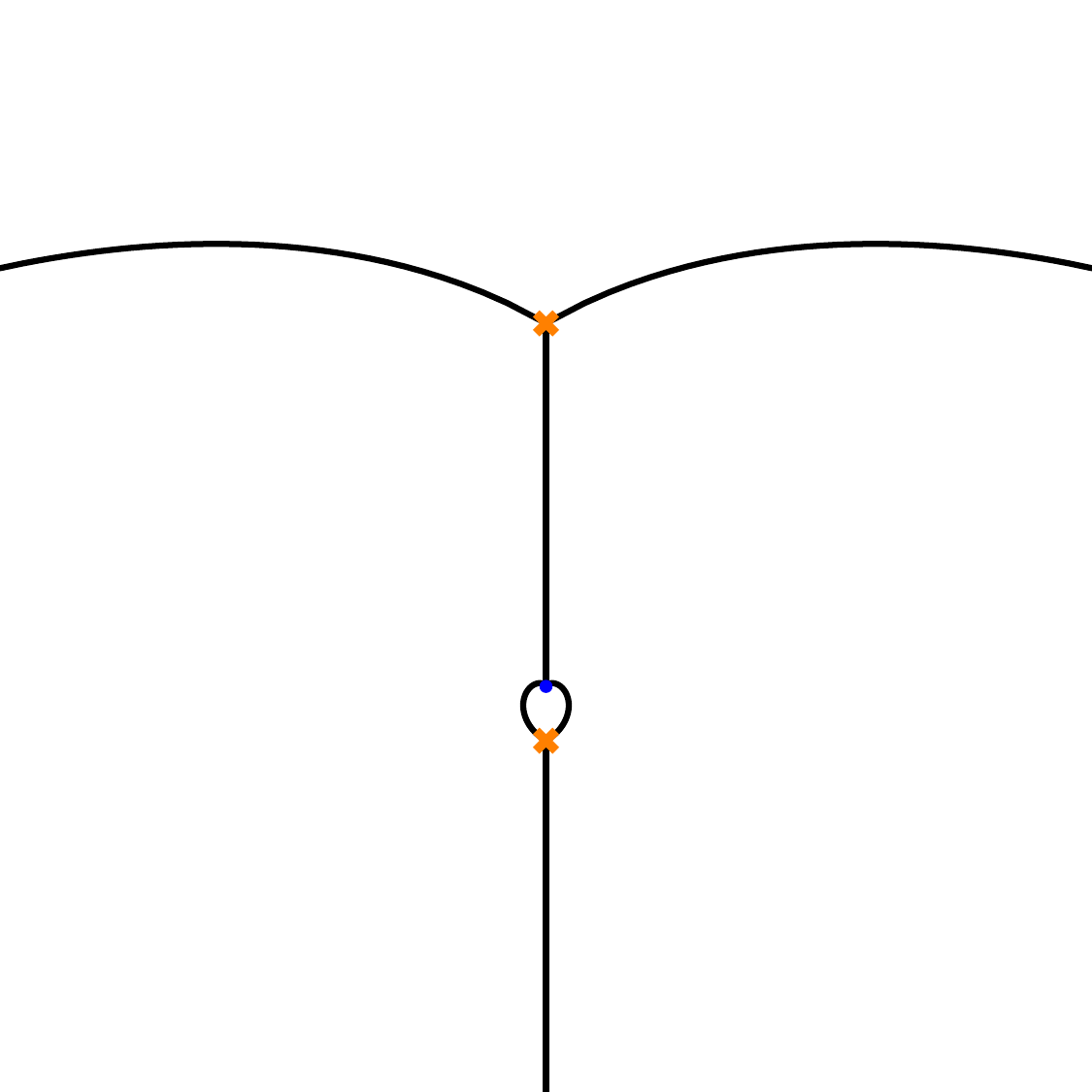}
\includegraphics[width=0.19\textwidth]{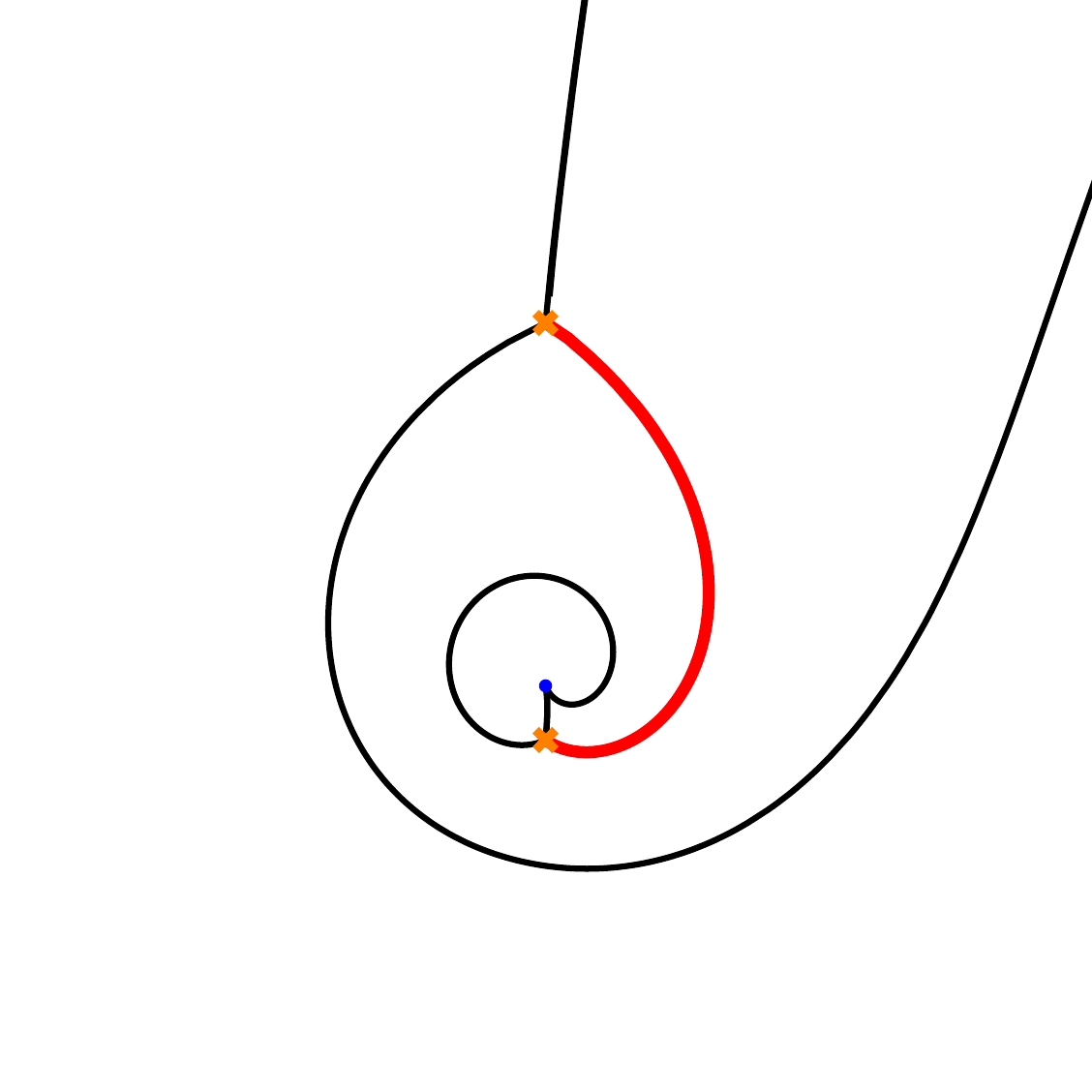}
\includegraphics[width=0.19\textwidth]{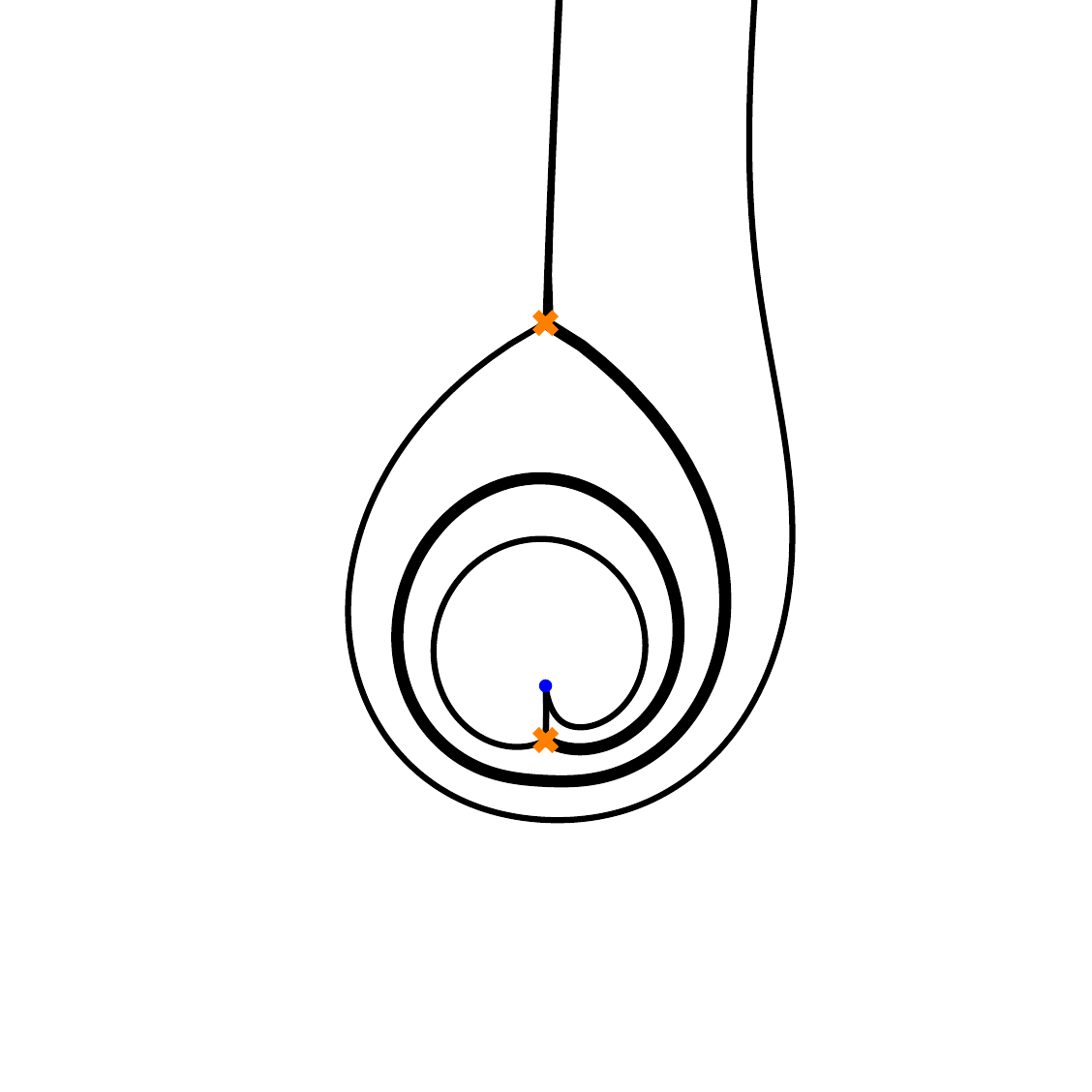}\\
\includegraphics[width=0.19\textwidth]{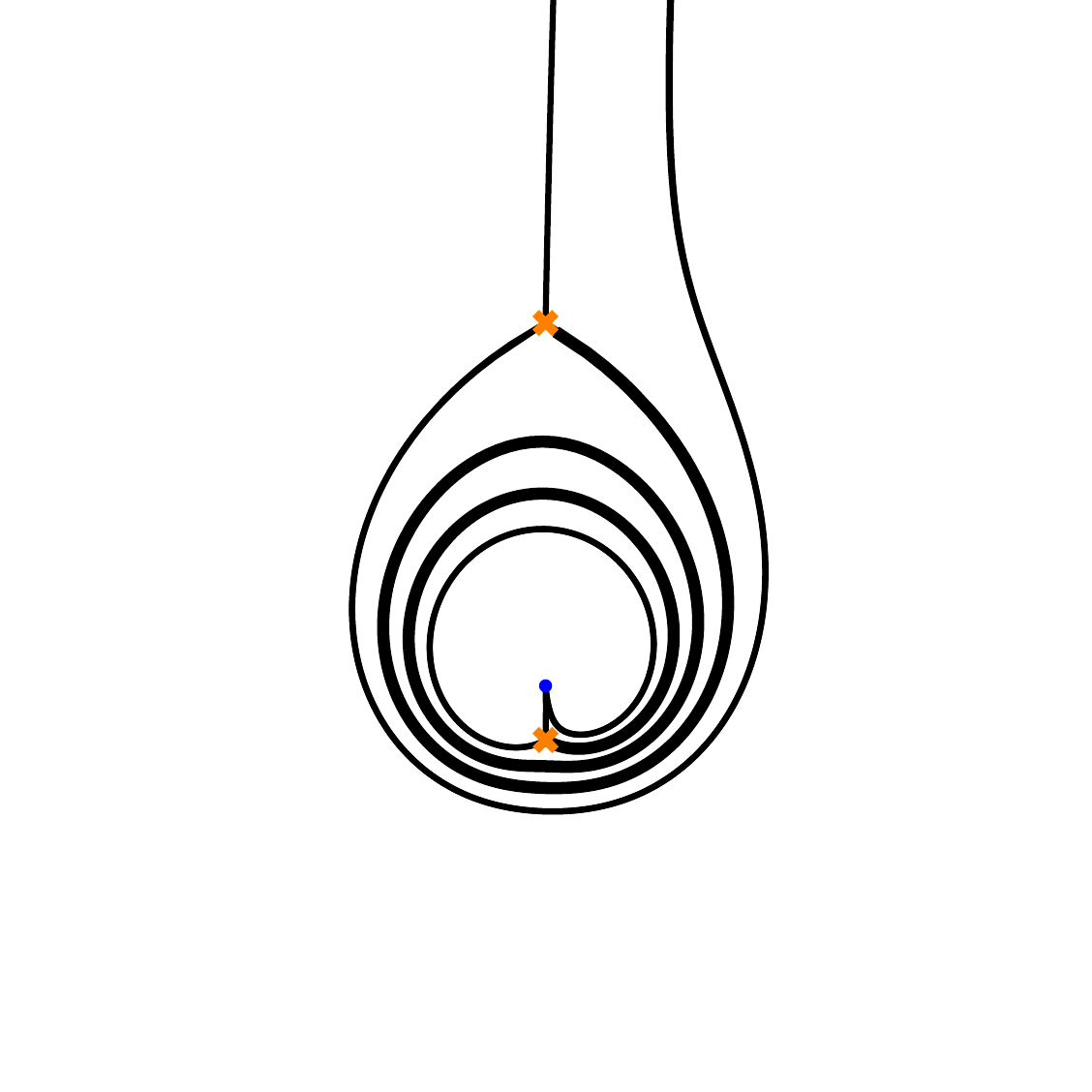}
\raisebox{40pt}{$\cdots$}
\includegraphics[width=0.19\textwidth]{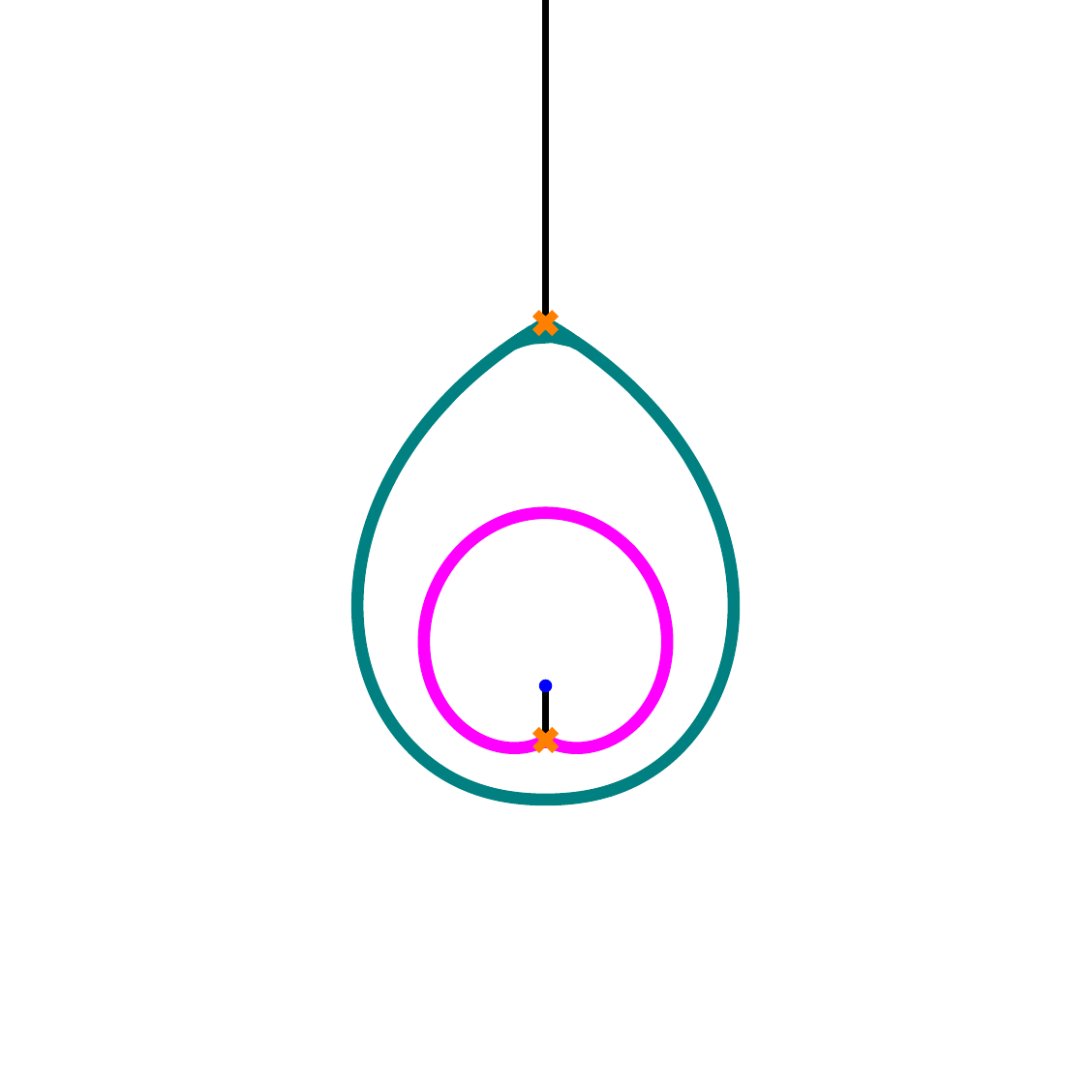}
\raisebox{40pt}{$\cdots$}
\includegraphics[width=0.19\textwidth]{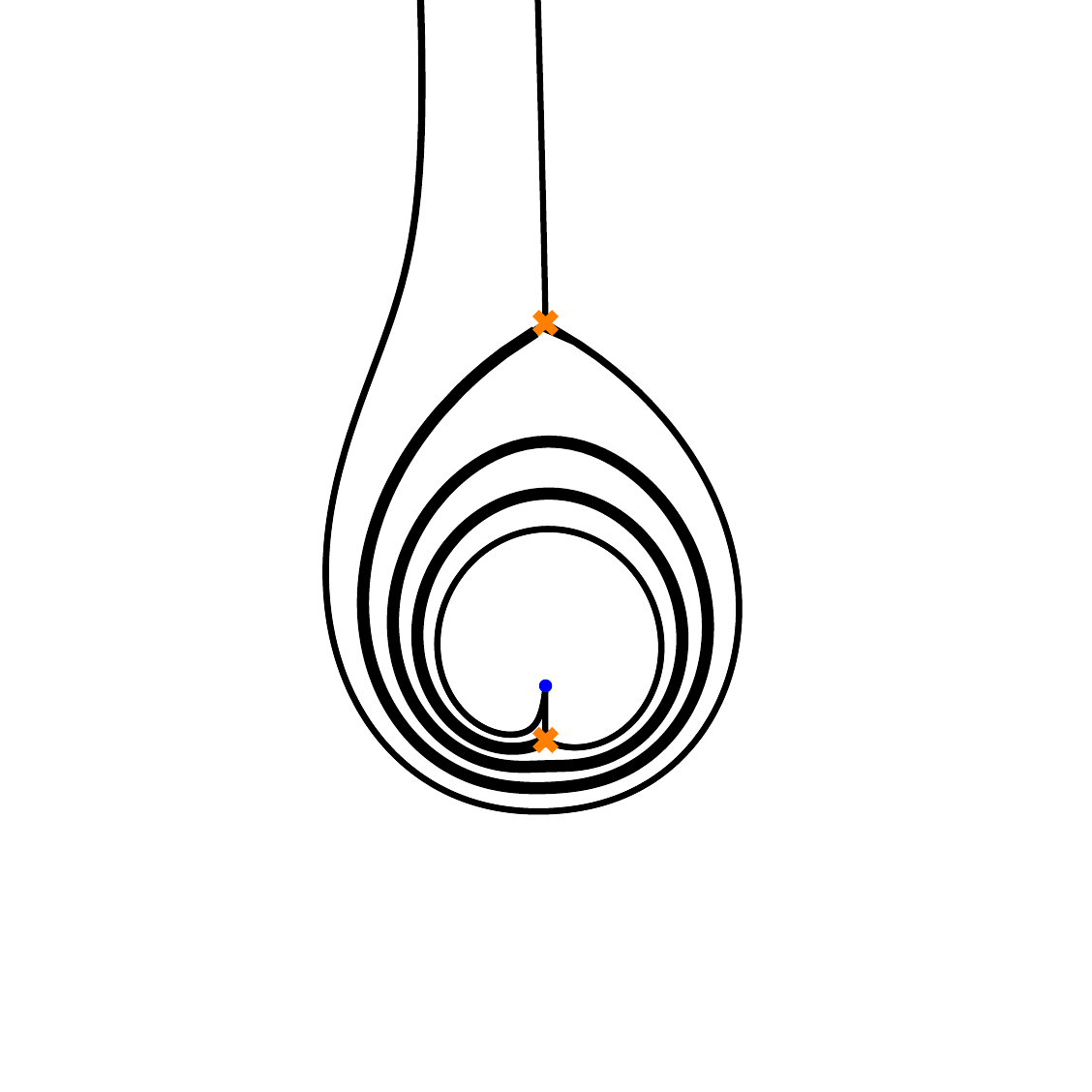}\\
\includegraphics[width=0.19\textwidth]{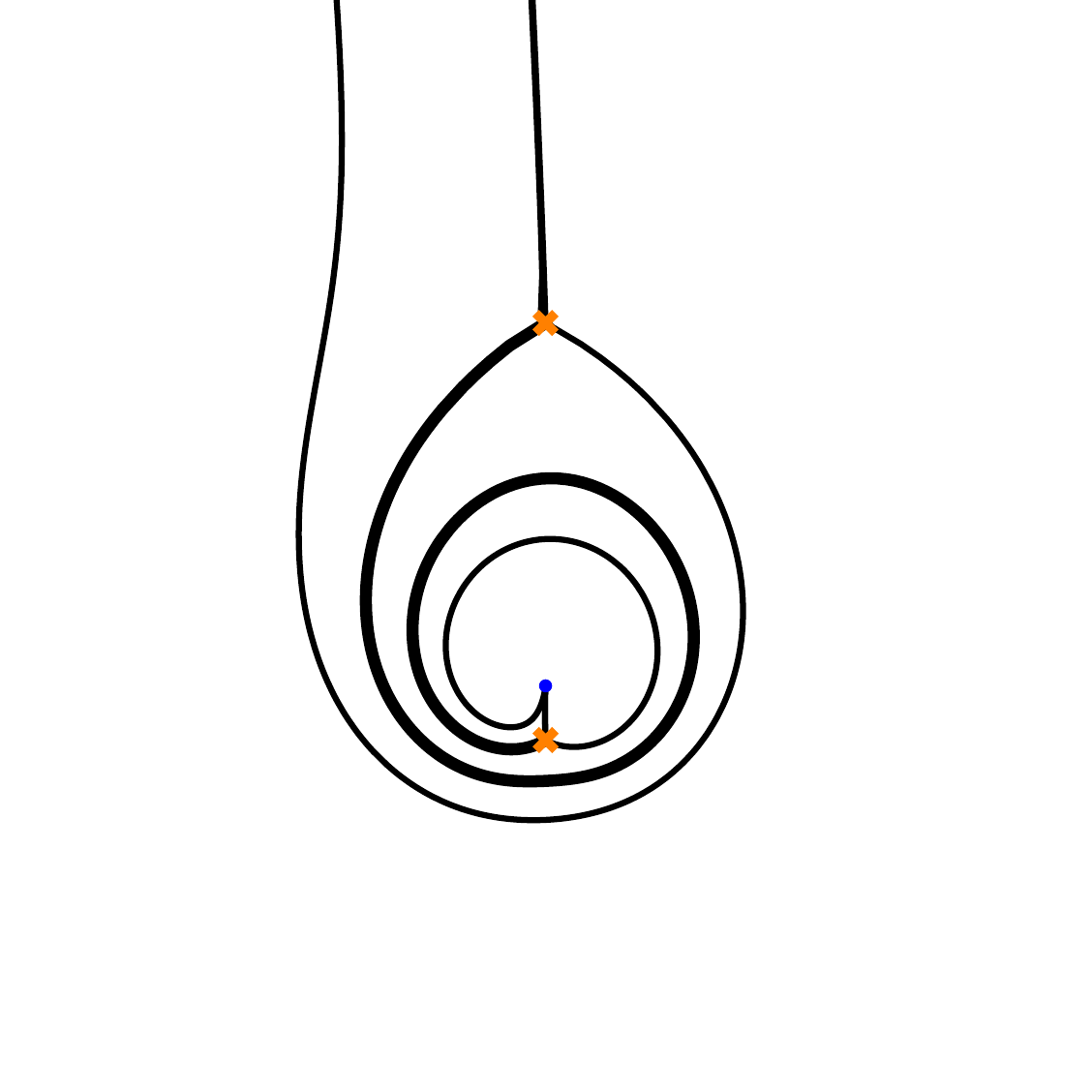}
\includegraphics[width=0.19\textwidth]{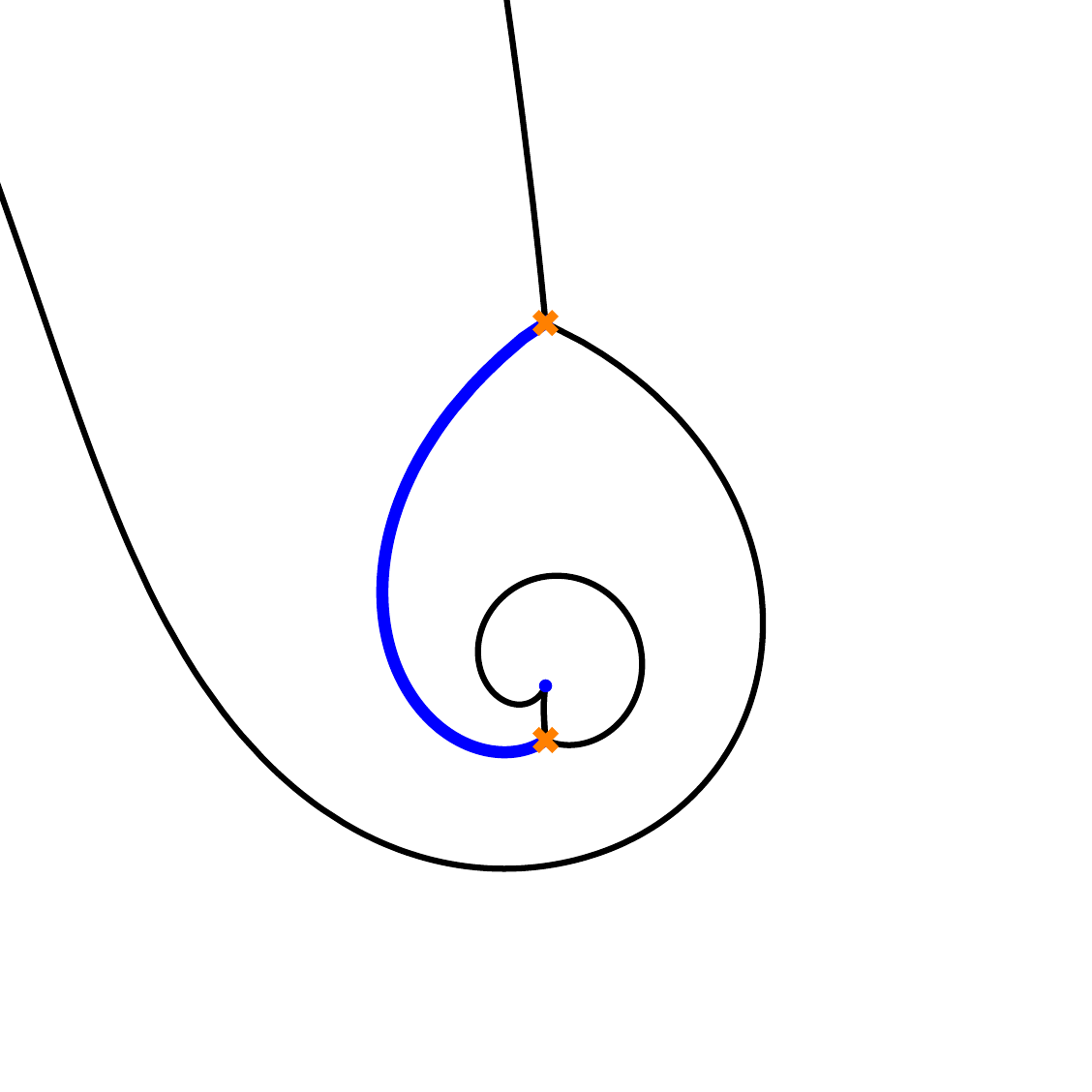}
\includegraphics[width=0.19\textwidth]{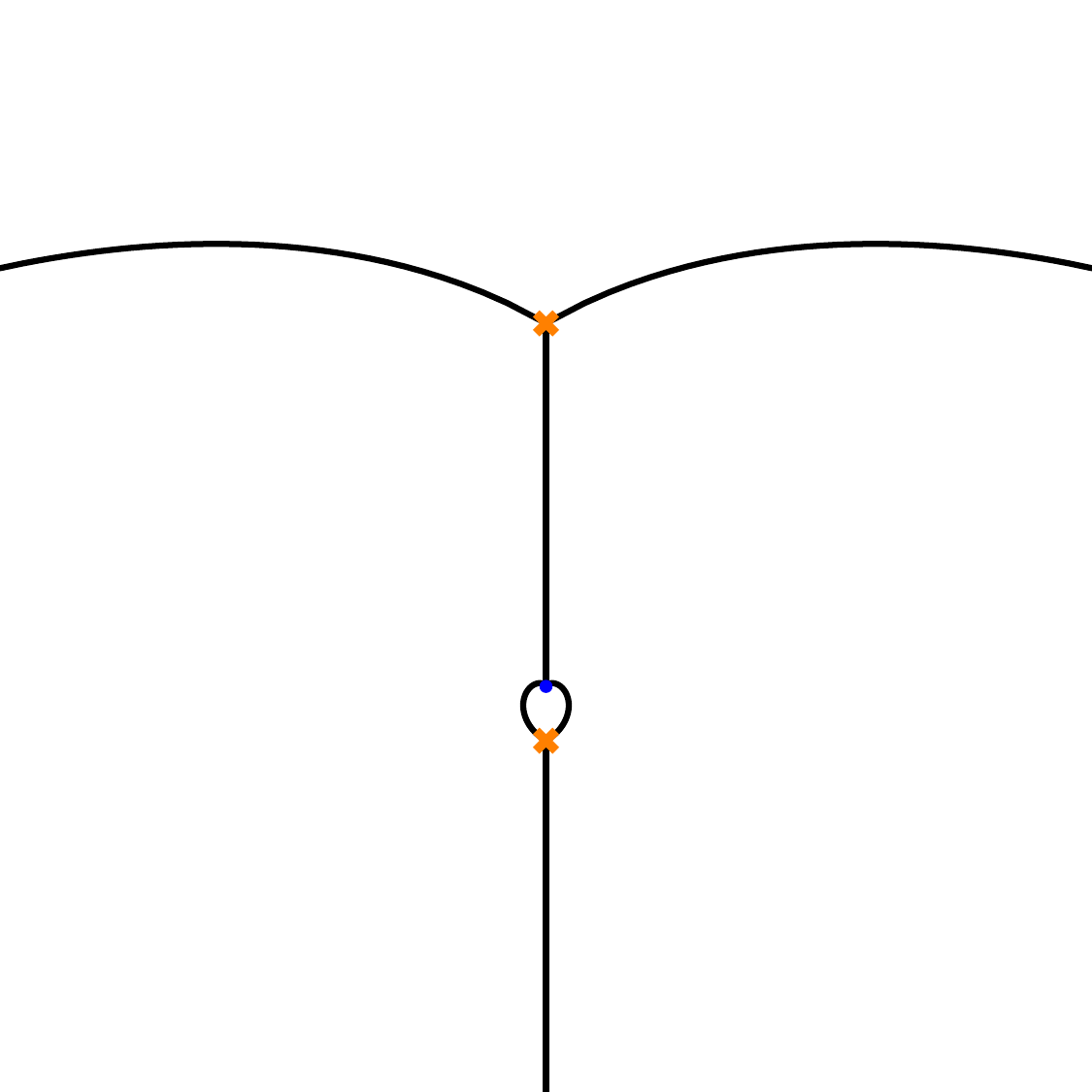}
\caption{Infinitely many holomorphic disks at $u=-2i$ for $0\leq \theta\leq \pi$.
}
\label{fig:YM-weak}
\end{center}
\end{figure}

From Theorems \ref{general wall crossing} and \ref{skein valued cluster}, we deduce: 
\begin{equation}\label{eq:SWwcf}
	\Psi_{(1,1)} \Psi_{(-1,1)} = 
	\Psi_{(-1,1)} \dots \Psi_{(-1,2n+1)} \dots
%	\Psi_{0,2}[q]^{-1}\Psi_{0,2}[q^{-1}]^{-1} 
	\Psi_{A_{10}}^{-1} \Psi_{A_{01}}^{-1}
	\dots \Psi_{(1,2n+1)} \dots \Psi_{(1,1)}
\end{equation}
where ellipses contain contributions from all values of $n\in \mathbb{N}$.
Here $A_{01},A_{10} \in \Sk(T^2\times I)$ are connected curves in class $(0,2)$ with a negative (respectively, positive) crossing shown in Figure~\ref{fig:ymWCF}. See Remark \ref{rmk:VMfactors} for details about the associated factors $\Psi_{A_{10}}^{-1}\Psi_{A_{01}}^{-1}$.

\begin{remark}
Formula \eqref{eq:SWwcf} can also be derived by repeated application of the pentagon relation of Figure \ref{fig:pentagon}. The first steps of this procedure are shown in Figure \ref{fig:ymWCF}:
we first push the curves $(1,1)$ and $(-1,1)$ through each other, creating curves $A_{10}$ and $A_{01}$. 
After this step, further pushing curve $(1,1)$ down will cause it to cross $A_{10}$ generating further curves in class $(1,2n+1)$. Similarly, further pushing curve $(-1,1)$ upward will also generate curves in class $(2n+1)$ by unlinking with $A_{10}$. 
Note that $A_{01}$ does not participate in further creation of curves, as shown in Figure \ref{fig:YMwcf-deformed}.
\end{remark}

\begin{figure}
\begin{center}
\includegraphics[width=0.6\textwidth]{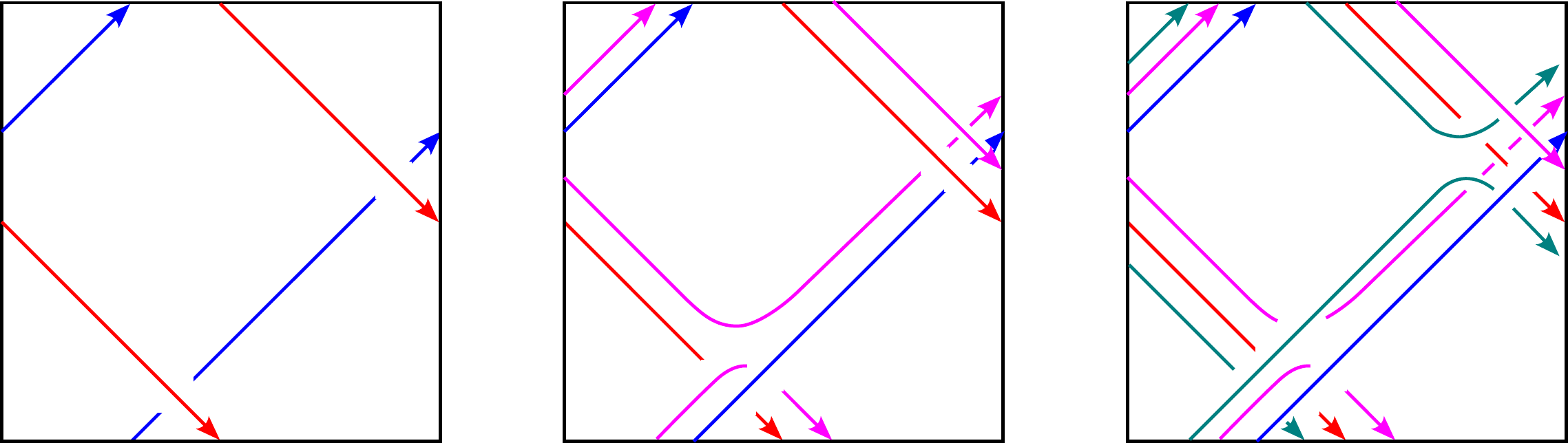}
\caption{Curves $(1,1)$ (red), $(-1,1)$ (blue), $A_{10}$ (magenta) and $A_{01}$ (green) after moving $(1,1)$ across $(-1,1)$.}
\label{fig:ymWCF}
\end{center}
\end{figure}

\begin{figure}
\begin{center}
\includegraphics[width=0.25\textwidth]{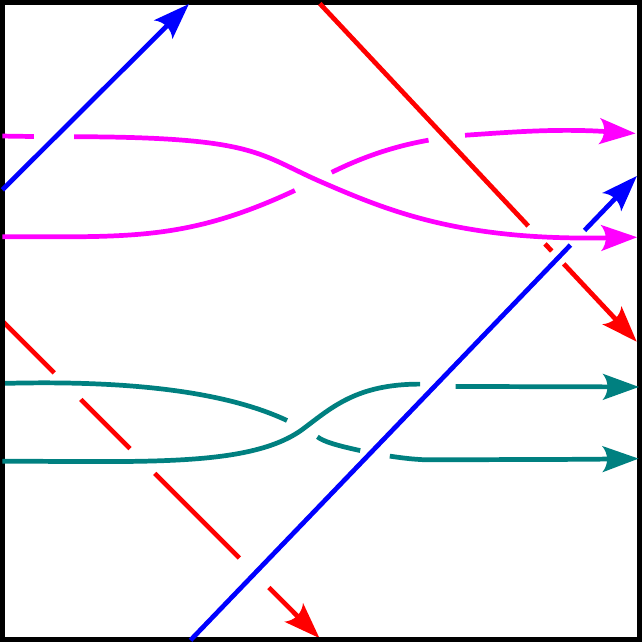}
\caption{Curves $(1,1)$, $(-1,1)$, $A_{10}$ and $A_{01}$.}
\label{fig:YMwcf-deformed}
\end{center}
\end{figure}

\begin{remark}\label{rmk:VMfactors}
$A_{10}$ and $A_{01}$ can be expressed in terms 
certain skein elements $P_{i,j}$ introduced by Morton and Samuelson \cite{Morton-Samuelson}:
\begin{equation}
\begin{split}
	&A_{10} = \frac{1}{2}(q^{1/2}+q^{-1/2}) P_{0,2} + \frac{1}{2} (q^{1/2}-q^{-1/2}) P_{0,1}^2 \\
	&A_{01} = \frac{1}{2}(q^{1/2}+q^{-1/2}) P_{0,2} - \frac{1}{2} (q^{1/2}-q^{-1/2}) P_{0,1}^2 \\
\end{split}
\end{equation}
The factors appearing in \eqref{eq:SWwcf} are defined as follows
\begin{equation}\label{eq:psi-A10-A01}
\begin{split}
    \Psi_{A_{10}}^{-1} &= \exp\left(\sum_{d\geq 1} \frac{1}{2d} \left(\frac{q^{1/2}+q^{-1/2}}{q^{1/2}-q^{-1/2}} P_{0,2d} + P_{0,d}^2\right)\right)\\
    \Psi_{A_{01}}^{-1} &= \exp\left(\sum_{d\geq 1} \frac{1}{2d} \left(\frac{q^{1/2}+q^{-1/2}}{q^{1/2}-q^{-1/2}} P_{0,2d} - P_{0,d}^2\right)\right)\,.
\end{split}
\end{equation}

Each term can be given a geometric interpretation by an application of \cite[Theorem 13]{morton2008geometrical}, which allows to compute insertions of any $Q\in \Sk(S^1\times D^2)$ along a curve in $\SkAlg(T^2\times I)$.
We have checked numerically that $\Psi_{A_{10}}^{-1}$ coincides with the insertion of a disk, i.e. a skein-valued dilogarithm \eqref{eq:explicit-skein-dilog}, along curve $A_{10}$.
Similarly, we have checked numerically that 
$\Psi_{A_{01}}$ (i.e. the second line in \eqref{eq:psi-A10-A01} with opposite overall sign in the exponent) coincides with the insertion of an anti-disk, i.e. the
inverse of the skein-valued dilogarithm \eqref{eq:skein-dilog-inverse}, along $A_{01}$.
To obtain $\Psi_{A_{01}}^{-1}$ one needs to compute the multiplicative inverse. As an operation in the skein, this is complicated by the fact that annihilation of curves in the product $\Psi_{A_{01}}^{-1} \Psi_{A_{01}}$ involves moving (multiple coverings of) the disk and anti-disk onto each other, which induces repeated wall-crossings among their boundaries.
\end{remark}

\begin{remark} 
    Formula \eqref{eq:SWwcf} is a skein-valued lift of the  wall-crossing formula for 4d $\mathcal{N}=2$ $SU(2)$ Yang-Mills theory which was previously identified with the Kontsevich-Soibelman wall-crossing formula for the CY3 category associated to the Kronecker quiver  \cite[discussion below Conjecture 1]{Kontsevich:2008fj} and \cite[Equation (2.26)]{Gaiotto-Moore-Neitzke-4dwallvia3dfield}.
The `motivic' counterpart of the Kontsevich-Soibelman wall-crossing formula is known to give, in this case, the spin refinement of the Seiberg-Witten wall crossing \cite[Equation (1.3) and Section 4.1]{Dimofte:2009tm}.
Our wall crossing formula (arrived at by entirely different considerations, having nothing to do with the Donaldson-Thomas invariants of a Calabi-Yau category) in fact specializes 
to the refined/motivic formula by specializing \eqref{eq:SWwcf} to the $\mathfrak{gl}(1)$ skein via \eqref{eq:HOMLYPT-2-gl1}.  
In particular, we note that the $\mathfrak{gl}(1)$-specialization of the middle factors $\Psi_{A_{10}}^{-1} \Psi_{A_{01}}^{-1}$ is $\Phi(q^{1/2} \hat x^2)^{-1}\Phi(q^{-1/2} \hat x^2)^{-1}$ where $\Phi(\xi) = (q^{1/2} \xi;q)_\infty^{-1}$, 
in agreement with \cite[Equation (4.8)]{Dimofte:2009tm}, \cite[Section 4.1]{Galakhov:2014xba}, and \cite[Section 8.4]{Neitzke-Yan-nonabelianization}. 
\end{remark}

\begin{question}
    What (if any) question in the Donaldson-Thomas theory of the CY3 category associated to the Kronecker quiver -- or what (if any) question about $\mathcal{N}=2$ supersymmetric Yang-Mills theory -- corresponds to the skein valued wall crossing \eqref{eq:SWwcf}?
\end{question}

\appendix

\section{Flow graphs and holomorphic curves} \label{flow graphs and holomorphic curves}

\subsection{The flow graph limit}\label{ssec: curves to flow graphs}
Consider a Lagrangian $Q\subset T^\ast M$. We will take $Q$ to lie very close to the zero section. We obtain this by fiber scaling $\sigma_\lambda\colon T^\ast M\to T^\ast M$, $\sigma_\lambda(q,p)=(q,\lambda p)$. Since $\sigma_\lambda^\ast\omega=\lambda\omega$, $\sigma_\lambda$ preserves Lagrangians and in the non-exact case it scales the flux. Let $Q_\lambda=\sigma_\lambda(Q)$. 

We first consider holomorphic curves with boundary on $Q_\lambda$ only. If we consider the standard Gromov limit of such curves we have the following straightforward result.
\begin{lemma}
Let $u_\lambda\colon (\Sigma_\lambda,\partial\Sigma_\lambda)\to (T^\ast M,Q_\lambda)$ be a family of holomorphic curves, then some subsequence $u_\lambda$ Gromov converges to a constant curve $u_0\colon (\Sigma_0,\partial\Sigma_0)$ mapping into $M$ as $\lambda\to 0$.    
\end{lemma}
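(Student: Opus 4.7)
The plan is a standard Gromov-compactness argument exploiting the fact that $Q_\lambda$ converges to the zero section $M$ as $\lambda\to 0$. The main content is to show that the symplectic area of $u_\lambda$ vanishes in the limit, so that any bubble limit collapses to a constant map into $M$.

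First I would control the area. Since $T^\ast M$ is exact with Liouville form $\lambda_{\mathrm{can}}$ satisfying $\sigma_\lambda^\ast\lambda_{\mathrm{can}}=\lambda\cdot\lambda_{\mathrm{can}}$, Stokes' theorem gives
\[
\int_{\Sigma_\lambda} u_\lambda^\ast\omega \ = \ \int_{\partial\Sigma_\lambda}u_\lambda^\ast\lambda_{\mathrm{can}} \ = \ \int_{\partial\Sigma_\lambda}(\sigma_\lambda^{-1}\circ u_\lambda)^\ast\sigma_\lambda^\ast\lambda_{\mathrm{can}} \ = \ \lambda\int_{\partial\Sigma_\lambda} (\sigma_\lambda^{-1}\circ u_\lambda|_{\partial})^\ast\beta,
\]
where $\beta:=\lambda_{\mathrm{can}}|_Q$ is a fixed closed $1$-form on $Q$. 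Under the (standing, implicit) hypothesis of uniformly bounded topological/homological type of the family $u_\lambda$, the right-hand integral is bounded in absolute value by a constant depending only on the relative homology class $[\sigma_\lambda^{-1}\circ u_\lambda]\in H_2(T^\ast M,Q)$ and the periods of $\beta$. Hence $\mathrm{Area}(u_\lambda)=O(\lambda)\to 0$.

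Next I would use this vanishing area together with the monotonicity inequality for $J$-holomorphic curves with boundary on a totally real submanifold: any point in the image of $u_\lambda$ at distance $r$ from $\partial u_\lambda$ contributes at least $c r^2$ to the area, where $c$ depends only on the geometry near $M\subset T^\ast M$ (which is bounded since $Q_\lambda\to M$ smoothly). Combined with $\partial u_\lambda\subset Q_\lambda$ and $Q_\lambda\to M$ in $C^\infty$, this forces
\[
\sup_{z\in\Sigma_\lambda}\mathrm{dist}\bigl(u_\lambda(z),M\bigr)\ \longrightarrow\ 0\qquad\text{as }\lambda\to 0.
\]
In particular the images converge uniformly to $M$.

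Finally I would apply Gromov compactness for $J$-holomorphic curves with totally real (and smoothly converging) boundary conditions: after passing to a subsequence, $u_\lambda$ converges to a nodal $J$-holomorphic curve $u_0\colon(\Sigma_0,\partial\Sigma_0)\to(T^\ast M,M)$ whose image lies in $M$. Since $\omega|_M=0$, every component of $u_0$ has zero area and is therefore constant, which is the desired conclusion. The main (minor) obstacle will be clarifying the implicit uniformity hypothesis needed to bound the boundary periods of $\beta$; this is usually built into the definition of a ``family'' of holomorphic curves in the sense of Gromov compactness, and the rest of the argument is routine.
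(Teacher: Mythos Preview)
Your argument is correct and rests on the same exactness idea as the paper's. The paper gives a one-line proof: any Gromov limit has boundary on the zero section $M$, and a non-constant component (or bubble from derivative blow-up) would be a non-constant holomorphic curve in $T^\ast M$ with boundary on $M$, which cannot exist; your explicit $\mathrm{Area}(u_\lambda)=O(\lambda)$ computation and the monotonicity step make this quantitative but are not needed for the bare statement.
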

\begin{proof}
This follows from the standard argument for Gromov compactness. (Note that there cannot be any derivative blow up: such blow up would result in a non-constant holomorphic curve in $T^\ast M$ with boundary in $M$).      
\end{proof}

We write $(q,p)$ for standard coordinates on $T^\ast M$ and use the almost complex structure $J$ on $T^\ast M$ (in some neighborhood of the zero section) induced by a Riemannian metric on $M$. The following result follows from the maximum principle for harmonic functions.
\begin{lemma}\label{l: estimate1}\cite[Lemma 5.4]{Ekholm-morse}
If $u=(q,p)\colon(\Sigma,\partial\Sigma)\to (T^\ast M,Q_\lambda)$ is a $J$-holomorphic map then 
$\sup_\Sigma|p|=\mathcal{O}(\lambda)$. \qed
\end{lemma}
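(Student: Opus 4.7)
The plan is to exploit plurisubharmonicity of the fiberwise norm squared on $T^\ast M$ together with the boundary condition on $Q_\lambda$, and then invoke the maximum principle. Throughout we work in a Weinstein neighborhood of $M\subset T^\ast M$ where $J$ is the standard almost complex structure associated to the Riemannian metric $g$; in particular $J$ sends horizontal vectors (with respect to the Levi-Civita connection) to vertical ones, and in fiber coordinates $J$ restricted to a fiber is the standard complex structure on $T_q^\ast M\cong \R^n$.

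First, I would establish the following geometric fact: the function $\phi:T^\ast M\to \R$ defined by $\phi(q,p)=|p|_g^2$ is plurisubharmonic with respect to $J$ in a neighborhood of the zero section. In local coordinates, $\phi$ vanishes to second order along $M$ and its Hessian along the fiber direction is positive definite; using the compatibility of $J$ with the horizontal/vertical decomposition, a direct computation gives $-dd^c\phi \geq c\,\omega$ for some $c>0$ on a neighborhood of $M$. Consequently, for any $J$-holomorphic map $u=(q,p)\colon (\Sigma,\partial\Sigma)\to (T^\ast M,Q_\lambda)$ with image in this neighborhood, the composition $\phi\circ u\colon \Sigma\to\R$ is subharmonic.

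Second, I would control $\phi$ on the boundary condition. The Lagrangian $Q\subset T^\ast M$ has compact image of its intersection with any compact set, so (in the compact setting, or uniformly on compact pieces in the cylindrical-end case) $\sup_Q|p|=C<\infty$. Since $Q_\lambda=\sigma_\lambda(Q)$ and $\sigma_\lambda$ scales fiber coordinates by $\lambda$, we get $\sup_{Q_\lambda}|p|=\lambda C=\mathcal{O}(\lambda)$. In particular the image $u(\partial\Sigma)\subset Q_\lambda$ lies in the neighborhood of $M$ considered above for $\lambda$ small, and $\phi|_{u(\partial\Sigma)}=\mathcal{O}(\lambda^2)$.

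Finally, applying the maximum principle for subharmonic functions on the Riemann surface $\Sigma$ (noting one should first verify $u(\Sigma)$ itself lies in the neighborhood where $\phi$ is plurisubharmonic; a short continuity/connectedness argument starting from the boundary and using the same subharmonicity estimate handles this) yields
\[
\sup_\Sigma \phi\circ u \;\le\; \sup_{\partial\Sigma}\phi\circ u \;=\;\mathcal{O}(\lambda^2),
\]
and taking square roots gives $\sup_\Sigma|p|=\mathcal{O}(\lambda)$, as required. The main subtlety—really the only non-bookkeeping step—is the first one, namely verifying plurisubharmonicity of $|p|^2$ for the chosen $J$ in a neighborhood of the zero section; but this is a well-known local computation in the Sasaki (kinetic energy) Kähler model and the proof is standard.
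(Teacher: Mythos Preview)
Your proposal is correct and follows essentially the same approach as the paper, which does not give a proof but simply cites \cite[Lemma 5.4]{Ekholm-morse} and remarks that the result ``follows from the maximum principle for harmonic functions.'' Your argument via plurisubharmonicity of $|p|^2$ and the maximum principle for subharmonic functions is the standard way to make that remark precise, and your handling of the boundary bound on $Q_\lambda$ and the confinement-to-neighborhood issue is appropriate.
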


Lemma \ref{l: estimate1} shows that we can rescale holomorphic curves $u_\lambda$ locally by $\lambda^{-1}$. Such rescaled map also satisfies a Cauchy-Riemann equation and, provided the derivative of the rescaled map is bounded, it converges to the fiber strip over a local Morse flow line. 
As explained in \cite[Section 5, Corollary 5.24]{Ekholm-morse}, it is possible to add, given that $S$ meets a finite number of general position conditions, finitely many boundary punctures on the domain of $u_\lambda$ so that for the punctured domain the rescaled derivative is uniformly bounded. 
Hence after adding a finite number of punctures the holomorphic map converges (in this rescaled sense as is standard for flow trees, see \cite[Section 5.4.4]{Ekholm-morse}) to a flow graph. We refer to this rescaled convergence as \emph{flow graph convergence}, which in particular gives $C^0$-convergence everywhere (and $C^1$-convergence outside neighborhoods of $\mathcal{O}(\lambda\log \lambda)$-neighborhoods of the graph vertices, see \cite[Lemma 5.13]{Ekholm-morse}). We state this result on flow graph convergence as a lemma.

\begin{lemma}\label{l : flow graph converegnce 1}
Any sequence of holomorphic curves $u_\lambda$ with boundary on $Q_\lambda$ has a subsequence that flow graph converges to a flow graph $\Gamma$ of $Q$. \qed      
\end{lemma}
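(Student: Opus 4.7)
The plan is to bootstrap the standard Gromov limit (which by the preceding lemma is a constant map into $M$) to a flow graph by rescaling in the fiber direction and carefully handling the points where the derivative fails to be uniformly controlled. The key input is the estimate $\sup_\Sigma |p| = \mathcal{O}(\lambda)$ of Lemma \ref{l: estimate1}, which says that $u_\lambda$ lives in an $\mathcal{O}(\lambda)$-neighborhood of the zero section and therefore admits a meaningful rescaled limit.

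First I would pass to a Gromov-convergent subsequence, getting a bubble tree whose image lies in $M$, and then work separately on each irreducible domain component. On a fixed component, let $\tilde u_\lambda = \lambda^{-1} u_\lambda$ denote the map with the fiber coordinate rescaled by $\lambda^{-1}$; by Lemma \ref{l: estimate1} this is a uniformly bounded map satisfying a $\lambda$-dependent Cauchy-Riemann equation with boundary on $\lambda^{-1}Q_\lambda = Q$. In regions where the gradient of $\tilde u_\lambda$ is uniformly bounded, elliptic estimates give $C^1_{\mathrm{loc}}$-convergence (away from the boundary punctures to be introduced below) to a limit that is forced by the standard arguments of \cite[Section 5]{Ekholm-morse} to be a fiber strip over a Morse flow line of the difference 1-forms defining $Q$ locally.

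The main obstacle is dealing with points where the rescaled gradient blows up; these are precisely the locations that will become the vertices of the flow graph (ends, switches, $Y_0$- and $Y_1$-vertices). The strategy, following \cite[Section 5, Corollary 5.24]{Ekholm-morse}, is to insert a finite collection of boundary punctures at such points so that on the punctured domain $\tilde u_\lambda$ has uniformly bounded derivative. The key technical inputs are: a monotonicity/area estimate that bounds the number of gradient concentration points uniformly in $\lambda$ (using that the local energy at each concentration point is bounded below, while the total energy of $u_\lambda$ is bounded by $\mathcal{O}(\lambda)$ times a constant coming from the Gromov limit); and a genericity argument on $Q$ guaranteeing that no concentration point accumulates a positive-dimensional family of bubbles, so that finitely many punctures suffice. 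Once the punctures are added, I would apply the flow-line compactness of \cite[Section 5.4.4]{Ekholm-morse} to each piece, recovering a fiber strip over a flow line on each piece and obtaining matching conditions at the punctures that exactly say the resulting data assembles into a flow graph of $Q$.

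Finally, I would check that the resulting object $\Gamma$ satisfies the global closure conditions in the definition of a flow graph: the cotangent lifts of the flow lines close up because $u_\lambda$ has no interior boundary, and at each vertex the local model at a gradient concentration point (an end, switch, or $Y$-vertex, as classified in \cite[Section 5]{Ekholm-morse}) is exactly the permissible local model for a flow graph vertex. The convergence is $C^0$ everywhere by construction and $C^1$ outside the $\mathcal{O}(\lambda \log \lambda)$-neighborhoods of vertices by the quoted \cite[Lemma 5.13]{Ekholm-morse}, giving flow graph convergence in the sense required.
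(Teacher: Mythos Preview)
Your proposal is correct and takes essentially the same approach as the paper, whose argument is given in the paragraph immediately preceding the lemma (the lemma itself carries only a \qed). Both rescale by $\lambda^{-1}$ using Lemma \ref{l: estimate1}, invoke \cite[Section 5, Corollary 5.24]{Ekholm-morse} to add finitely many boundary punctures under general position assumptions on $Q$ so that the rescaled derivative is uniformly bounded, and then conclude flow graph convergence via \cite[Section 5.4.4]{Ekholm-morse} with the $C^0$/$C^1$ control from \cite[Lemma 5.13]{Ekholm-morse}; your additional remarks on monotonicity and vertex closure are elaborations consistent with those references rather than a different route.
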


Dimension formulas for flow graphs and holomorphic curves agree, and after small perturbation, there is therefore a discrete set of flow graphs that are finite below any fixed Euler characteristic, compare \cite[Section 3]{Ekholm-morse}.

\subsection{Flow graphs attached to curves}\label{ssec: curves to curves + graphs}

Suppose now that $M \subset X$ is a Lagrangian, $Q \subset D^*M \subset X$ is contained in a Weinstein neighborhood of $M$, and we study curves with boundary on $Q$ and possibly also on other Lagrangians $L \subset X \setminus D^*M$. 
We take $Q_\lambda\subset D^*M$, $\lambda>0$, as the corresponding family of fiber-scaled Lagrangians.  

In this case, as $\lambda \to 0$, holomorphic curves with boundary on $Q_\lambda \sqcup L$ concentrate near holomorphic curves with boundary on $M \sqcup L$ with flow graphs of $Q\subset T^\ast M$ attached along their boundaries in $M$.   Such configurations were called quantum flow trees in \cite{EENS} and were the key to the calculation of the knot contact homology differential for any link. Technical results needed to establish the results stated here are found in \cite[Section 5]{EENS}. 

We first consider the standard Gromov limit of curves in $(X,Q_\lambda)$. We have the following.
\begin{lemma}\label{l : usual Gromov}
Let $u_\lambda\colon (\Sigma_\lambda,\partial\Sigma_\lambda)\to (X,Q_\lambda)$ be a family of holomorphic curves. Then some subsequence $u_\lambda$ Gromov converges to a curve $u_0\colon (\Sigma_0,\partial\Sigma_0)\to (X,M)$ as $\lambda\to 0$.  
\end{lemma}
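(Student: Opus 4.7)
The plan is to run standard Gromov compactness for Lagrangian boundary value problems, where the boundary Lagrangian $Q_{\lambda}$ is not fixed but varies in a $C^{\infty}$-convergent family whose limit is (the part of) $M$ over which $Q$ projects. Because $Q\subset D^{*}M$, fiber scaling gives $Q_{\lambda}=\sigma_{\lambda}(Q)\to M$ in $C^{\infty}$ on compact subsets of the Weinstein neighborhood; thus from the outset the boundary condition degenerates smoothly onto $M$, and the task is to check that the usual compactness package goes through in spite of this degeneration.

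First I would establish a uniform energy/area bound. In the Weinstein neighborhood $D^{*}M$, the symplectic form is exact with primitive the tautological $1$-form $p\,dq$, and Lemma \ref{l: estimate1} gives $|p|=\mathcal{O}(\lambda)$ on the part of $u_{\lambda}$ lying in this neighborhood. Hence the contribution of $u_{\lambda}|_{D^{*}M}$ to $\int u_{\lambda}^{*}\omega$ is, by Stokes, a boundary integral of $p\,dq$ against the boundary arcs on $Q_{\lambda}$, which is $\mathcal{O}(\lambda)\cdot\operatorname{length}(\partial u_{\lambda}\cap Q_{\lambda})$. In the complementary region $X\setminus D^{*}M$, the boundary lies on the fixed Lagrangian $L$, and the area is controlled by the relative homology class of $u_{\lambda}$ (which we may fix, since we are passing to a subsequence) via its boundary on $L$ and along a fixed reference arc in $\partial D^{*}M$. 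Combining these gives a uniform bound on $\int u_{\lambda}^{*}\omega$, hence on the energy.

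Next I would rule out derivative blow-up in the usual way. If $|du_{\lambda}|\to\infty$, a standard rescaling argument produces either (i) a nonconstant $J$-holomorphic sphere in $X$, (ii) a nonconstant $J$-holomorphic disk with boundary on $L$ (away from $D^{*}M$), or (iii) a nonconstant $J$-holomorphic disk with boundary on the $C^{\infty}$-limit of $Q_{\lambda}$, namely on $M$. Cases (i) and (ii) are the familiar bubbles that Gromov compactness handles by extracting a stable nodal limit; case (iii) is the source of the new phenomenon, because disks on $M$ that arise this way have energy $\mathcal{O}(\lambda)\to 0$, and so are carried by the fiber scaling---this is precisely where the picture of \emph{flow graphs} appearing in the next subsection will come from. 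At the present level of the lemma it suffices to observe that all three types of limits have boundary on $M\cup L$, so that after passing to a subsequence and applying the standard Gromov compactness theorem for holomorphic curves with $C^{\infty}$-convergent totally real boundary conditions (see e.g.\ the treatment in \cite{EENS}, which was designed for exactly this kind of setting), we obtain a stable limit $u_{0}\colon (\Sigma_{0},\partial\Sigma_{0})\to (X,M\cup L)$.

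The main obstacle is the moving boundary condition and the possibility, mentioned above, of disk bubbles whose boundary collapses into $M$. These are not bubbles in the classical sense---their energy tends to zero---and if one were to keep only the classical Gromov limit one might lose information. At the level of the present lemma, however, this is benign: such disks literally converge to constant maps into $M$, which are absorbed into $u_{0}$ and do not spoil the convergence statement. (They are recovered in the refined flow-graph convergence of Lemma \ref{l : flow graph converegnce 1}, which exactly captures, via fiberwise rescaling, the non-constant profile that is being compressed into $M$ as $\lambda\to 0$; the corresponding refinement for the mixed problem is the content of the subsequent lemmas.) Thus the proof reduces to verifying the energy bound and invoking standard $C^{\infty}$-moving-boundary Gromov compactness.
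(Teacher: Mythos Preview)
Your overall strategy---run standard Gromov compactness with a $C^\infty$-converging totally real boundary condition $Q_\lambda \to M$, and observe that any bubbles forming along the $Q_\lambda$-boundary limit to constant maps into $M$---is exactly the paper's approach, and your discussion of the three bubble types and of how the zero-energy bubbles are absorbed into the limit is correct.

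However, your derivation of the uniform energy bound has a gap. You invoke Lemma \ref{l: estimate1} to claim $|p|=\mathcal{O}(\lambda)$ on $u_\lambda^{-1}(D^*M)$, but that lemma is proved for curves whose \emph{entire} boundary lies on $Q_\lambda$; here the curve also exits $D^*M$ to meet $L$, so the maximum principle only gives $|p|\le 1$ on the portion inside $D^*M$ (the maximum being attained where the curve crosses $\partial D^*M$). Even granting your $|p|$ estimate, bounding the inside area by $\mathcal{O}(\lambda)\cdot\mathrm{length}(\partial u_\lambda\cap Q_\lambda)$ is circular: that length is not a priori bounded independently of the energy you are trying to control.

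The paper sidesteps this entirely. It simply takes an initial area bound as given (as is standard in any Gromov compactness statement---implicitly by fixing the relative homology class), and notes that since the action and flux of the isotopy $Q\to Q_\lambda$ are $\mathcal{O}(\lambda)$, the area in a fixed class is uniformly bounded in $\lambda$. This is the clean argument: fix the class in $H_2(X,Q\cup L)$, observe the $\omega$-area varies by the flux, which vanishes as $\lambda\to 0$. Replace your first paragraph with this and the rest of your proof goes through.
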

\begin{proof}
This follows from the standard argument for Gromov compactness. The number of non-zero area bubbles in the limit is controlled by the initial area bound of the curve. For sequences with bounded derivative, the usual Arzela-Ascoli argument gives a sequence that converges to a holomorphic curve with boundary on $L\cup M$. Since the action and flux of $S_\lambda$ is $\mathcal{O}(\lambda)$ this accounts for all area and we get Gromov convergence.
\end{proof}

We point out that unlike Gromov compactness in the case when the Lagrangian does not degenerate, in Lemma \ref{l : usual Gromov}, constant components of the limit may be located far away from the positive area components of the lifting curve. As before, to get a more precise description, we need to rescale.

We consider this more refined picture of the limit as $\lambda\to 0$. 
As in \cite[Section 5]{EENS}, we consider the domains $(\Sigma_\lambda,\partial\Sigma_\lambda)$ in a sequence of curves $u_\lambda$. Arguing as in \cite[Lemma 5.8 and 5.13]{EENS} we find subsets $(\Sigma_0,\partial\Sigma_0)$ of the domain bounded by extremal short segments where the map satisfies $\mathcal{O}(\lambda)$ derivative bounds and where $\area(u_\lambda|\Sigma_0)=\mathcal{O}(\lambda)$. As in Lemma \ref{l : flow graph converegnce 1} we then find that the restriction $u_\lambda|\Sigma_0$ can be rescaled and converges to flow graphs, and by the area argument in \cite[Lemma 5.13]{EENS} these flow graphs begin and end on the boundary of the positive area part of the limit. We say that such a configuration is a \emph{holomorphic curve with a flow graph attached}. This then leads to the following result.   

\begin{lemma}\label{l : flow graph converegnce 3}
Any sequence of holomorphic curves $u_\lambda\colon (\Sigma_\lambda,\partial\Sigma_\lambda)\to (X,Q_\lambda)$ has a subsequence that flow graph converges to a holomorphic curve $u_M\colon(\Sigma,\partial\Sigma)\to(X,M)$ with a flow graph $\Gamma$ of $Q_\lambda\subset T^\ast M$ attached along the boundary components in $M$. 
If $C$ is embedded and transversely cut out and the flow graph configuration is transversely cut out as well, then for sufficiently small $\lambda>0$ there is a unique transversely cut out holomorphic curve in $N$ with boundary (piecewise) $C^1$-close to the cotangent lift of the flow graph configuration.   
\end{lemma}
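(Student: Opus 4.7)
The plan is to split the statement into a compactness half and a gluing half, and in each case assemble previously established technical results from \cite{Ekholm-morse} and \cite{EENS} rather than redo the analysis from scratch.

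For the compactness half, I would start from Lemma \ref{l : usual Gromov}: after passing to a subsequence, the curves $u_\lambda$ converge in the usual Gromov sense to a holomorphic curve $u_M\colon(\Sigma,\partial\Sigma)\to(X,M)$ together with possibly many low-area components that collapse in the Gromov limit. The refinement needed is to detect the flow graph hidden inside the collapsing pieces. Following the scheme of \cite[Section 5]{EENS}, I would decompose each $\Sigma_\lambda$ along extremal short boundary arcs into a \emph{thick} part carrying a definite amount of symplectic area and a \emph{thin} part of area $\mathcal{O}(\lambda)$. On the thick part, standard Gromov compactness recovers $u_M$. On the thin part, Lemma~\ref{l : estimate1} gives $\sup|p|=\mathcal{O}(\lambda)$, and fiberwise rescaling by $\lambda^{-1}$ turns the Cauchy-Riemann equation into one whose solutions, after adjoining finitely many boundary punctures where derivative bounds fail, flow-graph converge as in the proof of Lemma~\ref{l : flow graph converegnce 1} to a flow graph $\Gamma$ of $Q \subset T^*M$. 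The area argument of \cite[Lemma 5.13]{EENS} forces each end of $\Gamma$ to match a boundary point of the thick limit on $M$, giving exactly the "curve plus attached flow graph" structure claimed.

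For the gluing half, I would run the implicit function theorem for $\bar\partial$ in the standard Floer-theoretic framework. The approximate solution is a pre-glued map obtained by patching $u_M$ with the fiber strips of $T^*M$ over the edges of $\Gamma$ (scaled to lie at distance $\mathcal{O}(\lambda)$ from $M$), together with local model interpolations in neighborhoods of the vertices of $\Gamma$ and at the junctions where flow edges meet $\partial \Sigma \cap M$. The two transversality hypotheses say that the linearized $\bar\partial$ at $u_M$ (with boundary condition $M$) and the linearized flow equation at $\Gamma$ are both surjective; the standing gluing theorems of \cite[Section 6]{Ekholm-morse} and \cite[Section 5]{EENS} assemble these into uniform surjectivity of the linearization at the pre-glued map with a right-inverse bounded in $\lambda$ in an appropriate weighted Sobolev norm, and then produce a unique nearby honest holomorphic curve with boundary on $Q_\lambda \sqcup L$.

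The main obstacle I would expect is controlling the gluing at the junction points between a flow-graph edge and a boundary arc of $u_M$: there the Lagrangian boundary condition transitions from $M$ to the fiber-scaled $Q_\lambda$ over a scale of order $\mathcal{O}(\lambda \log \lambda)$, and one must verify that the error introduced by the pre-gluing is small in a norm that is uniform as $\lambda \to 0$ and compatible with the fiber rescaling used in Part~1. This is precisely the content of the junction analysis carried out in \cite[Sections 5--6]{Ekholm-morse} and \cite[Section 5]{EENS}, and the only extra point to check is that the auxiliary Lagrangian $L \subset X \setminus D^*M$, being separated from the degenerating region, enters only as a fixed background boundary condition for $u_M$ and is simply carried along. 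Uniqueness of the glued curve then follows by the usual compactness argument: any holomorphic curve with boundary on $Q_\lambda \sqcup L$ that stays $C^0$-close to the pre-glued configuration must, for $\lambda$ small enough, lie in the image of the gluing map, since by Part~1 its Gromov limit is forced to be the given configuration $(u_M,\Gamma)$, which is rigid by hypothesis.
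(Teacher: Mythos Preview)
Your proposal is correct and follows essentially the same approach as the paper: both split into a compactness step (thick/thin decomposition of $\Sigma_\lambda$ via extremal short arcs, Gromov limit on the thick part, rescaled flow-graph limit on the thin part using \cite[Lemma 5.13]{EENS}) and a gluing step (pregluing of $u_M$ with fiber strips over $\Gamma$, local models at vertices and junctions, uniform Fredholm theory in weighted Sobolev spaces from \cite[Section 6]{Ekholm-morse} and \cite[Section 5.4]{EENS}). The only minor difference is that the paper argues surjectivity of the gluing map via an analytic estimate (control in patching regions propagates to control in the full weighted norm), whereas you deduce uniqueness from compactness; both are standard and either works here.
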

\begin{proof}
The convergence statement was discussed above.  The last statement follows from results in \cite[Section 5.4]{EENS}. We give a brief description of the argument given there. 

The existence and uniqueness of holomorphic curves near flow graph configurations is a consequence of a gluing theorem that contains the two usual ingredients in Floer gluing: establish conditions for Newton iteration and show that the resulting gluing map is surjective. In the case at hand, the domain of the curve constructed is a fat graph attached to the domain of the big curve. For generic configurations there are two local domain models in the graph: strip regions (along flow edges, switches, and ends) and strips with a slit (near trivalent vertices). In all these local models both solutions and their linearizations near the limit can be explicitly determined, see \cite[Section 6.1]{Ekholm-morse}. There is also a local model for junction points where the graph pieces are attached to the big curve, see \cite[Section 5.4.2]{EENS}. 

Along these local model domains there are Sobolev spaces of maps with positive exponential weights, pieces are glued and interpolated in bounded regions where the weight function is bounded, see \cite[Sections 6.2--3]{Ekholm-morse}. This leads to a uniform Fredholm problem and a pregluing that satisfies the necessary quadratic estimate for Floer iteration, see \cite[Section 6.4]{Ekholm-morse} and \cite[Section 5.4.3]{EENS}. The surjectivity is then established using the fact that along the standard pieces, control of the function norm in the bounded patching regions gives control in the weighted norm over the whole standard piece, see \cite[Section 6.4.4]{Ekholm-morse} and \cite[Section 5.4.3]{EENS}.         
\end{proof}

Lemma \ref{l : flow graph converegnce 3} gives a one-to-one correspondence between holomorphic curves with boundary on $Q_\lambda$ and holomorphic curves with boundary on $M$ with flow graphs attached provided the holomorphic curves in the flow graphs are transversely cut out and simple (i.e., nowhere multiple covered). We next show that if the big holomorphic curve $C$ in the limit is embedded near its boundary, then flow graph edges of limiting curves that are attached to $C$ cannot be multiply covered. 
\begin{lemma}\label{l : simple implies simple}
Let $u_\lambda\colon (\Sigma_\lambda,\partial\Sigma_\lambda)\to (X,Q_\lambda)$ be a sequence that flow graph converges to a holomorphic curve $u_M\colon(\Sigma,\partial\Sigma)\to(X,M)$ with a flow graph $\Gamma$ of $Q_\lambda\subset T^\ast M$ attached along the boundary components in $M$ with all attaching points distinct and assume that $u_M$ is an embedding in a neighborhood of $\partial \Sigma$. Then any edge of $\Gamma$ is non-multiply covered. 
\end{lemma}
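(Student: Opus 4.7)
My plan is to argue by contradiction: suppose some edge $e$ of $\Gamma$ is a $k$-fold cover of an underlying flow line with $k\ge 2$, and derive a contradiction from the embeddedness of $u_M$ near $\partial\Sigma$ together with the distinctness of the attaching points. The heart of the argument is to follow the local structure of the flow graph limit near $e$ back to the topology of the domain $\Sigma_\lambda$ and of its boundary, and then to push this information down to $u_M(\partial\Sigma)\subset M$.

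First I would unpack what flow graph convergence with a multiply covered edge means on the level of domains, using the local analysis of Section~5 of \cite{Ekholm-morse} / Section~5 of \cite{EENS}. For each small $\lambda$, the preimage in $\Sigma_\lambda$ of a neighborhood of the flow edge $e$ consists of $k$ disjoint thin ``strip regions'' $R^{(1)}_\lambda,\dots,R^{(k)}_\lambda$, each of which, after the standard fiber rescaling, converges to the fiber strip over the same underlying flow line. Each strip $R^{(i)}_\lambda$ carries (at least) one boundary arc on $\partial\Sigma_\lambda$ that limits to a boundary point $q^{(i)}\in\partial\Sigma$ at the endpoint of the flow line lying on $u_M(\partial\Sigma)$; these points $q^{(1)},\dots,q^{(k)}$ are exactly the attaching points of the strips to the big curve.

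Second, I would exploit the fact that the endpoint of the flow line $e$ on $u_M$ is a single point $p\in M$, so $u_M(q^{(i)})=p$ for all $i=1,\dots,k$. The hypothesis that $u_M$ is an embedding in a neighborhood of $\partial\Sigma$ means exactly that $u_M|_{\partial\Sigma}$ is injective near the attaching points, so the identity $u_M(q^{(i)})=p$ forces $q^{(1)}=\dots=q^{(k)}$. On the other hand, the attaching points of the $k$ strips are, by their very construction as ends of disjoint strip regions $R^{(i)}_\lambda\subset\Sigma_\lambda$, genuinely distinct boundary points of $\Sigma_\lambda$ separated by positive-length subarcs of $\partial\Sigma_\lambda$; passing to the limit, the $q^{(i)}$ must themselves be distinct (this is also precisely what the hypothesis ``all attaching points distinct'' encodes). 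This forces $k=1$, contradicting $k\ge 2$.

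The main obstacle is the first step, i.e.\ producing the $k$ disjoint strip regions $R^{(i)}_\lambda$ and showing their ends on $\partial\Sigma_\lambda$ remain separated as $\lambda\to 0$. This is really a statement about the local model for holomorphic strips with boundary on $Q_\lambda$ degenerating to a fiber strip over a flow line: it is implicit in the standard gluing/compactness package (compare the domain decomposition in \cite[Lemma 5.8, Lemma 5.13]{EENS} and the local strip model in \cite[Section 6]{Ekholm-morse}), but one does have to be a little careful that a multiply covered flow edge cannot arise from a single strip region in $\Sigma_\lambda$ that folds onto itself, since such folding would produce interior critical points of $u_\lambda$ in violation of the open mapping principle for $J$-holomorphic maps. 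Once this is in place, the rest of the argument is just the injectivity of $u_M|_{\partial\Sigma}$.
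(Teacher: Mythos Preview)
There is a genuine gap at the step ``passing to the limit, the $q^{(i)}$ must themselves be distinct.'' Even if, for each fixed $\lambda$, the $k$ strip regions $R^{(i)}_\lambda$ attach to the thick part of $\Sigma_\lambda$ at $k$ distinct boundary points, nothing prevents these points from coalescing as $\lambda\to 0$: the boundary arcs separating them have length of order $\lambda$ and shrink. In fact they \emph{must} coalesce, since all $k$ strips lie over flow lines converging to the same limiting flow line with endpoint $p\in M$, so all junction points converge to the unique preimage of $p$ in $\partial\Sigma$ (using exactly the embeddedness of $u_M$ you invoke). Thus you correctly deduce $q^{(1)}=\cdots=q^{(k)}$, but there is nothing to contradict. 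The hypothesis ``all attaching points distinct'' is a genericity condition on the \emph{limit} configuration --- different edges of $\Gamma$ attach at different points of $u_M(\partial\Sigma)$ --- and a single $k$-fold covered edge contributes only \emph{one} attaching point; it does not hand you $k$ distinct ones. Relatedly, your appeal to the open mapping theorem to rule out a single strip region folding onto itself is not valid: $J$-holomorphic maps may have interior branch points and remain open, so a connected piece of $\Sigma_\lambda$ could in principle multiply cover the model fiber strip.

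A second gap is that your argument only treats edges with an endpoint on $u_M(\partial\Sigma)$; the flow graph may have internal edges (between trivalent vertices, or ending at switches/ends along the branch locus), and these are not addressed.

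The paper's proof is different on both counts. For an edge attached to $u_M$ it works in a local model $(\C^3,\R^3)$ and projects to a single complex line: the big curve projects to the upper half-plane with degree $1$ (by embeddedness), the flow strip sits below a slit at height $\lambda$, and a $k$-fold edge would force degree $>1$ below the slit, which is impossible for the projected holomorphic map. It then propagates multiplicity $1$ through the rest of $\Gamma$ by checking, in the standard local models at each vertex type, that edge multiplicity is constant across vertices.
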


\begin{proof}
We first consider an edge in $\Gamma$ attached to $u_M$.
Consider a neighborhood $(\C^3,\R^3)$, where $\R^3$ corresponds to $L$ of the point where the flow line is attached. We take coordinates so that the big curve lies in the first coordinate line. We take one of the sheets of $Q$ to be $\R^3$ and the other to be the graph of $\lambda dx_2$. (Note that the flow tree rescaling, multiplication by $\lambda^{-1}$, takes the holomorphic curve near the flow graph configuration to a standard holomorphic curve, with boundary on $\R^3$ and the graph of $dx_2$, up to an error term of size $\mathcal{O}(\lambda)$, see \cite[Lemma 5.12]{Ekholm-morse}.)

Projecting the curve with flow line attached to the complex line spanned by $\partial_{x_1}+\partial_{x_2}$ we find that the curve projects to the upper half-plane with boundary on the real line and a slit along the line with imaginary part $\lambda>0$. The assumption that $u_M$ is an embedding near the boundary implies that the degree of the half plane equals $1$ above the line with imaginary part $\lambda$. If the edge attached was a multiple cover, then the degree below the slit would be $>1$. This is not possible and hence the edge cannot be multiply covered. 

We next show that no other edge of $\Gamma$ can be multiply covered. Note first that it is clear that the multiplicity of a curve is constant along an edge in $\Gamma$. We check that it remains constant over vertices. As above, near a vertex there is a standard model for the projected boundary condition after rescaling, sheets without cusp edges correspond to lines in $\C$ parallel to the real axis, a those with cusp edge correspond to a half circles with lines attached. In the limit, the projected curve is holomorphic, possibly with interior and boundary branch points, and it is straightforward to check that edge multiplicity does not change. Since edges of $\Gamma$ attached to the big curve $u_M$ have multiplicity one, it follows that all edges of $\Gamma$ have multiplicity one.  
\end{proof}

\bibliographystyle{plain}
\bibliography{skeinrefs}

\end{document}